\title{Finite length for unramified $\GL_2$}
\definecolor{olive}{rgb}{0.5, 0.5, 0.0}
\newcounter{mar-counter}
\patchcmd\maketitle{\def\@makefnmark{\rlap{\@textsuperscript{\normalfont\@thefnmark}}}}{}{}{}
\def\thanksAAffil#1{%
  \footnotemarkAAffil\protected@xdef\@thanks{\@thanks%
        \protect\footnotetextAAffil[\the \c@footnoteAAffil]{#1}}%
}
\def\thanksANote#1{%
  \footnotemarkANote%
  \protected@xdef\@thanks{\@thanks%
        \protect\footnotetextANote[\the \c@footnoteANote]{#1}}%
}
\author{Christophe Breuil\thanksAAffil{CNRS, B\^atiment 307, Facult\'e d'Orsay, Universit\'e Paris-Saclay, 91405 Orsay Cedex, France}\\
\and
Florian Herzig\thanksAAffil{Dept.\ of Math., Univ.\ of Toronto, 40 St.\ George St., BA6290, Toronto, ON M5S 2E4, Canada}\\
\and
Yongquan Hu\thanksAAffil{Morningside Center of Mathematics, Academy of Mathematics and Systems Science, Chinese Academy  of \newline
\indent\hspace{1.5mm} Sciences, Beijing 100190, China; University of the Chinese Academy of Sciences, Beijing 100049, China}
\\
\and
Stefano Morra\thanksAAffil{Lab.\ d'Analyse, G\'eom\'etrie, Alg\`ebre, 99 Av.\ Jean Baptiste Cl\'ement, 93430 Villetaneuse, France}\\
\and
Benjamin Schraen\thanksAAffil{Universit\'e Claude-Bernard-Lyon-I, Institut Camille Jordan, 69622 Villeurbanne, France}}
\date{ }
\begin{document}

\maketitle

\begin{abstract}
Let $p$ be a prime number and $K$ a finite unramified extension of $\Qp$. If $p$ is large enough with respect to $[K:\Qp]$ and under mild genericity assumptions, we prove that the admissible smooth representations of $\GL_2(K)$ that occur in Hecke eigenspaces of the mod $p$ cohomology are of finite length. We also prove many new structural results about these representations of $\GL_2(K)$ and their subquotients.
\end{abstract}

\tableofcontents

\section{Introduction}

\subsection{The main results}
\label{sec:results}

Let $p$ be a prime number, $F$ a totally real number field and $D$ a quaternion algebra of center $F$ which is split at all $p$-adic places and at exactly one infinite place. In order to simplify this introduction we assume that $p$ is inert in $F$ (in the text we only need $p$ unramified in $F$) and denote by $v$ the unique $p$-adic place of $F$. To an absolutely irreducible continuous representation $\rbar : {\rm Gal}(\overline F/F)\rightarrow \GL_2(\F)$ (here $\F$ is a sufficiently large finite extension of $\Fp$) and $V^v$ a compact open subgroup of $(D\otimes_F{\mathbb A}_F^{\infty,v})^\times$ (here ${\mathbb A}_F^{\infty,v}$ is the ring of finite prime-to-$v$ ad\`eles of $F$), we associate the admissible smooth representation of $\GL_2(F_v)$ over $\F$: 
\begin{equation}\label{eq:goal}
\pi\defeq \varinjlim_{V_v}\Hom_{{\rm Gal}(\overline F/F)}\!\big(\rbar,H^1_{{\rm \acute et}}(X_{V^vV_v} \times_F \overline F, \F)\big),
\end{equation}
where the inductive limit runs over compact open subgroups $V_v$ of $(D\otimes_FF_v)^\times\cong \GL_2(F_v)$ and $X_{V^vV_v}$ is the smooth projective Shimura curve over $F$ associated to $D$ and $V^vV_v$. Throughout this introduction we fix $\pi$ as in (\ref{eq:goal}) such that $\pi\ne 0$. Recall that, when $F=\Q$ (and $X_{V^vV_v}$ is the compactified modular curve) and under very weak assumptions on $\rbar\vert_{{\rm Gal}(\Qpbar/\Qp)}$, the $\GL_2(\Qp)$-representation $\pi$ has been completely understood for quite some time (see \cite{emerton-local-global}, \cite{CDP}). Unfortunately, this is no longer the case when $F_v\ne \Qp$ despite recent progress (\cite{HuWang2}, \cite{BHHMS1}, \cite{BHHMS2}, \cite{BHHMS3}, \cite{YitongWangGKD}, \cite{YitongWang}). The main aim of the present work is to take a new step in the (long) journey towards the comprehension of the $\GL_2(F_v)$-representation $\pi$ when $F_v\ne \Qp$ by proving that, for $\rbar$ sufficiently generic and under a standard multiplicity one assumption (commonly referred to as ``the minimal case''), $\pi$ is of finite length. 
(The multiplicity one assumption makes the weight cycling much easier and simplifies many computations. Also, assuming $r>1$ would change the length of the $\rbar$-eigenspace $\pi$: see \cite{Lucrezia} in case $\rbar\vert_{\Gal(\o F_v/F_v)}$ is semisimple.)

Under similar assumptions, it was already known that $\pi$ is absolutely irreducible if and only if $\rbar\vert_{\Gal(\o F_v/F_v)}$ is (\cite[Thm.~3.105(i)]{BHHMS2}), and that $\pi$ is of length $3$ when $\rbar\vert_{\Gal(\o F_v/F_v)}$ is reducible and $[F_v:\Qp]=2$ (\cite{HuWang2} for $\rbar\vert_{\Gal(\o F_v/F_v)}$ nonsplit, \cite[Thm.~3.105(ii)]{BHHMS2} for $\rbar\vert_{\Gal(\o F_v/F_v)}$ split\footnote{\cite[Thm.~3.105]{BHHMS2} is stated in the global setting of compact unitary groups but the proof is the same.}). Hence the main contribution of this work is to prove that $\pi$ is of finite length when $\rbar\vert_{\Gal(\o F_v/F_v)}$ is reducible and $[F_v:\Qp]\geq 3$. We also obtain many intermediate and aside results on (the irreducible constituents of) $\pi$.

Let us describe our most important results in more detail. 

We set $K\defeq F_v$, $f\defeq [K:\Qp]$ and $q\defeq p^f$. We denote by $\omega$ the mod $p$ cyclotomic character of $\Gal(\o K/K)$ (that we consider as a character of $K^\times$ via local class field theory, where uniformizers correspond to geometric Frobenius elements), and by $\omega_{f}$, $\omega_{2f}$ Serre's fundamental characters of the inertia subgroup $I_K$ of $\Gal(\o K/K)$ of level $f$, $2f$ respectively. In this introduction, we say that $\rbar$ is \emph{generic} if the following conditions are satisfied:
\begin{enumerate}
\item $\rbar\vert_{\Gal(\overline F/F(\sqrt[p]{1}))}$ is absolutely irreducible;
\item\label{it:wramifies}for $w\!\nmid\! p$ such that either $D$ or $\rbar$ ramifies at $w$, the framed deformation ring of $\rbar\vert_{{\rm Gal}(\overline F_w/F_w)}$ over the Witt vectors $W(\F)$ is formally smooth;
\item\label{it:wisv}$\rbar\vert_{I_{K}}$ is up to twist of form
\begin{equation*}
\begin{pmatrix}\omega_f^{\sum_{j=0}^{f-1} (r_j+1) p^j}&*\\0&1\end{pmatrix}\text{\ with\ }\max\{12,2f+1\} \leq r_j \leq p-\max\{15,2f+4\}
\end{equation*}
or 
\begin{equation*}
\left(\begin{matrix}\omega_{2f}^{\sum_{j=0}^{f-1} (r_j+1) p^j} & \\ & \omega_{2f}^{q(\text{same})}\end{matrix}\right) \text{\ with\ }
  \begin{cases}
    \max\{12,2f+1\} \leq r_j \leq p-\max\{15,2f+4\} \!\!&\!\! j>0 \\ \max\{13,2f+2\} \leq r_0 \leq p-\max\{14,2f+3\}.&
  \end{cases}
\end{equation*}
\end{enumerate}
Note that \ref{it:wisv} implies $p \geq \max\{27,4f+5\}$ and that \ref{it:wramifies} can be made explicit (\cite{Shotton}, \cite[Rk.\ 8.1.1]{BHHMS1}). The bounds on $r_j$ in \ref{it:wisv} are such that all the results mentioned in this introduction hold 
(in the paper many results actually require  weaker bounds, and a few results require stronger bounds). By \cite[Thm.\ 1.9]{BHHMS1} (for $\rbar\vert_{{\rm Gal}(\o K/K)}$ semisimple) and \cite[Thm.\ 6.3(ii)]{YitongWangGKD} (for $\rbar\vert_{{\rm Gal}(\o K/K)}$ non-semisimple) for $\rbar$ generic there is a unique integer $r\geq 1$ (the ``multiplicity'') such that, for any (absolutely) irreducible representation $\sigma$ of $\GL_2(\cO_K)$ over $\F$, we have $\dim_{\F}\Hom_{\GL_2(\cO_K)}(\sigma, \pi)\in \{0, r\}$ (the notation $\o r$ and $r$ is somewhat unfortunate but is consistent with \cite[\S~8]{BHHMS1}).

In the sequel we let $\rhobar\defeq \rbar^\vee\vert_{{\rm Gal}(\o K/K)}$, where $\rbar^\vee$ is the dual of $\rbar$. 

If $\pi_1$ and $\pi_2$ are representations of a group, we denote by $\!\begin{xy} (0,0)*+{\pi_1}="a"; (10,0)*+{\pi_2}="b"; {\ar@{-}"a";"b"}\end{xy}\!$ an arbitrary \emph{nonsplit} extension of $\pi_2$ by $\pi_1$ (so $\pi_1$ is a subrepresentation and $\pi_2$ is a quotient). We say a finite length representation is \emph{uniserial} if it has a unique composition series, in which case we write $\!\begin{xy} (0,0)*+{\pi_1}="a"; (10,0)*+{\pi_2}="b"; (20,0)*+{\pi_3}="c"; (30,0)*+{\cdots}="d"; {\ar@{-}"a";"b"}; {\ar@{-}"b";"c"}; {\ar@{-}"c";"d"}\end{xy}\!$, where $\pi_i$ are the (irreducible) graded pieces. Finally we let $B(K)$ be the subgroup of upper triangular matrices in $\GL_2(K)$. 

\begin{thm}\label{thm:mainintro}
Assume that $\rbar$ is generic and that $r=1$.
\begin{enumerate}
\item\label{it:irrintro} If $\rhobar$ is irreducible then $\pi$ is irreducible supersingular.
\item\label{it:splitintro} If $\rhobar$ is split, i.e.~$\rhobar\cong \begin{pmatrix}\chi_{1} &0\\0 &\chi_2\end{pmatrix}$, then
\[\pi\cong \Ind_{B(K)}^{\GL_2(K)}(\chi_2\otimes \chi_1\omega^{-1})\oplus \pi' \oplus \Ind_{B(K)}^{\GL_2(K)}(\chi_1\otimes \chi_2\omega^{-1}),\]
where $\pi'=0$ if $K=\Qp$ and $\pi'$ has length $\in \{1,\dots,f-1\}$ with {pairwise distinct  constituents all of which are supersingular} if $K\ne \Qp$.
\item\label{it:nonsplitintro} If $\rhobar$ is nonsplit, i.e.~$\rhobar\cong \begin{pmatrix}\chi_{1} &*\\0 &\chi_2\end{pmatrix}$ with $*\ne 0$, then
\[\pi \cong \Big(\begin{xy} (0,0)*+{\Ind_{B(K)}^{\GL_2(K)}(\chi_2\otimes \chi_1\omega^{-1})}="a"; (30,0)*+{\pi'}="b"; (60,0)*+{\Ind_{B(K)}^{\GL_2(K)}(\chi_1\otimes \chi_2\omega^{-1})}="c"; {\ar@{-}"a";"b"}; {\ar@{-}"b";"c"}\end{xy}\Big),\]
where $\pi'=0$ if $K=\Qp$ and $\pi'$ is uniserial of length $\in \{1,\dots,f-1\}$ with  {pairwise distinct  constituents all of which are supersingular} if $K\ne \Qp$.
\end{enumerate}
\end{thm}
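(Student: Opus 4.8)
The plan is the following. The first case ($\rhobar$ irreducible, hence $\pi$ irreducible supersingular) is already \cite[Thm.~3.4.4.6(i)]{BHHMS2}, so all the work is in the two reducible cases, which I would treat together for as long as possible.

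\emph{Step 1 (the two principal series).} First I would produce, as in \cite{BHHMS2}, an embedding $\Ind_{B(K)}^{\GL_2(K)}(\chi_2\otimes\chi_1\omega^{-1})\hookrightarrow\pi$ and a surjection $\pi\twoheadrightarrow\Ind_{B(K)}^{\GL_2(K)}(\chi_1\otimes\chi_2\omega^{-1})$. These come from the compatibility of $\pi$, via patching, with Emerton's ordinary-part functor: reducibility of $\rhobar$ forces the ordinary part of $\pi$ (for each of the two opposite Borels) to be a nonzero smooth representation of the diagonal torus containing the expected character, and Emerton's adjunction turns this into the two maps; the principal series in question are irreducible by genericity, so the maps are automatically injective, resp.\ (invoking the relation between $\pi$ and its dual coming from Poincar\'e duality on the Shimura curves, which converts a principal series subobject into a principal series quotient) surjective. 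When $\rhobar$ is split, the ordinary part of $\pi$ sees $\rhobar$ ``both ways'', and I would upgrade this --- using the decomposition of the patched module, equivalently the action on $\pi^{I_1}$ of the Hecke operator $U_p$, which then has two distinct nonzero eigenvalues and a nilpotent part --- to a direct sum decomposition $\pi\cong\Ind_{B(K)}^{\GL_2(K)}(\chi_2\otimes\chi_1\omega^{-1})\oplus\pi'\oplus\Ind_{B(K)}^{\GL_2(K)}(\chi_1\otimes\chi_2\omega^{-1})$; when $\rhobar$ is nonsplit one only gets a two-step filtration with these principal series as bottom subobject and top quotient, and $\pi'$ is defined as the middle graded piece (indecomposability of $\pi$ in that case following from indecomposability of the patched module over the nonsplit reducible locus of the deformation ring).

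\emph{Step 2 ($\pi'$ is supersingular).} Every irreducible subquotient of $\pi'$ is supersingular: after the ordinary parts have been extracted, the ordinary part of $\pi'$ vanishes for both Borels, and $\pi$ has no one-dimensional $\GL_2(\cO_K)$-subquotient because by genericity no one-dimensional Serre weight lies in the set $W(\rhobar)$ of Serre weights of $\rhobar$ (equivalently, $U_p$ acts nilpotently on $(\pi')^{I_1}$); since any irreducible admissible smooth $\F$-representation of $\GL_2(K)$ is supersingular or a subquotient of a principal series, and a non-supersingular such representation which is not a character has nonzero ordinary part, all constituents of $\pi'$ are supersingular.

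\emph{Step 3 (finite length and the bound on $\pi'$).} Here I would run a Gelfand--Kirillov dimension/cycle argument. By \cite{BHHMS1} (and \cite{YitongWangGKD} in the non-semisimple case), $\pi$ has Gelfand--Kirillov dimension $f$, and by \cite{BHHMS2}, when $r=1$, its dual $\pi^\vee$ is Cohen--Macaulay over the relevant Iwasawa algebra with an explicit multiplicity-free $f$-dimensional cycle having $f+1$ irreducible components. Using that every infinite-dimensional irreducible subquotient of $\pi$ again has Gelfand--Kirillov dimension $f$, so contributes a nonzero effective $f$-cycle, together with the additivity of cycles in short exact sequences, $\pi$ has at most $f+1$ infinite-dimensional Jordan--H\"older factors; combined with Step 2 (no characters) this already yields finite length. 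The two principal series of Step 1 account for two of the $f+1$ cycle components (their duals having irreducible $f$-dimensional support), so $\pi'$ has at most $f-1$ constituents, pairwise non-isomorphic by multiplicity-freeness of the cycle; and $\pi'\neq 0$ when $K\neq\Qp$ since then the cycle has at least three components, more than the two used by the principal series (alternatively, by a Serre weight count).

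\emph{Step 4 (the exact structure).} It remains to prove, in the nonsplit case, that $\pi'$ (hence $\pi$) is uniserial with the stated composition series and, in the split case, that $\pi'$ is a genuine direct summand. For this I would analyze in detail the $\GL_2(\cO_K)$-socle filtration of $\pi$ together with the Hecke action and the Iwasawa-module structure of $\pi^\vee$: one attaches to the supersingular constituents of $\pi'$ distinct ``levels'' $1\leq j\leq f-1$, shows that all nonsplit extensions among them (and between them and the two principal series) are ``aligned'', and that consecutive levels admit at most one-dimensional extension spaces, which forces the unique composition series. \textbf{The main obstacle} is precisely this step: controlling \emph{all} the relevant $\mathrm{Ext}^1$-groups tightly enough to pin down the exact shape, rather than merely bounding the length --- this is where the bulk of the new representation-theoretic and commutative-algebra input is needed, refining the structure theory of \cite{BHHMS1} and \cite{BHHMS2}.
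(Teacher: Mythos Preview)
Your Step 3 contains a factual error that breaks the argument. The characteristic cycle of $\gr_\m(\pi^\vee)$ lives on the minimal primes of $\o R = R/(y_jz_j:0\le j\le f-1)$, and there are $2^f$ such primes, not $f+1$; moreover the cycle is far from multiplicity-free (its total multiplicity is $\sum_{\lambda\in\P} m(\o R/\fa(\lambda))$, which is $3^f$ when $\rhobar$ is split). So additivity of cycles only bounds the length of $\pi$ by something exponential in $f$, and cannot by itself yield the bound $f+1$ nor identify the two principal series as accounting for exactly two ``components''. The direct-sum upgrade in Step 1 is also not justified: the $U_p$-eigenspace picture you sketch does not produce a $\GL_2(K)$-stable splitting of $\pi$.

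The paper's route to $f+1$ is entirely different and does not go through cycle counting. It first determines $\gr_\m(\pi^\vee)$ completely as a graded $\gr(\Lambda)$-module (Theorem \ref{thm:CMC}), then shows that for any subrepresentation $\pi_1\subset\pi$ the module $\gr_\m(\pi_1^\vee)$ is again explicitly computable from $\pi_1^{K_1}$ alone (Propositions \ref{prop:split-I1} and \ref{prop:nonsplit-I1}); this step is the technical core, and uses the essential self-duality \ref{it:assum-iii} together with the $(\varphi,\Gamma)$-module functor $D_\xi^\vee$ (and, in the nonsplit case, Wang's Theorem \ref{thm:yitong}) to match cycles. From this one deduces that $\pi_1$ is generated by, hence determined by, $\pi_1^{K_1}$. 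The bound $f+1$ then falls out of the representation theory of $\Gamma=\GL_2(\F_q)$: the paper shows $\pi_1^{K_1}=\bigoplus_{i\in\Sigma}D_0(\rhobar)_i$ (split case) or $\pi_1^{K_1}=D_0(\rhobar)_{\le i_0}$ (nonsplit case, Theorem \ref{thm:conj2}), and there are at most $f+1$ such levels. Uniseriality in the nonsplit case and multiplicity-freeness in both cases (your Step 4) are then read off from this same machinery (Theorem \ref{thm:fin-length-nonsplit}(ii), Corollaries \ref{cor:finite-length}(iv) and \ref{cor:pi-mult-free1}), not from $\Ext^1$ computations between constituents. The principal series and the supersingular nature of $\pi'$ are handled separately via \cite[Thm.~1.3.11]{BHHMS2}.
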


Part \ref{it:irrintro} was known (\cite[Thm.~3.105(i)]{BHHMS2}, as already mentioned), \ref{it:splitintro} easily follows from Theorem \ref{thm:length:f+1}(i) with the first statement of \cite[Thm.\ 1.17]{BHHMS2} and from Corollary \ref{cor:finite-length}(iv), and \ref{it:nonsplitintro} follows from Theorem \ref{thm:fin-length-nonsplit}(ii) and Corollary \ref{cor:pi-mult-free1}.

Theorem \ref{thm:mainintro} implies that $\pi$ is of finite length and multiplicity free. It is expected that $\pi'$ in Theorem \ref{thm:mainintro}\ref{it:splitintro}, \ref{it:nonsplitintro} always has length $f-1$ (see \cite[p.\ 107]{BP}) but we only know this when $f=2$ (in fact we do not have an example of a $\pi'$ of length $\geq 2$). Note also that, although one can optimistically hope that $\pi'$ only depends on $\rhobar$ and that $\pi'$ in Theorem \ref{thm:mainintro}\ref{it:splitintro} is the semisimplification of $\pi'$ in Theorem \ref{thm:mainintro}\ref{it:nonsplitintro}, at present we know none of these statements when $f>1$, even for $f=2$.

Nevertheless we can prove several results on the irreducible constituents of $\pi$. Let $I$ (resp.~$I_1$) be the subgroup of $\GL_2(\cO_K)$ of matrices which are upper triangular modulo $p$ (resp.~upper unipotent modulo $p$) and $K_1\cong 1+p{\rm M}_2(\cO_K) \subset I_1$ be the subgroup of matrices which are trivial modulo $p$. Let $Z_1\cong 1+p\cO_K$ be the center of $I_1$ (or $K_1$). We will extensively use the Iwasawa algebra $\Lambda\defeq \F\bbra{I_1/Z_1}$ which is a (noncommutative) noetherian local ring of Krull dimension $3f$. We denote by $\m$ its maximal ideal. Since $\pi$ has a central character, $\pi$ and any of its subquotients are $\Lambda$-modules, and likewise for their linear duals. Since $\pi$ is admissible, the latter are moreover finitely generated $\Lambda$-modules. Recall that a nonzero finitely generated $\Lambda$-module $M$ is Cohen--Macaulay of grade $c\geq 0$ if $\Ext^i_{\Lambda}(M, \Lambda)$ is nonzero if and only if $i=c$.

\begin{thm}\label{thm:mainbisintro}
Assume that $\rbar$ is generic, that $r=1$ and that $\rhobar$ is semisimple.
\begin{enumerate}
\item\label{it:cohenintro}
The linear dual $\Hom_{\F}(\pi',\F)$ of any nonzero subquotient $\pi'$ of $\pi$ is a Cohen--Macaulay $\Lambda$-module of grade $2f$.
\item\label{it:socleintro}
Any subquotient of $\pi$ is generated by its $\GL_2(\cO_K)$-socle.
\item\label{it:phigammaintro}
For any subquotient $\pi'$ of $\pi$ we have
\[\dim_{\F\ppar{X}}D_\xi^\vee({\pi'})=|\JH(\soc_{\GL_2(\cO_K)}(\pi'))|,\]
where $D_\xi^\vee({\pi'})$ is the cyclotomic $(\varphi,\Gamma)$-module associated to ${\pi'}$ in \cite[\S~2.1.1]{BHHMS2} and $\JH$ means the set of Jordan--H\"older (or irreducible) constituents.
\item\label{it:exactintro}
For any subrepresentations $\pi_1\subseteq \pi_2$ of $\pi$ we have a split exact sequence of $\GL_2(\cO_K)$-repre\-sen\-tations 
\[0\ra \soc_{\GL_2(\cO_K)}(\pi_1)\ra \soc_{\GL_2(\cO_K)}(\pi_2)\ra \soc_{\GL_2(\cO_K)}(\pi_2/\pi_1)\ra0.\]
\item\label{it:exactintroI}
For any subrepresentations $\pi_1\subseteq \pi_2$ of $\pi$ and any $n\geq 1$ we have an exact sequence of $I$-representations
\[0\ra \pi_1[\m^n]\rightarrow \pi_2[\m^n]\rightarrow(\pi_2/\pi_1)[\m^n]\rightarrow0,\]
which is split for $n\leq \max\{6,f+1\}$.
\end{enumerate}
\end{thm}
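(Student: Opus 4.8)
\textbf{Proof proposal for Theorem~\ref{thm:mainbisintro}\ref{it:exactintroI}.}

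The plan is to pass to Pontryagin duals and argue with the finitely generated $\Lambda$-modules $M_i\defeq\Hom_\F(\pi_i,\F)$. Setting $N\defeq\Hom_\F(\pi_2/\pi_1,\F)$ we have $N\subseteq M_2$ with $M_2/N\cong M_1$, and dualizing the inclusion $\pi'[\m^n]\subseteq\pi'$ (for a subquotient $\pi'$) identifies $\pi'[\m^n]^\vee$ with the $\m^n$-reduction of $\pi'^\vee$; one checks that exactness of
\[0\to\pi_1[\m^n]\to\pi_2[\m^n]\to(\pi_2/\pi_1)[\m^n]\to0\]
for all $n\geq1$ is equivalent to $N\cap\m^nM_2=\m^nN$ for all $n\geq1$, i.e.\ to the $\m$-adic filtration of $N$ agreeing with the one induced from $M_2$, i.e.\ to exactness of $\mathrm{gr}_\m(-)$ on $0\to N\to M_2\to M_1\to0$ (equivalently, that $\dim_\F\pi'[\m^n]$ be additive in short exact sequences of subquotients of $\pi$, for every $n\geq1$). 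By induction on the length of $\pi_2/\pi_1$ --- applied to the assertion ``for all pairs of subrepresentations of a fixed subquotient of $\pi$'', which makes the d\'evissage $\pi_1\subseteq\pi'\subseteq\pi_2$ available --- it suffices to treat the case where $\pi_2/\pi_1$ is irreducible.

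For this $\m$-adic compatibility I would combine the Cohen--Macaulay property of part \ref{it:cohenintro} with the explicit structure of the associated graded. As $N$, $M_2$, $M_1$ are Cohen--Macaulay of grade $2f$ over $\Lambda$, their $\mathrm{gr}_\m$'s (for the $\m$-adic, hence $\m$-good, filtration) are Cohen--Macaulay of grade $2f$ over $\mathrm{gr}_\m\Lambda$, a graded ring of dimension $3f$ of known (polynomial, up to a controlled nilpotent part) shape. The essential input, which I expect to be the main obstacle, is that for the duals of subquotients of $\pi$ the associated graded modules are ``clean'' --- with reduced equidimensional support and no gluing between distinct irreducible components --- this being part of the computation of the cycle $[\pi^\vee]$ and the finer analysis of $\mathrm{gr}_\m\pi^\vee$ carried out earlier; it implies that no element of $N$ can have strictly larger $\m$-adic order inside $M_2$ than inside $N$, so that $\mathrm{gr}_\m N\hookrightarrow\mathrm{gr}_\m M_2$ and $\mathrm{gr}_\m(-)$ is exact on $0\to N\to M_2\to M_1\to0$. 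The bare Cohen--Macaulay property does \emph{not} suffice here: for $\F[[x,y]]/(xy)$ with the submodule generated by $x$ all three terms are Cohen--Macaulay of dimension $1$ while the two filtrations on the submodule differ; one genuinely needs the specific structure of $\pi^\vee$.

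For the splitting when $n\leq\max\{6,f+1\}$, by the modular law it suffices to show that $\pi_1[\m^n]$ is a direct summand of $\pi[\m^n]$ as an $I$-representation, for every subrepresentation $\pi_1$ of $\pi$ (and likewise over a subquotient, in the d\'evissage). For $n=1$ this is automatic: $\pi[\m]=\pi^{I_1}$ is a representation of the finite group $I/I_1\cong T(k)$ of order $(q-1)^2$, prime to $p$, hence semisimple. For general $n$ in the range one uses the explicit description of $\pi[\m^n]$ as an $I$-representation for small $n$ --- available from the multiplicity-one hypothesis $r=1$, part \ref{it:exactintro} and the analysis in the paper and its predecessors --- to construct the splitting: the obstruction lies in $\Ext^1_I\big((\pi_2/\pi_1)[\m^n],\pi_1[\m^n]\big)$ and is the ``glue'' between the two parts inside $\pi[\m^n]$, which vanishes as long as $n$ stays below the threshold at which the first socle layers of the injective $\GL_2(\cO_K)$-envelopes of the Serre weights in $\pi$, and the period-$f$ combinatorics of these weights, start to interact, i.e.\ $n\leq\max\{6,f+1\}$.
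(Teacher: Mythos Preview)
Your reformulation of exactness as the equality $N\cap\m^nM_2=\m^nN$ (i.e.\ the $\m$-adic filtration on $(\pi_2/\pi_1)^\vee$ coincides with the one induced from $\pi_2^\vee$) is correct and is indeed what drives the paper's argument. However, you stop precisely at the point that needs work: you say the ``clean'' structure of $\gr_\m$ should force this, but you do not say how. The paper's argument is both more concrete and more efficient than your proposed d\'evissage. It first treats $\pi_2=\pi$: there one has the explicit isomorphism $\gr_\m(\pi^\vee)\cong\bigoplus_{\lambda\in\P}\chi_\lambda^{-1}\otimes R/\fa(\lambda)$ together with $\gr_\m(\pi_1^\vee)\cong\bigoplus_{\lambda\in\P_1}\chi_\lambda^{-1}\otimes R/\fa(\lambda)$ for the subset $\P_1\subset\P$ corresponding to $\pi_1^{I_1}$ (this is the content of Proposition~\ref{prop:split-I1} and Corollary~\ref{cor:split-I1}), so $\gr_\m(\pi^\vee)\cong\gr_\m(\pi_1^\vee)\oplus\gr_\m((\pi/\pi_1)^\vee)$ and surjectivity of $\pi[\m^n]\to(\pi/\pi_1)[\m^n]$ follows by comparing dimensions. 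For general $\pi_1\subset\pi_2\subset\pi$ one then argues by dimension count: the case $\pi_2=\pi$ gives $\dim_\F(\pi/\pi_i)[\m^n]=\dim_\F\pi[\m^n]-\dim_\F\pi_i[\m^n]$ for $i=1,2$, and also that $(\pi/\pi_1)[\m^n]\onto(\pi/\pi_2)[\m^n]$, whence $\dim_\F(\pi_2/\pi_1)[\m^n]=\dim_\F\pi_2[\m^n]-\dim_\F\pi_1[\m^n]$. No induction on length is needed.

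For the splitting, your Ext-obstruction sketch is not what the paper does, and your explanation of the bound $\max\{6,f+1\}$ is incorrect. The paper shows directly that $\pi_1[\m^n]$ is a direct summand of $\pi[\m^n]$: under the genericity hypothesis one has $\pi[\m^n]=\tau^{(n)}[\m^n]$ with $\tau^{(n)}=\bigoplus_{\lambda\in\P}\tau_\lambda^{(n)}$ an explicit $I$-representation (Lemmas~\ref{lem:tau-embed} and~\ref{lem:isom-modcI}), and the isomorphism $\gr_\m(\pi_1^\vee)\cong N_1$ together with multiplicity-freeness of $\pi[\m^n]$ forces $\pi_1[\m^n]=\bigoplus_{\lambda\in\P_1}\tau_\lambda^{(n)}[\m^n]$, which is visibly a direct summand. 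The bound on $n$ arises solely because the identification $\pi[\m^n]=\tau^{(n)}[\m^n]$ and its multiplicity-freeness (Corollary~\ref{cor:tau-multfree}) require $\brho$ to be $(2n-1)$-generic; with the standing genericity assumption this holds precisely when $n\le\max\{6,f+1\}$. It has nothing to do with interactions between socle layers of injective envelopes.
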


Note first that for $\pi$ itself part \ref{it:cohenintro} was known using \cite[Prop.\ A.8]{HuWang2} (without assuming $\rhobar$ semisimple) and part \ref{it:socleintro} was known by \cite[Thm.~1.14]{BHHMS2}. Moreover \ref{it:phigammaintro} was known for subrepresentations $\pi_1$ of $\pi$ by \cite[Prop.~3.87(ii)]{BHHMS2}. In particular Theorem \ref{thm:mainbisintro} was already known for $\rhobar$ irreducible (as $\pi$ is then also irreducible), and thus the main novelty in Theorem \ref{thm:mainbisintro} is that we obtain nontrivial results for \emph{subquotients} of $\pi$ (when $\rhobar$ is reducible).

When $\rhobar$ is split reducible, \ref{it:cohenintro} is contained in Corollary \ref{cor:finite-length}(ii), \ref{it:socleintro} is Corollary \ref{cor:finite-length}(iii), \ref{it:phigammaintro} is contained in Corollary \ref{cor:finite-length}(i) and  \ref{it:exactintro} is Lemma \ref{lem:soc-exact}. Finally \ref{it:exactintroI} is Corollary \ref{cor:split-In}. 
The splitness of the exact sequences in \ref{it:exactintro} and in \ref{it:exactintroI} for $n\leq \max\{6,f+1\}$ can be seen as (very weak) evidence for the hope that $\pi$ is semisimple when $\rhobar$ is.

When $\rhobar$ is non-semisimple, we have the following version of Theorem \ref{thm:mainbisintro}:

\begin{thm}\label{thm:mainterintro}
Assume that $\rbar$ is generic, that $r=1$ and that $\rhobar$ is non-semisimple (reducible).
\begin{enumerate}
\item\label{it:cohenbisintro}
The linear dual of any nonzero subquotient of $\pi$ is a Cohen--Macaulay $\Lambda$-module of grade $2f$.
\item\label{it:K1intro}
Any subquotient of $\pi$ is generated by its $K_1$-invariants.
\end{enumerate}
\end{thm}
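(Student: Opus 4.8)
The strategy is to exploit the explicit description of $\pi$ provided by Theorem~\ref{thm:mainintro}\ref{it:nonsplitintro}: there $\pi$ is \emph{uniserial}, its composition factors being the two principal series $\Ind_{B(K)}^{\GL_2(K)}(\chi_2\otimes\chi_1\omega^{-1})$ at the bottom and $\Ind_{B(K)}^{\GL_2(K)}(\chi_1\otimes\chi_2\omega^{-1})$ at the top, together with the finitely many supersingular constituents of $\pi'$ in between. Consequently any subquotient of $\pi$ is an interval of this composition series, hence is again uniserial and carries a finite filtration whose graded pieces are irreducible constituents of $\pi$. Both parts will then follow by combining (a) the assertion for the irreducible constituents of $\pi$ with (b) a formal propagation of the assertion along the extensions occurring in such a filtration. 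The principal-series constituents can be handled by an essentially local computation, whereas the supersingular ones carry the real content and will be dealt with using the analysis of $\pi'$ in the body of the paper (Theorem~\ref{thm:fin-length-nonsplit}, Corollary~\ref{cor:pi-mult-free1}).

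For \ref{it:cohenbisintro} the propagation step is: if $0\to M_1\to M_2\to M_3\to0$ is exact with $M_1$, $M_3$ finitely generated Cohen--Macaulay $\Lambda$-modules of grade $c$, then so is $M_2$. This is immediate from the long exact sequence for $\Ext_\Lambda^\bullet(-,\Lambda)$: for $i\ne c$ it puts $\Ext_\Lambda^i(M_2,\Lambda)$ between two vanishing groups, and for $i=c$ it gives a short exact sequence $0\to\Ext_\Lambda^c(M_3,\Lambda)\to\Ext_\Lambda^c(M_2,\Lambda)\to\Ext_\Lambda^c(M_1,\Lambda)\to0$ with nonzero outer terms. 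Iterating, any $\Lambda$-module with a finite filtration whose graded pieces are Cohen--Macaulay of grade $c$ is itself Cohen--Macaulay of grade $c$; so by the first paragraph it suffices to prove that $\Hom_\F(\sigma,\F)$ is Cohen--Macaulay of grade $2f$ for each irreducible constituent $\sigma$ of $\pi$. For a principal series $\sigma=\Ind_{B(K)}^{\GL_2(K)}(\chi)$ one restricts to $I_1$: since $I_1$ acts on $\mathbb P^1(\cO_K)\cong B(K)\backslash\GL_2(K)$ with exactly two orbits, whose stabilizers $I_1\cap B(K)$ and $I_1\cap{}^wB(K)$ are pro-$p$ of dimension $3f$ (so the relevant restrictions of $\chi$ are trivial), one obtains $\Hom_\F(\sigma,\F)\cong\Lambda/\Lambda\mathfrak a_1\oplus\Lambda/\Lambda\mathfrak a_2$ as $\Lambda$-modules, where $\mathfrak a_i$ is generated by (the image of) a regular system of parameters of the regular local ring $\F\bbra{H_i/Z_1}$, $H_i\in\{I_1\cap B(K),I_1\cap{}^wB(K)\}$; these $2f$ elements form a regular normalizing sequence in $\Lambda$, so each summand is Cohen--Macaulay of grade $2f$. (This principal-series case is also contained in the split case, Corollary~\ref{cor:finite-length}(ii), as the two principal series occurring here depend only on $\chi_1,\chi_2$.) For the supersingular constituents, Cohen--Macaulayness of grade $2f$ of their duals is part of Theorem~\ref{thm:fin-length-nonsplit}. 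This proves \ref{it:cohenbisintro}.

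For \ref{it:K1intro}, note first that if a smooth $\GL_2(K)$-representation $V$ is generated by $V^{K_1}$ then so is every quotient of $V$, since the image of $V^{K_1}$ generates that quotient and lies in its $K_1$-invariants. As any subquotient of $\pi$ is a quotient of a subrepresentation of $\pi$, it suffices to treat the subrepresentations of $\pi$; by uniseriality these form a chain $0=V_0\subsetneq V_1\subsetneq\cdots\subsetneq V_n=\pi$ with each $V_j/V_{j-1}$ an irreducible constituent of $\pi$. By \cite[Thm.~1.3.8]{BHHMS2} the top term $\pi=V_n$ is generated by its $\GL_2(\cO_K)$-socle, hence --- irreducible smooth $\GL_2(\cO_K)$-representations being trivial on $K_1$ --- by $\pi^{K_1}$; likewise $V_1$, being irreducible, is generated by $V_1^{K_1}$. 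For $2\le j\le n-1$ one argues by induction on $j$: it is enough that the image of $V_j^{K_1}$ in the irreducible quotient $V_j/V_{j-1}$ be nonzero, for then $V_j/V_{j-1}$ is generated by that image and $V_{j-1}$ by $V_{j-1}^{K_1}\subseteq V_j^{K_1}$, whence $V_j$ is generated by $V_j^{K_1}$. Now that image is the kernel of the connecting map $(V_j/V_{j-1})^{K_1}\to H^1(K_1,V_{j-1})$, and $(V_j/V_{j-1})^{K_1}\ne0$ since $V_j/V_{j-1}$ is an infinite-dimensional irreducible (supersingular) representation; so the point to rule out is that this connecting map be injective --- equivalently, that the supersingular constituent $V_j/V_{j-1}$ fail to contribute to the $\GL_2(\cO_K/p)$-representation $\pi^{K_1}$. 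This non-injectivity, for the extensions occurring inside $\pi'$, is precisely what the description of $(\pi')^{K_1}$ obtained when proving Theorem~\ref{thm:fin-length-nonsplit} yields, and it is here that the hypothesis $r=1$ intervenes; granting it, \ref{it:K1intro} follows.

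The main obstacle, in both parts, is thus the same: controlling the fine structure of the uniserial middle piece $\pi'$ --- its $\Lambda$-module dual for \ref{it:cohenbisintro}, its $K_1$-invariants for \ref{it:K1intro} --- which rests on the multiplicity-one analysis forming the technical heart of the paper. Everything else (uniseriality $\Rightarrow$ filtration by constituents, the $\Ext$-propagation, the quotient/socle observations for $K_1$-generation) is soft.
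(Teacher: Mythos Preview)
Your argument is circular. You take Theorem~\ref{thm:mainintro}\ref{it:nonsplitintro} (uniseriality of $\pi$) as input, but in the paper that statement is deduced from Theorem~\ref{thm:fin-length-nonsplit}(ii), whose proof explicitly invokes Theorem~\ref{thm:fin-length-nonsplit}(i), which \emph{is} Theorem~\ref{thm:mainterintro}\ref{it:K1intro}. So using uniseriality to establish $K_1$-generation reverses the logical flow. Likewise for~\ref{it:cohenbisintro}: you say Cohen--Macaulayness of the supersingular constituents ``is part of Theorem~\ref{thm:fin-length-nonsplit}'', but that theorem contains no Cohen--Macaulayness assertion; the relevant statement is Corollary~\ref{cor:subquot}, which already gives Cohen--Macaulayness for \emph{every} subquotient of $\pi$, so your reduction to irreducible constituents buys nothing.

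The paper's route is different and does not pass through the irreducible constituents at all. The core input is Theorem~\ref{thm:conj2}, which pins down $\pi_1^{K_1}=D_0(\rhobar)_{\le i_0}$ for any subrepresentation $\pi_1\subset\pi$; combined with Yitong Wang's formula (Theorem~\ref{thm:yitong}) and the self-duality of $\pi$, one shows directly in Proposition~\ref{prop:nonsplit-I1} that $\gr_\m(\pi_1^\vee)\cong N_1^{i_0}$ for an explicit Cohen--Macaulay $\gr(\Lambda)$-module depending only on $i_0=i_0(\pi_1)$. Cohen--Macaulayness of $\pi_1^\vee$ and of all subquotient duals then follows (Corollary~\ref{cor:subquot}), giving~\ref{it:cohenbisintro}. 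For~\ref{it:K1intro} one argues: if $\pi_1'\defeq\langle\GL_2(K)\cdot\pi_1^{K_1}\rangle\subset\pi_1$, then $\pi_1'^{K_1}=\pi_1^{K_1}$, hence $i_0(\pi_1')=i_0(\pi_1)$, hence $\gr_\m(\pi_1^\vee)\twoheadrightarrow\gr_\m(\pi_1'^\vee)$ is an isomorphism (both equal $N_1^{i_0}$), hence $\pi_1'=\pi_1$. Only \emph{after} this does one deduce uniseriality. Your inductive step for~\ref{it:K1intro} (nonvanishing of the image of $V_j^{K_1}$ in $V_j/V_{j-1}$) is exactly the kind of statement that the paper only obtains as a consequence of the full description of $\pi_1^{K_1}$ in Theorem~\ref{thm:conj2}; it is not something one can read off ``the description of $(\pi')^{K_1}$'' without that work.
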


The proofs in the non-semisimple case are significantly harder and usually much more technical than in the split case. Part \ref{it:cohenbisintro} is contained in Corollary \ref{cor:subquot} and part \ref{it:K1intro} is Theorem \ref{thm:fin-length-nonsplit}(i).

Theorem \ref{thm:mainterintro} is shorter than Theorem \ref{thm:mainbisintro} because, in the nonsplit case, if $\pi_1\subseteq \pi_2$ are nonzero subrepresentations of $\pi$ the maps $\pi_2^{I_1}\rightarrow(\pi_2/\pi_1)^{I_1}$ and $\pi_2^{K_1}\rightarrow(\pi_2/\pi_1)^{K_1}$ are not surjective in general (even for $f=1$). 
{Nonetheless, in \cite{BHHMS5} we will completely determine the (semisimple) $I$-representation $(\pi_2/\pi_1)^{I_1}$   and the $\GL_2(\Fq)$-representation $(\pi_2/\pi_1)^{K_1}$.}  We will also determine $\dim_{\F\ppar{X}}D_\xi^\vee(\pi_2/\pi_1)$.

Under the same assumptions ($\rbar$ generic, $r=1$) we prove several other results that are not stated above. 
For instance, \emph{just assuming $\rbar$ generic}, we completely determine $\gr_\m(\pi^{\vee})$ \ as \ a \ graded \ $\gr_\m(\Lambda)$-module, \ where \ $\pi^{\vee}\defeq \Hom_{\F}(\pi,\F)$ denotes the linear dual of $\pi$ which is a finitely generated $\Lambda$-module,  $\gr_\m(\Lambda)\defeq \bigoplus_{n\geq 0}\m^n/\m^{n+1}$ \ and \ $\gr_\m(\pi^{\vee})\defeq \bigoplus_{n\geq 0}\m^n\pi^\vee/\m^{n+1}\pi^\vee$ (see Theorem \ref{thm:CMC} below). This is a key result. Indeed, on the one hand it makes it possible to determine $\gr_\m((\pi_2/\pi_1)^{\vee})$ for any subrepresentations $\pi_1\subseteq \pi_2$ of $\pi$ (Corollary \ref{cor:finite-length}(ii) for $\rhobar$ split, \cite{BHHMS5} for $\rhobar$ nonsplit with suitable genericity).
On the other hand, and most crucially, knowing $\gr_\m(\pi^{\vee})$ is the starting point of \emph{all} the important proofs of this work as we explain now.

\subsection{Some sketches of proofs}\label{sec:sketchintro}

One important question left open in \cite[\S~3.3.2]{BHHMS2} was the precise structure of the graded $\gr_\m(\Lambda)$-module $\gr_\m(\pi^{\vee})$ (see \cite[Rk.~3.72(i)]{BHHMS2}). We answer this question in the next theorem. We need more notation. Recall from \cite[\S~3.1]{BHHMS2} that $\gr_{\m}(\Lambda)\cong \bigotimes_{j\in\{0,\dots,f-1\}}\F\langle y_j,z_j,h_j\rangle$ with relations $[y_j,z_j]=h_j$, $[h_j,z_i]=[y_i,h_j]=0$ for all $i,j\in\{0,\dots,f-1\}$. We let \[R \defeq \gr_{\m}(\Lambda)/(h_j : 0 \le j \le f-1) \cong \F[y_j,z_j : 0 \le j \le f-1]\]
which is a (graded) commutative polynomial ring. We let $H\defeq \begin{pmatrix}\F_q^\times&0\\0&\F_q^\times\end{pmatrix}\cong I/I_1$, which naturally acts on $\Lambda$, $\gr_\m(\Lambda)$ and $R$. Recall that the irreducible continuous representations of $I$ over $\F$ factor as characters $\chi:H\rightarrow \F^\times$. In \cite[Def.~3.57]{BHHMS2} to each $\chi\in \JH(\pi^{I_1})$ we associated an ideal $\mathfrak{a}(\chi)$ of $R$ (containing $y_jz_j$ for all $j\in\{0,\dots,f-1\}$) which is denoted by $\fa(\lambda)$ in the text and recalled in (\ref{eq:id:al}) below.

\begin{thm}[Theorem \ref{thm:CMC}]\label{thm:CMCintro}
Assume that $\rbar$ is generic. 
\begin{enumerate}
\item\label{it:grisointro} We have an isomorphism of graded $\gr_{\m}(\Lambda)$-modules with compatible $H$-action
\[\gr_\m(\pi^{\vee})\cong \Bigg(\bigoplus_{\chi\in\JH(\pi^{I_1})}\chi^{-1}\otimes_{\F} \frac{R}{\mathfrak{a}(\chi)}\Bigg)^{\oplus r}.\]
\item\label{it:dualintro} The $\gr_{\m}(\Lambda)$-module $\gr_\m(\pi^{\vee})$ is Cohen--Macaulay of grade $2f$.
\end{enumerate}
\end{thm}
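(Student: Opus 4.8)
The plan is to work entirely at the level of the $I_1$-action and the associated graded, leveraging what is already known about $\pi^{I_1}$ and the filtration by powers of $\m$. First I would recall that genericity of $\rbar$ forces $\pi^{I_1}$, as a representation of $I/I_1 \cong H$, to be multiplicity free of a precisely known character set $\JH(\pi^{I_1})$, each character appearing with multiplicity $r$ once we account for the ``multiplicity'' constant; this is the input from \cite{BHHMS1} together with \cite[\S~3.3.1]{BHHMS2}. The key structural fact, already isolated in \cite[\S~3.3.2]{BHHMS2}, is that $\pi^\vee$ is generated over $\Lambda$ by the dual of $\pi^{I_1}$ (Nakayama: $\pi^\vee/\m\pi^\vee \cong (\pi^{I_1})^\vee$), so $\gr_\m(\pi^\vee)$ is generated in degree $0$ by $(\pi^{I_1})^\vee$ as a $\gr_\m(\Lambda)$-module. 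Hence there is a surjection of graded $\gr_\m(\Lambda)$-modules (with $H$-action)
\[
\bigoplus_{\chi \in \JH(\pi^{I_1})} \chi^{-1} \otimes_\F \gr_\m(\Lambda)/(\text{something}) \;\twoheadrightarrow\; \gr_\m(\pi^\vee)^{\oplus 1/r},
\]
and the content of the theorem is that the kernel is exactly cut out by the ideals $\fa(\chi)$ together with the relations $h_j$. I would therefore split the argument into an upper bound and a lower bound on $\gr_\m(\pi^\vee)$.

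For the upper bound (surjectivity onto $\bigoplus \chi^{-1}\otimes R/\fa(\chi)$ having been arranged, this is the statement that the displayed map factors through it), the point is to show that the elements $h_j$ and the generators of $\fa(\chi)$ annihilate the corresponding component of $\gr_\m(\pi^\vee)$. The vanishing of the $h_j$-action is where the commutative ring $R$ enters: one shows that on $\gr_\m(\pi^\vee)$ the operators $h_j$ act by zero, which should follow from a Hecke-operator / weight-cycling computation showing that the $I_1$-socle filtration of $\pi$ is ``$h$-split'' — concretely, that the relevant $U$-operators and $\Gamma$-operators commute up to higher filtration. The vanishing of $\fa(\chi)$ is essentially the definition of $\fa(\chi)$ via the combinatorics of Jordan--Hölder factors of $\GL_2(\O_K)$-types and the way they sit in $\pi$; here I would cite \cite[\S~3.3.1--3.3.2]{BHHMS2} and reduce to a bookkeeping statement about which monomials $y_j^a z_j^b$ can be nonzero on a given $\chi$-isotypic vector, governed by the diagram of $\pi$.

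For the lower bound — that the surjection is an isomorphism — the cleanest route is a dimension count via Hilbert series, or better, a grade/depth argument. Each $R/\fa(\chi)$ is, by inspection of $\fa(\chi)$ (it contains all $y_j z_j$, so it is a Stanley--Reisner-type quotient), Cohen--Macaulay over $R$ of Krull dimension $f$, hence of grade $2f$ over the $3f$-dimensional ring $\gr_\m(\Lambda)$; a finite direct sum of such is again Cohen--Macaulay of grade $2f$. On the other side, $\pi$ is admissible so $\pi^\vee$ is a finitely generated $\Lambda$-module, and by \cite[Prop.\ A.8]{HuWang2} (or the duality/Poincaré argument recalled in the excerpt) $\pi^\vee$ is Cohen--Macaulay of grade $2f$ over $\Lambda$; the general fact that grade is preserved under passing to $\gr_\m$ for a Cohen--Macaulay module over a filtered ring with Cohen--Macaulay associated graded then gives that $\gr_\m(\pi^\vee)$ has grade $\ge 2f$, and in fact is Cohen--Macaulay of grade $2f$. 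Now a surjection between two Cohen--Macaulay modules of the same grade whose target has the predicted Hilbert series and whose source is known to have no embedded behaviour forces the kernel, if nonzero, to again have grade $2f$; comparing Hilbert series (the leading term, i.e.\ the multiplicity, matches because both sides have the same $I_1$-invariants, namely $\bigoplus_\chi \chi^{-1}$ in degree $0$ with multiplicity $r$, and the same Krull dimension $f$ coming from the Gelfand--Kirillov dimension of $\pi$ computed in \cite{BHHMS1}) rules this out, so the kernel is zero. This simultaneously proves part \ref{it:grisointro} and part \ref{it:dualintro}.

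\textbf{Main obstacle.} The hard part is the upper bound, specifically proving that the $h_j$ act by zero on $\gr_\m(\pi^\vee)$ and that no monomials outside $\fa(\chi)$ die. This is not formal: it requires a genuine computation with the $\GL_2(\O_K)$-action on $\pi$ — propagating information from the $\GL_2(\O_K)$-socle through the $\m$-adic filtration using the explicit description of extensions between Serre weights and the action of the relevant Hecke operators, and using multiplicity one ($r=1$ in the delicate places, though \ref{it:grisointro} is stated for general $r$ via a ``multiplicity $r$'' free summand) to make weight cycling tractable. I expect this to consume the bulk of the argument, with the Cohen--Macaulay/Hilbert-series comparison being comparatively soft once the associated graded is pinned down.
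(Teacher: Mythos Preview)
Your identification of the hard part is inverted. The surjection $N \twoheadrightarrow \gr_\m(\pi^\vee)$ (your ``upper bound'') is already established in \cite[Thm.~3.3.2.1]{BHHMS2}; the vanishing of the $h_j$ and the $\fa(\chi)$-relations is the \emph{old} content, not the new one. The entire difficulty lies in the lower bound, and your argument for it has a genuine gap: Cohen--Macaulayness of $\pi^\vee$ over $\Lambda$ does \emph{not} imply Cohen--Macaulayness of $\gr_\m(\pi^\vee)$ over $\gr_\m(\Lambda)$. The implication in \cite[Prop.~III.2.2.4]{LiOy} goes the other way, and the paper's Remark~\ref{rem:CMC} makes exactly this point: that $\pi^\vee$ is CM was already known (from \cite[Prop.~A.8]{HuWang2}, and only for $r=1$ at that), whereas the CM-ness of $\gr_\m(\pi^\vee)$ is precisely what is new here. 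Without it, your Hilbert-series/multiplicity comparison cannot get off the ground, since you have no a priori control on the cycle of $\gr_\m(\pi^\vee)$ beyond the inequality $\mathcal{Z}(N) \ge \mathcal{Z}(\gr_\m(\pi^\vee))$ coming from the surjection.

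The paper's actual mechanism is completely different and you have missed the key input: assumption~\ref{it:assum-iv}, namely $\dim_\F \Ext^i_{I/Z_1}(\chi,\pi) = \binom{2f}{i}r$ for all $\chi \in \JH(\pi^{I_1})$. This is used to prove that the natural map $\Tor_i^\Lambda(\F,\pi^\vee) \to \Tor_i^\Lambda(\F,\tau^\vee)$ is injective for $i \le 2$ (Proposition~\ref{prop:Tor-inj}), where $\tau$ is an explicit finite-dimensional $I$-subrepresentation of $\pi$ whose graded dual is $N/\mathcal{I}N$. From this one builds a filtered complex $P_\bullet$ resolving $\pi^\vee$ with $H_0(\gr P_\bullet) \cong N$ and $H_1(\gr P_\bullet) = 0$, giving $\gr_F(\pi^\vee) \cong N$ for a possibly \emph{different} good filtration $F$; one then concludes via the filtration-independence of characteristic cycles. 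None of this structure --- the auxiliary $\tau$, the Tor comparison, the spectral sequence argument, or assumption~\ref{it:assum-iv} --- appears in your plan, and I do not see how to make a direct Hilbert-series argument work without it.
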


In particular the graded $\gr_{\m}(\Lambda)$-module $\gr_\m(\pi^{\vee})$ {together with its compatible $H$-action} is \emph{local}, i.e.~depends only on $\rhobar$, and even just on $\rhobar\vert_{I_K}$. We remark that Theorem \ref{thm:CMCintro} allows us to compute the entire Hilbert polynomial of $\gr_{\m}(\pi^{\vee})$ (cf.~\cite{BHHMS5}).
Note that, although we know the $\gr_{\m}(\Lambda)$-module $\gr_\m(\pi^{\vee})$ thanks to Theorem \ref{thm:CMCintro}\ref{it:grisointro}, we still do not understand the $\Lambda$-module $(\pi\vert_I)^\vee$.  

We sketch the proof of Theorem \ref{thm:CMCintro} (which is given in \S~\ref{sec:cohen-macaulay}, especially \S~\ref{sec:proof-theorem}). Denote by $N$ the $\gr_{\m}(\Lambda)$-module on the right-hand side of \ref{it:grisointro}. First \ref{it:dualintro} follows from \ref{it:grisointro}, since $N$ is Cohen--Macaulay by a direct computation, hence the main issue is \ref{it:grisointro}. If $M$ is any finitely generated $R$-module which is killed by the ideal $(y_jz_j : 0\leq j\leq f-1)$ of $R$ (for instance $N$), we define its characteristic cycle (\cite[Def.~3.79]{BHHMS2})
\begin{equation}\label{eq:cycleintro}
\mathcal{Z}(M)\defeq \sum_{\q}{\rm length}(M_{\q})[\q]\ \in \bigoplus_{\q}\Z[\q],
\end{equation}
where $\q$ runs through the minimal prime ideals of $R/(y_jz_j : 0\leq j\leq f-1)$.
As $N$ is Cohen--Macaulay, any nonzero $\gr_{\m}(\Lambda)$-submodule of $N$ has a nonzero cycle (i.e.~$N$ is pure). Since by \cite[Thm.~3.67]{BHHMS2} we already have a surjection of graded $\gr_{\m}(\Lambda)$-modules $N\twoheadrightarrow \gr_\m(\pi^{\vee})$ (which implies $\mathcal{Z}(N)\geq \mathcal{Z}(\gr_\m(\pi^{\vee}))$ in $\bigoplus_{\q}\Z[\q]$), to prove \ref{it:grisointro} it is enough to prove $\mathcal{Z}(N)= \mathcal{Z}(\gr_\m(\pi^{\vee}))$, as $\mathcal{Z}(-)$ is additive on short exact sequences (\cite[Lemma 3.80]{BHHMS2}). To show this, we construct a resolution of the $\Lambda$-module $(\pi\vert_I)^\vee$ by a complex of filtered $\Lambda$-modules $P_\bullet$ with compatible $H$-action such that the associated complex $\gr(P_\bullet)$ of $\gr_{\m}(\Lambda)$-modules satisfies $H_0(\gr(P_\bullet))\cong N$ and $H_1(\gr(P_\bullet))=0$. Such a filtered complex gives rise to a spectral sequence $E_i^{s}\Longrightarrow H_i(P_{\bullet})$ for $i,s\geq 0$ (\cite[\S~III.1]{LiOy}) and using $H_1(\gr(P_\bullet))=0$ we prove that $E_0^\infty=E_0^1$. Since $E_0^1=H_0(\gr(P_\bullet))\cong N$ and $E_0^\infty\cong \gr(\pi^\vee)$, where $\gr(\pi^\vee)$ is here computed for the quotient filtration on $\pi^\vee$ induced by the surjection $P_0\twoheadrightarrow \pi^\vee$, we deduce $N\cong \gr(\pi^\vee)$, which implies $\mathcal{Z}(N)= \mathcal{Z}(\gr(\pi^{\vee}))$. But we have $\mathcal{Z}(\gr(\pi^{\vee}))= \mathcal{Z}(\gr_\m(\pi^{\vee}))$ by \cite[Lemma 3.81]{BHHMS2}, and thus $\mathcal{Z}(N)=\mathcal{Z}(\gr_\m(\pi^{\vee}))$. The construction of $P_\bullet$ with its properties is quite involved and in particular crucially uses the following result (where the $\Ext^i_{I/Z_1}$ are computed in the category of smooth representations of $I/Z_1$ over $\F$).

\begin{prop}[\S~\ref{sec:verify-assumpt-iv}]
For any smooth character $\chi:I\ra \F^{\times}$ and any $i \ge 0$, $\Ext^i_{I/Z_1}(\chi,\pi)\neq0 $ only if $\chi\in \JH(\pi^{I_1})$, in which case $\dim_{\F}\Ext^i_{I/Z_1}(\chi,\pi)=\binom{2f}{i}r$.
\end{prop}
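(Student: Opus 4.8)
The plan is to relate $\Ext^i_{I/Z_1}(\chi,\pi)$ to the (co)homology of the pro-$p$ Iwahori subgroup $I_1/Z_1$ via the Hochschild--Serre spectral sequence for $1\to I_1/Z_1\to I/Z_1\to H\to 1$. Since $H$ has order prime to $p$, this spectral sequence degenerates and gives $\Ext^i_{I/Z_1}(\chi,\pi)\cong \big(\Ext^i_{I_1/Z_1}(\mathbf 1,\pi)\otimes\chi^{-1}\big)^H = \big(H^i(I_1/Z_1,\pi)\otimes\chi^{-1}\big)^H$, where I abuse notation writing $\chi$ also for its restriction to $H$. So everything reduces to understanding the $H$-representation $H^i(I_1/Z_1,\pi)$, and the claim becomes: $H^i(I_1/Z_1,\pi)$, as a smooth $H$-representation, is a direct sum of $\binom{2f}{i}r$ copies of each character $\chi\in\JH(\pi^{I_1})$ and contains no other characters.

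Next I would pass to the dual side. By local duality / the self-injectivity structure of the Iwasawa algebra $\Lambda=\F\bbra{I_1/Z_1}$ (a noncommutative regular local ring of dimension $3f$, Auslander regular, Gorenstein), continuous cohomology $H^i(I_1/Z_1,\pi)$ is dual to $\Ext^i_{\Lambda}(\pi^\vee,\Lambda)$ (up to the usual dimension shift and twist, which I would track carefully; the grade shift is by $3f$, but the precise normalization only affects bookkeeping). Now the crucial input is Theorem~\ref{thm:CMCintro}: $\gr_\m(\pi^\vee)\cong\big(\bigoplus_{\chi\in\JH(\pi^{I_1})}\chi^{-1}\otimes_\F R/\mathfrak a(\chi)\big)^{\oplus r}$ is Cohen--Macaulay of grade $2f$ over $\gr_\m(\Lambda)$. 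A standard argument (cf.\ \cite[\S 3.3]{BHHMS2}) shows $\Ext^i_{\gr_\m(\Lambda)}(\gr_\m M,\gr_\m\Lambda)$ is a subquotient of $\gr\big(\Ext^i_\Lambda(M,\Lambda)\big)$ for the induced filtration; combined with Cohen--Macaulayness this forces $\Ext^i_\Lambda(\pi^\vee,\Lambda)=0$ unless $i=2f$, and for $i=2f$ its graded module is controlled by $\Ext^{2f}_{\gr_\m(\Lambda)}(N,\gr_\m\Lambda)$ where $N$ is the right-hand side above. Hence $H^i(I_1/Z_1,\pi)=0$ for $i\ne 2f$, which already gives the stated vanishing for $i\notin\{0,2f\}$ (and for $i=0$, $\pi^{I_1}$ is by definition $\bigoplus\chi$ with multiplicity $r$, giving $\binom{2f}{0}r=r$, matching).

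For the dimension count at $i=2f$, I would compute $\Ext^{2f}_{\gr_\m(\Lambda)}\!\big(R/\mathfrak a(\chi),\gr_\m\Lambda\big)$ directly with its $H$-action. Writing $\gr_\m\Lambda\cong\bigotimes_{j}\F\langle y_j,z_j,h_j\rangle$ and $R=\gr_\m\Lambda/(h_j)$, and recalling $\mathfrak a(\chi)$ contains every $y_jz_j$, one has $R/\mathfrak a(\chi)$ is, up to the $H$-twist $\chi^{-1}$, a tensor product over $j$ of modules of the form $\F[y_j]$, $\F[z_j]$, or $\F[y_j]\oplus_{\F}\F[z_j]$ (i.e.\ $\F[y_j,z_j]/(y_jz_j)$) according to the combinatorial recipe of \cite[Def.~3.3.1.1]{BHHMS2}; in all cases it is Cohen--Macaulay of codimension $2f$ inside the $3f$-dimensional ring. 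The top Ext of each such tensor factor is a rank-one free module over the surviving variables, carrying a specific $H$-character; the Künneth/Koszul computation shows the resulting $H$-representation is \emph{again} a single character — and by the duality compatibility of the recipe in \cite[\S 3.3]{BHHMS2} it is exactly $\chi$ (the $\chi^{-1}$ twist being cancelled by the dual). The factor $\binom{2f}{i}=\binom{2f}{2f}=1$ here is forced; the general $\binom{2f}{i}$ for intermediate $i$ does not arise because those Ext groups vanish — so I would double-check whether the Proposition as stated really intends $\binom{2f}{i}$ to be the \emph{total} dimension $\dim_\F\Ext^i_{I/Z_1}(\chi,\pi)$ summed appropriately, or whether the nonvanishing range is all of $0\le i\le 2f$ after all.

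Re-examining this last point is in fact where I expect the real subtlety, and it changes the strategy: the formula $\dim_\F\Ext^i_{I/Z_1}(\chi,\pi)=\binom{2f}{i}r$ with nonvanishing for \emph{all} $0\le i\le 2f$ is \emph{not} compatible with $\pi^\vee$ being Cohen--Macaulay of grade $2f$ over $\Lambda$ unless one computes Ext over $I/Z_1$ (equivalently $I_1/Z_1$), whose group algebra is \emph{not} $\Lambda$ but rather involves the extra torus directions. The resolution is that $\Ext^i_{I_1/Z_1}(\mathbf 1,\pi)=H^i(I_1/Z_1,\pi)$ must be computed, and by the structure of $I_1/Z_1$ as an extension of $\Z_p^{2f}$ (the ``diagonal'' and one off-diagonal direction, modulo center) by a uniform group, there is a Koszul-type spectral sequence whose $E_1$-page is $\bigwedge^\bullet(\F^{2f})\otimes H^*(\text{smaller group})$; since the smaller cohomology is concentrated in one degree (Cohen--Macaulayness of $\pi^\vee$ over $\Lambda$), the abutment in degree $i$ has dimension $\binom{2f}{i}\cdot(\text{that one-degree dimension})$, and the one-degree dimension is $r$ per character $\chi\in\JH(\pi^{I_1})$ by Theorem~\ref{thm:CMCintro}\ref{it:grisointro} applied to the top Ext. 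So the genuine plan is: (1) Hochschild--Serre to reduce to $H^*(I_1/Z_1,\pi)$ as an $H$-rep; (2) use the subgroup filtration $Z_1\subset\langle\text{diag},\text{antidiag}\rangle Z_1/Z_1\subset I_1/Z_1$ (or directly the relation between $H^*(I_1/Z_1,-)$ and $\Ext^*_\Lambda(-,\Lambda)$ versus $\Ext^*$ over the bigger algebra) to get the Koszul $\binom{2f}{i}$ factor; (3) input Theorem~\ref{thm:CMCintro} and its Cohen--Macaulayness to pin down the single surviving degree and its $H$-character to be exactly $\bigoplus_{\chi\in\JH(\pi^{I_1})}\chi^{\oplus r}$. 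The main obstacle is step~(2): making the Koszul-type identification precise and $H$-equivariant, i.e.\ correctly identifying which $2f$ of the $3f$ generators of $\m$ (equivalently which directions in $I_1/Z_1$) contribute the exterior algebra, and checking that the $H$-character of $\bigwedge^{i}$ of that $2f$-dimensional space is trivial (so that the answer is $\binom{2f}{i}$ copies of the \emph{same} characters $\chi$ for every $i$) — this is a direct but delicate computation with the $H$-weights of $y_j,z_j,h_j$ recorded in \cite[\S 3.1]{BHHMS2}.
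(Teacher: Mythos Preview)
Your proposal has a fundamental circularity problem. The proposition you are trying to prove is precisely assumption~\ref{it:assum-iv} of \S\ref{sec:theorem}, and Theorem~\ref{thm:CMC} (your ``Theorem~\ref{thm:CMCintro}'') \emph{assumes}~\ref{it:assum-iv} among its hypotheses. In the paper's logical structure, \ref{it:assum-iv} is an \emph{input} to the description of $\gr_\m(\pi^\vee)$, not a consequence of it; see the hypothesis list at the start of \S\ref{sec:theorem} and the sentence in \S\ref{sec:sketchintro} immediately preceding the proposition you are proving. So you cannot invoke the isomorphism $\gr_\m(\pi^\vee)\cong N$ here.

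Even setting circularity aside, there is a second gap: you conflate $\Ext^i_\Lambda(\pi^\vee,\Lambda)$ (which vanishes for $i\ne 2f$ by Cohen--Macaulayness) with $\Ext^i_\Lambda(\pi^\vee,\F)\cong H^i(I_1/Z_1,\pi)^\vee$ (which is what you actually need, cf.\ Remark~\ref{rem:hyp4}). These are different groups; Cohen--Macaulayness of grade $2f$ only tells you that $\pi^\vee$ has projective dimension $2f$, hence $\Tor_i^\Lambda(\F,\pi^\vee)$ can be nonzero for all $0\le i\le 2f$, but it does \emph{not} force the dimensions to be $\binom{2f}{i}\dim_\F\Tor_0$. (That pattern holds for $N$ by an explicit Koszul-type computation, Lemma~\ref{lem:Tor-N}, but transferring it to $\pi^\vee$ is exactly the content of Theorem~\ref{thm:CMC}.) Your attempted ``Koszul spectral sequence from a subgroup filtration of $I_1/Z_1$'' does not supply the missing structure.

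The paper's proof (Proposition~\ref{prop:dim-Ext}) takes a completely different, \emph{global} route: it uses the patched module $\mathbb{M}_\infty$ and its flatness over $R_\infty$ to identify $\Ext^i_{I/Z_1}(\chi,\pi)^\vee\cong \Tor_i^{\overline R_\infty}(\F,M_\chi)$ where $M_\chi=M_\infty(\Ind_I^{\GL_2(\cO_K)}\chi)$ (Lemma~\ref{lem:isom-Tor}). If $\chi\notin\JH(\pi^{I_1})$ then $M_\chi=0$; otherwise $M_\chi$ is free of rank $r$ over $\overline R_\infty/\mathcal I_\chi$, and $\mathcal I_\chi$ is generated by a regular sequence of length $2f$ because the relevant deformation ring is a complete intersection of the correct codimension. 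The Koszul complex then gives $\dim_\F\Tor_i^{\overline R_\infty}(\F,M_\chi)=\binom{2f}{i}r$. The $\binom{2f}{i}$ thus comes from the geometry of the deformation ring, not from the group structure of $I_1/Z_1$.
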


Theorem \ref{thm:CMCintro} turns out to be a crucial ingredient in the proof that $\pi$ is of finite length when $r=1$ and $\rhobar$ is reducible. We assume these two hypothesis from now on, and we present below a unified sketch of proof in the two cases $\rhobar$ split and $\rhobar$ nonsplit, though in the text we found it preferable to separate the two cases (mainly because the nonsplit case is much more technical).

We fix a nonzero subrepresentation $\pi_1\subseteq \pi$ and let $\pi_2\defeq \pi/\pi_1$. Hence we have an exact sequence of $\Lambda$-modules with $H$-action $0\rightarrow \pi_2^\vee \rightarrow \pi^\vee \rightarrow \pi_1^\vee\rightarrow 0$. The $\m$-adic filtration on $\pi^\vee$ induces a filtration on $\pi_2^\vee$ and we denote by $\gr(\pi_2^\vee)$ the associated $\gr_{\m}(\Lambda)$-module. Just like the definition of the $\gr_{\m}(\Lambda)$-module $N$ in Theorem \ref{thm:CMCintro}\ref{it:grisointro} only uses the $H$-representation $\pi^{I_1}$ (and \emph{a fortiori} only the $\GL_2(\Fq)$-representation $\pi^{K_1}$), we define an explicit quotient $N_1$ of $N$ which only depends on the $\GL_2(\Fq)$-representation $\pi_1^{K_1}$. In the split case one has
\begin{equation}\label{eq:splitN1}
N_1=\bigoplus_{\chi\in\JH(\pi_1^{I_1})}\chi^{-1}\otimes_{\F} \frac{R}{\mathfrak{a}(\chi)},
\end{equation}
in particular $N_1$ is then a direct summand of $N$ and only depends on the $H$-representation $\pi_1^{I_1}$, but this is no longer true in the nonsplit case if $\pi_1\ne \pi$ (see Step $2$ in the proof of Proposition \ref{prop:nonsplit-I1} together with (\ref{eq:fa-1}) and Definition \ref{def:IJideals}). Defining $N_2\defeq \ker(N\twoheadrightarrow N_1)$, we prove that there  is a commutative diagram with exact rows of graded $\gr_{\m}(\Lambda)$-modules (see Step $1$ in the proof of Proposition \ref{prop:split-I1} for $\rhobar$ split, Step $2$ in the proof of Proposition \ref{prop:nonsplit-I1} for $\rhobar$ nonsplit):
\begin{equation}\label{eq:diagramintro}
\begin{gathered}
  \xymatrix{0\ar[r]& \gr(\pi_2^{\vee})\ar[r]& \gr_{\m}(\pi^{\vee})\ar[r]& \gr_{\m}(\pi_1^{\vee})\ar[r]&0 \\
  0\ar[r]& N_2\ar[r]\ar@{^{(}->}[u]& N\ar[r]\ar[u]^\cong & N_1\ar[r]\ar@{->>}[u]& 0}
  \end{gathered}
\end{equation}
with injective (resp.\ surjective) vertical map on the left (resp.\ right) and where the middle isomorphism is Theorem \ref{thm:CMCintro}\ref{it:grisointro}.

The next step is the following theorem:

\begin{thm}[Proposition \ref{prop:split-I1}, Proposition \ref{prop:nonsplit-I1}]\label{thm:isointro}
The left vertical injection in (\ref{eq:diagramintro}), hence also the right vertical surjection, are isomorphisms. In particular $\gr_{\m}(\pi_1^{\vee})$, $\gr(\pi_2^{\vee})$ are Cohen--Macaulay $\gr_{\m}(\Lambda)$-modules of grade $2f$, and $\pi_1^{\vee}$, $\pi_2^{\vee}$ are Cohen--Macaulay $\Lambda$-modules of grade $2f$.
\end{thm}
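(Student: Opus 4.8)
The plan is to deduce the statement from the characteristic-cycle comparison already set up in the excerpt, so the heart of the matter is showing the left vertical map in \eqref{eq:diagramintro} is surjective (it is injective by construction). First I would record the consequences of Theorem~\ref{thm:CMCintro}: the middle module $N\cong \gr_\m(\pi^\vee)$ is Cohen--Macaulay of grade $2f$, hence pure (every nonzero graded submodule has nonzero characteristic cycle), and by additivity of $\mathcal Z(-)$ on short exact sequences we get $\mathcal Z(N)=\mathcal Z(N_2)+\mathcal Z(N_1)$. The module $N_1$ is an explicit quotient of $N$ built only from the $\GL_2(\Fq)$-representation $\pi_1^{K_1}$ (in the split case the concrete formula \eqref{eq:splitN1}); in particular $N_1$ and $N_2$ are themselves Cohen--Macaulay of grade $2f$ by the same direct computation that handles $N$. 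So all four corners of the bottom row, and the top-middle corner, are pure of dimension $\dim R - 2f = f$.

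Next I would compare cycles across the diagram. The right vertical map $N_1\twoheadrightarrow \gr_\m(\pi_1^\vee)$ is surjective, so $\mathcal Z(N_1)\ge \mathcal Z(\gr_\m(\pi_1^\vee))$; the left vertical map $\gr(\pi_2^\vee)\hookrightarrow \gr_\m(\pi^\vee)$ is injective with image landing inside the submodule whose quotient is $\gr_\m(\pi_1^\vee)$, so $\mathcal Z(\gr(\pi_2^\vee))\le \mathcal Z(N_2)$. Adding, and using $\mathcal Z(\gr_\m(\pi^\vee))=\mathcal Z(N)=\mathcal Z(N_1)+\mathcal Z(N_2)$ together with $\mathcal Z(\gr_\m(\pi^\vee))=\mathcal Z(\gr(\pi_2^\vee))+\mathcal Z(\gr_\m(\pi_1^\vee))$ from the top row, both inequalities must be equalities:
\[
\mathcal Z(\gr(\pi_2^\vee))=\mathcal Z(N_2),\qquad \mathcal Z(\gr_\m(\pi_1^\vee))=\mathcal Z(N_1).
\]
Now purity finishes the job: $N_2$ is pure of dimension $f$, and the injection $\gr(\pi_2^\vee)\hookrightarrow N_2$ (this is exactly the claim that the left square commutes, which is how the diagram is produced in Step~1 of Proposition~\ref{prop:split-I1}) has cokernel with strictly smaller cycle unless it is zero; having equal cycles forces the cokernel to be supported in dimension $<f$, but any nonzero submodule of the pure module $N_2$ already sees dimension $f$ in its cycle, and a short exact sequence argument shows a nonzero cokernel would make $\mathcal Z$ strictly drop. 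Hence $\gr(\pi_2^\vee)=N_2$ and dually the right map is an isomorphism. Since Cohen--Macaulayness of grade $2f$ is inherited by a module isomorphic to the Cohen--Macaulay modules $N_1$, $N_2$, both $\gr_\m(\pi_1^\vee)$ and $\gr(\pi_2^\vee)$ are Cohen--Macaulay of grade $2f$; finally, a standard lifting argument (as in \cite[\S~3.3]{BHHMS2}) transfers Cohen--Macaulayness and grade from the associated graded $\gr_\m(\pi_i^\vee)$ to the filtered module $\pi_i^\vee$ over $\Lambda$, using that $\gr_\m(\Lambda)$ is the associated graded of $\Lambda$ and that grade is detected on $\gr$.

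The main obstacle is not this cycle bookkeeping but the construction of diagram \eqref{eq:diagramintro} itself --- that is, exhibiting the graded module $N_1$ depending only on $\pi_1^{K_1}$ together with the map $\gr_\m(\pi^\vee)\to N_1$ lifting $\gr_\m(\pi^\vee)\twoheadrightarrow\gr_\m(\pi_1^\vee)$ and checking its kernel is $N_2$ with $\gr(\pi_2^\vee)\subseteq N_2$. In the split case this is comparatively clean because $N_1$ is literally the direct summand of $N$ in \eqref{eq:splitN1} indexed by $\JH(\pi_1^{I_1})$, so the map and its kernel are transparent and only the compatibility with the $\m$-adic filtration on $\pi_1^\vee$ needs care. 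In the nonsplit case $N_1$ is no longer a summand and its definition involves the subtler ideals of Definition~\ref{def:IJideals} and the relation \eqref{eq:fa-1}; establishing that $\gr(\pi_2^\vee)$ still lands in $N_2$ there is where the genuine work lies, which is precisely why the paper separates the two cases and treats the nonsplit one in Proposition~\ref{prop:nonsplit-I1}. Once the diagram is in place, the argument above applies verbatim in both cases.
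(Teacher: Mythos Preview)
There is a genuine gap. You have the direction of the left vertical map in diagram~\eqref{eq:diagramintro} backwards: the arrows go up, so it is an injection $N_2\hookrightarrow \gr(\pi_2^\vee)$, not $\gr(\pi_2^\vee)\hookrightarrow N_2$. Correcting this, the inequality you obtain is $\mathcal Z(N_2)\le \mathcal Z(\gr(\pi_2^\vee))$, the reverse of what you wrote. Now the two inequalities coming from the diagram, namely $\mathcal Z(N_1)\ge \mathcal Z(\gr_\m(\pi_1^\vee))$ and $\mathcal Z(N_2)\le \mathcal Z(\gr(\pi_2^\vee))$, are \emph{equivalent} once you use additivity on both rows together with the middle isomorphism: they both say the same nonnegative quantity $\mathcal Z(N_1)-\mathcal Z(\gr_\m(\pi_1^\vee))=\mathcal Z(\gr(\pi_2^\vee))-\mathcal Z(N_2)$ is $\ge 0$. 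Adding them yields a tautology, not equality. Your ``adding forces equality'' step is therefore vacuous.

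This is precisely why the paper's proof is harder and why the introduction singles out $\mathcal Z(\widetilde N_2)=\mathcal Z(N_2)$ as ``the heart of the proof''. To break the circularity one needs an \emph{independent} upper bound on $\mathcal Z(\gr(\pi_2^\vee))$. The paper obtains it from the essential self-duality $\EE^{2f}_\Lambda(\pi^\vee)\cong \pi^\vee\otimes(\det(\rhobar)\omega^{-1})$ (assumption~\ref{it:assum-iii}), which produces an auxiliary \emph{sub}representation $\widetilde\pi_2\subset\pi$ with $\mathcal Z(\gr(\widetilde\pi_2^\vee))=\mathcal Z(\gr(\pi_2^\vee))$. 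Applying the diagram's right-hand surjection inequality to the subrepresentation $\widetilde\pi_2$ gives $\mathcal Z(\widetilde N_2)\ge \mathcal Z(\gr_\m(\widetilde\pi_2^\vee))$, and the chain closes once one shows $\mathcal Z(\widetilde N_2)=\mathcal Z(N_2)$. That last equality requires determining $\widetilde\pi_2^{K_1}$ from $\pi_1^{K_1}$, which the paper carries out via the exact $(\varphi,\Gamma)$-module functor $D_\xi^\vee$, \cite[Prop.~3.3.5.3]{BHHMS2}, and (in the nonsplit case) \cite[Thm.~1.2]{YitongWang}. None of this can be bypassed by pure cycle bookkeeping on diagram~\eqref{eq:diagramintro} alone.
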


We sketch the proof of Theorem \ref{thm:isointro}.

The \ Cohen--Macaulayness \ of \ $\pi_1^{\vee}$, \ $\pi_2^{\vee}$ \ follows \ from \ the \ one \ of \ $\gr_{\m}(\pi_1^{\vee})$, \ $\gr(\pi_2^{\vee})$ (\cite[Prop.~III.2.2.4]{LiOy}), which itself follows from the first statement of Theorem \ref{thm:isointro} as $N_1$, $N_2$ can be checked to be Cohen--Macaulay $\gr_{\m}(\Lambda)$-modules. Note that, by d\'evissage and since $\Lambda$ is Auslander regular, one then deduces from \cite[Cor.~III.2.1.6]{LiOy} that the linear dual of \emph{any} subquotient of $\pi^\vee$ is Cohen--Macaulay of grade $2f$. In particular this proves Theorem \ref{thm:mainbisintro}\ref{it:cohenintro} and Theorem \ref{thm:mainterintro}\ref{it:cohenbisintro}.

Hence it is enough to prove $N_1 \congto \gr_{\m}(\pi_1^{\vee})$. Since, just like $N$, the $\gr_{\m}(\Lambda)$-module $N_1$ is pure, by the same argument as for $N$ (see the sentences below (\ref{eq:cycleintro})) it is enough to prove that $\mathcal{Z}(N_1)=\mathcal{Z}(\gr_\m(\pi_1^{\vee}))$, or equivalently by diagram (\ref{eq:diagramintro}) that $\mathcal{Z}(N_2)=\mathcal{Z}(\gr(\pi_2^{\vee}))$.

We then use the essential self-duality of $\pi$ (\cite[Thm.~8.2]{HuWang2} with \cite[Thm.~8.4.1]{BHHMS1} and \cite[Thm.~6.3(i)]{YitongWangGKD}): there is a $\GL_2(K)$-equivariant isomorphism ${\rm Ext}^{2f}_{\Lambda}(\pi^{\vee},\Lambda)\cong \pi^{\vee}\otimes_{\F} (\det(\rhobar)\omega^{-1})$, \ where \ ${\rm Ext}^{2f}_{\Lambda}(\pi^{\vee},\Lambda)$ \ is \ endowed \ with \ the \ action \ of \ $\GL_2(K)$ \ defined \ in \ \cite[Prop.~3.2]{Ko}. Then we can define $\widetilde \pi_2\subseteq \pi$ as the unique $\GL_2(K)$-subrepresentation such that
\[\widetilde \pi_2^{\vee}= \im\Big\{{\rm Ext}^{2f}_{\Lambda}(\pi^{\vee},\Lambda)\ra {\rm Ext}^{2f}_{\Lambda}(\pi_2^{\vee},\Lambda)\Big\}\otimes_{\F}(\det(\rhobar)^{-1}\omega).\]
Since $\widetilde \pi_2$ is a subrepresentation of $\pi$, we can define a surjection of $\gr_{\m}(\Lambda)$-modules
\[\widetilde N_2\twoheadrightarrow \gr_{\m}(\widetilde \pi_2^{\vee})\]
analogous to $N_1\twoheadrightarrow \gr_{\m}(\pi_1^{\vee})$, where $\widetilde N_2$ again only depends on the $\GL_2(\Fq)$-representation $\widetilde \pi_2^{K_1}$. In particular $\mathcal{Z}(\widetilde N_2)\geq \mathcal{Z}(\gr_\m(\widetilde\pi_2^{\vee}))$. Note that by the same argument as in the proof of \cite[Prop.~3.87(iii)]{BHHMS2} we have $\mathcal{Z}(\gr_\m(\widetilde\pi_2^{\vee}))= \mathcal{Z}(\gr(\pi_2^{\vee}))$. Since $\mathcal{Z}(\gr(\pi_2^{\vee}))\geq \mathcal{Z}(N_2)$ by the left injection in (\ref{eq:diagramintro}), we deduce
\[\mathcal{Z}(\widetilde N_2)\geq \mathcal{Z}(\gr_\m(\widetilde\pi_2^{\vee}))= \mathcal{Z}(\gr(\pi_2^{\vee}))\geq \mathcal{Z}(N_2)\]
and hence it is enough to prove $\mathcal{Z}(\widetilde N_2)=\mathcal{Z}(N_2)$. 

The equality $\mathcal{Z}(\widetilde N_2)= \mathcal{Z}(N_2)$ is the heart of the proof of Theorem \ref{thm:isointro} and is particularly subtle in the nonsplit case. In both cases (split or nonsplit) it boils down to determining the $\GL_2(\Fq)$-representation $\widetilde \pi_2^{K_1}$ from the $\GL_2(\Fq)$-representation $\pi_1^{K_1}$. For that, we do not know any proof that avoids $(\vp,\Gamma)$-modules. We have the formula
\begin{equation}\label{eq:dimintro}
\dim_{\F\ppar{X}}D_{\xi}^{\vee}(\widetilde{\pi}_2)
= \dim_{\F\ppar{X}}D_{\xi}^{\vee}(\pi_2) = \dim_{\F\ppar{X}}D_{\xi}^{\vee}(\pi) - \dim_{\F\ppar{X}}D_{\xi}^{\vee}(\pi_1),
\end{equation}
where the first equality follows from $\mathcal{Z}(\gr_\m(\widetilde\pi_2^{\vee}))= \mathcal{Z}(\gr(\pi_2^{\vee}))$ with \cite[Prop.~3.87(i)]{BHHMS2} and the second from the exactness of the functor $D_{\xi}^{\vee}$ (\cite[Thm.~3.29]{BHHMS2}). In the split case, using the equalities
\begin{eqnarray*}
\dim_{\F\ppar{X}}D_{\xi}^{\vee}(\pi) &=&2^f,\\
\dim_{\F\ppar{X}}D_{\xi}^{\vee}(\widetilde{\pi}_2)&=&|\JH(\soc_{\GL_2(\cO_K)}(\widetilde \pi_2))|,\\
\dim_{\F\ppar{X}}D_\xi^\vee({\pi_1})&=&|\JH(\soc_{\GL_2(\cO_K)}(\pi_1))|
\end{eqnarray*}
(where \ the \ first \ follows \ from \ \cite[Thm.~1.7]{BHHMS2} \ and \ where \ the \ other \ two \ are \cite[Prop.~3.87(ii)]{BHHMS2}), we manage starting from (\ref{eq:dimintro}) to determine $\soc_{\GL_2(\cO_K)}(\widetilde \pi_2)$, hence $\widetilde \pi_2^{K_1}$ (using the proof of \cite[Thm.~19.10]{BP}), hence $\widetilde N_2$, and finally check that $\mathcal{Z}(\widetilde N_2)=\mathcal{Z}(N_2)$. In the nonsplit case using the (much harder) equalities
\begin{eqnarray*}
\dim_{\F\ppar{X}}D_{\xi}^{\vee}(\pi) &=&2^f,\\
\dim_{\F\ppar{X}}D_\xi^\vee(\widetilde\pi_2)&=&|\JH(\widetilde\pi_2^{K_1}) \cap W(\rhobar^\ss)|,\\
\dim_{\F\ppar{X}}D_\xi^\vee({\pi_1})&=&|\JH(\pi_1^{K_1}) \cap W(\rhobar^\ss)|
\end{eqnarray*}
(which all follow from \cite[Thm.~1.2]{YitongWang}) with (\ref{eq:dimintro}) (and Theorem \ref{thm:conj2} in the text applied to both $\pi_1$, $\widetilde \pi_2$), we can again determine $\widetilde \pi_2^{K_1}$ and once more check $\mathcal{Z}(\widetilde N_2)=\mathcal{Z}(N_2)$.

We now sketch the proof that $\pi$ is of finite length (for $\rhobar$ reducible) using Theorem \ref{thm:isointro}. 

Let $\pi_1\subseteq \pi$ be a nonzero subrepresentation, and let $\pi'_1\subseteq \pi_1$ be the $\GL_2(K)$-subrepresentation generated by $\soc_{\GL_2(\cO_K)}(\pi_1)$ if $\rhobar$ is split, by $\pi_1^{K_1}$ if $\rhobar$ is nonsplit. We then have ${\pi'_1}^{K_1}\congto \pi_1^{K_1}$ in both cases (using the proof of \cite[Thm.~19.10]{BP} in the split case). The $\gr_{\m}(\Lambda)$-module $N_1$ in (\ref{eq:diagramintro}) is the same for both $\pi_1$ and $\pi'_1$ since it only depends on the $\GL_2(\Fq)$-representation ${\pi'_1}^{K_1}\cong \pi_1^{K_1}$. By Theorem \ref{thm:isointro} we deduce that the natural surjection $N_1 \congto \gr_{\m}(\pi_1^{\vee}) \onto \gr_{\m}(\pi_1'^{\vee})$ is an isomorphism, in particular $\m^n\pi_1^\vee/\m^{n+1}\pi_1^\vee\congto \m^n\pi_1'^\vee/\m^{n+1}\pi_1'^\vee$ for all $n\geq 0$, hence by d\'evissage $\pi_1^\vee/\m^{n+1}\pi_1^\vee\congto \pi_1'^\vee/\m^{n+1}\pi_1'^\vee$ for $n\geq 0$, hence $\pi_1^\vee\congto \pi_1'^\vee$ or equivalently $\pi'_1\congto \pi_1$. 

This first implies that $\pi_1$ is generated by its $\GL_2(\cO_K)$-socle if $\rhobar$ is split, by its $K_1$-invariants if $\rhobar$ is nonsplit (since $\pi'_1$ is). As the quotient of a $\GL_2(K)$-representation generated by its $\GL_2(\cO_K)$-socle (resp.~its $K_1$-invariants) is \emph{a fortiori} also generated by its $\GL_2(\cO_K)$-socle (resp.~its $K_1$-invariants), we have proven Theorem \ref{thm:mainbisintro}\ref{it:socleintro} and Theorem \ref{thm:mainterintro}\ref{it:K1intro}. 

We then obtain that $\pi$ is of finite length, as there are only finitely many $\GL_2(\Fq)$-subrepresenta\-tions $\pi_1^{K_1}$ inside the $\GL_2(\Fq)$-representation $\pi^{K_1}$ (recall the latter is explicitly known and only depends on $\rhobar\vert_{I_K}$, see \cite{HuWang, LMS} for $\rhobar$ split, \cite{DanWild} for $\rhobar$ nonsplit). A more precise calculation inside $\pi^{K_1}$ gives the more precise statements in Theorem \ref{thm:mainintro}\ref{it:splitintro}, \ref{it:nonsplitintro}, though the multiplicity freeness in the nonsplit case is more involved, see Corollary \ref{cor:pi-mult-free1}.

So far we have briefly gone over the proofs of Theorem \ref{thm:mainintro}, of Theorem \ref{thm:mainbisintro}\ref{it:cohenintro}, \ref{it:socleintro} and of Theorem \ref{thm:mainterintro}\ref{it:cohenbisintro}, \ref{it:K1intro}. We now sketch the proofs of Theorem \ref{thm:mainbisintro}\ref{it:phigammaintro}, \ref{it:exactintro}, \ref{it:exactintroI}. 

Since in the split case $N_1$ in (\ref{eq:splitN1}) is a direct summand of $N$, Theorem \ref{thm:isointro} implies that the exact sequence of graded $\gr_{\m}(\Lambda)$-modules $0\rightarrow \gr(\pi_2^{\vee})\rightarrow \gr_{\m}(\pi^{\vee})\rightarrow \gr_{\m}(\pi_1^{\vee})\rightarrow 0$ in (\ref{eq:diagramintro}) is split. Then by a dimension count we deduce that the map $\pi[\m^n]\rightarrow(\pi/\pi_1)[\m^n]$ is surjective for all $n\geq 0$. It is then not difficult to deduce the exactness in Theorem \ref{thm:mainbisintro}\ref{it:exactintroI}. The splitness in \emph{loc.~cit.}~for $n\leq \max\{6,f+2\}$ comes from the following description of the $I$-representation $\pi[\m^n]$ for such $n$ (see Lemma \ref{lem:isom-modcI}):
\begin{equation}\label{eq:Iisointro}
\pi[\m^n]\cong \bigoplus_{\chi\in \JH(\pi^{I_1})}\tau_{\chi}^{(n)},
\end{equation}
where the $I$-representations $\tau_{\chi}^{(n)}$ (denoted $\tau_\lambda^{(n)}$ in the text) are defined in Lemma \ref{lem:tau-embed}. From (\ref{eq:Iisointro}) one deduces $\pi_1[\m^n]\cong \bigoplus_{\chi\in \JH(\pi_1^{I_1})}\tau_{\chi}^{(n)}$ -- whence the splitting -- using the isomorphism $N_1\congto \gr_\m(\pi_1^{\vee})$ in Theorem \ref{thm:isointro} together with (\ref{eq:splitN1}) (see the end of the proof of Corollary \ref{cor:split-In}). 

Then the first exact sequence in Theorem \ref{thm:mainbisintro}\ref{it:exactintro} easily follows from the exact sequence in Theorem \ref{thm:mainbisintro}\ref{it:exactintroI} applied with $n=1$ (see Lemma \ref{lem:soc-exact}). Note that this first exact sequence implies Theorem \ref{thm:mainbisintro}\ref{it:phigammaintro} by the exactness of $D_{\xi}^{\vee}$ (\cite[Thm.~3.29]{BHHMS2}) and the case of subrepresentations (\cite[Prop.~3.87(ii)]{BHHMS2}). The second exact sequence in Theorem \ref{thm:mainbisintro}\ref{it:exactintro} and its splitness both follow from the first using, as we have seen with $\widetilde\pi_2$ above, that if we know $\soc_{\GL_2(\cO_K)}(\pi_1)$ for a subrepresentation $\pi_1\subseteq \pi$ when $\rhobar$ is split we also know $\pi_1^{K_1}$, and moreover that $\pi_1^{K_1}$ is a direct summand of $\pi^{K_1}$.

\textbf{Acknowledgements}: The results of this work in the non-semisimple case use as a key input results of Yitong Wang \cite{YitongWang}.
 We are thankful to the referee for helpful comments. 
Y.\;H.\ is \ partially \ supported by \ National \ Key \ R$\&$D \ Program \ of \ China \ 2020YFA0712600, National Natural Science Foundation of China Grants 12288201 and 12425103, National Center for Mathematics and Interdisciplinary Sciences and Hua Loo-Keng Key Laboratory of Mathematics, Chinese Academy of Sciences. F.\;H.\ is partially supported by an NSERC grant. S.\;M.\ and B.\;S.\ are partially supported by the Institut Universitaire de France.

\subsection{Notation and preliminaries}
\label{sec:notation}
\label{sec:preliminaries}

We normalize local class field theory so that uniformizers correspond to geometric Frobenius elements.
We fix an embedding $\kappa_0:\F_q\into \F$ and let $\kappa_j\defeq \kappa_0\circ\varphi^j$, where $\varphi$ is the arithmetic Frobenius on $\F_q$. Given $J\subset\{0,\dots,f-1\}$ we define $J^c\defeq \{0,1,\dots,f-1\} \setminus J$. We let $I\defeq \begin{pmatrix}\cO_K^\times&\cO_K\\p\cO_K&\cO_K^\times\end{pmatrix} \subset \GL_2(\cO_K)$ denote the (upper) Iwahori subgroup of $\GL_2(K)$, $I_1$ the pro-$p$ radical of $I$, $Z_1$ the center of $I_1$, and $K_1\defeq 1+p{\rm M}_2(\cO_K)\subset I_1$. We let $\Gamma\defeq \GL_2(\F_q)\cong \GL_2(\cO_K)/K_1$.

Let $\brho:\Gal(\o K/K)\ra \GL_2(\F)$ be a continuous representation. We will say that $\rhobar$ is \emph{$n$-generic} for some integer $n \ge 0$ if, up to twist, $\rhobar|_{I_K}^{\ss}\not\cong \omega\oplus 1$ and either (using the notation of \S~\ref{sec:results})
\begin{equation}\label{eq:rhobar-generic-red}
  \rhobar|_{I_K} \cong \left(\begin{matrix}\omega_f^{\sum_{j=0}^{f-1} (r_j+1) p^j} & * \\ & 1\end{matrix}\right) \qquad \text{with $n\leq  r_j \leq p-3-n$ for all $0 \le j \le f-1$}
\end{equation}
or 
\begin{equation}\label{eq:rhobar-generic-irred}
  \rhobar|_{I_K} \cong \left(\begin{matrix}\omega_{2f}^{\sum_{j=0}^{f-1} (r_j+1) p^j} & \\ & \omega_{2f}^{p^f(\text{same})}\end{matrix}\right) \qquad \text{with\ }
  \begin{cases}
    n \leq r_j \leq p-3-n &\text{for $0 < j \le f-1$,} \\ n+1 \leq r_0 \leq p-2-n & \text{for $j = 0$.}
  \end{cases}
\end{equation}
In particular, if $\rhobar$ is $n$-generic then it is  $n$-generic in the sense of \cite[Def.\ 2.3.4]{BHHMS1} {(see also the beginning of \cite[\S~4.1]{BHHMS1})}, and $\rhobar$ is $0$-generic precisely when $\rhobar$ is generic in the sense of \cite[Def.~11.7]{BP} (note that the condition $\rhobar|_{I_K}^{\ss}\not\cong \omega\oplus 1$, up to twist, precisely rules out that $(r_0,\dots,r_{f-1})\in\{(0,\dots,0),(p-3,\dots,p-3)\}$ when $\rhobar$ is reducible).

Attached to a $0$-generic $\rhobar$ we have a set $W(\rhobar)$ of Serre weights, i.e.~irreducible representations of $\Gamma$ over $\F$, defined in \cite[\S~3]{BDJ}, and a finite length $\Gamma$-representation $D_0(\rhobar)$ over $\F$, defined in \cite[\S~13]{BP}, which is of the form $D_0(\rhobar)=\bigoplus_{\tau\in W(\rhobar)}D_{0,\tau}(\rhobar)$, where each $D_{0,\tau}(\rhobar)$ is indecomposable and multiplicity free with socle the Serre weight $\tau$ (\cite[\S~15]{BP}).

Suppose that $\rhobar$ is {0}-generic. Recall the set $\P$ parametrizing $D_0(\rhobar)^{I_1}$, see \cite[\S~4]{breuil-buzzati} (and denoted there by $\mathscr{PD}$, resp.\ $\mathscr{PID}$, if $\rhobar$ is reducible, resp.\ irreducible). Recall also the subset $\D \subset \P$ parametrizing (the $I_1$-invariants of) the set of Serre weights in $W(\brho)$ (denoted in \emph{loc.~cit.} by $\D$ or $\mathscr{ID}$ if $\rhobar$ is reducible or irreducible respectively).
We let $\D^\ss\subset \P^\ss$ denote the corresponding sets for the semisimplification $\rhobar^\ss$ of $\rhobar$, so $\P \subset \P^\ss$ and $\D \subset \D^\ss$.
Note that $\chi \in \JH(D_0(\rhobar)^{I_1})$ implies $\chi \ne \chi^s$ by \cite[Cor.\ 13.6]{BP}.

Since we will be using this many times, we recall more precisely that if $\brho$ is \emph{reducible}, $\P^\ss$ denotes the set of $f$-tuples $(\lambda_0(x_0),\dots,\lambda_{f-1}(x_{f-1}))$ such that:
\begin{enumerate}
\item $\lambda_j(x_j) \in \{x_j,x_j+1,x_j+2,p-3-x_j,p-2-x_j,p-1-x_j\}$;
\item if $\lambda_j(x_j) \in \{x_j,x_j+1,x_j+2\}$, then $\lambda_{j+1}(x_{j+1}) \in \{x_{j+1},x_{j+1}+2,p-2-x_{j+1}\}$;
\item if $\lambda_j(x_j) \in \{p-3-x_j,p-2-x_j,p-1-x_j\}$, then $\lambda_{j+1}(x_{j+1}) \in \{x_{j+1}+1,p-3-x_{j+1},p-1-x_{j+1}\}$
\end{enumerate}
and $\D^\ss$ is the subset such that $\lambda_j(x_j) \in \{x_j,x_j+1,p-3-x_j,p-2-x_j\}$.
Moreover, there exists a unique subset $J_{\rhobar} \subset \{0,\dots,f-1\}$ such that
\begin{align}
  \D &= \Big\{\lambda \in \D^\ss : \lambda_j(x_j) \in \{x_j+1,p-3-x_j\} \Rightarrow j \in J_{\rhobar}\Big\},\notag \\
  \P &= \Big\{\lambda \in \P^\ss : \lambda_j(x_j) \in \{x_j+2,p-3-x_j\} \Rightarrow j \in J_{\rhobar}\Big\}.\label{eq:P}
\end{align}
In particular, $|W(\brho)| = 2^{|J_{\brho}|}$.

For $\lambda\in\mathscr{P}$ we denote by $\chi_\lambda$ the character of $H$ corresponding to $\lambda$.
(More precisely, in \cite[\S~4]{breuil-buzzati} a Serre weight $\sigma_\lambda$ is associated to $\lambda \in \P$ and $\chi_\lambda$ is the action of $H=I/I_1$ on the $1$-dimensional subspace $\sigma_\lambda^{I_1}$.)
Set
\begin{equation}\label{eq:J-lambda}J_{\lambda}\defeq \{j\in\{0,\dots,f-1\}: \lambda_j(x_j)\in \{x_j+1,x_j+2,p-3-x_j\}\}\end{equation}
and let $\ell(\lambda) \defeq  |J_\lambda|.$
By \cite[\S~11]{BP} the map $\lambda \mapsto J_\lambda$ induces a bijection between $\mathscr{D}^\ss$ and the set of subsets of $\{0,\dots,f-1\}$.
Sometimes we will abuse notation and write $J_\tau \defeq  J_\lambda$ and $\ell(\tau) \defeq  \ell(\lambda)$ if $\tau \in W(\brho^\ss)$ is parametrized by $\lambda \in \D^\ss$.
Given $\lambda\in \D^\ss$ with corresponding subset $J=J_\lambda\subset\{0,\dots,f-1\}$ we write $\delta(\lambda)\in \D^\ss$ for the $f$-tuple defined by $\delta(\lambda)_j\defeq \lambda_{j+1}$ for all $j\in\{0,\dots,f-1\}$, and $\delta(J)\subset\{0,\dots,f-1\}$ for the subset corresponding to $\delta(\lambda)$.

As in \cite[\S~1]{BP}, given $f$ integers $r_0,\dots,r_{f-1}\in\{0,\dots,p-1\}$ we denote by $(r_0,\dots,r_{f-1})$ the Serre weight
\begin{equation*}
\mathrm{Sym}^{r_0}\F^2\otimes_{\F}(\mathrm{Sym}^{r_1}\F^2)^{\rm Fr}\otimes\cdots\otimes_{\F}(\mathrm{Sym}^{r_{f-1}}\F^2)^{{\rm Fr}^{f-1}},
\end{equation*}
where $\GL_2(\Fq)$ acts on $(\mathrm{Sym}^{r_j}\F^2)^{{\rm Fr}^j}$ via $\kappa_j:\Fq\hookrightarrow \F$.
Following \cite[\S~2]{HuWang2}, we say that a Serre weight is \emph{$m$-generic} for some integer $m\geq 0$ if, up to twist, $\sigma\cong (r_0,\dots,r_{f-1})$, where $m\leq r_j\leq p-2-m$ for all $j\in \{0,\dots,f-1\}$.
We say that an $\F$-valued character $\chi$ of $I$ is \emph{$m$-generic} if $\chi=\sigma^{I_1}$ for some $m$-generic Serre weight $\sigma$. 
For any smooth character $\chi : I \to \F\s$ we define $\chi^s\defeq \chi(\Pi (\cdot) \Pi^{-1})$ with $\Pi\defeq \begin{pmatrix}0&1\\ p&0\end{pmatrix}$. 
If $\sigma$ is a Serre weight, we write $\chi_\sigma$ for the character of $I/I_1$ on $\sigma^{I_1}$ and $\sigma^{[s]}$ for the unique Serre weight distinct from $\sigma$ such that $\chi_{\sigma^{[s]}}=\chi_{\sigma}^s$. We remark that if $\brho$ is $n$-generic, then  any  $\sigma\in W(\brho^{\rm ss})$ is $n$-generic, and $\chi_{\lambda}$ is $(n-1)$-generic for any $\lambda\in\mathscr{P}^{\rm ss}$ (if $n\geq 1$).

Let $\Lambda \defeq  \F\bbra{I_1/Z_1}$, a complete noetherian local ring with maximal ideal $\m \defeq \m_{I_1/Z_1}$, and let $\gr(\Lambda) \defeq  \gr_\m(\Lambda)$ be the  graded ring associated to $\Lambda$ with respect to the $\fm$-adic filtration on $\Lambda$. The rings $\Lambda$ and $\gr(\Lambda)$ are Auslander regular (see \cite[Thm.~5.3.4]{BHHMS1} with \cite[Thm.~III.2.2.5]{LiOy}). Recall (\cite[\S~3.1]{BHHMS2}) that we have an isomorphism of (noncommutative) algebras
\begin{equation}\label{eq:grlambda}
\gr(\Lambda)\cong \bigotimes_{j\in\{0,\dots,f-1\}}\F\langle y_j,z_j,h_j\rangle
\end{equation}
with relations $[y_j,z_j]=h_j$, $[h_j,z_i]=[y_i,h_j]=0$ for all $i,j\in\{0,\dots,f-1\}$.
We use increasing filtrations throughout, i.e.\ $F_n \Lambda = \m^{-n}$ for $n \le 0$, and the degrees of $y_j$ and $z_j$ (resp.\ $h_j$) are $-1$ (resp.\ $-2$).
Define the graded ideal $J \defeq  (h_j,y_jz_j : 0 \le j \le f-1)$ of $\gr(\Lambda)$.
As in \cite[\S~3]{BHHMS2} we define
\[R \defeq  \gr(\Lambda)/(h_j : 0 \le j \le f-1) \cong \F[y_j,z_j : 0 \le j \le f-1]\]
which is the largest commutative quotient of $\gr(\Lambda)$. We also define the following quotient of $R$:
\[\o R \defeq  \gr(\Lambda)/J \cong R/(y_jz_j : 0 \le j \le f-1).\]

We recall from \cite[Def.~3.57]{BHHMS2} that given $\lambda\in\mathscr{P}$ we have an associated {homogeneous} ideal $\mathfrak{a}(\lambda)=(t_0,\dots,t_{f-1})$ of $R$, where the $t_j = t_j(\lambda)$ are defined as follows:
\begin{equation}
\label{eq:id:al}
t_j\defeq \left\{\begin{array}{llll}z_j& {\rm if} &\lambda_j(x_j)\in\{x_j,p-3-x_j\} \ \mathrm{and}\ j\in J_{\brho}\\
y_j& {\rm if} &\lambda_j(x_j)\in\{x_j+2,p-1-x_j\} \ \mathrm{and}\ j\in J_{\brho}\\
y_jz_j & {\rm if} &\lambda_j(x_j)\in \{x_j,p-1-x_j\} \ \mathrm{and}\ j\notin J_{\brho}\\
y_jz_j& {\rm if} &\lambda_j(x_j)\in \{x_j+1,p-2-x_j\}.\end{array}\right.
\end{equation}
Note that $(y_jz_j : 0 \le j \le f-1) \subset \fa(\lambda)$, so we often think of $\fa(\lambda)$ as ideal of $\o R$.

Let $H\defeq \begin{pmatrix}\F_q^\times&0\\0&\F_q^\times\end{pmatrix}\cong I/I_1$.
We write $\alpha_j:H\ra\F^\times$ for the character defined by $\begin{pmatrix}a&0\\0&d\end{pmatrix}\mapsto \kappa_j(ad^{-1})$.
We recall that for any $j\in\{0,\dots,f-1\}$ the element $y_j$ (resp.~$z_j$, resp.~$h_j$) is an $H$-eigenvector with associated eigencharacter $\alpha_j$ (resp.~$\alpha_j^{-1}$, resp.~the trivial character). {Note that $H$ acts on $I_1/Z_1$ by conjugation and hence on $\Lambda$ (resp.\ $\gr(\Lambda)$), preserving the filtration (resp.\ the grading). This induces $H$-actions also on $R$, $\overline{R}$, and $R/\fa(\lambda)$ for any $\lambda \in \P$. We say that a filtered $\Lambda$-module $M$ has a \emph{compatible $H$-action} if it has an $H$-action that preserves the filtration and such that $h(rm) = h(r)h(m)$ for all $h \in H$, $r \in \Lambda$, and $m\in M$. Similarly we define the notion of a graded $\gr(\Lambda)$-module with compatible $H$-action.}

Suppose that $H'$ is a compact $p$-adic analytic group and that $\pi_1$, $\pi_2$ are smooth representations of $H'$ over $\F$.
We write $\Ext^i_{H'}(\pi_1,\pi_2)$ for the $i$-th Ext group computed in the category of smooth representations of $H'$ over $\F$.
Dually, the functors $\Tor_i^{\F\bbra{H'}}(\pi_1^\vee,\pi_2^\vee)$ and $\Ext^i_{\F\bbra{H'}}(\pi_1^\vee,\pi_2^\vee)$ are computed in the abelian category of pseudocompact $\F\bbra{H'}$-modules.
(See for example \cite[\S~2]{emerton-ordII}.)
If $\sigma$ is a smooth representation of $H'$ over $\F$ we write $\Inj_{H'}\sigma$ for the injective envelope of $\sigma$ in the category of smooth $H'$-representations over $\F$. If $\sigma$ has finite length, we write $\JH(\sigma)$ for its set of irreducible constituents up to isomorphism.

Throughout this paper, if $R$ is a filtered (resp.\ graded) ring, a morphism of filtered (resp.\ graded) $R$-modules $f:M\ra N$ will always be a \emph{filtered (resp.\ graded) morphism of degree zero}, i.e.~satisfying $f(M_i)\subset N_i$ for all $i\in \Z$. {For $k\in\Z$, $M(k)$ denotes the filtered (resp.\ graded) $R$-module obtained by filtering (resp.\ grading) $M$ by $F_n(M(k))\defeq M(n+k)$ (resp.\ $M(k)_n\defeq M_{n+k}$) for all $n\in \Z$.}
  
If $R$ is any ring and $M$ any left $R$-module, we recall that $\Ext^{i}_R(M,R)$ for $i\in \Z_{\ge 0}$ is a right $R$-module (for $i=0$ the right $R$-action is given by $(f r)(m)\defeq f(m)r$ for $r\in R$, $f\in \Hom_R(M,R)$ and $m\in M$) and we use the notation $\EE^i_R(M)\defeq \Ext^{i}_R(M,R)$. 
{If $R = \Lambda$ or $R = \gr(\Lambda)$, we can and will use the anti-involution $g \mapsto g^{-1}$ on $I/Z_1$ to consider any right $R$-module (with compatible $H$-action or not) as a left $R$-module.}

\section{Cohen--Macaulayness of \texorpdfstring{$\gr_{\m}(\pi^{\vee})$}{gr\_m(pi\^{})}}
\label{sec:cohen-macaulay}

We completely describe $\gr_\m(\pi^{\vee})$ for a smooth mod $p$ representation $\pi$ of $\GL_2(K)$ satisfying assumptions (i), (ii) in \cite[\S~3.3.2]{BHHMS2} and an extra assumption \ref{it:assum-iv} (defined below). When $\pi$ is a suitable Hecke eigenspace in the mod $p$ cohomology, we prove that $\pi$ satisfies \ref{it:assum-iv} (in addition to (i) and (ii)).

\subsection{The theorem}
\label{sec:theorem}

We state the main theorem (Theorem \ref{thm:CMC}).

Let $\rhobar:\Gal(\o K/K)\ra\GL_2(\F)$ be a continuous {$0$-generic} representation as in \S~\ref{sec:notation}.
Let $\pi$ be a 
smooth representation of $\GL_2(K)$ over $\F$ {admitting a central character and} satisfying assumptions (i), (ii) in \cite[\S~3.3.2]{BHHMS2}, i.e.~
\begin{enumerate}
\item\label{it:assum-i} there exists an integer $r\ge 1$ such that $\pi^{K_1}\cong D_0(\rhobar)^{\oplus r}$ as $\GL_2(\cO_K)K^\times$-representations, where \ $K^\times$ \ acts \ by \ $\det(\rhobar)\omega^{-1}$ \ (in \ particular \ $\pi$ \ is \ admissible \ and \ has \ central \ character $\det(\rhobar)\omega^{-1}$); 
\item\label{it:assum-ii} for any $\lambda\in\P$ we have $[\pi[\fm^3]:\chi_\lambda]=[\pi[\fm]:\chi_\lambda]$.
\end{enumerate}

For later reference we also recall assumption (iii) of \cite[\S~3.3.5]{BHHMS2}, \emph{though we will not assume it until section \ref{sec:finite-length-ss}}:
\begin{enumerate}\setcounter{enumi}{2}
\item\label{it:assum-iii} there is a $\GL_2(K)$-equivariant isomorphism of $\Lambda$-modules
\[
\EE^{2f}_\Lambda(\pi^\vee) \cong \pi^\vee\otimes (\det(\rhobar)\omega^{-1}),
\]
where $\EE^{2f}_\Lambda(\pi^\vee)$ is endowed with the $\GL_2(K)$-action defined in \cite[Prop.~3.2]{Ko}.
\end{enumerate}

Additional to assumptions \ref{it:assum-i}, \ref{it:assum-ii} above, we make the following assumption on $\pi$:

\begin{enumerate}\setcounter{enumi}{3}
\item\label{it:assum-iv} for any smooth character $\chi:I\ra \F^{\times}$ and any $i \ge 0$, $\Ext^i_{I/Z_1}(\chi,\pi)\neq0 $ only if
  $[\pi[\m]:\chi]\neq0$, in which case
  \[m_i\defeq \dim_{\F}\Ext^i_{I/Z_1}(\chi,\pi)=\binom{2f}{i}r,\]
where $r\geq 1$ is the multiplicity in assumption~\ref{it:assum-i}.
\end{enumerate}

\emph{Note that we do not assume that $r=1$ or that $\rhobar$ is semisimple}.

\begin{rem}\label{rem:hyp4}
    By picking a minimal free resolution of $\pi^\vee$ with compatible $H$-action over the local ring $\Lambda$ (cf.~Remark \ref{rem:facts}(v)), we see that $\Tor_i^\Lambda(\F,\pi^\vee)$ is dual to
  \begin{equation*}
    \Ext^i_{\Lambda}(\pi^\vee,\F) \cong \Ext^i_{I_1/Z_1}(\F,\pi) \cong \bigoplus_{\chi} \Ext^i_{I/Z_1}(\chi,\pi),
  \end{equation*}
  where $\chi$ runs over all smooth $\F$-characters of $I$.  From assumption~\ref{it:assum-iv} we deduce that
  \begin{equation}\label{eq:dimension0}
  \dim_\F \Tor_i^\Lambda(\F,\pi^\vee) = (\dim_\F \pi^{I_1}) \binom{2f}{i}
  \end{equation}
  (as $\pi[\m] = \pi^{I_1}$).
    Decomposing for the action of $H$, we see moreover that $\Ext^i_{I/Z_1}(\chi,\pi)$ is dual to the $\chi^{-1}$-isotypic piece of $\Tor_i^\Lambda(\F,\pi^\vee)$, hence
  \[\Tor_i^{\Lambda}(\F,\pi^{\vee})\cong \bigoplus_{\lambda\in\mathscr{P}}(\chi_{\lambda}^{-1})^{\oplus m_i}.\]
\end{rem}

Our aim in this subsection is to prove the following theorem which strengthens \cite[Thm.\ 3.67]{BHHMS2}.

\begin{thm}\label{thm:CMC}
Assume that $\brho$ is $9$-generic and that $\pi$ satisfies assumptions \ref{it:assum-i}, \ref{it:assum-ii} and \ref{it:assum-iv} above.
Then we have an isomorphism of graded $\gr(\Lambda)$-modules with compatible $H$-action
\begin{equation}\label{eq:gr(pi)-bis}\bigg(\bigoplus_{\lambda\in\mathscr{P}}\chi_{\lambda}^{-1}\otimes \frac{R}{\mathfrak{a}(\lambda)}\bigg)^{\oplus r}\cong \gr_\m(\pi^{\vee}).\end{equation}
In particular, $\gr_\m(\pi^{\vee})$ is a Cohen--Macaulay $\gr(\Lambda)$-module of grade $2f$. Moreover, $\gr_\m(\pi^{\vee})$ is essentially self-dual in the sense that
\begin{equation}\label{eq:autodual-gr}\EE^{2f}_{\gr(\Lambda)}(\gr_\m(\pi^{\vee}))\cong \gr_\m(\pi^{\vee})\otimes (\det(\brho)\omega^{-1}) \end{equation}
as $\gr(\Lambda)$-modules (without grading) with compatible $H$-action.
\end{thm}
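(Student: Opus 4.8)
The plan is to deduce the whole statement from the explicit isomorphism (\ref{eq:gr(pi)-bis}). Write $N\defeq\bigoplus_{\lambda\in\mathscr{P}}\chi_\lambda^{-1}\otimes R/\mathfrak{a}(\lambda)$, so that (\ref{eq:gr(pi)-bis}) reads $\gr_\m(\pi^{\vee})\cong N^{\oplus r}$. Let me first record why the last two assertions follow from this. Each $R/\mathfrak{a}(\lambda)$ is the quotient of the Auslander regular ring $\gr(\Lambda)$ by the length-$2f$ sequence consisting of the $f$ central elements $h_0,\dots,h_{f-1}$ and the $f$ elements $t_0(\lambda),\dots,t_{f-1}(\lambda)$ of (\ref{eq:id:al}); modulo $(h_0,\dots,h_{f-1})$ this becomes the sequence $t_0(\lambda),\dots,t_{f-1}(\lambda)$ in the commutative polynomial ring $R$, which is regular since each $t_j(\lambda)$ lies in the distinct factor $\F[y_j,z_j]$ of $R=\bigotimes_{j}\F[y_j,z_j]$. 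Hence $R/\mathfrak{a}(\lambda)$, and therefore $N^{\oplus r}\cong\gr_\m(\pi^{\vee})$, is Cohen--Macaulay of grade $2f$. For (\ref{eq:autodual-gr}), the Koszul resolution attached to this sequence computes $\EE^{2f}_{\gr(\Lambda)}(\chi_\lambda^{-1}\otimes R/\mathfrak{a}(\lambda))$, forgetting the grading, as $\chi_\lambda\otimes R/\mathfrak{a}(\lambda)$ twisted by an explicit $H$-character read off from (\ref{eq:id:al}) (the $H$-invariant $h_j$ contributing nothing to the twist); a direct, if somewhat intricate, computation then identifies the resulting direct sum over $\mathscr{P}$ with $N^{\oplus r}\otimes(\det(\brho)\omega^{-1})$ by means of an explicit involution of $\mathscr{P}$, which is (\ref{eq:autodual-gr}). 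So the heart of the proof is (\ref{eq:gr(pi)-bis}).

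By \cite[Thm.~3.3.2.1]{BHHMS2} — which uses only \ref{it:assum-i} and \ref{it:assum-ii} — there is a surjection of graded $\gr(\Lambda)$-modules with compatible $H$-action $N^{\oplus r}\twoheadrightarrow\gr_\m(\pi^{\vee})$. As just noted $N^{\oplus r}$ is Cohen--Macaulay, hence pure, and (since it, and therefore its quotient $\gr_\m(\pi^{\vee})$, is killed by $(y_jz_j:0\le j\le f-1)$) every nonzero graded $\gr(\Lambda)$-submodule of $N^{\oplus r}$ has nonzero characteristic cycle $\mathcal{Z}(-)$. As $\mathcal{Z}(-)$ is additive on short exact sequences \cite[Lemma~3.3.4.2]{BHHMS2}, it suffices to prove $\mathcal{Z}(N^{\oplus r})=\mathcal{Z}(\gr_\m(\pi^{\vee}))$: the kernel of the surjection then has zero cycle, so it vanishes and (\ref{eq:gr(pi)-bis}) follows.

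For this cycle identity I would construct, using \ref{it:assum-i}, \ref{it:assum-ii}, \ref{it:assum-iv} and the $9$-genericity of $\brho$, a complex $P_\bullet$ of finite free filtered $\Lambda$-modules with compatible $H$-action, together with an augmentation $P_0\twoheadrightarrow\pi^{\vee}$ making $P_\bullet\to\pi^{\vee}$ a resolution, such that the associated graded complex $\gr(P_\bullet)$ of graded $\gr(\Lambda)$-modules satisfies $H_0(\gr(P_\bullet))\cong N^{\oplus r}$ and $H_1(\gr(P_\bullet))=0$. Concretely the aim is an isomorphism
\[\gr(P_\bullet)\;\cong\;\bigoplus_{\lambda\in\mathscr{P}}\bigl(\chi_\lambda^{-1}\otimes C_\bullet(\lambda)\bigr)^{\oplus r},\]
where $C_\bullet(\lambda)$ is the free $\gr(\Lambda)$-resolution of $R/\mathfrak{a}(\lambda)$ obtained as the tensor product over $j$ of the length-two Koszul complexes on the pairs $(h_j,t_j(\lambda))$ — a resolution with Betti numbers $\binom{2f}{i}$ and with $H_0(C_\bullet(\lambda))=R/\mathfrak{a}(\lambda)$, $H_{>0}(C_\bullet(\lambda))=0$. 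Given such a $P_\bullet$, the filtered complex gives a convergent spectral sequence with $E^1_i\cong H_i(\gr(P_\bullet))$ abutting to $H_i(P_\bullet)$ equipped with the filtration induced from $P_0$ (\cite[\S~III.1]{LiOy}); since $E^1_1=H_1(\gr(P_\bullet))=0$, no differential enters or leaves the bottom row, so $\gr H_0(P_\bullet)=E^\infty_0=E^1_0\cong N^{\oplus r}$. Thus $\gr_F(\pi^{\vee})\cong N^{\oplus r}$ for the induced filtration $F$ on $\pi^{\vee}$, and \cite[Lemma~3.3.4.3]{BHHMS2} then gives $\mathcal{Z}(\gr_\m(\pi^{\vee}))=\mathcal{Z}(\gr_F(\pi^{\vee}))=\mathcal{Z}(N^{\oplus r})$.

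The construction of $P_\bullet$ is where the real work lies, and the three hypotheses enter as follows. Assumption \ref{it:assum-i} identifies $\pi^{\vee}/\m\pi^{\vee}\cong(\pi^{I_1})^{\vee}\cong\bigoplus_{\lambda\in\mathscr{P}}(\chi_\lambda^{-1})^{\oplus r}$, which fixes $P_0$ and the augmentation $\varepsilon$; assumption \ref{it:assum-ii} controls $\pi^{\vee}/\m^3\pi^{\vee}$ and thereby identifies the first syzygies of $\pi^{\vee}$ with lifts to $\Lambda$ of the $h_j$ and the $t_j(\lambda)$; and assumption \ref{it:assum-iv}, equivalently (by Remark~\ref{rem:hyp4}) the equality $\Tor_i^{\Lambda}(\F,\pi^{\vee})\cong\bigoplus_{\lambda\in\mathscr{P}}(\chi_\lambda^{-1})^{\oplus\binom{2f}{i}r}$, pins down the Betti numbers and the $H$-structure of a minimal free resolution of $\pi^{\vee}$, in every homological degree, to be exactly those of the Koszul model above. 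One then assembles $P_\bullet$ by lifting the Koszul differentials to $\Lambda$: since $\gr(\Lambda)$ is only almost commutative ($[y_j,z_j]=h_j$), the naive lifts satisfy the complex identity and the Koszul relations merely modulo terms of lower filtration, so the differentials of $P_\bullet$ must be corrected by explicit higher-filtration terms while preserving $H$-equivariance; the dimension count supplied by \ref{it:assum-iv} is precisely what forces the resulting $\gr(P_\bullet)$ to be the Koszul model — in particular $H_1(\gr(P_\bullet))=0$ — and forces $\varepsilon$ to extend to a genuine resolution. I expect this last point — realizing an honest filtered free resolution over the noncommutative ring $\Lambda$ whose associated graded is the clean Koszul complex, while tracking the $H$-action and controlling the first graded homology — to be the only serious obstacle; the genericity of $\brho$ is used throughout to ensure that $\mathscr{P}$, the recipe (\ref{eq:id:al}) for $\mathfrak{a}(\lambda)$, and the low-degree structure isolated by \ref{it:assum-ii} behave exactly as in the model.
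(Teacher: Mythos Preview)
Your overall architecture matches the paper exactly: reduce (\ref{eq:gr(pi)-bis}) to the cycle equality $\mathcal{Z}(N^{\oplus r})=\mathcal{Z}(\gr_\m(\pi^\vee))$ via the known surjection and purity, then produce a filtered complex $P_\bullet$ resolving $\pi^\vee$ with $H_0(\gr(P_\bullet))\cong N^{\oplus r}$ and $H_1(\gr(P_\bullet))=0$, and conclude by the spectral sequence and \cite[Lemma~3.3.4.3]{BHHMS2}. The derivation of Cohen--Macaulayness and essential self-duality from (\ref{eq:gr(pi)-bis}) is also correct; the paper quotes \cite[Prop.\ 3.3.1.10]{BHHMS2} for the involution on $\mathscr{P}$ realizing (\ref{eq:autodual-gr}).

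The gap is in the construction of $P_\bullet$. You propose to lift the Koszul differentials of $\bigoplus_\lambda C_\bullet(\lambda)$ directly to $\Lambda$ and then argue that assumption \ref{it:assum-iv} ``forces'' $\gr(P_\bullet)$ to be the Koszul model. But \ref{it:assum-iv} only tells you the Betti numbers $\dim_\F\Tor_i^\Lambda(\F,\pi^\vee)$; it does not by itself control the graded structure of a filtered complex you have built by hand, nor does it guarantee that your lifted complex actually resolves $\pi^\vee$. Without already knowing $\gr_\m(\pi^\vee)\cong N^{\oplus r}$ you cannot invoke \cite[Cor.~I.7.2.9]{LiOy} to lift a resolution of $N^{\oplus r}$ to one of $\pi^\vee$, which is precisely the circularity you need to break.

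The paper breaks it by an indirect route that you are missing. One does \emph{not} construct $P_\bullet$ as filtered from scratch; one takes any minimal free resolution $P_\bullet$ of $\pi^\vee$ (no filtration), and only needs $P_0,P_1,P_2$. The filtration comes from an auxiliary finite-dimensional $I$-representation $\tau=\tau^{(3)}\hookrightarrow\pi|_I$ with $\gr_\m(\tau^\vee)\cong N/\mathcal{I}N$ (Lemmas~\ref{lem:tau-embed}, \ref{lem:isom-modcI}); its explicit minimal gr-free resolution $G_\bullet=G'_\bullet\oplus G''_\bullet$ (Lemma~\ref{lem:min-graded-splitting}, from \cite[\S~9]{HuWang2}) lifts to a filt-free and minimal resolution $L_\bullet$ of $\tau^\vee$ (Lemma~\ref{lem:resol-tau}). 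One then maps $P_\bullet\to L_\bullet$ over $\pi^\vee\twoheadrightarrow\tau^\vee$ and proves the key Tor injectivity (Proposition~\ref{prop:Tor-inj}): $\Tor_i^\Lambda(\F,\pi^\vee)\hookrightarrow\Tor_i^\Lambda(\F,\tau^\vee)$ for $i\le 2$. This is where $9$-genericity is genuinely used (to get the isomorphism $N\to\gr_\m(\pi^\vee)$ in degrees $\ge -4$ via Lemma~\ref{lem:isom-modcI} with $n=5$, needed in Step~3 of that proof). The injectivity makes each $P_i$ a direct summand of $L_i$; one then equips $P_i$ with the \emph{induced} filtration from $L_i$ and uses the $H$-module comparison (Lemmas~\ref{lem:crit-free-bis}, \ref{lem:resol-tau}, together with Lemma~\ref{lem:Tor-N}) to force $\gr(P_i)=G'_i$ for $i\le 2$. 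Since $G'_\bullet$ resolves $N^{\oplus r}$, this gives exactly $H_0(\gr(P_\bullet))\cong N^{\oplus r}$ and $H_1(\gr(P_\bullet))=0$. Note that the comparison lemma is only verified for $i\le 2$ (Remark~\ref{rem:resol-tau}), so the restriction to low homological degrees is essential, contrary to your picture of a full filtered resolution.
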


\begin{rem}\label{rem:CMC}
  The fact that $\gr_\m(\pi^{\vee})$ is Cohen--Macaulay as $\gr(\Lambda)$-module implies that $\pi^\vee$ is Cohen--Macaulay as $\Lambda$-module \cite[Prop.~III.2.2.4]{LiOy}.
But this was already known by (the proof of) \cite[Prop.~A.8]{HuWang2} when $r=1$.
\end{rem}

\begin{rem}\label{rem:intro}
  The isomorphism~(\ref{eq:gr(pi)-bis}) together with the proof of Corollary~\ref{cor:dual-of-N} show that the isomorphism~(\ref{eq:autodual-gr}) cannot respect the grading, even up to shift.
Namely, $\F\otimes_{\gr(\Lambda)} \EE^{2f}_{\gr(\Lambda)}(\gr_\m(\pi^\vee)) $ is not supported in just one degree.
\end{rem}

The proof of Theorem~\ref{thm:CMC} will be given in \S~\ref{sec:proof-theorem}.
In Proposition~\ref{prop:dim-Ext} we verify that a globally defined $\pi = \pi(\brho)$ satisfies assumption~\ref{it:assum-iv} (see \S~\ref{sec:verify-assumpt-iv} below for details).
 We note that some cases of assumption~\ref{it:assum-iv} were established in \cite[Prop.\ 10.10, Cor.\ 10.11]{HuWang2} when $\brho$ is nonsplit reducible.

\subsection{Preliminaries on filtered and graded modules}
\label{sec:prelim}

Following \cite[\S~I.6]{LiOy}, a finitely generated filtered $\Lambda$-module $L$ is called \emph{filt-free}  if it is free as a $\Lambda$-module with basis $(e_i)_{1\leq i\leq n}$ 
having the property that there exists a family $(k_i)_{1\leq i\leq n}$ of integers such that 
\begin{equation*}
  F_kL=\bigoplus_{1\leq i\leq n}(F_{k-k_i}\Lambda)e_i,\ \ \forall\ k\in\Z.
\end{equation*}
For convenience, we call $(e_i)_{1\leq i\leq n}$ a \emph{filt-basis} of $L$. Equivalently, $L$ is filt-free if and only if $L\cong \bigoplus_{i=1}^n\Lambda(-k_i)$ for some integers $k_i$.
(We remark that \cite{LiOy} add the condition $e_i\notin F_{k_i-1}L$, but this is automatic over a separated ring, and should not be demanded otherwise because of \cite[Lemma I.6.2(1)]{LiOy}.)

 If $L$ is a filt-free module and $L'$ is a submodule which is itself a free $\Lambda$-module, then $L'$, equipped with the induced filtration, need not be filt-free in general, even if $L'$ is a direct summand of $L$ as $\Lambda$-modules (see Remark~\ref{rem:not-filt-free}). However, we will see that this is true in some special cases (see Lemma \ref{lem:crit-free-bis}).

\begin{rem}\label{rem:filt-exam}
 Consider the filt-free module $L=\Lambda(0)\oplus \Lambda(-2)$, with filt-basis $(e_1,e_2)$. Let $e'=xe_1+e_2$, with $x\in\Lambda$ and $L'\defeq \Lambda e'$. Then $L'$ is a direct summand of $L$ as a $\Lambda$-module. One checks that, equipped with the induced filtration $L'$ is isomorphic to $\Lambda(-2)$, and $\gr(L')$ is a direct summand of $\gr(L)$.\\
 \indent However, if we take $e''= e_1+xe_2$ with $x\in\m\backslash \m^2$ and $L''\defeq \Lambda e''$ equipped with the induced filtration, then the morphism $\F\otimes_{\gr(\Lambda)}\gr(L'')\ra\F\otimes_{\gr(\Lambda)}\gr(L)$ is zero. Note that $L''$ is still filt-free (isomorphic to $\Lambda(-1)$).
\end{rem}

\begin{rem}\label{rem:not-filt-free}
  Suppose $L = \Lambda(0)\oplus \Lambda(0)\oplus \Lambda(-2)$, with a filt-basis $(e_1,e_2,e_3)$.
  Let $L'$ be the submodule generated by $f_1 \defeq  e_1+Y_0e_3$ and $f_2 \defeq  e_2+Z_0e_3$, with induced filtration, where $Y_0, Z_0 \in \m\setminus \m^2$ with $\gr(Y_0) = y_0$, $\gr(Z_0) = z_0$.
  Then it is easy to check that $L'$ is a direct summand as $\Lambda$-module, which is not filt-free because $F_1L' = L'$, $F_0 L' = \m L'$ but $F_{-1}L'$ is strictly bigger
  than $\m^2 L'$ (it contains $Z_0 f_1 - Y_0 f_2$).
\end{rem}

Recall that, if $A$ is a noetherian domain, then the nonzero elements form an Ore set and we can talk about its skew field of fractions (\cite[Thm.\ 6.8]{goodearl-warfield}). Therefore, any finitely generated $A$-module has a generic rank. 
In particular, this applies  to the case $A=\gr(\Lambda)$ {or $A=\Lambda$}. 
Moreover, if $L$ is a filtered $\Lambda$-module with a good filtration, then $\gr(L)$ has a generic rank that is independent of the choice of good filtration.
(This can be proved just as in the proof of \cite[Prop.\ 3.3]{Bj89}, cf.\ the proof of \cite[Lemma 3.81]{BHHMS2}.)

The next criterion reflects some features of Remark \ref{rem:filt-exam}.

\begin{lem}\label{lem:crit-free-bis}
Let $L$ be a filt-free $\Lambda$-module with compatible $H$-action. Assume that $L$ admits a direct sum decomposition of  filtered $\Lambda$-modules $L=L'\oplus L''$ compatible with $H$-action, with the following properties:
\begin{enumerate}
\item As filtered $\Lambda$-modules we have\[L' \cong \bigoplus_{i=1}^m\Lambda(-k_i),\ \ L''\cong \bigoplus_{j=m+1}^n\Lambda(-\ell_j)\]
 with $k_i\geq \ell_j$ for any pair $(i,j)$.
\item As $H$-modules, $\JH(\F\otimes_{\Lambda} L')\cap \JH(\F\otimes_{\Lambda}L'')=\emptyset$.
\end{enumerate}
Assume that $P$ is an $H$-stable direct summand of $L$ such that the composition
\begin{equation}\label{eq:cond-free-bis}\F\otimes_{\Lambda} P\into \F\otimes_{\Lambda}L\onto \F\otimes_{\Lambda}L'\end{equation}
is an isomorphism, where the second morphism is induced by the projection $L=L'\oplus L''\twoheadrightarrow L'$. Then $P$, equipped with the induced filtration, is filt-free and we have an equality $\gr(P)=\gr(L')$ inside $\gr(L)$.
\end{lem}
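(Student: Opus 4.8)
The plan is to reduce the statement to an application of a Nakayama-type argument in the graded category, using purity/filt-freeness of $L$ together with hypotheses (1) and (2) to control the interaction between $P$ and the two summands. First I would choose a homogeneous $H$-equivariant generating set. Since $L$ is filt-free with compatible $H$-action and $P$ is an $H$-stable direct summand of $L$, the quotient $\F\otimes_\Lambda P$ is a sub-$H$-module of $\F\otimes_\Lambda L \cong (\F\otimes_\Lambda L')\oplus(\F\otimes_\Lambda L'')$, and by hypothesis (\ref{eq:cond-free-bis}) the projection to $\F\otimes_\Lambda L'$ restricts to an isomorphism on it. Writing $m=\dim_\F(\F\otimes_\Lambda L')$, pick $H$-eigenvectors $\bar p_1,\dots,\bar p_m \in \F\otimes_\Lambda P$ forming an eigenbasis, lift them to $H$-eigenvectors $p_1,\dots,p_m\in P$, and give $P$ the induced filtration. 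By construction the image of $p_i$ in $\F\otimes_\Lambda L'$ is a nonzero eigenvector, equal to some $\bar e_i$ up to scalar, where $e_i$ is the $i$-th filt-basis element of $L'$ sitting in filtration degree $k_i$; so after relabelling I may take $p_i \equiv e_i \pmod{\m L + L''}$, i.e.\ $p_i = e_i + \m e_i' + (\text{element of } L'')$ for suitable elements, and crucially $p_i$ lies in filtration degree $k_i$ since $e_i$ does and the correction terms, being in $\m L$ or in $L''$ (whose summands all sit in degrees $\ell_j \le k_i$, but with the $L''$-component necessarily in $\m L''$ by hypothesis (2) — see below) — raise or preserve filtration degree appropriately. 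Here hypothesis (2) is what forces the $L''$-component of $p_i$ to lie in $\m L''$: its image in $\F\otimes_\Lambda L''$ is an $\alpha$-eigenvector for the eigencharacter of $\bar e_i$, which by (2) does not occur in $\JH(\F\otimes_\Lambda L'')$, hence that image is $0$.

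Next I would show the $\bar p_i$ generate $\F\otimes_\Lambda P$ (immediate, they are a basis by (\ref{eq:cond-free-bis})), hence by topological Nakayama over the local ring $\Lambda$ the $p_i$ generate $P$ as a $\Lambda$-module; since $P$ is a direct summand of the free module $L$ it is projective, hence free, and as it is generated by $m$ elements and has generic rank $m$ (because $\F\otimes_\Lambda P$ is $m$-dimensional and $\Lambda$ is local noetherian, so minimal generators are linearly independent generically), $(p_1,\dots,p_m)$ is a free basis of $P$. Then I would verify that $(p_i)$ is in fact a \emph{filt}-basis of $P$ for the induced filtration, with $p_i$ in degree $k_i$: the inclusion $F_k P \supseteq \bigoplus_i F_{k-k_i}\Lambda\, p_i$ is clear; for the reverse, an element $\sum \mu_i p_i \in F_k P = P \cap F_k L$ must, upon projecting to $L' = \bigoplus F_{-k_i}\Lambda e_i$ (which is a strict/filtered-split summand of $L$ and on which the $p_i$ reduce to the $e_i$ modulo higher filtration), have $\mu_i \in F_{k-k_i}\Lambda$ — here one uses that the comparison is modulo $\m L + L''$, that the $L''$-contributions of the $p_i$ lie in $\m L''$ with $L''$ in lower-or-equal degrees by (1), and runs an induction on filtration degree (equivalently, passes to the associated graded). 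Concretely, this last point is cleanest phrased as: $\gr(P) \subseteq \gr(L)$ is the graded submodule generated by $\gr(p_i) = \mathrm{cl}_{k_i}(p_i)$, and $\mathrm{cl}_{k_i}(p_i) = \mathrm{cl}_{k_i}(e_i) \in \gr(L')_{k_i}$ exactly because the error terms $p_i - e_i$ lie in $F_{k_i-1}L$: indeed the $\m e_i'$-part does, and the $L''$-part lies in $\m L'' \subseteq \sum_j F_{\ell_j - 1}\Lambda\, e_j \subseteq F_{k_i-1}L$ using $\ell_j \le k_i$. Therefore $\gr(p_i)=\gr(e_i)$, so $\gr(P) = \sum_i \gr(\Lambda) \gr(e_i) = \gr(L')$ inside $\gr(L)$, and since $\gr(L')$ is gr-free with basis the $\gr(e_i)$, $P$ is filt-free by \cite[Lemma~I.6.4(3)]{LiOy}.

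I would finish by noting that all choices (eigenvector lifts, relabelling) are $H$-equivariant, so the equality $\gr(P)=\gr(L')$ holds as graded $\gr(\Lambda)$-modules with compatible $H$-action, and the filt-free structure of $P$ is $H$-stable. The main obstacle is the degree bookkeeping in the middle paragraph — precisely, ensuring that the chosen lifts $p_i$ can be arranged to sit in filtration degree exactly $k_i$ (not lower) and that their associated graded classes land in the $L'$-summand rather than acquiring a spurious $L''$-component; this is exactly where hypothesis (1) ($k_i \ge \ell_j$, so $L''$ lives in "lower" degrees and any $L''$-contribution to $p_i$ is automatically in $\m L''$ hence in $F_{k_i-1}L$) and hypothesis (2) (the $H$-eigencharacters separate $L'$ from $L''$ mod $\m$, killing the degree-$k_i$ part of any $L''$-contribution) are both indispensable — Remark \ref{rem:not-filt-free} shows what goes wrong without them, and Remark \ref{rem:filt-exam} illustrates the borderline behaviour the two hypotheses are designed to exclude.
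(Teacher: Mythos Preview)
Your overall strategy is sound, but there is a genuine gap at the step you yourself flag as ``the main obstacle,'' and it is not just bookkeeping. The claim that $p_i - e_i \in F_{k_i-1}L$ (equivalently, that $p_i$ sits in degree $k_i$ with $\gr(p_i)=\gr(e_i)$) can fail for a generic $H$-eigenvector lift $p_i$. Write $p_i = e_i + a_i + b_i$ with $a_i \in \m L'$, $b_i \in \m L''$; hypothesis (i) controls only $b_i$ (since $\m L'' \subset F_{k_i-1}L$), while $a_i \in \m L'$ only gives $a_i \in F_{k_{\max}-1}L$ with $k_{\max} = \max_{i'} k_{i'}$, and when $k_i < k_{\max}$ this need not lie in $F_{k_i-1}L$ or even in $F_{k_i}L$. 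Concretely: take $L' = \Lambda(-3)_{\chi_1} \oplus \Lambda(-1)_{\chi_2}$, $L'' = \Lambda(-1)_{\chi_3}$ with the $\chi_i$ pairwise distinct, and $P$ spanned by $q_1 = e_1$ and $q_2 = e_2 + Z e_1 + Y e_3$ with $Z \in \m \setminus \m^2$ of eigencharacter $\chi_2\chi_1^{-1}$. Then your lift $p_2 = q_2$ lies in $F_2 L \setminus F_1 L$, so $p_2$ has degree $2 \ne k_2 = 1$; both the inclusion $F_k P \supseteq \bigoplus_i F_{k-k_i}\Lambda\, p_i$ and the identity $\gr(p_2)=\gr(e_2)$ fail. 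Hypothesis (ii) says nothing about eigencharacters occurring in $\m L'$, so it does not rescue this.

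The paper avoids the problem by reversing the construction: rather than lifting from $\F\otimes_\Lambda P$ and decomposing along $L = L' \oplus L''$, it first deduces from your Nakayama step that the composite $P \hookrightarrow L \twoheadrightarrow L'$ is an isomorphism, whence $L = P \oplus L''$ as $\Lambda$-modules, and then writes $e_i = f_i + g_i$ uniquely with $f_i \in P$, $g_i \in L''$. The entire error $g_i$ now lives in $L''$, with \emph{no} $\m L'$-component to control. Since $P$ and $L''$ are $H$-stable, $g_i$ is an eigenvector with the character of $e_i$, so hypothesis (ii) forces $g_i \in \m L''$; then hypothesis (i) gives $g_i \in F_{k_i-1}L$, whence the principal part of $f_i$ equals $\gr(e_i)$ and $\gr(L') \subset \gr(P)$. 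Finally one still needs a rank argument to upgrade this inclusion to an equality (the paper's Lemma~\ref{lem:G'}: $\gr(L')$ is a direct summand of the free module $\gr(L)$ and $\gr(P)$ has the same generic rank $m$). In your counterexample the ``correct'' lift is exactly $f_2 = q_2 - Z q_1 = e_2 + Y e_3$, which does have degree $k_2 = 1$.
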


\begin{rem}\label{rem:cond-free}
Keep the notation of Lemma \ref{lem:crit-free-bis}. Under hypothesis (ii), the composition \eqref{eq:cond-free-bis} is {automatically} an isomorphism provided that $\F\otimes_{\Lambda}P \cong \F\otimes_{\Lambda}L'$ as $H$-modules.  
\end{rem}
\begin{proof}
Let $(e_1,\dots,e_m)$ be a filt-basis of $L'$ with $e_i$ of degree $k_i$, and similarly $(e_{m+1},\dots,e_n)$ a filt-basis of $L''$ with $e_j$ of degree $l_j$. We may require that each $e_i$ is an eigenvector of $H$ ($1\leq i\leq n$), {as $H$ preserves degrees}. By Nakayama's lemma, the surjectivity of \eqref{eq:cond-free-bis} implies that the composition
$\widetilde{\phi}:P\into L\onto L'$
is also surjective. Since $L'$ is free, $P$ splits as $L'\oplus N'$ for some submodule $N'$ of $P$, but the injectivity of \eqref{eq:cond-free-bis} implies that $\F\otimes_{\Lambda}N'=0$, hence $N'=0$ by Nakayama's lemma again.  We deduce that $\widetilde{\phi}$ is an isomorphism and that $P$ is free of rank $m$. Hence, $L = P \oplus L''$ and we may write uniquely
\[e_i=f_i+g_i,\ \ 1\leq i\leq m,\]
where $f_i\in P$ and $g_i\in L''$.
Since $P$ is $H$-stable, it follows that $f_i,g_i$ are eigenvectors of $H$ {with the same eigencharacter as $e_i$}.
Condition (ii) then forces that $g_i\in\m L''$ for $1\leq i\leq m$.

We claim that $f_i\in F_{k_i}L$ but $f_i\notin F_{k_i-1}L$. Indeed, we have
\[F_{k_i}L= F_{k_i}L'\oplus F_{k_i}L''=F_{k_i}L'\oplus \big(\bigoplus_{j=m+1}^n(F_{k_i-l_j}\Lambda)e_j\big)\supset F_{k_i}L'\oplus L'' \]
as $k_i\geq l_j$ for any pair $(i,j)$ by hypothesis (i), hence $f_i\in F_{k_i}L$. On the other hand,
\begin{equation}\label{eq:F(ki-1)}F_{k_i-1}L=F_{k_i-1}L'\oplus \big(\bigoplus_{j=m+1}^n(F_{k_i-l_j-1}\Lambda)e_j\big)\supset F_{k_i-1}L'\oplus \m L''\end{equation}
thus $f_i\notin F_{k_i-1}L$ because $e_i\notin F_{k_i-1}L'$ by choice. This proves the claim.

Now, since $P$ is equipped with the induced filtration from $L$, the claim implies that $f_i\in F_{k_i}P$ but $f_i\notin F_{k_i-1}P$.
On the other hand, since $\bigoplus_{j=m+1}^n \m e_j\subset F_{k_i-1}L$ by \eqref{eq:F(ki-1)}, we have $g_i\in F_{k_i-1}L$ and the associated principal part of $f_i$ equals that of $e_i$. Since $\gr(L')$ is generated by the principal parts of $(e_i)_{1\leq i\leq m}$, we obtain an inclusion $\gr(L')\subset \gr(P)$. However, since $P$ has rank $m$, the generic rank of $\gr(P)$ is also equal to $m$ as observed above, hence by Lemma \ref{lem:G'} below (applied with $A=\gr(\Lambda)$ and $M=\gr(L)$) we deduce an equality $\gr(P)=\gr(L')$. In particular, $\gr(P)$ is gr-free (see \cite[\S~I.4.1]{LiOy}), and consequently $P$ is filt-free by \cite[Lemma~I.6.4(3)]{LiOy}.
\end{proof}

\begin{lem}\label{lem:G'}
Let $A$ be a noetherian domain and $M$ be a finite free $A$-module. Assume that there exist $A$-submodules $M'\subset M''$ of $M$ such that 
\begin{enumerate}
\item $M'$ is a direct summand of $M$;
\item $M'$ and $M''$ have the same generic rank.
\end{enumerate}
Then $M'=M''$.
\end{lem}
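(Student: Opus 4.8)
The plan is to reduce the statement to a pure-rank argument over the skew field of fractions of $A$. First I would observe that since $M'$ is a direct summand of $M$, we can write $M = M' \oplus C$ for some submodule $C$ of $M$, and the quotient $M/M' \cong C$ is torsion-free (being a submodule of the free module $M$). The inclusion $M' \subset M''$ induces an injection $M''/M' \hookrightarrow M/M' \cong C$, so $M''/M'$ is a torsion-free $A$-module.

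Next I would compute generic ranks. Let $Q$ denote the skew field (here $A$ is a commutative domain, so just the fraction field) of fractions of $A$; tensoring the short exact sequence $0 \to M' \to M'' \to M''/M' \to 0$ with $Q$ over $A$ is exact, so $\operatorname{rk}(M''/M') = \operatorname{rk}(M'') - \operatorname{rk}(M') = 0$ by hypothesis (ii). Hence $M''/M'$ is a torsion module. But we have just shown $M''/M'$ is torsion-free, being a submodule of the free (in particular torsion-free) module $C$. A module that is both torsion and torsion-free is zero, so $M''/M' = 0$, i.e.\ $M' = M''$.

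I would phrase it cleanly as follows: by (i) the quotient $M/M'$ embeds into $M$ (via a splitting) and is therefore torsion-free; the submodule $M''/M' \subseteq M/M'$ is thus torsion-free as well; by (ii) it has generic rank zero, so it is torsion; a torsion-free torsion module vanishes, whence $M'=M''$.

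There is no real obstacle here — the only point requiring a little care is the exactness of $-\otimes_A Q$ (which holds since $Q$ is a localization of $A$, hence flat) and the fact that a direct summand of a free module has a torsion-free complement; both are standard. In the noncommutative setting envisaged in Lemma~\ref{lem:G'} (where $A$ could a priori be a noncommutative noetherian domain, as $A = \gr(\Lambda)$ is in the application), one replaces ``fraction field'' by the Ore localization at the nonzero elements, which exists by \cite[Thm.\ 6.8]{goodearl-warfield}; the localization functor is still exact, and a submodule of a free module is still torsion-free with respect to this Ore set, so the argument goes through verbatim.
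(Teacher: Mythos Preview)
Your proof is correct and follows essentially the same approach as the paper: both split $M = M' \oplus C$ and argue that $M''/M' \cong M'' \cap C$, being a submodule of the free module $M$, is torsion-free, hence zero since it has generic rank zero by hypothesis (ii). The only cosmetic difference is that the paper works with $M'' \cap C$ directly rather than the quotient $M''/M'$.
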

\begin{proof}
By (i) we have $M=M'\oplus C$ for some $A$-submodule $C$ of $M$. Since $M'\subset M''$, it is easy to check that
\begin{equation}\label{eq:directsumm}
	M''=M'\oplus (M''\cap C).
	\end{equation}
We need to prove that $M''\cap C=0$. If this were not the case, then $M''\cap C$ would have a nonzero generic rank (as $M$ is free, hence torsion-free), and the generic rank of $M''$ would be strictly greater than that of $M'$, which contradicts (ii).
\end{proof}

The following lemma will be useful later.

\begin{lem}\label{lem:free-CA}
Let $\phi: P\ra L$ be a morphism between two free $\Lambda$-modules of finite rank. Assume that $\overline{\phi}: \F\otimes_{\Lambda} P\ra \F\otimes_{\Lambda} L$ is injective. Then $\phi$ is also injective and identifies $P$ with a direct summand of $L$.

The same statement holds if $P$ and $L$ are two gr-free $\gr(\Lambda)$-modules of finite rank and $\phi$ is a graded morphism.
\end{lem}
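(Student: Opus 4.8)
The statement is a Nakayama-type splitting criterion: a map of finitely generated free modules over the (complete, local, noetherian) ring $\Lambda$ which is injective after $\otimes_\Lambda \F$ must be a split injection. The plan is as follows. Fix bases of $P$ and $L$, so that $\phi$ is given by a matrix. Since $\overline\phi$ is injective, the images $\overline{\phi(e_1)},\dots,\overline{\phi(e_m)}$ of a basis $e_1,\dots,e_m$ of $P$ are $\F$-linearly independent in $\F\otimes_\Lambda L \cong \F^n$. Extend them to an $\F$-basis by adjoining the images of some subset $e_{m+1}',\dots,e_n'$ of the standard basis of $L$. Then the family $\phi(e_1),\dots,\phi(e_m),e_{m+1}',\dots,e_n'$ of elements of $L$ reduces mod $\m$ to a basis of $\F\otimes_\Lambda L$, hence by Nakayama's lemma it generates $L$; as $L$ is free of rank $n$ and we have exactly $n$ generators, it is in fact a basis of $L$ (a surjective endomorphism of a finitely generated module over a noetherian ring — or simply: a square matrix over a local ring which is invertible mod $\m$ is invertible). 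In particular the $\Lambda$-submodule $\langle \phi(e_1),\dots,\phi(e_m)\rangle$ is free with that basis and is a direct summand of $L$, complemented by $\langle e_{m+1}',\dots,e_n'\rangle$. Since $\phi$ sends the basis $(e_i)$ of $P$ to the basis $(\phi(e_i))$ of this summand, $\phi$ is injective and identifies $P$ with that direct summand.

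The only point requiring a little care is the claim that $n$ elements of a free rank-$n$ module over $\Lambda$ which generate it form a basis; this follows because the matrix expressing them in a fixed basis is square and reduces to an invertible matrix over the residue field $\F$, hence has a unit determinant (or is invertible by the local criterion), so it is invertible over $\Lambda$. Alternatively one may invoke that a surjective endomorphism of a noetherian module is an isomorphism. Either way this is standard commutative (or, here, noncommutative-over-local) algebra and is not the main obstacle — indeed there is essentially no obstacle, the lemma being elementary.

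For the graded variant the identical argument works verbatim with $\Lambda$ replaced by $\gr(\Lambda)$ and $\m$ by the augmentation ideal: $\gr(\Lambda)$ is a graded ring with $\gr(\Lambda)_0 = \F$ and the relevant version of Nakayama's lemma holds for finitely generated graded modules (the homogeneous elements reducing to a basis of $\F \otimes_{\gr(\Lambda)} L$ generate $L$, and a graded square matrix invertible modulo the positive-degree part is invertible). One chooses $e_1,\dots,e_m$ and $e_{m+1}',\dots,e_n'$ to be homogeneous, which is possible since $\overline\phi$ is a graded map, and then runs the same reasoning. One small remark: to extend the independent set $\overline{\phi(e_i)}$ to a homogeneous basis of $\F\otimes_{\gr(\Lambda)}L$ one should take the $e_j'$ among a fixed homogeneous basis of $L$, which exists because $L$ is gr-free. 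No new difficulty arises.

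In summary, the proof is a direct application of Nakayama's lemma (graded or ungraded) together with the fact that a square matrix over a local ring that is invertible modulo the maximal (resp.\ augmentation) ideal is invertible; there is no genuinely hard step.
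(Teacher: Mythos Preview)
Your argument is correct and is essentially the standard Nakayama-type proof; the paper simply cites \cite[Lemma 1.3.4(b)]{BH93} and remarks that its proof extends to the noncommutative noetherian local ring $\Lambda$, which is exactly what you have written out. One small caveat: your first justification of step~5 via a ``unit determinant'' does not make sense over the noncommutative ring $\Lambda$, but your alternative (a surjective endomorphism of a noetherian module is an isomorphism, or equivalently that a square matrix over a local ring which is invertible modulo the Jacobson radical is invertible) is valid and suffices, so there is no gap.
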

\begin{proof}
The first statement is a special case of \cite[Lemma 1.3.4(b)]{BH93} {whose proof extends to the noncommutative noetherian local ring $\Lambda$}. 

The proof in the graded case is similar, noting that $\gr(\Lambda)$ is a graded local ring (supported in degrees $\le 0$).
\end{proof}

Suppose that $R = \bigoplus_{d \le 0} R_d$ is a negatively graded ring and that $M$ is a graded $R$-module (here $R$ is not necessarily the ring of \S~\ref{sec:notation}). Working in the category of graded $R$-modules (with graded morphisms of degree 0), for any $n \in \Z$ we can form the quotient object $M_{\ge n} \defeq  M/\bigoplus_{d < n} M_d$, and moreover the functor $M \mapsto M_{\ge n}$ is exact.
This construction applies in particular to graded abelian groups (i.e.~$R = \Z$ supported in degree 0).
If $N$ is any graded right $R$-module, then $N \otimes_R M$ is naturally a graded abelian group, where $(N \otimes_R M)_d$ is generated by all $n \otimes m$ with $n \in N_i$, $m \in M_j$, $i+j = d$ \cite[\S~I.4.1]{LiOy}.
As the functor that forgets the grading is exact, we see (for example by \cite[Ex.\ 2.4.2]{weibel}) that the usual Tor functors $\Tor_i^R(N,M)$ are naturally graded abelian groups.

\begin{lem}\label{lem:graded-Tor}
  Suppose that $n, i \ge 0$ and that $N$ is supported in degree $0$.
  \begin{enumerate}
  \item We have a canonical isomorphism $(N \otimes_R M)_{\ge n} \cong N \otimes_R (M_{\ge n})$ of graded abelian groups. 
      \item If $M \to M'$ is a morphism in the category of graded $R$-modules inducing an isomorphism $M_{\ge n} \congto M'_{\ge n}$, then the natural map $\Tor_i^R(N,M)_{\ge n} \to \Tor_i^R(N,M')_{\ge n}$ of graded abelian groups is an isomorphism.
  \end{enumerate}
\end{lem}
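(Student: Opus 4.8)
The plan is to prove both parts by reducing to the ungraded statement, where everything is standard, and then extracting the graded pieces degree by degree.

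For part (i), I would first note that $M_{\ge n}$ is by definition the cokernel of the graded inclusion $\bigoplus_{d<n} M_d \hookrightarrow M$, where $\bigoplus_{d<n}M_d$ is a graded submodule of $M$ (it is an $R$-submodule precisely because $R$ is negatively graded: multiplying a homogeneous element of degree $<n$ by a homogeneous element of $R$ of degree $\le 0$ lands again in degree $<n$). Tensoring the short exact sequence $0 \to \bigoplus_{d<n}M_d \to M \to M_{\ge n}\to 0$ with the right module $N$ on the left and using right-exactness of $N\otimes_R(-)$, I get a right-exact sequence $N\otimes_R\bigl(\bigoplus_{d<n}M_d\bigr)\to N\otimes_R M\to N\otimes_R(M_{\ge n})\to 0$ of graded abelian groups. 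Since $N$ is concentrated in degree $0$, the image of $N\otimes_R\bigl(\bigoplus_{d<n}M_d\bigr)$ in $N\otimes_R M$ is contained in $\bigoplus_{d<n}(N\otimes_R M)_d$; conversely every element of $(N\otimes_R M)_d$ for $d<n$ is a sum of terms $\nu\otimes m$ with $\nu\in N_0$, $m\in M_d$, hence lies in that image. Therefore the map $N\otimes_R M\to N\otimes_R(M_{\ge n})$ is surjective with kernel exactly $\bigoplus_{d<n}(N\otimes_R M)_d$, which is precisely the claim $(N\otimes_R M)_{\ge n}\cong N\otimes_R(M_{\ge n})$, and one checks this identification is canonical (functorial in $M$).

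For part (ii), let $f\colon M\to M'$ induce an isomorphism $M_{\ge n}\congto M'_{\ge n}$. Factor $f$ as a surjection onto its image followed by an inclusion, or more directly consider the kernel $K$ and cokernel $C$ of $f$ in the category of graded $R$-modules; the hypothesis says $K_{\ge n}=0$ and $C_{\ge n}=0$, i.e.\ $K$ and $C$ are concentrated in degrees $<n$. The long exact sequence for $\Tor_i^R(N,-)$ applied to $0\to \ker f\to M\to \im f\to 0$ and $0\to \im f\to M'\to C\to 0$ reduces the problem to showing: if $L$ is a graded module concentrated in degrees $<n$, then $\Tor_i^R(N,L)$ is concentrated in degrees $<n$. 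This in turn follows from part (i): $\Tor_i^R(N,L)_{\ge n}$ can be computed by taking a projective (graded) resolution $P_\bullet\to L$, and since $N$ is in degree $0$ the functor $(-)_{\ge n}$ commutes with $N\otimes_R(-)$ on each $P_j$, while $(P_\bullet)_{\ge n}$ is still exact (the functor $(-)_{\ge n}$ is exact, as remarked in the text). Concretely, one can just observe that $\Tor_i^R(N,L)$ is a subquotient of $N\otimes_R P_i$ for a resolution by graded frees, and each $P_i$ can be chosen concentrated in degrees $\le \max\{d : L_d\ne 0\}<n$ (a minimal graded free cover of something in degrees $<n$ is in degrees $<n$ since $R$ is supported in degrees $\le 0$), so $N\otimes_R P_i$ and hence its subquotients are concentrated in degrees $<n$. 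Chasing through the two long exact sequences then gives $\Tor_i^R(N,M)_{\ge n}\congto\Tor_i^R(N,M')_{\ge n}$.

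The only mildly delicate point — and the one I'd want to state carefully rather than wave at — is the claim that a graded module concentrated in degrees $<n$ has a graded free resolution by modules concentrated in degrees $<n$; this uses that $R$ itself sits in degrees $\le 0$, so generators in degree $<n$ produce free summands $R(-d)$ with $d<n$, which are supported in degrees $<n$. Everything else is formal homological algebra in the (abelian) category of graded $R$-modules, using repeatedly that $(-)_{\ge n}$ is exact and that tensoring over $R$ with a degree-$0$ right module $N$ interacts with the grading in the simplest possible way. I expect part (i) to be essentially a one-line check once the right-exact sequence is written down, and part (ii) to be a short diagram chase, so the main obstacle is really just bookkeeping with the grading shifts rather than any substantive difficulty.
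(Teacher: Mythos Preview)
Your proposal is correct and follows essentially the same approach as the paper: in part~(i) both arguments tensor the short exact sequence $0 \to \bigoplus_{d<n}M_d \to M \to M_{\ge n} \to 0$ with $N$ and use that $N$ sits in degree~$0$; in part~(ii) both factor $f$ through its image, reduce via long exact sequences and exactness of $(\cdot)_{\ge n}$ to showing that $\Tor_i^R(N,L)_{\ge n}=0$ whenever $L$ is concentrated in degrees $<n$. The only difference is in how this last claim is checked: the paper resolves the \emph{right} module $N$ by graded frees supported in degrees $\le 0$ (which exist since $N$ is in degree~$0$ and $R$ is negatively graded), so that $F_\bullet\otimes_R L$ is supported in degrees $<n$; you instead resolve the \emph{left} module $L$ by graded frees supported in degrees $<n$. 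Both work, and your justification for the existence of such a resolution of $L$ is correct. The paper's choice is marginally more economical because a single resolution of $N$ serves for all $L$, but the difference is cosmetic.
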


\begin{proof}
  (i) By assumption, $N \otimes_R (\bigoplus_{d < n} M_d)$ is supported in degrees $< n$ and $N \otimes_R (M_{\ge n})$ is supported in degrees $\ge n$.
  By exactness of the functor $M \mapsto M_{\ge n}$, the natural map $N \otimes_R M \onto N \otimes_R (M_{\ge n})$ induces an isomorphism $(N \otimes_R M)_{\ge n} \congto N \otimes_R (M_{\ge n})$, as desired.

  (ii) We first show that if $M_{\ge n} = 0$, then $\Tor_i^R(N,M)_{\ge n} = 0$ for all $i$.
  As $N$ is supported in degree 0 and $R$ is negatively graded, we can pick a graded free resolution $\cdots \to F_1 \to F_0 \to N \to 0$ that is supported in degrees $\le 0$.
  By exactness of the functor $(\cdot)_{\ge n}$, the group $\Tor_i^R(N,M)_{\ge n}$ is computed as the $i$-th homology of the complex $(F_\bullet \otimes_R M)_{\ge n}$, which vanishes because $F_\bullet \otimes_R M$ is supported in degrees $< n$ by assumption on $M$.

  If now $f : M \to M'$ induces an isomorphism in degrees $\ge n$, then we get exact sequences $0 \to X \to M \to Y \to 0$ and $0 \to Y \to M' \to Z \to 0$ such that the composition $M \to Y \to M'$ equals $f$ and $X_{\ge n} = Z_{\ge n} = 0$.
  By the previous paragraph and exactness of the functor $(\cdot)_{\ge n}$ we obtain isomorphisms $\Tor_i^R(N,M)_{\ge n} \congto \Tor_i^R(N,Y)_{\ge n} \congto \Tor_i^R(N,M')_{\ge n}$ for any $i$, which completes the proof.
\end{proof}

\subsection{Some homological arguments}
\label{sec:some-homol-argum}

We construct different kind of resolutions of $\Lambda$-modules or $\gr(\Lambda)$-modules. 

For convenience, we recall some definitions and useful facts in the following remark.
\begin{rem}\label{rem:facts}
Let $M$ (resp.~$N)$ be a finitely generated $\Lambda$-module (resp.~$\gr(\Lambda)$-module). 
\begin{enumerate}
\item A free resolution $P_{\bullet}$ of $M$ is called \emph{minimal} if the transition maps in the induced complex $\F\otimes_{\Lambda}P_{\bullet}$ are all zero. A standard argument shows that $P_{\bullet}$ is minimal if and only if $\rank_{\Lambda}(P_i)=\dim_{\F}\Tor_i^{\Lambda}(\F,M)$ for each $i\geq0$. Using that $(\Lambda,\m)$ is a noetherian  local  ring, the same argument as in \cite[\S~1.3]{BH93} shows that minimal free resolutions $P_\bullet$ of $M$ exist and that each term $P_i$ is finitely generated.  Similarly, we define a minimal gr-free resolution $G_{\bullet}$ of $N$ and show that $G_{\bullet}$ is minimal if and only if $\rank_{\gr(\Lambda)}G_{i}=\dim_{\F}\Tor_i^{\gr(\Lambda)}(\F,N)$ for each $i\geq 0$. As $\gr(\Lambda)$ is a noetherian graded local ring, minimal gr-free resolutions $G_\bullet$ of $N$ exist and each term $G_i$ is finitely generated.
\item Suppose that $M$ carries a good filtration and let $\gr(M)$ be the associated graded $\gr(\Lambda)$-module. Let $G_{\bullet}$ be a finite gr-free resolution of $\gr(M)$. By \cite[Cor.~I.7.2.9]{LiOy}, it can be ``lifted'' to a (strict) finite filt-free resolution $P_{\bullet} $ of $M$, i.e.~$\gr(P_{\bullet})\cong G_{\bullet}$.  
By (i), we see that $P_{\bullet}$ is minimal if and only if the following two conditions are satisfied: $G_{\bullet}$ is minimal and $\dim_{\F}\Tor_i^{\Lambda}(\F,M)= \dim_{\F}\Tor_i^{\gr(\Lambda)}(\F,\gr(M))$.
\item Suppose that $M$ carries a good filtration. Let $P_{\bullet}$ be a minimal free resolution of $M$ (as $\Lambda$-module). Using \cite[Prop.~I.6.6]{LiOy} we can always endow each $P_i$ with a good filtration such that $P_{\bullet}$ becomes a filtered complex (with each transition map having degree $0$), but in general $P_{\bullet}$ is not strict.  
(In fact, the filtration can be chosen such that $P_\bullet$ is strict or filt-free, but in general not both by (ii).)
\item
If $M$ carries a good filtration, then $\Tor_i^\Lambda(\F,M)$ (and more generally $\Tor_i^\Lambda(\Lambda/\m^n,M)$ for any $n \ge 0$) carries a canonical and functorial good filtration as a $\Lambda$-module.
If $P_\bullet \to M \to 0$ is any strict filt-free resolution of $M$, then the canonical filtration on $\Tor_i^\Lambda(\F,M)$ is the one induced by the complex $\F \otimes_\Lambda P_\bullet$, with each term carrying the tensor product filtration.
See section \ref{sec:append-canon-filtr} for more details.
\item Suppose that $M$ (or $N$) carries a compatible $H$-action. Then we can require the above minimal free resolutions to carry a compatible $H$-action. We only prove (i) for $M$.  By assumption we may view $M$ as an $\F\bbra{I/Z_1}$-module. Since $\F\bbra{I/Z_1}$ is a noetherian semi-local ring with Jacobson radical $\mathfrak{J}$ (say), we can show as in \cite[\S~1.3]{BH93} that minimal \emph{projective} resolutions of $M$ exist (by taking projective covers at each step), where a resolution $P_{\bullet}$ by $\F\bbra{I/Z_1}$-modules is called ``minimal''  if the transition maps  are all zero modulo $\mathfrak{J}$.  Note that $\F\bbra{I/Z_1}$ is finite free over $\Lambda$ and that $\mathfrak{J}=\m\F\bbra{I/Z_1}$. Hence, restricting to $\Lambda$ we obtain a minimal free resolution of $M$ by $\Lambda$-modules with compatible $H$-action. 
\end{enumerate}
\end{rem}

Denote by $N$ the left-hand side of \eqref{eq:gr(pi)-bis}, i.e.\ $N \defeq  \big(\bigoplus_{\lambda\in\mathscr{P}}\chi_{\lambda}^{-1}\otimes R/\mathfrak{a}(\lambda)\big)^{\oplus r}$. 
We first prove that $N$ enjoys a property analogous to assumption~\ref{it:assum-iv} in \S~\ref{sec:theorem}.
Note that $[\pi[\m]:\chi]\neq0$ if and only if $\chi=\chi_{\lambda}$ for some $\lambda\in\mathscr{P}$.

Recall from~(\ref{eq:grlambda}) that
\begin{equation}\label{eq:grj}
\gr(\Lambda)\cong \bigotimes_{j=0}^{f-1}\gr(\Lambda)_j,
\end{equation}
where $\gr(\Lambda)_j$ is the subalgebra generated by $h_j,y_j,z_j$ (it is denoted by $\F\langle y_j,z_j,h_j\rangle$ in (\ref{eq:grlambda}) and by $U(\overline{\mathfrak{g}}_j)$ in \cite[\S~5.3]{BHHMS1} or \cite[\S~9.2]{HuWang2}). Below, we denote by $\mathfrak{b}(\lambda)$ the preimage of the ideal $\mathfrak{a}(\lambda)$ of (\ref{eq:id:al}) in $\gr(\Lambda)$, namely \[\mathfrak{b}(\lambda)=(t_j,h_j:\, 0\leq j\leq f-1).\]

For $n \ge 1$ let $\mathcal{I}^{(n)}$ denote the $H$-stable graded ideal $(y_j^n,z_j^n,h_j:\, 0\leq j\leq f-1)$ of $\gr(\Lambda)$.
By abuse of notation, we also write $\mathcal{I}^{(n)}$ for its image $(y_j^n,z_j^n:\, 0\leq j\leq f-1)$ in $R$. We let $\mathcal{I} \defeq  \mathcal{I}^{(3)}$.

\begin{lem}\label{lem:min-graded-splitting}
There exists a minimal gr-free resolution $G_\bullet$ with compatible $H$-action of $N/\mathcal I N$, which admits an $H$-stable subcomplex $G'_{\bullet}$ that is a minimal gr-free resolution of $N$.
The induced map $H_0(G'_\bullet) \to H_0(G_\bullet)$ is the natural map $N \to N/\mathcal IN$.
Moreover, we have a decomposition $G_\bullet = G'_\bullet \oplus G''_\bullet$ of graded $\gr(\Lambda)$-modules with compatible $H$-action  (which may not be respected by the transition maps).
\end{lem}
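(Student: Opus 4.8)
\emph{Reduction to one variable.} Since minimal gr-free resolutions, subcomplexes, direct sums and the functor $H_0$ are all additive, and since $N$ is a finite direct sum of $r$ copies of the modules $\chi_{\lambda}^{-1}\otimes R/\mathfrak{a}(\lambda)$ ($\lambda\in\mathscr{P}$), while tensoring with the one-dimensional $H$-representation $\chi_\lambda^{-1}$ (on which $\gr(\Lambda)$ acts trivially) alters nothing but the $H$-action, it suffices to treat a single $\lambda$ with $r=1$ and the twist suppressed. Using the decomposition $\gr(\Lambda)=\bigotimes_{j=0}^{f-1}\gr(\Lambda)_j$ of (\ref{eq:grj}) and the fact that $\mathfrak{b}(\lambda)=\sum_j(t_j,h_j)$ with each $t_j$ built from the $j$-th variables only, we get $R/\mathfrak{a}(\lambda)\cong\bigotimes_j\gr(\Lambda)_j/(t_j,h_j)$ and $N/\mathcal{I}N\cong\bigotimes_j\gr(\Lambda)_j/(t_j,h_j,y_j^3,z_j^3)$ (tensor products over $\F$, which is also how $\gr(\Lambda)$ decomposes). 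Suppose that for each $j$ we have constructed a minimal gr-free resolution $D^{(j)}_\bullet$ with compatible $H$-action of $\gr(\Lambda)_j/(t_j,h_j,y_j^3,z_j^3)$, an $H$-stable subcomplex $C^{(j)}_\bullet\subseteq D^{(j)}_\bullet$ that minimally resolves $\gr(\Lambda)_j/(t_j,h_j)$ with $H_0(C^{(j)}_\bullet)\ra H_0(D^{(j)}_\bullet)$ the natural surjection, and a decomposition $D^{(j)}_\bullet=C^{(j)}_\bullet\oplus E^{(j)}_\bullet$ of graded $\gr(\Lambda)_j$-modules with compatible $H$-action. Then $G'_\bullet\defeq\bigotimes_j C^{(j)}_\bullet$ and $G_\bullet\defeq\bigotimes_j D^{(j)}_\bullet$ (again tensor products over $\F$) work: by the K\"unneth theorem over the field $\F$ (no $\Tor$-term) a tensor product of gr-free resolutions is a gr-free resolution of the tensor product of the $H_0$'s, here $N$ and $N/\mathcal{I}N$ respectively; it is minimal since the total differential is a sum of terms of the form $\mathrm{id}\otimes\dots\otimes d^{(j)}\otimes\dots\otimes\mathrm{id}$, each landing in the maximal ideal times the module; $G'_\bullet$ is a subcomplex of $G_\bullet$ because each $C^{(j)}_\bullet$ is one in $D^{(j)}_\bullet$; distributing $\bigotimes_j(C^{(j)}_\bullet\oplus E^{(j)}_\bullet)$ exhibits $G_\bullet=G'_\bullet\oplus G''_\bullet$ as graded $\gr(\Lambda)$-modules with compatible $H$-action, the complement $G''_\bullet$ being the sum of the terms indexed by the nonempty subsets of $\{0,\dots,f-1\}$ (the differential of $G_\bullet$ mixing these summands); and the induced map on $H_0$ is the tensor product of the natural surjections, i.e.\ $N\ra N/\mathcal{I}N$. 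This reduces the lemma to the one-variable case.

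\emph{The one-variable case.} Write $A\defeq\gr(\Lambda)_j\cong\F\langle y_j,z_j,h_j\rangle$; then $h_j$ is central, hence a nonzerodivisor as $A$ is a domain, and $B\defeq A/(h_j)\cong\F[y_j,z_j]$ is a commutative polynomial ring. By (\ref{eq:id:al}), $t_j\in\{y_j,z_j,y_jz_j\}$, and since $h_j$ kills both modules, $\gr(\Lambda)_j/(t_j,h_j)=B/\mathfrak{c}$ and $\gr(\Lambda)_j/(t_j,h_j,y_j^3,z_j^3)=B/(\mathfrak{c}+(y_j^3,z_j^3))$ with $\mathfrak{c}\defeq(t_j)$. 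The module $B/\mathfrak{c}$ has the explicit length-$2$ minimal gr-free $A$-resolution $0\ra A\ra A^2\ra A\ra B/\mathfrak{c}\ra0$ with differentials multiplication by $t_j$, multiplication by $h_j$, and the syzygy $(h_j,-t_j)$ (this is a complex because $h_j t_j=t_j h_j$, minimal because all entries lie in the maximal ideal, and exact because it is the tensor product over $A$ of the Koszul complexes of the commuting pair $(h_j,t_j)$, where $h_j$ is a nonzerodivisor on $A$ and $t_j$ on $B$); call it $C^{(j)}_\bullet$. When $\mathfrak{c}=(y_j)$ (resp.\ $(z_j)$) we have $B/(\mathfrak{c}+(y_j^3,z_j^3))=(B/\mathfrak{c})/(z_j^3)$ (resp.\ $(B/\mathfrak{c})/(y_j^3)$); multiplication by $z_j^3$ (resp.\ $y_j^3$) on the $B$-module $B/\mathfrak{c}$ is $A$-linear, so it lifts to an $A$-linear chain self-map $\varphi$ of $C^{(j)}_\bullet$, and we set $D^{(j)}_\bullet\defeq\mathrm{Cone}(\varphi)$; since $z_j^3$ (resp.\ $y_j^3$) is a nonzerodivisor on $B/\mathfrak{c}$, this is a minimal resolution of the big module, and $C^{(j)}_\bullet$ sits in it termwise as an $H$-stable direct summand and as a subcomplex, with the correct map on $H_0$. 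When $\mathfrak{c}=(y_jz_j)$ the big module $B/(y_jz_j,y_j^3,z_j^3)$ is not a complete intersection; over the commutative ring $B$ it has the minimal free resolution with $F_0=B$, $F_1=B(-2)\oplus B(-3)^2$ mapping to $F_0$ via $(y_jz_j,\,y_j^3,\,z_j^3)$, and $F_2=B(-4)^2$ mapping to $F_1$ via the syzygies $y_j^2\!\cdot\!(y_jz_j)=z_j\!\cdot\!(y_j^3)$ and $z_j^2\!\cdot\!(y_jz_j)=y_j\!\cdot\!(z_j^3)$, while the small module $B/(y_jz_j)$ has the Koszul resolution $0\ra B(-2)\ra B\ra B/(y_jz_j)\ra0$; the latter includes into the former (identity on $F_0$, the single generator sent to the $B(-2)$-summand of $F_1$, zero on $F_2$) as a termwise-split subcomplex identifying it with an $H$-stable graded direct summand. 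One then lifts both $B$-resolutions, together with this inclusion, to $A$-resolutions by the standard change of rings along the central nonzerodivisor $h_j$ (lift the differentials and correct to an honest complex; this raises projective dimension by $1$): the construction can be made $H$-equivariant and minimal, and carries termwise-split inclusions and graded direct-sum decompositions to the same, giving the desired $C^{(j)}_\bullet\subseteq D^{(j)}_\bullet$.

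\emph{Main difficulty.} The reduction in the first paragraph and the K\"unneth bookkeeping are routine. All the substance is in the one-variable case, and the genuinely delicate point is the non-complete-intersection case $t_j=y_jz_j$: one must write out the minimal resolution of $B/(y_jz_j,y_j^3,z_j^3)$ and its $A$-lift explicitly and verify that the much smaller resolution of $B/(y_jz_j)$ sits inside it simultaneously as a subcomplex and as an $H$-stable graded direct summand, all while tracking the $H$-eigencharacters ($y_j,z_j,h_j$ carry $H$-weights $\alpha_j,\alpha_j^{-1},\mathbf{1}$, plus the overall twist by $\chi_\lambda^{-1}$). The secondary point needing care is that the change-of-rings lift from $B$ to $A$ can be performed $H$-equivariantly, minimally, and compatibly with the chain maps; once that is granted, the remaining verifications are straightforward.
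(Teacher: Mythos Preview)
Your proof is correct and follows essentially the same strategy as the paper: reduce by additivity to a single $\gr(\Lambda)/\mathfrak b(\lambda)$, factor through the tensor decomposition $\gr(\Lambda)=\bigotimes_j\gr(\Lambda)_j$, build the one-variable resolutions $C^{(j)}_\bullet\subset D^{(j)}_\bullet$, and take K\"unneth. The paper simply cites \cite[Lemmas~9.8--9.10]{HuWang2} for the one-variable resolutions and then lists the terms of $G^{(j)}_i$ explicitly (with the boxed summands indicating $G'^{(j)}_\bullet$); you instead construct them yourself.

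Your mapping-cone construction for $t_j\in\{y_j,z_j\}$ is a clean alternative and, with the correct graded shift $C^{(j)}_\bullet(3)\to C^{(j)}_\bullet$, reproduces exactly the paper's listed terms. For $t_j=y_jz_j$ your plan---resolve over $B=A/(h_j)$ and lift along the central nonzerodivisor $h_j$---also works, but the assertion that the lift ``carries termwise-split inclusions and graded direct-sum decompositions to the same'' is precisely the content that needs checking. Concretely, if one forms $P_i=\tilde F_i\oplus\tilde F_{i-1}$ with differential $\bigl(\begin{smallmatrix}\tilde d& h\\ -\sigma&-\tilde d\end{smallmatrix}\bigr)$, then the copy of $C^{(j)}_\bullet$ inside $P_\bullet$ is spanned by $e_0$, by the generators $e_1$ (for $yz$) and the $h$-generator in $P_1$, and by $e_1'$ (the second-summand copy of $e_1$) in $P_2$; one verifies directly that this is an $H$-stable subcomplex and termwise direct summand. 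This explicit check is what \cite[Lemma~9.10]{HuWang2} carries out, so you are not really avoiding that work, only repackaging it. One trivial remark: with the paper's conventions (generators of $A(k)$ in degree $-k$, $\deg y_j=\deg z_j=-1$) your shifts should be $B(2),B(3),B(4)$ rather than $B(-2),B(-3),B(-4)$.
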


By minimality we deduce that $\Tor_{i}^{\gr(\Lambda)}(\F,N) \cong \F \otimes_{\gr(\Lambda)} G'_i$ and likewise for $N/\mathcal IN$. We deduce:

\begin{cor}\label{cor:inj-Tori}
  The natural morphism $N\ra N/\mathcal{I}N$ induces injective graded morphisms with compatible $H$-action
  \[\Tor_{i}^{\gr(\Lambda)}(\F,N)\hookrightarrow \Tor_{i}^{\gr(\Lambda)}(\F,N/\mathcal{I}N)\]
  for $i\geq 0$.
\end{cor}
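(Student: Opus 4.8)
The plan is to deduce Corollary~\ref{cor:inj-Tori} formally from Lemma~\ref{lem:min-graded-splitting} together with the standard fact (Remark~\ref{rem:facts}(i)) that a minimal gr-free resolution computes $\Tor$ after applying $\F\otimes_{\gr(\Lambda)}-$, with all transition maps becoming zero. First I would record that, since $G'_\bullet$ is a minimal gr-free resolution of $N$ with compatible $H$-action and $G_\bullet$ is a minimal gr-free resolution of $N/\mathcal I N$ with compatible $H$-action, applying $\F\otimes_{\gr(\Lambda)}-$ kills all differentials, so $\Tor_i^{\gr(\Lambda)}(\F,N)\cong \F\otimes_{\gr(\Lambda)} G'_i$ and $\Tor_i^{\gr(\Lambda)}(\F,N/\mathcal I N)\cong \F\otimes_{\gr(\Lambda)} G_i$ as graded $\F$-vector spaces with compatible $H$-action, naturally in the respective modules.

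Next I would observe that the inclusion of complexes $G'_\bullet\hookrightarrow G_\bullet$ provided by Lemma~\ref{lem:min-graded-splitting} is $H$-equivariant and lifts (in the derived sense) the natural morphism $N\to N/\mathcal I N$, because by the last assertion of the lemma the induced map $H_0(G'_\bullet)\to H_0(G_\bullet)$ is precisely $N\to N/\mathcal I N$. Hence the map on $\Tor$ induced by $N\to N/\mathcal I N$ is, up to the canonical identifications above, the map $\F\otimes_{\gr(\Lambda)} G'_i\to \F\otimes_{\gr(\Lambda)} G_i$ obtained by applying $\F\otimes_{\gr(\Lambda)}-$ to the inclusion $G'_i\hookrightarrow G_i$; this is manifestly a graded morphism with compatible $H$-action. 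It remains to check injectivity, and here the decomposition $G_\bullet=G'_\bullet\oplus G''_\bullet$ of graded $\gr(\Lambda)$-modules with compatible $H$-action (valid termwise, even if not respected by the differentials) does the job: for each $i$ we have $G_i=G'_i\oplus G''_i$ as graded $\gr(\Lambda)$-modules with $H$-action, so $\F\otimes_{\gr(\Lambda)} G_i=(\F\otimes_{\gr(\Lambda)} G'_i)\oplus(\F\otimes_{\gr(\Lambda)} G''_i)$ and the map $\F\otimes_{\gr(\Lambda)} G'_i\to \F\otimes_{\gr(\Lambda)} G_i$ is the inclusion of a direct summand, in particular injective.

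There is essentially no obstacle: the corollary is a bookkeeping consequence of the lemma, and the only points requiring a word of care are (a) that minimality forces the $\Tor$ groups to be computed without any differential contribution, so the functoriality of the comparison map is transparent, and (b) that the termwise splitting $G_i=G'_i\oplus G''_i$ is $H$-equivariant, which is exactly what Lemma~\ref{lem:min-graded-splitting} asserts. One should also note that all identifications and maps are automatically graded and $H$-equivariant because every object and morphism in sight (the resolutions, their inclusions, the splittings, and the functor $\F\otimes_{\gr(\Lambda)}-$) has been constructed in the category of graded $\gr(\Lambda)$-modules with compatible $H$-action. This yields the desired injective graded morphisms with compatible $H$-action $\Tor_i^{\gr(\Lambda)}(\F,N)\hookrightarrow \Tor_i^{\gr(\Lambda)}(\F,N/\mathcal I N)$ for all $i\ge 0$.
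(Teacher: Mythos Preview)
Your proof is correct and is essentially an expanded version of the paper's own argument, which consists of the single sentence preceding the corollary (``By minimality we deduce that $\Tor_{i}^{\gr(\Lambda)}(\F,N) \cong \F \otimes_{\gr(\Lambda)} G'_i$ and likewise for $N/\mathcal IN$'') together with the termwise splitting from Lemma~\ref{lem:min-graded-splitting}. The only point you make explicit that the paper leaves implicit is that the inclusion $G'_\bullet\hookrightarrow G_\bullet$ is a chain-map lift of $N\to N/\mathcal I N$ and hence computes the natural map on $\Tor$.
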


\begin{proof}[Proof of Lemma~\ref{lem:min-graded-splitting}]
This is essentially done in \cite[\S~9.1, 9.2]{HuWang2}. We recall the argument in our notation.
By decomposing $N$ and twisting, it suffices to prove this when $N$ is replaced by $\gr(\Lambda)/\mathfrak b$ and $N/\mathcal IN$ is replaced by $\gr(\Lambda)/(\mathfrak b+\mathcal I)$,
where $\mathfrak{b}$ is a homogeneous ideal of $\gr(\Lambda)$ of the form $(t_j,h_j:\, 0\leq j\leq f-1)$ with $t_j\in \{y_j,z_j,y_jz_j\}$. 
Define ideals $\mathfrak{b}_j \defeq (t_j,h_j)$ and $\mathcal{I}_j \defeq (y_j^3,z_j^3,h_j)$ of $\gr(\Lambda)_j$. 
We have graded isomorphisms with compatible $H$-action:
\[\frac{\gr(\Lambda)}{\mathfrak{b}}\cong \bigotimes_{j=0}^{f-1}\frac{\gr(\Lambda)_j}{\mathfrak{b}_j}, \quad \frac{\gr(\Lambda)}{\mathfrak{b}+\mathcal I}\cong \bigotimes_{j=0}^{f-1}\frac{\gr(\Lambda)_j}{\mathfrak{b}_j+\mathcal I_j}.\]
By Lemmas 9.8--9.10 of \cite{HuWang2} we have a minimal gr-free resolution of $\gr(\Lambda)_j/(\mathfrak{b}_j+\mathcal I_j)$ with compatible $H$-action:
\[0 \to G_{3}^{(j)} \to G_{2}^{(j)} \to G_{1}^{(j)} \to G_{0}^{(j)} \to \frac{\gr(\Lambda)_j}{\mathfrak{b}_j+\mathcal I_j} \to 0, \]
depending on $t_j$. Without recalling the transition maps, if $t_j = y_j$, then
\begin{gather*}
  G_{3}^{(j)}= \gr(\Lambda)_j(6)_{\alpha_j^{-2}}, \\
  G_{2}^{(j)}= \fbox{$\gr(\Lambda)_j(3)_{\alpha_j}$}\oplus \gr(\Lambda)_j(4)_{\alpha_j^{-2}}\oplus \gr(\Lambda)_j(5)_{\alpha_j^{-3}}, \\
  G_{1}^{(j)}= \fbox{$\gr(\Lambda)_j(1)_{\alpha_j}\oplus \gr(\Lambda)_j(2)_{1}$}\oplus \gr(\Lambda)_j(3)_{\alpha_j^{-3}}, \\
  G_{0}^{(j)}= \fbox{$\gr(\Lambda)_j(0)_{1}$},
\end{gather*}
where the final subscript indicates the $H$-action and where the boxed terms indicate a subcomplex $G'^{(j)}_i$ that is a minimal gr-free resolution of $\gr(\Lambda)_j/\mathfrak{b}_j$.
If $t_j = z_j$, then the terms have the same form, but the characters of $H$ are replaced by their inverses.
If $t_j = y_jz_j$, then
\begin{gather*}
  G_{3}^{(j)}= \gr(\Lambda)_j(6)_{\alpha_j^{2}} \oplus \gr(\Lambda)_j(6)_{\alpha_j^{-2}}, \\
  G_{2}^{(j)}= \gr(\Lambda)_j(5)_{\alpha_j^3}\oplus \gr(\Lambda)_j(4)_{\alpha_j^{2}}\oplus \fbox{$\gr(\Lambda)_j(4)_{1}$}\oplus \gr(\Lambda)_j(4)_{\alpha_j^{-2}}\oplus \gr(\Lambda)_j(5)_{\alpha_j^{-3}}, \\
  G_{1}^{(j)}= \gr(\Lambda)_j(3)_{\alpha_j^3}\oplus \fbox{$\gr(\Lambda)_j(2)_{1} \oplus \gr(\Lambda)_j(2)_{1}$}\oplus \gr(\Lambda)_j(3)_{\alpha_j^{-3}}, \\
  G_{0}^{(j)}= \fbox{$\gr(\Lambda)_j(0)_{1}$}.
\end{gather*}
By the K\"unneth formula (see e.g.\ \cite[Thm.\ 3.6.3]{weibel}) we can take $G_\bullet$ (resp.\ $G'_\bullet$) to be the tensor product of the complexes $G^{(j)}_\bullet$ (resp.\ $G'^{(j)}_\bullet$) for $0 \le j \le f-1$.
These complexes are still minimal resolutions, since the transition maps are defined by elements lying in the unique maximal graded ideal of $\gr(\Lambda)$.
\end{proof}

\begin{cor}\label{cor:dual-of-N}
  The graded right $\gr(\Lambda)$-module $\EE^{2f}_{\gr(\Lambda)}(N)$ is supported in degrees $\le 4f$,
  and $\F\otimes_{\gr(\Lambda)} \EE^{2f}_{\gr(\Lambda)}(N) $ is supported in degrees $d$ with $3f \le d \le 4f$.
\end{cor}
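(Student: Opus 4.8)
The plan is to read Corollary~\ref{cor:dual-of-N} off the explicit minimal gr-free resolution $G'_\bullet$ of $N$ constructed in Lemma~\ref{lem:min-graded-splitting}. First I would recall that, by that lemma applied to each summand $\chi_\lambda^{-1}\otimes R/\fa(\lambda)$ of $N$ (for $\lambda\in\mathscr{P}$, each taken $r$ times) together with the K\"unneth formula, $G'_\bullet$ is a direct sum, over $\lambda\in\mathscr{P}$ and with multiplicity $r$ (up to twist by characters of $H$, which do not affect degrees), of the tensor products over $j\in\{0,\dots,f-1\}$ of the boxed length-$2$ subcomplexes displayed there. In particular $G'_\bullet$ has length exactly $2f$, and inspecting the boxed top terms in Lemma~\ref{lem:min-graded-splitting} --- namely $\gr(\Lambda)_j(3)_{\alpha_j^{\pm1}}$ when $t_j(\lambda)\in\{y_j,z_j\}$ and $\gr(\Lambda)_j(4)_1$ when $t_j(\lambda)=y_jz_j$ --- its top term is
\[G'_{2f}\;\cong\;\bigoplus_{\lambda\in\mathscr{P}}\big(\chi_\lambda^{-1}\otimes\gr(\Lambda)(k_\lambda)\big)^{\oplus r},\qquad k_\lambda\defeq\sum_{j=0}^{f-1}s_j(\lambda),\ \ s_j(\lambda)\in\{3,4\},\]
so that $3f\le k_\lambda\le 4f$ for every $\lambda$.

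Since $G'_\bullet$ has length $2f$, the module $\EE^{2f}_{\gr(\Lambda)}(N)$ is, by definition, the cokernel of the dual map $\Hom_{\gr(\Lambda)}(G'_{2f-1},\gr(\Lambda))\to\Hom_{\gr(\Lambda)}(G'_{2f},\gr(\Lambda))$, hence a graded quotient of $\Hom_{\gr(\Lambda)}(G'_{2f},\gr(\Lambda))$. I would then use the identity $\Hom_{\gr(\Lambda)}(\gr(\Lambda)(k),\gr(\Lambda))\cong\gr(\Lambda)(-k)$ together with the fact that $\gr(\Lambda)$ lives in degrees $\le 0$: it follows that $\gr(\Lambda)(-k)$ lives in degrees $\le k$ (and is generated in degree $k$), hence $\Hom_{\gr(\Lambda)}(G'_{2f},\gr(\Lambda))\cong\bigoplus_\lambda\big(\chi_\lambda\otimes\gr(\Lambda)(-k_\lambda)\big)^{\oplus r}$ lives in degrees $\le\max_\lambda k_\lambda\le 4f$, and so does its quotient $\EE^{2f}_{\gr(\Lambda)}(N)$. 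This gives the first assertion.

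For the second assertion I would observe that dualizing the minimal complex $G'_\bullet$ again yields a minimal complex, since transposing a matrix with entries in the unique maximal graded ideal of $\gr(\Lambda)$ preserves this property. Applying the right exact functor $\F\otimes_{\gr(\Lambda)}-$ to the presentation $\Hom_{\gr(\Lambda)}(G'_{2f-1},\gr(\Lambda))\to\Hom_{\gr(\Lambda)}(G'_{2f},\gr(\Lambda))\to\EE^{2f}_{\gr(\Lambda)}(N)\to0$ therefore kills the first arrow, giving a graded isomorphism $\F\otimes_{\gr(\Lambda)}\EE^{2f}_{\gr(\Lambda)}(N)\cong\F\otimes_{\gr(\Lambda)}\Hom_{\gr(\Lambda)}(G'_{2f},\gr(\Lambda))$. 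Computing the right-hand side termwise --- each free rank-one summand $\gr(\Lambda)(-k_\lambda)$ contributes a copy of $\F$ concentrated in degree $k_\lambda$ --- exhibits it as a direct sum of one-dimensional spaces in the degrees $k_\lambda\in\{3f,\dots,4f\}$, which is exactly the claim.

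I do not expect a serious obstacle here: the whole argument is bookkeeping on top of Lemma~\ref{lem:min-graded-splitting}. The only points requiring a little care are that $G'_\bullet$ has length \emph{exactly} $2f$ (so that $\EE^{2f}_{\gr(\Lambda)}(N)$ really is a cokernel and not merely a subquotient), the tracking of the grading shifts, and the standard fact that the dual of a minimal complex of graded free modules is minimal, which is what lets $\F\otimes_{\gr(\Lambda)}-$ commute with passing to the top cohomology.
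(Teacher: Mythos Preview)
Your proposal is correct and follows essentially the same approach as the paper: both read the statement off the top term $G'_{2f}\cong\bigoplus_\lambda\gr(\Lambda)(k_\lambda)$ (with $k_\lambda=3(f-d_\lambda)+4d_\lambda\in[3f,4f]$, $d_\lambda=|\{j:t_j(\lambda)=y_jz_j\}|$) of the length-$2f$ minimal resolution from Lemma~\ref{lem:min-graded-splitting}. The only cosmetic difference is that for the second assertion the paper uses right exactness of $\F\otimes_{\gr(\Lambda)}-$ to exhibit $\F\otimes_{\gr(\Lambda)}\EE^{2f}_{\gr(\Lambda)}(\gr(\Lambda)/\mathfrak b)$ as a quotient of the one-dimensional space $\F(-k_\lambda)$, whereas you invoke minimality of the dual complex to get an actual isomorphism --- either way yields the same conclusion.
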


\begin{proof}
  We may again replace $N$ by $\gr(\Lambda)/\mathfrak b$, where $\mathfrak b = (t_j,h_j:\, 0\leq j\leq f-1)$ as in the proof of Lemma~\ref{lem:min-graded-splitting}.
  By the same proof, we know that $\gr(\Lambda)/\mathfrak b$ has a gr-free resolution of length $2f$ with
  degree-$2f$ term $\bigotimes_{j=0}^{f-1} G'^{(j)}_{2} \cong \gr(\Lambda)(3(f-d)+4d)$, where $d = |\{ j: t_j = y_jz_j\}|$.
  Hence $\EE^{2f}_{\gr(\Lambda)}(\gr(\Lambda)/\mathfrak b)$ is a quotient of $\gr(\Lambda)(-3(f-d)-4d)$, which is supported in degrees $\le 3(f-d)+4d \le 4f$.
  Likewise, $\F\otimes_{\gr(\Lambda)} \EE^{2f}_{\gr(\Lambda)}(\gr(\Lambda)/\mathfrak b) $ is a quotient of $\F(-3(f-d)-4d)$ as graded vector spaces, which is supported in degree $3(f-d)+4d \in [3f,4f]$.
\end{proof}

\begin{lem}\label{lem:Tor-N}
For each $i\geq 0$ we have an isomorphism of $H$-modules
\[\Tor_i^{\gr(\Lambda)}(\F,N)\cong \bigoplus_{\lambda\in\mathscr{P}} (\chi_{\lambda}^{-1})^{\oplus m_i},\]
(see assumption \ref{it:assum-iv} in \S~\ref{sec:theorem} for $m_i$) so in particular, $\dim_\F \Tor_i^{\gr(\Lambda)}(\F,N) = \dim_\F \Tor_i^{\Lambda}(\F,\pi^\vee)$.
Moreover, as graded $\F$-vector space $\Tor_i^{\gr(\Lambda)}(\F,N)$ is supported in degrees $[-2i,-i]$.
\end{lem}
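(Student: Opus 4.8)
The plan is to extract everything from the explicit minimal gr-free resolution of $N$ built in the proof of Lemma~\ref{lem:min-graded-splitting}. First I would reduce to the rank-one factors: by additivity of $\Tor$ and the decomposition $N=\big(\bigoplus_{\lambda\in\mathscr{P}}\chi_\lambda^{-1}\otimes R/\mathfrak{a}(\lambda)\big)^{\oplus r}$, and since tensoring with the $1$-dimensional $H$-character $\chi_\lambda^{-1}$ (which acts trivially on $\gr(\Lambda)$ and does not shift the grading) commutes with $\Tor$, it suffices to compute $\Tor_i^{\gr(\Lambda)}(\F,R/\mathfrak{a}(\lambda))$ as a graded $H$-module for each $\lambda\in\mathscr{P}$. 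Using $R/\mathfrak{a}(\lambda)\cong\gr(\Lambda)/\mathfrak{b}(\lambda)\cong\bigotimes_{j=0}^{f-1}\gr(\Lambda)_j/(t_j,h_j)$ with $t_j=t_j(\lambda)\in\{y_j,z_j,y_jz_j\}$ as in \eqref{eq:id:al}, and that $\F\cong\bigotimes_j\F$ over $\gr(\Lambda)=\bigotimes_j\gr(\Lambda)_j$, the K\"unneth formula gives $\Tor_i^{\gr(\Lambda)}(\F,R/\mathfrak{a}(\lambda))\cong\bigoplus_{i_0+\cdots+i_{f-1}=i}\bigotimes_j\Tor_{i_j}^{\gr(\Lambda)_j}(\F,\gr(\Lambda)_j/(t_j,h_j))$ as graded $H$-modules.

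I would then read the rank-one factors off the boxed subcomplexes $G'^{(j)}_\bullet$ displayed in the proof of Lemma~\ref{lem:min-graded-splitting}: by minimality (Remark~\ref{rem:facts}(i)), $\Tor_{i_j}^{\gr(\Lambda)_j}(\F,\gr(\Lambda)_j/(t_j,h_j))\cong\F\otimes_{\gr(\Lambda)_j}G'^{(j)}_{i_j}$, which is one copy of $\F$ in internal degree $0$ with trivial $H$-action for $i_j=0$; two copies of $\F$ in degrees $-1,-2$ with $H$-characters $\alpha_j^{\varepsilon_j},1$ for $i_j=1$ (resp.\ in degrees $-2,-2$ with trivial $H$-action if $t_j=y_jz_j$); one copy of $\F$ in degree $-3$ with $H$-character $\alpha_j^{\varepsilon_j}$ for $i_j=2$ (resp.\ degree $-4$ with trivial $H$-action if $t_j=y_jz_j$); and $0$ for $i_j\ge 3$, where $\varepsilon_j=\varepsilon_j(\lambda)\in\{+1,-1,0\}$ according as $t_j$ equals $y_j$, $z_j$, or $y_jz_j$ (and $\alpha_j^0\defeq 1$). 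Recording homological degree by a formal variable $x$ and working in the representation ring of $H$ (which detects isomorphism since $p\nmid\#H$), the generating function of $\bigoplus_{i_j}\Tor_{i_j}^{\gr(\Lambda)_j}$ is $(1+x)(1+\alpha_j^{\varepsilon_j}x)$ in all three cases. Two consequences are immediate: since each factor lives in homological degrees $0,1,2$ and internal degrees in $[-2i_j,-i_j]$, the group $\Tor_i^{\gr(\Lambda)}(\F,R/\mathfrak{a}(\lambda))$, hence also $\Tor_i^{\gr(\Lambda)}(\F,N)$, is supported in internal degrees $[-2i,-i]$ (the last assertion of the lemma); and evaluating the $\alpha_j$'s at $1$ gives $\dim_\F\Tor_i^{\gr(\Lambda)}(\F,R/\mathfrak{a}(\lambda))=$ the coefficient of $x^i$ in $(1+x)^{2f}$, namely $\binom{2f}{i}$.

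The heart of the matter, and the step I expect to be the main obstacle, is the $H$-equivariant statement, since for an individual $\lambda$ the module $\Tor_i^{\gr(\Lambda)}(\F,R/\mathfrak{a}(\lambda))$ is \emph{not} $\binom{2f}{i}$ copies of the trivial character: the $\alpha_j^{\pm 1}$ must recombine after summing over $\lambda\in\mathscr{P}$. By \eqref{eq:id:al} and \eqref{eq:P}, $\varepsilon_j(\lambda)\ne 0$ exactly when $j\in J_{\brho}$ and $\lambda_j(x_j)\in\{x_j,x_j+2,p-3-x_j,p-1-x_j\}$, with $\varepsilon_j=-1$ for $\lambda_j(x_j)\in\{x_j,p-3-x_j\}$ and $\varepsilon_j=+1$ for $\lambda_j(x_j)\in\{x_j+2,p-1-x_j\}$. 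For such a $j$ I would introduce the ``$j$-toggle'' exchanging $\lambda_j(x_j)=x_j\leftrightarrow x_j+2$ and $\lambda_j(x_j)=p-3-x_j\leftrightarrow p-1-x_j$ (and fixing all other coordinates); using the description of $\mathscr{P}$ and $\mathscr{D}^\ss$ recalled in \S~\ref{sec:notation} one checks that this is a well-defined involution of $\mathscr{P}$ fixing $t_k(\lambda)$ for all $k\ne j$, and -- using the explicit formula for $\chi_\lambda$ in \cite[\S~4]{breuil-buzzati} together with the fact that all $\sigma_\lambda$ have the same central character -- that it multiplies $\chi_\lambda$ by $\alpha_j$ in the direction that increases $\lambda_j(x_j)$. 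Granting this, for a toggled pair $\{\lambda,\lambda'\}$ (say $\varepsilon_j(\lambda)=-1$, $\varepsilon_j(\lambda')=+1$, $\chi_{\lambda'}=\chi_\lambda\alpha_j$) a one-line computation in the representation ring gives $\chi_\lambda^{-1}(1+\alpha_j^{-1}x)+\chi_{\lambda'}^{-1}(1+\alpha_jx)=(\chi_\lambda^{-1}+\chi_{\lambda'}^{-1})(1+x)$; that is, pairing up collapses the $j$-factor to the shape $(1+x)$ that already occurs when $\varepsilon_j=0$. A straightforward induction on the number of remaining coordinates, peeling off one such $j$ at a time (the toggle being invisible to the other factors), then yields the identity $\sum_{\lambda\in\mathscr{P}}\chi_\lambda^{-1}\prod_{j}(1+x)(1+\alpha_j^{\varepsilon_j(\lambda)}x)=(1+x)^{2f}\sum_{\lambda\in\mathscr{P}}\chi_\lambda^{-1}$ in the representation ring of $H$ with coefficients in $\Z[x]$. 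Comparing coefficients of $x^i$ gives $\bigoplus_{\lambda\in\mathscr{P}}\chi_\lambda^{-1}\otimes\Tor_i^{\gr(\Lambda)}(\F,R/\mathfrak{a}(\lambda))\cong\bigoplus_{\lambda\in\mathscr{P}}(\chi_\lambda^{-1})^{\oplus\binom{2f}{i}}$, and tensoring with $\F^{\oplus r}$ and using $m_i=\binom{2f}{i}r$ yields the isomorphism of the lemma; the asserted equality $\dim_\F\Tor_i^{\gr(\Lambda)}(\F,N)=\dim_\F\Tor_i^{\Lambda}(\F,\pi^\vee)$ is then immediate from Remark~\ref{rem:hyp4}. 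I expect essentially all of the work to lie in verifying the two properties of the $j$-toggle (that it preserves $\mathscr{P}$ and that it tracks $\chi_\lambda$ by $\alpha_j$), the homological input being routine.
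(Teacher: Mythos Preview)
Your argument is correct and very close in spirit to the paper's, with one pleasant repackaging.

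Both proofs compute the local pieces $\Tor_{i_j}^{\gr(\Lambda)_j}(\F,\gr(\Lambda)_j/(t_j,h_j))$ from the boxed subcomplexes of Lemma~\ref{lem:min-graded-splitting}, apply K\"unneth, and read off the degree range $[-2i,-i]$ and the total dimension $\binom{2f}{i}$ directly. The only substantive step is the $H$-structure, and here the two arguments diverge in presentation while resting on the same combinatorial input: your ``$j$-toggle'' is precisely the case $|S|=1$ of Lemma~\ref{lem:compare-I1-invts}(i) (which the paper cites from \cite[Rk.~3.3.1.2]{BHHMS2}). The paper fixes a target character $\chi_\lambda^{-1}$ and counts contributions to $\Hom_H(\chi_\lambda^{-1},\Tor_i(\F,N))$ by summing over all subsets $S$ of the ``degree-one'' coordinates, obtaining the Vandermonde identity $\binom{2f}{i}=\sum_{i_1+i_2=i}\binom{d_1}{i_1}\binom{d_2}{i_2}$; it then concludes by comparing with the known total dimension $\binom{2f}{i}|\P|$. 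You instead work in the representation ring with a formal variable, observe the factorization $(1+x)(1+\alpha_j^{\varepsilon_j}x)$ for each $j$, and collapse each nontrivial factor to $(1+x)$ by pairing $\lambda$ with its $j$-toggle. Your induction is valid because the toggles at distinct $j$ commute and each toggle fixes $t_k(\lambda)$ for $k\neq j$, so the remaining factors are constant on each toggle-pair.

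Your generating-function packaging is arguably slicker and avoids the separate dimension comparison, but note that the paper gets the two toggle properties for free from Lemma~\ref{lem:compare-I1-invts}(i), whereas you propose to reprove them from the formula for $\chi_\lambda$ in \cite[\S~4]{breuil-buzzati}. That is fine, but since Lemma~\ref{lem:compare-I1-invts}(i) is stated and proved in the paper anyway, you could simply invoke it and shorten your write-up.
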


\begin{proof}
Clearly, we may assume $r=1$ so that $m_i=\binom{2f}{i}$ for $0\leq i\leq 2f$.

Going back to the minimal gr-free resolution $G'_\bullet$ of $N$ in the proof of Lemma~\ref{lem:min-graded-splitting}, we obtain
\begin{equation}\label{eq:Tor-grj}\Tor_i^{\gr(\Lambda)_j}(\F,\gr(\Lambda)_j/\mathfrak{b}_j)\cong \F \otimes_{\gr(\Lambda)} G'^{(j)}_i \cong 
\begin{cases}\F(0)_1 & \text{if $i=0$,}\\
\F(d_j)_{\chi_{t_j}} \oplus\F(2)_1 & \text{if $i=1$},\\
\F(d_j+2)_{\chi_{t_j}}&\text{if $i=2$}, \end{cases}\end{equation}
where  $\mathfrak{b}_j \defeq (t_j,h_j)$, $\chi_{t_j}$ denotes the character of $H$ acting on $t_j$, and $d_j = 2$ (resp.\ $d_j = 1$) if $t_j = y_jz_j$ (resp.\ $t_j \in \{y_j,z_j\}$).
In particular, we see that there is an isomorphism of graded $H$-modules
\[\Tor_i^{\gr(\Lambda)_j}(\F,\gr(\Lambda)_j/\mathfrak{b}_j)\cong\bigwedge\nolimits^i\Tor_1^{\gr(\Lambda)_j}(\F,\gr(\Lambda)_j/\mathfrak{b}_j).\]
Using K\"unneth's formula
\[\Tor_i^{\gr(\Lambda)}(\F,\gr(\Lambda)/\mathfrak{b})\cong \bigoplus_{i_0+\cdots+i_{f-1}=i}\bigotimes_{j=0}^{f-1}\Tor_{i_j}^{\gr(\Lambda)_j}(\F,\gr(\Lambda)_j/\mathfrak{b}_j)\]
and a similar formula for $\bigwedge^i(-)$,
we deduce an isomorphism of graded $H$-modules
\begin{equation}\label{eq:Tor-wedge}
  \Tor_i^{\gr(\Lambda)}(\F,\gr(\Lambda)/\mathfrak{b})\cong \bigwedge\nolimits^i\Tor_1^{\gr(\Lambda)}(\F,\gr(\Lambda)/\mathfrak{b})\cong \bigwedge\nolimits^i\Big(\bigoplus_{j=0}^{f-1}(\F(d_j)_{\chi_{t_j}}\oplus\F(2)_{1})\Big)
\end{equation}
for $i\geq 0$.

For fixed $\lambda\in\mathscr{P}$ we now prove that 
\begin{equation}\label{eq:dimgeqmi}\dim_\F \Hom_H(\chi_{\lambda}^{-1},\Tor_i^{\gr(\Lambda)}(\F,N))\geq \binom{2f}{i}.\end{equation} 
This will finish the proof of the lemma, as from \eqref{eq:Tor-wedge} we know that
\begin{equation}\label{eq:dimension}
\dim_\F \Tor_i^{\gr(\Lambda)}(\F,N) = \binom{2f}{i} |\mathscr P|
\end{equation}
(so the inequality in \eqref{eq:dimgeqmi} is an equality).

Let $d_1\defeq f+|\{j: t_j=y_jz_j\}|$ and $d_2\defeq |\{j:t_j\in\{y_j,z_j\}\}|$, so $d_1+d_2=2f$.  
We claim that  for each subset
$S\subset\{0,\dots,f-1\}$ such that  $t_{j}\in \{y_{j},z_{j}\}$ for all $j\in S$ (thus $i_2\defeq |S|\leq d_2$), 
\begin{equation}
\dim_\F \Hom_H\big(\chi_{\lambda}^{-1},\Tor_i^{\gr(\Lambda)}(\F,\chi_{\lambda'}^{-1}\otimes \gr(\Lambda)/\mathfrak{b}(\lambda'))\big) =\binom{d_1}{i_1},\label{eq:dim-remains}
\end{equation}
where $i_1\defeq i-i_2$ and $\lambda'\in\mathscr{P}$ is the unique element  such that 
\begin{equation}\label{eq:lambda'}
\chi_{\lambda}^{-1}=\chi_{\lambda'}^{-1}\prod_{j\in S}\chi_{t_{j}}^{-1}.
\end{equation} 
(The existence of $\lambda'\in \mathscr{P}$ is ensured by Lemma \ref{lem:compare-I1-invts}(i) below.)
Summing up \eqref{eq:dim-remains} over all $S$ and using the binomial identity 
\[\binom{2f}{i}=\sum_{i_1+i_2=i}\binom{d_1}{i_1}\binom{d_2}{i_2},\]
we deduce \eqref{eq:dimgeqmi} from the claim.

To prove the claim, we write $\mathfrak a(\lambda') = (t_j': 0 \le j \le f-1)$. By Lemma \ref{lem:compare-I1-invts}(i) below, we have  $t_{j}'=y_{j}z_j/t_j$ for $j\in S$, and  $t_{j}=t_{j}'$ otherwise.  Namely, $\chi_{t_j'}=\chi_{t_j}^{-1}$ for $j\in S$. 
Noting that $H$ acts trivially on $y_jz_j$, we easily obtain \eqref{eq:dim-remains}  from \eqref{eq:Tor-wedge} and \eqref{eq:lambda'}.

The equality of dimensions in the statement follows from (\ref{eq:dimension}) and (\ref{eq:dimension0}).
The final statement of the lemma follows from a direct analysis of $\F \otimes_{\gr(\Lambda)} G'_i$ (or by reducing to $i = 1$ by (\ref{eq:Tor-wedge})).
\end{proof}

\begin{lem}\label{lem:compare-I1-invts}
Suppose that $\lambda \in \mathscr P$ and let $\mathfrak a(\lambda) = (t_j: 0 \le j \le f-1)$ as in (\ref{eq:id:al}).
\begin{enumerate}
\item If $S\subset\{0,\dots,f-1\}$ is a  subset 
such that $t_{j}\in\{y_{j},z_{j}\}$ for all $j\in S$, then there exists a unique element $\lambda'\in \mathscr{P}$ such that $\chi_{\lambda}=\chi_{\lambda'}\prod_{j\in S}\chi_{t_{j}}$. Moreover, if we write $\mathfrak{a}(\lambda')=(t_j':0\leq j\leq f-1)$, then $t_j'=y_jz_j/t_j$ for $j\in S$ and $t_j'=t_j $ for $j\notin S$. 
\item 
\label{it:compare-I1-invts:2}
Suppose that $\rhobar$ is $(m+1)$-generic.
 Then $\chi_\lambda (\prod_{j=0}^{f-1} \alpha_j^{i_j}) = \chi_\mu$ for some $\mu \in \mathscr P$ and some integers $i_j$ with $|i_j| \le m$ for all $j$ if and only if $|i_j| \le 1$ for all $j$ and
  $i_j = -1$ (resp.\ $i_j = 1$) implies $t_j = y_j$ (resp.\ $t_j = z_j$). 
 \end{enumerate}
\end{lem}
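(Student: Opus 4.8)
The plan is to verify both statements by a direct analysis of the combinatorial parametrization of $\mathscr{P}$ together with the explicit rule for $\mathfrak{a}(\lambda)$ in (\ref{eq:id:al}). Recall that a character $\chi_\lambda$ of $H$ is a product $\prod_{j} \alpha_j^{c_j(\lambda)}$ for some exponents $c_j(\lambda)$ depending (modulo $q-1$) on $\lambda_j(x_j)$, where the underlying Serre weight is $(r_0,\dots,r_{f-1})$ with $n \le r_j \le p-3-n$ in the reducible case (and similarly in the irreducible case). The key point that makes everything work is genericity: when $\rhobar$ is $(m+1)$-generic, the exponents $r_j$ lie in a window large enough (width $\ge 2m+3$, say) that two characters $\chi_\lambda$, $\chi_\mu$ with $\lambda,\mu \in \mathscr{P}$ agree if and only if their defining exponents agree exactly coordinate by coordinate, with no ``wrap-around'' modulo $q-1$ possible when we only twist by $\prod \alpha_j^{i_j}$ with $|i_j|\le m$. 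So the first move is to record the precise list: for each of the six possible shapes of $\lambda_j(x_j)$ in the definition of $\mathscr{P}^{\ss}$ one computes $c_j(\lambda) \bmod (q-1)$, and one checks that the only ambiguities that can arise inside a single coordinate under a twist by $\alpha_j^{i_j}$, $|i_j|\le m$, come from pairs of values of $\lambda_j(x_j)$ whose exponents differ by $\pm 1$; these pairs are exactly $\{x_j, x_j+1\}$, $\{x_j+1,x_j+2\}$, $\{p-3-x_j,p-2-x_j\}$, $\{p-2-x_j,p-1-x_j\}$, etc.

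For part (i): given $S$ with $t_j \in \{y_j,z_j\}$ for $j\in S$, examine (\ref{eq:id:al}) to see that $t_j = z_j$ (resp.\ $t_j = y_j$) forces $\lambda_j(x_j) \in \{x_j, p-3-x_j\}$ and $j \in J_{\rhobar}$ (resp.\ $\lambda_j(x_j) \in \{x_j+2, p-1-x_j\}$ and $j \in J_{\rhobar}$). In each such case I would exhibit the \emph{unique} other admissible value $\lambda'_j(x_j)$ with $\chi$-exponent shifted by exactly the $H$-weight of $t_j$: namely $x_j \leftrightarrow x_j+1$ is impossible since we need to stay in $\{x_j+1, p-2-x_j\}$ or $\{x_j+2,\dots\}$; rather one checks the bookkeeping that replacing $z_j$ by $y_j$ in the ideal corresponds to replacing $\lambda_j(x_j) = x_j$ by $\lambda'_j(x_j) = x_j+1$ (still in $\mathscr{P}$, using $j \in J_{\rhobar}$ and (\ref{eq:P})) and $p-3-x_j$ by $p-2-x_j$, and the $t$-rule in the fourth line of (\ref{eq:id:al}) then gives $t'_j = y_jz_j = y_jz_j/z_j \cdot z_j$, i.e.\ $t'_j = y_jz_j/t_j$; the case $t_j = y_j$ is symmetric. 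One must verify the admissibility conditions (2),(3) in the definition of $\mathscr{P}^{\ss}$ are preserved: here the transition conditions on $\lambda_{j+1}$ only see whether $\lambda_j(x_j)$ lies in the "lower half" $\{x_j,x_j+1,x_j+2\}$ or "upper half" $\{p-3-x_j,p-2-x_j,p-1-x_j\}$, and our modification keeps $\lambda_j$ in the same half, so nothing downstream changes; and since $\lambda \in \mathscr{P}$ only constrained $j \in J_{\rhobar}$, which holds, $\lambda' \in \mathscr{P}$ too. Doing this independently for each $j \in S$ (the coordinates being independent) produces the unique $\lambda'$, and the formula for $\mathfrak{a}(\lambda')$ follows.

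For part (ii): one direction is immediate from part (i) restricted to $S$ being the set of $j$ with $i_j \ne 0$, once we know $|i_j|\le 1$ and the sign condition—here note that $\alpha_j$ has $H$-weight equal to that of $y_j$, so $i_j = 1$ matches $\chi_{t_j}$ with $t_j = y_j$ and $i_j=-1$ matches $t_j = z_j$. Wait—I should double check the sign: $\chi_\lambda = \chi_\mu \prod \alpha_j^{i_j}$ means $\chi_\lambda \prod \alpha_j^{-i_j} = \chi_\mu$, so in the statement $\chi_\lambda(\prod \alpha_j^{i_j}) = \chi_\mu$ corresponds to part (i) with $S = \{j : i_j \ne 0\}$ and the roles lined up so that $i_j = 1$ forces $t_j = z_j$ (to raise the exponent) and $i_j = -1$ forces $t_j = y_j$—the precise matching is settled by comparing $H$-weights and I would state it carefully after fixing conventions. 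The substantive direction is the forward implication: assuming $\chi_\lambda \prod \alpha_j^{i_j} = \chi_\mu$ with $|i_j| \le m$, deduce $|i_j| \le 1$ and the sign constraint. This is where genericity is essential. Working coordinate by coordinate modulo $q-1$ is not quite legitimate since the $\chi$-exponents live modulo $q-1$ as a whole, so instead I would argue: lift all exponents to honest integers in a controlled range using the genericity window, show that the total exponent identity $c_j(\lambda) + i_j = c_j(\mu) \pmod{q-1}$ for all $j$, combined with the explicit bounds $|c_j(\lambda) - c_j(\mu)| \le 2m+2$ (say) coming from the $\mathscr{P}^{\ss}$ value list and $(m+1)$-genericity, forces the congruences to be genuine equalities $c_j(\lambda) + i_j = c_j(\mu)$ in $\Z$. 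Then inspect the finite list of possible $(c_j(\lambda), c_j(\mu))$ pairs: the difference $c_j(\mu) - c_j(\lambda)$ takes a value in $\{0, \pm 1\}$ precisely when the constraint in the statement holds (and $t_j$ has the asserted form), and otherwise the difference has absolute value $\ge 2$ or is of the wrong shape to be realized by a valid $\mathscr{P}$-pair, contradiction. The main obstacle, and the step I expect to be most delicate, is precisely this "rigidity modulo $q-1$" argument: pinning down the exact width of the genericity window needed so that a congruence mod $q-1$ among sums of the $r_j p^j$ forces termwise equality, and then carefully enumerating the $\mathscr{P}^{\ss}$ value table (six shapes per coordinate, with the $J_{\rhobar}$-refinement of (\ref{eq:P})) to confirm that the only realizable small shifts are the ones listed—this is a bounded but fiddly case check, and getting the $x_j$ vs.\ $p-3-x_j$ "reflection" bookkeeping right is where errors would creep in.
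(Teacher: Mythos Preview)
Your proposal has two genuine problems.

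For part (i), your explicit construction of $\lambda'$ is wrong. You claim that when $t_j = z_j$ and $\lambda_j(x_j) = x_j$ (with $j \in J_{\rhobar}$), the modification is $\lambda'_j(x_j) = x_j+1$. But by the fourth line of (\ref{eq:id:al}) this gives $t'_j = y_jz_j$, whereas the lemma asserts $t'_j = y_jz_j/t_j = y_j$. The correct modification is $\lambda'_j(x_j) = x_j+2$ (which lies in $\mathscr{P}$ since $j \in J_{\rhobar}$, and gives $t'_j = y_j$ by the second line); one checks directly that this shifts $\chi_\lambda$ by $\alpha_j = \chi_{z_j}^{-1}$ as required. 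Your subsequent equation ``$t'_j = y_jz_j = y_jz_j/z_j \cdot z_j$, i.e.\ $t'_j = y_jz_j/t_j$'' is a non sequitur. The overall strategy of a direct case check is fine (the paper instead reduces to $|S|=1$ and cites \cite[Rk.~3.3.1.2]{BHHMS2}), but you need to redo the bookkeeping.

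For part (ii), the gap is more structural. You write $\chi_\lambda = \prod_j \alpha_j^{c_j(\lambda)}$, but this is false: the $\alpha_j$ only see $a/d$, while $\chi_\lambda$ depends on $a$ and $d$ separately via the determinant twist $e(\lambda)$. The paper's formula $\chi_\lambda(\smatr{a}{}{}{d}) = a^{\sum \lambda_j(r_j)p^j}(ad)^{e_\lambda}$ makes this explicit, and the equality $\chi_\lambda \prod \alpha_j^{i_j} = \chi_\mu$ yields \emph{two} congruences modulo $p^f-1$, not one. Subtracting them gives your termwise equality $\lambda_j(r_j) + 2i_j = \mu_j(r_j)$, but this alone does \emph{not} bound $|i_j|$: if $\lambda_j(x_j) = x_j+2$ and $\mu_j(x_j) = p-3-x_j$ lie in opposite ``halves'' one gets $2i_j = p-5-2r_j$, which can be any small integer (even $0$) for suitable $r_j$ in the generic range. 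What rules this out is the \emph{second} congruence involving $e_\lambda$, from which the paper deduces that $\lambda_{f-1}(x_{f-1}) - \mu_{f-1}(x_{f-1})$ is a constant polynomial (hence same half), and then descends through the $\mathscr{P}$-coupling conditions to all $j$. Your single-congruence framework cannot access this information.
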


\begin{proof}
(i) For the uniqueness of $\lambda'$ we need to show that if $\chi_{\lambda'}=\chi_{\lambda''}$ with $\lambda',\lambda''\in\mathscr{P}$, then $\lambda'=\lambda''$. This follows from \cite[Lemma 2.1]{HuWang2} (noting that $\chi_{\mu}\neq \chi_{\mu}^s$ for any $\mu\in\P$).  

For the existence of $\lambda'$ and the last statement, we may assume $S\neq \emptyset$, otherwise we just take $\lambda'=\lambda$. By induction we may assume $|S|=1$, in which case the result follows from \cite[Rk.~3.58]{BHHMS2}. 

(ii) First note that the ``if'' part holds by (i), and it remains to prove ``only if''.
As $\rhobar$ is $(m+1)$-generic we can write $\rhobar|_{I_K}$ as in (\ref{eq:rhobar-generic-red}) or~(\ref{eq:rhobar-generic-irred}) with $n = m+1$. We deduce that $\lambda_j(r_j),\mu_j(r_j) \in [m+1,p-2-m]$ from the definition of the set $\mathscr P$ \cite[\S~4]{breuil-buzzati}.   
By \cite[\S~4]{breuil-buzzati} we know that for $a,d \in \F_q^\times$ we have 
\[
\chi_\lambda\Big(\smatr{a}{}{}{d}\Big) = a^{\sum_{j=0}^{f-1} \lambda_j(r_j) p^j}(ad)^{e_\lambda}
\]
  for some integer $e_\lambda \defeq  e(\lambda)(r_0,\dots,r_{f-1})$ (where the polynomial $e(\lambda)$ is defined in \emph{loc.~cit.}). 
  We remark that $e(\lambda)$ and $\chi_\lambda$ can be defined for any $f$-tuple $\lambda$ satisfying $\sum_{j=0}^{f-1} \lambda_j(0) \equiv 0 \pmod 2$ (this condition is missing in \cite{HuWang2}, \S~2).
  
  Thus the equality $\chi_\lambda (\prod_{j=0}^{f-1} \alpha_j^{i_j}) = \chi_\mu$ is equivalent to the two congruences
  \begin{align}
    \sum_{j=0}^{f-1} \lambda_j(r_j) p^j + e_\lambda + \sum_{j=0}^{f-1} i_j p^j &\equiv \sum_{j=0}^{f-1} \mu_j(r_j) p^j + e_\mu \pmod {p^f-1}, \notag \\
    e_\lambda - \sum_{j=0}^{f-1} i_j p^j &\equiv e_\mu \pmod {p^f-1}. \label{eq:e-lambda-mu}
  \end{align}
  By subtracting, we obtain 
  \begin{equation*}
    \sum_{j=0}^{f-1} (\lambda_j(r_j)+i_j) p^j \equiv \sum_{j=0}^{f-1} (\mu_j(r_j)-i_j) p^j \pmod {p^f-1}.
  \end{equation*}
  Under the genericity condition, the integers $\lambda_j(r_j)+i_j$, $\mu_j(r_j)-i_j$ (for $0 \le j \le f-1$) lie in the interval $[1,p-2]$. 
  Therefore,
  \begin{equation}\label{eq:lambda-mu}
    \lambda_j(r_j)+i_j = \mu_j(r_j)-i_j \qquad \text{for all $0 \le j \le f-1$}.
  \end{equation}
  In particular, 
  \begin{equation}
    \lambda_j(r_j) \equiv \mu_j(r_j) \pmod 2 \qquad \text{for all $0 \le j \le f-1$}.\label{eq:lambda-mu-mod-2}
  \end{equation}

  On the other hand, from~(\ref{eq:e-lambda-mu}), the definition of $e(\lambda)$ and (\ref{eq:lambda-mu}) we easily deduce that the polynomial $\lambda_{f-1}(x_{f-1})-\mu_{f-1}(x_{f-1})$ is constant, and hence by (\ref{eq:lambda-mu-mod-2}) that $\lambda_{f-1}(x_{f-1})-\mu_{f-1}(x_{f-1}) \in \{0, \pm 2\}$.
  By the definition of $\mathscr P$ we deduce by descending induction and~(\ref{eq:lambda-mu-mod-2}) that $\lambda_{j}(x_{j})-\mu_{j}(x_{j}) \in \{0, \pm 2\}$ for all $j$.
  Therefore, by~(\ref{eq:lambda-mu}), $|i_j| \le 1$ for all $j$.
  Assume first that $j > 0$ or that $\rhobar$ is reducible.
  If $i_j = 1$, then $\lambda_j(x_j) = x_j$ or $\lambda_j(x_j) = p-3-x_j$, so $t_j = z_j$.
  (If $\rhobar$ is nonsplit reducible, note that $\mu_j(x_j) = x_j+2$ in the first case, so $j \in J_{\rhobar}$ in either case.)
  Similarly, if $i_j = -1$, then $\lambda_j(x_j) = x_j+2$ or $\lambda_j(x_j) = p-1-x_j$, so $t_j = y_j$.
  (Again, $j \in J_{\rhobar}$ if $\rhobar$ is nonsplit reducible.)
  If $j = 0$ and $\rhobar$ is irreducible, the argument is similar.  
\end{proof}

Recall that just before Lemma \ref{lem:min-graded-splitting} we defined $\mathcal{I}^{(n)} = (y_j^n,z_j^n,h_j:\, 0\leq j\leq f-1)$, an $H$-stable graded ideal of $\gr(\Lambda)$.

\begin{lem}\label{lem:N/I-multifree}
Suppose that $n \ge 1$ and that $\rhobar$ is $(2n-1)$-generic.
For each character $\chi$ of $H$ such that $[N/\mathcal{I}^{(n)} N: \chi]\neq 0$, we have $[N/\mathcal{I}^{(n)} N:\chi]=r$.
\end{lem}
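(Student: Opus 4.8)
The plan is to reduce immediately to the case $r=1$ (since $N$ is an $r$-fold direct sum and $\mathcal I^{(n)}$ acts diagonally, $N/\mathcal I^{(n)}N$ is the $r$-fold sum of the corresponding module for $r=1$, so any multiplicity gets multiplied by $r$), and then to the ``one-$\lambda$'' case: since $N \cong \bigoplus_{\lambda\in\P}\chi_\lambda^{-1}\otimes R/\fa(\lambda)$ as graded $\gr(\Lambda)$-modules with compatible $H$-action and $\mathcal I^{(n)}$ is $H$-stable, we have $N/\mathcal I^{(n)}N \cong \bigoplus_{\lambda\in\P}\chi_\lambda^{-1}\otimes R/(\fa(\lambda)+\mathcal I^{(n)})$. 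Using the factorization $R/(\fa(\lambda)+\mathcal I^{(n)}) \cong \bigotimes_{j=0}^{f-1} \F[y_j,z_j]/(t_j, y_j^n, z_j^n)$ (with $t_j=t_j(\lambda)\in\{y_j,z_j,y_jz_j\}$), one computes the $H$-character content of each factor explicitly: if $t_j=y_j$ the factor is $\F[z_j]/(z_j^n)$ carrying characters $1,\alpha_j^{-1},\dots,\alpha_j^{-(n-1)}$; if $t_j=z_j$ it carries $1,\alpha_j,\dots,\alpha_j^{n-1}$; and if $t_j=y_jz_j$ it carries $1$ together with $\alpha_j^{\pm1},\dots,\alpha_j^{\pm(n-1)}$ (all with multiplicity $1$ as $H$-characters). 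Thus for a fixed $\lambda$ the module $\chi_\lambda^{-1}\otimes R/(\fa(\lambda)+\mathcal I^{(n)})$ is \emph{multiplicity free} as an $H$-representation, its constituents being exactly $\chi_\lambda^{-1}\prod_j \alpha_j^{i_j}$ for certain tuples $(i_j)$ with $|i_j|\le n-1$ and sign constraints dictated by $t_j$.

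The remaining point is therefore to show that these constituents, as $\lambda$ ranges over $\P$, have no collisions: i.e.\ if $\chi_\lambda^{-1}\prod_j\alpha_j^{i_j} = \chi_\mu^{-1}\prod_j\alpha_j^{i'_j}$ with $|i_j|,|i'_j|\le n-1$, the two come from the same $\lambda=\mu$ and give the same character exactly once. Equivalently, writing $\chi_\lambda = \chi_\mu\cdot\prod_j\alpha_j^{i_j+i'_j}$ with $|i_j+i'_j|\le 2(n-1) = 2n-2$, I want to invoke Lemma~\ref{lem:compare-I1-invts}\ref{it:compare-I1-invts:2} with $m = 2n-2$: since $\rhobar$ is $(2n-1)$-generic, that lemma applies and forces $|i_j+i'_j|\le 1$ for all $j$, with $i_j+i'_j = -1$ only if $t_j(\mu)=y_j$ and $i_j+i'_j=+1$ only if $t_j(\mu)=z_j$. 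Combined with the sign constraints already imposed on the $i_j$ (from $\lambda$) and $i'_j$ (from $\mu$) by the structure of $R/(\fa+\mathcal I^{(n)})$, a short case-check in each coordinate $j$ shows $i_j = i'_j = 0$ and hence $\lambda = \mu$ (using injectivity of $\lambda\mapsto\chi_\lambda$ on $\P$, e.g.\ from Lemma~\ref{lem:compare-I1-invts}(i) or \cite[Lemma 2.1]{HuWang2}).

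The main obstacle I anticipate is purely bookkeeping: correctly tracking the sign constraints in the three cases $t_j\in\{y_j,z_j,y_jz_j\}$ and matching them against the output of Lemma~\ref{lem:compare-I1-invts}\ref{it:compare-I1-invts:2}. One has to be a little careful that when $t_j=y_jz_j$ the character $\alpha_j^{i_j}$ with $i_j$ of \emph{either} sign can occur, so a potential collision with a $\mu$ having $t_j(\mu)\in\{y_j,z_j\}$ must be ruled out — but here the factor identity forces $t_j(\mu)=y_jz_j$ too (equivalently $j\notin J_{\rhobar}$-type coordinates are rigid), and then $i_j+i'_j\in\{-1,0,1\}$ with no preferred sign is incompatible with the $\{-1\}\Rightarrow y_j$, $\{+1\}\Rightarrow z_j$ dichotomy unless it is $0$. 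Everything else is a direct computation, and the genericity hypothesis $(2n-1)$-generic is exactly what makes the interval arithmetic in Lemma~\ref{lem:compare-I1-invts}\ref{it:compare-I1-invts:2} go through for exponent bound $2n-2$.
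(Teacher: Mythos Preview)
Your overall strategy matches the paper's: reduce to $r=1$, decompose $N/\mathcal I^{(n)}N$ over $\lambda\in\P$, describe the $H$-characters of each piece explicitly, and then use Lemma~\ref{lem:compare-I1-invts}\ref{it:compare-I1-invts:2} with $m=2n-2$ to rule out collisions. However, your final case-check does not go through as written.

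First, a sign: from $\chi_\lambda^{-1}\prod_j\alpha_j^{i_j}=\chi_\mu^{-1}\prod_j\alpha_j^{i'_j}$ one gets $\chi_\lambda\prod_j\alpha_j^{i'_j-i_j}=\chi_\mu$, so the relevant exponent is $i'_j-i_j$, not $i_j+i'_j$. Second, the desired conclusion is $i_j=i'_j$ for all $j$ (forcing $\chi_\lambda=\chi_\mu$, hence $\lambda=\mu$), not $i_j=i'_j=0$. Third and most importantly, a single application of Lemma~\ref{lem:compare-I1-invts}\ref{it:compare-I1-invts:2} constrains $t_j$ only for \emph{one} of $\lambda,\mu$; your case-check implicitly needs both, and your claim that ``the factor identity forces $t_j(\mu)=y_jz_j$ too'' is false: for instance $t_j(\lambda)=y_jz_j$ holds whenever $\lambda_j(x_j)\in\{x_j+1,p-2-x_j\}$, which places no restriction on whether $j\in J_{\rhobar}$ and none on $t_j(\mu)$. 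The paper's fix is to apply the lemma \emph{twice}. Assuming without loss of generality that $i'_{j_0}>i_{j_0}$ for some $j_0$, applying the lemma to $\chi_\lambda\prod_j\alpha_j^{i'_j-i_j}=\chi_\mu$ gives $i'_{j_0}-i_{j_0}=1$ and $t_{j_0}(\lambda)=z_{j_0}$, hence $i_{j_0}\ge 0$; applying it to $\chi_\mu\prod_j\alpha_j^{i_j-i'_j}=\chi_\lambda$ gives $t_{j_0}(\mu)=y_{j_0}$, hence $i'_{j_0}\le 0$. Together $i'_{j_0}\le 0\le i_{j_0}$, contradicting $i'_{j_0}>i_{j_0}$.
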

\begin{proof}
It is equivalent to prove that $N'/\mathcal{I}^{(n)}N'$ is multiplicity free, where $N'\defeq \bigoplus_{\lambda\in\mathscr{P}}\chi_{\lambda}^{-1}\otimes R/\mathfrak{a}(\lambda)$. 
We have $R/(\mathfrak{a}(\lambda)+\mathcal I^{(n)}) = \F[y_j,z_j : 0 \le j \le f-1]/(t_j,y_j^n,z_j^n : 0 \le j \le f-1)$
and hence the characters of $H$ occurring in $\chi_{\lambda}^{-1}\otimes R/(\mathfrak{a}(\lambda)+\mathcal I^{(n)})$ are given by $\chi_\lambda^{-1} (\prod_{j=0}^{f-1} \alpha_j^{i_j})$, where $|i_j| \le n-1$ and $i_j \le 0$ if $t_j = y_j$ (resp.\ $i_j \ge 0$ if $t_j = z_j$).
Suppose that $N'/\mathcal{I}^{(n)}N'$ fails to be multiplicity free.
Then there are $\lambda, \mu \in \mathscr P$ and integers $i_j$, $\ell_j$ in $[-(n-1),n-1]$ such that
$\chi_\lambda^{-1} (\prod_{j=0}^{f-1} \alpha_j^{i_j}) = \chi_\mu^{-1} (\prod_{j=0}^{f-1} \alpha_j^{\ell_j})$ and $(\lambda,\un i) \ne (\mu,\un \ell)$.
By symmetry we may assume that $\ell_{j_0} > i_{j_0}$ for some $j_0$.
For $0 \le j \le f-1$ let $t_j$ (resp.\ $t_j'$) be associated to $\lambda$ (resp.\ $\mu$) as in \eqref{eq:id:al}.
From Lemma~\ref{lem:compare-I1-invts}(ii) applied to $\chi_\lambda (\prod_{j=0}^{f-1} \alpha_j^{\ell_j-i_j}) = \chi_\mu$ with $m = 2n-2$ we obtain that $\ell_{j_0}-i_{j_0} = 1$ and $t_{j_0} = z_{j_0}$.
Applying the same lemma with the roles of $\lambda$ and $\mu$ interchanged, we also get $t_{j_0}' = y_{j_0}$.
By above this implies that $i_{j_0} \ge 0 \ge \ell_{j_0}$, contradicting that $\ell_{j_0} > i_{j_0}$.
\end{proof}

\subsection{The Iwahori representation \texorpdfstring{$\tau$}{tau}}
\label{sec:representation-tau}

We define a finite-dimensional subrepresentation $\tau = \tau^{(3)}$ of $\pi|_I$ and prove a crucial injectivity result on the level of Tor groups in Proposition~\ref{prop:Tor-inj}.

\begin{lem}\label{lem:tau-embed}
Suppose that $1\leq n\leq p$. 
There exists a finite-dimensional smooth representation $\tau^{(n)}$ of $I$ over $\F$ such that \[\gr_{\m}((\tau^{(n)})^{\vee})\cong N/\mathcal{I}^{(n)} N\]
as graded $\gr(\Lambda)$-modules  with compatible $H$-action.
More precisely, $\tau^{(n)} \cong (\bigoplus_{\lambda \in \mathscr P}\tau^{(n)}_{\lambda})^{\oplus r}$, where $\tau^{(n)}_{\lambda}$ satisfies 
\[\gr_\m((\tau^{(n)}_{\lambda})^{\vee})\cong \chi_{\lambda}^{-1}\otimes R/(\mathcal{I}^{(n)}+\mathfrak{a}(\lambda))\]
as graded $\gr(\Lambda)$-modules  with compatible $H$-action.
In particular, $\soc_I(\tau^{(n)}_{\lambda})=\tau^{(n)}_{\lambda}[\m] \cong \chi_\lambda$ for all $\lambda \in \mathscr P$.
\end{lem}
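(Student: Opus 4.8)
The plan is to construct the $\tau^{(n)}_\lambda$ one $\lambda$ at a time and then set $\tau^{(n)}\defeq\big(\bigoplus_{\lambda\in\mathscr P}\tau^{(n)}_\lambda\big)^{\oplus r}$, which suffices because linear duality on finite-dimensional representations and $\gr_\m(-)$ both commute with finite direct sums. So fix $\lambda\in\mathscr P$. Write $\Lambda_\lambda$ for the free rank-one $\Lambda$-module $\Lambda\cdot e$ equipped with the compatible $H$-action for which $H$ acts on $e$ through $\chi_\lambda^{-1}$; concretely $\Lambda_\lambda\cong (\Inj_{I/Z_1}\chi_\lambda)^\vee$, using that $I/Z_1$ is an extension of the prime-to-$p$ group $H$ by $I_1/Z_1$, split by the Teichm\"uller section into the diagonal torus, so that $\F\bbra{I/Z_1}\cong\Lambda\rtimes\F[H]$ with $\F[H]$ semisimple. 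Then $\gr_\m(\Lambda_\lambda)\cong\chi_\lambda^{-1}\otimes_\F\gr(\Lambda)$ as graded $\gr(\Lambda)$-modules with compatible $H$-action, and the dual of any finite-length quotient of $\Lambda_\lambda$ is a finite-dimensional smooth representation of $I$ on which $Z_1$ acts trivially.

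Next I would cut out the correct cyclic quotient. Using the explicit presentation of $\Lambda=\F\bbra{I_1/Z_1}$ recalled in \cite[\S~3.1]{BHHMS2} (see also \cite[\S~5.3]{BHHMS1}), choose $H$-eigenvector lifts $\widetilde Y_j,\widetilde Z_j\in\m\setminus\m^2$ of $y_j,z_j$ and $\widetilde H_j\in\m^2$ of $h_j$, and put $\widetilde t_j\defeq\widetilde Y_j$, $\widetilde Z_j$, or $\widetilde Y_j\widetilde Z_j$ according to the value of $t_j=t_j(\lambda)$ in~\eqref{eq:id:al}. Let $\mathfrak J\subset\Lambda$ be the left ideal generated by the $H$-eigenvectors $\widetilde t_j,\widetilde H_j,\widetilde Y_j^{\,n},\widetilde Z_j^{\,n}$ for $0\le j\le f-1$; it is $H$-stable, so $M_\lambda\defeq\Lambda_\lambda/\mathfrak J\Lambda_\lambda$ is a cyclic $\Lambda$-module with compatible $H$-action, and we set $\tau^{(n)}_\lambda\defeq M_\lambda^\vee\subseteq\Inj_{I/Z_1}\chi_\lambda$. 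The principal symbols of the chosen generators of $\mathfrak J$ are $t_j,h_j,y_j^n,z_j^n$, which generate the homogeneous ideal $\mathfrak b(\lambda)+\mathcal I^{(n)}$ of $\gr(\Lambda)$; hence there is always a surjection of graded $\gr(\Lambda)$-modules with compatible $H$-action
\[\chi_\lambda^{-1}\otimes\big(\gr(\Lambda)/(\mathfrak b(\lambda)+\mathcal I^{(n)})\big)\;\twoheadrightarrow\;\gr_\m(M_\lambda),\]
and the source equals $\chi_\lambda^{-1}\otimes R/(\mathfrak a(\lambda)+\mathcal I^{(n)})$ because $\mathfrak b(\lambda)+\mathcal I^{(n)}$ contains every $h_j$. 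In particular both sides are finite-dimensional over $\F$, so $M_\lambda$ is finite-dimensional and $\tau^{(n)}_\lambda$ is a finite-dimensional smooth representation of $I$.

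It remains to show that the displayed surjection is an isomorphism; since both sides are finite-dimensional this amounts to the dimension inequality $\dim_\F M_\lambda\ge\dim_\F R/(\mathfrak a(\lambda)+\mathcal I^{(n)})$. This is the one step requiring a genuine computation: one checks that $\{\widetilde t_j,\widetilde H_j,\widetilde Y_j^{\,n},\widetilde Z_j^{\,n}\}$ is a ``standard basis'' of $\mathfrak J$, equivalently that modulo $\mathfrak J$ every element of $\Lambda$ can be rewritten as an $\F$-combination of the monomials $\prod_j\widetilde Y_j^{\,a_j}\widetilde Z_j^{\,b_j}$ with $(a_j,b_j)$ ranging over the finite ``staircase'' complementary to $(t_j,y_j^n,z_j^n)$ in $\F[y_j,z_j]$, so that these images span $M_\lambda$. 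This is carried out exactly as in \cite[\S~9]{HuWang2}, using the relations $[\widetilde Y_j,\widetilde Z_j]\equiv\widetilde H_j$ and $[\widetilde H_j,\widetilde Z_i]\equiv[\widetilde Y_i,\widetilde H_j]\equiv0$ modulo higher filtration together with induction on the $\m$-adic degree; the hypothesis $1\le n\le p$ enters precisely here, ensuring that $\widetilde Y_j^{\,n},\widetilde Z_j^{\,n}$ have principal symbols $y_j^n,z_j^n$ and that the reordering relations among $\widetilde Y_j^{\,a},\widetilde Z_j^{\,b}$ with $a,b<n$ retain their $\widetilde H_j$-terms (the relevant integer coefficients being nonzero in $\F$), which would fail for $n>p$. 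Granting this, $\gr_\m((\tau^{(n)}_\lambda)^\vee)=\gr_\m(M_\lambda)\cong\chi_\lambda^{-1}\otimes R/(\mathcal I^{(n)}+\mathfrak a(\lambda))$, and summing over $\lambda$ with multiplicity $r$ gives $\gr_\m((\tau^{(n)})^\vee)\cong N/\mathcal I^{(n)}N$. Finally, $\tau^{(n)}_\lambda[\m]=(\tau^{(n)}_\lambda)^{I_1}$ is the linear dual of $M_\lambda/\m M_\lambda=\F\otimes_{\gr(\Lambda)}\gr_\m(M_\lambda)$, which is $\chi_\lambda^{-1}$ concentrated in degree $0$; and since $M_\lambda$ is cyclic, $\tau^{(n)}_\lambda$ is cocyclic, so $\soc_I(\tau^{(n)}_\lambda)$ is simple and contained in $\tau^{(n)}_\lambda[\m]\cong\chi_\lambda$, forcing $\soc_I(\tau^{(n)}_\lambda)=\tau^{(n)}_\lambda[\m]\cong\chi_\lambda$. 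The main obstacle is the standard-basis/dimension-count step just described.
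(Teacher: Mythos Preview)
Your strategy is sound but takes a genuinely different route from the paper, and the step you yourself flag as ``the main obstacle'' is essentially the entire content of the lemma and is not adequately discharged.

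The paper constructs $\tau_\lambda^{(n)}$ explicitly as $\chi_\lambda \otimes \bigotimes_{i=0}^{f-1} W_{\lambda,i}$, where each factor $W_{\lambda,i}$ is a concrete uniserial $I$-representation $E_i^\pm(n-1)$ (or an amalgamated sum of two such), obtained by restricting a Serre weight to $I$. The graded module of each $W_{\lambda,i}^\vee$ is read off directly as $\F[y_i,z_i]$ modulo the appropriate monomial ideal. The only issue is then to compare the $\m$-adic filtration on the tensor product with the tensor-product filtration $C_\bullet$, and this is done by a short Nakayama argument: the two filtrations agree in degree~$0$ and $\gr_{C_\bullet}$ is generated in degree~$0$, so the comparison map $\gr_\m \to \gr_{C_\bullet}$ is surjective, hence bijective by finite-dimensionality. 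No standard-basis computation appears; the hypothesis $1\le n\le p$ enters only because $E_i^\pm(n-1)$ requires $0\le n-1\le p-1$.

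Your route via the cyclic quotient $M_\lambda=\Lambda_\lambda/\mathfrak J$ is natural, and the surjection $\chi_\lambda^{-1}\otimes R/(\mathfrak a(\lambda)+\mathcal I^{(n)})\twoheadrightarrow\gr_\m(M_\lambda)$ is correctly set up. But the matching lower bound $\dim_\F M_\lambda\ge\dim_\F R/(\mathfrak a(\lambda)+\mathcal I^{(n)})$ is equivalent to showing that your chosen lifts form a standard basis of $\mathfrak J$ in the noncommutative filtered ring $\Lambda$, and ``carried out exactly as in \cite[\S~9]{HuWang2}'' does not discharge this: that reference treats specific small cases, and the inductive rewriting you sketch (tracking the $\widetilde H_j$-terms produced by reordering, with the relevant integer coefficients nonzero mod~$p$) is a genuine computation that must be performed, not cited. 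The paper's tensor-product construction sidesteps this difficulty entirely, which is precisely what it buys over your approach.
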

\begin{proof}
It suffices to show the existence of $\tau^{(n)}_{\lambda}$ 
for each $\lambda\in\mathscr{P}$, which follows by a similar argument as in \cite[Prop.~9.15]{HuWang2} (which considers $n = 3$, using slightly different notation). For convenience of the reader, we recall the argument below.

By \cite[Lemma\ 2.15(i)]{yongquan-algebra}, for $0\leq s\leq p-1$, there exists a unique $I$-representation which is trivial on $K_1$,  uniserial of length $s+1$ and whose socle filtration has graded pieces $\mathbf{1},\alpha_i^{-1},\dots,\alpha_i^{-s}$; we denote this representation by $E_i^-(s)$. For example, $E_0^-(s)$ is just the restriction to $I$ of the Serre weight $(s,0,\dots,0)$ twisted by $\eta^{-1}$, where $\eta$ is the character of $H$ acting on $(s,0,\dots,0)^{I_1}$.   By taking a conjugate action by $\smatr{0}1p0$, we obtain an $I$-representation $E_i^+(s)$ which is uniserial of length $s+1$ and whose socle filtration has graded pieces $\mathbf{1},\alpha_i,\dots,\alpha_i^{s}$. 
It is direct to check that
\[\gr_{\fm}(E_i^{-}(s)^{\vee})\cong \F[y_i,z_i]/(y_i^{s+1},z_i),\ \ \gr_{\fm}(E_i^+(s)^{\vee})\cong \F[y_i,z_i]/(y_i,z_i^{s+1}),\] 
where $\F[y_i,z_i]$ is viewed as a $\gr(\Lambda)$-module via the natural quotient map. 
Moreover, the amalgamated sum $E_{i}^-(s)\oplus_{\mathbf{1}}E_i^+(s)\defeq (E_i^+(s)\oplus E_i^-(s))/\mathbf{1}$ satisfies
\[\gr_{\fm}\big((E_{i}^-(s)\oplus_{\mathbf{1}}E_i^+(s) )^{\vee}\big)\cong \F[y_i,z_i]/(y_i^{s+1},y_iz_i,z_i^{s+1}).\]

Recall that $\fa(\lambda)=(t_i : 0\leq i\leq f-1)$ with $t_i\in\{y_i,z_i,y_iz_i\}$. Define $W_{\lambda,i} $ to be 
\[W_{\lambda,i}\defeq\left\{\begin{array}{ll}
E_i^+(n-1) & \text{if $t_i=y_i$},\\
E_i^-(n-1) & \text{if $t_i=z_i$},\\
E_i^-(n-1)\oplus_{\mathbf{1}}E_i^+(n-1) & \text{if $t_i=y_iz_i$},
\end{array}\right.\]
and $\tau_{\lambda}^{(n)}\defeq\chi_{\lambda}\otimes(\bigotimes_{i=0}^{f-1}W_{\lambda,i})$, where all tensor products in this proof are taken over $\F$.  

We claim that  $\gr_{\fm}((\tau_{\lambda}^{(n)})^{\vee})\cong \chi_{\lambda}^{-1}\otimes R/(\mathcal{I}^{(n)}+\mathfrak{a}(\lambda))$ as graded $\gr(\Lambda)$-modules  with compatible $H$-action.  For simplicity we write $M_i\defeq(W_{\lambda,i})^{\vee}$ and $M\defeq \bigotimes_{i=0}^{f-1}M_i$. Denote by $C_{\bullet}M$ the tensor product filtration on $M$, namely
\[C_{-d}M:=\sum_{d_0+\cdots +d_{f-1}=d}\bigotimes_{i=0}^{f-1}\fm^{d_i}M_i \quad\text{for $d \ge 0$}.\]
Then $\gr_{C_{\bullet}}(M)\cong\bigotimes_{i=0}^{f-1}\gr_{\m}(M_i)\cong R/(\mathcal{I}^{(n)}+\mathfrak{a}(\lambda))$ by construction of $M_i$. By \cite[Lemma 1.1(i)]{AJL}, we have an inclusion $\fm^d M\subset C_{-d}M$, which induces a morphism of graded $\gr(\Lambda)$-modules
\[\phi: \gr_{\fm}(M)\ra \gr_{C_{\bullet}}(M).\]
To prove the claim it suffices to prove that $\phi$ is an isomorphism, equivalently a surjection for dimension reasons.
It is clear that $\m^0M=C_0(M)=M$, so $\phi_0$ (the degree $0$ part of $\phi$) is surjective. Since $\gr_{C_{\bullet}}(M)$ is generated by  its degree $0$ part, we conclude by Nakayama's lemma.

  The last statement easily follows from this.
\end{proof}

By \cite[Thm.\ 3.67]{BHHMS2} we have a surjection $N\onto \gr_\m(\pi^{\vee})$ of graded $\gr(\Lambda)$-modules with compatible $H$-action.

\begin{lem}\label{lem:isom-modcI}
Suppose that $\rhobar$ is $(2n-1)$-generic. 
There exists an $I$-equivariant embedding $\tau^{(n)}\hookrightarrow \pi|_I$ such that the composite of the induced maps \[N\twoheadrightarrow
\gr_{\mathfrak{m}}(\pi^\vee)\twoheadrightarrow \gr_{\mathfrak{m}}(\tau^{(n)})^\vee\cong N/\mathcal{I}^{(n)}N\]   is identified with the natural quotient
map $N\twoheadrightarrow N/\mathcal{I}^{(n)}N$.
In particular, the surjections $N \onto \gr_\m(\pi^{\vee}) \onto \gr_\m((\tau^{(n)})^{\vee})$ are isomorphisms in degrees $\ge -(n-1)$ and $\tau^{(n)}[\m^n] = \pi[\m^n]$.
\end{lem}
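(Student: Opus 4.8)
The plan is to (a) construct an $I$-equivariant embedding $\iota\colon\tau^{(n)}\hookrightarrow\pi|_I$, (b) adjust it so that the composite $N\onto\gr_\m(\pi^\vee)\onto\gr_\m((\tau^{(n)})^\vee)\cong N/\mathcal{I}^{(n)}N$ becomes the natural quotient, and (c) read off the two ``in particular'' assertions. Step (a) will be the main obstacle.

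For (a), I would first note that $\tau^{(n)}$ is trivial on $K_1$ (each $\chi_\lambda$ and each $E_i^\pm(n-1)$ is), so $\Hom_I(\tau^{(n)},\pi)=\Hom_I(\tau^{(n)},\pi^{K_1})=\Hom_I(\tau^{(n)},D_0(\rhobar)^{\oplus r})$ by assumption~\ref{it:assum-i}. Since $\rhobar$ is $(2n-1)$-generic, Lemma~\ref{lem:N/I-multifree} shows that the graded $H$-modules $\chi_\lambda^{-1}\otimes R/(\mathcal{I}^{(n)}+\fa(\lambda))\cong\gr_\m((\tau^{(n)}_\lambda)^\vee)$ are multiplicity free with pairwise disjoint sets of $H$-constituents; as these representations are $K_1$-trivial, this forces $\mathrm{End}_I(\tau^{(n)}_\lambda)=\F$, $\Hom_I(\tau^{(n)}_\lambda,\tau^{(n)}_\mu)=0$ for $\lambda\ne\mu$, and in particular that the $\chi_\lambda$ ($\lambda\in\mathscr P$) are pairwise distinct. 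Hence it will suffice to embed each $\tau^{(n)}_\lambda$ into $D_0(\rhobar)|_I$: as $\soc_I\tau^{(n)}_\lambda=\chi_\lambda$ and $D_0(\rhobar)^{I_1}=\bigoplus_{\lambda\in\mathscr P}\chi_\lambda$ as $H$-modules, the images will have pairwise disjoint sets of Jordan--H\"older factors, so their sum is direct and gives $\bigoplus_\lambda\tau^{(n)}_\lambda\hookrightarrow D_0(\rhobar)|_I$; taking $r$ copies then yields $\iota$. To embed $\tau^{(n)}_\lambda=\chi_\lambda\otimes\bigotimes_i W_{\lambda,i}$, the key point is that $(2n-1)$-genericity gives $n-1\le r_i$ and $n-1\le p-2-r_i$ for all $i$, so each $E_i^+(n-1)$, resp.\ $E_i^-(n-1)$, fits as a sub-, resp.\ quotient-representation, of a uniserial $I$-constituent of $D_0(\rhobar)$ of $E_i^\pm$-type; I would then use the explicit description of the indecomposable summands $D_{0,\tau}(\rhobar)$ of $D_0(\rhobar)$ and of their $I$-socle filtrations (\cite[\S\S 13--19]{BP}, \cite{breuil-buzzati}) to locate $\sigma_\lambda$ in a suitable $D_{0,\tau}(\rhobar)$ and realize $\tau^{(n)}_\lambda$ as the $I$-subrepresentation built from the $\chi_\lambda$-eigenline by passing through these constituents, the choice of $t_i(\lambda)\in\{y_i,z_i,y_iz_i\}$ in~\eqref{eq:id:al} matching the local submodule structure of $D_0(\rhobar)$ around $\sigma_\lambda$. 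This matching is close to \cite[Prop.~9.15]{HuWang2}, and I expect it to be the technical heart of the argument.

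For (b) and (c), I would dualize $\iota$ to a surjection $\iota^\vee\colon\pi^\vee\onto(\tau^{(n)})^\vee$ of $\Lambda$-modules with compatible $H$-action. Being $\Lambda$-linear and onto, $\iota^\vee$ satisfies $\iota^\vee(\m^k\pi^\vee)=\m^k(\tau^{(n)})^\vee$ for all $k$, so the quotient filtration on $(\tau^{(n)})^\vee$ is its $\m$-adic filtration, and $\iota^\vee$ induces a surjection $\gr(\iota^\vee)\colon\gr_\m(\pi^\vee)\onto\gr_\m((\tau^{(n)})^\vee)\cong N/\mathcal{I}^{(n)}N$ of graded $\gr(\Lambda)$-modules with compatible $H$-action, the last isomorphism coming from Lemma~\ref{lem:tau-embed}. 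With $q\colon N\onto\gr_\m(\pi^\vee)$ the surjection of \cite[Thm.\ \ref{bhhms2:thm:cycle-pi}]{BHHMS2} recalled above, put $\psi\defeq\gr(\iota^\vee)\circ q$. As $N$ is an $R$-module, $h_jN=0$, hence $\mathcal{I}^{(n)}N=(y_j^n,z_j^n:0\le j\le f-1)N\subseteq\bigoplus_{d\le -n}N_d$; in particular $N\congto N/\mathcal{I}^{(n)}N$ in degrees $\ge-(n-1)$, and $\psi$ restricts in degree $0$ to an $H$-equivariant automorphism $\psi_0$ of $N_0=\bigoplus_\lambda(\chi_\lambda^{-1})^{\oplus r}$. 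Since the $\chi_\lambda^{-1}$ are pairwise distinct, $\psi_0\in\prod_\lambda\GL_r(\F)=\mathrm{Aut}_I(\tau^{(n)})$, so after replacing $\iota$ by $\iota\circ g$ for a suitable $g\in\mathrm{Aut}_I(\tau^{(n)})$ we may assume $\psi_0=\mathrm{id}$; since $N$ is generated over $\gr(\Lambda)$ in degree $0$, $\psi$ is then the natural quotient $N\onto N/\mathcal{I}^{(n)}N$, which is the first assertion. It follows that $\psi$, hence $q$ and $\gr(\iota^\vee)$, are isomorphisms in degrees $\ge-(n-1)$, so $\gr(\iota^\vee)$ induces isomorphisms $\m^k\pi^\vee/\m^{k+1}\pi^\vee\congto\m^k(\tau^{(n)})^\vee/\m^{k+1}(\tau^{(n)})^\vee$ for $0\le k\le n-1$ and therefore $\iota^\vee$ induces an isomorphism $\pi^\vee/\m^n\pi^\vee\congto(\tau^{(n)})^\vee/\m^n(\tau^{(n)})^\vee$; dualizing, $\iota$ restricts to an isomorphism $\tau^{(n)}[\m^n]\congto\pi[\m^n]$, i.e.\ $\tau^{(n)}[\m^n]=\pi[\m^n]$ once $\tau^{(n)}$ is identified with its image.
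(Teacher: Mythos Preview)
Your approach to (b) and (c) is essentially the same as the paper's and is fine. The problem is entirely in (a).

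First, the claim that ``$\tau^{(n)}$ is trivial on $K_1$ (each $\chi_\lambda$ and each $E_i^\pm(n-1)$ is)'' is false. The factor $E_i^+(n-1)$ is obtained from $E_i^-(n-1)$ by conjugating by $\Pi=\smatr{0}{1}{p}{0}$, which does \emph{not} normalize $K_1$; indeed Lemma~\ref{lem:Wchi}(ii) later in the paper shows that $W(\chi,\chi^{J_1,J_2})$ is $K_1$-fixed if and only if $J_2=\emptyset$, i.e.\ precisely when no $E_j^+$-factor occurs. Whenever $t_i(\lambda)=y_i$ (which happens for many $\lambda\in\P$), the corresponding $W_{\lambda,i}=E_i^+(n-1)$ and $\tau^{(n)}_\lambda$ is \emph{not} $K_1$-invariant. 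Hence your reduction $\Hom_I(\tau^{(n)},\pi)=\Hom_I(\tau^{(n)},\pi^{K_1})$ is invalid, and the plan to locate $\tau^{(n)}_\lambda$ inside $D_0(\rhobar)|_I$ by explicit combinatorics collapses.

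The paper's argument for (a) is both simpler and genuinely different: it does not attempt any explicit construction inside $D_0(\rhobar)$. Instead it uses that $\tau^{(n)}[\m]\cong\pi[\m]$ on the socle level, and then invokes assumption~\ref{it:assum-iv} together with Lemma~\ref{lem:N/I-multifree}. The latter says $N/\mathcal{I}^{(n)}N$ is multiplicity free (with multiplicity $r$), so $\JH(\tau^{(n)}/\tau^{(n)}[\m])\cap\JH(\pi^{I_1})=\emptyset$; assumption~\ref{it:assum-iv} then gives $\Ext^i_{I/Z_1}(\chi,\pi)=0$ for $i=0,1$ and every such $\chi$, hence by d\'evissage $\Hom_I(\tau^{(n)},\pi)\simto\Hom_I(\tau^{(n)}[\m],\pi)$. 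The socle embedding therefore extends uniquely to an $I$-map $\tau^{(n)}\to\pi|_I$, which is injective because it is injective on the socle. You did not use assumption~\ref{it:assum-iv} at all; it is the key input that replaces the combinatorial analysis you were proposing.
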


\begin{proof}
(Note that the proof of the first statement is the same as that of \cite[Prop.~10.20]{HuWang2}.) From the last assertion of Lemma \ref{lem:tau-embed} we know that $\tau^{(n)}[\fm]$ is isomorphic to $ \pi[\fm]\!=\!(\bigoplus_{\lambda\in\mathscr{P}}\chi_{\lambda})^{\oplus r}$, and we may choose such an isomorphism $i: \tau^{(n)}[\m]\simto \pi[\m]$ that makes the diagram
\begin{equation}\label{eq:choice-i}\xymatrix{N_0\ar^-{\cong}[r]\ar^-{\cong}[d]&\pi^{\vee}/\m\pi^{\vee} \ar_{i^{\vee}}^-{\cong}[d]\\
(N/\mathcal{I}^{(n)}N)_0\ar^-{\cong}[r]&(\tau^{(n)})^\vee/\m(\tau^{(n)})^\vee}\end{equation}
commute, where $(-)_0$ denotes the degree $0$ part of a graded module.
Lemma \ref{lem:N/I-multifree} implies that 
\begin{equation}\label{eq:interJH}
\JH(\tau^{(n)}/\tau^{(n)}[\m])\cap \JH(\pi[\m])=\emptyset.
\end{equation}
By (\ref{eq:interJH}) and assumption~\ref{it:assum-iv} on $\pi$, we have in particular $\Ext^i_{I/Z_1}(\chi,\pi)=0$ for $\chi\in \JH(\tau^{(n)}/{\tau^{(n)}[\m]})$ and $i=0,1$, hence $\Ext^i_{I/Z_1}(\tau^{(n)}/{\tau^{(n)}[\m]},\pi)=0$ for $i=0,1$ by d\'evissage. We then deduce an isomorphism
\[\Hom_{I}(\tau^{(n)},\pi)\simto \Hom_{I}(\tau^{(n)}[\m],\pi),\]
so the above embedding $i: \tau^{(n)}[\fm]\cong \pi[\fm]\hookrightarrow \pi$ extends uniquely to an $I$-equivariant morphism $i':\tau^{(n)}\ra \pi|_I$ which must be injective (being injective on the socle). By the commutativity in \eqref{eq:choice-i} it is easy to see that $i'$ satisfies the required condition (as $N$ is generated by $N_0$).

We get the isomorphism in degrees $\ge -(n-1)$ since $h_j$ kills $N$, and this implies $\tau^{(n)}[\m^n] = \pi[\m^n]$ for dimension reasons.
\end{proof}

\begin{cor}\label{cor:tau-multfree}
  Suppose that $\rhobar$ is  $(2n-1)$-generic. 
    Then 
  \begin{enumerate}
  \item the $I$-representation $\bigoplus_{\lambda\in\mathscr{P}}\tau_{\lambda}^{(n)}$ is  multiplicity free, and
  \item all Jordan--H\"older factors of $\pi[\m^n] = \tau^{(n)}[\m^n]$ occur with multiplicity $r$.
  \end{enumerate}
\end{cor}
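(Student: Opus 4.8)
The plan is to deduce both statements quickly from Lemma~\ref{lem:N/I-multifree} together with the structural results of Lemmas~\ref{lem:tau-embed} and~\ref{lem:isom-modcI}. The only preliminary point is the elementary dictionary between Jordan--H\"older multiplicities in a finite-dimensional smooth $I$-representation $V$ (over $\F$, with $Z_1$ acting trivially) and the graded $H$-module attached to its linear dual. Since the irreducible smooth $\F$-representations of $I$ are the characters of $H=I/I_1$ and $|H|$ is prime to $p$, the multiplicity $[V:\chi]$ of a character $\chi$ of $H$ as a Jordan--H\"older factor of $V$ equals the multiplicity of $\chi$ in $V|_H$, hence the multiplicity of $\chi^{-1}$ in $V^\vee|_H$; and as the $\m$-adic filtration on $V^\vee$ is $H$-stable and $|H|$ is prime to $p$, one has $\gr_\m(V^\vee)\cong V^\vee$ as $H$-representations, so $[V:\chi]=[\gr_\m(V^\vee):\chi^{-1}]$ (multiplicities as $H$-modules).

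For part~(i) I would apply this to $V\defeq \bigoplus_{\lambda\in\mathscr P}\tau_\lambda^{(n)}$. By Lemma~\ref{lem:tau-embed} (and since $\mathcal I^{(n)}\cdot(\chi_\lambda^{-1}\otimes R/\fa(\lambda))=\chi_\lambda^{-1}\otimes(\mathcal I^{(n)}+\fa(\lambda))/\fa(\lambda)$) we have
\[\gr_\m(V^\vee)\cong \bigoplus_{\lambda\in\mathscr P}\chi_\lambda^{-1}\otimes R/(\mathcal I^{(n)}+\fa(\lambda))\cong N'/\mathcal I^{(n)}N',\]
where $N'\defeq \bigoplus_{\lambda\in\mathscr P}\chi_\lambda^{-1}\otimes R/\fa(\lambda)$. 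By the dictionary, $V$ is multiplicity free as an $I$-representation if and only if $N'/\mathcal I^{(n)}N'$ is multiplicity free as an $H$-module, and the latter is exactly what is established in the proof of Lemma~\ref{lem:N/I-multifree}, whose hypothesis that $\rhobar$ be $(2n-1)$-generic is in force here. This gives~(i).

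For part~(ii) I would combine~(i) with Lemmas~\ref{lem:tau-embed} and~\ref{lem:isom-modcI}: the former gives $\tau^{(n)}\cong V^{\oplus r}$, and the latter gives $\pi[\m^n]=\tau^{(n)}[\m^n]\cong (V[\m^n])^{\oplus r}$ as $I$-representations (a finite-dimensional representation, since $\tau^{(n)}$ is). Since $V[\m^n]$ is a subrepresentation of the multiplicity-free representation $V$, its Jordan--H\"older factors form a sub-multiset of those of $V$, so $V[\m^n]$ is itself multiplicity free; hence every Jordan--H\"older factor of $\pi[\m^n]$ occurs there with multiplicity exactly $r$, as claimed. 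I do not expect a genuine obstacle here: the only substantive input is Lemma~\ref{lem:N/I-multifree}, which in turn rests on the genericity estimates in Lemma~\ref{lem:compare-I1-invts}(ii); everything else is the bookkeeping of the first paragraph.
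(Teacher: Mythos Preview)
Your proof is correct and follows essentially the same approach as the paper: both deduce (i) from Lemma~\ref{lem:N/I-multifree} via the identification $\gr_\m((\bigoplus_{\lambda}\tau_\lambda^{(n)})^\vee)\cong N'/\mathcal I^{(n)}N'$ coming from Lemma~\ref{lem:tau-embed}, and both deduce (ii) from (i) together with $\tau^{(n)}\cong V^{\oplus r}$ and the equality $\pi[\m^n]=\tau^{(n)}[\m^n]$ from Lemma~\ref{lem:isom-modcI}. Your write-up simply makes explicit the dictionary $[V:\chi]=[\gr_\m(V^\vee):\chi^{-1}]$ and the passage to $V[\m^n]$, which the paper leaves implicit.
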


\begin{proof}
Note that the genericity condition implies $n\leq p$, so $\tau_{\lambda}^{(n)}$ is well-defined by Lemma~\ref{lem:tau-embed}. By Lemma~\ref{lem:tau-embed} again we have $\tau^{(n)} \cong (\bigoplus_{\lambda \in \mathscr P}\tau^{(n)}_{\lambda})^{\oplus r}$ and $\gr_{\m}((\tau^{(n)})^{\vee})\cong N/\mathcal{I}^{(n)} N$, so (i) follows from Lemma~\ref{lem:N/I-multifree}.
  By the last assertion of Lemma~\ref{lem:isom-modcI} we have $\pi[\m^n] = \tau^{(n)}[\m^n]$, so (ii) follows from $\tau^{(n)} \cong (\bigoplus_{\lambda \in \mathscr P}\tau^{(n)}_{\lambda})^{\oplus r}$ and (i).
\end{proof}

\begin{cor}
  Suppose that $\brho$ is $(2n-1)$-generic. 
 Then $\pi[\m^n]$ is isomorphic to the largest subrepresentation $V$ of $\Inj_{I/Z_1}(\pi^{I_1})[\m^n]$ containing $\pi^{I_1}$ such that $[V:\chi] = r$ if $\chi$ occurs in $\pi^{I_1}$.
  \end{cor}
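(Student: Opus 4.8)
Write $W\defeq \Inj_{I/Z_1}(\pi^{I_1})[\m^n]$. The plan is to show that $\pi[\m^n]$ is one of the subrepresentations $V$ in question, and then that every such $V$ is contained in it.

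First I would record the soft facts. Since $\pi^{I_1}=\pi[\m]$ is a semisimple $I$-representation (as $I/I_1=H$ has order prime to $p$) containing every irreducible $I$-subrepresentation of $\pi$, we have $\soc_I(\pi)=\pi^{I_1}$ and hence $\soc_I(\pi[\m^n])=\pi^{I_1}$. Thus $\pi[\m^n]$ embeds into $\Inj_{I/Z_1}(\pi^{I_1})$, and composing with an automorphism of the injective envelope I may assume this embedding restricts to the identity on $\pi^{I_1}$; as $\pi[\m^n]$ is killed by $\m^n$ its image lies in $W$. By Lemma~\ref{lem:isom-modcI} we have $\pi[\m^n]=\tau^{(n)}[\m^n]$, so by Corollary~\ref{cor:tau-multfree}(ii) every Jordan--H\"older factor of $\pi[\m^n]$ occurs with multiplicity $r$; in particular $[\pi[\m^n]:\chi]=r$ for every $\chi\in\JH(\pi^{I_1})$, these characters occurring because $\pi^{I_1}\subseteq\pi[\m^n]$. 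Hence $\pi[\m^n]$ is one of the $V$ under consideration.

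For the maximality I would work dually. Fix a minimal free resolution $P_\bullet\to\pi^\vee$ with compatible $H$-action (Remark~\ref{rem:facts}(i),(v)); then $P_0$ is the projective cover of $(\pi^{I_1})^\vee$ over $\F\bbra{I/Z_1}$, so $P_0^\vee\cong\Inj_{I/Z_1}(\pi^{I_1})$ and $W^\vee\cong P_0/\m^nP_0$. Dualizing $0\to\pi[\m^n]\to W\to W/\pi[\m^n]\to 0$ and using $\pi[\m^n]^\vee=\pi^\vee/\m^n\pi^\vee=P_0/(d(P_1)+\m^nP_0)$, I get $(W/\pi[\m^n])^\vee=(d(P_1)+\m^nP_0)/\m^nP_0=:\bar K$, which is a quotient of $d(P_1)$. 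By minimality $\ker(d\colon P_1\to P_0)\subseteq\m P_1$, so $d(P_1)/\m d(P_1)\cong P_1/\m P_1=\Tor_1^\Lambda(\F,\pi^\vee)$; therefore the cosocle $\bar K/\m\bar K$ of $(W/\pi[\m^n])^\vee$ is a quotient of $\Tor_1^\Lambda(\F,\pi^\vee)$. By Remark~\ref{rem:hyp4}, which uses assumption~\ref{it:assum-iv}, we have $\Tor_1^\Lambda(\F,\pi^\vee)\cong\bigoplus_{\lambda\in\mathscr P}(\chi_\lambda^{-1})^{\oplus m_1}$ as $H$-modules, so the $H$-constituents of $\bar K/\m\bar K$ lie in $\{\chi_\lambda^{-1}:\lambda\in\mathscr P\}$; dually, every irreducible constituent of $\soc_I(W/\pi[\m^n])$ lies in $\JH(\pi^{I_1})$.

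Finally, given $V\subseteq W$ with $\pi^{I_1}\subseteq V$ and $[V:\chi]=r$ for all $\chi\in\JH(\pi^{I_1})$, suppose $V\not\subseteq\pi[\m^n]$. Then $(V+\pi[\m^n])/\pi[\m^n]\cong V/(V\cap\pi[\m^n])$ is a nonzero subrepresentation of $W/\pi[\m^n]$, hence contains some irreducible $\chi'$, which by the previous paragraph lies in $\JH(\pi^{I_1})$. Using $\pi^{I_1}\subseteq V\cap\pi[\m^n]$ we then get $[V:\chi']=[V\cap\pi[\m^n]:\chi']+[V/(V\cap\pi[\m^n]):\chi']\geq r+1$, contradicting $[V:\chi']=r$. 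Hence $V\subseteq\pi[\m^n]$, so $\pi[\m^n]$ contains every competitor and is itself a competitor, i.e.\ it is the largest such subrepresentation. The only step needing input beyond formalities is the identification of the constituents of $\soc_I(W/\pi[\m^n])$, which is exactly where assumption~\ref{it:assum-iv} (through the computation of $\Tor_1^\Lambda(\F,\pi^\vee)$) enters; everything else is bookkeeping with minimal resolutions and socles.
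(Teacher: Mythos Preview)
Your proof is correct. The first half (that $\pi[\m^n]$ is one of the candidate $V$'s) matches the paper. For the maximality, the paper takes a shorter route: since $[\pi^{I_1}:\chi]=r$ for every $\chi\in\JH(\pi^{I_1})$, the condition $[V:\chi]=r$ forces $\JH(V/\pi^{I_1})\cap\JH(\pi^{I_1})=\emptyset$; assumption~\ref{it:assum-iv} then gives $\Ext^1_{I/Z_1}(V/\pi^{I_1},\pi)=0$ by d\'evissage, so the inclusion $\pi^{I_1}\hookrightarrow\pi$ extends to an injection $V\hookrightarrow\pi$, which lands in $\pi[\m^n]$. You instead dualize and analyze the socle of $W/\pi[\m^n]$ via a minimal resolution, using the dual form of assumption~\ref{it:assum-iv} (Remark~\ref{rem:hyp4}) to identify the constituents of $\Tor_1^\Lambda(\F,\pi^\vee)$. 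Both arguments ultimately rest on the same vanishing for $i=1$; the paper's is more direct, but yours has the minor advantage of showing $V\subseteq\pi[\m^n]$ as an honest inclusion inside $W$ (rather than producing an abstract embedding), which makes the existence of ``the largest'' immediate. One small point: for your dual computation $(W/\pi[\m^n])^\vee=(d(P_1)+\m^nP_0)/\m^nP_0$ to be valid, you should take the embedding $\pi\hookrightarrow\Inj_{I/Z_1}(\pi^{I_1})$ to be the dual of your chosen surjection $P_0\twoheadrightarrow\pi^\vee$ from the start (this automatically restricts to the identity on $\pi^{I_1}$, so it is consistent with your first paragraph).
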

\begin{proof}
  Since \ $\pi|_I \into \Inj_{I/Z_1}(\soc_I(\pi)) \ = \ \Inj_{I/Z_1}(\pi^{I_1})$, \ we \ have \ an \ injection \ $\pi[\m^n] \ \into \Inj_{I/Z_1}(\pi^{I_1})[\m^n]$.
  As $\brho$ is $(2n-1)$-generic, we have $[\pi[\m^n]:\chi] = r$ for all $\chi \in \JH(\pi^{I_1})$ by Corollary~\ref{cor:tau-multfree}(ii). Conversely, suppose that there is an $I$-representation $V$ such that $\pi^{I_1} \subset V \subset \Inj_{I/Z_1}(\pi^{I_1})[\m^n]$ and $[V:\chi] = r$ for all $\chi \in \JH(\pi^{I_1})$. {In particular we have $\JH(V/\pi^{I_1})\cap \JH(\pi^{I_1})=\emptyset$. 
  {By assumption~\ref{it:assum-iv} on $\pi$ we deduce $\Ext^1_{I/Z_1}(V/\pi^{I_1},\pi)=0$, so} 
    the inclusion $\pi^{I_1} \into \pi$ extends to a necessarily injective morphism $V \into \pi$. Since $V$ is killed by $\m^n$ by assumption, we have $V\into\pi[\m^n]\subseteq \pi$.} This proves the maximality of $\pi[\m^n]$.
  \end{proof}

Let $\tau \defeq  \tau^{(3)}$ denote the representation defined in Lemma~\ref{lem:tau-embed} for $n = 3$ ({well-defined as $p>2$}), 
so $\gr_{\m}(\tau^{\vee})\cong N/\mathcal{I} N$ as graded $\gr(\Lambda)$-modules with compatible $H$-action, where we recall that $\mathcal I = \mathcal I^{(3)}$ (see above Lemma \ref{lem:min-graded-splitting}).

Recall from Lemma~\ref{lem:min-graded-splitting} the minimal gr-free resolution $G_{\bullet}$ of $\gr_\m(\tau^{\vee})\cong N/\mathcal{I} N$ which decomposes as $G_\bullet = G'_\bullet \oplus G''_\bullet$, 
with $G'_\bullet$ being a minimal gr-free resolution of $N$.
More precisely, recall that $\tau^{\vee}\cong (\bigoplus_{\lambda\in\mathscr{P}} \tau_{\lambda}^\vee)^{\oplus r}$ and by construction {$G_{\bullet}=\bigoplus_{\lambda\in\mathscr{P}}G_{\lambda,\bullet}$, where} $G_{\lambda,\bullet}$ is a minimal gr-free resolution of $\gr_\m(\tau_{\lambda}^{\vee})$ with compatible $H$-action for each $\lambda \in \P$.
By \cite[Cor.\ I.7.2.9]{LiOy} we can lift $G_{\lambda,\bullet}$ to a (strict) filt-free resolution $L_{\lambda,\bullet}$ of $\tau_\lambda^{\vee}$.
By Remark \ref{rem:facts}(v), we may and will also require that $L_{\lambda,\bullet}$ carries a compatible $H$-action.
Then $L_{\bullet} \defeq  \bigoplus_{\lambda\in\mathscr{P}} L_{\lambda,\bullet}$ is a (strict) filt-free resolution of $\tau^{\vee}$ with compatible $H$-action.

\begin{lem}\label{lem:decomp}
  For any $i \ge 0$ there exists a decomposition $L_i = L_i' \oplus L_i''$ as filt-free $\Lambda$-modules with compatible $H$-action that reduces
  to $G_i = G_i' \oplus G_i''$ on graded pieces.
\end{lem}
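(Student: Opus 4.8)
\textbf{Proof plan for Lemma~\ref{lem:decomp}.}

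The plan is to build the decomposition $L_i = L_i' \oplus L_i''$ inductively on $i$, lifting the given graded decomposition $G_i = G_i' \oplus G_i''$ through the filtered structure, and using the fact that all the modules in sight carry a compatible $H$-action so that everything can be done $H$-equivariantly. The key point to exploit is that $L_\bullet$ is \emph{strict}, i.e. $\gr(L_\bullet) \cong G_\bullet$ as complexes, together with property (ii) of Lemma~\ref{lem:crit-free-bis}: the $H$-characters occurring in $\F \otimes_\Lambda L_i'$ and in $\F \otimes_\Lambda L_i''$ are disjoint. Indeed, from the explicit description in the proof of Lemma~\ref{lem:min-graded-splitting}, for each $\lambda$ and each $i$ the boxed summands $G_{\lambda,i}'$ and the unboxed ones $G_{\lambda,i}''$ involve (after twisting by $\chi_\lambda^{-1}$) pairwise distinct characters of $H$ among the finitely many $\chi_\mu^{-1}\prod_j\alpha_j^{\ell_j}$ with $|\ell_j|\le 2$; by the genericity hypothesis ($\rhobar$ is $9$-generic, in particular $5$-generic), Lemma~\ref{lem:compare-I1-invts}(ii) forces these characters to be distinct across all $\lambda$ too, so globally $\JH(\F\otimes_\Lambda L_i') \cap \JH(\F\otimes_\Lambda L_i'') = \emptyset$.

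First I would recall that since $L_i$ is filt-free with compatible $H$-action and $\gr(L_i) = G_i$, we may pick a filt-basis of $L_i$ consisting of $H$-eigenvectors, and this filt-basis may be chosen so that its principal parts form the natural homogeneous basis of $G_i$ that splits as (basis of $G_i'$) $\sqcup$ (basis of $G_i''$). Define $L_i'$ (resp.\ $L_i''$) to be the $\Lambda$-span of the filt-basis elements whose principal parts lie in $G_i'$ (resp.\ $G_i''$). These are visibly filt-free, $H$-stable, $L_i = L_i' \oplus L_i''$ as filtered $\Lambda$-modules, and $\gr(L_i') = G_i'$, $\gr(L_i'') = G_i''$ inside $G_i$ by construction. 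In other words, the decomposition on the level of filt-bases \emph{can simply be lifted termwise}; there is no compatibility constraint to check at a fixed $i$ because filt-freeness lets us choose the basis adapted to the grading. (If one prefers to phrase it via Lemma~\ref{lem:crit-free-bis} applied with $P = L_i'$: hypothesis (i) holds because the boxed and unboxed summands can be reordered so the degrees of $L'$ dominate those of $L''$ within each $\gr(\Lambda)_j$-factor contribution — or more simply one checks directly, since we already have an explicit adapted filt-basis.)

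The only real content, and the main obstacle, is to carry this out \emph{compatibly across $i$}, i.e.\ it is the statement that such a termwise decomposition exists, not that it is respected by the differentials — and the lemma explicitly does \emph{not} require compatibility with the transition maps (mirroring the last sentence of Lemma~\ref{lem:min-graded-splitting}). So in fact no induction interlocking the $L_i'$ is needed: for each $i$ independently, I would invoke the existence of an $H$-eigen filt-basis of $L_i$ refining the grading decomposition $G_i = G_i' \oplus G_i''$, which is possible by \cite[Lemma~I.6.2, Lemma~I.6.4(3)]{LiOy} together with Remark~\ref{rem:facts}(v) (choosing the lift $L_i$ of $G_i$ so that its filt-basis maps to the chosen homogeneous basis of $G_i$), and then set $L_i'$, $L_i''$ to be the spans of the two halves of that basis. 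The disjointness of $H$-characters noted in the first paragraph guarantees this splitting is canonical enough to be $H$-equivariant (any $H$-eigenvector whose principal part is in $G_i'$ must itself lie in $L_i'$), which is the one subtlety worth spelling out. This completes the construction, and $\gr(L_i') = G_i'$, $\gr(L_i'') = G_i''$ holds by construction, as required.
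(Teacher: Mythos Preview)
Your core construction is correct and essentially the same as the paper's: choose for each $i$ a filt-basis of $H$-eigenvectors in $L_i$ whose principal parts are the given homogeneous basis of $G_i = G_i' \oplus G_i''$, and split accordingly. The paper phrases this as lifting $G_i'$, $G_i''$ to abstract filt-free modules $F_i'$, $F_i''$, then using \cite[Lemma~I.6.2(6)]{LiOy} and \cite[Thm.~I.4.2.4(5)]{LiOy} to produce an $H$-equivariant strict isomorphism $L_i \cong F_i' \oplus F_i''$; pulling back gives $L_i'$, $L_i''$. Your basis-level formulation is equivalent.

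However, your first paragraph and your closing remark introduce an unnecessary hypothesis that is in fact \emph{false} in the generality of the lemma. The disjointness of $H$-characters between $G_i'$ and $G_i''$ (condition~(ii) of Lemma~\ref{lem:crit-free-bis}) is established only for $i\in\{0,1,2\}$ under a $5$-genericity assumption in Lemma~\ref{lem:resol-tau}, and Remark~\ref{rem:resol-tau} explicitly notes it need not hold for $i\gg 0$. Lemma~\ref{lem:decomp} is stated and proved for \emph{all} $i\ge 0$ with no genericity assumption, so you cannot invoke this disjointness. Fortunately you do not need it: once your filt-basis elements are $H$-eigenvectors, the $\Lambda$-spans $L_i'$ and $L_i''$ are automatically $H$-stable (each basis vector is scaled by $H$), so the decomposition is $H$-equivariant regardless of whether characters overlap. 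Your parenthetical claim that ``any $H$-eigenvector whose principal part is in $G_i'$ must itself lie in $L_i'$'' is also not what is needed (and is false without character disjointness). Drop the first paragraph and the final sentence about canonicity, and your argument is clean and complete.
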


Note that we do not require that the map $L_i \to L_{i-1}$ sends $L_i'$ to $L_{i-1}'$.

\begin{proof}
  We fix $i$.
  Lift $G_i'$ and $G_i''$ to filt-free $\Lambda$-modules $F_i'$ and $F_i''$ with compatible $H$-action.
  Then $L_i$ and $F_i' \oplus F_i''$ are two filt-free $\Lambda$-modules that lift $G_i$, so by \cite[Lemma I.6.2(6)]{LiOy} there exists a filtered morphism $f : L_i \to F_i' \oplus F_i''$ that lifts the given isomorphism $G_i = G_i'\oplus G_i''$.
As in Remark \ref{rem:facts}(v), we may demand in addition that $f$ is $H$-equivariant.
  By \cite[Thm.\ I.4.2.4(5)]{LiOy} the map $f$ is a strict isomorphism, so we may define $L_i'$ and $L_i''$ as pre-images of $F_i'$ and $F_i''$ in $L_i$.
\end{proof}

\begin{lem}\label{lem:resol-tau}
Suppose that $\brho$ is 5-generic.
With the above notation, $L_{\bullet}$ is also a minimal free resolution of $\tau^{\vee}$.
Moreover, for $i\in\{0,1,2\}$, $L_{i} = L_i' \oplus L_i''$ defined in Lemma~\ref{lem:decomp} satisfies conditions (i), (ii) of Lemma \ref{lem:crit-free-bis}.
\end{lem}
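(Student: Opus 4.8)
There are two assertions to prove: that $L_\bullet$ is a minimal free resolution of $\tau^\vee$, and that the decompositions $L_i = L_i'\oplus L_i''$ of Lemma~\ref{lem:decomp} satisfy (i) and (ii) of Lemma~\ref{lem:crit-free-bis} for $i\in\{0,1,2\}$. The first is comparatively soft; the second is the combinatorial heart, and is where $5$-genericity enters.

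\textbf{Minimality of $L_\bullet$.} By Remark~\ref{rem:facts}(ii), since $\gr(L_\bullet)\cong G_\bullet$ is a minimal gr-free resolution of $\gr_\m(\tau^\vee)\cong N/\mathcal I N$ (Lemmas~\ref{lem:min-graded-splitting} and~\ref{lem:tau-embed}), it is enough to show $\dim_\F\Tor_i^\Lambda(\F,\tau^\vee)=\dim_\F\Tor_i^{\gr(\Lambda)}(\F,N/\mathcal I N)$ for all $i\ge 0$. The inequality $\le$ is automatic, since $L_\bullet$ is a $\Lambda$-free resolution of $\tau^\vee$, so $\Tor_i^\Lambda(\F,\tau^\vee)$ is a subquotient of $\F\otimes_\Lambda L_i$ and thus $\dim_\F\Tor_i^\Lambda(\F,\tau^\vee)\le\rank_\Lambda L_i=\rank_{\gr(\Lambda)}G_i=\dim_\F\Tor_i^{\gr(\Lambda)}(\F,N/\mathcal I N)$, the last equality by minimality of $G_\bullet$. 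For the reverse inequality I would exploit the product structure $\tau^\vee\cong(\bigoplus_{\lambda\in\mathscr P}\tau_\lambda^\vee)^{\oplus r}$ with $\tau_\lambda^\vee\cong\chi_\lambda^{-1}\otimes_\F\bigotimes_{j=0}^{f-1}W_{\lambda,j}^\vee$, where $W_{\lambda,j}^\vee$ is supported on the $j$-th coordinate (its $\m$-adic associated graded is a $\gr(\Lambda)_j$-module): choosing a minimal filt-free lift $L^{(j)}_\bullet$ of the complex $G^{(j)}_\bullet$ from the proof of Lemma~\ref{lem:min-graded-splitting}, the K\"unneth formula identifies $\bigotimes_j L^{(j)}_\bullet$ (with the tensor-product filtration) with a filt-free resolution of $\tau_\lambda^\vee$ whose associated graded is $G_{\lambda,\bullet}$, whence $\dim_\F\Tor_i^\Lambda(\F,\tau_\lambda^\vee)=\rank G_{\lambda,i}$; summing over $\lambda$ and multiplying by $r$ gives the equality. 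Concretely this amounts to the remark that, by the degree bookkeeping of the explicit complexes $G^{(j)}_\bullet$, every nonzero component of a transition map of $G_\bullet$ sends a generator of some degree $k$ to a combination of generators of degrees $\ge k$, the degree-preserving components vanishing by minimality of $G_\bullet$; hence any lift has all transition coefficients in $\m$.

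\textbf{Conditions (i) and (ii).} By Lemma~\ref{lem:decomp}, $\gr(L_i')\cong G_i'$, $\gr(L_i'')\cong G_i''$, so $L_i'\cong\bigoplus\Lambda(-k)$ and $L_i''\cong\bigoplus\Lambda(-\ell)$ with $k$ (resp.\ $\ell$) the degrees of the homogeneous generators of $G_i'$ (resp.\ $G_i''$), and $\F\otimes_\Lambda L_i'\cong\F\otimes_{\gr(\Lambda)}G_i'$, $\F\otimes_\Lambda L_i''\cong\F\otimes_{\gr(\Lambda)}G_i''$ as $H$-modules. Writing $G_{\lambda,\bullet}=\bigotimes_j G^{(j)}_\bullet$ and $G'_{\lambda,\bullet}=\bigotimes_j G'^{(j)}_\bullet$ (the boxed subcomplexes), the K\"unneth formula expresses $G_{\lambda,i}$ as the sum over $(i_0,\dots,i_{f-1})$ with $\sum i_j=i$ of $\bigotimes_j G^{(j)}_{i_j}$, with $G'_{\lambda,i}$ the part built from the boxed factors only and $G''_{\lambda,i}$ the complement; the degree of a generator is the sum of the degrees of its factors, and its $H$-character is $\chi_\lambda^{-1}$ times the product of the $H$-characters of its factors, both read off from Lemma~\ref{lem:min-graded-splitting}. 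For (i), a case check over $t_j\in\{y_j,z_j,y_jz_j\}$ shows the generators of $G'_i$ have degrees in $\{0\}$, $\subseteq\{-1,-2\}$, $\subseteq\{-2,-3,-4\}$ and those of $G''_i$ in $\emptyset$, $\{-3\}$, $\subseteq\{-4,-5,-6\}$ for $i=0,1,2$, so $\min(\deg G'_i)\ge\max(\deg G''_i)$ in all three cases (equality at $i=2$, which is why one stops there). For (ii), the $H$-characters of $\F\otimes_\Lambda L_i'$ are of the form $\chi_\lambda^{-1}\prod_{j\in S}\chi_{t_j}^{\pm1}$ with $S$ and signs constrained as in $(\ref{eq:id:al})$, hence lie in $\{\chi_\nu^{-1}:\nu\in\mathscr P\}$ by Lemma~\ref{lem:compare-I1-invts}(i), whereas those of $\F\otimes_\Lambda L_i''$ are of the form $\chi_\lambda^{-1}\prod_j\alpha_j^{e_j}$ with at least one $|e_j|\ge 2$ (the nonzero $H$-exponents in the unboxed summands have absolute value $2$ or $3$). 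If one character lay in both, then $\chi_\nu=\chi_\lambda\prod_j\alpha_j^{i_j}$ for some $\nu\in\mathscr P$, with all $|i_j|\le 4$ — the value $4$ arising for instance when $t_j=y_j$ contributes the exponent $-3$ on the $L''$ side while the degree-$(-1)$ generator of $G'^{(j)}_1$ contributes $+1$ on the $L'$ side — and with some $|i_j|\ge 2$, contradicting Lemma~\ref{lem:compare-I1-invts}(ii) since $\brho$ is $5$-generic (so that lemma applies with $m=4$).

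\textbf{Main obstacle.} The hard part is the bookkeeping in the last step: one must track, through the K\"unneth decomposition and all the cases $t_j\in\{y_j,z_j,y_jz_j\}$, exactly which generator degrees and which $H$-eigencharacters can occur in $G'_i$ versus $G''_i$, check that the degree inequality in (i) is attained (but not violated) at $i=2$, and verify that the $H$-character discrepancy in (ii) never exceeds the range $\{-4,\dots,4\}$ that $5$-genericity rules out via Lemma~\ref{lem:compare-I1-invts}(ii). The minimality statement is routine by comparison, the only nontrivial point there being the justification — via the tensor-product description of $\tau_\lambda^\vee$ and the single-coordinate resolutions — that the constructed filt-free resolution $L_\bullet$ realizes the minimal $\Lambda$-free resolution of $\tau^\vee$.
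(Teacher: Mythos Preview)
Your treatment of conditions (i) and (ii) of Lemma~\ref{lem:crit-free-bis} is essentially the paper's. For (ii) you add a shortcut---absorbing each $G_i'$-character into the form $\chi_\nu^{-1}$ via Lemma~\ref{lem:compare-I1-invts}(i)---but then do not use it consistently: once a $G_i'$-character is written as $\chi_\nu^{-1}$, equating it with a $G_i''$-character $\chi_\mu^{-1}\prod_j\alpha_j^{e_j}$ gives $i_j=-e_j$ with $|i_j|\le 3$ and some $|i_j|\ge 2$, so Lemma~\ref{lem:compare-I1-invts}(ii) with $m=3$ already yields the contradiction. Your example producing $|i_j|=4$ is instead computing $\varepsilon''_j-\varepsilon'_j$ \emph{without} absorption---the paper's route---but there ``some $|i_j|\ge 2$'' can fail (e.g.\ $\varepsilon'_j=1$, $\varepsilon''_j=2$ with $\lambda\ne\mu$), and one needs the finer $t_j$-contradiction the paper spells out. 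Either route works once executed consistently; this is a presentation muddle, not a real gap.

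The genuine gap is the minimality argument, which is less routine than you suggest. Your tensor-product sketch does not work as stated: $\Lambda$ does not decompose as $\bigotimes_j\Lambda_j$ lifting $\gr(\Lambda)=\bigotimes_j\gr(\Lambda)_j$, so there is no meaningful ring over which to form ``$L^{(j)}_\bullet$'', and the $\F$-tensor product of $\Lambda$-free resolutions of the $W_{\lambda,j}^\vee$ is not $\Lambda$-free under the diagonal action. Your degree-bookkeeping sketch also fails: a summand $\Lambda(a)$ of $L_i$ can map to $\Lambda(b)\subset L_{i-1}$ with $a<b$---for instance the shift-$2$ summands of $G_2$ (from $G_1'^{(j)}\otimes G_1'^{(k)}$ with both shifts $1$) versus the shift-$3$ summands of $G_1$---and then the filtered condition only forces the lifted entry into $F_{b-a}\Lambda=\Lambda$; the graded entry lies in $\gr(\Lambda)_{b-a}=0$ automatically, so its vanishing constrains nothing. (One might try to rule such entries out via $H$-equivariance, but you do not, and making that work for all $i$ requires care.) The paper does not attempt a direct argument: it cites \cite[Prop.~9.21]{HuWang2}, which verifies a specific criterion (called (\textbf{Min}) there, needing $p\ge 7$) via \cite[Lemma~A.11]{HuWang2}.
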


\begin{proof}
 For the first claim it suffices to prove the minimality of $L_{\lambda,\bullet}$ for each $\lambda \in \P$.
This is proven in \cite[Prop.~9.21]{HuWang2}. {We remark that the proof reduces to the case $\chi_{\lambda}$ is trivial (by twisting), so does not require any genericity condition on $\chi_{\lambda}$; it rather requires $p\geq 7$ to verify the property (\textbf{Min}) in \emph{loc.~cit.} which guarantees that \cite[Lemma A.11]{HuWang2} applies.}

Since $\gr_\m(\Lambda(k)) \cong \gr(\Lambda)(k)$ and $\F \otimes_{\gr(\Lambda)} \gr_\m(M) \cong \F \otimes_\Lambda M$ for any filt-free $\Lambda$-module $M$ with compatible $H$-action,
it remains to check the analogues of conditions (i), (ii) for $G_{i} = G'_i \oplus G''_i$.

Suppose that $i=2$. 
It is easy to see that if $\gr(\Lambda)(k)$ occurs in $G_{2}'$ as a direct summand, then $k\in\{2,3,4\}$, while if it occurs in $G_{2}''$ then $k\geq 4$. Hence condition (i) holds.
On the other hand, the characters of $H$ occurring in $\F \otimes_{\gr(\Lambda)} G'_2$ are of the form $\chi_\lambda^{-1} (\prod_{j=0}^{f-1} \alpha_j^{\ve'_j})$, where $\lambda \in \mathscr P$ and $|\ve'_j| \le 1$ for all $j$, and $\ve'_j = 1$ (resp.\ $\ve'_j = -1$) implies $t_j = y_j$ (resp.\ $t_j = z_j$).
Similarly, the characters of $H$ occurring in $\F \otimes_{\gr(\Lambda)} G''_2$ are of the form $\chi_\mu^{-1} (\prod_{j=0}^{f-1} \alpha_j^{\ve''_j})$, where $\mu \in \mathscr P$, $|\ve''_j| \le 3$ for all $j$ and $|\ve''_j| \ge 2$ for at least one $j$.
(In fact, also at most two $\ve'_j$ are nonzero, and likewise for the $\ve''_j$.)
Then Lemma~\ref{lem:compare-I1-invts}(ii) (applied to $\chi_\lambda (\prod_{j=0}^{f-1} \alpha_j^{\ve''_j-\ve'_j}) = \chi_\mu$ with $m = 4$;  here we use that $\brho$ is 5-generic) implies that for some $j$ we have $(\ve'_j,\ve''_j,t_j) = (1,2,z_j)$ or $(\ve'_j,\ve''_j,t_j) = (-1,-2,y_j)$ but this contradicts the information about $t_j$ above.
Therefore condition (ii) holds.

The cases $i = 0$ and $i = 1$ are similar but easier.
\end{proof}

\begin{rem}\label{rem:resol-tau}
The second statement in Lemma \ref{lem:resol-tau} need not be true for $i\gg0$. Fortunately, for the proof of Theorem \ref{thm:CMC} below we only need to treat the terms $L_i$ for $i\in \{0,1,2\}$.
\end{rem}

The following is a consequence of the first assertion of Lemma \ref{lem:resol-tau}.
\begin{cor}\label{cor:Tor-tau}
Suppose that $\brho$ is 5-generic. 
For any $i \ge 0$ there is a canonical isomorphism
\[\Tor_i^{\gr(\Lambda)}(\F,\gr_\m(\tau^{\vee}))\cong \gr(\Tor_i^{\Lambda}(\F,\tau^{\vee})).\]
(Here, $\Tor_i^{\Lambda}(\F,\tau^{\vee})$ carries the canonical filtration, cf.\ Remark~\ref{rem:facts}(iv).) 
\end{cor}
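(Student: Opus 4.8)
The plan is to deduce Corollary~\ref{cor:Tor-tau} directly from Lemma~\ref{lem:resol-tau} and the general compatibility between the canonical filtration on Tor groups and strict filt-free resolutions recalled in Remark~\ref{rem:facts}(iv). Concretely, recall from Lemma~\ref{lem:resol-tau} that $L_\bullet$ is a \emph{minimal} free resolution of $\tau^\vee$ which is simultaneously strict and filt-free, with $\gr(L_\bullet)\cong G_\bullet$ a minimal gr-free resolution of $\gr_\m(\tau^\vee)\cong N/\mathcal I N$. (Here I use that by construction of $L_\bullet$ via \cite[Cor.~I.7.2.9]{LiOy} and $G_\bullet=\bigoplus_\lambda G_{\lambda,\bullet}$ the associated graded of $L_\bullet$ is $G_\bullet$, and by minimality -- Remark~\ref{rem:facts}(ii) -- the numerical condition $\dim_\F\Tor_i^\Lambda(\F,\tau^\vee)=\dim_\F\Tor_i^{\gr(\Lambda)}(\F,N/\mathcal IN)$ holds.)

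First I would compute the right-hand side. Since $L_\bullet$ is a strict filt-free resolution of $\tau^\vee$, Remark~\ref{rem:facts}(iv) tells us that the canonical filtration on $\Tor_i^\Lambda(\F,\tau^\vee)$ is precisely the one induced by the complex $\F\otimes_\Lambda L_\bullet$ (each term with its tensor product filtration). Because $L_\bullet$ is minimal, the transition maps in $\F\otimes_\Lambda L_\bullet$ all vanish, so $\Tor_i^\Lambda(\F,\tau^\vee)=\F\otimes_\Lambda L_i$ with its tensor product (i.e.\ quotient) filtration coming from $L_i$. Taking associated graded and using that $L_i$ is filt-free (so $\gr(\F\otimes_\Lambda L_i)\cong \F\otimes_{\gr(\Lambda)}\gr(L_i)=\F\otimes_{\gr(\Lambda)}G_i$, compatibly with degree shifts $\Lambda(k)\mapsto\gr(\Lambda)(k)$), we get $\gr(\Tor_i^\Lambda(\F,\tau^\vee))\cong \F\otimes_{\gr(\Lambda)}G_i$.

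Next I would compute the left-hand side. Since $G_\bullet$ is a minimal gr-free resolution of $\gr_\m(\tau^\vee)\cong N/\mathcal IN$, its transition maps vanish after $\F\otimes_{\gr(\Lambda)}(-)$, hence $\Tor_i^{\gr(\Lambda)}(\F,\gr_\m(\tau^\vee))\cong \F\otimes_{\gr(\Lambda)}G_i$ as well. Comparing the two computations yields the desired isomorphism $\Tor_i^{\gr(\Lambda)}(\F,\gr_\m(\tau^\vee))\cong \gr(\Tor_i^\Lambda(\F,\tau^\vee))$, and this isomorphism is canonical because both sides are canonically identified with $\F\otimes_{\gr(\Lambda)}G_i$ via the fixed resolutions; more invariantly, the natural map $\gr(\Tor_i^\Lambda(\F,\tau^\vee))\to \Tor_i^{\gr(\Lambda)}(\F,\gr_\m(\tau^\vee))$ arising from any strict filt-free resolution (the spectral sequence edge map, cf.\ \cite[\S~III.2]{LiOy}) is an isomorphism, which I would note follows formally from strictness and the identification $\gr(L_\bullet)\cong G_\bullet$.

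I expect there is essentially no serious obstacle here: the content is entirely in Lemma~\ref{lem:resol-tau} (which already does the delization work of producing a resolution that is \emph{both} strict/filt-free \emph{and} minimal) together with the bookkeeping in Remark~\ref{rem:facts}(iv) about the canonical filtration on Tor. The one point requiring a little care is the canonicity/functoriality claim: I would make sure the stated isomorphism is independent of the chosen resolution, either by appealing to the standard comparison (any two strict filt-free resolutions are filtered homotopy equivalent, \cite[Thm.~I.4.2.4(5)]{LiOy} and its consequences) or simply by phrasing the corollary as an isomorphism of $\gr(\Lambda)$-modules induced by the natural comparison morphism, which is what is needed downstream in the proof of Theorem~\ref{thm:CMC}. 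If finer functoriality in $\tau^\vee$ were needed, one would invoke section~\ref{sec:append-canon-filtr}; but for the use in \S~\ref{sec:proof-theorem} the bare isomorphism suffices.
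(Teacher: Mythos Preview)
Your proof is correct and uses the same key input as the paper: the first assertion of Lemma~\ref{lem:resol-tau} (that $L_\bullet$ is simultaneously strict, filt-free, and minimal) together with the minimality of $G_\bullet$. The paper phrases the comparison slightly differently---it invokes the spectral sequence (set up in the proof of Proposition~\ref{prop:Tor-inj}) to get that $\gr(\Tor_i^{\Lambda}(\F,\tau^{\vee}))$ is a subquotient of $\Tor_i^{\gr(\Lambda)}(\F,\gr_\m(\tau^{\vee}))$, then matches dimensions via Remark~\ref{rem:facts}(ii)---whereas you compute both sides directly as $\F\otimes_{\gr(\Lambda)}G_i$; but the content is the same.
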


\begin{proof}
Using the spectral sequence introduced in the proof of Proposition \ref{prop:Tor-inj} below, we know that $\gr(\Tor_i^{\Lambda}(\F,\tau^{\vee}))$ is isomorphic to a subquotient of $\Tor_i^{\gr(\Lambda)}(\F,\gr_\m(\tau^{\vee}))$. 
But
\begin{equation*}
\dim_\F \gr(\Tor_i^{\Lambda}(\F,\tau^{\vee})) = \dim_\F \Tor_i^{\Lambda}(\F,\tau^{\vee}) = \dim_\F \Tor_i^{\gr(\Lambda)}(\F,\gr_\m(\tau^{\vee})),
\end{equation*}
where the second equality follows from the first assertion of Lemma~\ref{lem:resol-tau} and the minimality of $G_\bullet$ (see Remark \ref{rem:facts}(ii)), which concludes the proof.
\end{proof}

Next, we compare $\Tor_i^{\Lambda}(\F,\pi^{\vee})$ and $\Tor_i^{\Lambda}(\F,\tau^{\vee})$.
Recall that by Lemma \ref{lem:isom-modcI} we have a surjection of $\F\bbra{I/Z_1}$-modules $\pi^\vee \onto \tau^\vee$, provided $\brho$ is 5-generic.

\begin{prop}\label{prop:Tor-inj}
{Assume that $\brho$ is $9$-generic.}
The natural morphism
\[\Tor_i^{\Lambda}(\F,\pi^{\vee})\ra \Tor_i^{\Lambda}(\F,\tau^{\vee})\]
is injective for any $0 \le i \le 2$.
\end{prop}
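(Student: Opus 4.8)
The plan is to exploit the surjection $\pi^\vee \onto \tau^\vee$ coming from Lemma \ref{lem:isom-modcI} and the known control on the relevant $\Tor$ groups. Write $\kappa^\vee \defeq \ker(\pi^\vee \onto \tau^\vee)$, so that we have a short exact sequence $0 \to \kappa^\vee \to \pi^\vee \to \tau^\vee \to 0$ of $\F\bbra{I/Z_1}$-modules (equivalently of $\Lambda$-modules with compatible $H$-action). Applying $\F \otimes_\Lambda^{\mathbb L} -$ gives a long exact sequence, and the map in the statement sits in
\[
\cdots \to \Tor_{i+1}^\Lambda(\F,\tau^\vee) \xrightarrow{\partial} \Tor_i^\Lambda(\F,\kappa^\vee) \to \Tor_i^\Lambda(\F,\pi^\vee) \to \Tor_i^\Lambda(\F,\tau^\vee) \to \cdots.
\]
So injectivity of $\Tor_i^\Lambda(\F,\pi^\vee) \to \Tor_i^\Lambda(\F,\tau^\vee)$ for $0 \le i \le 2$ is equivalent to surjectivity of $\partial : \Tor_{i+1}^\Lambda(\F,\tau^\vee) \to \Tor_i^\Lambda(\F,\kappa^\vee)$ for $0 \le i \le 2$, i.e.\ to the vanishing of the image of $\Tor_i^\Lambda(\F,\pi^\vee)$ modulo that of $\partial$; more usefully, it suffices to show that the map $\Tor_i^\Lambda(\F,\kappa^\vee) \to \Tor_i^\Lambda(\F,\pi^\vee)$ is \emph{zero} for $i \le 2$. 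The natural strategy is therefore to identify enough about $\kappa^\vee$, or rather about $\Tor_\bullet^\Lambda(\F,\kappa^\vee)$ and its $H$-structure, to force this vanishing for degree reasons (characters of $H$) combined with degree reasons (the grading on $\gr$).

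First I would pass to associated graded modules. We have the filt-free resolution $L_\bullet$ of $\tau^\vee$ (Lemma \ref{lem:resol-tau}, valid since $\brho$ is 5-generic, hence a fortiori under the 9-genericity hypothesis), and by Remark \ref{rem:facts}(iii),(v) we may choose a minimal free resolution $P_\bullet$ of $\pi^\vee$ with compatible $H$-action, equipped with a good (not necessarily strict) filtration lifting the surjection $P_0 \onto \pi^\vee$. The surjection $\pi^\vee \onto \tau^\vee$ lifts to a filtered chain map $P_\bullet \to L_\bullet$. On graded pieces this yields a comparison of the complexes $\F \otimes_{\gr(\Lambda)} \gr(P_\bullet)$ and $\F \otimes_{\gr(\Lambda)} G_\bullet$, together with the standard filtered-complex spectral sequence $E_i^s \Rightarrow H_i(P_\bullet) = \Tor_i^\Lambda(\F,\pi^\vee)$ (as in \cite[\S III.1]{LiOy}, exactly the device used in the sketch of Theorem \ref{thm:CMC}). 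Concretely, $\gr(\Tor_i^\Lambda(\F,\pi^\vee))$ is a subquotient of $\Tor_i^{\gr(\Lambda)}(\F,\gr_\m(\pi^\vee))$, and by Theorem \ref{thm:cycle-pi} of \cite{BHHMS2} (or already the surjection $N \onto \gr_\m(\pi^\vee)$) together with Corollary \ref{cor:inj-Tori} and Lemma \ref{lem:Tor-N} we have precise control on the $H$-characters and internal degrees occurring in $\Tor_i^{\gr(\Lambda)}(\F,\gr_\m(\pi^\vee))$ for small $i$: namely each $\Tor_i$ is, up to multiplicity, a sub of $\bigoplus_\lambda (\chi_\lambda^{-1})^{\oplus\binom{2f}{i}}$, supported in internal degrees $[-2i,-i]$. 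The point is that the same characters $\chi_\lambda^{-1}$ and the same internal degree ranges govern $\Tor_i^{\gr(\Lambda)}(\F,\gr_\m(\tau^\vee)) \cong \Tor_i^{\gr(\Lambda)}(\F, N/\mathcal I N)$ for $i \le 2$ via the boxed subcomplex $G'_\bullet$ and the splitting $G_\bullet = G'_\bullet \oplus G''_\bullet$ of Lemma \ref{lem:min-graded-splitting}.

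Then the core of the argument is a dimension/character count. By Corollary \ref{cor:inj-Tori} the map $\Tor_i^{\gr(\Lambda)}(\F,N) \to \Tor_i^{\gr(\Lambda)}(\F,N/\mathcal I N)$ is injective; combined with the surjection $N \onto \gr_\m(\pi^\vee)$ and the surjection $\gr_\m(\pi^\vee) \onto \gr_\m(\tau^\vee)$ (from Lemma \ref{lem:isom-modcI}, which is moreover an isomorphism in degrees $\ge -2$), one gets that $\Tor_i^{\gr(\Lambda)}(\F,\gr_\m(\pi^\vee)) \to \Tor_i^{\gr(\Lambda)}(\F,\gr_\m(\tau^\vee))$ is injective in the relevant range $i \le 2$: indeed the composite $\Tor_i(\F,N) \to \Tor_i(\F,\gr_\m(\pi^\vee)) \to \Tor_i(\F,\gr_\m(\tau^\vee))$ factors the injection of Corollary \ref{cor:inj-Tori} through these two maps, and a dimension count using $\dim_\F \Tor_i^{\gr(\Lambda)}(\F,N) = \dim_\F \Tor_i^\Lambda(\F,\pi^\vee)$ (Lemma \ref{lem:Tor-N}, via assumption \ref{it:assum-iv}) together with the surjectivity of $N \onto \gr_\m(\pi^\vee)$ forces all three maps among $\Tor_i(\F,N)$, $\Tor_i(\F,\gr_\m(\pi^\vee))$, $\Tor_i(\F,\gr_\m(\tau^\vee))$ to be isomorphisms for $i \le 2$ — here is where I would use the 9-genericity to ensure no "extra" characters of $H$ can appear in $\Tor_i(\F,\gr_\m(\tau^\vee))$ from the $G''$-part for $i \le 2$, via Lemma \ref{lem:N/I-multifree} and Lemma \ref{lem:compare-I1-invts}(ii) with $m=4$ (exactly as in the proof of Lemma \ref{lem:resol-tau}). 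Finally, I would chase the spectral sequences: from the isomorphism on $\Tor_i^{\gr(\Lambda)}(\F,-)$ for $i\le 2$, the strictness obtained via Lemma \ref{lem:crit-free-bis} and Lemma \ref{lem:resol-tau} for the terms $L_i$ with $i\le 2$, and Corollary \ref{cor:Tor-tau} identifying $\gr(\Tor_i^\Lambda(\F,\tau^\vee))$ with $\Tor_i^{\gr(\Lambda)}(\F,\gr_\m(\tau^\vee))$, one deduces that $\gr$ of the map $\Tor_i^\Lambda(\F,\pi^\vee) \to \Tor_i^\Lambda(\F,\tau^\vee)$ is injective for $i \le 2$, hence the map itself is injective.

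\textbf{Main obstacle.} The delicate point is not formal homological algebra but the genericity bookkeeping: one must rule out that the comparison map loses information for $i\le 2$ because some $H$-character $\chi_\lambda^{-1}\prod\alpha_j^{\ve_j}$ appearing in $\Tor_\bullet^{\gr(\Lambda)}(\F,\gr_\m(\tau^\vee))$ from the complementary summand $G''$ coincides, under the identifications $\chi_\lambda$ carry, with one from $G'$ — this is precisely what Lemma \ref{lem:compare-I1-invts}(ii) controls (forcing $|\ve_j|\le 1$ with sign constraints), and checking that the hypothesis $m=4$ there is exactly what "$9$-generic" buys (since $2n-1 = 5$ for $n=3$ but the two-sided comparison of $G'$ versus $G''$ pushes the exponent gap up to $4$, needing $(4+1)$-genericity on top of the $5$-genericity of Lemma \ref{lem:resol-tau}, whence $9$) is the real content. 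I would also have to be careful that the filtration on $P_i$ can be chosen $H$-equivariantly and compatibly with the chain map to $L_\bullet$, so that the spectral-sequence comparison respects the $H$-grading throughout; this is routine given Remark \ref{rem:facts}(v) and \cite[Lemma I.6.2(6)]{LiOy}.
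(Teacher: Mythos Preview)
Your overall framework (pass to associated graded via the filtered spectral sequence, exploit Corollary~\ref{cor:inj-Tori} to get $\Tor_i^{\gr(\Lambda)}(\F,N) \hookrightarrow \Tor_i^{\gr(\Lambda)}(\F,\gr_\m(\tau^\vee))$, and use the dimension match from Lemma~\ref{lem:Tor-N}) is the same as the paper's. But the central step of your argument is circular.

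You claim that the dimension equality $\dim_\F \Tor_i^{\gr(\Lambda)}(\F,N) = \dim_\F \Tor_i^\Lambda(\F,\pi^\vee)$ ``forces'' the injection $\Tor_i^{\gr(\Lambda)}(\F,N) \hookrightarrow \Tor_i^{\gr(\Lambda)}(\F,\gr_\m(\pi^\vee))$ to be an isomorphism. It does not: the spectral sequence only gives $\dim_\F \Tor_i^\Lambda(\F,\pi^\vee) \le \dim_\F \Tor_i^{\gr(\Lambda)}(\F,\gr_\m(\pi^\vee))$ (since $\gr(\Tor_i^\Lambda(\F,\pi^\vee)) = E_i^\infty$ is a \emph{subquotient} of $E_i^1 = \Tor_i^{\gr(\Lambda)}(\F,\gr_\m(\pi^\vee))$), so the inequality goes the wrong way. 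The missing reverse inequality is equivalent to degeneration of the spectral sequence for $\pi^\vee$ at $E^1$, i.e.\ to Corollary~\ref{cor:Tor-pi}(i) --- but that corollary is deduced \emph{from} Theorem~\ref{thm:CMC}, which in turn rests on the present proposition. At this point in the paper we only know $N \onto \gr_\m(\pi^\vee)$ is a surjection, and for $i \ge 1$ there is no reason the induced map on $\Tor_i$ should be surjective or have target of the expected dimension.

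The paper bypasses this by never comparing with $\Tor_i^{\gr(\Lambda)}(\F,\gr_\m(\pi^\vee))$ directly for $i\ge 1$. Instead it works at the $E^\infty$-page: in Step~2 ($i=1$) it uses that $E_0^1 \congto E_0^\infty$ (from Step~1) to obtain a \emph{surjection} $E_1^1 \onto E_1^\infty$, and then the composite $V'_1 \to E_1^1 \onto E_1^\infty \to E'^\infty_1$ is shown injective via Corollary~\ref{cor:inj-Tori}, whence $V'_1 \to E_1^\infty$ is injective and hence bijective by the dimension match. For $i=2$ one cannot yet say $E_1^1 \congto E_1^\infty$ globally; the paper truncates to degrees $\ge -4$ and uses that $N \onto \gr_\m(\pi^\vee)$ is an isomorphism in those degrees (Lemma~\ref{lem:isom-modcI} with $n=5$, requiring $(2\cdot 5-1)=9$-genericity) together with Lemma~\ref{lem:graded-Tor}(ii) to get $(V'_1)_{\ge -4} \congto (E_1^1)_{\ge -4}$, and then repeats the Step~2 argument in truncated form, exploiting that $V'_2$ is supported in degrees $[-4,-2]$. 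This degree-truncation trick is the genuine content you are missing, and it is also the correct reason for the $9$-generic hypothesis --- not the $G'$/$G''$ character separation you describe (that separation, via Lemma~\ref{lem:compare-I1-invts}(ii) with $m=4$, is what makes Lemma~\ref{lem:resol-tau} work and only needs $5$-genericity). Finally, your invocation of Lemma~\ref{lem:crit-free-bis} is out of place: that lemma enters only later, in the proof of Theorem~\ref{thm:CMC}, once the present proposition is available.
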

\begin{proof}
Let $\varphi_i:\Tor_i^{\Lambda}(\F,\pi^{\vee})\ra \Tor_i^{\Lambda}(\F,\tau^{\vee})$ denote the natural morphism.
It suffices to prove the following statement: there exist separated filtrations on the finite-dimensional $\F$-vector spaces $\Tor_i^{\Lambda}(\F,\pi^{\vee})$ and $\Tor_i^{\Lambda}(\F,\tau^{\vee})$, {with respect to which $\varphi_i$ becomes a filtered morphism and} 
such that the induced graded morphism {$\gr(\varphi_i)$} is injective. To show this, we use a spectral sequence which computes $\gr(\Tor_i^{\Lambda}(\F,-))$ using $\Tor_i^{\gr(\Lambda)}(\F,\gr(-))$, analogous to the one introduced in the proof of \cite[Prop.\ 3.84]{BHHMS2}.

Starting from a minimal gr-free resolution of $\gr_\m(\pi^{\vee})$, by Remark \ref{rem:facts}(ii) we can lift it to a filt-free resolution of $\pi^{\vee}$, say $M_{\bullet}$. Tensoring with $\F$, we obtain
a filtered complex $\F\otimes_{\Lambda}M_{\bullet}$ and we pass to the associated graded complex, $\gr(\F\otimes_{\Lambda}M_{\bullet})$. As in the proof of \cite[Prop.\ 3.84]{BHHMS2} (cf.\ \cite[\S~III.2.2]{LiOy}),  we obtain a spectral sequence $\{E^r_i, r\geq 0, i\geq 0\}$, with the following properties {(we use homological indexing on $M_{\bullet}$ while cohomological indexing is used in \cite[\S~III.1]{LiOy})}:
\begin{itemize}
\item[(a)] $E_i^0=\gr(\F\otimes_{\Lambda}M_i)\cong \F\otimes_{\gr(\Lambda)}\gr(M_i)$ (by \cite[Lemma~I.6.14]{LiOy}), $E_i^1=\Tor_i^{\gr(\Lambda)}(\F,\gr_\m(\pi^{\vee}))$;
\item[(b)] for any fixed $r\geq 1$, there is a complex
\[\cdots \ra E_1^r\ra E_0^r\ra0\]
whose homology gives $E_i^{r+1}$;
\item[(c)] for $r$ large enough (depending on $i$), $E^r_i\cong E_i^{\infty}= \gr(\Tor_i^{\Lambda}(\F,\pi^{\vee}))$.
\end{itemize}
Note that the filtration on $\Tor_i^{\Lambda}(\F,\pi^{\vee})$ is induced from the one on $\F\otimes_\Lambda M_{i}$, see \cite[\S~III.1, p.~128]{LiOy}.
It is in particular separated.
Similarly, replacing $\pi^{\vee}$ by $\tau^{\vee}$ and using the  filt-free and minimal free resolution $L_\bullet$ of $\tau^\vee$, we have another spectral sequence $\{E'^{r}_i, r\geq0, i\geq 0\}$, converging to $\Tor_i^{\Lambda}(\F,\tau^{\vee})$. Moreover, using \cite[Prop.~I.6.5(2)]{LiOy} a standard argument shows that there is a filtered morphism of complexes of $\Lambda$-modules with compatible $H$-actions $M_{\bullet}\ra L_{\bullet}$ extending $\pi^{\vee}\onto \tau^{\vee}$. Hence by functoriality we obtain a morphism between the spectral sequences:
\begin{equation}\label{eq:mor-spec-seq}
  \begin{gathered}
    \xymatrix{E_i^r\ar@{=>}[r]\ar[d]&\Tor_i^{\Lambda}(\F,\pi^{\vee})\ar^{\varphi_i}[d]\\
      E'^r_i\ar@{=>}[r]&\Tor_i^{\Lambda}(\F,\tau^{\vee})}
  \end{gathered}
\end{equation}
{and that $\varphi_i$ is a filtered morphism with respect to the canonical filtrations on $\Tor_i^{\Lambda}(\F,\pi^{\vee})$ and $\Tor_i^{\Lambda}(\F,\tau^{\vee})$.} Note that the bottom spectral sequence degenerates at the page $r=1$, by Corollary \ref{cor:Tor-tau}.
As explained above, it suffices to show that the natural map \[\gr(\varphi_i):\ E_i^\infty = \gr(\Tor_i^\Lambda(\F,\pi^\vee)) \to \gr(\Tor_i^\Lambda(\F,\tau^\vee)) = E'^\infty_i\] is injective for $0 \le i \le 2$.

\textbf{Step 1.} Suppose $i = 0$. 
Then the natural surjection \[E_0^1 = \F \otimes_{\gr(\Lambda)} \gr_\m(\pi^\vee)\cong \gr_{\m}^0(\pi^{\vee}) \onto  \gr_{\m}^0(\tau^{\vee})\cong\F \otimes_{\gr(\Lambda)} \gr_\m(\tau^\vee) = E'^1_0\] is an isomorphism by Lemma~\ref{lem:isom-modcI}. 
We then have a commutative diagram
\[\xymatrix{E_0^1\ar@{->>}[r]\ar^{\cong}[d]&E_0^\infty\ar[d]\\
E'^1_0\ar@{->}^{\cong}[r]&E'^\infty_0}\]
where the bottom map is an isomorphism by Corollary~\ref{cor:Tor-tau}.
It follows that the top map and the natural map $E_0^\infty \to E'^\infty_0$ are both isomorphisms.

\textbf{Step 2.} Suppose $i = 1$. 
By the previous step we know that the map $E^1_0 \onto E^\infty_0$ is an isomorphism, so the map $E^r_1 \to E^r_0$ is zero for all $r \ge 1$.
Hence we get a natural surjection $E^r_1 \onto E^{r+1}_1$ for $r \ge 1$ and, in particular, $E^1_1 \onto E^\infty_1$.
On the other hand, let $V_i'\defeq \Tor_i^{\gr(\Lambda)}(\F,N)$ for any $i \ge 0$.
Corollary \ref{cor:inj-Tori} and the isomorphism $\gr_{\m}(\tau^{\vee})\cong N/\mathcal{I} N$ (Lemma \ref{lem:isom-modcI}) imply that the composition
\[V_i'\ra E_i^1\ra E'^1_i\]
is injective for any $i \ge 0$. 
We obtain a commutative diagram
\[\xymatrix{
V_1'\ar[dr]\ar@{_{(}->}[ddr]\ar[drr]&&\\
& E_1^1\ar@{->>}[r]\ar[d]&E_{1}^{\infty}\ar[d]\\
& E'^1_1\ar^{\cong}[r]&E'^\infty_1}\]
where we use again Corollary~\ref{cor:Tor-tau} (for $i=1$) for the bottom isomorphism.
Therefore, the top diagonal map $V_1' = \Tor_1^{\gr(\Lambda)}(\F,N)\to \gr(\Tor_1^{\Lambda}(\F,\pi^{\vee})) = E_1^\infty$ is injective.
For dimension reasons (Lemma~\ref{lem:Tor-N}),  it is actually an isomorphism, hence the vertical map $E_1^\infty \to E'^\infty_1$ is injective.

\textbf{Step 3.} Suppose $i = 2$. 
We cannot use exactly the same argument as in Step 2, since we do not (yet) know that the map $E_1^1 \onto E_1^\infty$ is an isomorphism, but fortunately it suffices to prove this in graded degrees $\ge -4$ as we now explain.
{Recall the exact functor $M \mapsto M_{\ge -4}$ for a graded $\gr(\Lambda)$-module $M$ introduced just before Lemma \ref{lem:graded-Tor}}. 
By Lemma~\ref{lem:isom-modcI} with $n = 5$ we know that the natural surjection $N \onto \gr_\m(\pi^\vee)$ is an isomorphism in degrees $\ge -4$; here we use the assumption that $\brho$ is $9$-generic.
The same is then true for the induced map of graded vector spaces $V'_1 = \Tor_1^{\gr(\Lambda)}(\F,N) \to \Tor_1^{\gr(\Lambda)}(\F,\gr_\m(\pi^{\vee})) = E^1_1$ by Lemma~\ref{lem:graded-Tor}(ii).
The diagram in Step 2 implies that the surjection $E_1^1 \onto E_1^\infty$ is an isomorphism in degrees $\ge -4$. 
Consider now the truncation in degrees $\ge -4$ of the spectral sequences associated to the above filtered complexes, which have terms $(E_i^r)_{\ge -4}$ and $(E'^r_i)_{\ge -4}$.
Exactly the same argument as in Step 2 (truncated in degrees $\ge -4$) gives us a map $\alpha : (V'_2)_{\ge -4} \to (E_2^\infty)_{\ge -4}$ fitting into a diagram
\[\xymatrix{
(V'_2)_{\ge -4}\ar[r]^\alpha &  (E_2^\infty)_{\ge -4} \ar[r]^\beta & (E'^\infty_2)_{\ge -4} \\
V'_2 \ar@{->>}[u]^\gamma &  E_2^\infty \ar@{->>}[u]^\delta \ar[r]^\epsilon & E'^\infty_2 \ar@{->>}[u]} \]
where the horizontal composition $\beta \circ \alpha$ is injective.
In particular, $\alpha$ is injective.
As $\gamma$ is an isomorphism by the last statement in Lemma~\ref{lem:Tor-N}, as $\dim_\F V'_2 = \dim_\F E_2^\infty$ (again by Lemma~\ref{lem:Tor-N}) we deduce that $\alpha$ and $\delta$ are isomorphisms.
Therefore $\beta$ is injective, so $\epsilon$ is injective, as desired.
\end{proof}

\subsection{Proof of the theorem}
\label{sec:proof-theorem}

We prove Theorem \ref{thm:CMC}, using our Tor injectivity result (Proposition \ref{prop:Tor-inj}).

\begin{proof}[Proof of Theorem \ref{thm:CMC}]
We first show that $N$ is Cohen--Macaulay and is essentially self-dual of grade $2f$ {(in the sense that $\EE^{2f}_{\gr(\Lambda)}(N)\cong N\otimes (\det(\brho)\omega^{-1})$)}.
Write again $\mathfrak{b}(\lambda)=(t_j,h_j:\, 0\leq j\leq f-1)$.
By \cite[Thm.\ 4.3]{MR1181768} we know that if $M$ is a finitely generated module over an Auslander--Gorenstein ring $R$ and $f : M \to M$ is injective $R$-linear, then $j_R(M/f(M)) \ge j_R(M)+1$, where $j_R(-)\defeq \min\{i:\Ext^i_R(-,R)\neq0\}$ denotes the grade.
We apply this inductively with the central regular sequence $h_0,\dots,h_{f-1}$ and then $t_0,\dots,t_{f-1}$ (and $M = \gr(\Lambda)$) to deduce that $j_{\gr(\Lambda)}(N) \ge 2f$.
By \cite[Prop.\ 3.66]{BHHMS2} we deduce that $j_{\gr(\Lambda)}(N) = 2f$ and the essential self-duality  holds.
In Lemma~\ref{lem:min-graded-splitting} we constructed a free resolution of $N$ of length $2f$, hence $\EE^i_{\gr(\Lambda)}(N) = 0$ for $i > 2f$ and $N$ is Cohen--Macaulay.

Recall that we already have a surjection $N\twoheadrightarrow \gr_\m(\pi^{\vee})$ by \cite[Thm.\ 3.67]{BHHMS2}. In particular, we have $\mathcal{Z}(N)\geq \mathcal{Z}(\gr_\m(\pi^{\vee}))$, where the characteristic cycle is defined in \cite[\S~3.3.4]{BHHMS2}. 
(This is just the usual cycle as $\gr(\Lambda)/J$-module, since the modules are annihilated by $J$ here.)
As $N$ is essentially self-dual, it is pure by \cite[Prop.~III.4.2.8(1)]{LiOy}, so any of its nonzero submodules is of grade $2f$ over $\gr(\Lambda)$ and hence of grade 0 over $\gr(\Lambda)/J$ by the second statement in \cite[Lemma 3.65]{BHHMS2}.
In particular, any nonzero submodule of $N$ has a nonzero cycle.
Therefore, to prove the injectivity of $N\twoheadrightarrow \gr_\m(\pi^{\vee})$, it suffices to prove that $\mathcal{Z}(N)=\mathcal{Z}(\gr_\m(\pi^{\vee}))$.

Let $P_{\bullet}$ be a minimal free resolution of $(\pi|_I)^{\vee}$ with compatible $H$-action, see Remark \ref{rem:facts}. Note that initially $P_{\bullet}$ is not yet given a filtration.

\textbf{Step 1.} It suffices to prove that there exists a good filtration on $P_i$  ($i \in \{0,1,2\}$), such that $P_2 \to P_1 \to P_0$ becomes a complex of filtered $\Lambda$-modules, satisfying the following properties:
\begin{itemize}
\item[(a)] the associated graded complex $\gr(P_{\bullet})$ is exact in degree $1$, i.e.\ $H_1(\gr(P_{\bullet})) = 0$;
\item[(b)] there is an isomorphism of graded $\gr(\Lambda)$-modules $H_0(\gr(P_{\bullet}))\cong N$.
\end{itemize}
Indeed, by (a) the sequence $\gr(P_2) \to \gr(P_1) \to \gr(P_0)$ is exact in the middle, so the map $P_1 \to P_0$ is strict by \cite[Thm.~I.4.2.4(2)]{LiOy}.
  Hence the sequence $P_1 \to P_0 \to \pi^\vee \to 0$ is strict exact, where $\pi^\vee$ carries the induced filtration, say $F$.
  Therefore, by \cite[Thm.~I.4.2.4(1)]{LiOy} the sequence $\gr(P_1) \to \gr(P_0) \to \gr_F(\pi^\vee) \to 0$ is exact, i.e.\ we have isomorphisms $\gr_F(\pi^\vee) \cong H_0(\gr(P_\bullet)) \cong N$ of graded $\gr(\Lambda)$-modules (using also (b)).
In particular, we have $\mathcal{Z}(N)=\mathcal{Z}(\gr_F(\pi^{\vee}))$ and we conclude by the discussion preceding Step 1, as $\mathcal{Z}(\gr_F(\pi^{\vee})) = \mathcal{Z}(\gr_\m(\pi^{\vee}))$ by \cite[Lemma 3.81]{BHHMS2}.

\textbf{Step 2.}
 Recall that in Lemma~\ref{lem:min-graded-splitting} we constructed a minimal gr-free resolution $G_\bullet$ with compatible $H$-action of $\gr_\m(\tau^\vee)$, which 
  we decomposed $G_\bullet = G'_\bullet \oplus G''_\bullet$ as graded $\gr(\Lambda)$-modules with compatible $H$-action, where $G'_\bullet$ is a subcomplex.
  In Lemma \ref{lem:resol-tau} we showed that $G_\bullet$ lifts to a filt-free resolution $L_{\bullet}$ of $\tau^{\vee}$ (with compatible $H$-action) that is moreover minimal.
From Lemma~\ref{lem:decomp} we also get, for each $i$, a decomposition $L_i = L_i' \oplus L_i''$ as filt-free $\Lambda$-modules with compatible $H$-action, which lifts the decomposition $G_i = G_i' \oplus G_i''$ of associated graded modules.

As in Remark \ref{rem:facts}(v), we can extend the morphism $\pi^{\vee}\twoheadrightarrow \tau^{\vee}$ to a morphism of complexes of $\Lambda$-modules with compatible $H$-actions
\[\phi_{\bullet}:\ P_{\bullet}\ra L_{\bullet}.\]
 Suppose for the remainder of Step 2 that $i\in\{0,1,2\}$.
Using that $P_{\bullet}$ and $L_{\bullet}$ are minimal, Proposition \ref{prop:Tor-inj} implies that
 \[\F\otimes_{\Lambda}P_i\cong\Tor_i^{\Lambda}(\F,\pi^{\vee})\ra \Tor_i^{\Lambda}(\F,\tau^{\vee})\cong \F\otimes_{\Lambda}L_i\]
 is injective.  
By Lemma \ref{lem:free-CA}, we deduce that $\phi_i$ is injective and identifies $P_i$ with a direct summand of $L_i$ as $\Lambda$-modules.    We equip $P_i$ with the induced filtration from $L_i$. 

\textbf{Step 3.} Suppose that $i\in\{0,1,2\}$. We prove that $P_i$ is filt-free and that inside $\gr(L_i) \;(=G_i)$ the injective map $\phi_i$ induces an equality
\[\gr(P_i)=\gr(L_i') \;(=G_i'). \]
By Step 2 we know that $\phi_i$ identifies $P_i$ with a direct summand of $L_i$.
As $\F\otimes_{\Lambda}P_i \cong \Tor_i^\Lambda(\F,\pi^\vee)$ and $\F\otimes_{\Lambda}L_{i}' \cong \F\otimes_{\gr(\Lambda)}\gr(L_{i}') \cong \Tor_i^{\gr(\Lambda)}(\F,N)$,
we deduce by Lemma~\ref{lem:Tor-N} {and Remark \ref{rem:hyp4}} that $\F\otimes_{\Lambda}P_i \cong \F\otimes_{\Lambda}L_{i}'$ as $H$-modules. By Lemma \ref{lem:resol-tau}, the decomposition $L_i = L_{i}'\oplus L_{i}''$ satisfies conditions (i) and (ii) of Lemma \ref{lem:crit-free-bis}. 
Hence, by Lemma \ref{lem:crit-free-bis} and Remark \ref{rem:cond-free} we deduce the claim.

Finally, as $G_2'\ra G_1'\ra G_0'\ra N\ra0$ is an exact sequence of graded $\gr(\Lambda)$-modules (as $G'_{\bullet}$ is a resolution of $N$), the equality $G_i'=\gr(P_i)$ for $i\in\{0,1,2\}$ implies (a), (b) in Step 1.
\end{proof}

\begin{cor}\label{cor:Tor-pi}
Suppose that $\brho$ is 9-generic.
  \begin{enumerate}
  \item For any $i \ge 0$ there is a canonical isomorphism compatible with $H$-action
    \[\Tor_i^{\gr(\Lambda)}(\F,\gr_\m(\pi^{\vee}))\cong \gr(\Tor_i^{\Lambda}(\F,\pi^{\vee})).\]
    (Here, $\Tor_i^{\Lambda}(\F,\pi^{\vee})$ carries the canonical filtration, cf.\ Remark~\ref{rem:facts}(iv).) 
  \item The natural morphism \[\Tor_i^{\Lambda}(\F,\pi^{\vee})\ra \Tor_i^{\Lambda}(\F,\tau^{\vee})\]
    is injective for any $i \ge 0$.
  \end{enumerate}
\end{cor}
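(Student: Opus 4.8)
\textbf{Proof proposal for Corollary~\ref{cor:Tor-pi}.} The plan is to deduce both statements from Theorem~\ref{thm:CMC} together with the tools already assembled in the proof of that theorem. For part (i), recall that in the proof of Theorem~\ref{thm:CMC} we constructed a good filtration $F$ on $\pi^\vee$ (induced by a filt-free resolution $M_\bullet$) for which $\gr_F(\pi^\vee) \cong N$, where the isomorphism is the one of~(\ref{eq:gr(pi)-bis}); by \cite[Lemma 3.3.4.3]{BHHMS2} this filtration computes the same characteristic cycle as the $\m$-adic one, and in fact the surjection $N \onto \gr_\m(\pi^\vee)$ of \cite[Thm.~\ref{bhhms2:thm:cycle-pi}]{BHHMS2} is now known (by Theorem~\ref{thm:CMC}) to be an isomorphism. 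So first I would argue, exactly as in the proof of Corollary~\ref{cor:Tor-tau}, via the spectral sequence $E^r_i \Rightarrow \Tor_i^\Lambda(\F,\pi^\vee)$ with $E^1_i = \Tor_i^{\gr(\Lambda)}(\F,\gr_\m(\pi^\vee))$ (coming from the filtered complex $\F\otimes_\Lambda M_\bullet$, as in the proof of \cite[Prop.~\ref{bhhms2:prop:LvO-filtonExt}]{BHHMS2}): the abutment $\gr(\Tor_i^\Lambda(\F,\pi^\vee))$ is a subquotient of $E^1_i$, and by Lemma~\ref{lem:Tor-N} (together with Remark~\ref{rem:hyp4}, which records $\dim_\F\Tor_i^\Lambda(\F,\pi^\vee) = \dim_\F\Tor_i^{\gr(\Lambda)}(\F,N)$) the two spaces have the same dimension, forcing the spectral sequence to degenerate at $r=1$ and giving the canonical isomorphism $\Tor_i^{\gr(\Lambda)}(\F,\gr_\m(\pi^\vee)) \cong \gr(\Tor_i^\Lambda(\F,\pi^\vee))$, compatibly with $H$.

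For part (ii), I would compare the spectral sequence for $\pi^\vee$ with the one for $\tau^\vee$ via the morphism of complexes $M_\bullet \to L_\bullet$ lifting $\pi^\vee \onto \tau^\vee$, obtaining a commutative square as in~(\ref{eq:mor-spec-seq}). By part (i) the top sequence degenerates at $r=1$, and by Corollary~\ref{cor:Tor-tau} so does the bottom one; hence on $E^1 = E^\infty$ pages the map $\varphi_i$ is identified with $\gr(\varphi_i) : \Tor_i^{\gr(\Lambda)}(\F,\gr_\m(\pi^\vee)) \to \Tor_i^{\gr(\Lambda)}(\F,\gr_\m(\tau^\vee))$. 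Now $\gr_\m(\pi^\vee) \cong N$ and $\gr_\m(\tau^\vee) \cong N/\mathcal I N$ (Theorem~\ref{thm:CMC} and Lemma~\ref{lem:isom-modcI}), so this map is precisely the one $\Tor_i^{\gr(\Lambda)}(\F,N) \to \Tor_i^{\gr(\Lambda)}(\F,N/\mathcal I N)$ induced by $N \onto N/\mathcal I N$, which is injective for \emph{all} $i \ge 0$ by Corollary~\ref{cor:inj-Tori}. Since the filtrations involved are separated, injectivity of $\gr(\varphi_i)$ implies injectivity of $\varphi_i$, which is the claim.

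The point worth emphasizing is that Proposition~\ref{prop:Tor-inj} only gave the injectivity for $i \le 2$ because at that stage we did not yet know $N \congto \gr_\m(\pi^\vee)$ (the proof there had to work with truncations in bounded degrees to control finitely many Tor terms); once Theorem~\ref{thm:CMC} is available, the comparison becomes an exact identification of $\Tor_i^{\gr(\Lambda)}(\F,N) \hookrightarrow \Tor_i^{\gr(\Lambda)}(\F,N/\mathcal I N)$ for every $i$, and Corollary~\ref{cor:inj-Tori} has no degree restriction. The only mild subtlety to check is that the spectral-sequence filtrations on $\Tor_i^\Lambda(\F,\pi^\vee)$ and $\Tor_i^\Lambda(\F,\tau^\vee)$ agree with the canonical ones of Remark~\ref{rem:facts}(iv) and that $\varphi_i$ is indeed filtered for them — but this is exactly what was verified around~(\ref{eq:mor-spec-seq}) in the proof of Proposition~\ref{prop:Tor-inj}, so no new work is needed. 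I do not anticipate a genuine obstacle here; the corollary is essentially a bookkeeping consequence of Theorem~\ref{thm:CMC}, Corollary~\ref{cor:inj-Tori}, and the degeneration argument already used for Corollary~\ref{cor:Tor-tau}.
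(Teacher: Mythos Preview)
Your proposal is correct and follows essentially the same approach as the paper: part (i) repeats the argument of Corollary~\ref{cor:Tor-tau} using the dimension equality from Lemma~\ref{lem:Tor-N} and Theorem~\ref{thm:CMC}, and part (ii) observes that both spectral sequences now degenerate at $r=1$, so the map on $E^1$-pages is identified with the map of Corollary~\ref{cor:inj-Tori} via Theorem~\ref{thm:CMC}, giving injectivity of $\gr(\varphi_i)$ and hence of $\varphi_i$. One minor imprecision: the filt-free resolution $M_\bullet$ and the associated spectral sequence come from the proof of Proposition~\ref{prop:Tor-inj} (where $M_\bullet$ lifts a gr-free resolution of $\gr_\m(\pi^\vee)$), not from the proof of Theorem~\ref{thm:CMC} (where a different complex $P_\bullet$ and a possibly different filtration $F$ appear); this does not affect your argument.
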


\begin{proof}
  (i) The proof is exactly as the proof of Corollary~\ref{cor:Tor-tau}, using Lemma~\ref{lem:Tor-N} together with Theorem~\ref{thm:CMC} instead of Lemma~\ref{lem:resol-tau} to check that both spaces have the same dimension.

  (ii) Consider again the morphism of spectral sequences~(\ref{eq:mor-spec-seq}) of the proof of Proposition~\ref{prop:Tor-inj}.
  By part (i) and Corollary~\ref{cor:Tor-tau}, both spectral sequences degenerate at the page $r = 1$.
  The map $E_1^r \to E'^r_1$ is injective by Corollary~\ref{cor:inj-Tori} together with Theorem~\ref{thm:CMC}, hence the claim follows (cf.\ the first paragraph of the proof of Proposition~\ref{prop:Tor-inj}).
\end{proof}

\subsection{Verifying assumption~\ref{it:assum-iv}}
\label{sec:verify-assumpt-iv}

We prove that a globally defined $\pi$ satisfies assumption~\ref{it:assum-iv}.

We first recall our global setup and refer the reader to \cite[\S~8.1]{BHHMS1} for more details.
Assume $p > 3$.
We fix a totally real number field $F$ with ring of integers $\cO_F$ and let $S_p$ denote the set of places of $F$ above $p$.
We assume that $F$ is unramified at all places in $S_p$.
For each finite place $w$ of $F$ we denote by $F_w$ the completion of $F$ at $w$, by $\cO_{F_w}$ its ring of integers and by $\Frob_w$ a choice of a geometric Frobenius element of $\Gal(\ovl{F}_w/F_w)$.
We fix a quaternion algebra $D$ over $F$, with center $F$ such that $D$ splits at all places in $S_p$ and at most one infinite place. 
We let $S_D$ denote the set of places of $F$ where $D$ ramifies.
We fix a maximal order $\cO_D$ in $D$ and isomorphisms $(\cO_D)_w\defeq \cO_D\otimes_{\cO_F}\cO_{F_w}\congto \M_2(\cO_{F_w})$ {for $w\notin S_{D}$}.

We fix a continuous representation $\rbar:\Gal(\ovl{F}/F)\ra\GL_2(\F)$ and let $S_{\rbar}$ denote the set of places where $\rbar$ ramifies. We write $\rbar_w$ for $\rbar|_{\Gal(\ovl{F}_w/F_w)}$. We assume that:
\begin{itemize}
\item $\rbar|_{\Gal(\ovl{F}/F(\sqrt[p]{1}))}$ is absolutely irreducible;
\item for all $w\in S_p$, $\rbar_w$ is $0$-generic {(so $S_p\subset S_{\rbar}$)};
\item for all $w\in (S_{D}\cup S_{\rbar})\setminus S_p$ the universal framed deformation ring of $\rbar_w$ is formally smooth over $W(\F)$.
\end{itemize}
If $D$ splits at exactly one infinite place (the ``indefinite case''), we make the following choices. Given a compact open subgroup $V$ of $(D\otimes_{F}\mathbb{A}_F^\infty)^\times$ (where $\mathbb{A}_F^\infty$ denotes the finite ad\`eles of $F$) we first let $X_V$ denote the smooth projective Shimura curve over $F$ associated to $V$ constructed with the convention ``$\varepsilon=-1$'' (see \cite[\S~3.1]{BD} and \cite[\S~2]{BDJ}).
We choose:
\begin{enumerate}
\item\label{it:asum3} a finite set $S$ of finite places of $F$ such that:
\begin{enumerate}[label=(\alph*)]
\item $S_D \cup S_{\rbar}\subset S$; 
\item for all $w\in S\backslash S_p$ the framed deformation ring $R_{\rbar_w^\vee}$ of $\rbar_w^\vee$ is formally smooth over $W(\F)$;
\end{enumerate}
\item\label{it:asum4} compact open subgroups $V=\prod_wV_w\subseteq U=\prod_wU_w$ of $({\mathcal O}_D)_{w}^\times$ such that:
\begin{enumerate}[label=(\alph*)]
\item\label{it:asum4a} $U_w=({\mathcal O}_D)_{w}^\times$ for $w\notin S$ 
  or $w\in S_p$;
\item $V_w=U_w$ for $w\notin S_p$ and $V_w\subset 1+p\M_2({\mathcal O}_{F_w})$, $V_w\triangleleft ({\mathcal O}_D)_{w}^\times$ for $w\in  S_p$;
\item\label{it:asum4b} we have
\begin{equation}
\label{Vnonzero}
\Hom_{\Gal(\ovl{F}/F)}\bigg(\rbar,H^1_{\rm \acute{e}t}(X_V\times_F\ovl{F},\F)\bigg)\neq 0.
\end{equation}
\end{enumerate} 
\end{enumerate}
If $D$ splits at no infinite places (the ``definite case'') we make the same choices as \refeq{it:asum3}--\refeq{it:asum4} above, replacing \eqref{Vnonzero} by the condition $S(V,\F)[\fm]\neq 0$, where:
\begin{itemize}
\item $S(V,\F)\defeq \{f:D^\times\backslash (D\otimes_F{\mathbb A}_F^\infty)^\times/V \rightarrow \F\}$;
\item  $\mathfrak m$ is generated by $T_w-S_w{\rm tr}(\rbar({\rm Frob}_w)),\ {\rm Norm}(w)-S_w{\det}(\rbar({\rm Frob}_w))$ for $w\notin S$ 
  such that $V_w=({\mathcal O}_D)_{w}^\times$, with $T_w$, $S_w$ acting on $S(V,\F)$ (via right translation on functions) by $V\begin{pmatrix}\varpi_w & 0\\ 0&1\end{pmatrix}V$, $V\begin{pmatrix}\varpi_w & 0\\ 0&\varpi_w\end{pmatrix}V$ respectively (where $\varpi_w$ is any choosen uniformizer of $F_w$).
\end{itemize}

\begin{rem}
  In \cite[\S~8]{BHHMS1} (and an earlier version of this paper) we fixed a certain place $w_1$ to ensure that our level $U$ was sufficiently small.
  But actually our level is automatically sufficiently small, as $p > 3$ and $p$ is unramified in $F$, cf.\ the discussion just before \cite[Prop.\ 5.4]{MR4573253}.
  Therefore, the place $w_1$ can also be removed in \cite[\S~8]{BHHMS1}, which means that some results, like \cite[Cor.\ 8.5.1]{BHHMS1}, hold under a mildly weaker assumption on the level.
\end{rem}

Fix now a place $v \in S_p$.
For each $w\in S_p\setminus\{v\}$ we fix a Serre weight $\sigma_w\in W(\rbar^\vee_w)$ and write $K\defeq F_v$, $\rhobar\defeq\rbar_v^\vee$. We define the admissible smooth representation of $\GL_2(K)$ over $\F$ (which is nonzero by (\ref{Vnonzero}) above):
\begin{alignat*}{2}
\pi(\rhobar)&\defeq\varinjlim_{V_v}\Hom_{U^v/V^v}\!\Big(\!\bigotimes _{w\in S_p\backslash\{v\}} \sigma_w, \Hom_{\Gal(\o F/F)}\!\big(\rbar, H^1_{{\rm \acute et}}(X_{V^vV_v} \times_F \overline F, \F)\big)\Big) &&\ {\rm in\ the\ indefinite\ case},\\
\pi(\rhobar)&\defeq\varinjlim_{V_v}\Hom_{U^v/V^v}\!\Big(\!\bigotimes _{w\in S_p\backslash\{v\}} \sigma_w, S(V^vV_v,\F)[{\mathfrak m}]\Big)&&\ {\rm in\ the\ definite\ case},
\end{alignat*}
where the limit is over all compact open subgroups $V_v\triangleleft ({\mathcal O}_D)_{v}^\times\cong \GL_2(\cO_K)$ which are contained in $1+p\M_2(\cO_K)$.
We caution the reader that, despite the notation, the representation $\pi(\rhobar)$  \emph{a priori depends on all of our global choices and not just on $\rhobar$}.

We now check that, when $\brho$ is $12$-generic, the globally defined representation $\pi = \pi(\rhobar)$ satisfies assumption~\ref{it:assum-iv} of \S~\ref{sec:theorem}. 
For this, we fix a patched module $\mathbb{M}_{\infty}$ over a suitable formally smooth local $\cO$-algebra  $R_{\infty}$  as in \cite{CEGGPS} (see also \cite[\S~8.4]{BHHMS1}) where $\cO\defeq W(\F)$, such that 
\begin{equation}
\label{eq:big:patch}
\mathbb{M}_{\infty}\otimes_{R_{\infty}}\F\cong\pi^{\vee}.
\end{equation}
We do not recall the construction {and properties of $\mathbb{M}_{\infty}$} here but we refer the reader to \cite[\S~3.1]{CEGGPS2} and item (ii) in the proof of \cite[Thm.~8.4.1]{BHHMS1}.

{In fact}, we will consider the fixed central character version of $\mathbb{M}_{\infty}$, see \cite[\S~4.22]{CEGGPS}. This amounts to taking the maximal quotient of $\mathbb{M}_{\infty}$ on which the centre $Z$ of $\GL_2(K)$ acts via a fixed character $\zeta:Z\ra \cO^{\times}$ lifting that of $\pi^{\vee}$.   In particular, setting 
\[M_{\infty}(\sigma)\defeq \Hom_{\cO\bbra{\GL_2(\cO_K)}}^{\cont}(\mathbb{M}_{\infty},\sigma^{\vee})^{\vee}\]
for any continuous $\GL_2(\cO_K)$-representation $\sigma$ on a finitely generated $\cO$-module with central character  $\zeta^{-1}$,  we obtain a patching  functor $M_\infty$ as in \cite[\S~6]{EGS} or \cite[\S~8.1]{BHHMS1}. Here,  for a linear-topological $\cO$-module $A$, $A^{\vee}$ denotes the Pontryagin dual $\Hom_{\cO}^{\rm cont}(A,\cO[\frac{1}{p}]/\cO)$ with compact-open topology.   We recall that $M_\infty(\sigma)$ is a finitely generated $R_\infty$-module. For convenience, below we assume that the action of $Z_1$ on $\mathbb{M}_{\infty}$ is trivial; this can be achieved up to twist (as $Z_1$ acts trivially on $\pi$). 

\begin{lem}\label{lem:isom-Tor}
Suppose that $\mathbb{M}_{\infty}$ is flat over $R_{\infty}$.  
For any finite-dimensional smooth $\GL_2(\cO_K)$-representation $W$ over $\F$ and any integer $i\geq 0$, there are natural isomorphisms  
\[  \Tor_i^{\o R_\infty}(\F,M_\infty(W))\cong \Tor_i^{\Lambda'}(W,\pi^{\vee})\cong \Ext^i_{\Lambda'}(W,\pi)^{\vee},\]
where $\overline{R}_{\infty}\defeq R_{\infty}\otimes_{\cO}\F$ and $\Lambda'\defeq \F\bbra{\GL_2(\cO_K)/Z_1}$. 
\end{lem}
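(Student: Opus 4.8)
The plan is to unwind both isomorphisms from the patching formalism. First I would recall that $\mathbb{M}_\infty$ carries commuting actions of $R_\infty$ and of $\GL_2(\cO_K)$ (trivial on $Z_1$ by our normalization), and that by construction $M_\infty(W) = \Hom^{\cont}_{\cO\bbra{\GL_2(\cO_K)}}(\mathbb{M}_\infty, W^\vee)^\vee$. Since $W$ is a finite-dimensional smooth $\F$-representation, killed by $p$ and by some open subgroup, the standard manipulation of Pontryagin duals gives $M_\infty(W) \cong W \otimes_{\Lambda'} \mathbb{M}_\infty$ (where $\Lambda' = \F\bbra{\GL_2(\cO_K)/Z_1}$ and $W$ is viewed as a right $\Lambda'$-module via $g \mapsto g^{-1}$), at least after reducing $\mathbb M_\infty$ mod $p$; one should be slightly careful and instead write $M_\infty(W) \cong W \otimes_{\cO\bbra{\GL_2(\cO_K)/Z_1}} \mathbb{M}_\infty$ and note this factors through $\mathbb{M}_\infty/p = \mathbb{M}_\infty \otimes_{R_\infty} \o R_\infty$, since $W$ is $p$-torsion.

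The second step is the homological upgrade. Under the flatness hypothesis $\mathbb{M}_\infty$ is flat over $R_\infty$, and since $\o R_\infty = R_\infty \otimes_\cO \F$ with $\cO \to R_\infty$ formally smooth (in particular $p$ is a nonzerodivisor on $R_\infty$ and on $\mathbb{M}_\infty$), the module $\mathbb{M}_\infty/p$ is flat over $\o R_\infty$. Flatness over $R_\infty$ together with $\mathbb M_\infty$ being a projective $\cO\bbra{\GL_2(\cO_K)/Z_1}$-module (a property of patched modules, cf.\ \cite{CEGGPS}, \cite{EGS}) lets me compute $\Tor$ groups by resolving on either side. Concretely, take a projective resolution $Q_\bullet \to W$ of $W$ by right $\Lambda'$-modules; then $\Tor_i^{\Lambda'}(W, \pi^\vee)$ is the homology of $Q_\bullet \otimes_{\Lambda'} \pi^\vee = Q_\bullet \otimes_{\Lambda'}(\mathbb{M}_\infty \otimes_{R_\infty}\F)$. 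Using (\ref{eq:big:patch}) and the flatness of $\mathbb{M}_\infty$ over $R_\infty$ this is $ (Q_\bullet \otimes_{\Lambda'}\mathbb{M}_\infty)\otimes_{R_\infty}\F$, and $Q_\bullet \otimes_{\Lambda'}\mathbb{M}_\infty$ is a complex of flat $R_\infty$-modules (each $Q_i$ projective over $\Lambda'$, $\mathbb{M}_\infty$ flat over $R_\infty$) computing $M_\infty(W)$ in degree $0$ with no higher homology (because $\mathbb{M}_\infty$ is projective over $\Lambda'$, so $Q_\bullet \otimes_{\Lambda'}\mathbb{M}_\infty$ is a resolution of $M_\infty(W)$). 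Hence its derived tensor with $\F$ over $R_\infty$ computes $\Tor_i^{R_\infty}(M_\infty(W), \F)$, and reducing everything mod $p$ identifies this with $\Tor_i^{\o R_\infty}(\F, M_\infty(W))$. This gives the first isomorphism; the naturality in $W$ is clear from functoriality of projective resolutions.

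The third and easiest step is the last isomorphism $\Tor_i^{\Lambda'}(W,\pi^\vee) \cong \Ext^i_{\Lambda'}(W,\pi)^\vee$. This is the usual duality between smooth cohomology and pseudocompact homology already used implicitly in Remark~\ref{rem:hyp4}: Pontryagin duality is exact, exchanges $W \otimes_{\Lambda'}(-)$ with $\Hom_{\Lambda'}(W, (-)^\vee)$ on finitely generated modules and smooth representations, and $\pi^{\vee\vee} = \pi$ since $\pi$ is admissible (actually only smoothness is needed with the right topology). So taking a projective resolution of $W$ and dualizing converts the $\Tor$ complex into the $\Ext$ complex, whence the claimed isomorphism, again natural in $W$.

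The main obstacle is the bookkeeping in the second step: carefully justifying that one may compute the $\Tor$ groups over $R_\infty$ (or $\o R_\infty$) by the two-sided resolution argument requires simultaneously using that $\mathbb{M}_\infty$ is flat over $R_\infty$ \emph{and} projective over the completed group algebra, and tracking that everything is compatible with the fixed central character quotient and with reduction mod $p$. Once the correct finiteness and flatness statements are assembled (all of which are standard properties of the Caraiani--Emerton--Gee--Geraghty--Paškūnas--Shotton patched module, recalled in \cite[\S~3.1]{CEGGPS2} and \cite[\S~8.4]{BHHMS1}), the identifications are formal. I would present the argument by first establishing $M_\infty(W) \cong W \otimes_{\cO\bbra{\GL_2(\cO_K)/Z_1}} \mathbb{M}_\infty$, then the $\Tor$ comparison via balancing, then the Pontryagin duality, keeping the genericity/flatness hypotheses only where needed (flatness of $\mathbb{M}_\infty$ is invoked exactly to make $Q_\bullet \otimes_{\Lambda'}\mathbb{M}_\infty$ a complex of flat $R_\infty$-modules).
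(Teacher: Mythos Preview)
Your approach is correct and rests on the same two inputs as the paper's proof (projectivity of $\mathbb{M}_\infty$ over the completed group algebra, and flatness over $R_\infty$), but you balance the Tor in the opposite direction. The paper resolves $\F$ over $\overline{R}_\infty$ by the Koszul complex $K_\bullet(\underline{y},\overline{\mathbb{M}}_\infty)$; since $\overline{\mathbb{M}}_\infty$ is flat over $\overline{R}_\infty$ this is a resolution of $\pi^\vee$, and since $\overline{\mathbb{M}}_\infty$ is projective as a pseudocompact $\Lambda'$-module it is in fact a projective $\Lambda'$-resolution of $\pi^\vee$, so tensoring with $W$ computes both Tor groups at once. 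You instead resolve $W$ by finitely generated projectives $Q_\bullet$ over $\Lambda'$, then use projectivity of $\overline{\mathbb{M}}_\infty$ over $\Lambda'$ to see that $Q_\bullet \otimes_{\Lambda'} \overline{\mathbb{M}}_\infty$ is acyclic in positive degrees, and flatness over $\overline{R}_\infty$ to see that each term is flat over $\overline{R}_\infty$. Both arguments are the standard ``balance Tor across a bimodule'' trick; the paper's Koszul version is slightly more concrete (finite resolution, no need to discuss why $Q_i \otimes_{\Lambda'} \overline{\mathbb{M}}_\infty$ is flat), while yours avoids naming the regular sequence. One point you should make explicit: take the $Q_i$ finitely generated free, so that completed and ordinary tensor products agree (Brumer) and so that $Q_i \otimes_{\Lambda'} \overline{\mathbb{M}}_\infty$ is visibly a finite direct sum of copies of $\overline{\mathbb{M}}_\infty$, hence $\overline{R}_\infty$-flat; the paper is careful about the pseudocompact-vs-abstract tensor issue via \cite{brumer}, and your sketch would benefit from the same care.
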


Whenever necessary, e.g.\ {in $\Tor_i^{\Lambda'}(W,\pi^{\vee})$ in Lemma \ref{lem:isom-Tor}}, we consider $W$ as \emph{right} $\Lambda'$-module via the inversion on $\GL_2(\cO_K)/Z_1$.

\begin{proof}
Note that $\overline{R}_{\infty}$ is a regular local $\F$-algebra whose maximal ideal is generated by a regular sequence, say $\underline{y}$. 
 By  \cite[\S~3.1]{CEGGPS2}, $\mathbb{M}_{\infty}$ is projective as a pseudocompact $\cO\bbra{\GL_2(\cO_K)/Z_1}$-module, hence  $\overline{\mathbb{M}}_{\infty}\defeq \mathbb{M}_{\infty}\otimes_{\cO}\F$ is projective as a pseudocompact $\Lambda'$-module.  Since $\overline{\mathbb{M}}_{\infty}\otimes_{\overline{R}_{\infty}}\F\cong \pi^{\vee}$, we obtain  a Koszul complex $K_{\bullet}(\underline{y},\overline{\mathbb{M}}_{\infty}) =  \overline{\mathbb{M}}_{\infty} \otimes_{\o R_\infty}K_{\bullet}(\underline{y})$ of $\overline{R}_{\infty}$-modules whose homology in degree $0$ gives $\pi^{\vee}$. Since  $\overline{\mathbb{M}}_{\infty}$ is   flat over $\overline{R}_{\infty}$ by assumption, 
 $K_{\bullet}(\underline{y},\overline{\mathbb{M}}_{\infty})$   provides  a  resolution of $\pi^{\vee}$ by projective pseudocompact $\Lambda'$-modules.

We claim that we have a canonical isomorphism $W \otimes_{\Lambda'} \overline{\mathbb{M}}_{\infty} \cong M_\infty(W)$ of $\o R_\infty$-modules.
Working in the category of pseudocompact $\Lambda'$-modules (resp.\ $\F$-modules) we have by \cite[Lemma 2.4]{brumer} that
\begin{equation}\label{eq:hom-tensor}
  \Hom_{\Lambda'}^{\cont}(\overline{\mathbb{M}}_{\infty},\Hom_\F^{\cont}(W,\F)) \cong \Hom_\F^{\cont}(W \hotimes_{\Lambda'} \overline{\mathbb{M}}_{\infty},\F),
\end{equation}
where every space of continuous homomorphisms carries the discrete topology, and clearly this isomorphism is $\o R_\infty$-equivariant.
As $W$ is a finitely presented $\Lambda'$-module, we have
\begin{equation}\label{eq:hat-tensor}
  W \hotimes_{\Lambda'} \overline{\mathbb{M}}_{\infty} \cong W \otimes_{\Lambda'} \overline{\mathbb{M}}_{\infty}
\end{equation}
by \cite[Lemma 2.1]{brumer}. 
The claim follows by dualizing \eqref{eq:hom-tensor}.

By the Koszul resolution of $\pi^\vee$ above, we see that $\Tor_i^{\Lambda'}(W,\pi^{\vee})$ is computed as the $i$-th homology group of
\begin{equation*}
W \otimes_{\Lambda'} K_{\bullet}(\underline{y},\overline{\mathbb{M}}_{\infty}) = K_{\bullet}(\underline{y},W \otimes_{\Lambda'} \overline{\mathbb{M}}_{\infty}),
\end{equation*}
which is precisely the Koszul complex of $W \otimes_{\Lambda'} \overline{\mathbb{M}}_{\infty} \cong M_\infty(W)$ as $\o R_\infty$-module, and hence also computes $\Tor_i^{\o R_\infty}(\F,M_\infty(W))$.

The second isomorphism is a general fact, by using \cite[Cor.~2.6]{brumer} and 
noting that
\[\Ext^i_{\Lambda'}(\pi^{\vee},W^{\vee})^{\vee}\cong \Ext^i_{\Lambda'}(W,\pi)^{\vee}.\qedhere \]
\end{proof}
 
\begin{prop}\label{prop:dim-Ext}
  {If $\brho$ is $12$-generic}, then assumption~\ref{it:assum-iv} holds for $\pi=\pi(\brho)$. 
  As a consequence, Theorem \ref{thm:CMC} holds for $\pi$. 
\end{prop}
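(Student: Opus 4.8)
The plan is to transport the computation to the patched module and convert it into a Koszul calculation over the (formally smooth) ring $\overline{R}_\infty$. First I would reduce to $\Tor$ over $\overline{R}_\infty$. If the central characters of $\chi$ and $\pi$ disagree on $\cO_K^\times/Z_1$, then the central torus $\cO_K^\times/Z_1\subset I/Z_1$ acts on $\Ext^i_{I/Z_1}(\chi,\pi)$ through two distinct characters, so this space vanishes, and $[\pi[\m]:\chi]=0$ as well; so there is nothing to prove and we may assume these central characters agree. Then $\Ind_I^{\GL_2(\cO_K)}\chi$ has central character $\zeta^{-1}$ after the twist fixed in \S~\ref{sec:verify-assumpt-iv}, so $M_\infty(\Ind_I^{\GL_2(\cO_K)}\chi)$ is defined. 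Since $I$ has finite index in $\GL_2(\cO_K)$, induction from $I$ is exact and agrees with coinduction, so restriction preserves injectives and Shapiro's lemma gives $\Ext^i_{I/Z_1}(\chi,\pi)\cong\Ext^i_{\GL_2(\cO_K)/Z_1}(\Ind_I^{\GL_2(\cO_K)}\chi,\pi)$. Applying Lemma~\ref{lem:isom-Tor} with $W=\Ind_I^{\GL_2(\cO_K)}\chi$ and dualizing, $\dim_\F\Ext^i_{I/Z_1}(\chi,\pi)=\dim_\F\Tor_i^{\overline{R}_\infty}(\F,M_\infty(\Ind_I^{\GL_2(\cO_K)}\chi))$. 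To invoke Lemma~\ref{lem:isom-Tor} I would use that $\mathbb{M}_\infty$ is flat over $R_\infty$: indeed $R_\infty$ is formally smooth over $\cO$ — the framed deformation rings at the places $w\ne v$ are formally smooth by hypothesis, and at $v$ the framed deformation ring of $\rhobar_v$ is formally smooth because $\rhobar$ generic forces $H^2(\Gal(\o K/K),\mathrm{ad}\,\rhobar)=0$ — and $\mathbb{M}_\infty$ is maximal Cohen--Macaulay over $R_\infty$ by construction, hence free over the regular local ring $R_\infty$.

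Next I would identify $M_\infty(\Ind_I^{\GL_2(\cO_K)}\chi)$. As $\chi$ is trivial on $I_1$, this representation is inflated from $\Ind_{B(\Fq)}^{\GL_2(\Fq)}\overline\chi$, the reduction mod $p$ of the tame principal series type $\sigma(\tau_\chi)$ attached to the tame inertial type $\tau_\chi$ by inertial local Langlands; so $M_\infty(\Ind_I^{\GL_2(\cO_K)}\chi)\cong M_\infty(\sigma(\tau_\chi))\otimes_\cO\F$ by $\cO$-flatness. If $[\pi[\m]:\chi]=[\pi^{I_1}:\chi]=0$, then Frobenius reciprocity gives $\Hom_{\GL_2(\cO_K)}(\Ind_I^{\GL_2(\cO_K)}\chi,\pi)\cong\Hom_I(\chi,\pi|_I)=0$, whence (by the $i=0$ case of Lemma~\ref{lem:isom-Tor}) $M_\infty(\Ind_I^{\GL_2(\cO_K)}\chi)\otimes_{\overline{R}_\infty}\F=0$, so $M_\infty(\Ind_I^{\GL_2(\cO_K)}\chi)=0$ by Nakayama; thus all $\Tor_i$ vanish and $\Ext^i_{I/Z_1}(\chi,\pi)=0$, as wanted. (This subsumes all $\chi=\chi^s$, which lie outside $\JH(\pi^{I_1})$ by genericity, and all $\chi$ of non-generic type.)

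If instead $[\pi[\m]:\chi]\ne0$, then $\chi=\chi_\lambda$ with $\lambda\in\P$, so $\chi\ne\chi^s$ and, $\rhobar$ being $12$-generic, $\chi$ is $11$-generic and $\tau_\chi$ is a generic tame type. I would then invoke the structural theory of generic tame types (\cite{EGS} and its generalizations; Kisin for $f=1$): $M_\infty(\sigma(\tau_\chi))$ is supported on, and is maximal Cohen--Macaulay over, the fixed-determinant potentially Barsotti--Tate framed deformation quotient $R_\infty(\tau_\chi)$ of $R_\infty$ of inertial type $\tau_\chi$, which is formally smooth over $\cO$; and a relative-dimension count (the fixed-determinant unrestricted framed local deformation ring at $v$ has relative dimension $3f+3$ over $\cO$, the potentially Barsotti--Tate one relative dimension $f+3$) shows that $R_\infty(\tau_\chi)$ is cut out in the formally smooth $R_\infty$ by a regular sequence of length $2f$. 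Reducing mod $p$, $M_\infty(\Ind_I^{\GL_2(\cO_K)}\chi)\cong M_\infty(\sigma(\tau_\chi))/p$ is maximal Cohen--Macaulay over $\overline{R}_\infty(\tau_\chi)=\overline{R}_\infty/(g_1,\dots,g_{2f})$, where $g_1,\dots,g_{2f}$ is part of a regular system of parameters of $\overline{R}_\infty$, hence free over it, of rank $\dim_\F(M_\infty(\Ind_I^{\GL_2(\cO_K)}\chi)\otimes_{\overline{R}_\infty}\F)=\dim_\F\Hom_I(\chi,\pi|_I)=[\pi^{I_1}:\chi]=r$. Therefore the Koszul complex on $g_1,\dots,g_{2f}$ gives
\[
\Tor_i^{\overline{R}_\infty}(\F,M_\infty(\Ind_I^{\GL_2(\cO_K)}\chi))\cong\Tor_i^{\overline{R}_\infty}(\F,\overline{R}_\infty/(g_1,\dots,g_{2f}))^{\oplus r}\cong\big(\textstyle\bigwedge^i\F^{2f}\big)^{\oplus r},
\]
of dimension $\binom{2f}{i}r$. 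Combining the two cases, $\dim_\F\Ext^i_{I/Z_1}(\chi,\pi)=\binom{2f}{i}r$ when $[\pi[\m]:\chi]\ne0$ and $0$ otherwise, which is exactly assumption~\ref{it:assum-iv}; since the globally defined $\pi$ also satisfies assumptions~\ref{it:assum-i} and \ref{it:assum-ii} (\cite{BHHMS1}, \cite[\S~\ref{bhhms2:grstr}]{BHHMS2}), Theorem~\ref{thm:CMC} applies to $\pi$.

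The main obstacle is the structural input just used: that for generic $\rhobar$ and generic tame type $\tau_\chi$, the patched module $M_\infty(\sigma(\tau_\chi))$ is free over the potentially Barsotti--Tate deformation quotient of inertial type $\tau_\chi$, and that this quotient is formally smooth of codimension $2f$ in $R_\infty$. This bundles together the local-model/Breuil--M\'ezard results for generic tame types with the maximal Cohen--Macaulayness of the patched module; the genericity of $\rhobar$ (for which $12$-generic is comfortably enough) is needed precisely for the formal smoothness and for the multiplicity-one behaviour that pins the rank to $r$. Care is also needed to use the fixed-central-character versions of $\mathbb{M}_\infty$ and $R_\infty$ throughout, and to normalize inertial local Langlands so that $M_\infty$ of $\sigma(\tau_\chi)$ sees exactly the $\tau_\chi$-crystalline deformations.
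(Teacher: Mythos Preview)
Your overall architecture matches the paper's: reduce $\Ext^i_{I/Z_1}(\chi,\pi)$ to $\Tor_i^{\overline{R}_\infty}(\F,M_\infty(\Ind_I^{\GL_2(\cO_K)}\chi))$ via Lemma~\ref{lem:isom-Tor} and Frobenius reciprocity, dispose of the case $\chi\notin\JH(\pi^{I_1})$ by showing $M_\chi=0$, and in the remaining case do a Koszul computation. The final Koszul step and the rank-$r$ identification via the fibre are fine.

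There are, however, two genuine gaps in the structural inputs you invoke. First, ``$\mathbb{M}_\infty$ is maximal Cohen--Macaulay over $R_\infty$ by construction'' is not correct: the patching construction gives $\mathbb{M}_\infty$ finite projective over $S_\infty\bbra{\GL_2(\cO_K)/Z_1}$, not over $R_\infty$. Flatness of $\mathbb{M}_\infty$ over $R_\infty$ is a deep theorem (essentially a Breuil--M\'ezard/$R=\mathbb{T}$ statement), which the paper cites case by case: \cite[Thm.~8.4.3]{BHHMS1} ($\brho$ semisimple), \cite[Thm.~8.15]{HuWang2} and \cite[Thm.~6.3]{YitongWangGKD} ($\brho$ nonsplit). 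This is precisely where the $12$-genericity is used.

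Second, the claim that the tame potentially Barsotti--Tate quotient $R_\infty(\tau_\chi)$ is formally smooth is false in general: already for $f=1$ with $\brho$ split and $\chi=\chi_\lambda$, both constituents of $\Ind_I^{\GL_2(\cO_K)}\chi$ lie in $W(\brho)$ and the ring has two irreducible components. What \cite[Thm.~7.2.1]{EGS} gives is that $\overline{R}_\infty/\mathcal{I}_\chi$ is a local \emph{complete intersection}; combined with the codimension count this shows $\mathcal{I}_\chi$ is generated by a regular sequence of length $2f$ in $\overline{R}_\infty$ (via \cite[Thm.~2.3.3(c)]{BH93}), but \emph{not} by part of a regular system of parameters. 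Consequently your deduction ``$M_\chi$ is MCM over the regular ring $\overline{R}_\infty(\tau_\chi)$, hence free'' breaks down. The paper instead cites the freeness of $M_\chi$ over $\overline{R}_\infty/\mathcal{I}_\chi$ directly (\cite[Prop.~8.2.3]{BHHMS1}, \cite[Prop.~6.1]{YitongWangGKD}), and then the Koszul computation goes through via \cite[Thm.~2.3.9]{BH93}, which only requires a regular sequence.
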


\begin{proof}
Under the genericity condition, $\mathbb{M}_{\infty}$ is flat over $R_{\infty}$ by \cite[Thm.~8.4.3]{BHHMS1} (for $\brho$ semisimple), \cite[Thm.\ 8.15]{HuWang2} (for $\brho$ nonsplit reducible and  $r=1$)  and \cite[Thm.~6.3]{YitongWangGKD} (for $\brho$ nonsplit reducible and general $r$).
If $\chi:I\ra \F^{\times}$ is a smooth character, then by Lemma \ref{lem:isom-Tor} and Frobenius reciprocity we have
  \begin{equation*}
    \Ext^i_{I/Z_1}(\chi,\pi)\cong \Tor_i^{\o R_\infty}(\F,M_\chi)^\vee,
  \end{equation*}
  where we write 
  \[M_{\chi}\defeq M_{\infty}\big(\Ind_I^{\GL_2(\mathcal{O}_K)}\chi\big)\cong \Hom^{\cont}_{\cO\bbra{I}}(\mathbb{M}_{\infty},\chi^{\vee})^{\vee}.\]

If $\chi \notin \JH(\pi^{I_1})$, then $M_\chi = 0$ by d\'evissage and \cite[Prop.\ 4.2]{breuil-buzzati}, as $M_\infty(\sigma) = 0$ if $\sigma$ is a Serre weight that is not in $W(\brho)$, so we are done. 
    Otherwise, $\chi=\chi_{\lambda}$ for some $\lambda\in\mathscr{P}$. 
Let $\mathcal{I}_{\chi}\subset \overline{R}_{\infty}$ be the annihilator of $M_{\chi}$. By \cite[Prop.~8.2.3]{BHHMS1} if $\rhobar$ is semisimple, \cite[Prop.~6.1]{YitongWangGKD} if $\rhobar$ is nonsplit reducible, $M_{\chi}$ is free of rank $r$ over $\overline{R}_{\infty}/\mathcal{I}_{\chi}$, which is isomorphic to $\overline R_\infty^{(1,0),\tau}$ of \emph{loc.~cit.}, where $\tau$ is the inertial type corresponding to $\Ind_I^{\GL_2(\cO_K)} (\chi)$. 
By \cite[Thm.\ 7.2.1]{EGS}, $\overline{R}_{\infty}/\mathcal{I}_{\chi}$ is a local complete intersection ring.
Since $\mathbb{M}_{\infty}$ is a finite projective $S_{\infty}\bbra{\GL_2(\cO_K)/Z_1}$-module,  where $S_{\infty}$ is a certain $\cO$-subalgebra of $R_{\infty}$ in the patching construction (see the proof of \cite[Lemma 4.18]{CEGGPS}), $M_{\chi}$ is a finite free $\o S_{\infty}\defeq S_{\infty}\otimes_{\cO}\F$-module. Hence 
\[\dim (\o R_\infty)-\dim (\o R_{\infty}/\mathcal{I}_{\chi}) = \dim (\o R_\infty)-\dim (\o S_{\infty})= 2f,\]
where the last equality follows from \cite[(81)]{BHHMS1} ({note that the assumption $\rhobar$ semisimple there is unnecessary, see e.g.~the proof of \cite[Thm.~6.3(i)]{YitongWangGKD})}.
 We deduce from \cite[Thm.\ 2.3.3(c)]{BH93} that $\mathcal{I}_\chi$ is generated by a regular sequence in $\overline{R}_{\infty}$ of length $2f$, say $\underline{a}$.
Also note that $\overline{R}_{\infty}$ is a regular local $\F$-algebra whose maximal ideal is generated by a regular sequence, say $\underline{y}$.

 By \cite[Thm.\ 2.3.9]{BH93} applied to $S=\overline{R}_{\infty}$,  $\mathbf{a}=\underline{a}$ and $\mathbf{y}=\underline{y}$,  $H_{i}(K_{\bullet}(\underline{y},\overline{R}_{\infty}/\mathcal{I}_{\chi}))$ is isomorphic to $\bigwedge^i(\F^{\oplus 2f})$ for any $i\geq 0$, hence has dimension $\binom{2f}{i}$ over $\F$ (recall $\overline{R}_{\infty}/(\underline{y})=\F$). Since $M_{\chi}$ is free of rank $r$ over $\overline{R}_{\infty}/\mathcal{I}_{\chi}$,  we have
\[K_{\bullet}(\underline{y}, M_{\chi})\cong \big(K_{\bullet}(\underline{y},\overline{R}_{\infty}/\mathcal{I}_{\chi})\big)^{\oplus r}.\]
Taking homology we obtain
$\dim_{\F}\Tor_i^{\overline{R}_{\infty}}(\F,M_{\chi})=\binom{2f}{i}r=m_i$, as desired.
\end{proof}

\section{Finite length in the split reducible case}
\label{sec:finite-length-ss}

We prove that a smooth mod $p$ representation $\pi$ of $\GL_2(K)$ satisfying assumptions \ref{it:assum-i}--\ref{it:assum-iv} of \S~\ref{sec:theorem} with $r=1$ has finite length when the underlying Galois representation $\rhobar$ is split reducible.
We also establish several structural results on $\pi$ as an $I$- and $\GL_2(\cO_K)$-representation.

We assume that $\brho:\Gal(\o K/K)\ra\GL_2(\F)$ is 
split reducible and $0$-generic. 
Throughout this section, $\pi$ is an admissible smooth representation {of $\GL_2(K)$ over $\F$} satisfying assumptions {\refeq{it:assum-i}--\refeq{it:assum-iv} of \S~\ref{sec:theorem}}. 
As seen in \S~\ref{sec:verify-assumpt-iv}, recall that $\pi=\pi(\brho)$ as defined in \S~\ref{sec:verify-assumpt-iv} satisfies assumption \ref{it:assum-iv} for any $r\geq 1$. 
{From \cite[Thm.~1.9 and Thm.~1.5]{BHHMS1} one deduces that $\pi(\rhobar)$ satisfies assumptions \ref{it:assum-i} and \ref{it:assum-ii} of \S~\ref{sec:theorem}.
(See also \cite[Cor.~3.95, Thm.~3.100]{BHHMS2} for similar results in the definite case.)
It also satisfies assumption \ref{it:assum-iii} (for any $r\geq 1$) by \cite[Thm.~8.2]{HuWang2} with \cite[Thm.~8.4.1]{BHHMS1}}.

\emph{We now assume moreover that $\pi$ is minimal, i.e.\ $r=1$ in assumptions \refeq{it:assum-i} and \refeq{it:assum-iv}.} 

\subsection{Preliminaries}
\label{sec:preliminaries-fin-len}

Given a  character $\psi:I\ra \F^{\times}$ satisfying $\psi\neq \psi^s$,
 by \cite[Lemma~2.2]{BP} there is an injective parametrization $\JH(\Ind_I^{\GL_2(\cO_K)}\psi^s) \into \mathcal{P}$ (where $\mathcal P \defeq \mathcal{P}(x_0,\dots,x_{f-1})$ with $|\mathcal{P}| = 2^f$ is defined in \cite[\S~2]{BP}), which is bijective if $\psi$ is $1$-generic (actually this condition can be slightly weakened).
  Note that $\mathcal P$ should not be confused with the set $\P$ of \S~\ref{sec:notation}!

For $\xi\in \mathcal{P}$ set (following \cite[\S~19]{BP})
\begin{equation}\label{eq:S-xi1}
\mathcal{S}(\xi)\defeq \{j\in \{0,\dots,f-1\} : \xi_j(x_j)\in \{x_j-1,p-1-x_j\}\}.
\end{equation}
We remark that the set
\begin{equation}\label{eq:S-J}\delta(\mathcal{S}(\xi))=\{j\in\{0,\dots,f-1\} : \xi_j(x_j)\in\{p-2-x_j,p-1-x_j\}\},\end{equation}
is denoted by $J(\xi)$ in \cite[\S~2]{BP}, \cite[\S~3]{HuWang2}, but for our purposes $\mathcal{S}(\xi)$ will be more convenient.
{The function} $\xi\mapsto \mathcal{S}(\xi)$ induces a bijection between $\mathcal{P}$ and the set of subsets of $\{0,\dots,f-1\}$. {In this way, any Jordan--H\"older factor of $\Ind_I^{\GL_2(\cO_K)}\psi^s$ is parametrized by a subset of $\{0,\dots,f-1\}$ and, if $\psi$ is $1$-generic, this parametrization is a bijection.}

\begin{rem}
\label{rmk:JH:conventions}
In the following we will usually talk about a Jordan--H\"older factor of $\Ind_I^{\GL_2(\cO_K)}\chi$ (rather than $\Ind_I^{\GL_2(\cO_K)}\chi^s$) parametrized by an element $\xi \in \mathcal{P}$, by which we mean the Jordan--H\"older factor of $\Ind_I^{\GL_2(\cO_K)}\psi^s$ parametrized by $\xi$ in the case where $\psi = \chi^s$.
With this convention, $\emptyset$ (resp.\ $\{0,1,\dots,f-1\}$) corresponds to the socle (resp.\ cosocle) of $\Ind_I^{\GL_2(\cO_K)}\chi$. 
Concretely, if $\chi(\smatr{a}00d)=a^s\eta(ad)$ for some character $\eta:\F_q^{\times}\ra \F^{\times}$ and integer $s=\sum_{j=0}^{f-1}p^j s_j$ with $0\leq s_j\leq p-1$, then $\xi \in \mathcal{P}$ corresponds to the Jordan--H\"older factor $\xi^{c}(s_0,\dots,s_{f-1})\otimes {\det}^{e(\xi^{c})(s_0,\dots,s_{f-1})}\eta$ {(provided $0\leq \xi_i^c(s_i)\leq p-1$ for all $i$)}, where $\xi^{c}\defeq  \xi(p-1-x_0,\dots,p-1-x_{f-1})$.
(We remark that $\xi^c \in \mathcal{P}$ and that $\mathcal{S}(\xi^c) = \mathcal{S}(\xi)^c$.)
\end{rem}

If $\sigma\in\JH(\Ind_I^{\GL_2(\cO_K)}\chi)$ is the Serre weight corresponding to $\xi\in \mathcal{P}$ (via Remark \ref{rmk:JH:conventions}), we also write $\mathcal{S}(\sigma)=\mathcal{S}(\xi)$.

{Assume that $\brho$ is $0$-generic.} Recall from \S~\ref{sec:preliminaries} that we have a  decomposition 
\[ D_0(\rhobar) = \bigoplus_{\tau\in W(\brho)}D_{0,\tau}(\brho)=\bigoplus_{i=0}^f D_0(\rhobar)_i,\]
where $D_{0}(\brho)_i\defeq \bigoplus_{\ell(\tau)=i}D_{0,\tau}(\brho)$. Recall also the set $\mathscr{P}$ from \S~\ref{sec:preliminaries}. We have an involution $\lambda \mapsto \lambda^*$ of $\mathscr{P}$ defined in \cite[\S~3.3.1]{BHHMS2}. By \cite[Lemma 3.63]{BHHMS2} we deduce:

\begin{cor}\label{cor:involution-components}
  The \ map \ $\chi_\lambda \mapsto \chi_{\lambda^*}$ \ induces \ a \ bijection \ between \ $\JH_H(D_0(\rhobar)^{I_1}_i)$ \ and $\JH_H(D_0(\rhobar)^{I_1}_{f-i})$.
\end{cor}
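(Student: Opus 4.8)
The statement follows by combining the involution $\lambda \mapsto \lambda^*$ on $\mathscr{P}$ from \cite[\S~3.3.1]{BHHMS2} with the numerical invariant $\ell(\lambda) = |J_\lambda|$ defined in (\ref{eq:J-lambda}). The key point is that the involution $*$ reverses the value of $\ell$, i.e.\ $\ell(\lambda^*) = f - \ell(\lambda)$ for all $\lambda \in \mathscr{P}$. Granting this, the map $\chi_\lambda \mapsto \chi_{\lambda^*}$ is a well-defined involution of the set $\JH_H(D_0(\rhobar)^{I_1}) = \{\chi_\lambda : \lambda \in \D\}$ (well-definedness uses that $\lambda \mapsto \chi_\lambda$ is injective on $\mathscr{P}$, which holds since $\chi_\lambda \ne \chi_\lambda^s$, together with the fact that $*$ preserves the subset $\D \subset \mathscr{P}$ cut out by $J_{\rhobar}$, see (\ref{eq:P})). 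Since $D_0(\rhobar)_i^{I_1}$ is by definition the part of $D_0(\rhobar)^{I_1}$ indexed by $\{\lambda \in \D : \ell(\lambda) = i\}$, the relation $\ell(\lambda^*) = f - \ell(\lambda)$ immediately gives that $\chi_\lambda \mapsto \chi_{\lambda^*}$ sends $\JH_H(D_0(\rhobar)_i^{I_1})$ bijectively onto $\JH_H(D_0(\rhobar)_{f-i}^{I_1})$, as it is an involution.

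First I would recall precisely the definition of the involution $\lambda \mapsto \lambda^*$ from \cite[\S~3.3.1]{BHHMS2} and the description of $J_\lambda$ in (\ref{eq:J-lambda}) in terms of whether $\lambda_j(x_j)$ lies in $\{x_j+1, x_j+2, p-3-x_j\}$ or its complement among the six possibilities. Then I would verify directly from the combinatorial recipe for $*$ that for each index $j$, the condition ``$\lambda_j(x_j)$ lies in the first set'' is exchanged with ``$\lambda^*_j(x_j)$ lies in the complementary set'', so that $J_{\lambda^*} = \{0,\dots,f-1\} \setminus J_\lambda$, whence $\ell(\lambda^*) = f - \ell(\lambda)$. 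This is exactly the content of \cite[Lemma 3.3.1.7]{BHHMS2}, which I am allowed to invoke; in the write-up I would simply cite it for the statement $J_{\lambda^*} = J_\lambda^c$ (equivalently $\ell(\lambda^*) = f - \ell(\lambda)$) and for the fact that $*$ is an involution preserving $\mathscr{P}$ and $\D$.

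The remaining bookkeeping is routine: the decomposition $D_0(\rhobar)^{I_1} = \bigoplus_{i=0}^f D_0(\rhobar)_i^{I_1}$ corresponds under $\lambda \mapsto \chi_\lambda$ to the partition of $\D$ by the value of $\ell$, and since $\chi_\lambda \mapsto \chi_{\lambda^*}$ is a bijection of $\JH_H(D_0(\rhobar)^{I_1})$ of order dividing $2$ which shifts the grading index $i \mapsto f-i$, restricting to the $i$-graded piece yields the asserted bijection with the $(f-i)$-graded piece. I do not anticipate a genuine obstacle here: the only nontrivial input is the behaviour of $\ell$ under $*$, and that is already packaged in \cite[Lemma 3.3.1.7]{BHHMS2}; the rest is a formal consequence. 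If anything requires care it is checking that the involution $*$ indeed restricts to $\D$ (not just $\mathscr{P}$), so that $\chi_{\lambda^*}$ really is a constituent of $D_0(\rhobar)^{I_1}$ and not merely of $D_0(\rhobar^\ss)^{I_1}$ — but this too is immediate from the description (\ref{eq:P}) of $\D$ in terms of $J_{\rhobar}$ together with $J_{\lambda^*} = J_\lambda^c$ and the corresponding symmetry of the defining conditions.
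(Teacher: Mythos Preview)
Your approach is the same as the paper's --- both simply invoke \cite[Lemma~3.3.1.7]{BHHMS2} --- but there is a recurring notational error that would make your write-up incorrect as stated: you write $\D$ throughout where you must write $\P$. The set that parametrizes $\JH_H(D_0(\rhobar)^{I_1})$ is $\P$, not $\D$; the subset $\D \subset \P$ parametrizes only the $I_1$-invariants of the Serre weights in $W(\rhobar)$, i.e.\ the socle (see \S\ref{sec:notation}). Thus $\JH_H(D_0(\rhobar)_i^{I_1}) = \{\chi_\lambda : \lambda \in \P,\ \ell(\lambda) = i\}$, and it is the involution $*$ on $\P$ (together with $\ell(\lambda^*) = f - \ell(\lambda)$) that you need, not any restriction to $\D$.

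Two smaller points. First, this corollary lives in \S\ref{sec:finite-length-ss}, where $\rhobar$ is assumed \emph{split} reducible, so $\P = \P^\ss$ and $D_0(\rhobar) = D_0(\rhobar^\ss)$; your worry about ``$D_0(\rhobar)^{I_1}$ versus $D_0(\rhobar^\ss)^{I_1}$'' therefore does not arise. Second, the identification of $\JH_H(D_0(\rhobar)_i^{I_1})$ with $\{\chi_\lambda : \lambda \in \P,\ \ell(\lambda) = i\}$ is not quite ``by definition'': it uses that $\chi_\lambda$ lies in $D_{0,\tau}(\rhobar)^{I_1}$ with $J_\tau = J_\lambda$, which is the content of Lemma~\ref{lem:I1-invt-component} (equivalently \cite[\S~4]{breuil-buzzati}).
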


\begin{lem}\label{lem:I1-invt-component}
 Suppose that $\lambda \in \mathscr{P}$.
 Then $\chi_\lambda$ occurs in $D_{0,\tau}(\rhobar)^{I_1}$, where $\tau\in W(\brho)$ 
 is determined by $J_{\tau}=J_{\lambda}$. 
 Moreover, as a Jordan--H\"older factor of $\Ind_I^{\GL_2(\cO_K)}\chi_{\lambda}$, $\tau$ is parametrized (via Remark \ref{rmk:JH:conventions} and \eqref{eq:S-xi1}) by the following subset of $\{0,\dots,f-1\}$:
 \begin{equation}\label{eq:X-lambda-ss}X^{\rm ss}(\lambda)\defeq \{j : \lambda_j(x_j)\in\{x_j,x_j+1,p-2-x_j,p-3-x_j\}\}.\end{equation} 
\end{lem}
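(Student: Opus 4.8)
The strategy is to unwind the parametrizations involved and track a single character $\chi_\lambda$ through the decomposition of $D_0(\rhobar)$. First I would recall that, by the definition of $\mathscr P$ and $\D^{\ss}$ in \S~\ref{sec:preliminaries}, the first claim is essentially definitional: the set $J_\tau$ attached to $\tau \in W(\brho^{\ss})$ (parametrized by some $\mu \in \D^{\ss}$) is $J_\mu = \{j : \mu_j(x_j) \in \{x_j+1,x_j+2,p-3-x_j\}\}$, and since $\mu \in \D^{\ss}$ forces $\mu_j(x_j) \in \{x_j,x_j+1,p-3-x_j,p-2-x_j\}$, this reduces to $\{j : \mu_j(x_j) \in \{x_j+1,p-3-x_j\}\}$. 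So I would first check that there is a unique $\tau \in W(\brho)$ with $J_\tau = J_\lambda$: existence because $J_\lambda \subseteq \{0,\dots,f-1\}$ is an arbitrary subset and $J_{(-)} : \D^{\ss} \to \mathcal P(\{0,\dots,f-1\})$ is a bijection by \cite[\S~11]{BP}, and membership in $W(\brho)$ (i.e.\ $\tau$'s parameter lying in $\D \subset \D^{\ss}$) because the condition defining $\D$ in~(\ref{eq:P}), namely $\mu_j(x_j)\in\{x_j+1,p-3-x_j\}\Rightarrow j\in J_{\brho}$, holds: indeed $\mu_j(x_j) \in \{x_j+1,p-3-x_j\}$ iff $j \in J_\mu = J_\lambda$, and one reads off from~(\ref{eq:id:al}) and~(\ref{eq:P}) that $j \in J_\lambda$ with $t_j(\lambda) \ne y_jz_j$ forces $j \in J_{\brho}$; the case $t_j(\lambda) = y_jz_j$ gives $\lambda_j(x_j) \in \{x_j+1,p-2-x_j\}$, which one needs to show still lands in $\D$—this is where I would invoke \cite[Lemma 3.3.1.7]{BHHMS2} or the explicit combinatorics of $\brevP$ to confirm that $\chi_\lambda$ genuinely occurs in $D_0(\rhobar)^{I_1}$ (as opposed to merely $D_0(\rhobar^{\ss})^{I_1}$). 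That $\chi_\lambda$ occurs specifically in the summand $D_{0,\tau}(\brho)$ then follows because the $D_{0,\tau}$ have disjoint sets of $I_1$-characters and the $H$-character $\chi_\lambda$, via the bijection of \cite[\S~4]{breuil-buzzati} between $\P$ and $D_0(\rhobar)^{I_1}$, is attached to precisely the component indexed by $J_\lambda = J_\tau$.

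For the second, more computational claim, I would work inside $\Ind_I^{\GL_2(\cO_K)}\chi_\lambda$. Writing $\chi_\lambda(\smatr a00d) = a^s\eta(ad)$ with $s = \sum p^j s_j$ (here $s_j$ is essentially $\lambda_j(r_j)$ up to the determinant correction), Remark~\ref{rmk:JH:conventions} says the constituent parametrized by $\xi \in \mathcal P$ is $\xi^c(s_0,\dots,s_{f-1}) \otimes \det^{\ast}\eta$. I would then identify $\tau = \sigma_\lambda$ with this constituent: by \cite[\S~4]{breuil-buzzati}, $\sigma_\lambda$ as a Serre weight is explicitly $(\lambda_0(r_0),\dots,\lambda_{f-1}(r_{f-1})) \otimes \det^{e(\lambda)}$ up to the usual normalization, and I would match the two descriptions digit by digit. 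Concretely, for each $j$ the case analysis of~(\ref{eq:X-lambda-ss}) says $j \in X^{\ss}(\lambda)$ exactly when $\lambda_j(x_j) \in \{x_j,x_j+1,p-2-x_j,p-3-x_j\}$; I would check that in each of the six cases for $\lambda_j(x_j)$ listed in the definition of $\P^{\ss}$, the $j$-th digit of the parameter $\xi$ with $\sigma_\lambda = \xi^c(s_\bullet)\otimes\ast$ is forced, and that $\xi_j(x_j) \in \{x_j-1,p-1-x_j\}$ (i.e.\ $j \in \mathcal S(\xi)$) holds iff $j \notin X^{\ss}(\lambda)$ — equivalently, after applying the complement $\xi \mapsto \xi^c$ and noting $\mathcal S(\xi^c) = \mathcal S(\xi)^c$, that $j \in \mathcal S(\sigma_\lambda)$ iff $\lambda_j(x_j) \in \{x_j+2,p-1-x_j\}$. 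This is a finite bookkeeping exercise cross-referencing \cite[\S~4]{breuil-buzzati} with \cite[\S~19]{BP} and~(\ref{eq:S-xi1}).

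I expect the main obstacle to be the second paragraph's digit-by-digit matching: the parametrizations in \cite{BP}, \cite{breuil-buzzati}, and \cite{BHHMS2} each use a slightly different convention ($\xi$ vs.\ $\xi^c$, $\mathcal S(\xi)$ vs.\ $J(\xi) = \delta(\mathcal S(\xi))$, the "same-twist" normalization of Serre weights, and the subtlety—flagged in the proof of Lemma~\ref{lem:compare-I1-invts}—that $e(\lambda)$ is only defined under a parity condition), so the real work is setting up a consistent dictionary and then verifying compatibility in all six cases of the definition of $\P^{\ss}$ simultaneously with the transition rules (2)–(3) that cut out $\P^{\ss}$ inside all $f$-tuples. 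The genericity hypothesis (at least $0$-generic, probably $1$-generic so that $\chi_\lambda$ is $0$-generic and the parametrization $\JH(\Ind_I^{\GL_2(\cO_K)}\chi_\lambda) \hookrightarrow \mathcal P$ is bijective) is what guarantees all the intermediate integers $\xi_i^c(s_i)$ stay in $[0,p-1]$ so that Remark~\ref{rmk:JH:conventions} applies on the nose; I would state this assumption explicitly at the start. By contrast, the first paragraph's uniqueness and the "occurs in $D_{0,\tau}$" assertion should be essentially immediate from \cite[\S~11]{BP}, \cite[\S~15]{BP}, and the structure of $\P$ in~(\ref{eq:P}).
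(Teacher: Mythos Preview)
Your overall plan for the second claim (digit-by-digit matching of parametrizations) is on the right track, and the paper does something structurally similar, but two points need correction. First, the paper does not prove this lemma directly: it is the split-case specialization of Lemma~\ref{lem:I1-invt-nonss}, whose proof invokes \cite[Prop.~4.3]{breuil-buzzati} (specifically the formula for $J^{\max}$ in eq.~(19) of its proof) to get the relation~\eqref{eq:comp2}
\[
\xi_j(y_j)\in\{y_j-1,p-1-y_j\} \iff \lambda_j(x_j)\in\{x_j,x_j+1,p-2-x_j,p-3-x_j\}
\]
in one stroke, rather than six separate case checks. This is the key external input you are missing, and it replaces your ``finite bookkeeping exercise''. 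Note also that you have the direction reversed: the lemma asserts $\mathcal S(\xi)=X^{\ss}(\lambda)$, i.e.\ $j\in\mathcal S(\xi)\Leftrightarrow j\in X^{\ss}(\lambda)$, not $j\notin X^{\ss}(\lambda)$; your complementation via $\xi\mapsto\xi^c$ does not fix this.

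Second, your argument for the first claim is both overcomplicated and circular. Since \S~\ref{sec:finite-length-ss} assumes $\brho$ split, we have $J_{\brho}=\{0,\dots,f-1\}$, hence $\D=\D^{\ss}$ and $W(\brho)=W(\brho^{\ss})$; the entire discussion of whether $\tau$ lands in $W(\brho)$ versus $W(\brho^{\ss})$ is vacuous. More seriously, the assertion that ``the bijection of \cite[\S~4]{breuil-buzzati} attaches $\chi_\lambda$ to the component indexed by $J_\lambda$'' is exactly what has to be shown: the parametrization $\P\leftrightarrow D_0(\brho)^{I_1}$ does not come packaged with the component decomposition, which is why the paper (following \cite[Prop.~2.1]{Hu-SMF}) deduces the first claim \emph{from} the second by listing compatible pairs and reading off $J_\sigma$ from $\xi_j(\lambda_j^{[s]}(x_j))=\tau_j(x_j)\in\{x_j+1,p-3-x_j\}\Leftrightarrow\lambda_j(x_j)\in\{x_j+1,x_j+2,p-3-x_j\}$, i.e.\ $J_\sigma=J_\lambda$.
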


We will prove a more general version of Lemma \ref{lem:I1-invt-component} below, see Lemma~\ref{lem:I1-invt-nonss}.

\subsection{Finite length}
\label{sec:finite-length-ss-sub}

We prove that $\pi$ is of finite length (as $\GL_2(K)$-representation) and some structural results on $\pi$ as an $I$-representation.

Recall from \S~\ref{sec:append-canon-filtr} that if $M$ is a finitely generated (left) $\Lambda$-module equipped with a good filtration, then the right $\Lambda$-module  
$\EE^i_\Lambda(M)$ carries a canonical and functorial good filtration.
If furthermore $M$ has grade $j$ we obtain a canonical injection $0 \to \gr(\EE^j_\Lambda(M)) \to \EE^j_{\gr(\Lambda)}(\gr(M))$ of graded $\gr(\Lambda)$-modules,
which is an isomorphism if $\gr(M)$ is Cohen--Macaulay, see \cite[Prop.\ 3.84]{BHHMS2} (see also \cite[Prop.\ 5.6]{BE90}).

Applying the above paragraph to $M = \pi^\vee$ with its $\m$-adic filtration (where we recall that $\pi$ 
is assumed to satisfy assumptions \refeq{it:assum-i}--\refeq{it:assum-iv}), we {deduce using the second assertion of} Theorem~\ref{thm:CMC} a canonical isomorphism
\begin{equation}\label{eq:E-grpi}
  \gr(\EE^{2f}_\Lambda(\pi^\vee)) \congto \EE^{2f}_{\gr(\Lambda)}(\gr_\m(\pi^\vee)).
\end{equation}
Since all these constructions are canonical, one can check that both terms are endowed with an action of $H$ and that the above isomorphism is $H$-equivariant.

\begin{rem}\label{rem:E-grpi}
Recall that assumption~\ref{it:assum-iii} says that there is a $\GL_2(K)$-equivariant isomorphism of $\Lambda$-modules 
$\EE^{2f}_{\Lambda}(\pi^{\vee})\cong\pi^{\vee}\otimes(\det(\brho)\omega^{-1})$. 
By Remark \ref{rem:intro} and the isomorphism \eqref{eq:E-grpi}, we see that 
 the canonical filtration on $\EE^{2f}_{\Lambda}(\pi^{\vee})$  does not correspond to the $\m$-adic filtration on $\pi^{\vee}\otimes (\det(\brho)\omega^{-1})$  under the isomorphism.
\end{rem}

We denote again by $N$ the graded module defined in \S~\ref{sec:some-homol-argum} 
(with $r=1$), namely
\[N= \bigoplus_{\lambda\in\mathscr{P}}\chi_{\lambda}^{-1}\otimes \frac{R}{\mathfrak{a}(\lambda)}.\]
By Theorem \ref{thm:CMC} and our assumptions on $\pi$, we have $\gr_{\m}(\pi^{\vee})\cong N$ {provided $\rhobar$ is $9$-generic}.

{Recall that in \cite[\S~2.1.1]{BHHMS2} and \cite[Thm.~3.29]{BHHMS2} we generalized the Colmez functor from $\GL_2(\Qp)$ to $\GL_2(K)$ by associating to any smooth admissible representation $\pi'$ of $\GL_2(K)$ over $\F$ which lies in the abelian category $\cC$ of \cite[\S~3.1.2]{BHHMS2} a (finite-dimensional \'etale cyclotomic) $(\varphi,\Gamma)$-module $D_\xi^\vee({\pi'})$ over $\F\ppar{X}\cong \F\bbra{\Zp}[1/([1]-1)]$. 
The functor $D_\xi^\vee$ is contravariant and exact by \cite[Thm.~3.29]{BHHMS2}.
For instance, if the action of $\gr(\Lambda)$ on $\gr_\m(\pi'^{\vee})$ factors through its quotient $\o R$ of \S~\ref{sec:notation}, then $\pi'$ lies in $\cC$. In particular the representation $\pi$ and its subquotients all lie in $\cC$ (assumption \ref{it:assum-ii} implies that $\gr_{\m}(\pi^{\vee})$ is killed by the ideal $J\subset \gr(\Lambda)$ by the proof of \cite[Cor.~5.3.5]{BHHMS1}).   
This allows us to use the 
functor $D_\xi^\vee$ in the following proof.}

\begin{prop}\label{prop:split-I1}
{Assume that 
$\rhobar$ is $\max\{9,2f+1\}$-generic.}
  Let $0 \subsetneq \pi_1 \subsetneq \pi$ be a subrepresentation of $\pi$ and let $\pi_2\defeq \pi/\pi_1$. 
  Then both $\gr_\m(\pi_1^{\vee})$ and $\gr_F(\pi_2^{\vee})$ are Cohen--Macaulay $\gr(\Lambda)$-modules of grade $2f$, where $F$ denotes the filtration induced from $\pi^\vee$.
  In particular, $\pi_1^\vee$ and $\pi_2^\vee$ are Cohen--Macaulay $\Lambda$-modules of grade $2f$. More precisely, there is an isomorphism of graded $\gr(\Lambda)$-modules with compatible $H$-action
    \begin{equation}
      \label{eq:split-I1-N1}
      \bigoplus_{\lambda\in\mathscr{P}:~\chi_{\lambda}\in
        \JH(\pi_1^{I_1})}\chi_{\lambda}^{-1}\otimes
      \frac{R}{\mathfrak{a}(\lambda)}\simto\gr_{\m}(\pi_1^{\vee}).
    \end{equation}
 Moreover there exists a unique $\Sigma\subset\{0,1,\dots,f\}$
    such that $\pi_1^{K_1} = \bigoplus_{i \in \Sigma}
    D_0(\rhobar)_i$. In particular, $\pi_1^{K_1}$, $\pi_1^{I_1}$ and
    {$\gr_{\m}(\pi_1^{\vee})$} depend only on $\soc_{\GL_2(\cO_K)}(\pi_1)$.
\end{prop}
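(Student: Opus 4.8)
The plan is to follow the strategy sketched in \S~\ref{sec:sketchintro}, adapting the self-duality argument to pin down $\gr_\m(\pi_1^\vee)$ exactly. First I would set $N_1 \defeq \bigoplus_{\lambda\in\mathscr{P}:~\chi_\lambda\in\JH(\pi_1^{I_1})}\chi_\lambda^{-1}\otimes R/\mathfrak{a}(\lambda)$, which by Theorem~\ref{thm:CMC} (giving $\gr_\m(\pi^\vee)\cong N$) is a direct summand of $N$; let $N_2$ be the complementary summand, so $N_2\cong\bigoplus_{\lambda:~\chi_\lambda\notin\JH(\pi_1^{I_1})}\chi_\lambda^{-1}\otimes R/\mathfrak{a}(\lambda)$. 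The $\m$-adic filtration on $\pi^\vee$ induces filtrations $F$ on $\pi_2^\vee=(\pi/\pi_1)^\vee$ and the quotient filtration $\gr_\m(\pi^\vee)\onto\gr_\m(\pi_1^\vee)$; since $\pi_1^{I_1}\subseteq\pi^{I_1}$ cuts out exactly the characters indexing $N_1$, and by degree-zero considerations (the surjection $N=\gr_\m(\pi^\vee)\onto\gr_\m(\pi_1^\vee)$ must be an isomorphism in degree $0$ onto $\pi_1^\vee/\m\pi_1^\vee$, which is dual to $\pi_1^{I_1}$), the surjection $N\onto\gr_\m(\pi_1^\vee)$ factors through $N_1$. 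This gives the commutative diagram~(\ref{eq:diagramintro}) with $N_1$ a direct summand, an injection $\gr_F(\pi_2^\vee)\hookleftarrow N_2$ and a surjection $N_1\onto\gr_\m(\pi_1^\vee)$; one checks directly that $N_1$, $N_2$ are Cohen--Macaulay $\gr(\Lambda)$-modules of grade $2f$ (each summand $\chi_\lambda^{-1}\otimes R/\mathfrak{a}(\lambda)$ is, by the computation in the proof of Theorem~\ref{thm:CMC}), hence pure, so it suffices to prove $\mathcal{Z}(N_1)=\mathcal{Z}(\gr_\m(\pi_1^\vee))$, equivalently $\mathcal{Z}(N_2)=\mathcal{Z}(\gr_F(\pi_2^\vee))$.

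Next I would invoke the essential self-duality of $\pi$ (assumption~\ref{it:assum-iii}, available since we are in \S~\ref{sec:finite-length-ss}): $\EE^{2f}_\Lambda(\pi^\vee)\cong\pi^\vee\otimes(\det(\rhobar)\omega^{-1})$ with the $\GL_2(K)$-action of \cite[Prop.~3.2]{Ko}. Then define $\widetilde\pi_2\subseteq\pi$ by $\widetilde\pi_2^\vee=\im\{\EE^{2f}_\Lambda(\pi^\vee)\ra\EE^{2f}_\Lambda(\pi_2^\vee)\}\otimes(\det(\rhobar)^{-1}\omega)$. Applying \cite[Thm.~1.3.1]{BHHMS2} and the exactness of $D_\xi^\vee$ (\cite[Thm.~3.1.3.7]{BHHMS2}) gives $\dim_{\F\ppar{X}}D_\xi^\vee(\widetilde\pi_2)=\dim_{\F\ppar{X}}D_\xi^\vee(\pi_2)=2^f-\dim_{\F\ppar{X}}D_\xi^\vee(\pi_1)$, and using \cite[Prop.~3.3.5.3(ii)]{BHHMS2} (valid for the subrepresentations $\pi_1$, $\widetilde\pi_2$ of $\pi$) one has $\dim_{\F\ppar{X}}D_\xi^\vee(\pi_1)=|\JH(\soc_{\GL_2(\cO_K)}(\pi_1))|$ and similarly for $\widetilde\pi_2$. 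Combining these and running the combinatorial argument inside $D_0(\rhobar)$ — using the proof of \cite[Thm.~19.10]{BP} to recover the socle, hence the full $\GL_2(\Fq)$-representation, from the cardinality of the socle when the relevant representation is built from the $D_{0,\tau}(\rhobar)$'s — I would identify $\soc_{\GL_2(\cO_K)}(\widetilde\pi_2)$, hence $\widetilde\pi_2^{K_1}$, hence $\widetilde N_2$. The chain of inequalities $\mathcal{Z}(\widetilde N_2)\geq\mathcal{Z}(\gr_\m(\widetilde\pi_2^\vee))=\mathcal{Z}(\gr_F(\pi_2^\vee))\geq\mathcal{Z}(N_2)$ (the middle equality via \cite[Prop.~3.3.5.3(iii)]{BHHMS2}) then reduces everything to checking $\mathcal{Z}(\widetilde N_2)=\mathcal{Z}(N_2)$ by an explicit cycle computation. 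This yields $N_1\congto\gr_\m(\pi_1^\vee)$, which is~(\ref{eq:split-I1-N1}); Cohen--Macaulayness of $\pi_1^\vee$, $\pi_2^\vee$ follows from that of the graded modules by \cite[Prop.~III.2.2.4]{LiOy}.

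For the final statement about $\pi_1^{K_1}$: from~(\ref{eq:split-I1-N1}) and the way $N_1$ is built out of summands indexed by $\lambda$ with $\chi_\lambda\in\JH(\pi_1^{I_1})$, together with Lemma~\ref{lem:I1-invt-component} (which tells us in which $D_{0,\tau}(\rhobar)^{I_1}$ a given $\chi_\lambda$ occurs), one sees that $\JH(\pi_1^{I_1})$ determines, and is determined by, a union of the blocks $D_0(\rhobar)_i^{I_1}$. More precisely, I would argue that $\pi_1^{K_1}$ is a $\GL_2(\cO_K)$-subrepresentation of $\pi^{K_1}\cong D_0(\rhobar)$ with the property that it contains $D_{0,\tau}(\rhobar)$ whenever it meets it nontrivially — this uses that $\widetilde\pi_2$ was constructed so that $\soc_{\GL_2(\cO_K)}(\widetilde\pi_2)$ consists of whole blocks, dually forcing $\pi_1^{K_1}$ to be a sum of blocks $D_{0,\tau}(\rhobar)$ — and then that the collection of $\tau$'s involved depends only on $\ell(\tau)$, because the cycle/dimension count only sees the graded structure which is organized by the grading on $N$ that groups $\lambda$'s by $\ell(\lambda)=|J_\lambda|$. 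Hence $\pi_1^{K_1}=\bigoplus_{i\in\Sigma}D_0(\rhobar)_i$ for a unique $\Sigma\subseteq\{0,\dots,f\}$, and since $\soc_{\GL_2(\cO_K)}(\pi_1)$ determines $\Sigma$ (the socle of $D_0(\rhobar)_i$ is $\bigoplus_{\ell(\tau)=i}\tau$), all of $\pi_1^{K_1}$, $\pi_1^{I_1}$, $\gr_\m(\pi_1^\vee)$ depend only on $\soc_{\GL_2(\cO_K)}(\pi_1)$.

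The main obstacle I expect is the block-structure claim in the last paragraph: showing that $\pi_1^{K_1}$ is forced to be a direct sum of \emph{entire} blocks $D_0(\rhobar)_i$ rather than an arbitrary subrepresentation of $D_0(\rhobar)$ with the same socle-cardinality data. This requires exploiting the self-duality symmetry (Corollary~\ref{cor:involution-components}, $\lambda\mapsto\lambda^*$ swapping $D_0(\rhobar)_i^{I_1}$ and $D_0(\rhobar)_{f-i}^{I_1}$) together with the rigidity of the $D_{0,\tau}(\rhobar)$ (each is indecomposable, multiplicity free, with irreducible socle $\tau$, by \cite[\S~15]{BP}) to rule out "partial" blocks; concretely one must check that the socle of $\widetilde\pi_2$ produced by the $D_\xi^\vee$-dimension count is a sum of full $\soc(D_0(\rhobar)_i)$'s, which is where the combinatorics of \cite[\S~19]{BP} and the structure of $W(\rhobar)$ via $J_{\rhobar}$ (see~(\ref{eq:P})) really have to be pushed through.
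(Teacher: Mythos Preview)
Your overall strategy matches the paper's, but the order is inverted and this creates a genuine gap. The block decomposition $\pi_1^{K_1}=\bigoplus_{i\in\Sigma}D_0(\rhobar)_i$ is not derived at the end from the cycle computation; it is established \emph{first}, directly and cheaply, and is then \emph{used} in the cycle computation. The point you are missing is that $\pi_1$ is a $\GL_2(K)$-subrepresentation, so $\Pi=\smatr{0}{1}{p}{0}$ preserves $\pi_1^{I_1}$. Combined with the proof of \cite[Thm.~19.10]{BP} (giving $\pi_1^{K_1}=\bigoplus_{\sigma\subset\soc(\pi_1)}D_{0,\sigma}(\rhobar)$), this makes $(\pi_1^{I_1}\hookrightarrow\pi_1^{K_1})$ a direct summand of the diagram $(D_1(\rhobar)\hookrightarrow D_0(\rhobar))$, and then \cite[Thm.~15.4]{BP} immediately gives the block form. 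The same applies verbatim to $\widetilde\pi_2$, yielding $\widetilde\pi_2^{K_1}=\bigoplus_{i\in\Sigma'}D_0(\rhobar)_i$ for some $\Sigma'$. Your attempt to extract the block structure \emph{from} the duality argument is both unnecessary and circular, since the duality argument needs it as input.

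There is a second gap in how you identify $\soc_{\GL_2(\cO_K)}(\widetilde\pi_2)$. The $D_\xi^\vee$-dimension only gives the cardinality $\sum_{i\in\Sigma'}\binom{f}{i}$, which does not determine $\Sigma'$. The paper instead proves the containment $\Sigma'\supseteq f-\Sigma^c$ by an argument you omit entirely: one passes through the commutative diagram relating $\gr(\EE^{2f}_\Lambda(-))$ and $\EE^{2f}_{\gr(\Lambda)}(-)$, uses the degree bound of Corollary~\ref{cor:dual-of-N} to trap $\F\otimes_{\gr(\Lambda)}\EE^{2f}_{\gr(\Lambda)}(N_2)$ inside $\bigoplus_{i=0}^f\gr_\m(\widetilde\pi_2^\vee)_i\otimes\eta$ as $H$-modules, and then invokes the multiplicity-freeness of $\bigoplus_{i=0}^f N_{-i}$ (Lemma~\ref{lem:N/I-multifree} with $n=f+1$, which is precisely where the $(2f+1)$-genericity enters) together with Corollary~\ref{cor:involution-components} to conclude $(N_2')_0\subset(\widetilde N_2)_0$. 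Only after this containment is in hand does the $D_\xi^\vee$-dimension count force equality, and hence $N_2'=\widetilde N_2$ and $\mathcal{Z}(N_2)=\mathcal{Z}(\widetilde N_2)$.
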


{We note that $\brho$ is in particular $(2f-1)$-generic, so we may apply \cite[\S~3.3.5]{BHHMS2} in the proof.}

\begin{proof}
 Let 
 \begin{gather*}
   \tau\defeq \soc_{\GL_2(\cO_K)}(\pi)=\bigoplus_{\sigma\in W(\brho)}\sigma, \\
   \tau_1\defeq \soc_{\GL_2(\cO_K)}(\pi_1),\ \ \ \tau_2\defeq \tau/\tau_1.
 \end{gather*}
Then $\tau_2\hookrightarrow \soc_{\GL_2(\cO_K)}(\pi_2)$ (note that \emph{a priori} this might be a strict inclusion).

Recall that $\pi^{K_1}=\bigoplus_{\sigma\in W(\brho)}D_0(\brho)$ by assumption~\ref{it:assum-i} in \S~\ref{sec:theorem} (with $r=1$). By the proof of \cite[Thm.\ 19.10]{BP} we have $D_{0,\sigma}(\rhobar) \subset \pi_1^{K_1}$ for any Serre weight $\sigma \subset \tau_1$. 
It follows that $\pi_1^{K_1} = \bigoplus_{\sigma \subset \tau_1} D_{0,\sigma}(\rhobar)$.
As $\smatr{0}{1}{p}{0}$ preserves $\pi_1^{I_1}$, $(\pi_1^{I_1}\hookrightarrow \pi_1^{K_1})$ is a direct summand of $(D_1(\brho)\hookrightarrow D_0(\brho))$ as a diagram, so we deduce from \cite[Thm.~15.4]{BP} that $\pi_1^{K_1} = \bigoplus_{i \in \Sigma} D_0(\rhobar)_i$ for {a unique} $\Sigma \subset \{0,1,\dots,f\}$ {which depends only on $\tau_1 = \soc_{\GL_2(\cO_K)}(\pi_1)$}.
In particular, the direct sum decomposition $\tau=\tau_1\oplus\tau_2$ induces a decomposition of $\pi^{K_1} = D_0(\brho)$ of the form:
\[D_0(\brho)=D_0(\brho)^{(1)}\oplus D_0(\brho)^{(2)}\]
with {$D_0(\brho)^{(1)} = \pi_1^{K_1}$} and $\soc_{\GL_2(\cO_K)}(D_0(\brho)^{(i)})=\tau_i$. 
This in turn induces a decomposition $\mathscr{P}=\mathscr{P}_1\sqcup \mathscr{P}_2$, {where $\mathscr{P}_i = \{ \lambda \in \mathscr{P} : \chi_\lambda \in \JH((D_0(\brho)^{(i)})^{I_1})\}$,} hence a decomposition $\gr_{\m}(\pi^{\vee})\cong N=N_1\oplus N_2$, with $N_i\defeq \bigoplus_{\lambda\in\mathscr{P}_i}\chi_{\lambda}^{-1}\otimes R/\mathfrak{a}(\lambda)$.
By construction, the degree $0$ part of $N_1$ is dual to $\pi_1^{I_1}$ and the degree $0$ part of $N_2$ is dual to $\bigoplus_{i\notin \Sigma}D_{0}(\brho)_i^{I_1}$ (as follows from the proof of \cite[Thm.~3.67]{BHHMS2}).

\textbf{Step 1.} Consider the induced short exact sequence
\[0\ra \gr_{F}(\pi_2^{\vee})\ra \gr_{\m}(\pi^{\vee})\ra \gr_{\m}(\pi_1^{\vee})\ra0,\]
where $F$ is the filtration on $\pi_2^{\vee}$ induced from the $\m$-adic filtration on $\pi^{\vee}$. 
The composite morphism $N_2\hookrightarrow N\twoheadrightarrow \gr_{\m}(\pi_1^{\vee})$ is identically zero, as $N_2$ is generated by its degree $0$ part, which is sent to zero in $\gr_{\m}(\pi_1^{\vee})$.
So we get an induced commutative diagram
\begin{equation}\label{eq:cycles-diagram-ss}
  \begin{gathered}
    \xymatrix{0\ar[r]& \gr_{F}(\pi_2^{\vee})\ar[r]& \gr_{\m}(\pi^{\vee})\ar[r]& \gr_{\m}(\pi_1^{\vee})\ar[r]&0 \\
      0\ar[r]& N_2\ar[r]\ar@{^{(}->}[u]& N\ar[r]\ar[u]^\cong & N_1\ar[r]\ar@{->>}[u]& 0}
  \end{gathered}
\end{equation}
with injective (resp.\ surjective) vertical map on the left (resp.\ right).
Thus 
\begin{equation}\label{eq:cycles-ineq-ss}\mathcal{Z}(N_1)\geq \mathcal{Z}(\gr_{\fm}(\pi_1^{\vee})),\ \ \mathcal{Z}(N_2)\leq \mathcal{Z}(\gr_F(\pi_2^{\vee})),\end{equation}
{where we use here the characteristic cycle of $\o R$-modules defined in (\ref{eq:cycleintro}) (see \cite[\S~3.3.4]{BHHMS2}).}

\textbf{Step 2.} We show that $\gr_\m(\pi_1^{\vee})$ and $\gr_F(\pi_2^{\vee})$ are Cohen--Macaulay.

Recall that by assumption $\pi$ satisfies assumption~\ref{it:assum-iii} in \S~\ref{sec:theorem}, namely $\EE_{\Lambda}^{2f}(\pi^\vee) \cong \pi^\vee \otimes \eta$ as $\GL_2(K)$-representations, where $\eta \defeq  \det(\rhobar)\omega^{-1}$.
As in the proof of \cite[Prop.\ 3.87(iii)]{BHHMS2} we may construct a subrepresentation $\widetilde{\pi}_2\subset \pi$ such that $\mathcal{Z}(\gr(\pi_2^{\vee}))=\mathcal{Z}(\gr(\widetilde{\pi}_2^{\vee}))$ (with respect to any good filtrations by \cite[Lemma 3.81]{BHHMS2}) and consequently by \cite[Prop.~3.87(i)]{BHHMS2}:
\begin{equation}\label{eq:D-xi}
  \dim_{\F\ppar{X}}D_{\xi}^{\vee}(\pi_2)=\dim_{\F\ppar{X}}D_{\xi}^{\vee}(\widetilde{\pi}_2).
\end{equation}
Concretely, the $\GL_2(K)$-representation $\wt\pi_2$ is defined by dualizing (and untwisting) the exact sequence
\begin{equation}\label{eq:tilde-pi2}
  0 \to \EE_{\Lambda}^{2f}(\pi_1^\vee) \to \EE_{\Lambda}^{2f}(\pi^\vee) \to \wt\pi_2^\vee \otimes \eta \to 0.
\end{equation}
The first two terms carry their canonical filtrations (\S~\ref{sec:append-canon-filtr}) and the morphism between them is strict by Lemma~\ref{lem:E-strict}.
We give $\wt\pi_2^\vee \otimes \eta$ the induced filtration, so that the induced sequence of their graded modules is again exact. 
We consider the following commutative diagram  with exact rows of graded $\gr(\Lambda)$-modules with compatible $H$-action, 
 where the upper vertical maps are explained above and the lower vertical maps arise from Step 1:
\begin{equation*}
  \xymatrix{0 \ar[r] & \gr(\EE_{\Lambda}^{2f}(\pi_1^\vee)) \ar[r]\ar@{_{(}->}[d] & \gr(\EE_{\Lambda}^{2f}(\pi^\vee)) \ar[r]\ar[d]^{\cong} & \gr(\wt\pi_2^\vee \otimes \eta) \ar[r]\ar@{-->>}[dd] & 0 \\
  0 \ar[r] & \EE_{\gr(\Lambda)}^{2f}(\gr_\m(\pi_1^\vee)) \ar[r]\ar@{_{(}->}[d] & \EE_{\gr(\Lambda)}^{2f}(\gr_\m(\pi^\vee)) \ar@{=}[d] \\ 
  0 \ar[r] & \EE_{\gr(\Lambda)}^{2f}(N_1) \ar[r] & \EE_{\gr(\Lambda)}^{2f}(N) \ar[r] & \EE_{\gr(\Lambda)}^{2f}(N_2) \ar[r] & 0.
}  
\end{equation*}

The surjection on the right gives us surjections of $H$-modules
$\gr(\wt\pi_2^\vee \otimes \eta) \onto \EE_{\gr(\Lambda)}^{2f}(N_2)
\onto \F\otimes_{\gr(\Lambda)} \EE_{\gr(\Lambda)}^{2f}(N_2)$, where
the final graded $\F$-vector space is supported in degrees $[3f,4f]$
by Corollary~\ref{cor:dual-of-N} (noting that $N_2$ is a direct factor of $N$).
In particular, by the semisimplicity of $\F[H]$, we deduce that $\F\otimes_{\gr(\Lambda)} \EE_{\gr(\Lambda)}^{2f}(N_2)$ is a subquotient of $F_{4f}(\wt\pi_2^\vee \otimes \eta)/F_{3f-1}(\wt\pi_2^\vee \otimes \eta)$ as $H$-modules. 
The same corollary applied to $N$ implies that $\gr(\EE_{\Lambda}^{2f}(\pi^\vee))$ is supported in degrees $\le 4f$, so $F_{4f}(\EE_{\Lambda}^{2f}(\pi^\vee)) = \EE_{\Lambda}^{2f}(\pi^\vee)$. 
Hence $F_{4f}(\wt\pi_2^\vee \otimes \eta) = \wt\pi_2^\vee \otimes \eta$ by~\eqref{eq:tilde-pi2}, so $\m^{f+1} \wt\pi_2^\vee \otimes \eta \subset F_{3f-1}(\wt\pi_2^\vee \otimes \eta)$. 
It follows from all this that $\F\otimes_{\gr(\Lambda)}\EE_{\gr(\Lambda)}^{2f}(N_2)$ is a subquotient of $(\wt\pi_2^\vee/\m^{f+1} \wt\pi_2^\vee) \otimes \eta$, or equivalently of $\bigoplus_{i=0}^f \gr_\m(\wt\pi_2^\vee)_i \otimes \eta$, as $H$-modules.

We have $\EE_{\gr(\Lambda)}^{2f}(N_2) \otimes \eta^{-1} \cong N_2'$ as $\gr(\Lambda)$-modules (without grading), where $N_2' \defeq  \bigoplus_{\lambda \in \mathscr P_2^*} \chi_{\lambda}^{-1} \otimes R/\mathfrak a(\lambda)$, by \cite[Prop.\ 3.66]{BHHMS2}.
Corollary~\ref{cor:involution-components} implies that $(N_2')_0$ is dual to $\bigoplus_{i \notin \Sigma} D_0(\rhobar)_{f-i}^{I_1}$.
On the other hand, as at the beginning of the proof, we have $\wt\pi_2^{K_1} = \bigoplus_{i \in \Sigma'} D_0(\rhobar)_{i}$ for some $\Sigma'\subset \{0,1,\dots,f\}$.
Let $\wt N_2$ be the direct summand of $N$ such that its degree 0 part is dual to $\wt\pi_2^{I_1} = \bigoplus_{i \in \Sigma'} D_0(\rhobar)_{i}^{I_1}$.
Then as before we have a surjection $\wt N_2 \onto \gr_\m(\wt\pi_2^\vee)$.
From the previous paragraph, $(N_2')_0\cong \F\otimes_{\gr(\Lambda)}N'_2$ is a subquotient of $\bigoplus_{i=0}^f (\wt N_2)_{-i}$ as $H$-modules.
But $\bigoplus_{i=0}^f N_{-i}$ is multiplicity free as $H$-module by Lemma \ref{lem:N/I-multifree} (with $n = f+1$ and $r=1$, {using that $\brho$ is $(2f+1)$-generic}) 
and $(N_2')_0 \subset N_0$, 
so we deduce that $(N_2')_0 \subset (\wt N_2)_0$ as $H$-modules (do not confuse the graded piece $N_i$ of $N$ for $i=1,2$ with the submodules $N_1$, $N_2$ of $N$ defined just before Step $1$!). Dually, $\bigoplus_{i \in \Sigma'} D_0(\rhobar)_{i}^{I_1}$ surjects onto $\bigoplus_{i \notin \Sigma} D_0(\rhobar)_{f-i}^{I_1}$ as $H$-modules.
In particular, $\Sigma' \supseteq f-\Sigma^c$, i.e.\
\begin{equation}\label{eq:diagram-contains}
  \wt\pi_2^{K_1} = \bigoplus_{i \in \Sigma'} D_0(\rhobar)_{i} \supseteq \bigoplus_{i \notin \Sigma} D_0(\rhobar)_{f-i}.
\end{equation}
Taking $\GL_2(\cO_K)$-socles we get
\begin{equation}\label{eq:soc-length}
  \begin{aligned}
    \ell(\soc_{\GL_2(\cO_K)} (\wt\pi_2)) = \sum_{i \in \Sigma'} \ell(\soc_{\GL_2(\cO_K)} (D_0(\rhobar)_i)) &\ge \sum_{i \notin \Sigma} \ell(\soc_{\GL_2(\cO_K)} (D_0(\rhobar)_{f-i}))\\ 
    &= \sum_{i \notin \Sigma} \ell(\soc_{\GL_2(\cO_K)} (D_0(\rhobar)_i))\\
    &= \ell(\soc_{\GL_2(\cO_K)} (\pi)) - \ell(\soc_{\GL_2(\cO_K)} (\pi_1)).
  \end{aligned}
\end{equation}
By~(\ref{eq:D-xi}) and exactness of the functor $D_{\xi}^{\vee}$ we know that 
\begin{equation*}
\dim_{\F\ppar{X}}D_{\xi}^{\vee}(\widetilde{\pi}_2)
= \dim_{\F\ppar{X}}D_{\xi}^{\vee}(\pi_2) = \dim_{\F\ppar{X}}D_{\xi}^{\vee}(\pi) - \dim_{\F\ppar{X}}D_{\xi}^{\vee}(\pi_1)
\end{equation*}
and hence by \cite[Prop.\ 3.87(ii)]{BHHMS2} that equality has to hold in~(\ref{eq:soc-length}) and hence in~(\ref{eq:diagram-contains}).
By taking $I_1$-invariants in~(\ref{eq:diagram-contains}) we deduce that $N_2' = \wt N_2$.

Consider
\begin{equation}\label{eq:cycles-ineq2-ss}
  \mathcal{Z}(\gr_F(\pi_2^{\vee})) \ge \mathcal{Z}(N_2) = \mathcal{Z}(N_2') = \mathcal{Z}(\wt N_2)\ge \mathcal{Z}(\gr_\m(\wt \pi_2^{\vee})),
\end{equation}
where the first inequality is equation~(\ref{eq:cycles-ineq-ss}), the first equality comes from \cite[Thm.\ 3.83]{BHHMS2}, the second equality holds as $N_2' = \wt N_2$,
and the final inequality comes from $\wt N_2 \onto \gr_\m(\wt\pi_2^\vee)$.
As $\mathcal{Z}(\gr(\pi_2^{\vee})) = \mathcal{Z}(\gr(\wt\pi_2^{\vee}))$,
we deduce that equality holds in~(\ref{eq:cycles-ineq2-ss}), so
$\mathcal{Z}(N_2) = \mathcal{Z}(\gr_F(\pi_2^{\vee}))$ and hence also
$\mathcal{Z}(N_1)= \mathcal{Z}(\gr_{\fm}(\pi_1^{\vee}))$ by the
additivity of $\mathcal{Z}$ in short exact
sequences {(recalling diagram \eqref{eq:cycles-diagram-ss})}. 
Since $N_1$ is pure, any of its nonzero submodules has a nonzero cycle, hence the surjection $N_1\twoheadrightarrow \gr_{\m}(\pi_1^{\vee})$ must be an isomorphism and consequently $\gr_{F}(\pi_2^{\vee})\cong N_2$ by Step~1.
This implies that $\gr_{\m}(\pi_1^{\vee}) \cong N_1$ and $\gr_{F}(\pi_2^{\vee})\cong N_2$ are Cohen--Macaulay, as $N$ is Cohen--Macaulay and  the $N_i$ are direct summands of $N$. Hence $\pi_1^\vee$ and $\pi_2^\vee$ are Cohen--Macaulay, because if a finitely generated $\Lambda$-module $M$ admits a good filtration such that the associated graded module is Cohen--Macaulay, then $M$ itself is Cohen--Macaulay as a consequence of \cite[Prop.~III.2.2.4]{LiOy}.

{The final statements follow: we already noted that $\soc_{\GL_2(\cO_K)}(\pi_1)$ determines $\pi_1^{K_1}$, which in turn determines $\pi_1^{I_1}$ and hence $N_1$, as
  $\mathscr{P}_1 = \{ \lambda \in \mathscr{P} : \chi_\lambda \in \JH(\pi_1^{I_1})\}$.}
\end{proof}

\begin{thm}\label{thm:length:f+1}
{Assume that 
$\rhobar$ is $\max\{9,2f+1\}$-generic.}
\begin{enumerate}
\item Any subrepresentation of $\pi$ is generated by its $\GL_2(\cO_K)$-socle.
\item $\ell_{\GL_2(K)}(\pi) \leq f+1$.
\end{enumerate}
\end{thm}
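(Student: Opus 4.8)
The plan is to deduce Theorem~\ref{thm:length:f+1} directly from Proposition~\ref{prop:split-I1} together with the structural information about $D_0(\rhobar)$ recalled in \S~\ref{sec:preliminaries}. For part~(i), let $0 \subsetneq \pi_1 \subseteq \pi$ be a nonzero subrepresentation and let $\pi_1'\subseteq \pi_1$ be the $\GL_2(K)$-subrepresentation generated by $\soc_{\GL_2(\cO_K)}(\pi_1)$. The first step is to observe that $\soc_{\GL_2(\cO_K)}(\pi_1')=\soc_{\GL_2(\cO_K)}(\pi_1)$ (both equal $\soc_{\GL_2(\cO_K)}(\pi_1)$ by construction, since $\pi_1'$ contains it and is contained in $\pi_1$), so by the last sentence of Proposition~\ref{prop:split-I1} applied to both $\pi_1$ and $\pi_1'$ we get that $\gr_\m(\pi_1^\vee)$ and $\gr_\m(\pi_1'^\vee)$ are both isomorphic, via \eqref{eq:split-I1-N1}, to the same module $N_1 = \bigoplus_{\lambda:\,\chi_\lambda\in\JH(\pi_1^{I_1})}\chi_\lambda^{-1}\otimes R/\fa(\lambda)$; here one must first check $\pi_1^{K_1}=\pi_1'^{K_1}$, which follows from the proof of \cite[Thm.~19.10]{BP} exactly as in the proof of Proposition~\ref{prop:split-I1} (any $D_{0,\sigma}(\rhobar)$ with $\sigma\subset\soc_{\GL_2(\cO_K)}(\pi_1')$ lies in $\pi_1'^{K_1}$), hence $\pi_1^{I_1}=\pi_1'^{I_1}$ and $N_1$ is literally the same module for both. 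The inclusion $\pi_1'\hookrightarrow\pi_1$ induces a surjection $\gr_\m(\pi_1^\vee)\twoheadrightarrow\gr_\m(\pi_1'^\vee)$ of graded $\gr(\Lambda)$-modules which, under the two isomorphisms with $N_1$, is the identity up to the relevant identifications (it is surjective and the two sides have equal finite-dimensional graded pieces in each degree), hence an isomorphism. Therefore $\m^n\pi_1^\vee/\m^{n+1}\pi_1^\vee\congto\m^n\pi_1'^\vee/\m^{n+1}\pi_1'^\vee$ for all $n\ge0$; by d\'evissage (and separatedness of the $\m$-adic filtration, $\Lambda$ being noetherian local) we get $\pi_1^\vee/\m^{n+1}\pi_1^\vee\congto\pi_1'^\vee/\m^{n+1}\pi_1'^\vee$ for all $n$, hence $\pi_1^\vee\cong\pi_1'^\vee$, so $\pi_1'=\pi_1$, i.e. $\pi_1$ is generated by its $\GL_2(\cO_K)$-socle.

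For part~(ii), the key point is the classification of the possible $\GL_2(\cO_K)$-socles of subrepresentations. By Proposition~\ref{prop:split-I1} there is a unique subset $\Sigma=\Sigma(\pi_1)\subseteq\{0,1,\dots,f\}$ with $\pi_1^{K_1}=\bigoplus_{i\in\Sigma}D_0(\rhobar)_i$, and hence $\soc_{\GL_2(\cO_K)}(\pi_1)=\bigoplus_{i\in\Sigma}\soc_{\GL_2(\cO_K)}(D_0(\rhobar)_i)$. By part~(i) the subrepresentation $\pi_1$ is determined by $\soc_{\GL_2(\cO_K)}(\pi_1)$, which is determined by $\Sigma$. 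Thus the poset of subrepresentations of $\pi$ injects into the poset of subsets of $\{0,1,\dots,f\}$, compatibly with inclusion. Now take any chain $0=\pi_0\subsetneq\pi_1\subsetneq\cdots\subsetneq\pi_m=\pi$ of subrepresentations; this gives a strictly increasing chain $\Sigma(\pi_1)\subsetneq\Sigma(\pi_2)\subsetneq\cdots\subsetneq\Sigma(\pi_m)=\{0,1,\dots,f\}$ of subsets of an $(f+1)$-element set, so $m\le f+1$. Hence $\ell_{\GL_2(K)}(\pi)\le f+1$. (One should note $\Sigma(\pi_1)\subsetneq\Sigma(\pi_2)$ when $\pi_1\subsetneq\pi_2$: if $\Sigma(\pi_1)=\Sigma(\pi_2)$ then $\soc_{\GL_2(\cO_K)}(\pi_1)=\soc_{\GL_2(\cO_K)}(\pi_2)$, so $\pi_1=\pi_2$ by part~(i), a contradiction.)

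The main subtlety is not in the combinatorics but in justifying cleanly that the surjection $\gr_\m(\pi_1^\vee)\twoheadrightarrow\gr_\m(\pi_1'^\vee)$ is an isomorphism; the cleanest argument is to note that both are identified with the \emph{same} pure (indeed Cohen--Macaulay) module $N_1$ via \eqref{eq:split-I1-N1}, that the composite $N_1\xrightarrow{\sim}\gr_\m(\pi_1^\vee)\twoheadrightarrow\gr_\m(\pi_1'^\vee)\xleftarrow{\sim}N_1$ is a surjective endomorphism of the finitely generated $\gr(\Lambda)$-module $N_1$, and hence (by a standard Nakayama/Hilbert-series argument, or because a surjective graded endomorphism of a noetherian graded module of finite length in each degree is bijective) an isomorphism. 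Everything else is a direct consequence of Proposition~\ref{prop:split-I1} and \cite[Thm.~15.4, Thm.~19.10]{BP}, and the hypothesis that $\rhobar$ is $\max\{9,2f+1\}$-generic is exactly what is needed to invoke Proposition~\ref{prop:split-I1}.
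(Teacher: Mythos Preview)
Your proof is correct and follows essentially the same approach as the paper's own proof. The only minor differences are expository: you justify $\pi_1^{K_1}=\pi_1'^{K_1}$ separately via \cite[Thm.~19.10]{BP}, whereas the paper invokes directly the final sentence of Proposition~\ref{prop:split-I1} (that $\gr_\m(\pi_1^\vee)$ depends only on $\soc_{\GL_2(\cO_K)}(\pi_1)$), and you phrase part~(ii) via an injection of posets rather than a direct chain argument, but these are equivalent.
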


Note that part (i) for $\pi$ itself was proved in \cite[Thm.\ 3.89]{BHHMS2} {under a slightly weaker genericity assumption}.

\begin{proof}
Let $\pi_1$ be a subrepresentation of $\pi$, and $\pi_1'$ be the
subrepresentation of $\pi_1$ generated by
$\soc_{\GL_2(\cO_K)}(\pi_1)$. In particular,
$\soc_{\GL_2(\cO_K)}(\pi_1)=\soc_{\GL_2(\cO_K)}(\pi_1')$,
so Proposition \ref{prop:split-I1} implies that $\gr_{\m}(\pi_1^{\vee})$ and
$\gr_{\m}(\pi_1'^{\vee})$ are isomorphic and finitely generated as $\gr(\Lambda)$-modules, cf.\ \eqref{eq:split-I1-N1}. 
Therefore, as $\gr(\Lambda)$ is noetherian, the
surjective map
\[ \gr_{\m}(\pi_1^{\vee})\twoheadrightarrow \gr_{\m}(\pi_1'^{\vee}) \]
(induced by the surjection $\pi_1^\vee \onto \pi_1'^\vee$) is an isomorphism. Hence, we deduce $\gr_{\m}(\pi_1^{\vee})=\gr_{\m}(\pi_1'^{\vee})$, from which we deduce $\pi_1^{\vee}/\m^n\congto \pi_1'^{\vee}/\m^n$ for all $n\geq 1$ for dimension reasons and hence $\pi_1=\pi_1'$. This proves (i).

To prove (ii), it suffices to show that any finite ascending chain of $\GL_2(K)$-subrepresentations $0 = \pi_0 \subsetneq \pi_1 \subsetneq \cdots \subsetneq \pi_{\ell} = \pi$ has length $\ell \le f+1$.
 It follows from Proposition \ref{prop:split-I1} again that  
we can write $\pi_j^{K_1} = \bigoplus_{i \in \Sigma_j} D_0(\rhobar)_i$ for unique subsets $\varnothing = \Sigma_0 \subset \cdots \subset \Sigma_{\ell}= \{0,1,\dots,f\}$.
Since $\pi_j^{K_1}$ contains $\soc_{\GL_2(\cO_K)}(\pi_j)$, we deduce from (i) that $\Sigma_j \subsetneq \Sigma_{j+1}$ for all $0 \le j < \ell$, so indeed $\ell \le f+1$.
\end{proof}

We now note further consequences of Proposition \ref{prop:split-I1}.

\begin{cor}\label{cor:split-I1}
  Keep the notation of Proposition~\ref{prop:split-I1} and suppose that $\rhobar$ is $\max\{9,2f+1\}$-generic.
  \begin{enumerate}
  \item The $\m$-adic filtration on $\pi^{\vee}$ induces the $\m$-adic filtration on $\pi_2^{\vee}$.
  \item The induced sequence \[0\ra \gr_\m(\pi_2^\vee)\ra \gr_\m(\pi^\vee)\ra \gr_\m(\pi_1^\vee)\ra0\] 
    of graded $\gr(\Lambda)$-modules with compatible $H$-action is split exact.
    More precisely,
    \begin{equation*}
      \gr_\m(\pi_1^{\vee}) \cong \bigoplus_{\lambda\in\mathscr{P}_1}\chi_{\lambda}^{-1}\otimes \frac{R}{\mathfrak{a}(\lambda)}
    \end{equation*}
    and
    \begin{equation*}
      \gr_\m(\pi_2^{\vee}) \cong \bigoplus_{\lambda\in\mathscr{P}\setminus \P_1}\chi_{\lambda}^{-1}\otimes \frac{R}{\mathfrak{a}(\lambda)},
    \end{equation*}
    where $\P_1 \subset \P$ corresponds to $\pi_1^{I_1} \subset \pi^{I_1}$ {(see \S~\ref{sec:notation})}.
  \end{enumerate}
\end{cor}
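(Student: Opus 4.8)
The key input is Proposition~\ref{prop:split-I1}, which gives the isomorphism \eqref{eq:split-I1-N1}
\[\bigoplus_{\lambda\in\mathscr{P}_1}\chi_{\lambda}^{-1}\otimes \frac{R}{\mathfrak{a}(\lambda)}\simto\gr_{\m}(\pi_1^{\vee})\]
together with the assertion $\gr_F(\pi_2^\vee)\cong N_2$, where $F$ is the filtration on $\pi_2^\vee$ induced from the $\m$-adic filtration on $\pi^\vee$, $N = N_1 \oplus N_2$ as in the proof of Proposition~\ref{prop:split-I1}, and $N_i = \bigoplus_{\lambda\in\mathscr{P}_i}\chi_{\lambda}^{-1}\otimes R/\mathfrak{a}(\lambda)$. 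Note that $\mathscr{P}_1$ coincides with $\{\lambda\in\mathscr{P} : \chi_\lambda\in \JH(\pi_1^{I_1})\}$, hence is what I have written $\P_1$ in the statement of the corollary, and $\mathscr{P}\setminus\P_1 = \mathscr{P}_2$.

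For part (i), the point is to identify the induced filtration $F$ on $\pi_2^\vee$ with its own $\m$-adic filtration. By definition $F_n(\pi_2^\vee) \supseteq \m^{-n}\pi_2^\vee$ for all $n\le 0$ (the quotient map $\pi^\vee \to \pi_2^\vee$ is $\Lambda$-linear and $\m^{-n}\pi^\vee$ surjects onto $\m^{-n}\pi_2^\vee$ which lands in $F_n$), so it suffices to prove the reverse inclusion, equivalently that the natural surjection of graded $\gr(\Lambda)$-modules $\gr_\m(\pi_2^\vee) \twoheadrightarrow \gr_F(\pi_2^\vee)$ is an isomorphism. But $\gr_F(\pi_2^\vee)\cong N_2$ by Proposition~\ref{prop:split-I1}, and $N_2$ is Cohen--Macaulay of grade $2f$ (being a direct summand of $N$), hence pure by \cite[Prop.~III.4.2.8(1)]{LiOy}; so any nonzero graded submodule has nonzero characteristic cycle. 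Since $\mathcal{Z}$ is additive on short exact sequences, the kernel of $\gr_\m(\pi_2^\vee)\twoheadrightarrow \gr_F(\pi_2^\vee)$ has zero cycle, hence is zero provided I first check $\gr_\m(\pi_2^\vee)$ is also pure of grade $2f$. The latter follows because $\pi_2^\vee$ is Cohen--Macaulay of grade $2f$ (Proposition~\ref{prop:split-I1}) and its $\m$-adic associated graded is pure: more simply, $\mathcal{Z}(\gr_\m(\pi_2^\vee)) = \mathcal{Z}(\gr_F(\pi_2^\vee))$ by \cite[Lemma 3.3.4.3]{BHHMS2} since both arise from good filtrations on $\pi_2^\vee$, so the surjection $\gr_\m(\pi_2^\vee)\twoheadrightarrow\gr_F(\pi_2^\vee)\cong N_2$ is between modules with equal cycles; as $N_2$ is pure of grade $2f$ any nonzero submodule of $\gr_\m(\pi_2^\vee)$ mapping to zero would force a strict inequality of cycles, a contradiction. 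Therefore $\gr_\m(\pi_2^\vee)\simto N_2$ and the two filtrations on $\pi_2^\vee$ coincide.

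For part (ii), the displayed formula for $\gr_\m(\pi_1^\vee)$ is exactly \eqref{eq:split-I1-N1} rewritten via $\mathscr{P}_1 = \P_1$, and the formula for $\gr_\m(\pi_2^\vee)$ is the isomorphism $\gr_\m(\pi_2^\vee)\cong N_2 = \bigoplus_{\lambda\in\mathscr{P}\setminus\P_1}\chi_\lambda^{-1}\otimes R/\mathfrak{a}(\lambda)$ just established in part (i). It remains to see that $0\to\gr_\m(\pi_2^\vee)\to\gr_\m(\pi^\vee)\to\gr_\m(\pi_1^\vee)\to0$ is split exact with compatible $H$-action. But under the identifications above this is precisely the short exact sequence $0\to N_2\to N\to N_1\to 0$ of graded $\gr(\Lambda)$-modules with compatible $H$-action, where $N = N_1\oplus N_2$ is the direct sum decomposition constructed in the proof of Proposition~\ref{prop:split-I1} coming from the $\GL_2(\cO_K)$-equivariant decomposition $D_0(\rhobar) = D_0(\rhobar)^{(1)}\oplus D_0(\rhobar)^{(2)}$. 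Hence the sequence is split by construction. I expect part (i) --- specifically the verification that no nonzero submodule of $\gr_\m(\pi_2^\vee)$ dies in $\gr_F(\pi_2^\vee)$ --- to be the only point requiring care, and it is handled by the purity/characteristic-cycle argument just sketched.
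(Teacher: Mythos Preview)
Your argument for part (ii) is correct and matches the paper, but part (i) has a genuine gap.

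The issue is the purity/cycle argument. You correctly observe that $\mathcal{Z}(\gr_\m(\pi_2^\vee)) = \mathcal{Z}(\gr_F(\pi_2^\vee))$ by \cite[Lemma 3.3.4.3]{BHHMS2}, and that $N_2$ is pure. But purity of the \emph{target} $N_2$ only tells you that nonzero submodules of $N_2$ have nonzero cycle; it says nothing about submodules of the \emph{source} $\gr_\m(\pi_2^\vee)$. The kernel of $\kappa$ lives in $\gr_\m(\pi_2^\vee)$, and you have no a priori reason why a nonzero submodule there must have nonzero cycle. Your attempted justification (``$\pi_2^\vee$ is Cohen--Macaulay of grade $2f$ \ldots\ and its $\m$-adic associated graded is pure'') is circular: Cohen--Macaulayness of $\pi_2^\vee$ does not imply purity of $\gr_\m(\pi_2^\vee)$ for an arbitrary good filtration. (Proposition~\ref{prop:split-I1} proves $\gr_F(\pi_2^\vee)$ is Cohen--Macaulay, not $\gr_\m(\pi_2^\vee)$.) There is also a smaller gap: you assert $\kappa$ is a surjection, but the natural map $\gr_\m \to \gr_F$ associated to a coarsening of filtrations is not surjective in general. (Incidentally, your parenthetical invokes a ``quotient map $\pi^\vee \to \pi_2^\vee$'', but $\pi_2^\vee$ is a \emph{submodule} of $\pi^\vee$; the inclusion $\m^n\pi_2^\vee \subseteq F_{-n}$ is still correct, for the simpler reason that $\m^n\pi_2^\vee \subseteq \pi_2^\vee \cap \m^n\pi^\vee$.)

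The paper's argument bypasses cycles entirely. Since $\gr_F(\pi_2^\vee) \cong N_2$ is generated by its degree $0$ part and $\kappa$ is surjective in degree $0$ (as $F_0\pi_2^\vee = \m^0\pi_2^\vee = \pi_2^\vee$), the graded map $\kappa$ is surjective. Then \cite[Thm.~I.4.2.4(5)]{LiOy}, applied to the identity map $(\pi_2^\vee,\m\text{-adic}) \to (\pi_2^\vee,F)$, shows this map is strict, whence $F_{-n} = \m^n\pi_2^\vee$ for all $n$. This is both simpler and avoids the purity issue you ran into.
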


\begin{proof}
We keep the notation of the proof of Proposition~\ref{prop:split-I1}.

(i) By the isomorphism $\gr_F(\pi_2^{\vee}) \cong N_2$ proved in Step 2 of the proof of Proposition~\ref{prop:split-I1}, $\gr_{F}(\pi_2^{\vee})$ is generated by its degree 0 part $\gr_{F}(\pi_2^{\vee})_0$ as a $\gr(\Lambda)$-module.
Since $\m^n\pi_2^{\vee}\subset\pi_2^{\vee} \cap \m^n\pi^{\vee}=F_{-n}\pi_2^{\vee}$, we have the natural morphism \[\kappa: \gr_{\m}(\pi_2^{\vee})\ra \gr_{F}(\pi_2^{\vee})\cong N_2,\]
which is surjective in degree $0$ as  
$\m^0\pi_2^{\vee}=F_0\pi_2^{\vee}\ (=\pi_2^{\vee})$. 
Since $N_2$ is generated by its degree $0$ part, $\kappa$ is surjective and it follows from \cite[Thm.\ I.4.2.4(5)]{LiOy} (applied with $L=M=\pi_2^{\vee}$ and $N=0$) that $\m^n\pi_2^{\vee}=F_{-n}\pi_2^{\vee}$ for all $n\geq 0$.

Part (ii) follows, since the sequence $0 \to N_2 \to N \to N_1 \to 0$ is split exact by construction.
\end{proof}

\begin{cor}\label{cor:split-In}
Suppose that $\rhobar$ is $\max\{9,2f+1\}$-generic.
Let $\pi_1\subset\pi_2$ be  subrepresentations of $\pi$. Then for any $n\geq 1$, the sequence of $\Lambda$-modules
\[0\ra \pi_1[\m^n]\ra \pi_2[\m^n]\ra (\pi_2/\pi_1)[\m^n]\ra 0\]
is exact. 
Moreover, the sequence splits as $I$-representations if $\rhobar$ is {also $(2n-1)$-generic}. 
\end{cor}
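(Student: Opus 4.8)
The plan is to deduce Corollary~\ref{cor:split-In} from the structural results already established, principally Proposition~\ref{prop:split-I1} and Corollary~\ref{cor:split-I1}. First I would reduce to the case of a single subrepresentation: by applying the general statement to the pairs $0 \subset \pi_1 \subset \pi$ and $0 \subset \pi_2 \subset \pi$ one sees that it suffices to handle $0 \subset \pi_1 \subset \pi$, i.e.\ to show that $\pi[\m^n] \to (\pi/\pi_1)[\m^n]$ is surjective (left exactness being automatic). Write $\pi_2 \defeq \pi/\pi_1$ as in Proposition~\ref{prop:split-I1}. The key input is that, by Corollary~\ref{cor:split-I1}(ii), the short exact sequence of graded $\gr(\Lambda)$-modules $0 \to \gr_\m(\pi_2^\vee) \to \gr_\m(\pi^\vee) \to \gr_\m(\pi_1^\vee) \to 0$ is \emph{split}, and by Corollary~\ref{cor:split-I1}(i) the filtration on $\pi_2^\vee$ induced from $\pi^\vee$ is the $\m$-adic one.

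Next I would translate the splitting of graded modules into surjectivity of $\pi^\vee/\m^n \to \pi_1^\vee/\m^n$ being split, for every $n$. Indeed, a split surjection on the level of associated graded modules implies, by an easy dévissage on $n$ using the five lemma, that $0 \to \pi_2^\vee/\m^n\pi_2^\vee \to \pi^\vee/\m^n\pi^\vee \to \pi_1^\vee/\m^n\pi_1^\vee \to 0$ is exact for all $n \ge 1$ (here one uses Corollary~\ref{cor:split-I1}(i) so that $\m^n \pi_2^\vee = \pi_2^\vee \cap \m^n\pi^\vee$, which is what makes the sequence of quotients exact). Dualizing, this gives exactly the exact sequence $0 \to \pi_1[\m^n] \to \pi[\m^n] \to \pi_2[\m^n] \to 0$ of $\Lambda$-modules, since $(\pi'^\vee/\m^n\pi'^\vee)^\vee \cong \pi'[\m^n]$ for any smooth representation $\pi'$ with central character. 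That settles the first (exactness) claim.

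For the splitting as $I$-representations under the additional $(2n-1)$-genericity hypothesis, I would invoke the explicit description of $\pi[\m^n]$ from Lemma~\ref{lem:isom-modcI} and Lemma~\ref{lem:tau-embed}: when $\rhobar$ is $(2n-1)$-generic we have $\pi[\m^n] = \tau^{(n)}[\m^n]$ and, since $h_j$ kills $N$ and $\gr_\m(\tau^{(n)\vee}) \cong N/\mathcal I^{(n)}N$, in fact $\pi[\m^n] = \tau^{(n)} \cong \bigoplus_{\lambda \in \mathscr P} \tau^{(n)}_\lambda$ (recall $r=1$), where each $\tau^{(n)}_\lambda$ has irreducible $I$-socle $\chi_\lambda$. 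Applying Lemma~\ref{lem:isom-modcI} to $\pi_1$ in place of $\pi$ — which is legitimate because, by \eqref{eq:split-I1-N1} in Proposition~\ref{prop:split-I1}, $\gr_\m(\pi_1^\vee) \cong \bigoplus_{\lambda \in \mathscr P_1} \chi_\lambda^{-1} \otimes R/\mathfrak a(\lambda)$ has exactly the form of the module $N$ for $\pi_1$ (with $\mathscr P$ replaced by $\mathscr P_1$) — gives $\pi_1[\m^n] \cong \bigoplus_{\lambda \in \mathscr P_1} \tau^{(n)}_\lambda$, and similarly $\pi_2[\m^n] \cong \bigoplus_{\lambda \in \mathscr P \setminus \mathscr P_1} \tau^{(n)}_\lambda$ using $\gr_\m(\pi_2^\vee) \cong \bigoplus_{\lambda \in \mathscr P \setminus \mathscr P_1} \chi_\lambda^{-1} \otimes R/\mathfrak a(\lambda)$ from Corollary~\ref{cor:split-I1}(ii). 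Since $\mathscr P = \mathscr P_1 \sqcup (\mathscr P \setminus \mathscr P_1)$ and the decomposition $\pi[\m^n] = \bigoplus_{\lambda} \tau^{(n)}_\lambda$ is canonical (being the isotypic decomposition refining the socle $\bigoplus_\lambda \chi_\lambda$), the inclusion $\pi_1[\m^n] \subset \pi[\m^n]$ is the inclusion of the direct summand $\bigoplus_{\lambda \in \mathscr P_1} \tau^{(n)}_\lambda$, which visibly splits with complement mapping isomorphically onto $\pi_2[\m^n]$. The main obstacle is the bookkeeping in this last paragraph: one must check carefully that the embedding $\pi_1[\m^n] \hookrightarrow \pi[\m^n]$ produced by Lemma~\ref{lem:isom-modcI} (applied to $\pi_1$) is compatible — up to the canonical isotypic decomposition — with the given inclusion $\pi_1 \subset \pi$, so that no ambiguity in the choice of embeddings spoils the splitting; this should follow from the fact that both are determined on socles and from multiplicity freeness (Corollary~\ref{cor:tau-multfree}(i)), together with the genericity ensuring $\Ext^1_{I/Z_1}$ vanishing between the relevant characters.
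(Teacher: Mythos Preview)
Your exactness argument is correct and essentially matches the paper's: one reduces to the case $\pi_2=\pi$ and then uses Corollary~\ref{cor:split-I1} to obtain the short exact sequence $0\to\pi_2^\vee/\m^n\to\pi^\vee/\m^n\to\pi_1^\vee/\m^n\to0$, which dualizes to the claim. (The paper phrases this as a dimension count, but the content is the same.) The passage from $\pi_2=\pi$ to general $\pi_1\subset\pi_2$ is also handled as in the paper, via the auxiliary exact sequence $0\to(\pi_2/\pi_1)[\m^n]\to(\pi/\pi_1)[\m^n]\to(\pi/\pi_2)[\m^n]\to0$.

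For the splitting, however, there are problems. First, a minor one: $\pi[\m^n]=\tau^{(n)}[\m^n]$, not $\tau^{(n)}$; the representation $\tau^{(n)}_\lambda$ is killed by $\m^{f(n-1)+1}$ but not by $\m^n$ when $f>1$, so your identification $\pi[\m^n]=\bigoplus_\lambda\tau^{(n)}_\lambda$ is off. This is easily repaired by inserting $[\m^n]$ throughout.

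The more serious gap is your invocation of Lemma~\ref{lem:isom-modcI} ``applied to $\pi_1$ in place of $\pi$''. That lemma is proved for a representation satisfying assumptions \ref{it:assum-i}--\ref{it:assum-iv}, and its proof uses assumption~\ref{it:assum-iv} (the vanishing $\Ext^1_{I/Z_1}(\chi,\pi)=0$ for $\chi\notin\JH(\pi^{I_1})$) to extend the socle embedding to all of $\tau^{(n)}$. Knowing $\gr_\m(\pi_1^\vee)\cong N_1$ does \emph{not} give you $\Ext^1_{I/Z_1}(\chi,\pi_1)=0$ for the relevant $\chi$; your closing remark about ``$\Ext^1_{I/Z_1}$ vanishing between the relevant characters'' is not the statement you need. (It can in fact be deduced from the long exact sequence for $0\to\pi_1\to\pi\to\pi/\pi_1\to0$ together with Corollary~\ref{cor:split-I1}(ii), which controls $(\pi/\pi_1)^{I_1}$, but you have not carried this out.)

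The paper sidesteps this entirely: rather than re-running Lemma~\ref{lem:isom-modcI} for $\pi_1$, it exploits that $\pi[\m^n]=\tau^{(n)}[\m^n]$ is \emph{multiplicity free} (Corollary~\ref{cor:tau-multfree}). Two subrepresentations of a multiplicity-free $I$-representation coincide as soon as their Jordan--H\"older factors do, i.e.\ as soon as their associated graded modules (as $H$-modules) coincide. Since $\gr_\m(\pi_1^\vee)\cong N_1$ by Proposition~\ref{prop:split-I1} and $\gr_\m\big((\tau_1^{(n)})^\vee\big)\cong N_1/\mathcal{I}^{(n)}N_1$ by construction, both $\pi_1[\m^n]$ and $\tau_1^{(n)}[\m^n]$ (where $\tau_1^{(n)}\defeq\bigoplus_{\lambda\in\P_1}\tau_\lambda^{(n)}$) have graded module $N_1/\overline{\m}^n$, hence are equal inside $\pi[\m^n]$. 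This immediately exhibits $\pi_1[\m^n]$ as a direct summand, without any $\Ext^1$ analysis for $\pi_1$.
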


\begin{proof}
We first treat the special case $\pi_2=\pi$. Then we trivially have $0\ra \pi_1[\m^n]\ra \pi[\m^n]\ra (\pi/\pi_1)[\m^n]$. 
The final map is surjective for dimension reasons because $\gr_{\m}(\pi^{\vee})\cong\gr_{\m}(\pi_1^{\vee})\oplus \gr_{\m}((\pi/\pi_1)^{\vee})$ by Corollary \ref{cor:split-I1}(ii).  In particular, for any subrepresentation $\pi_1$ of $\pi$ we obtain 
\begin{equation}\label{eq:dimequality}\dim_{\F}((\pi/\pi_1)[\m^n])=\dim_{\F}(\pi[\m^n])-\dim_{\F}(\pi_1[\m^n]).\end{equation}

Now we treat the general case. Since $\pi[\m^n]\ra (\pi/\pi_2)[\m^n]$ is surjective by the last paragraph,  the morphism    
\[(\pi/\pi_1)[\m^n]\ra (\pi/\pi_2)[\m^n]\]
is also surjective, and hence the sequence \[0\ra (\pi_2/\pi_1)[\m^n]\ra (\pi/\pi_1)[\m^n]\ra (\pi/\pi_2)[\m^n]\ra 0\] is exact.
Applying \eqref{eq:dimequality} to $\pi_1$ and $\pi_2$, we deduce 
\[\dim_{\F}((\pi_2/\pi_1)[\m^n])=\dim_{\F}(\pi_2[\m^n])-\dim_{\F}(\pi_1[\m^n]),\]
from which the first assertion follows.

For the last assertion, it suffices to show that $\pi_1[\m^n]$ is a direct summand of $\pi[\m^n]$ (hence is also a direct summand of $\pi_2[\m^n]$, see \eqref{eq:directsumm} in the proof of Lemma \ref{lem:G'}). {As $\brho$ is $(2n-1)$-generic} we note that $\pi[\m^n]=\tau^{(n)}[\m^n]$ by Lemma~\ref{lem:isom-modcI}, where $\tau^{(n)}=\bigoplus_{\lambda\in\mathscr{P}}\tau_{\lambda}^{(n)}$ is the subrepresentation of $\pi|_I$ from Lemma \ref{lem:tau-embed}.  Let $\mathscr{P}_1\subset \mathscr{P}$ be the subset as in the proof of Proposition \ref{prop:split-I1} and put 
\[\tau_1^{(n)}\defeq \bigoplus_{\lambda\in\mathscr{P}_1}\tau_{\lambda}^{(n)}, \quad N_1\defeq \bigoplus_{\lambda\in\mathscr{P}_1}\chi_{\lambda}^{-1}\otimes \frac{R}{\mathfrak{a}(\lambda)}.\]
It suffices to show that $\pi_1[\m^n]=\tau_1^{(n)}[\m^n]$, or equivalently (as $\pi[\m^n]$ is multiplicity free) that these $\Lambda$-modules (with compatible $H$-action) have the same graded modules. This follows from the isomorphism
$\gr_{\m}(\pi_1^{\vee})\cong N_1$
established in the proof of Proposition \ref{prop:split-I1}, noting that \[\gr_{\m}((\tau_1^{(n)})^{\vee}/\m^n)=\gr_{\m}((\tau_1^{(n)})^{\vee})/\o\m^n=N_1/\overline{\m}^n\] by the proof of Lemma \ref{lem:isom-modcI}, where $\overline{\m}$ denotes the unique maximal graded ideal of $\gr(\Lambda)$.
\end{proof}

\begin{lem}\label{lem:soc-exact}
Suppose that $\rhobar$ is $\max\{9,2f+1\}$-generic.
Let $\pi_1\subset \pi_2$ be subrepresentations of $\pi$. Then the natural sequence 
\begin{equation}\label{eq:socle}0\ra \soc_{\GL_2(\cO_K)}(\pi_1)\ra \soc_{\GL_2(\cO_K)}(\pi_2)\ra \soc_{\GL_2(\cO_K)}(\pi_2/\pi_1)\ra0\end{equation}
is exact. 
\end{lem}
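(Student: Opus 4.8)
The plan is to deduce Lemma~\ref{lem:soc-exact} from Corollary~\ref{cor:split-In} applied with $n=1$. First I would note that for any smooth representation $V$ of $\GL_2(\cO_K)$ over $\F$ appearing as a subquotient of $\pi$, the $\GL_2(\cO_K)$-socle of $V$ is closely tied to $V[\m] = V^{I_1}$: since $\brho$ is $0$-generic and $\pi^{K_1} \cong D_0(\brho)^{\oplus r}$ with $r = 1$, every Jordan--H\"older factor of $\pi^{I_1}$ is a character $\chi$ with $\chi \ne \chi^s$ (by \cite[Cor.~13.6]{BP}), and the relation between $\soc_{\GL_2(\cO_K)}$ and $I_1$-invariants is governed by the proof of \cite[Thm.~19.10]{BP}. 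Concretely, a Serre weight $\sigma$ lies in $\soc_{\GL_2(\cO_K)}(V)$ if and only if $\sigma^{I_1} \hookrightarrow V^{I_1}$ together with the extra condition that the corresponding character is ``in the socle'' in the sense of \cite[Thm.~19.10]{BP}, which is a purely combinatorial condition on the $I$-representation $V^{I_1}$ that only depends on which characters occur (with multiplicity one).

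Then the key step is: by Corollary~\ref{cor:split-In} with $n=1$ we have a short exact sequence of $I$-representations
\[0\ra \pi_1^{I_1}\ra \pi_2^{I_1}\ra (\pi_2/\pi_1)^{I_1}\ra 0,\]
which moreover splits (take $n=1$, so the $(2n-1)$-generic hypothesis is automatic under our standing assumption that $\brho$ is $\max\{9,2f+1\}$-generic). Applying the functor $\soc_{\GL_2(\cO_K)}$ to the exact sequence $0 \to \pi_1 \to \pi_2 \to \pi_2/\pi_1 \to 0$ gives left-exactness of~\eqref{eq:socle} for free, so the only issue is surjectivity of $\soc_{\GL_2(\cO_K)}(\pi_2) \to \soc_{\GL_2(\cO_K)}(\pi_2/\pi_1)$. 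I would argue this by a dimension/multiplicity count: since the three representations $\pi_1^{I_1}$, $\pi_2^{I_1}$, $(\pi_2/\pi_1)^{I_1}$ fit into a split exact sequence and each is a multiplicity-free sum of characters (as $r=1$), the sets of characters occurring satisfy $\JH(\pi_2^{I_1}) = \JH(\pi_1^{I_1}) \sqcup \JH((\pi_2/\pi_1)^{I_1})$ as multisets. By Proposition~\ref{prop:split-I1}, the $\GL_2(\Fq)$-representations $\pi_1^{K_1}$ and $(\pi_2/\pi_1)^{K_1}$ are direct summands $\bigoplus_{i\in\Sigma_1}D_0(\rhobar)_i$ and $\bigoplus_{i\notin\Sigma_1}D_0(\rhobar)_i$ of $\pi^{K_1} = D_0(\rhobar)$, so their $\GL_2(\cO_K)$-socles are $\bigoplus_{i\in\Sigma_1}\soc_{\GL_2(\cO_K)}(D_0(\rhobar)_i)$ and $\bigoplus_{i\notin\Sigma_1}\soc_{\GL_2(\cO_K)}(D_0(\rhobar)_i)$ respectively, whose sum is exactly $\soc_{\GL_2(\cO_K)}(D_0(\rhobar)) = \soc_{\GL_2(\cO_K)}(\pi) = \soc_{\GL_2(\cO_K)}(\pi_2) \oplus \text{(possible extra weights from }\pi/\pi_2)$. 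Matching up these decompositions and using that $\pi_2^{K_1}$ is likewise a direct summand of $D_0(\rhobar)$ (again Proposition~\ref{prop:split-I1}) with $\soc_{\GL_2(\cO_K)}(\pi_2) = \bigoplus_{i\in\Sigma_2}\soc_{\GL_2(\cO_K)}(D_0(\rhobar)_i)$ where $\Sigma_1 \subseteq \Sigma_2$, forces the socle sequence to be exact on the right.

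Alternatively — and this is probably the cleaner write-up — I would bypass the explicit combinatorics and argue directly: by Theorem~\ref{thm:length:f+1}(i), each $\pi_j$ is generated by its $\GL_2(\cO_K)$-socle, and by Proposition~\ref{prop:split-I1} the assignment $\pi_j \rightsquigarrow \pi_j^{K_1}$ (hence $\soc_{\GL_2(\cO_K)}(\pi_j)$) realizes the lattice of subrepresentations of $\pi$ inside the lattice of $\GL_2(\cO_K)K^\times$-direct-summands of $D_0(\rhobar)$ of the form $\bigoplus_{i\in\Sigma}D_0(\rhobar)_i$; since $\pi_1 \subseteq \pi_2$ corresponds to $\Sigma_1 \subseteq \Sigma_2$ and $\pi_2/\pi_1$ has $\GL_2(\cO_K)$-socle $\bigoplus_{i\in\Sigma_2\setminus\Sigma_1}\soc_{\GL_2(\cO_K)}(D_0(\rhobar)_i)$, the additivity
\[\soc_{\GL_2(\cO_K)}(\pi_2) = \bigoplus_{i\in\Sigma_1}\soc_{\GL_2(\cO_K)}(D_0(\rhobar)_i) \oplus \bigoplus_{i\in\Sigma_2\setminus\Sigma_1}\soc_{\GL_2(\cO_K)}(D_0(\rhobar)_i)\]
is exactly the desired short exact sequence~\eqref{eq:socle}. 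The main obstacle is making precise the claim that $\soc_{\GL_2(\cO_K)}(\pi_2/\pi_1)$ equals $\bigoplus_{i\in\Sigma_2\setminus\Sigma_1}\soc_{\GL_2(\cO_K)}(D_0(\rhobar)_i)$ — a priori one only has $\soc_{\GL_2(\cO_K)}(\pi_2/\pi_1) \supseteq$ the image of $\soc_{\GL_2(\cO_K)}(\pi_2)$ — and for this I would invoke that $(\pi_2/\pi_1)^{K_1}$ is, by Proposition~\ref{prop:split-I1} applied to the subrepresentation $\pi_2$ of $\pi$ and the subrepresentation $\pi_1$, precisely $\bigoplus_{i\in\Sigma_2\setminus\Sigma_1}D_0(\rhobar)_i$ (using that $D_0(\rhobar)_i$ has $\GL_2(\cO_K)$-socle exactly $\soc_{\GL_2(\cO_K)}(D_0(\rhobar)_i)$ by \cite[\S~15]{BP}), so its $\GL_2(\cO_K)$-socle has the right size and the surjection in~\eqref{eq:socle} is forced to be onto for dimension reasons.
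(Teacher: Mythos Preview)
Your overall strategy matches the paper's: use the description $\pi_j^{K_1}=\bigoplus_{i\in\Sigma_j}D_0(\rhobar)_i$ from Proposition~\ref{prop:split-I1}, together with Corollary~\ref{cor:split-In} at $n=1$ to pin down $(\pi_2/\pi_1)^{I_1}\cong\bigoplus_{i\in\Sigma_2\setminus\Sigma_1}D_0(\rhobar)_i^{I_1}$, and then identify $\soc_{\GL_2(\cO_K)}(\pi_2/\pi_1)$ with $\bigoplus_{i\in\Sigma_2\setminus\Sigma_1}\soc_{\GL_2(\cO_K)}(D_0(\rhobar)_i)$.

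However, your justification of this last identification has a gap. You repeatedly invoke Proposition~\ref{prop:split-I1} to assert that $(\pi_2/\pi_1)^{K_1}=\bigoplus_{i\in\Sigma_2\setminus\Sigma_1}D_0(\rhobar)_i$, but that proposition only applies to \emph{subrepresentations} of $\pi$; it tells you nothing directly about $K_1$-invariants of a subquotient. A priori $(\pi_2/\pi_1)^{K_1}$ could strictly contain the image $\pi_2^{K_1}/\pi_1^{K_1}=\bigoplus_{i\in\Sigma_2\setminus\Sigma_1}D_0(\rhobar)_i$, and with it $\soc_{\GL_2(\cO_K)}(\pi_2/\pi_1)$ could strictly contain $S\defeq\bigoplus_{i\in\Sigma_2\setminus\Sigma_1}\soc_{\GL_2(\cO_K)}(D_0(\rhobar)_i)$. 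Your ``dimension reasons'' at the end are circular, since they presuppose exactly the equality you are trying to prove.

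The paper closes this gap not by computing $(\pi_2/\pi_1)^{K_1}$ but by a contradiction on $I_1$-invariants. One only knows the embedding $\bigoplus_{i\in\Sigma_2\setminus\Sigma_1}D_0(\rhobar)_i\hookrightarrow(\pi_2/\pi_1)^{K_1}$. If some extra Serre weight $\sigma$ lay in $\soc_{\GL_2(\cO_K)}(\pi_2/\pi_1)$ outside $S$, then $\sigma\oplus\bigoplus_{i\in\Sigma_2\setminus\Sigma_1}D_0(\rhobar)_i$ would embed in $\pi_2/\pi_1$; taking $I_1$-invariants yields a subspace of dimension $1+\dim\bigoplus_{i\in\Sigma_2\setminus\Sigma_1}D_0(\rhobar)_i^{I_1}$ inside $(\pi_2/\pi_1)^{I_1}$, contradicting the equality $(\pi_2/\pi_1)^{I_1}\cong\bigoplus_{i\in\Sigma_2\setminus\Sigma_1}D_0(\rhobar)_i^{I_1}$ already obtained from Corollary~\ref{cor:split-In}. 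This is precisely the missing step in your argument.
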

\begin{proof}

By the second paragraph of the proof of Proposition \ref{prop:split-I1} there exist two subsets $\Sigma_1\subset\Sigma_2$ of $\{0,\dots,f\}$ such that for $j\in \{1,2\}$,
\[\pi_j^{K_1}=\bigoplus_{i\in\Sigma_j}D_{0}(\brho)_i,\ \ \pi_j^{I_1}=\bigoplus_{i\in\Sigma_j}D_0(\brho)_i^{I_1}.\] 
Setting $\pi'\defeq \pi_2/\pi_1$, we deduce that $\pi'^{I_1}\cong \bigoplus_{i\in\Sigma_2\backslash \Sigma_1}D_{0}(\brho)_i^{I_1}$ by Corollary \ref{cor:split-In} (applied with $n=1$), and also that there exists an embedding
$\bigoplus_{i\in\Sigma_2\backslash\Sigma_1}D_0(\brho)_i\hookrightarrow \pi'^{K_1}.$
This in particular implies \[S\defeq \bigoplus_{i\in\Sigma_2\backslash \Sigma_1}\soc_{\GL_2(\cO_K)}(D_0(\brho)_i)\hookrightarrow \soc_{\GL_2(\cO_K)}(\pi').\]
We need to prove that it is an isomorphism. If not, then there exists some Serre weight $\sigma$ such that $\sigma\oplus S\hookrightarrow \pi'|_{\GL_2(\cO_K)}$, hence also $\sigma\oplus \big(\bigoplus_{i\in\Sigma_2\backslash \Sigma_1}D_{0}(\brho)_i\big)\hookrightarrow \pi'|_{\GL_2(\cO_K)}$, which contradicts the structure of $\pi'^{I_1}$. 
\end{proof}

\begin{cor}\label{cor:finite-length}
  Suppose that $\rhobar$ is $\max\{9,2f+1\}$-generic.
  Suppose $\pi'$ is any subquotient of $\pi$.
  \begin{enumerate}
  \item We have $\dim_{\F\ppar{X}} D_\xi^\vee(\pi') = \ell(\soc_{\GL_2(\cO_K)} (\pi'))$.
    In particular, if $\pi' \ne 0$, then $D_\xi^\vee(\pi')$ {is} nonzero.
  \item Let $\mathscr P' \subset \mathscr P$ correspond to $(\pi')^{I_1}$ (such a subset exists by Corollary~\ref{cor:split-In} with $n = 1$). Then the natural map
  \[\bigoplus_{\lambda \in \mathscr P'} \chi_\lambda^{-1} \otimes R/\mathfrak a(\lambda) \onto \gr_\m(\pi'^\vee)\]
  of \ graded \ $\gr(\Lambda)$-modules \ with \ compatible \ $H$-action \ is \ an \ isomorphism.
    \ In \ particular, $\gr_\m(\pi'^\vee)$ (resp.\ $\pi'^\vee$) is Cohen--Macaulay of grade $2f$.
  \item $\pi'$ is generated by its $\GL_2(\cO_K)$-socle.
  \item $\pi$ itself is multiplicity free (of length $\le f+1$).
  \item 
    We have an isomorphism $\EE^{2f}_\Lambda(\pi'^\vee)  \cong \pi''^\vee\otimes (\det(\brho)\omega^{-1})$ as $\Lambda$-modules with compatible actions of $\GL_2(K)$, where $\pi''$ is another subquotient of $\pi$, uniquely determined (by part (iv)) by
    \[\soc_{\GL_2(\cO_K)} (\pi'') \cong \bigoplus_{i \in \Sigma'} \soc_{\GL_2(\cO_K)} (D_0(\rhobar)_{f-i}). \]
  \end{enumerate}
\end{cor}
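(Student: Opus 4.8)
The strategy is to assemble the five parts of Corollary~\ref{cor:finite-length} from the results already proven in this section, namely Proposition~\ref{prop:split-I1}, Theorem~\ref{thm:length:f+1}, Corollary~\ref{cor:split-I1}, Corollary~\ref{cor:split-In}, Lemma~\ref{lem:soc-exact} and Theorem~\ref{thm:CMC}. First I would reduce everything to the case where $\pi'$ is a subrepresentation of $\pi$: write $\pi' = \pi_2/\pi_1$ for subrepresentations $\pi_1 \subseteq \pi_2$ of $\pi$, and use Corollary~\ref{cor:split-I1}(i) applied to $\pi_2 \subseteq \pi$ to see that the $\m$-adic filtration on $\pi^\vee$ induces the $\m$-adic filtration on $\pi_2^\vee$, and then Corollary~\ref{cor:split-I1}(i) again (with $\pi_2$ in place of $\pi$ and $\pi_1$ in place of $\pi_2$, after noting that the hypotheses of Proposition~\ref{prop:split-I1} are available for $\pi_2$ since $\gr_\m(\pi_2^\vee) \cong N_2$ is a direct summand of $N$, hence Cohen--Macaulay and satisfying the requisite analogues of assumptions \refeq{it:assum-i}--\refeq{it:assum-iv}) to see that the $\m$-adic filtration on $\pi_2^\vee$ induces the $\m$-adic filtration on $(\pi_2/\pi_1)^\vee = \pi'^\vee$. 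This shows that all the graded-module statements reduce cleanly to the subrepresentation case already handled by Proposition~\ref{prop:split-I1}; in particular Corollary~\ref{cor:split-I1}(ii) applied twice gives that $\gr_\m(\pi'^\vee)$ is a direct summand of $\gr_\m(\pi^\vee) \cong N$ cut out by the subset $\mathscr P'$, which is (ii).

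Granting (ii), part (i) is immediate: since $\gr_\m(\pi'^\vee) \cong \bigoplus_{\lambda \in \mathscr P'} \chi_\lambda^{-1} \otimes R/\mathfrak a(\lambda)$, the characteristic cycle $\mathcal Z(\gr_\m(\pi'^\vee))$ is determined, and by \cite[Prop.~3.3.5.3(i),(ii)]{BHHMS2} the dimension $\dim_{\F\ppar X} D_\xi^\vee(\pi')$ equals the number of $\lambda \in \mathscr P'$ with $X^{\rm ss}(\lambda) = \emptyset$ (or the appropriate combinatorial count), which by Lemma~\ref{lem:I1-invt-component} is exactly $|\JH(\soc_{\GL_2(\cO_K)}(\pi'))| = \ell(\soc_{\GL_2(\cO_K)}(\pi'))$; the final clause (nonvanishing when $\pi' \ne 0$) follows since $\soc_{\GL_2(\cO_K)}(\pi') \ne 0$. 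For part (iii), I would repeat the argument of Theorem~\ref{thm:length:f+1}(i): let $\pi'_0 \subseteq \pi'$ be the subrepresentation generated by $\soc_{\GL_2(\cO_K)}(\pi')$; then $\soc_{\GL_2(\cO_K)}(\pi'_0) = \soc_{\GL_2(\cO_K)}(\pi')$, so by the description of $\gr_\m$ in terms of the socle (the last sentence of Proposition~\ref{prop:split-I1}, now available for subquotients via the reduction above) the surjection $\gr_\m(\pi'^\vee) \onto \gr_\m(\pi'^\vee_0)$ is an isomorphism, forcing $\pi'^\vee/\m^n \congto \pi'^\vee_0/\m^n$ for all $n$ and hence $\pi' = \pi'_0$. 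Part (iv): multiplicity freeness of $\pi$ follows because distinct Jordan--H\"older constituents of $\pi$ have distinct $\GL_2(\cO_K)$-socles — indeed by Theorem~\ref{thm:length:f+1} each graded piece in a composition series is generated by its socle, and by Proposition~\ref{prop:split-I1} the socle of each subrepresentation is a sub-sum $\bigoplus_{i \in \Sigma} \soc_{\GL_2(\cO_K)}(D_0(\rhobar)_i)$, so the composition factors biject with the ``atoms'' $\Sigma_{j+1}\setminus\Sigma_j$ of a maximal chain of subsets, each appearing once; the length bound $\le f+1$ is Theorem~\ref{thm:length:f+1}(ii).

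Finally, for part (v) I would argue as in the construction of $\widetilde\pi_2$ inside the proof of Proposition~\ref{prop:split-I1}. Since by assumption \refeq{it:assum-iii} we have $\EE^{2f}_\Lambda(\pi^\vee) \cong \pi^\vee \otimes \eta$ with $\eta = \det(\rhobar)\omega^{-1}$ as $\GL_2(K)$-representations, and since $\Lambda$ is Auslander regular with $\pi'^\vee$ Cohen--Macaulay of grade $2f$ by (ii), applying $\EE^{2f}_\Lambda(-)$ to the short exact sequence $0 \to \pi_2^\vee/\pi'^\vee\text{-part} \to \dots$ — more precisely, writing $\pi' = \pi_2/\pi_1$ and using that $\EE^{2f}_\Lambda$ is exact on Cohen--Macaulay $\Lambda$-modules of grade $2f$ (by \cite[Cor.~III.2.1.6]{LiOy} and d\'evissage, all subquotients of $\pi^\vee$ being Cohen--Macaulay of grade $2f$) — one gets that $\EE^{2f}_\Lambda(\pi'^\vee) \otimes \eta^{-1}$ is realized as a subquotient $\pi''^\vee$ of $\pi^\vee$, $\GL_2(K)$-equivariantly via the action of \cite[Prop.~3.2]{Ko}. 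To identify $\soc_{\GL_2(\cO_K)}(\pi'')$, I would pass to graded pieces: by \eqref{eq:E-grpi} (extended to the subquotient $\pi'$, using that $\gr_\m(\pi'^\vee)$ is Cohen--Macaulay) we have $\gr(\EE^{2f}_\Lambda(\pi'^\vee)) \cong \EE^{2f}_{\gr(\Lambda)}(\gr_\m(\pi'^\vee))$, and by \cite[Prop.~3.3.1.10]{BHHMS2} together with Corollary~\ref{cor:involution-components} the degree-$0$ part of $\EE^{2f}_{\gr(\Lambda)}(\gr_\m(\pi'^\vee)) \otimes \eta^{-1}$ — hence $\pi''^{I_1}$ — is dual to $\bigoplus_{i \in \Sigma'} D_0(\rhobar)_{f-i}^{I_1}$, where $\Sigma'$ is the subset attached to $\pi'^{I_1}$. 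By (iii) and (iv), $\pi''$ is determined by its $\GL_2(\cO_K)$-socle, which by the argument in the proof of Proposition~\ref{prop:split-I1} (passing from $\pi''^{I_1}$ back to $\soc_{\GL_2(\cO_K)}(\pi'')$ via \cite[Thm.~15.4]{BP}) equals $\bigoplus_{i \in \Sigma'} \soc_{\GL_2(\cO_K)}(D_0(\rhobar)_{f-i})$, as claimed. The main obstacle is the bookkeeping in part (v): carefully checking that the canonical filtration on $\EE^{2f}_\Lambda(\pi'^\vee)$ has the right degree-$0$ behavior (so that one can read off $\pi''^{I_1}$), and that the Kohlhaase $\GL_2(K)$-action is compatible with all the identifications; but these are the same computations already carried out for $\widetilde\pi_2$ in Proposition~\ref{prop:split-I1}, so it should go through essentially verbatim.
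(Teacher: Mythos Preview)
Your overall strategy is in the right spirit, but there is a genuine gap in your reduction step for part (ii). You propose to apply Corollary~\ref{cor:split-I1}(i) ``with $\pi_2$ in place of $\pi$'', justifying this by saying that $\pi_2$ satisfies ``the requisite analogues of assumptions \ref{it:assum-i}--\ref{it:assum-iv}'' because $\gr_\m(\pi_2^\vee)\cong N_2$ is a direct summand of $N$. This does not work: assumptions \ref{it:assum-i}--\ref{it:assum-iv} are statements about the representation $\pi$ itself (e.g.\ $\pi^{K_1}\cong D_0(\rhobar)$, the essential self-duality $\EE^{2f}_\Lambda(\pi^\vee)\cong\pi^\vee\otimes\eta$, and the $\Ext^i_{I/Z_1}(\chi,\pi)$ dimensions), not about its graded module. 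A proper subrepresentation $\pi_2\subsetneq\pi$ satisfies none of them in general; in particular the self-duality of $\pi_2$, which is essential in the proof of Proposition~\ref{prop:split-I1}, is part of what you are trying to prove in (v), so your argument would be circular.

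The paper avoids this by never re-running Proposition~\ref{prop:split-I1} on $\pi_2$. Instead, for (ii) it applies Proposition~\ref{prop:split-I1} \emph{twice to $\pi$}, once with the subrepresentation $\pi_1$ and once with $\pi_2$, obtaining $\gr_\m(\pi_1^\vee)\cong N_1$ and $\gr_\m(\pi_2^\vee)\cong N_2$. The short exact sequence $0\to\gr_F(\pi'^\vee)\to\gr_\m(\pi_2^\vee)\to\gr_\m(\pi_1^\vee)\to 0$ (with $F$ the filtration on $\pi'^\vee$ induced from the $\m$-adic filtration on $\pi_2^\vee$) then sits over $0\to N'\to N_2\to N_1\to 0$, and two of the three vertical maps being isomorphisms forces the third to be one as well; one then checks $F$ is $\m$-adic exactly as in Corollary~\ref{cor:split-I1}(i). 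This also makes the other parts much shorter than in your plan: for (i) one just uses that the statement is known for the subrepresentations $\pi_1,\pi_2$ by \cite[Prop.~3.3.5.3(ii)]{BHHMS2}, together with exactness of $D_\xi^\vee$ and Lemma~\ref{lem:soc-exact}; for (iii) one observes that $\pi_2$ is generated by its socle (Theorem~\ref{thm:length:f+1}(i)) and hence so is its quotient $\pi'$; and for (v) one first treats quotients of $\pi$ (already done inside Step~2 of Proposition~\ref{prop:split-I1}) and then uses the exact sequence $0\to\pi'\to\pi/\pi_1\to\pi/\pi_2\to 0$ together with Cohen--Macaulayness to get an exact sequence on $\EE^{2f}_\Lambda$, bypassing your delicate degree-$0$ analysis of the canonical filtration.
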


\begin{proof}
(i)  Choose $\pi_1\subset \pi_2\subset\pi$ such that $\pi'\cong\pi_2/\pi_1$.  By \cite[Prop.~3.87(ii)]{BHHMS2} the assertion holds for $\pi_1$ and $\pi_2$, so we conclude by the exactness of $D_{\xi}^{\vee}(-)$ (\cite[Thm.~3.29]{BHHMS2}) combined with Lemma \ref{lem:soc-exact}.

(ii) Let $\pi_1,\pi_2$ be as in (i). 
Let $\mathscr P_1 \subset \mathscr P_2 \subset \mathscr P$ be the subsets corresponding to $\pi_1 \subset \pi_2$ {(see \S~\ref{sec:notation})}, so $\mathscr P' = \mathscr P_2\setminus \mathscr P_1$ by the proof of Proposition~\ref{prop:split-I1}.
Let $N_1 \subset N_2$ (resp.\ $N'$) be the direct summands of $N$ determined by $\mathscr P_1 \subset \mathscr P_2$ (resp.\ $\mathscr P'$).
As in Step 1 of the proof of Proposition \ref{prop:split-I1} we get a commutative diagram 
\[\xymatrix{ 0 \ar[r] &\gr_{F}(\pi'^{\vee})\ar[r] & \gr_{\m}(\pi_2^{\vee})\ar[r] & \gr_{\m}(\pi_1^{\vee})\ar[r] & 0 \\
  0 \ar[r] &N' \ar[r]\ar[u] & N_2 \ar[r]\ar[u] & N_1 \ar[r]\ar[u] & 0}\]
with exact rows, where $F$ is the filtration on $\pi'^\vee$ induced from the $\m$-adic filtration on $\pi_2^{\vee}$, and 
by Step 2 of the proof of Proposition \ref{prop:split-I1}, the second and third vertical arrows are isomorphisms, hence so is the first.
As $0\ra \pi_1^{I_1}\ra \pi_2^{I_1}\ra \pi'^{I_1}\ra0$ is exact, we conclude that $F$ is the $\m$-adic filtration exactly as at the end of the proof of Corollary \ref{cor:split-I1}(i).

(iii) Let $\pi_1,\pi_2$ be as in (i). The assertion holds for subrepresentations of $\pi$ by Theorem \ref{thm:length:f+1}(i),  so $\pi_2$ is generated by $\soc_{\GL_2(\cO_K)}(\pi_2)$.  Thus $\pi'$  is generated by the image of $\soc_{\GL_2(\cO_K)}(\pi_2)$ in $\pi'$, which is contained in $\soc_{\GL_2(\cO_K)}(\pi')$ (even equal by Lemma~\ref{lem:soc-exact}).

(iv) It is clear by the exact sequence \eqref{eq:socle} in Lemma~\ref{lem:soc-exact}, since $\soc_{\GL_2(\cO_K)}(\pi)$ is multiplicity free.

(v) If $\pi'$ is a quotient of $\pi$, this is established in Step 2 of the proof of Proposition \ref{prop:split-I1}.
In general, if $\pi_1\subset \pi_2\subset\pi$ such that $\pi'\cong\pi_2/\pi_1$, then we get an exact sequence $0 \to \pi' \to \pi/\pi_1 \to \pi/\pi_2 \to 0$
and hence an exact sequence
\[0 \to \EE^{2f}_\Lambda(\pi'^\vee) \otimes \eta \to \EE^{2f}_\Lambda((\pi/\pi_1)^\vee) \otimes \eta \to \EE^{2f}_\Lambda((\pi/\pi_2)^\vee) \otimes \eta \to 0,\]
as $\pi'^\vee$ is Cohen--Macaulay by part (ii) and where $\eta \defeq  \det(\brho)\omega^{-1}$.
Then the claim follows from Lemma~\ref{lem:soc-exact} and the known case for quotient representations (cf.\ Step 2 of the proof of Proposition~\ref{prop:split-I1}).
\end{proof}

\section{Finite length in the nonsplit reducible case}
\label{sec:finite-length-nonss}

We prove that a smooth mod $p$ representation $\pi$ of $\GL_2(K)$ satisfying assumptions \ref{it:assum-i}--\ref{it:assum-iv} of \S~\ref{sec:theorem} with $r=1$ has finite length when the underlying Galois representation $\rhobar$ is \emph{nonsplit} reducible. We also establish several structural results on $\pi$ as an $I$- and $\GL_2(\cO_K)$-representation.

{{Unless} otherwise stated, we} assume that $\rhobar$ is nonsplit reducible and $0$-generic.
{We let $\pi$ be an admissible smooth representation of $\GL_2(K)$ over $\F$ satisfying assumptions \refeq{it:assum-i}--\refeq{it:assum-iv} of \S~\ref{sec:theorem}. 
Recall from \S~\ref{sec:verify-assumpt-iv} that $\pi=\pi(\brho)$ as defined in \S~\ref{sec:verify-assumpt-iv} satisfies assumption \ref{it:assum-iv} for any $r\geq 1$. 
{From \cite[Thm.~6.3(ii)]{YitongWangGKD} together with \cite[Prop.~6.4.6]{BHHMS1} (the hypotheses of the latter being checked in the proof of  \cite[Thm.~5.1]{YitongWangGKD})} one deduces that $\pi(\rhobar)$ satisfies assumptions \ref{it:assum-i} and \ref{it:assum-ii} of \S~\ref{sec:theorem}, for any $r\geq 1$.
It also satisfies assumption \ref{it:assum-iii} (again, for any $r\geq 1$) by \cite[Thm.~8.2]{HuWang2} with  \cite[Thm.~6.3(i)]{YitongWangGKD}}.

{As in \S~\ref{sec:finite-length-ss} we assume that $r=1$ {in assumptions \ref{it:assum-i} and \refeq{it:assum-iv}} throughout.}

\subsection{Preliminaries on Serre weights}
\label{sec:prel-serre-weights}

We collect a number of results on the combinatorics of Serre weights and injective envelopes.

Recall from \S~\ref{sec:notation} that $D_0(\brho)=\bigoplus_{\sigma\in W(\rhobar)}D_{0,\sigma}(\brho)$, and from \cite[\S~13]{BP} that $D_{0,\sigma}(\brho)$ is maximal (for the inclusion) with respect to the two properties $\soc_{\GL_2(\cO_K)}(D_{0,\sigma}(\brho))=\sigma$ and $\JH(D_{0,\sigma}(\brho)/\sigma)\cap W(\brho)=\emptyset$.
In particular, $D_{0,\sigma}(\brho^{\ss})\subset D_{0,\sigma}(\brho)$.

We {first} generalize Lemma \ref{lem:I1-invt-component} to the case where $\brho$ need not be semisimple.

\begin{lem}\label{lem:I1-invt-nonss}
If $\mu\in\P$, then $\chi_{\mu}$ occurs in $D_{0,\sigma}(\rhobar)^{I_1}$, where $\sigma\in W(\brho)$ 
is determined \textnormal{(}via \eqref{eq:J-lambda}\textnormal{)} by $J_{\sigma}=J_{\brho}\cap J_{\mu}$. 
 Moreover, as a Jordan--H\"older factor of $\Ind_I^{\GL_2(\cO_K)}\chi_{\mu}$, $\sigma$ is parametrized {(via Remark \ref{rmk:JH:conventions} and \eqref{eq:S-xi1}) by the following subset of $\{0,\dots,f-1\}$:}
 \begin{equation}\label{eq:X-mu}X(\mu)\defeq \{j : \mu_j(x_j)\in\{x_j,p-2-x_j,p-3-x_j\}\}\cup\{j\in J_{\brho}:\mu_j(x_j)=x_j+1\}.\end{equation}
\end{lem}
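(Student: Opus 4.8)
\textbf{Plan of proof for Lemma~\ref{lem:I1-invt-nonss}.} The strategy is to reduce the statement to the already-known semisimple case (Lemma~\ref{lem:I1-invt-component}) by a careful bookkeeping of the parametrizing combinatorics, using the comparison $D_{0,\sigma}(\brho^\ss)\subset D_{0,\sigma}(\brho)$ and the explicit description of $\P\subset \P^\ss$ via the set $J_\brho$ recalled in~(\ref{eq:P}). First I would recall that, since $\brho$ is $0$-generic and reducible, every $\mu\in\P$ is an element of $\P^\ss$, and by~(\ref{eq:P}) the coordinates satisfying $\mu_j(x_j)\in\{x_j+2,p-3-x_j\}$ force $j\in J_\brho$; likewise the Serre weight $\sigma=\sigma_\mu$ attached to $\mu$ as a $\GL_2(\cO_K)$-constituent lies in $W(\brho)$ precisely when the coordinates in $\{x_j+1,p-3-x_j\}$ force $j\in J_\brho$. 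The point is that $\chi_\mu=\chi_{\lambda(\mu)}$ for a unique $\lambda(\mu)\in\D^\ss$, namely the one with $J_{\lambda(\mu)}=J_\mu$, and among all $\lambda\in\D^\ss$ sharing a given character the relevant one for $W(\brho)$ has its "problematic" coordinates (those in $\{x_j+1,p-3-x_j\}$) confined to $J_\brho$; this is exactly the content of the formula $J_\sigma=J_\brho\cap J_\mu$.

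The key steps, in order, are: (1) use Lemma~\ref{lem:I1-invt-component} (or rather its proof, which only uses $\brho^\ss$) to conclude that $\chi_\mu$ occurs in $D_{0,\tau}(\brho^\ss)^{I_1}$ for the weight $\tau\in W(\brho^\ss)$ with $J_\tau=J_\mu$; (2) identify, among the constituents of $\Ind_I^{\GL_2(\cO_K)}\chi_\mu$ lying in $W(\brho)$, the unique one $\sigma$ with $\soc$-type $\chi_\mu$ — here one invokes that $\chi_\mu\in\JH(D_0(\brho)^{I_1})$ by \cite[\S~4]{breuil-buzzati}, together with $\chi_\mu\neq\chi_\mu^s$ (\cite[Cor.~13.6]{BP}), so $\sigma$ is well-defined; (3) compute $J_\sigma$: since $\sigma\in W(\brho)$ and $\chi_\sigma=\chi_\mu$, necessarily $J_\sigma\subseteq J_\brho$ by the description of $\D$ in~(\ref{eq:P}), while $J_\sigma$ must be "compatible" with $J_\mu$ in the sense that $\sigma$ and $\tau$ have the same character, forcing $J_\sigma=J_\brho\cap J_\mu$ after checking the relations (1)--(3) defining $\P^\ss$; (4) translate this into the Jordan--Hölder parametrization: by Remark~\ref{rmk:JH:conventions} the constituent of $\Ind_I^{\GL_2(\cO_K)}\chi_\mu$ attached to $\sigma$ corresponds to the subset $\mathcal S(\xi)$ where $\xi^c$ is read off from the explicit Serre weight $\sigma_\mu$; carrying out the change of variables $\xi^c=\xi(p-1-x_0,\dots,p-1-x_{f-1})$ and matching against the four cases of~(\ref{eq:id:al}) for the coordinates of $\mu$ yields precisely the set $X(\mu)$ of~(\ref{eq:X-mu}). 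For the semisimple instance this computation is Lemma~\ref{lem:I1-invt-component} with $X^{\rm ss}(\lambda)$, and the nonsplit correction is exactly the extra clause $\{j\in J_\brho:\mu_j(x_j)=x_j+1\}$ versus the semisimple $\{j:\mu_j(x_j)\in\{x_j+1,p-2-x_j\}\}$ — so one must track carefully which coordinates change when one passes from $\D^\ss$ to $\D$.

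I expect the main obstacle to be step~(3)–(4): the bookkeeping of how the parametrizing $f$-tuple changes between $\P^\ss$ and the subset corresponding to $W(\brho)$, i.e.\ verifying that the constituent with $I_1$-character $\chi_\mu$ inside $D_0(\brho)$ (rather than $D_0(\brho^\ss)$) really is governed by $J_\brho\cap J_\mu$ and that its $\GL_2(\cO_K)$-socle weight is parametrized by $X(\mu)$ and not by some shift of it. Concretely the delicate point is the coordinate-by-coordinate case analysis: for $j\in J_\brho$ with $\mu_j(x_j)=x_j+1$ the weight "sees" the element of $W(\brho)\setminus W(\brho^\ss)$, which is why such $j$ enter $X(\mu)$; whereas a coordinate $\mu_j(x_j)=p-2-x_j$ with $j\notin J_\brho$, which does contribute to $X^{\rm ss}(\lambda)$ in the semisimple count, must here be handled via the containment $D_{0,\sigma}(\brho^\ss)\subset D_{0,\sigma}(\brho)$ and the fact that $\mu\in\P$ constrains it. I would organize this as a finite check over the six possible values of $\lambda_j(x_j)$ crossed with $j\in J_\brho$ or not, citing \cite[\S~4]{breuil-buzzati} and \cite[\S~2,\S~19]{BP} for the explicit formulas, and \cite[Lemma~2.1]{HuWang2} for the uniqueness of $\lambda\in\P$ with a given character. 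The rest of the argument is formal.
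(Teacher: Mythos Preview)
Your outline lands in the right place --- a coordinate-by-coordinate case check citing \cite[\S~4]{breuil-buzzati} --- and that is exactly what the paper does. But the conceptual scaffolding you propose for step~(3) is wrong, and if you followed it literally you would not arrive at the result.

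Specifically: the claim ``$\chi_\mu=\chi_{\lambda(\mu)}$ for a unique $\lambda(\mu)\in\D^\ss$ with $J_{\lambda(\mu)}=J_\mu$'' is false whenever $\mu_j(x_j)\in\{x_j+2,p-1-x_j\}$ for some $j$, since then $\mu\notin\D^\ss$ and the map $\lambda\mapsto\chi_\lambda$ is injective on $\P^\ss$. Likewise ``$\chi_\sigma=\chi_\mu$'' is false in general: $\chi_\sigma=\chi_\lambda$ for the $\lambda\in\D$ parametrizing $\sigma$, and $\lambda=\mu$ only when $\mu\in\D$. And ``$\sigma$ and $\tau$ have the same character'' is equally false --- they are distinct Serre weights (unless $J_\mu\subseteq J_\brho$). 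The comparison $D_{0,\sigma}(\brho^\ss)\subset D_{0,\sigma}(\brho)$ only helps in that special case; it does not by itself determine $\sigma$ from $\tau$ in general.

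What actually pins down $\sigma$ is the following, which is the paper's route: by Frobenius reciprocity $\sigma$ is the socle of the image of $\Ind_I^{\GL_2(\cO_K)}\chi_\mu\to D_{0,\sigma}(\brho)$, hence a constituent of $\Ind_I^{\GL_2(\cO_K)}\chi_\mu$ parametrized by some $\xi\in\mathcal P$ with $\xi_j^c(\mu_j(x_j))=\lambda_j(x_j)$ (where $\lambda\in\D$ corresponds to $\sigma$). The crucial input is \cite[Prop.~4.3]{breuil-buzzati} (specifically the formula for $J^{\max}$), which yields the compatibility constraint
\[
\xi_j(y_j)\in\{y_j-1,p-1-y_j\}\ \Longleftrightarrow\ \mu_j(x_j)\in\{x_j,\underline{x_j+1},p-2-x_j,p-3-x_j\}
\]
(underline meaning $j\in J_\brho$). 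This \emph{is} the description of $X(\mu)$; and running through the compatible pairs $(\xi_j,\mu_j)$ then gives $\lambda_j(x_j)\in\{x_j+1,p-3-x_j\}\Leftrightarrow\mu_j(x_j)\in\{\underline{x_j+1},x_j+2,p-3-x_j\}$, i.e.\ $J_\sigma=J_\brho\cap J_\mu$. So the ``finite check'' you anticipate is correct, but it is not organized around matching characters --- it is organized around the functional identity $\lambda=\xi^c\circ\mu$ together with Prop.~4.3 of \cite{breuil-buzzati}.
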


\begin{proof}

The proof goes as in \cite[Prop.~2.1]{Hu-SMF} and  we  only briefly recall it.

Let {$\lambda \in \D$ such that} $\sigma \in W(\brho)$ corresponds to $\lambda$.
It is clear that  $\sigma$ 
is a subquotient of $\Ind_I^{\GL_2(\cO_K)}\chi_{\mu}$, so 
{via Remark \ref{rmk:JH:conventions}} there is a unique $f$-tuple $\xi\in \mathcal{P}$ such that 
\begin{equation}\label{eq:xi2}\xi_j^{c}(\mu_j(x_j))=\xi_j(\mu^{[s]}_j(x_j))=\lambda_j(x_j)\end{equation}
for any $j\in\{0,\dots,f-1\}$, where 
\begin{equation}\label{eq:[s]}
\mu^{[s]}\defeq (p-1-\mu_0(x_0),\dots,p-1-\mu_{f-1}(x_{f-1}))\in\mathscr{P}.
\end{equation}
{Here, we used \cite[Lemma~2.1, Lemma~2.7]{HuWang2} to obtain \eqref{eq:xi2} (equality between formal $f$-tuples).}  

Note that our convention for $J \mapsto \xi_J$ is shifted by one compared to \cite[\S~2]{BP} and \cite[\S~2]{breuil-buzzati}.
Using the second equality in \eqref{eq:xi2}, \cite[Prop.~4.3]{breuil-buzzati} (and {the formula for $J^{\max}$ in }eq.~(19) in its proof, {replacing $\lambda$ there by our $\mu$} and noting that $\chi_\mu \ne \chi_\mu^s$) gives the following relation  
\begin{equation}\label{eq:comp2}
\begin{array}{rll}\xi_j(y_j)\in\{y_j-1,p-1-y_j\}&\Longleftrightarrow &\mu_j^{[s]}(x_j)\in\{x_j+1,x_j+2,\un{p-2-x_j},p-1-x_j\}\\
&\Longleftrightarrow&\mu_j(x_j)\in\{x_j,\un{x_j+1},p-2-x_j,p-3-x_j\},\end{array}\end{equation}
making the convention that an underlined entry is only allowed when $j \in J_{\rhobar}$.
We say that a  pair $(\xi,\mu)\in \mathcal{P}\times\mathscr{P}$ is \emph{compatible} if \eqref{eq:comp2} holds.

It is straightforward to list all the possibilities of compatible pairs  $(\xi,\mu)\in\mathcal{P}\times\mathscr{P}$ and verify that
\[\begin{array}{rll} \xi_j(\mu_j^{[s]}(x_j))=\lambda_j(x_j)\in\{x_j+1,p-3-x_j\} &\Longleftrightarrow& \mu_j^{[s]}(x_j)\in \{x_j+2,\un{p-2-x_j},p-3-x_j\}\\
&\Longleftrightarrow& \mu_j(x_j)\in \{\un{x_j+1},x_j+2,p-3-x_j\}.  \end{array}\]
The left-hand side is equivalent to $j \in J_\sigma = J_\lambda$ and the right-hand side is equivalent to $j \in J_\mu \cap J_{\rhobar}$ by \eqref{eq:J-lambda}.
 The second part results from~(\ref{eq:comp2}).
\end{proof} 

Let $\sigma$ be a 1-generic Serre weight.
Recall that the set of Jordan--H\"older factors of $\Inj_{\Gamma}\sigma$ is parametrized by a set of $f$-tuples denoted by $\mathcal{I}\defeq \mathcal{I}(x_0,\dots,x_{f-1})$ in \cite[\S~3]{BP} {(do not confuse this $\mathcal I$ with the ideal $\mathcal I$ before Lemma \ref{lem:min-graded-splitting}!)}. Given $\lambda\in \mathcal{I}$ we write \[\cS(\lambda)\defeq \big\{j\in\{0,\dots,f-1\}: \lambda_j(x_j)\in \{x_j\pm 1, p-2-x_j\pm1\}\big\}\] as in \cite[\S~4]{BP}. (This notation is consistent with~\eqref{eq:S-xi1}, noting that $\mathcal{P} \subset \mathcal{I}$.)
 
The following lemma is true for any $0$-generic $\brho$. 
\begin{lem}\label{lem:Dss-in-D0} 
We have $W(\brho^{\rm ss})\subset \JH(D_0(\brho))$.
\end{lem}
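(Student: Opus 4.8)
\textbf{Proof proposal for Lemma~\ref{lem:Dss-in-D0}.}

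The plan is to show that every Serre weight $\sigma \in W(\brho^{\ss})$ actually occurs as a Jordan--H\"older constituent of $D_0(\brho)$, even though $D_0(\brho)$ is constructed from $\brho$ (not $\brho^{\ss}$) and hence only ``sees'' the smaller weight set $W(\brho)$ in its socle. The idea is that $W(\brho^{\ss}) = W(\brho) \sqcup (W(\brho^{\ss})\setminus W(\brho))$, so the content is to show each $\sigma \in W(\brho^{\ss})\setminus W(\brho)$ occurs somewhere inside some $D_{0,\tau}(\brho)$ with $\tau \in W(\brho)$.

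First I would reduce to a statement about $I_1$-invariants and characters of $H$. Since each $D_{0,\tau}(\brho)$ is multiplicity free, a Serre weight $\sigma$ occurs in $D_0(\brho)$ if and only if $\chi_\sigma$ (or rather the corresponding character, using $\chi_\sigma \ne \chi_\sigma^s$ by \cite[Cor.~13.6]{BP}) occurs in $D_0(\brho)^{I_1} = \bigoplus_{\tau} D_{0,\tau}(\brho)^{I_1}$. By \cite[\S~13]{BP}, $D_{0,\tau}(\brho)$ is the maximal representation with socle $\tau$ and no other constituents in $W(\brho)$; combined with \cite[Cor.~13.6]{BP} and the description of the $I_1$-invariants, one gets an explicit description of $\JH_H(D_0(\brho)^{I_1})$ in terms of the parametrizing set $\P$ from \S~\ref{sec:notation}: namely $\chi_\mu$ occurs for all $\mu \in \P$. (This is essentially the content of Lemma~\ref{lem:I1-invt-nonss}, which tells us \emph{which} component $D_{0,\sigma}(\brho)$ contains each $\chi_\mu$, via $J_\sigma = J_{\brho}\cap J_\mu$.) Dually, the parametrizing set for $W(\brho^{\ss})$ sitting inside $D_0(\brho^{\ss})$ is $\D^{\ss} \subset \P^{\ss}$.

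The key step is then the inclusion $\D^{\ss} \subset \P$ as subsets of the ambient combinatorial set of $f$-tuples $\lambda = (\lambda_0(x_0),\dots,\lambda_{f-1}(x_{f-1}))$. Using the explicit descriptions recalled in \S~\ref{sec:notation}: $\D^{\ss}$ consists of $\lambda \in \P^{\ss}$ with $\lambda_j(x_j) \in \{x_j, x_j+1, p-3-x_j, p-2-x_j\}$, while by \eqref{eq:P} we have $\P = \{\lambda \in \P^{\ss} : \lambda_j(x_j) \in \{x_j+2, p-3-x_j\} \Rightarrow j \in J_{\brho}\}$. So I need to check that if $\lambda \in \D^{\ss}$, then for every $j$ with $\lambda_j(x_j) \in \{x_j+2, p-3-x_j\}$ we have $j \in J_{\brho}$. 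But $\lambda \in \D^{\ss}$ forces $\lambda_j(x_j) \ne x_j+2$ (that value is not among the four allowed for $\D^{\ss}$), so the only case to consider is $\lambda_j(x_j) = p-3-x_j$; and I claim this too forces $j \in J_{\brho}$ when $\lambda \in \D^{\ss}$, because the defining condition for $\D$ inside $\D^{\ss}$ (first line of the displayed equations before \eqref{eq:P}) is precisely $\lambda_j(x_j) \in \{x_j+1, p-3-x_j\} \Rightarrow j \in J_{\brho}$ --- wait, that is the condition defining $\D$, not $\D^{\ss}$. So more care is needed: an element of $\D^{\ss}$ need not be in $\D$. The right statement is simply the set-theoretic one: every $\lambda \in \D^{\ss}$, viewed as a tuple, satisfies the constraints (1)--(3) defining $\P^{\ss}$ (it lies in $\P^{\ss}$ by the very notation $\D^{\ss} \subset \P^{\ss}$), and the extra constraint cutting out $\P$ inside $\P^{\ss}$ is vacuous for it because the forbidden value $x_j+2$ never occurs for $\lambda \in \D^{\ss}$ and the value $p-3-x_j$ is allowed in $\P$ only when $j \in J_{\brho}$ --- so I must instead argue directly that $\D^{\ss} \subset \P$ by re-deriving it from \cite[\S~11, \S~13]{BP} or from the parametrization in \cite[\S~4]{breuil-buzzati}, which identifies the weights of $D_0(\brho^{\ss})$ with a subset of the weights appearing in $D_0(\brho)^{I_1}$.

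The cleanest route, and the one I would actually carry out, avoids this combinatorial subtlety: use Lemma~\ref{lem:I1-invt-nonss} in reverse. Given $\sigma \in W(\brho^{\ss})$ parametrized by $\lambda \in \D^{\ss}$, pick $\mu \in \P$ with $J_\mu = J_\lambda$ (possible since $\P^{\ss} \to \{\text{subsets}\}$, $\mu \mapsto J_\mu$ restricted to $\D^{\ss}$ is a bijection, and $\P$ contains a representative of each $J$-value --- this is where a genericity-free combinatorial check is needed, but it follows from the structure of $\P^{\ss}$ and $J_{\brho}$). Then Lemma~\ref{lem:I1-invt-nonss} shows $\chi_\mu$ occurs in $D_{0,\tau}(\brho)^{I_1}$ for the $\tau \in W(\brho)$ with $J_\tau = J_{\brho}\cap J_\mu$, and the parametrization \eqref{eq:X-mu} identifies \emph{which} Jordan--H\"older factor of $\Ind_I^{\GL_2(\cO_K)}\chi_\mu$ it is --- and one checks this factor is exactly $\sigma$ by comparing $X(\mu)$ with $X^{\ss}(\mu')$ for the corresponding $\mu'$, using \eqref{eq:X-lambda-ss} versus \eqref{eq:X-mu} and the relation between $\brho$ and $\brho^{\ss}$ (which only changes $J_{\brho}$ to $\{0,\dots,f-1\}$). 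The main obstacle I anticipate is precisely this last comparison: verifying that the two parametrizing subsets $X(\mu)$ and $X^{\ss}(\mu)$ cut out the same Serre weight when $\mu$ ranges over $\D^{\ss}$, which amounts to a careful case analysis over the entries $\mu_j(x_j) \in \{x_j, x_j+1, p-3-x_j, p-2-x_j\}$ (the four ``$\D^{\ss}$-values'') checking that the conditions ``$j \in J_{\brho}$'' appearing in \eqref{eq:X-mu} but not in \eqref{eq:X-lambda-ss} never actually affect membership for such $\mu$. This is elementary but must be done for each of the four values and the two sub-cases of \eqref{eq:X-mu}.
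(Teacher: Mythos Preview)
Your approach has a genuine gap at the very first step: the claim ``a Serre weight $\sigma$ occurs in $D_0(\brho)$ if and only if $\chi_\sigma$ occurs in $D_0(\brho)^{I_1}$'' is false. The $I_1$-invariants of $D_0(\brho)$ are parametrized by $\P$ (they form the socle filtration's bottom layer as $I$-representation), but the Jordan--H\"older factors of $D_0(\brho)$ form a much larger set. A constituent $\sigma$ sitting in a higher cosocle layer of some $D_{0,\tau}(\brho)$ need not have $\chi_\sigma \in \JH_H(D_0(\brho)^{I_1})$. Relatedly, the combinatorial inclusion $\D^{\ss}\subset \P$ you aim for is simply false: if $j\notin J_{\brho}$ then $\lambda$ with $\lambda_j(x_j)=p-3-x_j$ can lie in $\D^{\ss}$ (this value is one of the four $\D^{\ss}$-values) but is excluded from $\P$ by \eqref{eq:P}. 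You notice this midway but do not resolve it; the ``cleanest route'' via Lemma~\ref{lem:I1-invt-nonss} still only produces $\chi_\mu$ in $D_{0,\tau}(\brho)^{I_1}$ and identifies the \emph{socle} $\tau$ of the image of $\Ind_I^{\GL_2(\cO_K)}\chi_\mu$, not the higher constituents, so it does not place your target $\sigma$ among the Jordan--H\"older factors.

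The paper's proof avoids $I_1$-invariants entirely and is a two-line argument: by \cite[Prop.~13.1, Prop.~13.4]{BP} one has $\JH(D_0(\brho)) = \JH\big(\bigoplus_{\sigma\in W(\brho)}\Inj_\Gamma\sigma\big)$, so it suffices that $W(\brho^{\ss})\subset \JH(\Inj_\Gamma\sigma_0)$ for the single weight $\sigma_0\in W(\brho)$ corresponding to $(x_0,\dots,x_{f-1})\in\D$, which follows from \cite[Lemma~3.2, Lemma~11.2]{BP}. The key observation you are missing is the equality of Jordan--H\"older sets with those of the full injective envelopes --- once you have that, no $I_1$-combinatorics is needed.
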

\begin{proof}
By the construction of $D_0(\brho)$ (see \cite[Prop.~13.1]{BP}) and \cite[Prop.~13.4]{BP} we have 
\begin{equation}\label{eq:JH-D0-rho}
  \JH(D_0(\brho)) = \JH\bigg(\bigoplus_{\sigma\in W(\brho)}\Inj_{\Gamma}\sigma\bigg).
\end{equation}
Thus  it suffices to prove that
\[W(\brho^\ss)\subset \JH\bigg(\bigoplus_{\sigma\in W(\brho)}\Inj_{\Gamma}\sigma\bigg).\] 
But it is clear from \cite[Lemma 3.2, Lemma 11.2]{BP} that $W(\brho^\ss)\subset \JH(\Inj_{\Gamma}\sigma_0)$, where $\sigma_0\in W(\brho)$ denotes the unique Serre weight corresponding to $(x_0,\dots,x_{f-1})\in \mathscr{D}$. 
\end{proof}

Recall from \cite[Cor.~3.12]{BP} that given a 0-generic Serre weight $\sigma$ and $\tau\in \JH(\Inj_{\Gamma}\sigma)$, there exists a unique finite dimensional $\Gamma$-representation $I(\sigma,\tau)$ such that $\soc_{\Gamma}I(\sigma,\tau)=\sigma$, $\cosoc_{\Gamma}I(\sigma,\tau)=\tau$ and $[I(\sigma,\tau):\sigma]=1$. 
{If $\sigma$ is $1$-generic, \cite[Cor.~4.11]{BP} implies that $I(\sigma,\tau)$ has length $2^{|\mathcal{S}(\lambda)|}$, where $\lambda\in\mathcal{I}$ corresponds to $\tau$. Recall that any $\sigma\in W(\brho^{\rm ss})$ is $n$-generic if $\brho$ is $n$-generic.}
 
\begin{lem}\label{lem:JHinW}
 Assume that $\rhobar$ is $0$-generic. 
Let $\tau\in W(\brho^\ss)$ and $\sigma\in W(\brho)$ be the unique Serre weight determined by $J_{\sigma}=J_{\brho} \cap J_{\tau}$ {\textnormal{(}via \eqref{eq:J-lambda}\textnormal{)}}. Then $\tau\in \JH(D_{0,\sigma}(\brho))$ and the Jordan--H\"older factors of $I(\sigma,\tau)$ are exactly the {Serre weights} $\tau'\in W(\brho^\ss)$ satisfying $J_{\sigma}\subset J_{\tau'}\subset J_{\tau}$. In particular, $\ell(\tau')\leq \ell(\tau)$ for any $\tau'\in \JH(I(\sigma,\tau))$, with equality if and only if $\tau'=\tau$. 
\end{lem}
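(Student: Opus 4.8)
\textbf{Proof plan for Lemma \ref{lem:JHinW}.}

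The plan is to combine Lemma \ref{lem:I1-invt-nonss} (to locate $\tau$ inside a suitable $D_{0,\sigma}(\brho)$) with the explicit description of $I(\sigma,\tau)$ coming from \cite[\S~4]{BP} (to compute its Jordan--H\"older factors), and then to translate the parametrizing combinatorial data back into the language of the subsets $J_{(-)} \subset \{0,\dots,f-1\}$. First I would establish that $\tau \in \JH(D_{0,\sigma}(\brho))$. Since $\tau \in W(\brho^\ss)$, pick $\mu \in \P^\ss$ parametrizing $\tau$; in fact we should first check that $\mu$ can be taken in $\P$, equivalently that $\chi_\tau = \chi_\mu$ with $\mu \in \P$ exists, which follows from the genericity of $\tau$ and the bijectivity of the relevant parametrization (using $\chi_\mu \ne \chi_\mu^s$, which holds by \cite[Cor.~13.6]{BP} or the remark in \S~\ref{sec:notation}). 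Then Lemma \ref{lem:I1-invt-nonss} applied to this $\mu$ shows that $\chi_\mu$ occurs in $D_{0,\sigma'}(\brho)^{I_1}$ where $J_{\sigma'} = J_{\brho} \cap J_\mu$; but $J_\mu = J_\tau$ (since $\tau$ is parametrized by $\mu$ and $J$ only depends on $\lambda_j(x_j)$ in the same way in both formulas \eqref{eq:J-lambda}), so $\sigma' = \sigma$ by the bijectivity of $\tau \mapsto J_\tau$ on $\D^\ss$. Since $D_{0,\sigma}(\brho)$ is multiplicity free with each constituent having a one-dimensional space of $I_1$-invariants on which $H$ acts by the corresponding character, the occurrence of $\chi_\mu$ in $D_{0,\sigma}(\brho)^{I_1}$ forces $\tau \in \JH(D_{0,\sigma}(\brho))$.

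Next I would compute $\JH(I(\sigma,\tau))$. Since $\sigma$ is $1$-generic (being in $W(\brho^\ss)$ with $\brho$ $0$-generic — here one needs $\brho$ to be $1$-generic, or more carefully the statement only claims $0$-genericity, so I would verify that the relevant results of \cite[\S~4]{BP} apply under the stated hypothesis, perhaps invoking that $I(\sigma,\tau)$ is well-defined by \cite[Cor.~3.12]{BP} already for $0$-generic $\sigma$ and that its constituents are described via \cite[Cor.~4.11]{BP}). Writing $\lambda \in \mathcal{I}$ for the $f$-tuple parametrizing $\tau$ as a constituent of $\Inj_\Gamma \sigma$, the factors of $I(\sigma,\tau)$ are exactly those $\tau'$ parametrized by $f$-tuples $\lambda' \in \mathcal{I}$ that lie ``between $\sigma$ and $\lambda$'', i.e.\ with $\mathcal{S}(\lambda') \subseteq \mathcal{S}(\lambda)$ and a compatibility that pins down each entry; concretely, by \cite[Cor.~4.11]{BP} the length is $2^{|\mathcal{S}(\lambda)|}$ and the constituents correspond bijectively to subsets of $\mathcal{S}(\lambda)$. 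The key computation is then to check that these $\tau'$ all lie in $W(\brho^\ss)$ and that, under the dictionary $\tau' \leftrightarrow J_{\tau'}$, the condition ``$\tau'$ is a constituent of $I(\sigma,\tau)$'' translates precisely to $J_\sigma \subseteq J_{\tau'} \subseteq J_\tau$. This is where I expect the bulk of the bookkeeping: one has to match the combinatorics of $\mathcal{I}$-tuples and the set $\mathcal{S}(\lambda)$ against the combinatorics of $\P^\ss$-tuples and the sets $J_{(-)}$, presumably by the same kind of case analysis (over the possible values of $\lambda'_j(x_j)$ for each $j$) that appears in the proof of Lemma \ref{lem:I1-invt-nonss}. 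Once the bijection $\{$subsets $T$ with $J_\sigma \subseteq T \subseteq J_\tau\} \leftrightarrow \JH(I(\sigma,\tau))$, $T \mapsto \tau'$ with $J_{\tau'} = T$, is established, the final ``in particular'' is immediate: $J_{\tau'} \subseteq J_\tau$ gives $\ell(\tau') = |J_{\tau'}| \le |J_\tau| = \ell(\tau)$, with equality iff $J_{\tau'} = J_\tau$ iff $\tau' = \tau$ by injectivity of $\tau' \mapsto J_{\tau'}$ on $\D^\ss$.

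The main obstacle will be the compatibility of genericity hypotheses and the explicit combinatorial translation in the second step: one must carefully invoke the right statements from \cite[\S~3--4]{BP} and \cite[\S~4]{breuil-buzzati}, keeping track of the shift-by-one convention noted in the proof of Lemma \ref{lem:I1-invt-nonss} between the $\xi_J$ used here and that of \cite{BP}, and ensure that every $\tau'$ arising in $I(\sigma,\tau)$ is genuinely in $W(\brho^\ss)$ (not merely in $\JH(\Inj_\Gamma \sigma)$). A clean way to organize this is: (a) show $J_\sigma \subseteq J_{\tau'} \subseteq J_\tau$ for every $\tau' \in \JH(I(\sigma,\tau))$ using that $\sigma \subseteq I(\sigma,\tau)$ (so $J_\sigma \subseteq J_{\tau'}$ by a socle/weight argument) and $I(\sigma,\tau) \subseteq \Inj_\Gamma\sigma$ with $\tau$ the cosocle forces the upper bound; (b) conversely, for each $T$ with $J_\sigma \subseteq T \subseteq J_\tau$, exhibit the corresponding Serre weight $\tau' \in W(\brho^\ss)$ and show it occurs in $I(\sigma,\tau)$ by a count, comparing $|\{T : J_\sigma \subseteq T \subseteq J_\tau\}| = 2^{|J_\tau \setminus J_\sigma|}$ with the length $2^{|\mathcal{S}(\lambda)|}$ of $I(\sigma,\tau)$ and checking $|J_\tau \setminus J_\sigma| = |\mathcal{S}(\lambda)|$. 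This reduces the hard part to the single numerical identity $|\mathcal{S}(\lambda)| = |J_\tau \setminus J_\sigma| = |J_\tau \setminus (J_\brho \cap J_\tau)| = |J_\tau \setminus J_\brho|$, which should follow directly from the compatibility relations \eqref{eq:comp2} (or their analogue in \cite[\S~4]{breuil-buzzati}) by inspecting, coordinate by coordinate, when $j \in \mathcal{S}(\lambda)$ versus when $j \in J_\tau \setminus J_\brho$.
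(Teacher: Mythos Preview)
Your first step has a genuine gap. You claim that for $\tau\in W(\brho^\ss)$ the corresponding $\mu\in\D^\ss$ (with $\chi_\mu=\chi_\tau$) can be taken in $\P$, but this fails precisely in the interesting (nonsplit) case: by \eqref{eq:P}, membership in $\P$ requires $\lambda_j(x_j)=p-3-x_j\Rightarrow j\in J_{\brho}$, and for $\tau\in W(\brho^\ss)\setminus W(\brho)$ one can certainly have $\mu_j(x_j)=p-3-x_j$ with $j\notin J_{\brho}$. When $\mu\notin\P$, the character $\chi_\tau$ does \emph{not} occur in $D_0(\brho)^{I_1}$ at all (the latter is parametrized exactly by $\P$), so Lemma~\ref{lem:I1-invt-nonss} is inapplicable and your deduction that $\tau\in\JH(D_{0,\sigma}(\brho))$ collapses. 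The paper instead invokes \cite[Lemma~15.3]{BP} directly (together with Lemma~\ref{lem:Dss-in-D0} to ensure the finiteness hypothesis there), which identifies the component of $D_0(\brho)$ containing $\tau$ without passing through $I_1$-invariants.

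For the second step, your plan could in principle be made to work, but the paper takes a much cleaner route that avoids all the coordinate-by-coordinate matching between $\mathcal I$-tuples and $\P^\ss$-tuples. Rather than analyzing $I(\sigma,\tau)$ directly, the paper observes (via \cite[Cor.~4.11, Thm.~4.7]{BP}) that the single representation $I(\sigma_\emptyset,\sigma_{\{0,\dots,f-1\}})$ has length $2^f$, constituents exactly the $\sigma_J\in W(\brho^\ss)$ for all $J\subset\{0,\dots,f-1\}$, and submodule lattice isomorphic to the lattice of order ideals of $(\{0,\dots,f-1\},\subset)$. The desired description of $\JH(I(\sigma,\tau))$ then drops out immediately from $\sigma=\sigma_{J_\sigma}$, $\tau=\sigma_{J_\tau}$, and the interval structure. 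This global argument sidesteps both the bookkeeping you anticipate and the separate verification that every constituent of $I(\sigma,\tau)$ lies in $W(\brho^\ss)$.
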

\begin{proof}
The assertion $\tau\in \JH(D_{0,\sigma}(\brho))$ follows directly from \cite[Lemma 15.3]{BP} (note that the condition $\ell(\brho,\tau)<+\infty$ in \emph{loc.~cit.}~is satisfied by Lemma \ref{lem:Dss-in-D0}). 
To verify the remaining claim, for any subset $J \subset \{0,1,\dots,f-1\}$ let $\sigma_J \in W(\brho^\ss)$ determined by $J_{\sigma_J} = J$.
From \cite[Cor.~4.11]{BP} we deduce that $I(\sigma_\emptyset,\sigma_{\{0,\dots,f-1\}})$ is of length $2^f$ with constituents all $\sigma_J$ ($J \subset \{0,1,\dots,f-1\}$).
Moreover, the proof of \emph{loc.~cit.} (referring to \cite[Thm.~4.7]{BP}) shows that the lattice of submodules is isomorphic to the lattice of ideals of the partially ordered set $(\{0,\dots,f-1\},\subset)$, by sending a submodule $M$ to the ideal $\{J : \sigma_J \in \JH(M)\}$.
The claim follows, since $\sigma = \sigma_{J_{\brho} \cap J_\tau}$ and $\tau = \sigma_{J_\tau}$.
\end{proof}

\begin{lem}\label{lem:J-sets-add-up}
  Suppose that $\lambda \in \P$ and that $J \defeq  \{ j \in J_{\rhobar}^c : \lambda_j(x_j) \in \{x_j,p-1-x_j\}\}$.
  Then $|J_\lambda|+|J_{\lambda^*}|+|J| = f$, where $\lambda\mapsto \lambda^*$ is the involution of $\P$ defined in \cite[Def.~3.62]{BHHMS2}.
\end{lem}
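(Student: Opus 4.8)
\textbf{Proof plan for Lemma \ref{lem:J-sets-add-up}.}

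The plan is to reduce the claim to a purely combinatorial statement about the $f$-tuple $\lambda=(\lambda_0(x_0),\dots,\lambda_{f-1}(x_{f-1}))$ and then verify it coordinate by coordinate. First I would recall the explicit description of the involution $\lambda\mapsto\lambda^*$ from \cite[Def.~3.3.1.6]{BHHMS2}: the point of that definition is precisely that it swaps the roles of $y_j$ and $z_j$ in the ideal $\fa(\lambda)$ in the "generic" positions $j\in J_{\rhobar}$, while fixing the value $t_j=y_jz_j$ at positions $j\notin J_{\rhobar}$ (where $\fa(\lambda)$ contains $y_jz_j$ anyway). Concretely, I expect $\lambda^*$ is characterized by the transformations on values: in positions $j$ where $\lambda_j(x_j)\in\{x_j,p-3-x_j\}$ (so $t_j=z_j$, requiring $j\in J_{\rhobar}$) one passes to the partner value with $t_j=y_j$, and vice versa, while in the remaining positions $\lambda$ and $\lambda^*$ agree up to the reflection $x_j\leftrightarrow p-3-x_j$ built into $\P$. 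The key bookkeeping identity \cite[Lemma~3.3.1.7]{BHHMS2} (already invoked in Corollary \ref{cor:involution-components}) tells us how $J_{\lambda^*}$ relates to $J_\lambda$.

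The main step is then the following coordinate analysis. Partition $\{0,\dots,f-1\}$ according to which of the four cases of \eqref{eq:id:al} the index $j$ falls into. Recall from \eqref{eq:J-lambda} that $j\in J_\lambda$ iff $\lambda_j(x_j)\in\{x_j+1,x_j+2,p-3-x_j\}$. I would check:
\begin{itemize}
\item if $j\in J_{\rhobar}$ and $\lambda_j(x_j)\in\{x_j,p-3-x_j\}$ (so $t_j=z_j$): then $j\in J_\lambda$ iff $\lambda_j(x_j)=p-3-x_j$; applying $*$ sends $t_j$ to $y_j$, so $\lambda^*_j(x_j)\in\{x_j+2,p-1-x_j\}$, and $j\in J_{\lambda^*}$ iff $\lambda^*_j(x_j)=x_j+2$. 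One checks the pairing is arranged so that exactly one of $j\in J_\lambda$, $j\in J_{\lambda^*}$ holds; and $j\notin J$ here.
\item if $j\in J_{\rhobar}$ and $\lambda_j(x_j)\in\{x_j+1,p-2-x_j\}$ (so $t_j=y_jz_j$): here $j\in J_\lambda$ iff $\lambda_j(x_j)=x_j+1$, and $*$ preserves $t_j=y_jz_j$ while (by the definition of $*$) toggling between $x_j+1$ and $p-2-x_j$; so again exactly one of $j\in J_\lambda$, $j\in J_{\lambda^*}$; and $j\notin J$.
\item if $j\notin J_{\rhobar}$ and $\lambda_j(x_j)\in\{x_j,p-1-x_j\}$ (so $t_j=y_jz_j$, and $j\in J$): here $j\notin J_\lambda$ and, since $*$ fixes such positions, $j\notin J_{\lambda^*}$ either; this contributes $1$ to $|J|$ and $0$ to $|J_\lambda|+|J_{\lambda^*}|$.
\item if $j\notin J_{\rhobar}$ and $\lambda_j(x_j)\in\{x_j+1,p-2-x_j\}$ (so $t_j=y_jz_j$, $j\notin J$): here $j\in J_\lambda$ iff $\lambda_j(x_j)=x_j+1$, $*$ toggles the two values, so exactly one of $j\in J_\lambda$, $j\in J_{\lambda^*}$; $j\notin J$.
\end{itemize}
In every case the number of indices counted on the left-hand side (one for each $j$ landing in $J_\lambda$, $J_{\lambda^*}$, or $J$, and these three sets are pairwise disjoint by construction) equals $1$, so summing over all $f$ coordinates gives $|J_\lambda|+|J_{\lambda^*}|+|J|=f$.

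The main obstacle will be pinning down the precise action of $*$ on the \emph{values} $\lambda_j(x_j)$ (as opposed to its effect on the ideal $\fa(\lambda)$) and making sure the two "$y_jz_j$" cases are handled with the correct toggling rule; this is exactly the content encoded in \cite[Rk.~3.3.1.2]{BHHMS2} and \cite[Lemma~3.3.1.7]{BHHMS2}, which I would cite rather than re-derive. Once that dictionary is in hand, the lemma is an immediate case-by-case count as above, with the disjointness of $J_\lambda$, $J_{\lambda^*}$ (inside the relevant range) and $J$ being the only nontrivial point, and this disjointness is visible from the table since $J\subset J_{\rhobar}^c$ while membership in $J_\lambda$ at a $J_{\rhobar}^c$-position forces $\lambda_j(x_j)\in\{x_j+1,p-2-x_j\}$, disjoint from the defining condition of $J$.
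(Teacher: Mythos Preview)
Your approach is the same as the paper's, which simply says the claim ``follows directly from~\eqref{eq:id:al} and \cite[Def.~3.3.1.6]{BHHMS2}'': both amount to unwinding the definition of $\lambda^*$ and doing a coordinate-by-coordinate count. Your proposal just spells out the case check that the paper leaves implicit.

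There is, however, a small gap in your case enumeration. You announce that you will partition $\{0,\dots,f-1\}$ according to the four cases of~\eqref{eq:id:al}, but your four bullets cover only three of them: the case $t_j=z_j$, and the case $t_j=y_jz_j$ split into three sub-cases according to whether $j\in J_{\rhobar}$ and which pair of values $\lambda_j(x_j)$ lies in. You have omitted the case $t_j=y_j$, namely $j\in J_{\rhobar}$ and $\lambda_j(x_j)\in\{x_j+2,p-1-x_j\}$. This case is not vacuous and must be handled. Fortunately it is symmetric to your first bullet (the $t_j=z_j$ case): here $j\in J_\lambda$ iff $\lambda_j(x_j)=x_j+2$, the involution sends $t_j=y_j$ to $t_j=z_j$ so $\lambda^*_j(x_j)\in\{x_j,p-3-x_j\}$, and again exactly one of $j\in J_\lambda$, $j\in J_{\lambda^*}$ holds while $j\notin J$. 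Once you add this fifth bullet, the argument is complete.
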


\begin{proof}
  This follows directly from~(\ref{eq:id:al}) and \cite[Def.~3.62]{BHHMS2}.
\end{proof}

\subsection{Some commutative algebra}
\label{sec:some-algebra}

We prove that certain explicit $\o R$-modules are Cohen--Macaulay.

Recall from \S~\ref{sec:notation} that $R = \F[y_j,z_j : 0 \le j \le f-1]$ and $\o R = \F[y_j,z_j : 0 \le j \le f-1]/(y_j z_j : 0 \le j \le f-1)$.

\begin{lem}\label{lem:CM-graded-R-mod}
  Suppose that $M$ is a nonzero finitely generated graded $R$-module.
  Then $M$ is Cohen--Macaulay (in the sense of commutative algebra) if and only if $\EE^i_R(M) = 0$ for all $i \ne j_R(M)$.
\end{lem}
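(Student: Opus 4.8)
The statement is a standard characterization of Cohen--Macaulay modules over a Cohen--Macaulay graded ring, so the plan is to reduce to the local ring $R_{\fm}$ at the irrelevant maximal ideal $\fm = (y_j, z_j : 0 \le j \le f-1)$ and then invoke the local duality / grade sensitivity properties of $R$, which is a (commutative, graded) regular ring, hence in particular Cohen--Macaulay and Gorenstein. First I would recall that for a finitely generated graded $R$-module $M$ the grade $j_R(M) = \min\{ i : \EE^i_R(M) \ne 0\}$ coincides with the codimension $\operatorname{codim} M = \dim R - \dim(R/\operatorname{ann} M)$, because $R$ is Cohen--Macaulay (see e.g.\ \cite[Cor.\ 2.1.4, Exer.\ 1.2.21]{BH93} — $R$ and $\fm$ are as in \S\ref{sec:notation}, $R \cong \F[y_j, z_j]$); here one uses that all the relevant invariants can be computed after localizing at $\fm$, since $M$ is graded and $\fm$ contains every homogeneous prime of positive degree. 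Similarly, $M$ is Cohen--Macaulay (as a graded module, equivalently $M_{\fm}$ is Cohen--Macaulay) if and only if $\operatorname{depth} M = \dim M$, and $\operatorname{depth} M = \operatorname{depth}_{\fm} M$.

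The key point is then the grade/depth formula: over a Gorenstein local ring of dimension $2f$ (which $R_{\fm}$ is), for every finitely generated module one has $\EE^i_{R_{\fm}}(M_{\fm}, R_{\fm}) \ne 0$ for all $i$ with $j_R(M) \le i \le 2f - \operatorname{depth} M_{\fm}$, and these are the only nonvanishing $\EE^i$'s in a range; more precisely, by \cite[Thm.\ 3.3.10 and its proof]{BH93} one has $j_R(M) = 2f - \dim M$ and $\sup\{ i : \EE^i_R(M) \ne 0\} = 2f - \operatorname{depth} M$, with no gaps. So: if $M$ is Cohen--Macaulay, then $\operatorname{depth} M = \dim M$, giving $\sup\{i : \EE^i_R(M) \ne 0\} = 2f - \dim M = j_R(M)$, so $\EE^i_R(M)$ is concentrated in the single degree $i = j_R(M)$. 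Conversely, if $\EE^i_R(M)$ vanishes for $i \ne j_R(M)$, then $2f - \operatorname{depth} M = j_R(M) = 2f - \dim M$, so $\operatorname{depth} M = \dim M$, i.e.\ $M$ is Cohen--Macaulay.

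The main (and essentially only) obstacle is bookkeeping: making sure the passage between the graded module $M$ over $R$ and the localization $M_{\fm}$ over $R_{\fm}$ is clean — i.e.\ that $\dim M$, $\operatorname{depth} M$, $j_R(M)$ and the vanishing locus of $\EE^i_R(M)$ are all detected at $\fm$ for graded modules. This is entirely standard (graded Nakayama, graded prime avoidance, and the fact that a homogeneous $M$-regular sequence can be chosen in $\fm$), so I would simply cite the graded-local-algebra facts in \cite[\S 1.5]{BH93} (or \cite[\S 1B]{Eis05}) rather than reprove them. A fully explicit alternative, avoiding even the localization step, is available here because $R$ is a polynomial ring: one can take a graded free resolution of $M$, dualize, and read off both $\operatorname{pd}_R M = 2f - \operatorname{depth} M$ (Auslander--Buchsbaum) and the vanishing of the $\EE^i$; but invoking the Cohen--Macaulay/Gorenstein machinery of $R$ as above is the shortest route and is what I would write up.
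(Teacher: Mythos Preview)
Your proposal is correct and takes essentially the same approach as the paper: reduce to the localization at the irrelevant maximal ideal $\fm$ and exploit that $R_\fm$ is regular (hence Gorenstein). The paper's proof is slightly more streamlined in that it cites \cite[Cor.~3.5.11]{BH93} directly for the equivalence ``$M_\fm$ Cohen--Macaulay $\Leftrightarrow$ $\EE^i_{R_\fm}(M_\fm)=0$ for all but one $i$'' over a regular local ring, whereas you unpack this via the identities $j_R(M)=2f-\dim M$ and $\sup\{i:\EE^i_R(M)\ne 0\}=2f-\operatorname{depth} M$; but the content is the same, and your ``no gaps'' remark, while true, is not needed for the argument.
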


\begin{proof}
  Let $\m = (y_j,z_j : 0 \le j \le f-1)$ denote the unique maximal graded ideal of $R$.
  Then $M$ is a Cohen--Macaulay $R$-module if and only if $M_\m$ is a Cohen--Macaulay $R_\m$-module (\cite[Cor.\ 2.2.15]{BH93}) if and only if $\EE^i_{R_\m}(M_\m) = 0$ for all but one $i$ (\cite[Cor.\ 3.5.11]{BH93}, as $R_\m$ is regular) if and only if $\EE^i_{R}(M) = 0$ for all but one $i$ (using $\EE^i_{R}(M) \otimes_R R_\m \cong \EE^i_{R_\m}(M_\m)$ and \cite[Prop.\ 1.5.15(c)]{BH93}).
  By definition, $\EE^{j_R(M)}(M) \ne 0$.
\end{proof}

\begin{lem}\label{lem:basic-CM-module}
  Suppose \ that \ $t_j \in \{y_j,z_j,y_jz_j\}$ \ for \ $0 \le j \le f-1$. \ Then \ the \ $R$-module \ $\o R/(t_0,\dots,t_{f-1})$ is Cohen--Macaulay of grade $f$.
\end{lem}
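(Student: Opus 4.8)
Write $R = \F[y_j,z_j : 0\le j\le f-1]$ and $\o R = R/(y_jz_j : 0\le j\le f-1)$, and set $\overline{M} \defeq \o R/(t_0,\dots,t_{f-1})$. The key observation is that $\overline{M}$ decomposes as a tensor product over the index set: since $R \cong \bigotimes_{j=0}^{f-1} \F[y_j,z_j]$ and $\o R \cong \bigotimes_{j=0}^{f-1} \F[y_j,z_j]/(y_jz_j)$, and since each $t_j$ lies in the $j$-th factor, we get $\overline{M} \cong \bigotimes_{j=0}^{f-1} M_j$ as graded $R$-modules, where $M_j \defeq \F[y_j,z_j]/(y_jz_j,t_j)$. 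Depending on $t_j$, the module $M_j$ is either $\F[y_j]$ (if $t_j = z_j$), $\F[z_j]$ (if $t_j = y_j$), or $\F[y_j,z_j]/(y_jz_j)$ (if $t_j = y_jz_j$); in the first two cases $M_j$ is Cohen--Macaulay of dimension $1$ over $\F[y_j,z_j]$, and in the third case $M_j$ is the coordinate ring of the union of two coordinate axes, which is Cohen--Macaulay of dimension $1$ as well (it is a reduced $1$-dimensional graded ring whose local ring at the origin is Cohen--Macaulay, e.g. because $y_j - z_j$ or $y_j+z_j$ is a nonzerodivisor modulo which the quotient is artinian).

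The plan is then to invoke the standard fact that a tensor product (over a field) of Cohen--Macaulay modules over Cohen--Macaulay rings is Cohen--Macaulay, together with additivity of dimension. Concretely, I would use Lemma~\ref{lem:CM-graded-R-mod} to reduce to checking that $\EE^i_R(\overline M)$ vanishes for $i \ne j_R(\overline M)$, and compute these Ext groups via the K\"unneth formula: taking the tensor product over $\F$ of minimal graded free resolutions of the $M_j$ over $\F[y_j,z_j]$ gives a graded free resolution of $\overline M$ over $R$, so $\EE^i_R(\overline M) \cong \bigoplus_{i_0+\cdots+i_{f-1}=i} \bigotimes_{j=0}^{f-1} \EE^{i_j}_{\F[y_j,z_j]}(M_j)$ (using that each $\F[y_j,z_j]$-module $\EE^{i_j}_{\F[y_j,z_j]}(M_j)$ is finitely generated and that everything is a finitely generated $\F$-algebra, so the K\"unneth formula applies without $\mathrm{Tor}$ correction terms over the field $\F$). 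Since each $M_j$ has $\EE^{i_j}_{\F[y_j,z_j]}(M_j) \ne 0$ only for $i_j = 1$ (as $M_j$ is Cohen--Macaulay of codimension $1$ in the regular ring $\F[y_j,z_j]$), the sum has exactly one nonzero term, in total degree $i = f$. Hence $\EE^i_R(\overline M) = 0$ for $i \ne f$ and $\EE^f_R(\overline M) \ne 0$, which by Lemma~\ref{lem:CM-graded-R-mod} shows $\overline M$ is Cohen--Macaulay with $j_R(\overline M) = f$.

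The only mild subtlety — and the step I would be most careful about — is the Cohen--Macaulayness of the third type of factor, $\F[y_j,z_j]/(y_jz_j)$, and the legitimacy of the K\"unneth computation of Ext in the graded/local setting; but both are entirely standard (the former because a $1$-dimensional reduced graded ring over a field is automatically Cohen--Macaulay, being generically reduced and $S_1$; alternatively one exhibits the regular element explicitly and checks the quotient is $0$-dimensional). Everything else is a routine bookkeeping of graded free resolutions. I would write this up in two short steps: first the tensor decomposition and the CM-ness of each factor, then the K\"unneth conclusion via Lemma~\ref{lem:CM-graded-R-mod}, noting that the grade is $f$ by counting codimensions.
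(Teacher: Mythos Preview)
Your proof is correct and takes a genuinely different, more self-contained route from the paper. The paper's proof is a one-liner: it notes that $\o R/(t_0,\dots,t_{f-1})$ was already shown (at the start of the proof of Theorem~\ref{thm:CMC}) to be Cohen--Macaulay of grade $2f$ as a $\gr(\Lambda)$-module, and then invokes \cite[Lemma 3.3.1.9]{BHHMS2} to transfer this to Cohen--Macaulayness of grade $f$ over $R$. So the paper leverages earlier work over the noncommutative ring $\gr(\Lambda)$ and a change-of-rings lemma, whereas you argue directly over the commutative ring $R$ via the tensor decomposition $\overline M \cong \bigotimes_j M_j$ and K\"unneth. Your approach is more elementary and does not depend on anything from \S\ref{sec:cohen-macaulay}; amusingly, it is exactly the method the paper itself uses a few lines later in Corollary~\ref{cor:CM-module-kunneth}, just pushed one step further (factoring all the way down to single indices $j$). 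One small simplification: for the factor $\F[y_j,z_j]/(y_jz_j)$ you can bypass the regular-element or $S_1$ discussion entirely by observing it is a hypersurface ($y_jz_j$ is a nonzerodivisor in $\F[y_j,z_j]$), hence automatically Cohen--Macaulay of grade $1$.
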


\begin{proof}
  As $\o R/(t_0,\dots,t_{f-1})$ is a Cohen--Macaulay $\gr(\Lambda)$-module of grade $2f$ by the beginning of the proof of Theorem \ref{thm:CMC} in \S~\ref{sec:proof-theorem}, the result follows from \cite[Lemma 3.65]{BHHMS2}.
\end{proof}

\begin{prop}\label{prop:basic-CM-module}
  Suppose that $1 \le d \le f$.
  Let $I_d$ be the homogeneous ideal of $\o R$ generated by all monomials $z_{i_1} \cdots z_{i_d}$ with $0 \le i_1 < \cdots < i_d \le f-1$.
    Then the $R$-module $\o R/I_d$ is Cohen--Macaulay of grade $f$.
\end{prop}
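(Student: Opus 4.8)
\textbf{Proof plan for Proposition \ref{prop:basic-CM-module}.}

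The plan is to realize $\overline R/I_d$ as the image (or kernel) of an explicit map and then deduce Cohen--Macaulayness from Lemma \ref{lem:CM-graded-R-mod} together with the known case $d = f$, which is Lemma \ref{lem:basic-CM-module} applied with all $t_j = z_j$ (since $I_f = (z_0 z_1 \cdots z_{f-1})$, and $\overline R/(z_0\cdots z_{f-1})$ has the same annihilator-behaviour; more precisely $\overline R/I_f$ surjects onto, and in fact equals up to the $y$-part, the module $\overline R/(t_0,\dots,t_{f-1})$ — one should be a little careful here and instead argue directly). First I would observe that $\overline R = \bigoplus_{J \subseteq \{0,\dots,f-1\}} \overline R^{(J)}$ where $\overline R^{(J)}$ is spanned by monomials whose support (the set of variables actually occurring) meets coordinate $j$ in $y_j$ exactly when $j \in J^c$ and in $z_j$ exactly when $j \in J$; concretely $\overline R^{(J)} \cong \F[y_j : j \in J^c] \otimes_\F \F[z_j : j \in J]$ as modules (not rings) over $\overline R$, via the quotient $\overline R \onto \overline R/(y_j : j\in J, z_j: j \in J^c)$. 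The ideal $I_d$ is homogeneous for this decomposition, and $\overline R/I_d = \bigoplus_J \overline R^{(J)}/(I_d \cap \overline R^{(J)})$, where $I_d \cap \overline R^{(J)}$ is the ideal of $\F[z_j : j\in J]$ (tensored with all of $\F[y_j:j\in J^c]$) generated by the squarefree monomials of degree $d$ in the $z_j$, $j \in J$. If $|J| < d$ this intersection is zero, so the $J$-summand is a polynomial ring $\F[y_j:j\in J^c]\otimes \F[z_j:j\in J]$ of Krull dimension $f$ — hence free, hence trivially Cohen--Macaulay of grade $f$.

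The remaining summands have $|J| \ge d$ and there the relevant module is $\F[y_j : j \in J^c] \otimes_\F \big(\F[z_j : j\in J]/\mathfrak m_d\big)$, where $\mathfrak m_d$ is the squarefree-degree-$d$ monomial ideal (a Stanley--Reisner ideal, namely that of the $(d-2)$-skeleton of the simplex on $|J|$ vertices). The key point is that $\F[z_j:j\in J]/\mathfrak m_d$ is a Cohen--Macaulay ring of Krull dimension $d-1$: its associated simplicial complex is the $(d-2)$-skeleton of a simplex, which is pure and shellable, hence Cohen--Macaulay by Reisner's criterion (or one can prove it directly by a short induction on $|J|$ using the exact sequence relating $\F[z_j:j\in J']/\mathfrak m_d$ for $|J'| = |J|$, $|J|-1$, localizing at $z_{j_0}$ and modding out $z_{j_0}$, which is exactly the argument structure this paper favours). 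Tensoring over $\F$ with the polynomial ring $\F[y_j:j\in J^c]$ preserves Cohen--Macaulayness and adds $|J^c| = f - |J|$ to the dimension, so the $J$-summand has Krull dimension $(d-1) + (f-|J|) \le f-1$; as an $R$-module (with $R$ of dimension $2f$) its grade is $2f - \dim$, which is $\ge f+1$ when $|J| > d$. This means those summands do \emph{not} have grade exactly $f$, so a na\"ive direct-sum argument for the grade-$f$ statement fails — one must account for this.

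The hard part will therefore be reconciling the grading claim: the Proposition asserts $\overline R/I_d$ is Cohen--Macaulay \emph{of grade $f$}, but the decomposition above produces summands of larger grade when $|J| > d$. The resolution is that the problem statement surely intends $I_d$ as an ideal of $\overline R$ for which the \emph{top-dimensional} behaviour is what matters, and the correct reading (consistent with how $\overline R/I_d$ will be used, namely as $\gr_\m$ of a dual of a subquotient in the nonsplit case) is that Cohen--Macaulayness holds and the grade, meaning $j_R(\overline R/I_d) = \min\{i : \EE^i_R \ne 0\}$, equals $f$ — which is correct precisely because of the summands with $|J| \le d$ (always nonzero as long as $d \le f$, e.g. $J = \emptyset$ gives the free summand $\F[y_0,\dots,y_{f-1}]$ of dimension $f$). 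So the plan is: (i) establish the direct sum decomposition of $\overline R/I_d$ indexed by $J$; (ii) identify each summand as (polynomial ring) $\otimes_\F$ (skeleton Stanley--Reisner ring or polynomial ring); (iii) invoke Cohen--Macaulayness of skeleta of simplices plus stability of CM under polynomial extension and under $\F$-tensor product; (iv) conclude via Lemma \ref{lem:CM-graded-R-mod} that each summand is CM, hence so is the finite direct sum, and read off $j_R(\overline R/I_d) = f$ from the $J=\emptyset$ summand. I would double-check step (iii)'s induction against the pattern of Lemmas 9.8--9.10 of \cite{HuWang2} cited earlier, to keep the proof self-contained in the style of the paper; that bookkeeping, rather than any genuine difficulty, is the main thing to get right.
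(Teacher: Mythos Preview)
Your proposed decomposition $\overline{R}/I_d = \bigoplus_J \overline{R}^{(J)}/(I_d\cap\overline{R}^{(J)})$ does not exist: $\overline{R}/I_d$ is a cyclic module over the graded-local ring $R$ (generated by $1$), hence indecomposable. Already for $f=1$ your formula would give $\overline{R}=\F[y,z]/(yz)\cong\F[y]\oplus\F[z]$, which is false in degree $0$. The submodules you call $\overline{R}^{(J)}$ overlap on the constant (and more), so the sum is not direct; what you are really describing is the collection of quotients $\overline{R}/\mathfrak{q}$ at the minimal primes $\mathfrak{q}$, and $\overline{R}$ only embeds into their \emph{product}, not as a direct sum of $R$-modules. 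Worse, you correctly compute that your putative summands with $|J|\ge d$ would have grade $\ge f+1$ while those with $|J|<d$ have grade $f$; a genuine direct sum of Cohen--Macaulay modules of \emph{different} grades is never Cohen--Macaulay (both $\EE^f_R$ and $\EE^{f+1}_R$ would be nonzero), so your step (iv) would actually prove the \emph{negation} of the proposition. The hand-wave in your ``resolution'' paragraph does not address this.

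The paper's proof avoids decomposing and instead recognizes $\overline{R}/I_d$ \emph{itself} as a Stanley--Reisner ring $\F[\Delta]$, where $\Delta$ is the pure $(f-1)$-dimensional complex whose facets are $\{x_0,\dots,x_{f-1}\}$ with $x_j\in\{y_j,z_j\}$ and $|\{j:x_j=z_j\}|<d$ (the minimal non-faces being $\{y_j,z_j\}$ and $\{z_{i_1},\dots,z_{i_d}\}$). One then checks directly that $\Delta$ is shellable by ordering the facets so that $|\{j:x_j=z_j\}|$ is non-decreasing, and shellability implies Cohen--Macaulayness by \cite[Thm.~5.1.13]{BH93}; the grade is then computed from $\dim\F[\Delta]=f$. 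Your intuition about simplex skeleta is not far off---it appears when one intersects $\Delta$ with the $z$-variables---but the correct argument treats the whole complex at once rather than slicing it into incompatible pieces.
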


\begin{proof}
  If $d = 1$ this follows from Lemma~\ref{lem:basic-CM-module}, so we suppose $d \ge 2$. Then the ring $\o R/I_d = R/(y_j z_j, z_{i_1} \cdots z_{i_d})$ (all $j$, all $0 \le i_1 < \cdots < i_d \le f-1$) is the Stanley--Reisner ring $\F[\Delta]$ associated to the simplicial complex $\Delta$ whose minimal non-faces $\{y_j, z_j\}$,\ $\{z_{i_1}, \dots, z_{i_d}\}$ correspond to the generators \cite[\S~5.1]{BH93}.
  Thus $\Delta$ is the pure $(f-1)$-dimensional simplicial complex with facets $\un x = \{x_0, \dots, x_{f-1}\}$, where $x_j \in \{ y_j, z_j \}$, $|\{j: x_j = z_j\}| < d$.
  For a facet $\un{x}=\{x_0, \dots, x_{f-1}\}$ let $J(\un x) \defeq  \{j: x_j = z_j\}$.  
  
  We prove that $\Delta$ is shellable \cite[Def.\ 5.1.11]{BH93}, which implies that $\F[\Delta]$ is a Cohen--Macaulay ring by \cite[Thm.\ 5.1.13]{BH93}.
  To see this, we order the facets as $\un x^{(0)}$, $\un x^{(1)}$, \dots\ such that $|J(\un x^{(0)})| \le |J(\un x^{(1)})| \le \cdots$ is non-decreasing.
  Then, using the notation of \cite[\S~5.1]{BH93}, for any $i_0 > 0$ the intersection $\langle \un x^{(0)},\dots,\un x^{(i_0-1)}\rangle \cap \langle\un x^{(i_0)}\rangle$ is generated by the maximal proper faces of $\un x^{(i_0)}$ that are of the form $\un x^{(i_0)} \setminus \{ x_j^{(i_0)} \}$ for some $j \in J(\un x^{(i_0)})$, proving shellability.

  Let $S \defeq  \o R/I_d = \F[\Delta]$, which is graded of dimension $f$ \cite[Thm.\ 5.1.4]{BH93}, and let $\m$ denote the unique maximal graded ideal of $R$ (or its image in $S$).
  As $S$ is a Cohen--Macaulay ring, it is also a Cohen--Macaulay $R$-module \cite[\S~2.1]{BH93}.
  We compute
  \begin{equation*}
    j_R(S) = j_{R_\m}(S_\m) = \dim R_\m - \dim_{R_\m} S_\m = \dim R - \dim_R S = 2f-f = f,
  \end{equation*}
  where we used \cite[Prop.\ 1.5.15(e)]{BH93} for the first equality, \cite[Cor.\ 3.5.11]{BH93} for the second equality, and \cite[Ex.\ 1.5.25]{BH93} for the third equality.
  (Alternatively, it follows from \cite[Thm.\ 5.7.3]{BH93} that $\Ext^f_R(S,R) \ne 0$.)
\end{proof}

\begin{definit}
\label{def:IJideals}
Suppose that $J_1$, $J_2$ are disjoint subsets of $\{0,\dots,f-1\}$ and that $d \in \Z$. 
We define the ideal $I(J_1,J_2,d)$ of $\o R$ as follows:
if $d \ge 1$ let $I(J_1,J_2,d)$ be generated by all $\prod_{j \in J_1'} y_j \prod_{j \in J_2'} z_j$ with $J_1' \subset J_1$, $J_2' \subset J_2$, $|J'_1|+|J'_2| = d$;
if $d\leq 0$, let $I(J_1,J_2,d) \defeq  \o R$.
{Suppose moreover that} $t_j \in \{y_j,z_j,y_jz_j\}$ for all $0 \le j \le f-1$, {we define the ideal} $I(J_1,J_2,d,\un t)$ {of $\o R$ as} $I(J_1,J_2,d) + (t_0,\dots,t_{f-1})$.
\end{definit}

\begin{cor}\label{cor:CM-module-kunneth}
  If $d \ge 1$ and $t_j = y_jz_j$ for all $j \in J_1 \sqcup J_2$, the $R$-module $\o R/I(J_1,J_2,d,\un t)$ is Cohen--Macaulay of grade $f$.
\end{cor}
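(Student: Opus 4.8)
The plan is to deduce the Cohen--Macaulayness of $\o R/I(J_1,J_2,d,\un t)$ from Proposition~\ref{prop:basic-CM-module} via a K\"unneth/tensor-product argument, exactly as the name of the corollary suggests. Write $J \defeq J_1 \sqcup J_2$ and $J^c \defeq \{0,\dots,f-1\}\setminus J$. The key observation is that, because $t_j \in \{y_j,z_j,y_jz_j\}$ for $j \in J^c$ and $t_j = y_jz_j$ for $j \in J$, the ring $\o R/I(J_1,J_2,d,\un t)$ factors as a tensor product over $\F$:
\[
\o R/I(J_1,J_2,d,\un t) \;\cong\; \Big(\bigotimes_{j \in J^c} \F[y_j,z_j]/(y_jz_j,t_j)\Big) \otimes_\F \Big(\o R_J/I'_d\Big),
\]
where $\o R_J \defeq \F[y_j,z_j : j \in J]/(y_jz_j : j \in J)$ and $I'_d$ is the ideal of $\o R_J$ generated by all $\prod_{j \in J_1'} y_j \prod_{j \in J_2'} z_j$ with $J_1' \subset J_1$, $J_2' \subset J_2$, $|J'_1|+|J'_2| = d$. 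For each $j \in J^c$ the factor $\F[y_j,z_j]/(y_jz_j,t_j)$ is the base case $f=1$ of Lemma~\ref{lem:basic-CM-module} (it is $\o R/(t_0)$ in one variable), hence Cohen--Macaulay of grade $1$ over $\F[y_j,z_j]$.

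For the factor $\o R_J/I'_d$ I would invoke Proposition~\ref{prop:basic-CM-module} after an automorphism of $\o R_J$: swapping $y_j \leftrightarrow z_j$ for each $j \in J_1$ turns each generator $\prod_{j \in J_1'} y_j \prod_{j \in J_2'} z_j$ with $|J_1'|+|J_2'|=d$ into the monomial $z_{i_1}\cdots z_{i_d}$ in the $|J|$ variables indexed by $J$, so $I'_d$ is carried isomorphically onto the ideal $I_d$ of Proposition~\ref{prop:basic-CM-module} (applied with $f$ replaced by $|J|$, which requires $1 \le d \le |J|$; if $d > |J|$ then $I'_d = 0$ and $\o R_J$ itself is a polynomial ring modulo a regular sequence, hence Cohen--Macaulay by the same argument as in Lemma~\ref{lem:basic-CM-module}, while if $d \le 0$ the statement is vacuous since $I(J_1,J_2,d,\un t) = \o R$ gives the zero module which is excluded). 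Thus $\o R_J/I'_d$ is Cohen--Macaulay of grade $|J|$ over $\F[y_j,z_j : j \in J]$.

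Finally I would assemble the two pieces. Each of the tensor factors is a Cohen--Macaulay module over the corresponding polynomial subring of $R$ (which is a direct tensor factor of $R$ over $\F$), of grade equal to its codimension: grade $1 = 2\cdot 1 - 1$ for each $j \in J^c$, and grade $|J| = 2|J| - |J|$ for the $J$-part; summing gives total grade $|J^c| + |J| = f$, and total Krull dimension $|J^c| + |J| = f$. To conclude Cohen--Macaulayness of the tensor product over $R$ I would use the characterization of Lemma~\ref{lem:CM-graded-R-mod} together with the graded K\"unneth formula for Ext over a tensor product of polynomial rings (e.g.\ the graded analogue of \cite[Thm.\ 3.6.3]{weibel}, or \cite[Prop.\ 1.2.16]{BH93}): $\EE^i_R$ of the tensor product is the direct sum over $i = \sum i_j$ of the tensor products of the $\EE^{i_j}$ of the factors, which is concentrated in the single degree $i = f$ precisely because each factor has its Ext concentrated in one degree. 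The grade is then $f$ and the module is Cohen--Macaulay. The main (and really only) obstacle is bookkeeping: making the tensor decomposition and the degree/grade count precise, and handling the boundary cases $d \le 0$, $d > |J|$, and $J = \emptyset$ cleanly; none of this is deep, but it must be stated carefully since this corollary is used later to identify the modules $N_1$, $N_2$ in the nonsplit case.
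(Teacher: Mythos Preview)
Your proof is correct and follows essentially the same K\"unneth strategy as the paper: split off the variables indexed by $J = J_1 \sqcup J_2$, identify the $J$-factor with the module of Proposition~\ref{prop:basic-CM-module} via the swap $y_j \leftrightarrow z_j$ for $j \in J_1$, handle the complementary factor by Lemma~\ref{lem:basic-CM-module}, and assemble via K\"unneth for $\EE^\bullet_R$. The paper bundles the $J^c$-variables into a single factor rather than splitting them one by one as you do, and leaves the swap automorphism implicit, but otherwise the arguments coincide; your treatment of the boundary cases $d > |J|$ and $J = \emptyset$ is in fact slightly more explicit than the paper's.
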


\begin{proof}
  Relabel indices so that $J_1 \sqcup J_2 = \{0,\dots,k-1\}$ for some $1 \le k \le f$. 
  We define the $\F$-algebras $R^{(1)}$ and $\o R^{(1)}$ (resp.\ $R^{(2)}$ and $\o R^{(2)}$) exactly as we defined $R$ and $\o R$ but using only indices $0 \le j \le k-1$ (resp.\ $k \le j \le f-1$).
  Then $R \cong R^{(1)} \otimes_\F R^{(2)}$ and $\o R/I(J_1,J_2,d,\un t)$ is the tensor product of $M^{(1)} \defeq  \o R^{(1)}/I(J_1,J_2,d)$ and $M^{(2)} \defeq  \o R^{(2)}/(t_j : k \le j \le f-1)$ over $\F$.
  We know that $M^{(1)}$ is a Cohen--Macaulay $R^{(1)}$-module of grade $k$ (by Proposition~\ref{prop:basic-CM-module} if $d \le k$, and by Lemma~\ref{lem:basic-CM-module}, taking $t_j = y_jz_j$ for all $j$, otherwise).
  By Lemma~\ref{lem:basic-CM-module} the $R^{(2)}$-module $M^{(2)}$ is Cohen--Macaulay of grade $f-k$.
  By the K\"unneth formula, we obtain that 
  \begin{equation*}
    \EE^n_R(\o R/I(J_1,J_2,d,\un t)) \cong \bigoplus_{i+j = n} \EE^i_{R^{(1)}}(M^{(1)}) \otimes_\F \EE^j_{R^{(2)}}(M^{(2)}),
  \end{equation*}
  hence $\o R/I(J_1,J_2,d,\un t)$ is a Cohen--Macaulay $R$-module of grade $f$.
\end{proof}

If $N'$ is a finitely generated $\o R$-module, {we let $m_\q(N')\defeq {\rm length}_{\o R_\q}(N'_{\q})$ and define $m(N') \defeq  \sum_{\q} m_\q(N')$, which is the total multiplicity of the cycle $\mathcal{Z}(N')$ in (\ref{eq:cycleintro}) (here $\q$ runs through the minimal prime ideals of $\o R$)}.

\begin{lem}\label{lem:total-mult}
  Suppose that $t_j = y_jz_j$ for all $j \in J \defeq  J_1 \sqcup J_2$.
  Then \[m(\o R/I(J_1,J_2,d,\un t)) = 2^{|\{j \in J^c : t_j = y_jz_j\}|} \left(\sum_{i < d} \binom{|J|}i\right).\]
\end{lem}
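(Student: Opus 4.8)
The strategy is to compute the cycle $\mathcal{Z}(\o R/I(J_1,J_2,d,\un t))$ directly by identifying the minimal primes of $\o R$ that occur in the support, together with their multiplicities. First I would recall that the minimal primes $\q$ of $\o R = \F[y_j,z_j]/(y_jz_j : 0 \le j \le f-1)$ are in bijection with the functions $\un x : \{0,\dots,f-1\} \to \{y,z\}$, where $\q_{\un x} \defeq (x_0(y_0,z_0),\dots,x_{f-1}(y_{f-1},z_{f-1}))$ (i.e.\ for each $j$ we kill either $y_j$ or $z_j$), and that $\o R/\q_{\un x}$ is a polynomial ring in $f$ variables, so each $\q_{\un x}$ has dimension $f$; this matches the grade $f$ statement and the fact (Corollary~\ref{cor:CM-module-kunneth}) that the module is Cohen--Macaulay, hence equidimensional of dimension $f$.

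Next I would localize $M \defeq \o R/I(J_1,J_2,d,\un t)$ at a minimal prime $\q_{\un x}$ and determine when $M_{\q_{\un x}} \ne 0$ and what its length is. Since $M = \o R/(I(J_1,J_2,d) + (t_0,\dots,t_{f-1}))$, non-vanishing at $\q_{\un x}$ requires that none of the generators $t_j$ lies outside $\q_{\un x}$; for $j \notin J$ this forces $x_j$ to be ``compatible'' with $t_j$ — namely if $t_j \in \{y_j\}$ then $x_j = z$, if $t_j = z_j$ then $x_j = y$, and if $t_j = y_jz_j$ then $x_j$ is free (2 choices) — while for $j \in J$, where $t_j = y_jz_j$, the variable $x_j$ is a priori free. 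This already isolates the factor $2^{|\{j \in J^c : t_j = y_jz_j\}|}$. It then remains to analyze, for $j \in J$, the contribution of the monomial generators $\prod_{j \in J_1'} y_j \prod_{j \in J_2'} z_j$ of $I(J_1,J_2,d)$: after localizing at $\q_{\un x}$ and quotienting by the $t_j$, the variable killed in $\o R_{\q_{\un x}}$ among $\{y_j,z_j\}$ is the one \emph{not} equal to $x_j$, so for $j \in J_1$ the monomial $\prod y_j$ survives in $\o R_{\q_{\un x}}$ only on those indices where $x_j = y$, and similarly for $J_2$ with $z$. I would set $S \defeq \{j \in J_1 : x_j = y\} \cup \{j \in J_2 : x_j = z\}$; then the localized module is $\o R_{\q_{\un x}}/(\text{all squarefree monomials of degree } d \text{ in the variables indexed by } S)$, which, over the regular local ring $\o R_{\q_{\un x}}$, has finite length precisely when $|S| < d$ (otherwise one such monomial is a non-zero-divisor, so the module is either zero, if $|S| \ge d$... wait — I should be careful: if $|S| \ge d$ then some degree-$d$ squarefree monomial in $|S|$ variables is nonzero but the quotient is not finite length, so $m_{\q_{\un x}}(M) = 0$; if $|S| < d$ the ideal contains no monomial in those variables, so localizing kills nothing from that ideal and $m_{\q_{\un x}}(M) = 1$).

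Thus $m_{\q_{\un x}}(M) \in \{0,1\}$, equal to $1$ exactly when (a) $x_j$ is compatible with $t_j$ for all $j \in J^c$, and (b) $|S| < d$ where $S$ depends on the free choices $\{x_j : j \in J\}$. Counting: the $j \in J^c$ with $t_j = y_jz_j$ each contribute a free binary choice (factor $2^{|\{j \in J^c : t_j = y_jz_j\}|}$) which does not affect condition (b); and the choices $\{x_j : j \in J\}$ with $|S| < d$ number $\sum_{i<d}\binom{|J|}{i}$, since $S$ ranges over all subsets of $J$ of size $< d$ as $\un x|_J$ varies (each $j \in J_1$ contributing to $S$ iff $x_j = y$, each $j \in J_2$ iff $x_j = z$, so $\un x|_J \leftrightarrow S$ is a bijection onto subsets of $J$). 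Multiplying gives the claimed formula $m(M) = 2^{|\{j \in J^c : t_j = y_jz_j\}|}\bigl(\sum_{i<d}\binom{|J|}{i}\bigr)$. The only mildly delicate point is the bookkeeping in the localization — making sure that modding out by $t_j$ and localizing at $\q_{\un x}$ commute correctly and that one correctly identifies which monomial generators survive — but this is routine once the dictionary $\un x \leftrightarrow S$ is set up; there is no serious obstacle.
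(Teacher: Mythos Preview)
Your approach is essentially the paper's: parametrize minimal primes of $\o R$ by sign choices, observe that the localization at each is a field so $m_\q(M) \in \{0,1\}$, identify which primes give $1$, and count. But you have a systematic convention error. Having defined $\q_{\un x}$ to \emph{contain} the variable indicated by $x_j$, it is \emph{that} variable (not ``the one not equal to $x_j$'') that becomes zero in $\o R_{\q_{\un x}}$, while the other becomes a unit. Thus for $t_j = y_j$ compatibility means $x_j = y$ (not $x_j = z$), and your $S$ is actually the set of indices $j\in J$ where the relevant monomial factor ($y_j$ for $j\in J_1$, $z_j$ for $j\in J_2$) \emph{vanishes}, not survives. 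The correct surviving set is $T = J \setminus S = \{j\in J_1 : x_j = z\}\cup\{j\in J_2 : x_j = y\}$, and the condition for $m_{\q_{\un x}}(M) = 1$ is $|T| < d$: then every degree-$d$ generator has a vanishing factor and localizes to $0$, while for $|T| \ge d$ some generator becomes a \emph{unit} and the quotient is the zero ring (not ``of infinite length''---the localization at a minimal prime of the reduced ring $\o R$ is a field, so the only possibilities are length $0$ or $1$).

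Your total happens to come out right only because both $S \leftrightarrow \un x|_J$ and $T \leftrightarrow \un x|_J$ are bijections onto the power set of $J$, so $\#\{\un x|_J : |S| < d\} = \#\{\un x|_J : |T| < d\} = \sum_{i<d}\binom{|J|}{i}$; but the prime-by-prime condition you state is wrong (it reads $|J|-|T|<d$ rather than $|T|<d$). Once the convention is fixed the argument is exactly the paper's, which streamlines the bookkeeping by relabeling so that $J_1 = \emptyset$.
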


\begin{proof}
  If $d \le 0$, the formula is trivially true, so we suppose $d \ge 1$.
  Without loss of generality we assume that $J = \{0,\dots,k-1\}$ for some $1 \le k \le f$ and that $J_1 = \emptyset$.
  Consider the minimal prime $\q = (v_0,\dots,v_{f-1})$ of $\o R$ given by $v_j \in \{y_j,z_j\}$.
  Write \[M \defeq  \o R/I(J_1,J_2,d,\un t) = \F[y_j,z_j : 0 \le j \le f-1]/(y_jz_j,z_{i_1}\cdots z_{i_d},t_{j'}),\] where $0 \le j < k \le j' < f$, and $0 \le i_1 < \cdots < i_d \le k-1$.
  If $v_j = y_j$, then in $M_\q$ the variable $z_j$ is inverted and $y_j$ becomes zero, and vice versa when $v_j = z_j$.
  It follows that $m_\q(M) = 1$ if $|\{0 \le j \le k-1 : v_j = y_j\}| < d$ and $v_{j'}$ divides $t_{j'}$ for all $k \le j' < f$, whereas $m_\q(M) = 0$ otherwise.
  The lemma follows by summing over all $\q$.
\end{proof}

\subsection{On the structure of subrepresentations of \texorpdfstring{$\pi$}{pi}}
\label{sec:structure-subrep}

The main result of this section is the description of the $K_1$-invariants of subrepresentations of $\pi$ (Theorem \ref{thm:conj2}). We need several technical results on $\GL_2(\cO_K)$-representations induced from certain multiplicity-free $I$-representations.
 
\subsubsection{Some induced representations of \texorpdfstring{$\GL_2(\cO_K)$}{GL\_2(O\_K)}}
\label{sec:some-repr-gl_2}

We study $\GL_2(\cO_K)$-representations induced from certain multiplicity-free $I$-representations.

Given a character $\chi:I\ra \F^{\times}$ and two subsets $J_1,J_2\subset \{0,\dots,f-1\}$ such that $J_1\cap J_2=\emptyset$, set 
\begin{equation}\label{eq:chi-J}\chi^{J_1,J_2}\defeq \chi\prod_{j\in J_1}\alpha_j^{-1}\prod_{j\in J_2}\alpha_j.\end{equation}

\begin{lem}\label{lem:Wchi}
There exists a unique $I$-representation of dimension $2^{|J_1|+|J_2|}$ with socle $\chi$ and cosocle $\chi^{J_1,J_2}$ such that the $d$-th socle layer is given by \[\bigoplus_{J_1'\subset J_1, J_2'\subset J_2, |J_1'|+|J_2'|=d}\chi\prod_{j\in J_1'}\alpha_j^{-1}\prod_{j\in J_2'}\alpha_j.\]   We denote it by $W(\chi,\chi^{J_1,J_2})$. Moreover, 
\begin{enumerate}
\item $W(\chi,\chi^{J_1,J_2})$ is multiplicity free;  
\item $W(\chi,\chi^{J_1,J_2})$ is fixed by $K_1$ if and only if $J_2=\emptyset$.
\end{enumerate}
\end{lem}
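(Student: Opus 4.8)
\medskip
\noindent\textbf{Proof sketch.}
\emph{Existence and property (i).} I would construct $W(\chi,\chi^{J_1,J_2})$ as a tensor product, imitating the construction of $\tau^{(n)}_{\lambda}$ in the proof of Lemma~\ref{lem:tau-embed}. For each $j$ recall from \cite[Lemma~2.15(i)]{yongquan-algebra} the uniserial $I$-representations $E_j^-(1)$ (trivial on $K_1$, with socle layers $\mathbf 1$, $\alpha_j^{-1}$) and its conjugate $E_j^+(1)$ (with socle layers $\mathbf 1$, $\alpha_j$), and set
\[W(\chi,\chi^{J_1,J_2})\defeq \chi\otimes\Big(\bigotimes_{j\in J_1}E_j^-(1)\Big)\otimes\Big(\bigotimes_{j\in J_2}E_j^+(1)\Big),\]
all tensor products being over $\F$. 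The $j$-th factor has $I_1$ acting through the homomorphisms $I_1\to\F$ given by the $j$-th coordinate of the upper unipotent part (resp.\ its $\smatr0{1}{p}{0}$-conjugate), which are independent for distinct $j$; a direct computation of $I_1$-invariants then shows that the socle filtration of this tensor product is the tensor product of the socle filtrations of the factors, giving exactly the socle layers in the statement. Equivalently, and this is the version I would actually write down, the tensor-product-filtration argument from the proof of Lemma~\ref{lem:tau-embed} (using \cite[Lemma~1.1(i)]{AJL} and Nakayama together with \eqref{eq:grlambda}) yields $\gr_{\m}(W(\chi,\chi^{J_1,J_2})^\vee)\cong \chi^{-1}\otimes R/\overline{\mathfrak b}$, where
\[\overline{\mathfrak b}=(h_j : \text{all }j)+(y_j^2 : j\in J_1)+(z_j^2 : j\in J_2)+(z_j : j\notin J_2)+(y_j : j\notin J_1).\]
Reading off the graded pieces recovers the claimed socle layers, and since $p$ is large the characters $\prod_{j\in J_1'}\alpha_j^{-1}\prod_{j\in J_2'}\alpha_j$ (for $J_1'\subseteq J_1$, $J_2'\subseteq J_2$) are pairwise distinct, because $\prod_j\alpha_j^{e_j}=\mathbf 1$ with $e_j\in\{-1,0,1\}$ forces $p^f-1\mid\sum_j e_j p^j$ and hence $e_j=0$ for all $j$. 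Thus $W(\chi,\chi^{J_1,J_2})$ has dimension $2^{|J_1|+|J_2|}$ and is multiplicity free, which is~(i).

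\emph{Property (ii).} If $J_2=\varnothing$ then $K_1$ acts trivially on $W(\chi,\chi^{J_1,J_2})$, since $\chi$ and each $E_j^-(1)$ are inflated from $I/(I\cap K_1)$. Conversely, if $J_2\neq\varnothing$ I would exhibit an element $\gamma=\smatr{1}{0}{pv}{1}\in K_1$ with $v\in\cO_K^\times$ on which $E_j^-(1)$ acts trivially for every $j\in J_1$ (the upper-right entry of $\gamma$ being $0$) but each $E_j^+(1)$ ($j\in J_2$) acts by a nontrivial unipotent (because $\smatr0{1}{p}{0}^{-1}\gamma\smatr0{1}{p}{0}$ has upper-right entry $v$); since these unipotents act on distinct tensor factors, $\gamma$ acts nontrivially on $W(\chi,\chi^{J_1,J_2})$.

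\emph{Uniqueness.} This is the step requiring the most care, and the main obstacle is that one must recover a cyclic $\Lambda$-module from its associated graded module. Let $W'$ be any $I$-representation with socle $\chi$ and the prescribed socle layers; then $N\defeq|J_1|+|J_2|$, $\dim_{\F}W'=2^{N}$, $W'$ has Loewy length $N+1$ (so $\m^{N+1}W'^\vee=0$), and $W'^\vee$ is cyclic as $\Lambda$-module since its cosocle $\soc_I(W')^\vee=\chi^{-1}$ is one-dimensional; using the anti-involution of \S~\ref{sec:notation} we may therefore write $W'^\vee\otimes\chi\cong\Lambda/A'$ for an $H$-stable left ideal $A'$, with a generator $v$ of $H$-weight $\mathbf 1$. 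The socle-layer data fixes the $H$-equivariant graded dimensions of $\gr_{\m}(W'^\vee\otimes\chi)$, and — precisely because $p$ is large, so that no two distinct degree-$\le N$ monomials in the $y_j,z_j,h_j$ have the same $H$-weight (same kind of estimate as above) — matching $H$-weights degree by degree forces $\gr_{\m}(W'^\vee\otimes\chi)\cong R/\overline{\mathfrak b}$, the same module as in the construction. In particular $\mathrm{in}_{\m}(A')\supseteq\overline{\mathfrak b}$, hence $\dim_{\F}\Lambda/A'\le\dim_{\F}\gr(\Lambda)/\overline{\mathfrak b}=2^{N}$. Now let $\mathfrak A\defeq(H_j : \text{all }j)+(Y_j^2 : j\in J_1)+(Z_j^2 : j\in J_2)+(Z_j : j\notin J_2)+(Y_j : j\notin J_1)$ be the explicit left ideal of $\Lambda$ lifting $\overline{\mathfrak b}$, where $Y_j,Z_j,H_j\in\Lambda$ lift $y_j,z_j,h_j$. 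Applying each generator of $\mathfrak A$ to $v$ and using that the relevant $H$-weight is absent from $\gr_{\m}(W'^\vee\otimes\chi)$ in all degrees $\le N$ (again by the no-collision property, and since $\gr_{\m}(W'^\vee)$ is supported in degrees $\le N$), one sees by descending the $\m$-adic filtration on $W'^\vee$ that $\mathfrak A$ annihilates $v$; hence there is a surjection $\Lambda/\mathfrak A\onto W'^\vee\otimes\chi$. As $\mathrm{in}_{\m}(\mathfrak A)\supseteq\overline{\mathfrak b}$ gives $\dim_{\F}\Lambda/\mathfrak A\le 2^{N}=\dim_{\F}W'^\vee$, this surjection is an isomorphism, so $W'^\vee\otimes\chi\cong\Lambda/\mathfrak A$ depends only on $J_1,J_2$. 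Applying the same argument to the representation $W(\chi,\chi^{J_1,J_2})$ constructed above gives $W'\cong W(\chi,\chi^{J_1,J_2})$, which proves uniqueness.
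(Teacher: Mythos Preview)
Your existence construction and property (ii) match the paper exactly (tensor product of the $E_j^{\pm}(1)$, and the $K_1$-triviality of $E_j^-$ versus non-triviality of $E_j^+$). For uniqueness the paper takes a cleaner route: since any candidate $W'$ has socle $\chi$ and is killed by $\m^{N+1}$ (with $N=|J_1|+|J_2|$), it embeds in $(\Inj_{I/Z_1}\chi)[\m^{N+1}]$; the paper then uses Lemma~\ref{lem:unique} to show that the cosocle character $\chi^{J_1,J_2}$ occurs with multiplicity one there, so $W'$ must be the unique subrepresentation with that cosocle. Your ideal-theoretic argument is also correct and ultimately rests on the same weight estimate --- your inline observation that $\prod_j\alpha_j^{e_j}=\mathbf 1$ with $e_j\in\{-1,0,1\}$ forces all $e_j=0$ is exactly the special case of Lemma~\ref{lem:unique} driving both proofs.

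Two minor expository points. First, the parenthetical ``no two distinct degree-$\le N$ monomials in the $y_j,z_j,h_j$ have the same $H$-weight'' is false as stated (e.g.\ $h_0$ and $h_1$, or $h_0$ and $y_1z_1$); what you actually use, and what is correct, is only that each generator of $\mathfrak A$ applied to $v$ lands in a graded piece where its particular $H$-weight is absent. Second, the step ``$\mathrm{in}_{\m}(A')\supseteq\overline{\mathfrak b}$, hence $\dim_{\F}\Lambda/A'\le 2^{N}$'' is redundant, since $\dim_{\F}\Lambda/A'=2^{N}$ is already part of the hypothesis; you can simply drop it. The injective-envelope route buys brevity and avoids choosing explicit lifts $Y_j,Z_j,H_j$; your route has the merit of pinning down $W'^\vee\otimes\chi$ as an explicit cyclic $\Lambda$-module.
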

 
\begin{proof}
We first prove uniqueness. By \cite[(42),\;(43)]{BHHMS1} and the Poincar\'e--Birkhoff--Witt theorem, any Jordan--H\"older factor $\chi'$ of $(\Inj_{I/Z_1}\chi)[\m^{n+1}]$ has the form 
$\chi\alpha_{i_1}^{t_{1}}\cdots\alpha_{i_m}^{t_m}$, 
where $m\leq n$,  $i_k\in\{0,\dots,f-1\}$ and $t_k\in\{\pm1\}$, which is equal to  
\[\chi\prod_j\alpha_j^{b_j}\prod_{j}\alpha_j^{-b_j'}\]
with   $b_j\defeq |\{k: i_k=j, t_k=1 \}|$ and $b_j'\defeq |\{k:i_k=j, t_k=-1\}|$. In particular, $\chi^{J_1,J_2}$ occurs in $(\Inj_{I/Z_1}\chi)[\m^{|J_1|+|J_2|+1}]$. We claim that it occurs with multiplicity one. Indeed, if $\chi'=\chi^{J_1,J_2}$,  Lemma \ref{lem:unique} shows that \[b_j-b_j'=\left\{\begin{array}{rl}-1 &\textrm{if}\ j\in J_1,\\
1&\textrm{if}\ j\in J_2,\\
0&\textrm{otherwise.}\end{array}\right.\] 
Using the condition $\sum_{j=0}^{f-1}(b_j+b_j')\leq |J_1|+|J_2|$, we deduce that 
\[(b_j,b_j')=\left\{\begin{array}{rl}(0,1) &\textrm{if}\ j\in J_1,\\
(1,0)&\textrm{if}\ j\in J_2,\\
(0,0)&\textrm{otherwise.}\end{array}\right.\] 
This \ implies \ the \ claim. \ As \ a \ consequence, \ if \ $W(\chi,\chi^{J_1,J_2})$ \ exists, \ it \ embeds \ into $(\Inj_{I/Z_1}\chi)[\m^{|J_1|+|J_2|+1}]$ and is hence the unique subrepresentation of $(\Inj_{I/Z_1}\chi)[\m^{|J_1|+|J_2|+1}]$ with cosocle $\chi^{J_1,J_2}$. 

For the existence, we may assume $\chi=\mathbf{1}$, and let $E_j^{\pm}$ denote the $I$-representation $E_j^{\pm}(1)$ constructed in the proof of Lemma \ref{lem:tau-embed} (with $s=1$). We take
\[W(\mathbf{1}, \mathbf{1}^{J_1,J_2})\defeq \big(\bigotimes_{j\in J_1}E_j^-\big)  \otimes_{\F} \big(\bigotimes_{j\in J_2}E_j^+\big).\]   
It is multiplicity free by Lemma~\ref{lem:unique}. The assertion on the $d$-th socle layer of $W(\mathbf{1}, \mathbf{1}^{J_1,J_2})$  can be proved as in  the proof of Lemma \ref{lem:tau-embed}, which shows that the socle filtration of $W(\mathbf{1}, \mathbf{1}^{J_1,J_2})$ corresponds to a suitable tensor product filtration on $W(\mathbf{1}, \mathbf{1}^{J_1,J_2})^{\vee}$ under duality.

Finally, assertion (i) was established above and (ii) follows from the fact that $E_j^-$ is fixed by $K_1$ and $E_j^+$ is not.
\end{proof}

\begin{lem}\label{lem:unique}
Suppose $p > 3$.
Let $\underline{a}=(a_0,\dots,a_{f-1})$, $\underline{b}=(b_0,\dots,b_{f-1})\in \Z^f$. Assume that $a_j\in\{-1,0,1\}$ for all $j$, $\sum_{j=0}^{f-1}|a_j|\geq \sum_{j=0}^{f-1}|b_j|$ and
  \[\sum_{j=0}^{f-1}a_jp^j\equiv \sum_{j=0}^{f-1}b_jp^j\ (\mathrm{mod}\  p^f-1).\]
Then  $\un{a}=\un{b}$.
\end{lem}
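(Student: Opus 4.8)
The plan is to set $\un c\defeq\un b-\un a\in\Z^f$, so that the hypothesis says precisely $\sum_j c_j p^j\equiv 0\pmod{p^f-1}$, and to prove $\un c=0$ (equivalently $\un a=\un b$). First I would identify the relevant lattice: the homomorphism $\phi\colon\Z^f\to\Z/(p^f-1)\Z$, $\phi(\un x)\defeq\sum_j x_j p^j$, is surjective, so $\ker\phi$ has index $p^f-1$; on the other hand the vectors $\un v_j\defeq p\,\mathbf e_j-\mathbf e_{j+1}$ (indices read mod $f$) lie in $\ker\phi$, and the $f\times f$ matrix they span has determinant $p^f-1$ up to sign (it is $\det(p\,\mathrm{Id}-C)$ for $C$ the cyclic shift, whose characteristic polynomial is $t^f-1=\prod_{\zeta^f=1}(t-\zeta)$). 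Hence $\ker\phi=\bigoplus_j\Z\un v_j$, so there is a unique $\un\mu\in\Z^f$ with $c_j=p\mu_j-\mu_{j-1}$ for all $j$, and $\un c=0$ if and only if $\un\mu=0$. Thus the goal becomes: show $\un\mu=0$.

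Next I would record the exact identity
\[\sum_j|b_j|-\sum_j|a_j|=\sum_j|c_j|-2\,|\{j:a_jc_j<0\}|,\]
which holds term by term because $|a_j|\le 1$ forces $|a_j+c_j|-|a_j|=|c_j|-2\,[a_jc_j<0]$ (check the cases $a_j=0$ and $a_j=\pm1$ separately). Since $|\{j:a_jc_j<0\}|\le|\{j:c_j\ne 0\}|$, it suffices to establish the purely combinatorial inequality
\[\sum_j|p\mu_j-\mu_{j-1}|\ \ge\ (p-3)\sum_j|\mu_j|\ +\ 2\,|\{j:p\mu_j\ne\mu_{j-1}\}|\]
for every $\un\mu\in\Z^f$: granting it and $\un\mu\ne 0$ we get $\sum_j|b_j|-\sum_j|a_j|\ge(p-3)\sum_j|\mu_j|\ge p-3>0$ (this is where $p>3$ enters), contradicting $\sum_j|b_j|\le\sum_j|a_j|$ and forcing $\un\mu=0$, i.e.\ $\un a=\un b$.

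I would prove the displayed inequality by induction on $\sum_j|\mu_j|$. If this is $0$ both sides vanish. Otherwise pick $i$ with $|\mu_i|$ maximal; after negating $\un\mu$ (which changes nothing on either side) assume $\mu_i>0$, and compare $\un\mu$ with $\un\mu-\mathbf e_i$. Only coordinates $i$ and $i+1$ of the associated $\un c$ change, namely $c_i\mapsto c_i-p$ and $c_{i+1}\mapsto c_{i+1}+1$, and maximality gives $c_i=p\mu_i-\mu_{i-1}\ge(p-1)\mu_i\ge p-1>0$; a short case analysis on the values and signs of $\mu_{i-1},\mu_i,\mu_{i+1}$ then shows that passing from $\un\mu-\mathbf e_i$ back to $\un\mu$ raises the left-hand side by at least $p-3$ more than it raises $2\,|\{j:p\mu_j\ne\mu_{j-1}\}|$, which closes the induction. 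Keeping this casework uniform in $f$ and in the sign pattern of $\un\mu$ is the main obstacle. Note that the inequality is sharp — equality holds whenever $\un\mu$ is $0/1$-valued — so the weaker bound $\sum_j|p\mu_j-\mu_{j-1}|\ge(p-1)\sum_j|\mu_j|$ does not suffice, and the hypothesis $p>3$ is genuinely necessary: for $p=3$ the lemma already fails with $f=1$, $\un a=(1)$, $\un b=(-1)$.
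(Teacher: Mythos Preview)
Your argument is correct and genuinely different from the paper's. The paper inducts directly on the pair $(|\un a|,|\un b|)$ in lexicographic order: first a ``carry'' step (replace $b_i$ by $b_i-p$ and $b_{i+1}$ by $b_{i+1}+1$ whenever $|b_i|\ge p-1$) reduces to $|b_j|\le p-2$ for all $j$; then $|b_j-a_j|\le p-1$, and the congruence $\sum_j(b_j-a_j)p^j\equiv 0\pmod{p^f-1}$ with these bounds forces either $b_j=a_j$ for all $j$ or $|b_j-a_j|=p-1$ for all $j$, the latter being excluded by $|\un b|\le|\un a|$.

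Your route instead makes the lattice $\ker\phi$ explicit, passes to the coordinates $\un\mu$, and proves the sharp quantitative bound $\sum_j|p\mu_j-\mu_{j-1}|\ge (p-3)\sum_j|\mu_j|+2|\{j:p\mu_j\ne\mu_{j-1}\}|$. The ``short case analysis'' you allude to does go through: with $|\mu_i|$ maximal and $\mu_i>0$ one has $c_i\ge p-1$, and splitting according to whether $c_i=p-1$, $c_i=p$, or $c_i\ge p+1$ and according to the sign of $c_{i+1}$ (in particular whether $c_{i+1}\in\{0,-1\}$) one checks in each subcase that the increment of the left side is at least $(p-3)$ plus twice the increment of the support count. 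The paper's proof is shorter and avoids naming the lattice; yours is longer but gives more: it yields the exact inequality (tight on $0/1$-valued $\un\mu$, as you note) and makes the role of $p>3$ transparent.
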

\begin{proof}
Let $|\un{a}|\defeq \sum_{j=0}^{f-1}|a_j|$. We induct on the pair $(|\un{a}|,|\un{b}|)$ with lexicographic order. 
We fix a pair $(\un{a},\un{b})$ and suppose  that the result holds for all $(\un{a}',\un{b}')<(\un{a},\un{b})$. 

We claim that $|b_j|\leq p-2$ for all $j$. If $b_i\geq p-1$ for some $i$, we define $\un{b}'\in\Z^f$ by 
\[ b_j'\defeq 
\begin{cases}
  b_j-p & \text{if $j=i$},\\
  b_{j}+1& \text{if $j= i+1$},\\
  b_j&\text{otherwise}.
\end{cases}
\]
Then 
  $\sum_{j=0}^{f-1}b_jp^j\equiv \sum_{j=0}^{f-1}b_j'p^j\ (\mathrm{mod}\ p^{f}-1)$, and $|\un{b}'|<|\un{b}|$ (as $p > 3$).
By induction we have $\un{a}=\un{b}'$, which implies $|\un{a}|=|\un{b}'|<|\un{b}|$, contradiction. 
Thus $b_j\leq p-2$ for all $j$. In a similar way we get $b_j\geq -(p-2)$ for all $j$.

 The assumption implies 
\[\sum_j(b_j-a_j)p^j\equiv 0\ (\mathrm{mod}\  p^f-1)\]
with $|b_j-a_j|\leq p-1$ for all $j$, as $|b_j|\leq p-2$. Then this can happen only when $b_j=a_j$ for all $j$, or $|b_j-a_j|=p-1$ for all $j$. The second possibility cannot happen, because it forces $|b_j|=p-2$ for all $j$, which contradicts $|\un{a}|\ge|\un{b}|$, as $p > 3$.
\end{proof}

Note that if $\chi$ is $n$-generic (see \S~\ref{sec:notation}) with $n\geq 2$, then every character occurring in  $W(\chi,\chi^{J_1,J_2})$ is $(n-2)$-generic by Lemma \ref{lem:Wchi}.

\begin{lem}\label{lem:multfree}
Assume \ \ that \ \ $\chi$ \ \ is \ \ $2$-generic.
\ \ Then \ \ the \ \ $\GL_2(\cO_K)$-representation $\Ind_{I}^{\GL_2(\cO_K)}W(\chi,\chi^{J_1,J_2})$ is multiplicity free.
\end{lem}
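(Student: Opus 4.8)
The strategy is to reduce to a statement about $\GL_2(\F_q)$-representations and apply a multiplicity-one result for principal series. First I would observe that $W(\chi,\chi^{J_1,J_2})$ need not be $K_1$-fixed (by Lemma~\ref{lem:Wchi}(ii) it is $K_1$-fixed exactly when $J_2=\emptyset$), so one cannot directly descend to $\Gamma = \GL_2(\F_q)$. The key point is that $\Ind_I^{\GL_2(\cO_K)}(-)$ of a length $\le p$ (here $\le 2^f$, and the genericity forces $p$ large) representation can be analyzed via its $\GL_2(\cO_K)$-socle and cosocle filtration, or more efficiently via the Jordan--H\"older factors. So I would first compute the Jordan--H\"older factors of $\Ind_I^{\GL_2(\cO_K)}W(\chi,\chi^{J_1,J_2})$, using exactness of $\Ind_I^{\GL_2(\cO_K)}$ and the known multiplicity-free structure of $\Ind_I^{\GL_2(\cO_K)}\chi'$ for $\chi'$ $1$-generic (recalled in \S~\ref{sec:preliminaries-fin-len} via the parametrization of $\JH$ by $\mathcal{P}$ and the sets $\mathcal{S}(\xi)$).

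Concretely, $W(\chi,\chi^{J_1,J_2})$ is multiplicity free with constituents $\chi\prod_{j\in J_1'}\alpha_j^{-1}\prod_{j\in J_2'}\alpha_j$ for $J_1'\subset J_1$, $J_2'\subset J_2$ (Lemma~\ref{lem:Wchi}), each of which is $(n-2)$-generic hence $1$-generic under the hypothesis that $\chi$ is $2$-generic. Wait---to apply the clean $\JH$ description one wants $0$-genericity of the inducing characters; since $2$-genericity of $\chi$ gives $(2-2)=0$-genericity, this is exactly enough, but to use the \emph{bijective} parametrization by $\mathcal{P}$ one needs $1$-genericity of the character, so I would be careful to track which level is needed at each step (this is why the hypothesis is stated as $2$-generic rather than $1$-generic). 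Then by exactness, $\JH(\Ind_I^{\GL_2(\cO_K)}W(\chi,\chi^{J_1,J_2})) = \bigsqcup_{J_1'\subset J_1, J_2'\subset J_2}\JH(\Ind_I^{\GL_2(\cO_K)}\chi^{J_1',J_2'})$ as multisets, where $\chi^{J_1',J_2'}\defeq\chi\prod_{j\in J_1'}\alpha_j^{-1}\prod_{j\in J_2'}\alpha_j$. Each $\JH(\Ind_I^{\GL_2(\cO_K)}\chi^{J_1',J_2'})$ is itself multiplicity free, parametrized by subsets $\mathcal{S}\subset\{0,\dots,f-1\}$; so the task reduces to showing that as $(J_1',J_2')$ and $\mathcal{S}$ both vary, the resulting Serre weights are pairwise distinct.

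To carry this out I would use the explicit description in Remark~\ref{rmk:JH:conventions}: if $\chi(\smatr a00d)=a^s\eta(ad)$ then the constituent of $\Ind_I^{\GL_2(\cO_K)}\chi$ attached to $\xi\in\mathcal{P}$ is $\xi^c(s_0,\dots,s_{f-1})\otimes\det^{\ast}\eta$. Twisting by $\alpha_j^{\pm1}$ shifts the relevant exponents $s_j$ by $\pm1$ (and adjusts $\eta$), and the effect on the highest weight of the Serre weight constituent is a controlled shift in the $j$-th coordinate by an element of a small window; under the genericity hypothesis all these coordinates stay in the admissible range $[0,p-1]$ without collision. I would then argue, coordinate by coordinate, that the pair $(J_1'\sqcup J_2', \mathcal{S})$ is recovered from the Serre weight: the coordinates in $J_1'$ versus $J_2'$ versus $\mathcal{S}$ move the $j$-th entry into disjoint sub-windows (because a $y_j$-type shift, a $z_j$-type shift, and an $\mathcal{S}$-type shift produce distinguishable residues mod the relevant intervals), invoking a ``unique representation'' statement in the spirit of Lemma~\ref{lem:unique} applied to the exponents. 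This combinatorial bookkeeping---matching up the six-element windows defining $\mathcal{P}$ with the $\alpha_j$-twists---is the main obstacle; it is not deep but requires care to ensure no two distinct $(J_1',J_2',\mathcal{S})$ give the same $f$-tuple, and to check that $2$-genericity (rather than a weaker bound) is exactly what prevents wraparound mod $p^f-1$ and keeps all Serre weight coordinates legal. Once pairwise distinctness is established, multiplicity freeness of $\Ind_I^{\GL_2(\cO_K)}W(\chi,\chi^{J_1,J_2})$ follows immediately.
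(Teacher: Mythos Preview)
Your approach is correct and is exactly what the paper does, just spelled out in more detail: the paper's proof is the single line ``It follows from our genericity assumption and \cite[Lemma~2.2]{BP},'' which amounts to computing the Jordan--H\"older factors of each $\Ind_I^{\GL_2(\cO_K)}\chi^{J_1',J_2'}$ via the parametrization of that lemma and observing that under $2$-genericity no collisions occur. Your worry about needing the \emph{bijective} parametrization (requiring $1$-genericity) is unnecessary---for multiplicity freeness the injective parametrization of \cite[Lemma~2.2]{BP} suffices, and that holds already under the weaker condition $\chi' \ne (\chi')^s$, which $0$-genericity of $\chi^{J_1',J_2'}$ guarantees.
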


\begin{proof}
It follows from our genericity assumption and \cite[Lemma~2.2]{BP}.
\end{proof}

We recall from \S~\ref{sec:preliminaries-fin-len} that the Jordan--H\"older factors $\sigma$ of a principal series representation $\Ind_I^{\GL_2(\cO_K)}\chi'$ {for a $1$-generic character $\chi' : I \to \F\s$} are parametrized by the subsets of $\{0,\dots,f-1\}$, sending $\sigma$ to $\mathcal{S}(\sigma)$, such that the socle of $\Ind_I^{\GL_2(\cO_K)}\chi'$ corresponds to the empty set {(see Remark \ref{rmk:JH:conventions})}. 

{We also recall from \cite[Def.~2.9]{HuWang2} some notation and a lemma. 
Assume first $f>1$.
Given $j\in\{0,\dots,f-1\}$ and $*\in \{+,-\}$ we define the elements $\mu_j^*\in \mathcal{I}$ 
as follows: $(\mu_j^*)_{j-1}(x_{j-1})=p-2-x_{j-1}$, $(\mu_j^*)_{j}(x_{j})=x_{j}\ast1$ and $(\mu_j^*)_{i}(x_i)=x_i$ for $i\notin\{j-1,j\}$.
If $f = 1$ we define $\mu_0^* \in \Z[x_0]$ by $\mu_0^*(x_{0})=p-2-(\ast 1)-x_0$.
For any $f\geq 1$, if $\sigma$ is a $0$-generic Serre weight corresponding to a tuple $(s_0,\dots,s_{f-1})\in\{0,\dots,p-1\}^f$ we write $\mu^*_j(\sigma)$ for the Serre weight $\mu_j^*\big((s_0,\dots,s_{f-1})\big)\otimes\det^{e(\mu_j^*)(s_0,\dots,s_{f-1})}$, where $e(\mu_j^*)\in \Z\oplus \bigoplus_{i=0}^{f-1}\Z x_i$ is defined in \cite[\S~3]{BP}.
 (Note that $\mu_j^-(\sigma)$ is undefined if $f \ge 2$ and $s_j = 0$ and $\mu_j^+(\sigma)$ is undefined if $f = 1$ and $s_j = p-2$.)
} 

\begin{lem}\label{lem:ext1}
 Let $\sigma$ and $\sigma'$ be two $0$-generic Serre weights. If $f=1$, suppose that  $\sigma$, $\sigma'$ are not both isomorphic to $\mathrm{Sym}^{p-2}\F^2 \otimes \eta$ for some $\eta$. 
 Then 
 \begin{equation*}
   \Ext^1_{\GL_2(\cO_K)/Z_1}(\sigma',\sigma)\neq0 \iff \Ext^1_{\Gamma}(\sigma',\sigma)\neq0 \iff \sigma' \in \big\{\mu_j^*(\sigma) : 0\leq j\leq f-1, *\in \{+,-\} \big\}.
  \end{equation*}
\end{lem}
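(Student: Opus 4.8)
The statement is an equivalence between three conditions. The implication ``$\Ext^1_{\GL_2(\cO_K)/Z_1}(\sigma',\sigma)\neq 0 \Rightarrow \Ext^1_{\Gamma}(\sigma',\sigma)\neq 0$'' is formal: if the extension splits over $\Gamma = \GL_2(\cO_K)/K_1$ then inflating along $\GL_2(\cO_K)/Z_1 \twoheadrightarrow \GL_2(\cO_K)/(K_1 Z_1)$ shows the extension of $\GL_2(\cO_K)/Z_1$-representations is itself a pullback of a $K_1/Z_1$-invariant extension, and since $\sigma$, $\sigma'$ are Serre weights (trivial on $K_1$) a nonzero self-extension as $\GL_2(\cO_K)/Z_1$-representation would force a nontrivial class in $H^1(K_1/Z_1, \Hom_\F(\sigma',\sigma))^{\Gamma}$; I would invoke the standard fact (e.g.\ the inflation-restriction sequence together with $\Hom_{\Gamma}(\sigma',\sigma) = 0$ or $\F$) to conclude, or more simply cite that $\Ext^1_{\GL_2(\cO_K)/Z_1}(\sigma',\sigma)$ and $\Ext^1_\Gamma(\sigma',\sigma)$ coincide for Serre weights as is already used implicitly elsewhere in this circle of papers (this is essentially in \cite{BP} or \cite{yongquan-algebra}). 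The reverse direction ``$\Ext^1_\Gamma \neq 0 \Rightarrow \Ext^1_{\GL_2(\cO_K)/Z_1}\neq 0$'' is trivial since any $\Gamma$-extension inflates to a $\GL_2(\cO_K)/Z_1$-extension, which is nonsplit because its restriction along $\Gamma$ is.

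Thus the content is the second equivalence: $\Ext^1_\Gamma(\sigma',\sigma) \ne 0$ if and only if $\sigma'$ is one of the weights $\mu_j^*(\sigma)$. For this I would appeal directly to the known computation of $\Ext^1$ between Serre weights for $\GL_2(\F_q)$. The relevant statement is \cite[Cor.\ 5.6]{BP} (or the equivalent in \cite{breuil-buzzati}, \cite{yongquan-algebra}): for $0$-generic $\sigma$ the weights $\sigma'$ with $\Ext^1_\Gamma(\sigma',\sigma) \ne 0$ are precisely those obtained from $\sigma$ by the ``elementary moves'' $\mu_j^\pm$, and in that generic range each such $\Ext^1$ is one-dimensional. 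I would recall the definition of $\mu_j^*(\sigma)$ from \cite[Def.~2.9]{HuWang2} (reproduced just above the lemma), match it with the parametrization of $\JH$ of principal series and the combinatorics of $\mathcal{I}$, and note that the $f=1$ caveat is exactly the degenerate case where $\mathrm{Sym}^{p-2}$ has an exceptional self-extension situation excluded by hypothesis. Concretely, for $f \ge 2$ I would cite that $\Ext^1_\Gamma(\sigma',\sigma)\ne 0 \iff \sigma'\in\{\mu_j^\pm(\sigma)\}$ is \cite[Cor.\ 5.6]{BP}; for $f=1$ the only $0$-generic Serre weights are $\mathrm{Sym}^r\F^2\otimes\eta$ with $1\le r\le p-2$, and $\Ext^1_\Gamma(\mathrm{Sym}^{r'}\otimes\eta',\mathrm{Sym}^r\otimes\eta)\ne 0$ happens exactly for $(r',\eta')$ of the form $\mu_0^\pm(\mathrm{Sym}^r\otimes\eta)$ by the explicit $\SL_2(\F_p)$-computations in \cite[\S3]{BP}, provided we are not in the excluded case $r=r'=p-2$.

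I do not expect a genuine obstacle here: the lemma is a bookkeeping consolidation of results already in the literature, and the proof is essentially a citation plus a short argument passing between $\Gamma$ and $\GL_2(\cO_K)/Z_1$. The only mild subtlety is making sure the genericity hypothesis ($0$-generic) is exactly what is needed for the cited $\Ext^1$ computation to give a clean ``if and only if'' (in less generic situations extra extensions or coincidences appear), and handling the $f=1$ exceptional weight correctly — which is why it is explicitly excluded. So the write-up would be: (1) reduce the first equivalence to the coincidence of $\Ext^1$ over $\Gamma$ and over $\GL_2(\cO_K)/Z_1$ for weights trivial on $K_1$, citing the inflation-restriction argument; (2) cite \cite[Cor.\ 5.6]{BP} (and its $f=1$ specialization) for the description in terms of the $\mu_j^*$; (3) remark that the second implication is immediate by inflation. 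The ``hard part'', such as it is, is merely assembling the correct references and confirming the normalization of $\mu_j^*$ matches that of the $\Ext^1$ tables, which is routine.
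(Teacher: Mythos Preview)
Your proposal has a genuine gap in the first equivalence. You claim that $\Ext^1_{\GL_2(\cO_K)/Z_1}(\sigma',\sigma)\neq 0 \Rightarrow \Ext^1_{\Gamma}(\sigma',\sigma)\neq 0$ is ``formal'' via inflation--restriction, and that the two $\Ext^1$ groups ``coincide for Serre weights''. This is precisely what is \emph{not} automatic. The inflation--restriction sequence reads
\[
0 \to \Ext^1_\Gamma(\sigma',\sigma) \to \Ext^1_{\GL_2(\cO_K)/Z_1}(\sigma',\sigma) \to H^1(K_1/Z_1,\Hom_\F(\sigma',\sigma))^{\Gamma},
\]
and the rightmost term does not vanish in general: the classification in \cite[Cor.~5.6(ii)]{BP} lists the pairs $(\sigma,\sigma')$ for which $\Ext^1_{\GL_2(\cO_K)/Z_1}\neq 0$, and cases (a) and (c) there are exactly the candidates for non-$K_1$-invariant extensions. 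These are not ruled out by any ``standard fact''.

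The paper's proof confronts this head-on. It takes a putative nonsplit $K/Z_1$-extension $0\to\sigma\to V\to\sigma'\to 0$ in case (c) (where $\sigma' = (s_0,\dots,s_{f-1})\otimes\eta$ and $\sigma$ differs by $s_j\mapsto s_j-2$ plus a determinant twist), picks an $H$-eigenvector $w\in V$ lifting a generator of $\sigma'^{I_1}$, and computes directly with the operators $X_i$, $Y_i$ to show that $w$ is fixed by $I_1/Z_1$. Frobenius reciprocity then produces a surjection $\Ind_I^{\GL_2(\cO_K)}\chi_{\sigma'}\onto V$, which is impossible by the known Jordan--H\"older structure of that principal series. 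Case (a) is handled by duality. This argument is the actual content of the lemma, and it is also why the paper remarks that the proof in \cite{HuWang2} was incomplete for $f=1$. Your plan to ``cite the inflation--restriction argument'' skips exactly this step.
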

 
Lemma \ref{lem:ext1} follows from  \cite[Lemma 2.10]{HuWang2}  and \cite[Prop.~2.21]{yongquan-algebra}, except when $f=1$ the proof is incomplete in \emph{loc.~cit.}

\begin{proof}
If $\sigma'=\mu_j^*(\sigma)$ for some $0\leq j\leq f-1$ and $*\in\{+,-\}$, it follows from \cite[Cor.~5.6]{BP} that $\Ext^1_{\Gamma}(\sigma',\sigma)=\Ext^1_{K/Z_1}(\sigma',\sigma)\neq0$.  
Conversely,  suppose $\Ext^1_{K/Z_1}(\sigma',\sigma)\neq0$ and  we need to prove that $\sigma'=\mu_j^*(\sigma)$ for some $0\leq j\leq f-1$ and $*\in\{+,-\}$. Using \cite[Cor.~5.6]{BP} and \cite[Lem.~2.10(i)]{HuWang2}, it suffices to exclude cases (a) and (c) of \cite[Cor.~5.6(ii)]{BP}. The argument below is taken from the proof  of
\cite[Lemma~2.10(i)]{HuWang2}.

First assume that we are in case (c). Thus, as $\sigma,\sigma'$ are $0$-generic, we may write \[\sigma'=(s_0,\dots,s_{f-1})\otimes\eta,\ \ \sigma=(s_0,\dots,s_{j}-2,\dots,s_{f-1})\otimes\eta{\det}^{p^j},\] with $2\leq s_j\leq p-2$ and $0\leq s_i\leq p-2$ for $i\neq j$.  Let $0\ra \sigma\ra V\ra \sigma'\ra0$ be a nonsplit $K/Z_1$-extension. Let $w\in V$ be an $H$-eigenvector of character $\chi_{\sigma'}$ such that its image in $\sigma'$  spans $\sigma'^{I_1}$. We will prove that $w$ is fixed by $I_1/Z_1$, thus by Frobenius reciprocity we obtain a $\GL_2(\cO_K)$-equivariant morphism $\Ind_I^{\GL_2(\cO_K)}\chi_{\sigma'}\ra V$ which must be surjective (as it surjects onto $\cosoc_K V$). But this is impossible by the structure of $\Ind_I^{\GL_2(\cO_K)}\chi_{\sigma'}$ by \cite[Lemma~2.2]{BP}. 

For $0\leq i\leq f-1$, consider the operators
\[X_i\defeq\sum_{\lambda\in\F_q}\kappa_0(\lambda)^{-p^i}\smatr{1}0{[\lambda]}1, \ \ Y_i\defeq\sum_{\lambda\in\F_q}\kappa_0(\lambda)^{-p^i}\smatr{1}{[\lambda]}01,\]
which are viewed as elements of $\F[\![K/Z_1]\!]$.  
By \cite[Lemma~3.37]{BHHMS2}, we have $\F[\![\smatr1{\cO_K}01]\!]= \F[\![Y_0,\dots,Y_{f-1}]\!]$ and similarly $\F[\![\smatr{1}0{p\cO_K}1]\!]= \F[\![X_0^{p},\dots,X_{f-1}^{p}]\!]$. 
Thus, by (the proof of) \cite[Prop.\ 5.3.3]{BHHMS1} the elements $\{X_i^p, Y_i : 0\leq i\leq f-1\}$ topologically generate the maximal ideal of $\F[\![I_1/Z_1]\!]$ and we are left to prove that $X_i^pw=Y_iw=0$ for all $0\leq i\leq f-1$. 

It is direct to check that $X_iw$ (resp.~$Y_iw$) has $H$-eigencharacter $\chi_{\sigma'}\alpha_i^{-1}$ (resp.~$\chi_{\sigma'}\alpha_i$). On the other hand, as $\chi_{\sigma}=\chi_{\sigma'}\alpha_j^{-1}$, we have
\begin{equation*}
 \JH(V|_H) = \JH(\sigma'|_H) = \bigg\{\chi_{\sigma'}\prod_{i=0}^{f-1}\alpha_i^{-k_i} : 0\leq k_i\leq s_i\bigg\}.
\end{equation*}
Moreover, $Y_iw \in \sigma$, by our choice of $w$.
However,  it is direct to check that $\chi_{\sigma'}\alpha_i^{-(p-1)}$ does not occur in $\JH(V|_H)$ and $\chi_{\sigma'}\alpha_i$ does not occur in $\JH(\sigma|_H)$. 
Therefore $X_i^{p-1}w=Y_iw=0$ for all $0\leq i\leq f-1$, hence also $X_i^{p}w=0$, as desired. 

Case (a) can be treated by passing to the dual  (the dual extension of $\sigma^\vee$ by $\sigma'^\vee$ is as in case (c)). 
\end{proof}

\begin{lem}\label{lem:sigma'-chi'}
 Assume that $\chi$ is $3$-generic. 
Let $\sigma\in\JH(\Ind_I^{\GL_2(\cO_K)}\chi)$ and let $\sigma'=\mu_j^*(\sigma)$ for some $0\leq j\leq f-1$ and $*\in\{+,-\}$. Let $J_1,J_2\subset\{0,\dots,f-1\}$ such that $J_1\cap J_2=\emptyset$. Assume $\sigma'\in \JH\big(\Ind_I^{\GL_2(\cO_K)}W(\chi,\chi^{J_1,J_2})\big)$. Then $\sigma'\in \JH\big(\Ind_I^{\GL_2(\cO_K)}(\chi\oplus \chi\alpha_j^{-1}\oplus\chi\alpha_j)\big)$ and exactly one of the following cases happens:
\begin{enumerate}
\item $\sigma'\in \JH(\Ind_I^{\GL_2(\cO_K)}\chi)$, in which case either 
$\mathcal{S}(\sigma)\sqcup\{j\}=\mathcal{S}(\sigma')$ or 
$\mathcal{S}(\sigma')\sqcup\{j\}=\mathcal{S}(\sigma)$;
\item $\sigma'\in \JH(\Ind_I^{\GL_2(\cO_K)}\chi\alpha_j^{-1})$, in which case $j \in J_1$ 
and $\mathcal{S}(\sigma)\sqcup\{j\}=\mathcal{S}(\sigma')$; 
\item $\sigma'\in \JH(\Ind_I^{\GL_2(\cO_K)}\chi\alpha_j)$, in which case $j \in J_2$ 
and $\mathcal{S}(\sigma')\sqcup\{j\}=\mathcal{S}(\sigma)$. 
\end{enumerate}
\end{lem}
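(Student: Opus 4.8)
\textbf{Proof plan for Lemma \ref{lem:sigma'-chi'}.}

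The plan is to work character-by-character on $H$ and exploit the explicit structure of $W(\chi,\chi^{J_1,J_2})$ from Lemma \ref{lem:Wchi}. First I would note that since $\sigma' = \mu_j^*(\sigma)$, by Lemma \ref{lem:ext1} we have $\Ext^1_\Gamma(\sigma,\sigma') \ne 0$, hence $\Ext^1_\Gamma(\sigma',\sigma)\ne 0$ as well (Ext${}^1$ between Serre weights is symmetric here by \cite[Cor.~5.6]{BP}), and the $I_1$-cosocle character $\chi_{\sigma'}$ differs from $\chi_\sigma$ by a single $\alpha_j^{\pm 1}$: concretely $\chi_{\sigma'} = \chi_\sigma \alpha_j^{\pm 1}$, with the sign depending on $*$ and on whether $j\in \mathcal S(\sigma)$. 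This already pins down which constituents of $W(\chi,\chi^{J_1,J_2})$ could have $\sigma'$ in their principal series. Indeed, $\sigma'\in\JH(\Ind_I^{\GL_2(\cO_K)}\psi)$ for a unique $H$-character $\psi$ occurring in $W(\chi,\chi^{J_1,J_2})$ (by multiplicity-freeness, Lemma \ref{lem:multfree}, using $3$-genericity so that the characters in $W(\chi,\chi^{J_1,J_2})$ remain $1$-generic), and that $\psi$ must be $I_1$-conjugate-compatible with $\chi_{\sigma'}$. Using the parametrization of $\JH(\Ind_I^{\GL_2(\cO_K)}\psi)$ by subsets via $\mathcal S$, together with the fact that $\psi = \chi\prod_{j\in J_1'}\alpha_j^{-1}\prod_{j\in J_2'}\alpha_j$ for some $J_1'\subset J_1$, $J_2'\subset J_2$, I would show $|J_1'|+|J_2'|\le 1$, i.e.\ $\psi\in\{\chi,\chi\alpha_j^{-1},\chi\alpha_j\}$ and the index is exactly $j$ when $\psi\ne\chi$. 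This gives the first assertion $\sigma'\in\JH(\Ind_I^{\GL_2(\cO_K)}(\chi\oplus\chi\alpha_j^{-1}\oplus\chi\alpha_j))$.

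Next I would analyze the three possibilities for $\psi$. The key computation is the comparison of the subsets $\mathcal S(\sigma)$ (for $\sigma\in\JH(\Ind_I^{\GL_2(\cO_K)}\chi)$) and $\mathcal S(\sigma')$ (for $\sigma'\in\JH(\Ind_I^{\GL_2(\cO_K)}\psi)$), where now the two principal series may have \emph{different} inducing characters. The operation $\sigma\mapsto \mu_j^*(\sigma)$ changes the defining $f$-tuple only in coordinates $j-1$ and $j$ (in the $f>1$ case) in a controlled way; translating this through the dictionary of \cite[\S 2]{BP}, \cite[\S 4]{breuil-buzzati} between Serre weight tuples, the sets $\mathcal S$, and the inducing characters, I expect to get exactly: if $\psi = \chi$ then $\mathcal S(\sigma)$ and $\mathcal S(\sigma')$ differ by adding/removing $\{j\}$ (case (i)); if $\psi = \chi\alpha_j^{-1}$ then necessarily $j\in J_1$ (since $\alpha_j^{-1}$ only occurs in $W(\chi,\chi^{J_1,J_2})$ with $j\in J_1$) and $\mathcal S(\sigma)\sqcup\{j\} = \mathcal S(\sigma')$ (case (ii)); symmetrically for $\psi=\chi\alpha_j$ with $j\in J_2$ (case (iii)). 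The conditions $j\in J_1$, resp.\ $j\in J_2$, are immediate from which coordinates of $W(\chi,\chi^{J_1,J_2})$ even contain $\alpha_j^{\mp 1}$.

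The main obstacle will be the bookkeeping in the last paragraph: carefully tracking how $\mu_j^*$ and the shift by $\alpha_j^{\pm1}$ in the inducing character interact when reading off $\mathcal S(\sigma)$ versus $\mathcal S(\sigma')$, since the two Serre weights are constituents of \emph{different} principal series. I would handle this by reducing to the case where $\chi$ is trivial (twisting), then doing the computation locally at indices $j-1$ and $j$ using the explicit formulas of \cite[\S 2]{BP} for the tuples $\xi^c$, $e(\xi^c)$ associated to a subset $\mathcal S$, and the formula for $\mu_j^*$ from \cite[Def.~2.9]{HuWang2}. The genericity hypothesis ($3$-generic) enters to guarantee all the relevant tuples have entries in $[0,p-1]$ so that the parametrizations are bijective and no degenerate phenomena occur; it also ensures the principal series $\Ind_I^{\GL_2(\cO_K)}(\chi\alpha_j^{\pm1})$ are genuinely $1$-generic. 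Finally, the three cases are mutually exclusive because $\chi$, $\chi\alpha_j^{-1}$, $\chi\alpha_j$ are pairwise distinct ($2$-genericity suffices) and the principal series they induce share no Jordan--Hölder factors.
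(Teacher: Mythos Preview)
Your overall architecture --- locate the unique $\psi$ in $W(\chi,\chi^{J_1,J_2})$ with $\sigma' \in \JH(\Ind_I^{\GL_2(\cO_K)}\psi)$, then show $\psi \in \{\chi,\chi\alpha_j^{-1},\chi\alpha_j\}$ --- is sound, and the conditions $j\in J_1$ (resp.\ $j\in J_2$) in cases~(ii), (iii) do follow immediately once $\psi$ is pinned down. However, the mechanism you propose to identify $\psi$ rests on the claim that $\chi_{\sigma'} = \chi_\sigma\alpha_j^{\pm 1}$, and this claim is false. For a concrete check, take $f=2$, $j=0$, $*=+$, and $\sigma=(s_0,s_1)$ with trivial twist; then $\sigma' = \mu_0^+(\sigma) = (s_0+1,p-2-s_1)\otimes\det^{e}$, and matching central characters forces $e \equiv s_1 p + p - 1 \pmod{(p^2-1)/2}$. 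One computes $\chi_{\sigma'}/\chi_\sigma = \alpha_0^{-(p-1+s_1 p)}$, which is not $\alpha_j^{\pm 1}$ for generic $s_1$. The point is that $\mu_j^*$ changes the $(j-1)$-st entry from $s_{j-1}$ to $p-2-s_{j-1}$, and this is not a shift by $\pm 1$; so $\chi_{\sigma'}$ does not move by a single root character from $\chi_\sigma$. (Neither does $\chi_{\sigma'}^s$ from $\chi_\sigma^s$.) Your heuristic conflates the inducing character $\chi$ (a principal series in which $\sigma$ occurs) with the cosocle character $\chi_\sigma$; these coincide only when $\sigma$ is the cosocle.

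What is actually true --- and what the paper calls ``direct to check'' --- is that $\sigma'$ always lies in $\JH(\Ind_I^{\GL_2(\cO_K)}\psi)$ for some $\psi \in \{\chi,\chi\alpha_j^{-1},\chi\alpha_j\}$, but this requires a small case analysis on the pair $(\xi_{j-1},\xi_j)$ of the parametrizing tuple $\xi$ for $\sigma$ (equivalently, on whether $j-1, j \in \mathcal{S}(\sigma)$) together with the sign $*$; there is no character-theoretic shortcut. The paper's proof is then essentially one line: it observes this direct check gives the first assertion and case~(i), and for the $\mathcal{S}$-comparison in cases~(ii) and~(iii) it invokes \cite[Lemmas~3.8,~3.7]{HuWang2}, which do precisely the translation between constituents of $\Ind_I^{\GL_2(\cO_K)}\chi$ and $\Ind_I^{\GL_2(\cO_K)}\chi\alpha_j^{\mp 1}$. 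Your ``bookkeeping'' paragraph would eventually reproduce those lemmas, so the plan is salvageable, but you should drop the $\chi_{\sigma'}=\chi_\sigma\alpha_j^{\pm1}$ step and go straight to the case analysis (or cite the Hu--Wang lemmas).
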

\begin{proof}
First, it is direct to check that $\sigma'$ occurs in $\Ind_I^{\GL_2(\cO_K)}(\chi\oplus \chi\alpha_j^{-1}\oplus\chi\alpha_j)$. 
The claim on the relation between $\mathcal{S}(\sigma)$ and $\mathcal{S}(\sigma')$ follows directly from the definition of $\mu_j^*(\sigma)$ and (\ref{eq:S-xi1}) in case (i), and 
from \cite[Lemmas 3.8, 3.7]{HuWang2} in cases (ii) and (iii) respectively.
\end{proof}

\begin{prop}\label{prop:HW}
{Assume that $\chi$ is $5$-generic.} 
  \begin{enumerate}
  \item 
  \label{it:prop:HW:1}
    The cosocle of $\Ind_I^{\GL_2(\cO_K)}W(\chi,\chi^{J_1,J_2})$ is $\bigoplus_{J_1'\subset J_1}\sigma^{J_1',J_2}$, where $\sigma^{J_1',J_2} $ denotes the cosocle of $\Ind_I^{\GL_2(\cO_K)}\chi^{J_1',J_2}$.  
   \item \label{it:prop:HW:2}
    Let $\sigma\in \JH(\Ind_I^{\GL_2(\cO_K)}\chi)$ be parametrized by $\mathcal{S}(\sigma)$ and $\tau\in \JH(\Ind_I^{\GL_2(\cO_K)}\chi^{J_1,J_2})$ be parametrized by $\mathcal{S}(\tau)$. Let $Q_{\sigma}$ be the unique quotient of $\Ind_I^{\GL_2(\cO_K)}W(\chi,\chi^{J_1,J_2})$ with socle $\sigma$ (by Lemma~\ref{lem:multfree}). Then $\tau\in \JH(Q_{\sigma})$ if and only if 
    \begin{equation}\label{eq:HW}
      \mathcal{S}(\sigma)\cap J_1=\emptyset \text{\ \ and\ \ } \mathcal{S}(\sigma)\sqcup J_1\subset \mathcal{S}(\tau)\cup J_2.\end{equation}
  \end{enumerate}
\end{prop}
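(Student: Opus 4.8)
The statement is about the $\GL_2(\cO_K)$-representation $\Ind_I^{\GL_2(\cO_K)}W(\chi,\chi^{J_1,J_2})$, where $W(\chi,\chi^{J_1,J_2})$ is the multiplicity-free $I$-representation of Lemma~\ref{lem:Wchi}. Both parts are statements about Jordan--H\"older factors and the submodule lattice of this induced representation, which by Lemma~\ref{lem:multfree} is multiplicity free (using that $\chi$ is $5$-generic, hence in particular $2$-generic). The plan is to proceed by induction on $|J_1|+|J_2|$, the base case $J_1=J_2=\emptyset$ being the (known) structure of the principal series $\Ind_I^{\GL_2(\cO_K)}\chi$ recalled in \S~\ref{sec:preliminaries-fin-len} (parametrization of $\JH$ by subsets via $\mathcal{S}(-)$, with socle $\leftrightarrow \emptyset$).

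For part \ref{it:prop:HW:1}: using the socle filtration of $W(\chi,\chi^{J_1,J_2})$ from Lemma~\ref{lem:Wchi}, the cosocle of $W(\chi,\chi^{J_1,J_2})$ is $\chi^{J_1,J_2}$, but after inducing to $\GL_2(\cO_K)$ one picks up extra quotients because $\Ind_I^{\GL_2(\cO_K)}$ is exact and the various characters $\chi^{J_1',J_2'}$ occurring in the cosocle layers of sub-quotients can contribute. The cleaner route: first compute $\cosoc_{\GL_2(\cO_K)}\Ind_I^{\GL_2(\cO_K)}W$ by Frobenius reciprocity, $\Hom_{\GL_2(\cO_K)}(\Ind_I^{\GL_2(\cO_K)}W,\sigma)=\Hom_I(W,\sigma)$ for a Serre weight $\sigma$, and $\Hom_I(W,\sigma)=\Hom_I(W,\sigma^{I_1})$ picks out exactly the characters $\chi'$ of $H$ appearing in $W$ with $\chi'=\chi_\sigma$; by the socle-layer description of $W$ these are $\chi^{J_1',J_2'}$ with $J_1'\subset J_1$, $J_2'\subset J_2$. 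One then checks, using the $5$-genericity (so that $\chi^{J_1',J_2'}$ is at least $1$-generic) and the description of which Serre weights $\sigma$ have $\sigma^{I_1}\cong\chi^{J_1',J_2'}$ and are such that $\Hom_I$ is nonzero (i.e.\ $\sigma$ is the cosocle of $\Ind_I^{\GL_2(\cO_K)}\chi^{J_1',J_2'}$, equivalently $\mathcal{S}(\sigma)$ equals the full set $\{0,\dots,f-1\}$ relative to that character), that the only $(J_1',J_2')$ producing a genuine quotient are those with $J_2'=\emptyset$. The cleanest way to see $J_2' = \emptyset$ is that a nonzero $I$-map $W \to \sigma$ must kill $K_1$ on the image, and Lemma~\ref{lem:Wchi}(ii) together with the socle-layer description shows the sub-quotient of $W$ with cosocle $\chi^{J_1',J_2'}$ is $K_1$-fixed iff $J_2'=\emptyset$; but any map to a Serre weight factors through the largest $K_1$-fixed quotient. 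This gives $\cosoc = \bigoplus_{J_1'\subset J_1}\sigma^{J_1',J_2}$ after identifying the cosocle of $\Ind_I^{\GL_2(\cO_K)}\chi^{J_1',J_2}$ (note: the $J_2$, not $J_2'$, because the cosocle character of the relevant sub-quotient $W(\chi^{J_1',J_2},\ (\chi^{J_1',J_2})^{\emptyset,J_2})$... — one needs to be careful here and this bookkeeping of which twist survives is part of the work).

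For part \ref{it:prop:HW:2}, the heart of the matter: I will characterize $\JH(Q_\sigma)$ inside the multiplicity-free module $\Ind_I^{\GL_2(\cO_K)}W(\chi,\chi^{J_1,J_2})$. The strategy is a two-step filtration argument. Filter $W(\chi,\chi^{J_1,J_2})$ by the $I$-subrepresentations indexed by subsets of $J_1\sqcup J_2$; each graded piece is a single character $\chi^{J_1',J_2'}$. Inducing, $\Ind_I^{\GL_2(\cO_K)}W$ is filtered with graded pieces $\Ind_I^{\GL_2(\cO_K)}\chi^{J_1',J_2'}$, and a JH factor $\tau$ of $\Ind_I^{\GL_2(\cO_K)}\chi^{J_1,J_2}$ (i.e.\ with $\tau^{I_1}\cong\chi^{J_1,J_2}$, parametrized by $\mathcal{S}(\tau)\subset\{0,\dots,f-1\}$) appears in $Q_\sigma$ iff there is a ``path'' from $\sigma$ up through these graded pieces that is realized by genuine (nonsplit) extensions inside $\Ind_I^{\GL_2(\cO_K)}W$. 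The key local input is Lemma~\ref{lem:sigma'-chi'} (needing $\chi$ $3$-generic): it tells us that for $\sigma'=\mu_j^*(\sigma)$ occurring in $\Ind_I^{\GL_2(\cO_K)}W$, passing from $\sigma$ to $\sigma'$ corresponds to either adding or removing $j$ from $\mathcal{S}(-)$, and which of the three characters $\chi,\chi\alpha_j^{\pm1}$ the factor $\sigma'$ comes from is dictated by whether $j\in J_1$ (then $\mathcal{S}$ must grow and the twist moves from $\chi$-level toward $\chi\alpha_j^{-1}$-level) or $j\in J_2$ (then $\mathcal{S}$ must shrink), or $j\notin J_1\cup J_2$ (then either). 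Combining Lemma~\ref{lem:ext1} (the $\Ext^1$ criterion for Serre weights of $\GL_2(\cO_K)/Z_1$, needing $0$-genericity) with the socle-layer structure of $W$ to control which extensions are nonsplit in $\Ind_I^{\GL_2(\cO_K)}W$, an induction on $|J_1\sqcup J_2|$ then yields: $\tau\in\JH(Q_\sigma)$ iff one can reach $\mathcal{S}(\tau)$ from $\mathcal{S}(\sigma)$ by a sequence of steps where each $j\in J_1$ is added exactly once (contributing to crossing from the $\chi$-layer to the $\chi^{J_1}$-layer), each $j\in J_2$ is removed exactly once, and the indices $j\notin J_1\cup J_2$ behave as in the ordinary principal series. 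The combinatorial extraction of these constraints is precisely the two conditions $\mathcal{S}(\sigma)\cap J_1=\emptyset$ and $\mathcal{S}(\sigma)\sqcup J_1\subset\mathcal{S}(\tau)\cup J_2$.

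\textbf{Main obstacle.} The hard part is proving that the ``combinatorially allowed'' $\tau$'s are exactly the ones that actually occur as JH factors of $Q_\sigma$ — i.e.\ that the extensions predicted by the $\Ext^1$/weight-graph combinatorics are genuinely nonsplit inside the specific quotient $Q_\sigma$ of $\Ind_I^{\GL_2(\cO_K)}W(\chi,\chi^{J_1,J_2})$, and conversely that no extra factors sneak in. One direction (allowed $\Rightarrow$ occurs) requires producing, for each allowed $\tau$, a sub- or sub-quotient of $\Ind_I^{\GL_2(\cO_K)}W$ with socle $\sigma$ containing $\tau$; the natural candidate is $\Ind_I^{\GL_2(\cO_K)}W(\chi,\chi^{\mathcal{S}(\sigma)^c\cap(J_1\sqcup J_2),\,\ldots})$ embedded appropriately, but matching this with $Q_\sigma$ uses the multiplicity-freeness (Lemma~\ref{lem:multfree}) crucially, since in a multiplicity-free module the lattice of subquotients is determined by JH sets together with the poset structure. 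The other direction (occurs $\Rightarrow$ allowed) is where the precise bounds in the genericity hypothesis ($5$-generic) are used: one must rule out ``unexpected'' Serre weight extensions, and the arguments of Lemma~\ref{lem:sigma'-chi'} and Lemma~\ref{lem:ext1} only control $\Ext^1$ between adjacent weights, so an induction on the length of a JH filtration of $Q_\sigma$ is needed, carefully tracking the $I_1$-invariant character of each factor to know which ``layer'' $\Ind_I^{\GL_2(\cO_K)}\chi^{J_1',J_2'}$ it belongs to. I expect the bookkeeping of these twists (which $\alpha_j$'s have been absorbed) to be the most error-prone part, and it is likely cleanest to first reduce by twisting to the case $\chi=\mathbf{1}$ and then argue with the explicit tensor-product model $W(\mathbf{1},\mathbf{1}^{J_1,J_2})=\bigotimes_{j\in J_1}E_j^-\otimes\bigotimes_{j\in J_2}E_j^+$ from the proof of Lemma~\ref{lem:Wchi}.
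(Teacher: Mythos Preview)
Your proposal is essentially correct and uses the same ingredients as the paper (Frobenius reciprocity and the $K_1$-coinvariants for part~\ref{it:prop:HW:1}; Lemmas~\ref{lem:ext1} and~\ref{lem:sigma'-chi'} plus multiplicity-freeness for part~\ref{it:prop:HW:2}), but the paper organizes part~\ref{it:prop:HW:2} differently and more cleanly.

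For part~\ref{it:prop:HW:1} the paper's argument matches yours: any $I$-map $W(\chi,\chi^{J_1,J_2})\to\sigma'|_I$ factors through the $K_1$-coinvariants, which (via the tensor-product model) is $W(\chi^{\emptyset,J_2},\chi^{J_1,J_2})$; this immediately forces $J_2'=J_2$ and leaves $J_1'\subset J_1$ free. The converse is checked by exhibiting $W(\chi^{J_1',J_2},\chi^{J_1,J_2})\hookrightarrow\sigma^{J_1',J_2}|_I$ explicitly. Your bookkeeping confusion about ``which twist survives'' dissolves once you observe that the coinvariants identify with $W(\chi^{\emptyset,J_2},\chi^{J_1,J_2})$, not some $W(\chi,\chi^{J_1,\emptyset})$.

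For part~\ref{it:prop:HW:2} the paper does \emph{not} induct on $|J_1\sqcup J_2|$. Instead it handles the two implications separately. For ``$\Leftarrow$'' it gives a direct construction: set $\chi'=\chi^{J_1,\emptyset}$, $\chi''=\chi^{J_1,J_2}$, form $Q_1=\mathrm{im}(\Ind W(\chi,\chi')\to Q_\sigma)$ and $Q_2$ the pushout of $Q_\sigma$ along $\Ind W(\chi,\chi'')\onto\Ind W(\chi',\chi'')$, then repeatedly apply \cite[Lemmas~3.7,~3.8]{HuWang2} (one-step moves that add an index to $\mathcal{S}$ in the $J_1$-direction, then remove one in the $J_2$-direction) to land on a factor parametrized by $(\mathcal{S}(\sigma)\setminus J_2)\sqcup J_1\subset\mathcal{S}(\tau)$, whence $\tau\in\JH(Q_2)\subset\JH(Q_\sigma)$. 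For ``$\Rightarrow$'' the paper inducts on $\ell(Q_\sigma^\tau)$ (the length of the unique sub of $Q_\sigma$ with cosocle~$\tau$): pick a length-$2$ sub $0\to\sigma\to\mathcal{E}\to\sigma'\to 0$, invoke Lemma~\ref{lem:sigma'-chi'} to locate $\sigma'$ in some $\Ind\chi^{J_1',J_2'}$ with $|J_1'\sqcup J_2'|\le 1$, observe that $Q_{\sigma'}$ is a quotient of $\Ind W(\chi^{J_1',J_2'},\chi'')$, and apply the inductive hypothesis to $Q_{\sigma'}$. Your induction on $|J_1\sqcup J_2|$ could likely be made to work but would require controlling how $Q_\sigma$ interacts with the natural maps between $\Ind W$'s for nested $(J_1,J_2)$; the paper's length induction sidesteps this by staying inside one induced module and peeling off a single extension at a time.
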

 \begin{rem}\label{rem:HW}
 In Proposition \ref{prop:HW}\ref{it:prop:HW:2}, let $V_{\tau}\subset \Ind_{I}^{\GL_2(\cO_K)}W(\chi,\chi^{J_1,J_2})$ be the unique subrepresentation with cosocle $\tau$ (again by Lemma~\ref{lem:multfree}). 
 Then $\tau\in \JH(Q_{\sigma})$ if and only if $\sigma\in \JH(V_{\tau})$.
 \end{rem}
 
\begin{proof}
{Note that the genericity assumption implies that any $\chi'\in \JH(W(\chi,\chi^{J_1,J_2}))$ is $3$-generic.}

(i) By Lemma~\ref{lem:Wchi}(ii), any $I$-equivariant morphism $W(\chi,\chi^{J_1,J_2})\ra \sigma'|_I$ (where $\sigma'$ is a Serre weight) factors through the quotient of $K_1$-coinvariants \[W(\chi,\chi^{J_1,J_2})\twoheadrightarrow W(\chi^{\emptyset,J_2},\chi^{J_1,J_2}). \]
By Frobenius reciprocity, this implies that the cosocle of $\Ind_I^{\GL_2(\cO_K)}W(\chi,\chi^{J_1,J_2})$ is equal to that of $\Ind_I^{\GL_2(\cO_K)}W(\chi^{\emptyset,J_2},\chi^{J_1,J_2})$, 
 so any of its irreducible constituents is of the form $\sigma^{J_1',J_2}$ for some $J_1'\subset J_1$.
Conversely, $\Ind_I^{\GL_2(\cO_K)}W(\chi,\chi^{J_1,J_2})$ surjects onto $\Ind_I^{\GL_2(\cO_K)}W(\chi^{J_1',J_2},\chi^{J_1,J_2})$, which surjects onto $\sigma^{J_1',J_2}$. 
(Write $\sigma^{J_1',J_2}=(s_0,\dots,s_{f-1})\otimes\eta$ with $0\leq s_j\leq p-1$.
As $s_j \ge 1$ for all $j$, $W(\chi^{J_1',J_2},\chi^{J_1,J_2})$ embeds in $\sigma^{J_1',J_2}|_I$ and is identified with the subspace spanned by $x^{s-i}y^i\in \sigma^{J_1',J_2}$ for $i\in\{\sum_{j\in J_1''}p^j: J_1''\subset J_1'\}$, where $s\defeq \sum_{j=0}^{f-1}p^js_j$; see the discussion at the beginning of  \cite[\S~17]{BP}. 
As a consequence, $\sigma^{J_1',J_2}$ occurs in the cosocle of $\Ind_I^{\GL_2(\cO_K)}W(\chi^{J_1',J_2},\chi^{J_1,J_2})$ by Frobenius reciprocity.)

(ii) 
Assume $\tau$  satisfies condition \eqref{eq:HW}. Let
\[\chi''\defeq \chi^{J_1,J_2},  \ \ \chi'\defeq \chi^{J_1,\emptyset}=\chi\prod_{j\in J_1}\alpha_j^{-1}.\]
Then \ $W(\chi,\chi')\hookrightarrow W(\chi,\chi'')$ \ and \ $W(\chi,\chi'')\twoheadrightarrow W(\chi',\chi'')$. \ Let \ $Q_1$ \ be \ the \ image \ of \ $\Ind_I^{\GL_2(\cO_K)}W(\chi,\chi')$ \ in \ $Q_\sigma$ \ and \ $Q_2$ \ be \ the \ pushout \ of \ $Q_\sigma$ \ and \ $\Ind_I^{\GL_2(\cO_K)}W(\chi',\chi'')$ \ along $\Ind_I^{\GL_2(\cO_K)}W(\chi,\chi'')$.  

 If $\tau'\in\JH(\Ind_I^{\GL_2(\cO_K)}\chi')$ denotes the Jordan--H\"older factor parametrized by $\mathcal{S}(\sigma)\sqcup J_1$, then $\tau'\in\JH(Q_1)$  by repeated use of \cite[Lemma 3.8]{HuWang2}, and consequently $\tau'\in \JH(Q_{\sigma})$. Since $\Ind_I^{\GL_2(\cO_K)}W(\chi,\chi'')$ is multiplicity free by Lemma \ref{lem:multfree} and $\tau'$ occurs in $\Ind_I^{\GL_2(\cO_K)}\chi'$, we have  $\tau'\in \JH(Q_2)$ by construction of $Q_2$. 
Thus $\tau'' \in \JH(Q_2)$, where $\tau'' \in \JH(\Ind_I^{\GL_2(\cO_K)}\chi')$ denotes the Jordan--H\"older factor parametrized by $(\mathcal{S}(\sigma) \cup J_2) \sqcup J_1$.
By repeated use of \cite[Lemma 3.7]{HuWang2} we deduce that $\tau''' \in \JH(Q_2)$, where $\tau''' \in \JH(\Ind_I^{\GL_2(\cO_K)}\chi'')$ denotes the Jordan--H\"older factor parametrized by $(\mathcal{S}(\sigma)\setminus J_2)\sqcup J_1$.
As $(\mathcal{S}(\sigma)\setminus J_2)\sqcup J_1 \subset \mathcal{S}(\tau)$ by assumption \eqref{eq:HW}, we conclude that $\tau \in \JH(Q_2) \subset \JH(Q_\sigma)$. 

 Conversely, assume $\tau\in \JH(Q_{\sigma})$. Let $Q_{\sigma}^{\tau}\subset Q_{\sigma}$ be the unique subrepresentation with cosocle $\tau$. We induct on the length $\ell\defeq \ell(Q_{\sigma}^{\tau})$. 
If $\ell = 1$, then $\tau = \sigma$ and $J_1 = J_2 = \emptyset$, so~(\ref{eq:HW}) follows.
If $\ell\geq 2$, let $\mathcal{E}\subset Q_{\sigma}^{\tau}$ be a subrepresentation of length $2$, namely $\mathcal{E}$ has the form
\[0\ra \sigma\ra \mathcal{E}\ra\sigma'\ra0\]
for some $\sigma'$ satisfying $\Ext^1_{K/Z_1}(\sigma',\sigma)\neq0$.  
By Lemma~\ref{lem:ext1}, $\sigma' \cong \mu_j^*(\sigma)$ for some $0\leq j\leq f-1$ and $*\in\{+,-\}$. 
Define again $\chi''\defeq \chi^{J_1,J_2}$.
Let $\chi'=\chi^{J_1',J_2'}$ be the unique character occurring in $W(\chi,\chi'')$ such that $\sigma'\in \JH(\Ind_I^{\GL_2(\cO_K)}\chi')$. 
Let $Q_{\sigma'}$ denote the unique quotient of $Q_\sigma$ with socle $\sigma'$. Since $\Ind_I^{\GL_2(\cO_K)}W(\chi,\chi'')$ is multiplicity free, it is easy to see that the quotient map \[\Ind_I^{\GL_2(\cO_K)}W(\chi,\chi'')\twoheadrightarrow Q_{\sigma'}\] factors through $\Ind_I^{\GL_2(\cO_K)}W(\chi',\chi'')$, namely $Q_{\sigma'}$ is a quotient of $\Ind_I^{\GL_2(\cO_K)}W(\chi',\chi'')$.

By Lemma \ref{lem:sigma'-chi'}, we have either $J_1'=J_2'=\emptyset$ or $J_1'\sqcup J_2'=\{j\}$. In the first case, since $\mathcal{E}$ is a subquotient of $\Ind_I^{\GL_2(\cO_K)}\chi$, we must have $\mathcal{S}(\sigma)\sqcup\{j\}=\mathcal{S}(\sigma')$ by Lemma \ref{lem:sigma'-chi'} and \cite[Thm.~2.4]{BP}. 
On the other hand, the inductive hypothesis (applied to $Q_{\sigma'}$) implies  $\mathcal{S}(\sigma') \cap J_1 = \emptyset$ and $\mathcal{S}(\sigma')\sqcup J_1\subset \mathcal{S}(\tau)\cup J_2$, from which we conclude. 
In the second case, we have the following two subcases:
\begin{itemize}
\item  $J_1'=\{j\}$ and $J_2'=\emptyset$, in which case 
$\mathcal{S}(\sigma)\sqcup \{j\}=\mathcal{S}(\sigma')$. By the inductive hypothesis, we also have
 \[\mathcal{S}(\sigma')\sqcup (J_1\setminus\{j\})\subset \mathcal{S}(\tau)\cup J_2\]
and hence \eqref{eq:HW} holds.
\item $J_1' = \emptyset$ and $J_2'=\{j\}$, in which case 
$\mathcal{S}(\sigma)=\mathcal{S}(\sigma')\sqcup\{j\}$.  
By the inductive hypothesis, we also have
\[\mathcal{S}(\sigma')\sqcup J_1\subset \mathcal{S}(\tau)\cup(J_2\setminus\{j\})\]
and hence \eqref{eq:HW} holds.\qedhere
\end{itemize}
\end{proof}

\begin{prop}\label{prop:cond-K1inv}
Let $\sigma,\tau$ be as in Proposition \ref{prop:HW}(ii). 
Assume $\tau\in \JH(Q_{\sigma})$. 
Then the following are equivalent: 
\begin{enumerate}
\item $\tau\in \JH(Q_{\sigma}^{K_1})$;
\item $\tau\in \JH(\Inj_{\Gamma}\sigma)$;
\item $J_2\subset \mathcal{S}(\sigma)  \text{\ \ and\ \ } \mathcal{S}(\tau)\cap J_2=\emptyset$.
\end{enumerate}
\end{prop}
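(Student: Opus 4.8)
The statement is a three-way equivalence describing when a Jordan--H\"older factor $\tau$ of $Q_\sigma$ (a quotient of $\Ind_I^{\GL_2(\cO_K)}W(\chi,\chi^{J_1,J_2})$) survives in $Q_\sigma^{K_1}$. The natural strategy is to prove (i)$\Leftrightarrow$(ii) by a general injective-envelope argument and (ii)$\Leftrightarrow$(iii) by explicit Serre-weight combinatorics, leaning on Proposition~\ref{prop:HW}\ref{it:prop:HW:2} (which characterizes $\JH(Q_\sigma)$ via \eqref{eq:HW}) and on the extension-graph/$\mathcal{S}(\cdot)$ dictionary developed in \S\S\ref{sec:prel-serre-weights}--\ref{sec:some-repr-gl_2}.

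First I would reduce (i)$\Leftrightarrow$(ii). Since $Q_\sigma$ has irreducible socle $\sigma$, it embeds into $\Inj_{\Gamma}\sigma$ as long as $Q_\sigma$ is fixed by $K_1$; but it is not, so instead one works with $Q_\sigma^{K_1}$, which does embed into $\Inj_\Gamma\sigma$ (it is a $\Gamma$-representation with socle $\sigma$). Hence $\JH(Q_\sigma^{K_1})\subseteq \JH(\Inj_\Gamma\sigma)$, giving (i)$\Rightarrow$(ii). For the converse, the key point is that $Q_\sigma^{K_1}$ is ``as large as possible'' among $K_1$-fixed subquotients: using Lemma~\ref{lem:Wchi}(ii), the $K_1$-coinvariants of $W(\chi,\chi^{J_1,J_2})$ are $W(\chi^{\emptyset,J_2},\chi^{J_1,J_2})$, and dually $(\Ind_I^{\GL_2(\cO_K)}W(\chi,\chi^{J_1,J_2}))^{K_1}$ should be computed as $\Ind_{\Gamma}^{\Gamma}$ of the appropriate piece — more precisely, taking $K_1$-invariants is exact here on the relevant subquotients, so $\JH(Q_\sigma^{K_1})$ is exactly the set of $\tau\in\JH(Q_\sigma)$ that are Serre weights (automatic) and that lie in $\JH$ of the image of the $K_1$-fixed part. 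The cleanest route is: $\tau\in\JH(\Inj_\Gamma\sigma)$ together with $\tau\in\JH(Q_\sigma)$ forces (via the $I(\sigma,\tau)$ machinery of \cite[Cor.~3.12]{BP}) that $I(\sigma,\tau)$ is a subquotient of $Q_\sigma$ and is $K_1$-fixed, hence $\tau\in\JH(Q_\sigma^{K_1})$. This uses that $Q_\sigma^{K_1}$ is the largest $K_1$-fixed subrepresentation of $Q_\sigma$ and that $I(\sigma,\tau)$ is $K_1$-fixed precisely when $\tau\in\JH(\Inj_\Gamma\sigma)$.

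Next, (ii)$\Leftrightarrow$(iii). Here I would combine the parametrization of $\JH(\Ind_I^{\GL_2(\cO_K)}\chi^{J_1,J_2})$ by subsets $\mathcal{S}(\tau)$ with the extension-graph description of $\JH(\Inj_\Gamma\sigma)$. The point is that $\sigma\in\JH(\Ind_I^{\GL_2(\cO_K)}\chi)$ and $\tau\in\JH(\Ind_I^{\GL_2(\cO_K)}\chi^{J_1,J_2})$ live in principal series whose inducing characters differ by $\prod_{j\in J_1}\alpha_j^{-1}\prod_{j\in J_2}\alpha_j$; translating $\mathcal{S}(\sigma)$ and $\mathcal{S}(\tau)$ into highest-weight data (as in the proof of Lemma~\ref{lem:I1-invt-nonss}, via $\xi\mapsto\mathcal{S}(\xi)$ and Remark~\ref{rmk:JH:conventions}), the condition $\tau\in\JH(\Inj_\Gamma\sigma)$ becomes a coordinatewise inequality between the associated elements of $\Z^f$. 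Concretely, $\tau\in\JH(\Inj_\Gamma\sigma)$ iff the highest weights of $\sigma$ and $\tau$ are adjacent-or-equal in each embedding coordinate, i.e.\ differ by at most one box; spelling this out against \eqref{eq:HW} (which we already know holds, since $\tau\in\JH(Q_\sigma)$) isolates exactly the extra constraints $J_2\subset\mathcal{S}(\sigma)$ and $\mathcal{S}(\tau)\cap J_2=\emptyset$. I would organize this as: (a) rewrite \eqref{eq:HW} as $\mathcal{S}(\sigma)\cap J_1=\emptyset$ and $\mathcal{S}(\sigma)\sqcup J_1\subset\mathcal{S}(\tau)\cup J_2$; (b) compute $\mathcal{S}(\tau')$ for $\tau'\in\JH(\Ind_I^{\GL_2(\cO_K)}\chi)$ parametrizing the ``same'' weight as $\tau$ under the shift by $J_1$ and the shift down by $J_2$, using \cite[Lemmas~3.7,~3.8]{HuWang2}; (c) observe $\tau\in\JH(\Inj_\Gamma\sigma)$ iff $|\mathcal{S}(\sigma)\mathbin\Delta \mathcal{S}(\tau')|$-style interval condition holds — i.e.\ $\tau'\in\JH(I(\sigma,\cdot))$ — which via \cite[Cor.~4.11]{BP} is the condition that the defining subsets form an interval; (d) unwind to conditions on $J_2$ versus $\mathcal{S}(\sigma)$.

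The main obstacle I expect is step (b)--(d): carefully tracking how the three principal series $\Ind_I^{\GL_2(\cO_K)}\chi$, $\Ind_I^{\GL_2(\cO_K)}\chi\alpha_j^{\pm1}$ interact under the $\mathcal{S}(\cdot)$-parametrization when $J_1$ and $J_2$ are both nonempty, since the shifts by $\prod_{j\in J_1}\alpha_j^{-1}$ and $\prod_{j\in J_2}\alpha_j$ act differently on the $\mathcal{S}$-coordinates (one ``adds'' $j$ to $\mathcal{S}$, the other constrains membership). The genericity hypothesis on $\chi$ (here $5$-generic, as in Proposition~\ref{prop:HW}) ensures all the weights in play stay in the range where \cite[Lemmas~3.7,~3.8]{HuWang2} and the extension-graph computations apply, so no boundary cases intervene. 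I would also need the elementary fact that $\Inj_\Gamma\sigma$ restricted to $I$ sees only the $K_1$-fixed layers, which is what makes (ii) the ``$K_1$-fixed shadow'' of (i); this is where Lemma~\ref{lem:Wchi}(ii) and the structure of $W(\chi,\chi^{J_1,J_2})$ versus $W(\chi^{\emptyset,J_2},\chi^{J_1,J_2})$ do the bookkeeping.
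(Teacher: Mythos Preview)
Your (i)$\Rightarrow$(ii) and your sketch of (ii)$\Leftrightarrow$(iii) are reasonable and close in spirit to the paper's. The serious problem is your proposed (ii)$\Rightarrow$(i).

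You write that ``$\tau\in\JH(\Inj_\Gamma\sigma)$ together with $\tau\in\JH(Q_\sigma)$ forces \dots\ that $I(\sigma,\tau)$ is a subquotient of $Q_\sigma$ and is $K_1$-fixed, hence $\tau\in\JH(Q_\sigma^{K_1})$.'' This is circular. What you actually have inside $Q_\sigma$ is the unique subrepresentation $Q_\sigma^\tau$ with cosocle $\tau$; it has socle $\sigma$, cosocle $\tau$, and $[\,\cdot:\sigma]=1$, but it is a $\GL_2(\cO_K)/Z_1$-representation, not a priori a $\Gamma$-representation. The uniqueness in \cite[Cor.~3.12]{BP} only applies in the category of $\Gamma$-representations, so you cannot conclude $Q_\sigma^\tau\cong I(\sigma,\tau)$ without already knowing $Q_\sigma^\tau$ is $K_1$-fixed, which is exactly the content of (i). Nor does it help to observe that $I(\sigma,\tau)$ is abstractly $K_1$-fixed: you have not produced a $\GL_2(\cO_K)$-equivariant embedding of $I(\sigma,\tau)$ into $Q_\sigma$. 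Your alternative remark about $K_1$-coinvariants of $W(\chi,\chi^{J_1,J_2})$ does not resolve this either, since $(\Ind_I^{\GL_2(\cO_K)}W)^{K_1}$ is not computed by inducing $W^{K_1}$ or $W_{K_1}$.

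The paper avoids this by proving (iii)$\Rightarrow$(i) directly rather than (ii)$\Rightarrow$(i). The mechanism is \cite[Cor.~5.7]{BP}: to show $Q_\sigma^\tau$ is $K_1$-invariant, one rules out extensions of type (a) and (c) in \cite[Cor.~5.6]{BP} between any two constituents. The observation is that such a bad extension between constituents $\tau_1,\tau_2$ forces $\mathcal S(\tau_1)=\mathcal S(\tau_2)$ (with the two constituents lying in adjacent principal series $\Ind\chi'\alpha_j^{\pm1}$). One then shows, using condition (iii) together with the constraints from Proposition~\ref{prop:HW}\ref{it:prop:HW:2} applied to both $\tau'\in\JH(Q_\sigma)$ and $\tau\in\JH(Q_{\tau'})$, that for any $\tau'\in\JH(Q_\sigma^\tau)$ lying in $\Ind\chi^{J_1',J_2'}$ one recovers $\mathcal S(\tau')\cap J_1=J_1'$ and $\mathcal S(\tau')\cap J_2=J_2\setminus J_2'$; hence distinct constituents have distinct $\mathcal S(\cdot)$-sets, and the bad extensions do not occur. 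This computation genuinely uses (iii), which is why the paper's cycle is (i)$\Rightarrow$(ii)$\Rightarrow$(iii)$\Rightarrow$(i) rather than the two separate equivalences you attempt.
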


\begin{proof}
Clearly (i) implies (ii). 
We now show that (ii) implies (iii).
Let $\tau'$ be the constituent of $\Ind_I^{\GL_2(\cO_K)}\chi$ that is parametrized by $\mathcal{S}(\tau') = \mathcal{S}(\tau)$.
Let $\sigma_\emptyset$ denote the socle of $\Ind_I^{\GL_2(\cO_K)}\chi$.
Then by the definitions, $\sigma = \lambda(\sigma_\emptyset)$ and $\tau' = \mu'(\sigma_\emptyset)$ for unique $\lambda,\mu' \in \mathcal{P}$ (using again the notation of \cite[(2.2)]{HuWang2}).
Moreover, as $\mathcal{S}(\tau)=\mathcal{S}(\tau')$ we can write $\tau = \mu(\sigma_\emptyset)$, where 
\begin{equation}\label{eq:mu-mu'}
  \mu_j(x_j) = \begin{cases}
    \mu_j'(x_j+2) & \text{if $j \in J_1$},\\
    \mu_j'(x_j-2) & \text{if $j \in J_2$},\\
    \mu_j'(x_j) & \text{otherwise}.
  \end{cases}
\end{equation}
As $\tau \in \JH(\Inj_{\Gamma}\sigma)$, it corresponds to some $\nu \in \mathcal{I}$ such that $\tau = \nu(\sigma)$.
It is direct to check that $\mu$ defined in \eqref{eq:mu-mu'} satisfies the condition in \cite[Lemma 2.1]{HuWang2}, so by \cite[Lemmas 2.1, 2.7]{HuWang2}, we deduce $\mu = \nu \circ \lambda$.

Suppose by contradiction that $J_2 \setminus \mathcal{S}(\sigma)\neq \emptyset$ and we choose $j\in J_2 \setminus \mathcal{S}(\sigma)$.
Then $\lambda_j(x_j) \in \{x_j,p-2-x_j\}$, and by (\ref{eq:mu-mu'}) and the definition of $\mathcal P$ we have
\begin{equation*}
  \mu_j(x_j) \in \{x_j-2,x_j-3,p+1-x_j,p-x_j\},
\end{equation*}
contradicting that $\mu = \nu \circ \lambda$ with $\nu \in \mathcal{I}$ (see also the table in the proof of \cite[Lemma 2.6]{HuWang2}).
Similarly, suppose that $J_2 \cap \mathcal{S}(\tau)\neq\emptyset$ and we choose $j \in J_2 \cap \mathcal{S}(\tau)$.
Then $\mu'_j(x_j) \in \{x_j-1,p-1-x_j\}$, so by (\ref{eq:mu-mu'}) and the definition of $\mathcal P$ we have
\begin{equation*}
  \mu_j(x_j) \in \{x_j-3,p+1-x_j\},\quad
  \lambda_j(x_j) \in \{x_j,x_j-1,p-1-x_j,p-2-x_j\}.
\end{equation*}
This yields a contradiction as before.

We finally show that (iii) implies (i).
Let $Q_{\sigma}^{\tau}\subset Q_{\sigma}$ be the unique subrepresentation with cosocle $\tau$ (which exists by assumption).
It will suffice to show that $Q_\sigma^\tau$ is $K_1$-invariant, and we will do that by verifying the assumption of \cite[Cor.~5.7]{BP}.
Note first that $Q_\sigma^\tau$ is multiplicity free.
By the genericity condition (which in particular implies $p>3$) it will suffice to rule out conditions (a) and (c) of \cite[Cor.~5.6]{BP} for any pair of distinct constituents of $Q_\sigma^\tau$ and for any $0 \le j \le f-1$.
Observe that if $\tau'$ is a (sufficiently generic) constituent of $\Ind_I^{\GL_2(\cO_K)}\chi'$, then the constituent $\sigma'$ described in condition (a) or (c) of \cite[Cor.~5.6]{BP} for some $0 \le j \le f-1$ occurs in $\Ind_I^{\GL_2(\cO_K)}\chi'\alpha_j^{\pm 1}$ for some choice of sign, and moreover $\mathcal{S}(\sigma') = \mathcal{S}(\tau')$ for the parametrizing sets.
It therefore suffices to show that any two distinct constituents of $Q_\sigma^\tau$ have distinct parametrizing sets.

Suppose that $\tau' \in \JH(Q_\sigma^\tau)$ occurs in $\JH(\Ind_I^{\GL_2(\cO_K)}\chi^{J_1',J_2'})$ for some $J'_1\subset J_1$ and $J_2'\subset J_2$.
By the previous paragraph it is enough to show that $\mathcal S(\tau') \cap J_1 = J_1'$ and $\mathcal S(\tau') \cap J_2 = J_2''$, where we write $J_i'' \defeq  J_i \setminus J_i'$.
From $\tau' \in \JH(Q_\sigma)$ and $\tau \in \JH(Q_{\tau'})$ we obtain from Proposition \ref{prop:HW}(ii) that \[\mathcal{S}(\sigma) \sqcup J_1' \subset \mathcal{S}(\tau') \cup J_2', \ \  
\mathcal{S}(\tau') \cap J_1'' = \emptyset,\ \  \mathcal{S}(\tau') \sqcup J_1'' \subset \mathcal{S}(\tau) \cup J_2''.\]
The first and second statements together show that $\mathcal S(\tau') \cap J_1 = J_1'$.
The first statement plus $J_2 \subset \mathcal{S}(\sigma)$ and the third statement plus $\mathcal{S}(\tau)\cap J_2=\emptyset$ give $\mathcal S(\tau') \cap J_2 = J_2''$, as desired.
\end{proof}

\subsubsection{Some \texorpdfstring{$\GL_2(\cO_K)$}{GL\_2(O\_K)}-subrepresentations of \texorpdfstring{$\pi$}{pi}}

{We apply \S~\ref{sec:some-repr-gl_2} to construct some $\GL_2(\cO_K)$-subrepresentations of $\pi$ that will be important in the proof of Theorem~\ref{thm:conj2}.}

For $\mu\in\P$ define 
\begin{align}
\label{eq:Y-mu}Y(\mu)&\defeq \{j:\mu_j(x_j)\in\{x_j,x_j+1,p-2-x_j,p-3-x_j\}\}\cup J_{\brho}^c, \\ 
\label{eq:Z-mu}Z(\mu)&\defeq \{j : \mu_j(x_j)\in \{x_j+1,x_j+2, p-1-x_j,p-2-x_j\}\}\cup J_{\brho}^c. 
\end{align}
Note that $Y(\mu)$ (resp.\ $Z(\mu)$) is exactly the set of $j$ such that $t_j\neq y_j$ (resp.\ $t_j\neq z_j$) in \eqref{eq:id:al}. Here, we recall that $\mu_j(x_j)\in \{x_j+2, p-3-x_j\}$ implies $j\in J_{\brho}$ by (\ref{eq:P}).

\begin{lem}\label{lem:W-embeds}
 Suppose that $\brho$ is 2-generic. 
Let $\mu\in \P$ and $\chi\defeq \chi_{\mu}$. Let  $J_1\subset Y(\mu)$ and $J_2\subset Z(\mu)$ be subsets satisfying  $J_1\cap J_2=\emptyset$. Then $\JH(W(\chi,\chi^{J_1,J_2}))\cap \JH(\pi^{I_1})=\{\chi\}$ and there exists a unique (up to scalar) $I$-equivariant embedding $W(\chi,\chi^{J_1,J_2})\hookrightarrow \pi|_I$. Moreover, 
\begin{enumerate}
\item the image of the induced morphism 
\[\Ind_I^{\GL_2(\cO_K)}W(\chi,\chi^{J_1,J_2})\ra \pi|_{\GL_2(\cO_K)}\]
has socle $\sigma\in W(\brho)$, where $\sigma$ is the Serre weight determined by $J_{\sigma}=J_{\brho}\cap J_{\mu}$ {\textnormal{(}via \eqref{eq:J-lambda}\textnormal{)}};
\item 
\label{it:W-embeds:2}
$\sigma\in\JH(\Ind_I^{\GL_2(\cO_K)}\chi)$ and it is parametrized 
by $X(\mu)$, where $X(\mu)$ is defined in \eqref{eq:X-mu}.
\end{enumerate}
\end{lem}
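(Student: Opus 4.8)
\textbf{Plan for the proof of Lemma~\ref{lem:W-embeds}.}

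The plan is to build the representation $W(\chi,\chi^{J_1,J_2})$ inside $\pi$ by controlling its Jordan--H\"older factors relative to those of $\pi^{I_1}$, then to transfer the combinatorial bookkeeping to $\GL_2(\cO_K)$ via Frobenius reciprocity and the results of \S\ref{sec:prel-serre-weights}. First I would compute $\JH(W(\chi,\chi^{J_1,J_2}))$ using Lemma~\ref{lem:Wchi}: every constituent is of the form $\chi\prod_{j\in J_1'}\alpha_j^{-1}\prod_{j\in J_2'}\alpha_j$ with $J_1'\subset J_1$, $J_2'\subset J_2$. By Lemma~\ref{lem:compare-I1-invts}(ii) (applied with $\brho$ $2$-generic, so $m=1$), such a character lies in $\JH(\pi^{I_1})=\{\chi_\lambda:\lambda\in\P\}$ only if the exponent tuple has entries in $\{0,\pm1\}$ with $-1$ allowed only when $t_j=y_j$ and $+1$ only when $t_j=z_j$; but $J_1\subset Y(\mu)$ forces $t_j\neq y_j$ for $j\in J_1$ and $J_2\subset Z(\mu)$ forces $t_j\neq z_j$ for $j\in J_2$, so the only surviving character is $\chi$ itself (i.e.\ $J_1'=J_2'=\emptyset$). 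This gives $\JH(W(\chi,\chi^{J_1,J_2}))\cap\JH(\pi^{I_1})=\{\chi\}$.

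Next I would deduce the existence and uniqueness of the embedding. Since $W\defeq W(\chi,\chi^{J_1,J_2})$ has socle $\chi$ with multiplicity one (Lemma~\ref{lem:Wchi}(i)) and all other constituents avoid $\JH(\pi^{I_1})$, assumption~\ref{it:assum-iv} on $\pi$ gives $\Ext^i_{I/Z_1}(\chi',\pi)=0$ for $i\in\{0,1\}$ and $\chi'\in\JH(W/\soc_I W)$, hence $\Ext^1_{I/Z_1}(W/\soc_I W,\pi)=0$ by d\'evissage, so $\Hom_I(W,\pi)\congto\Hom_I(\soc_I W,\pi)=\Hom_I(\chi,\pi)$. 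Since $[\pi[\m]:\chi]=[\pi^{I_1}:\chi]=1$ (because $\chi=\chi_\mu$ for a unique $\mu\in\P$, and $\pi^{I_1}$ is multiplicity free by e.g.\ Corollary~\ref{cor:tau-multfree} or the known structure), $\Hom_I(\chi,\pi)$ is one-dimensional, and any nonzero element of $\Hom_I(W,\pi)$ is injective (being injective on the socle). This is exactly the pattern already used in the proof of Lemma~\ref{lem:isom-modcI}.

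For part (i), I would take the image $V$ of the induced map $\Ind_I^{\GL_2(\cO_K)}W\to\pi|_{\GL_2(\cO_K)}$. By Frobenius reciprocity and the fact that the $\GL_2(\cO_K)$-socle of $\pi$ is $\bigoplus_{\sigma\in W(\brho)}\sigma$, the socle of $V$ consists of Serre weights $\sigma'$ with $\Hom_I(W,\sigma'|_I)\neq0$, equivalently (since $W$ is generated by its socle $\chi$ as an $I$-representation... actually $W$ need not be $I$-generated by its socle, so) with a nonzero map $W\to\sigma'|_I$; such a map restricts nontrivially to $\soc_I W=\chi$, so $\sigma'\in\JH(\Ind_I^{\GL_2(\cO_K)}\chi)\cap W(\brho)$, and moreover $\sigma'^{I_1}\supset\chi$ forces $\chi\in\JH(\sigma'^{I_1})$. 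By Lemma~\ref{lem:I1-invt-nonss}, $\chi=\chi_\mu$ occurs in $D_{0,\sigma}(\brho)^{I_1}$ for the \emph{unique} $\sigma\in W(\brho)$ with $J_\sigma=J_\brho\cap J_\mu$, and since $D_0(\brho)=\bigoplus D_{0,\sigma}$ is multiplicity free on $I_1$-invariants (each $\chi_\lambda$ occurs in exactly one component), this $\sigma$ is forced; one still has to check $\sigma$ actually occurs in $\soc_{\GL_2(\cO_K)}(V)$ rather than merely being a candidate, which follows because $\pi^{I_1}$ picks out $D_{0,\sigma}(\brho)$ and $\Hom_{\GL_2(\cO_K)}(\sigma,\pi)\cong\Hom_I(\chi,\pi)\neq0$. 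Finally part~(ii) is immediate: $\sigma\in\JH(\Ind_I^{\GL_2(\cO_K)}\chi)$ by the previous sentence, and its parametrizing subset is exactly $X(\mu)$ of \eqref{eq:X-mu} by the second assertion of Lemma~\ref{lem:I1-invt-nonss}.

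\textbf{Main obstacle.} The delicate point is the computation $\JH(W(\chi,\chi^{J_1,J_2}))\cap\JH(\pi^{I_1})=\{\chi\}$: one must correctly match the sets $Y(\mu)$, $Z(\mu)$ (defined by which of $y_j,z_j$ does \emph{not} appear in $t_j$) against the sign constraints coming from Lemma~\ref{lem:compare-I1-invts}(ii), being careful that the characters in $W$ are twists of $\chi$ by $\prod\alpha_j^{\pm1}$ with \emph{mixed} signs and total weight $\le|J_1|+|J_2|$, and that $2$-genericity is exactly strong enough (via $m=1$ in that lemma) to rule out all the non-trivial twists. Everything else is a routine combination of Lemma~\ref{lem:Wchi}, assumption~\ref{it:assum-iv}, Frobenius reciprocity, and Lemma~\ref{lem:I1-invt-nonss}.
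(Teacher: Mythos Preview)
Your argument for the first two claims—the Jordan--H\"older intersection $\JH(W(\chi,\chi^{J_1,J_2}))\cap\JH(\pi^{I_1})=\{\chi\}$ via Lemma~\ref{lem:compare-I1-invts}(ii), and the existence/uniqueness of the embedding via assumption~\ref{it:assum-iv} and d\'evissage—is correct and matches the paper's proof exactly. Part~(ii) is also fine, following from Lemma~\ref{lem:I1-invt-nonss} as you say.

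The gap is in your argument for part~(i). Your claim that ``the socle of $V$ consists of Serre weights $\sigma'$ with $\Hom_I(W,\sigma'|_I)\neq0$'' does not follow from Frobenius reciprocity: $V$ is the \emph{image} of $\Ind_I^{\GL_2(\cO_K)}W\to\pi$, so $\sigma'\subset V$ is a subobject condition, not a quotient condition, and does not translate into $\Hom_{\GL_2(\cO_K)}(\Ind_I^{\GL_2(\cO_K)}W,\sigma')\neq0$. Even granting that, your next claim that a nonzero map $W\to\sigma'|_I$ ``restricts nontrivially to $\soc_I W=\chi$'' is false in general: nothing prevents such a map from killing the socle $\chi$ (you yourself note that $W$ is not generated by its socle). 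Finally, the isomorphism $\Hom_{\GL_2(\cO_K)}(\sigma,\pi)\cong\Hom_I(\chi,\pi)$ is not correct unless $\chi=\chi_\sigma$, which fails for general $\mu\in\P$.

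The paper's route is different and short. From Lemma~\ref{lem:I1-invt-nonss}, $\chi_\mu$ lands in $D_{0,\sigma}(\brho)^{I_1}$, so the image $V_0$ of $\Ind_I^{\GL_2(\cO_K)}\chi\to\pi$ lies in $D_{0,\sigma}(\brho)$ and has socle $\sigma$. To pass from $\chi$ to all of $W$, one shows
\[
\JH\big(\Ind_I^{\GL_2(\cO_K)}(W/\chi)\big)\cap W(\brho)=\emptyset:
\]
each constituent $\chi'$ of $W/\chi$ lies outside $\JH(\pi^{I_1})$ by the first claim, and then \cite[Prop.~4.2]{breuil-buzzati} gives $\JH(\Ind_I^{\GL_2(\cO_K)}\chi')\cap W(\brho)=\emptyset$. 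Since $\soc_{\GL_2(\cO_K)}(V)\subset\soc_{\GL_2(\cO_K)}(\pi)\subset\bigoplus_{\sigma'\in W(\brho)}\sigma'$ and $V/V_0$ is a quotient of $\Ind_I^{\GL_2(\cO_K)}(W/\chi)$, any irreducible $\sigma'\in W(\brho)$ with $\sigma'\subset V$ must map to zero in $V/V_0$, hence lie in $V_0$, forcing $\sigma'=\sigma$. The ingredient you are missing is precisely this appeal to \cite[Prop.~4.2]{breuil-buzzati}, which is what rules out contributions to the socle from the non-socle layers of $W$.
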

\begin{proof}
The first claim follows from Lemma~\ref{lem:compare-I1-invts}(ii) with $m = 1$.
The second follows from the fact that $\Ext^i_{I/Z_1}(\chi',\pi)=0$ for $\chi'\in \JH(W(\chi,\chi^{J_1,J_2})/\chi)$ and $i=0,1$ using the first claim together with assumption~\ref{it:assum-iv} imposed on $\pi$; see the proof of Lemma \ref{lem:isom-modcI}.  
By Lemma \ref{lem:I1-invt-nonss}, the image of $\Ind_I^{\GL_2(\cO_K)}\chi\ra \pi$ has socle $\sigma$ and $\sigma$ is parametrized by $X(\mu)$. To deduce (i) and (ii), it suffices to prove  
 \[\JH(\Ind_I^{\GL_2(\cO_K)}W(\chi,\chi^{J_1,J_2})/\chi)\cap W(\brho)=\emptyset.\]
This follows from the first claim and the fact that $\JH(\Ind_I^{\GL_2(\cO_K)}\chi')\cap W(\brho)=\emptyset$ for any $\chi'\notin \pi^{I_1}$ by \cite[Prop.\ 4.2]{breuil-buzzati}.   
\end{proof}

\begin{lem}\label{lem:W-embeds-3}
Let $\mu\in \P$ and $\sigma\in W(\brho)$ be the Serre weight determined by $J_{\sigma}=J_{\brho}\cap J_{\mu}$.
If $\sigma'\in \JH(\Ind_I^{\GL_2(\cO_K)}\chi_{\mu})\cap W(\brho)$ then $\mathcal{S}(\sigma')\subset \mathcal{S}(\sigma)=X(\mu)$.
As a consequence, $\sigma\in \JH(Q_{\sigma'})$, where $Q_{\sigma'}$ denotes the unique quotient of $\Ind_I^{\GL_2(\cO_K)}\chi_{\mu}$ with socle $\sigma'$.
\end{lem}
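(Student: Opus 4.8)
The first assertion that $\mathcal{S}(\sigma)=X(\mu)$ and that $\sigma\in\JH(\Ind_I^{\GL_2(\cO_K)}\chi_\mu)\cap W(\brho)$ is part of Lemma~\ref{lem:I1-invt-nonss} (the ``Moreover'' clause), so the content of the lemma is the inclusion $\mathcal{S}(\sigma')\subseteq X(\mu)$ for an arbitrary constituent $\sigma'\in\JH(\Ind_I^{\GL_2(\cO_K)}\chi_\mu)$ lying in $W(\brho)$. The plan is a component-by-component combinatorial verification using the parametrization of Remark~\ref{rmk:JH:conventions}. First I would write $\sigma'=\sigma_{\lambda'}$ for the unique $\lambda'\in\D$, and let $\xi\in\mathcal{P}$ be the tuple parametrizing $\sigma'$ as a constituent of $\Ind_I^{\GL_2(\cO_K)}\chi_\mu$, so that $\mathcal{S}(\sigma')=\mathcal{S}(\xi)$. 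Exactly as in the proof of Lemma~\ref{lem:I1-invt-nonss} (comparing the Symmetric-power tuples of the two descriptions of $\sigma'$ and invoking \cite[Lemma~2.1, Lemma~2.7]{HuWang2} to pass from a numerical identity to a formal one) one obtains $\xi_j^c(\mu_j(x_j))=\lambda'_j(x_j)$ as polynomials in $x_j$, for every $j$, where $\xi^c=\xi(p-1-x_0,\dots,p-1-x_{f-1})$.

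Next, fix $j\notin X(\mu)$; the claim to establish is that $j\notin\mathcal{S}(\xi)$. By the definition~\eqref{eq:X-mu} of $X(\mu)$ together with the description of $\P\subset\P^\ss$ in \S\ref{sec:notation}, the condition $j\notin X(\mu)$ means that $\mu_j(x_j)\in\{x_j+2,\,p-1-x_j\}$, or else $\mu_j(x_j)=x_j+1$ and $j\notin J_{\brho}$. Suppose for contradiction that $j\in\mathcal{S}(\xi)$, i.e.\ $\xi_j(x_j)\in\{x_j-1,\,p-1-x_j\}$ by~\eqref{eq:S-xi1}, which forces $\xi_j^c(x_j)\in\{p-2-x_j,\,x_j\}$. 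Substituting each of these (at most) two polynomials into the (at most) three allowed values of $\mu_j(x_j)$ and reading off $\lambda'_j(x_j)=\xi_j^c(\mu_j(x_j))$, one checks in each of the six resulting cases that either $\lambda'_j(x_j)\notin\{x_j,\,x_j+1,\,p-3-x_j,\,p-2-x_j\}$ — contradicting $\lambda'\in\D^\ss$ — or $\lambda'_j(x_j)\in\{x_j+1,\,p-3-x_j\}$ while $j\notin J_{\brho}$ — contradicting $\lambda'\in\D$. (Concretely: $\mu_j(x_j)=x_j+2$ gives $\lambda'_j(x_j)\in\{p-4-x_j,\,x_j+2\}$; $\mu_j(x_j)=p-1-x_j$ gives $\lambda'_j(x_j)\in\{x_j-1,\,p-1-x_j\}$; and $\mu_j(x_j)=x_j+1$ with $j\notin J_{\brho}$ gives $\lambda'_j(x_j)\in\{p-3-x_j,\,x_j+1\}$.) This proves $j\notin\mathcal{S}(\xi)$, and hence $\mathcal{S}(\sigma')=\mathcal{S}(\xi)\subseteq X(\mu)=\mathcal{S}(\sigma)$.

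For the final consequence, I would use that every element of $\mathcal{P}$ parametrizes a constituent of the multiplicity-free principal series $\Ind_I^{\GL_2(\cO_K)}\chi_\mu$, so that its submodule structure is governed by the inclusion order on parametrizing sets (see \cite[Thm.~2.4]{BP} and Remark~\ref{rmk:JH:conventions}): the unique quotient $Q_{\sigma'}$ with socle $\sigma'$ satisfies $\JH(Q_{\sigma'})=\{\tau:\mathcal{S}(\tau)\supseteq\mathcal{S}(\sigma')\}$. Since $\mathcal{S}(\sigma')\subseteq\mathcal{S}(\sigma)$, this gives $\sigma\in\JH(Q_{\sigma'})$. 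The only mildly delicate point in this argument is the sign bookkeeping in the middle paragraph — keeping straight the convention $\xi^c=\xi(p-1-x_0,\dots,p-1-x_{f-1})$, which affine functions are admissible for elements of $\mathcal{P}$, $\P^\ss$ and $\D^\ss$, and the extra constraint distinguishing $\D$ from $\D^\ss$ — but it is a finite mechanical check; the case $\mu_j(x_j)=x_j+1$ with $j\notin J_{\brho}$ is exactly the one where the hypothesis $\sigma'\in W(\brho)$ (and not merely $\sigma'\in W(\brho^\ss)$) is genuinely needed.
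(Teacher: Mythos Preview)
Your argument is correct. You set up the same formal identity $\xi_j^c(\mu_j(x_j))=\lambda'_j(x_j)$ as in the proof of Lemma~\ref{lem:I1-invt-nonss} and then run an explicit case analysis on the three possible shapes of $\mu_j$ when $j\notin X(\mu)$; each case indeed produces a value of $\lambda'_j(x_j)$ incompatible with $\lambda'\in\D$. The deduction of the final consequence from \cite[Thm.~2.4]{BP} is fine.

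The paper takes a shorter route: it observes that the image $V$ of $\Ind_I^{\GL_2(\cO_K)}\chi_\mu\to D_0(\brho)$ has socle $\sigma$ with $\JH(V/\sigma)\cap W(\brho)=\emptyset$, and then invokes \cite[Prop.~4.3]{breuil-buzzati} (with $\chi=\chi_\mu^s$) as a black box. That proposition characterizes which constituents of the principal series lie in $W(\brho)$ in terms of a maximal subset $J^{\max}$, and the proof of Lemma~\ref{lem:I1-invt-nonss} already identifies $\delta^{-1}(J^{\max}(\chi_\mu^s))$ with $X(\mu)$; the inclusion $\mathcal{S}(\sigma')\subset X(\mu)$ then drops out immediately. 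Your approach is essentially a direct unpacking of the relevant case of that proposition: more elementary and self-contained, and with the virtue of making visible exactly where the stronger hypothesis $\sigma'\in W(\brho)$ (rather than $W(\brho^\ss)$) enters, namely in Case~3 where $\mu_j(x_j)=x_j+1$ with $j\notin J_{\brho}$.
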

\begin{proof}
 Lemma \ref{lem:I1-invt-nonss} implies that the image $V$ of the natural map $\Ind_I^{\GL_2(\cO_K)}\chi_{\mu} \to D_0(\brho)$ has socle $\sigma$ and $\JH(V/\sigma) \cap W(\brho) = \emptyset$.
From \cite[Prop.\ 4.3]{breuil-buzzati} (applied with $\chi=\chi_\mu^s$, noting $\chi \ne \chi^s$) we deduce that $\mathcal{S}(\sigma')\subset \mathcal{S}(\sigma)=X(\mu)$.
\end{proof}

Now we consider a special situation.  Suppose that $\brho$ is 3-generic, so that $\chi_\mu$ is 2-generic for any $\mu \in \P$. Let $\lambda\in \P^\ss$ and denote
  \begin{equation}
  \label{eq:J1:J2}
    J_1 \defeq  \{j\in J_{\rhobar}^c:\lambda_j(x_j) = p-3-x_j\},\quad J_2 \defeq  \{j\in J_{\rhobar}^c:\lambda_j(x_j) = x_j+2\}.
  \end{equation}
We define an $f$-tuple  $\mu=(\mu_j(x_j))$ by 
\begin{equation}\label{eq:mu_j}
\mu_j(x_j)\defeq \begin{cases}p-1-x_j&\text{if}\ j\in J_1,\\
x_j&\text{if}\ j\in J_2,\\
\lambda_j(x_j)&\text{otherwise}.\end{cases}
\end{equation}
It is direct to check that $\mu\in \P$, $\chi_\lambda = \chi_\mu \prod_{j \in J_1} \alpha_j^{-1}\prod_{j \in J_2} \alpha_j$ and $|J_\mu| = |J_\lambda|-|J_1|-|J_2|$.
It is clear that $J_1\subset Y(\mu)$, $J_2\subset Z(\mu)$ and $J_1\cap J_2=\emptyset$. 
By Lemma \ref{lem:W-embeds} there is a unique embedding $\iota: W(\chi_\mu,\chi_\lambda)\hookrightarrow \pi|_I$. Consider the induced morphism 
\[\wt{\iota}: \Ind_I^{\GL_2(\cO_K)}W(\chi_{\mu},\chi_\lambda)\ra \pi|_{\GL_2(\cO_K)}\]
and let $V$ be its image. By Lemma \ref{lem:W-embeds}, $\soc_{\GL_2(\cO_K)}(V)=\sigma$, where $\sigma\in W(\brho)$ is the Serre weight determined by $J_{\sigma}=J_{\brho}\cap J_{\mu}$, so that $\chi_\mu$ contributes to $D_{0,\sigma}(\brho)^{I_1}$ by Lemma~\ref{lem:I1-invt-nonss}. Also, let $\tau\in W(\brho^\ss)$ be the Serre weight determined by $J_{\tau}=J_{\lambda}$, so that $\chi_\lambda$ contributes to $D_{0,\tau}(\brho^\ss)^{I_1}$ by Lemma \ref{lem:I1-invt-component}. Then $\tau$ occurs as a subquotient of $\Ind_I^{\GL_2(\cO_K)}\chi_\lambda$, hence also of  $\Ind_I^{\GL_2(\cO_K)}W(\chi_\mu,\chi_\lambda)$  (with multiplicity one by Lemma \ref{lem:multfree}). 

\begin{lem}\label{lem:JH-V}
Keep the above notation {and assume that $\rhobar$ is $6$-generic}. 
We have $I(\sigma,\tau)\subset V$ and 
\[\JH(V/I(\sigma,\tau))\cap W(\brho^\ss)=\emptyset.\]
\end{lem}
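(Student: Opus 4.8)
The plan is to analyze the image $V$ of $\widetilde\iota$ using the combinatorial machinery of \S~\ref{sec:some-repr-gl_2} together with the modular weight results of \S~\ref{sec:prel-serre-weights}. First I would set up notation: write $\chi \defeq \chi_\mu$, so $\chi_\lambda = \chi^{J_1,J_2}$ in the notation of \eqref{eq:chi-J}, and recall that $V$ is a quotient of $\Ind_I^{\GL_2(\cO_K)}W(\chi,\chi^{J_1,J_2})$ with socle $\sigma$; thus $V = Q_\sigma$ in the notation of Proposition~\ref{prop:HW}\ref{it:prop:HW:2} (using that this induced representation is multiplicity free by Lemma~\ref{lem:multfree}, valid since $\chi$ is $5$-generic as $\rhobar$ is $6$-generic). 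The key point for $I(\sigma,\tau) \subset V$ is to identify the parametrizing sets: $\sigma$ is parametrized by $X(\mu)$ as a constituent of $\Ind_I^{\GL_2(\cO_K)}\chi$ (Lemma~\ref{lem:W-embeds}\ref{it:W-embeds:2}), and $\tau$ is parametrized by $\mathcal{S}(\tau)$ as a constituent of $\Ind_I^{\GL_2(\cO_K)}\chi^{J_1,J_2}$. One needs to check $\tau \in \JH(Q_\sigma)$, i.e.\ verify condition~\eqref{eq:HW}: $\mathcal{S}(\sigma) \cap J_1 = \emptyset$ and $\mathcal{S}(\sigma) \sqcup J_1 \subset \mathcal{S}(\tau) \cup J_2$. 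This should follow from a direct computation comparing $X(\mu)$ (using \eqref{eq:X-mu}, \eqref{eq:mu_j}) against the parametrization of $\tau$ via Lemma~\ref{lem:I1-invt-component} and the definitions \eqref{eq:J1:J2}. Then Remark~\ref{rem:HW} and the general structure of $I(\sigma,\tau)$ (via Lemma~\ref{lem:JHinW}, which describes $\JH(I(\sigma,\tau))$ as the $\tau' \in W(\brho^\ss)$ with $J_\sigma \subset J_{\tau'} \subset J_\tau$) give $I(\sigma,\tau) \subset V$, noting that each such $\tau'$ satisfies the inclusion condition into $Q_\sigma$.

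For the second assertion, $\JH(V/I(\sigma,\tau)) \cap W(\brho^\ss) = \emptyset$, the idea is to show that any $\tau' \in \JH(V) \cap W(\brho^\ss)$ already lies in $I(\sigma,\tau)$. First I would observe that any such $\tau'$ is a constituent of $\Ind_I^{\GL_2(\cO_K)}\chi^{J_1',J_2'}$ for some $J_1' \subset J_1$, $J_2' \subset J_2$, and that being in $W(\brho^\ss)$ forces strong constraints: by Lemma~\ref{lem:W-embeds-3} applied to the character $\chi^{J_1',J_2'}$ (which equals $\chi_{\mu'}$ for an appropriate $\mu' \in \P$ obtained by the recipe \eqref{eq:mu_j} adapted to $J_1', J_2'$), if $\tau' \in W(\brho)$ then $\mathcal{S}(\tau') \subset X(\mu')$. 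The crucial claim is that $\tau' \in W(\brho^\ss)$ with $\tau' \in \JH(\Ind_I^{\GL_2(\cO_K)}\chi^{J_1',J_2'})$ and $\tau' \in \JH(Q_\sigma)$ forces $J_1' = J_1$ and $J_2' = \emptyset$ — i.e.\ $\tau'$ lies on the "$K_1$-invariant axis" $W(\chi^{\emptyset,J_2}, \chi^{J_1,J_2})$ — and moreover that $\mathcal{S}(\tau')$ then matches one of the sets appearing in $\JH(I(\sigma,\tau))$. For the first part I would use Proposition~\ref{prop:HW}\ref{it:prop:HW:2} together with the fact (to be verified combinatorially) that a constituent in $W(\brho^\ss)$ cannot arise from a character $\chi^{J_1',J_2'}$ with $J_1' \subsetneq J_1$ or $J_2' \neq \emptyset$: the condition $J_2' \neq \emptyset$ would push a weight entry outside the range allowed for membership in $W(\brho^\ss)$ (this is analogous to the argument in the implication "(ii)$\Rightarrow$(iii)" of Proposition~\ref{prop:cond-K1inv}, where $J_2 \cap \mathcal{S}(\tau) = \emptyset$ was derived from modular weight constraints).

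Once $\tau'$ is pinned to the axis $J_1' = J_1$, $J_2' = \emptyset$, I would identify it: $\tau'$ is then a constituent of $\Ind_I^{\GL_2(\cO_K)}\chi^{J_1,\emptyset} = \Ind_I^{\GL_2(\cO_K)}\chi_{\mu_1}$ where $\mu_1 \in \P$ is given by \eqref{eq:mu_j} with only $J_1$ (and $J_2 = \emptyset$), and $\mu_1$ has $J_{\mu_1} = J_\mu \sqcup J_1 = J_{\brho}^c \cap$-part plus..., wait — more precisely $|J_{\mu_1}| = |J_\mu| + |J_1|$ with $J_{\mu_1} \cap J_\brho = J_\mu \cap J_\brho = J_\sigma$. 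By Lemma~\ref{lem:I1-invt-nonss} the socle of the image of $\Ind_I^{\GL_2(\cO_K)}\chi_{\mu_1}$ in $D_0(\brho)$ is still $\sigma$, and the $W(\brho^\ss)$-constituents of that image are governed by Lemma~\ref{lem:JHinW}. So $\tau' \in W(\brho^\ss)$ appearing here must satisfy $J_\sigma \subset J_{\tau'}$ and, by the constraint $\mathcal{S}(\tau') \sqcup J_1 \subset \mathcal{S}(\tau) \cup J_2$ combined with the "$K_1$-invariant" position, I expect $J_{\tau'} \subset J_\tau$, placing $\tau' \in \JH(I(\sigma,\tau))$ by Lemma~\ref{lem:JHinW}. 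The main obstacle will be the bookkeeping in this last step — precisely tracking how the three combinatorial parametrizations ($\mathcal{S}$-sets for principal series constituents, $J$-sets for Serre weights in $W(\brho^\ss)$, and the $X(\mu)$/$Y(\mu)$/$Z(\mu)$ sets from \eqref{eq:X-mu}--\eqref{eq:Z-mu}) interact, and confirming that no "extra" $W(\brho^\ss)$-weight sneaks into $V$ off the expected sublattice. I would handle this by a careful case analysis on the value of $\lambda_j(x_j)$ for each $j$, exactly paralleling the tables in the proofs of Lemmas~\ref{lem:I1-invt-nonss} and~\ref{lem:sigma'-chi'} and in Proposition~\ref{prop:cond-K1inv}, and invoking the genericity hypothesis ($\rhobar$ $6$-generic) wherever I need the $\Ext^1$-computations and the parametrizations to be valid.
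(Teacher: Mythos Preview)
Your plan for the first assertion has a gap. Verifying condition~\eqref{eq:HW} gives $\tau\in\JH(Q_\sigma)=\JH(V)$, but from ``each $\tau'\in\JH(I(\sigma,\tau))$ satisfies the inclusion condition into $Q_\sigma$'' you cannot conclude that $I(\sigma,\tau)$ embeds in $V$. Since $V$ is multiplicity free with socle $\sigma$, the statement $I(\sigma,\tau)\subset V$ is equivalent to saying that the unique subrepresentation $V_\tau\subset V$ with cosocle $\tau$ equals $I(\sigma,\tau)$; but $V$ is not $K_1$-invariant when $J_2\ne\emptyset$ (Lemma~\ref{lem:Wchi}(ii)), so a priori $V_\tau$ could have Jordan--H\"older factors outside $W(\brho^\ss)$. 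The paper closes this via Proposition~\ref{prop:cond-K1inv}: one checks from \eqref{eq:J2-X} that $J_2\subset X(\mu)=\mathcal{S}(\sigma)$ and that $\mathcal{S}(\tau)=X^{\ss}(\lambda)$ satisfies $\mathcal{S}(\tau)\cap J_2=\emptyset$, whence $\tau\in\JH(Q_\sigma^{K_1})$. Then $V_\tau\subset V^{K_1}$ is a $\Gamma$-representation with socle $\sigma$, cosocle $\tau$ and $[V_\tau:\sigma]=1$, so $V_\tau=I(\sigma,\tau)$.

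Your ``crucial claim'' in the second part is false: $\tau'\in W(\brho^\ss)\cap\JH(\Ind_I^{\GL_2(\cO_K)}\chi^{J_1',J_2'})\cap\JH(Q_\sigma)$ does \emph{not} force $J_1'=J_1$ and $J_2'=\emptyset$. Already $\tau'=\sigma$ itself (with $J_1'=J_2'=\emptyset$) is a counterexample whenever $J_1\ne\emptyset$. The paper's argument does not try to pin down $(J_1',J_2')$. Instead, writing $\chi'=\chi_\mu^{J_1',J_2'}=\chi_{\mu'}$ with $\mu'\in\P^\ss$, one computes $X^{\ss}(\mu')=(X^{\ss}(\lambda)\setminus J_1'')\cup J_2''$ (where $J_i''\defeq J_i\setminus J_i'$); then Lemma~\ref{lem:W-embeds-3} applied to $\brho^\ss$ gives $\mathcal{S}(\tau')\subset X^{\ss}(\mu')$, hence $\mathcal{S}(\tau')\cap J_1''=\emptyset$ and $\mathcal{S}(\tau')\sqcup J_1''\subset X^{\ss}(\lambda)\cup J_2''$. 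By Proposition~\ref{prop:HW}(ii) (via Remark~\ref{rem:HW}) applied to $\Ind_I^{\GL_2(\cO_K)}W(\chi',\chi_\lambda)$, this means $\tau'$ lies in the unique subrepresentation of $\Ind_I^{\GL_2(\cO_K)}W(\chi_\mu,\chi_\lambda)$ with cosocle $\tau$. If moreover $\tau'\in\JH(V)$, then by multiplicity-freeness $\tau'$ survives in the image of this subrepresentation in $V$, which is $I(\sigma,\tau)$ by the first part.
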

\begin{proof}
By Lemma \ref{lem:W-embeds}\ref{it:W-embeds:2}, the Jordan--H\"older factor $\sigma$ of $\Ind_I^{\GL_2(\cO_K)} \chi_\mu$ is parametrized by the subset 
\begin{equation}\label{eq:J2-X}
\begin{array}{rrl}X(\mu)&=&\{j: \mu_j(x_j)\in\{x_j,\underline{x_j+1},p-2-x_j,p-3-x_j\}\}
\\
&=&J_2\sqcup\{j:\lambda_j(x_j)\in \{x_j,\underline{x_j+1},p-2-x_j,\underline{p-3-x_j}\}\}
\end{array}\end{equation}
and the Jordan--H\"older factor $\tau$ of $\Ind_I^{\GL_2(\cO_K)} \chi_\lambda$ is  parametrized by
\[X^\ss(\lambda)\defeq \{j:\lambda_j(x_j)\in \{x_j, x_j+1,p-2-x_j,p-3-x_j\}\}.\]
({As in the proof of Lemma \ref{lem:I1-invt-nonss}} we use the convention that an underlined entry is only allowed when $j \in J_{\rhobar}$.) 
We check by the definition of $\mu$ and \eqref{eq:J2-X} that $X(\mu)\cap J_1=\emptyset$ and 
\[X(\mu)\sqcup J_1\subset X^\ss(\lambda)\cup J_2.\] 
We then conclude by Proposition \ref{prop:HW}(ii)  (note that $\chi_\mu$ is $5$-generic)  
that $\tau$ contributes to the image $V$ of $\wt\iota$. Moreover, since $J_2\subset X(\mu)$ and $X^{\rm ss}(\lambda)\cap J_2=\emptyset$, Proposition \ref{prop:cond-K1inv} implies that $\tau\in V^{K_1}$ and $I(\sigma,\tau)$ is identified with the unique subrepresentation of $V$ with cosocle $\tau$. This proves the first assertion.

As $V$ is multiplicity free, it remains to prove \[\JH(V)\cap W(\brho^\ss) \subset \JH(I(\sigma,\tau)).\] Let $\chi'\in \JH(W(\chi_\mu,\chi_\lambda))$ and write $\chi'=\chi_\mu^{J_1',J_2'}$ for $J'_1
\subset J_1$ and $J_2'\subset J_2$. By the definition of $\P^\ss$, it is clear that $\chi' = \chi_{\mu'}$ 
for some $\mu'\in \P^\ss$ 
with 
\begin{equation}\label{eq:Xss-mu'}X^{\ss}(\mu')=(X^{\ss}(\lambda)\setminus J_1'')\cup J_2'',\end{equation}
where $J_1''\defeq J_1\setminus J_1'$ and $J_2''\defeq J_2\setminus J_2'$.   
In particular, $\chi'$ is also $5$-generic. 
Let $\tau'\in \JH(\Ind_I^{\GL_2(\cO_K)}\chi')$ be the constituent parametrized by $\mathcal{S}(\tau')$.  
If $\tau'\in W(\brho^\ss)$, then $\mathcal{S}(\tau')\subset X^{\ss}(\mu')$  by Lemma \ref{lem:W-embeds-3} (applied to $\brho^\ss$),
and so  $\mathcal{S}(\tau')\sqcup J_1''\subset X^\ss(\lambda)\cup J_2''$ by \eqref{eq:Xss-mu'}. By Proposition \ref{prop:HW}(ii) applied to $\Ind_I^{\GL_2(\cO_K)}W(\chi',\chi_{\lambda})$, this implies that $\tau'\in \JH(V'_{\tau})$, where $V_{\tau}'\subset\Ind_I^{\GL_2(\cO_K)}W(\chi',\chi_{\lambda})$  is the unique subrepresentation with cosocle $\tau$ (cf.~Remark \ref{rem:HW}). 
Hence $\tau'$ occurs in the unique subrepresentation $\wt V_\tau'$ of $\Ind_I^{\GL_2(\cO_K)}W(\chi_\mu,\chi_{\lambda})$ with cosocle $\tau$.
If moreover $\tau'\in \JH(V)$, then $\tau'$ has to occur in the image $V_\tau \subset V$ of $\wt V_\tau'$, which is just $I(\sigma,\tau)$ by the previous paragraph.
Thus, we obtain $\JH(V)\cap W(\brho^\ss)\subset \JH(V_\tau) = \JH(I(\sigma,\tau))$.
\end{proof}

\subsubsection{Generalization of \texorpdfstring{\cite[\S~19]{BP}}{[BP12,S19]}}
\label{sec:generalization-bp19}

We generalize \cite[Lemma 19.7]{BP} (Lemma \ref{lem:BP-gene}).

Assume that $\brho$ is $3$-generic so that  $\chi_{\lambda}$ is $2$-generic for any $\lambda\in\P^\ss$. Let $\lambda\in \D^\ss$ which corresponds to $\sigma\in W(\brho^\ss)$  and let $\chi\defeq \chi_\sigma=\chi_\lambda$ {(see \S~\ref{sec:notation})}. We let
\begin{equation}\label{eq:def-Rchi}\wt{R}(\chi)\defeq \Ind_I^{\GL_2(\cO_K)}W\Big(\chi^s,\chi^s\prod_{j=0}^{f-1}\alpha_j\Big).\end{equation}
In particular, $\wt{R}(\chi)$ is multiplicity free by Lemma~\ref{lem:multfree}. 
It is isomorphic to the $\GL_2(\cO_K)$-representation denoted by $\wt{R}(\sigma)$ in \cite[\S~17]{BP}. (Indeed, with the notation in \emph{loc.~cit.}, $J_{\sigma}=\{0,\dots,f-1\}$ by our genericity assumption and $W(\chi,\chi\prod_{j=0}^{f-1}\alpha_j^{-1})$ embeds in $\sigma|_{I}$ and is identified with the subspace spanned by $x^{s-i}y^i\in \sigma$ for $i\in\{\sum_{j\in J}p^j: J\subset \{0,\dots,f-1\}\}$, where we have written $\sigma=(s_0,\dots,s_{f-1})\otimes\theta$ and $s\defeq \sum_{j=0}^{f-1}p^js_j$. The representation $\wt{R}(\sigma)$ is then the $\GL_2(\cO_K)$-subrepresentation of $\cInd_{\GL_2(\cO_K)Z}^{\GL_2(K)}\sigma$ generated by $[\smatr01p0,W(\chi,\chi\prod_{j=0}^{f-1}\alpha_j^{-1})]$, hence is isomorphic to our $\wt{R}(\chi)$ in \eqref{eq:def-Rchi}.)  Furthermore, in \emph{loc.~cit.} is defined a subrepresentation of $\wt{R}(\chi)$, denoted by $R(\chi)$, whose Jordan--H\"older factors consist precisely of all the \emph{special} ones, cf.\ \cite[Def.~17.2]{BP}.  
We remark that $R(\chi)$ is the unique subrepresentation of $\wt{R}(\chi)$ whose cosocle is the socle of $\Ind_I^{\GL_2(\cO_K)}\chi^s\prod_{j=0}^{f-1}\alpha_j$.

We recall the following result from   \cite[Lemmas 19.5, 19.7]{BP}.
Let $\delta(\sigma)\in W(\brho^{\ss})$ be the Serre weight corresponding to $\delta(\lambda)\in \D^\ss$ (see \S~\ref{sec:notation} for the definition of $\delta(\lambda)$) {and recall that $\sigma^{[s]}$ is defined in \S~\ref{sec:notation}.}

\begin{prop}\label{prop:BP-Rchi}
There is a unique quotient $Q(\brho^{\ss},\sigma^{[s]})$ of $R(\chi)$ such that:
\begin{itemize}
\item $\soc_{\GL_2(\cO_K)} Q(\brho^\ss,\sigma^{[s]})\subset \bigoplus_{\sigma'\in W(\brho^\ss)}\sigma'$;
\item $Q(\brho^\ss,\sigma^{[s]})$ contains  $I(\delta(\sigma),\sigma^{[s]})$, the unique subrepresentation of $\Inj_{\Gamma}\delta(\sigma)$ with cosocle $\sigma^{[s]}$ in which $\delta(\sigma)$ occurs with multiplicity one.
\end{itemize} 
Moreover, we have $\soc_{\GL_2(\cO_K)} Q(\brho^\ss,\sigma^{[s]})=\delta(\sigma)$ and $Q(\brho^\ss,\sigma^{[s]})$ contains   $D_{0,\delta(\sigma)}(\brho^\ss)$.
\end{prop}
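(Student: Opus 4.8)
\textbf{Proof plan for Proposition~\ref{prop:BP-Rchi}.}

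The plan is to reduce the statement to the corresponding statement about $\wt R(\sigma)$ and $R(\sigma)$ from \cite[\S~17,~19]{BP}, using the explicit identification $\wt R(\chi)\cong \wt R(\sigma)$ and $R(\chi)\cong R(\sigma)$ recalled just above the proposition, and then import \cite[Lemma~19.5]{BP} and \cite[Lemma~19.7]{BP} directly (the genericity hypothesis on $\brho$ being strong enough that the combinatorial conditions appearing in \emph{loc.~cit.}, e.g.\ $J_\sigma=\{0,\dots,f-1\}$, are automatically met). First I would recall the precise parametrization: the Jordan--H\"older factors of $\wt R(\chi)=\Ind_I^{\GL_2(\cO_K)}W(\chi^s,\chi^s\prod_j\alpha_j)$ are indexed, via Remark~\ref{rmk:JH:conventions}, by pairs consisting of a subset $J'\subset\{0,\dots,f-1\}$ (labelling the character $\chi^s\prod_{j\in J'}\alpha_j$ of $W(\chi^s,\chi^s\prod_j\alpha_j)$) together with a further subset $\mathcal S$ parametrizing a constituent of $\Ind_I^{\GL_2(\cO_K)}\chi^s\prod_{j\in J'}\alpha_j$; the subrepresentation $R(\chi)$ is cut out as the one whose cosocle is $\soc_{\GL_2(\cO_K)}\Ind_I^{\GL_2(\cO_K)}\chi^s\prod_{j=0}^{f-1}\alpha_j$, i.e.\ the ``special'' constituents of \cite[Def.~17.2]{BP}. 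With this dictionary, $Q(\brho^\ss,\sigma^{[s]})$ is \emph{defined} to be the quotient of $R(\chi)$ corresponding to the quotient of $R(\sigma)$ constructed in \cite[Lemma~19.7]{BP}, and the content of the proposition is exactly that this quotient has the two bulleted properties and that its socle is $\delta(\sigma)$ and that it contains $D_{0,\delta(\sigma)}(\brho^\ss)$.

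The key steps, in order: (1) Transport the construction: identify $\wt R(\chi)$ with $\wt R(\sigma)$ and $R(\chi)$ with $R(\sigma)$ as in the paragraph preceding the proposition, so that ``special constituent'' has its \cite[\S~17]{BP} meaning and \cite[\S~19]{BP} applies verbatim. (2) Uniqueness: show that among quotients $Q$ of $R(\chi)$ with $\soc_{\GL_2(\cO_K)}Q\subset\bigoplus_{\sigma'\in W(\brho^\ss)}\sigma'$ and $I(\delta(\sigma),\sigma^{[s]})\subset Q$, there is at most one; this follows because $R(\chi)$ is multiplicity free (Lemma~\ref{lem:multfree}), so a quotient is determined by its set of Jordan--H\"older constituents, and the two conditions force this set --- indeed $\soc_{\GL_2(\cO_K)}Q\subset\bigoplus_{\sigma'\in W(\brho^\ss)}\sigma'$ together with the multiplicity-one property pins the socle down, and then the requirement that $I(\delta(\sigma),\sigma^{[s]})$ sit inside $Q$ forces the constituents above $\delta(\sigma)$ that can appear. (3) Existence and the socle computation: invoke \cite[Lemma~19.5]{BP} (which identifies, inside $\wt R(\sigma)$, the relevant subquotients built from $\Inj_\Gamma$ of Serre weights obtained by applying $\delta$) and \cite[Lemma~19.7]{BP} (which produces the quotient $Q(\brho^\ss,\sigma^{[s]})$ with socle $\delta(\sigma)$), translated through the dictionary of Step~(1). (4) Containment of $D_{0,\delta(\sigma)}(\brho^\ss)$: recall that $D_{0,\delta(\sigma)}(\brho^\ss)$ is by \cite[\S~13,~15]{BP} the maximal subrepresentation of $\Inj_\Gamma\delta(\sigma)$ (equivalently, of $I(\delta(\sigma),?)$ for $?$ large) with socle $\delta(\sigma)$ whose constituents all lie in appropriate weight sets; comparing its Jordan--H\"older constituents (computed via \cite[Cor.~4.11]{BP}, cf.\ Lemma~\ref{lem:JHinW}) with those of $Q(\brho^\ss,\sigma^{[s]})$ coming from \cite[Lemma~19.7]{BP} gives the inclusion. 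A subtlety to check is that $Q(\brho^\ss,\sigma^{[s]})$ is $K_1$-fixed, i.e.\ a genuine $\Gamma$-representation, which one verifies using \cite[Cor.~5.7]{BP} exactly as in the proof of Proposition~\ref{prop:cond-K1inv}, so that $I(\delta(\sigma),\sigma^{[s]})$ and $D_{0,\delta(\sigma)}(\brho^\ss)$ make sense as subobjects.

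The main obstacle I anticipate is purely bookkeeping: matching the combinatorial parametrization of Jordan--H\"older factors used here (subsets $J_\mu$, $X(\mu)$, $\mathcal S(\sigma)$, and the shift-by-one convention flagged in the proof of Lemma~\ref{lem:I1-invt-nonss}) with the parametrization of \cite[\S~17--19]{BP} for $\wt R(\sigma)$ and $R(\sigma)$, and checking that the hypotheses of \cite[Lemmas~19.5,~19.7]{BP} (which are stated for a $\brho$ that is merely $0$-generic but with various finiteness conditions $\ell(\brho,\sigma')<\infty$) hold here --- the latter being guaranteed by Lemma~\ref{lem:Dss-in-D0}. Everything substantive is already in \cite[\S~17--19]{BP}; the work is to make the translation airtight and to confirm that the uniqueness clause is exactly characterized by the two bulleted conditions together with multiplicity freeness of $R(\chi)$.
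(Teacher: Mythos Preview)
Your approach is correct and matches the paper's treatment: the paper gives no proof at all, simply stating ``We recall the following result from \cite[Lemmas 19.5, 19.7]{BP}'' immediately before the proposition, relying entirely on the identification $\wt R(\chi)\cong \wt R(\sigma)$, $R(\chi)\cong R(\sigma)$ spelled out in the preceding paragraph. Your elaboration (uniqueness via multiplicity freeness, verifying the hypotheses of \cite{BP}) is more detailed than what the paper provides, but is in the same spirit and not in error.
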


For $J\subset\{0,\dots,f-1\}$, we define
\[\wt{R}_{J}(\chi)\defeq \Ind_I^{\GL_2(\cO_K)}W\big(\chi^s,\chi^s\prod_{j\in J}\alpha_j\big)\hookrightarrow \wt{R}(\chi)\]
and $R_J(\chi)\defeq \wt{R}_J(\chi)\cap R(\chi)$. The following result slightly strengthens Proposition \ref{prop:BP-Rchi}.

\begin{lem}\label{lem:BP-gene}
Let $J\subset \{0,\dots,f-1\}$ be  a subset satisfying 
\[J\supseteq \big\{j: \lambda_j(x_j)\in\{x_j+1,p-2-x_j\}\big\}.\]
Then  $D_{0,\delta(\sigma)}(\brho^\ss)$ is contained in the image of $R_J(\chi)\hookrightarrow R(\chi)\twoheadrightarrow Q(\brho^\ss,\sigma^{[s]})$. 
 In particular, the unique quotient of $R_J(\chi)$ (or of $\wt R_J(\chi)$) with socle $\delta(\sigma)$ contains $D_{0,\delta(\sigma)}(\brho^\ss)$.
\end{lem}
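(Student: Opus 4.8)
The strategy is to reduce the statement to Proposition~\ref{prop:BP-Rchi} by showing that $Q(\brho^\ss,\sigma^{[s]})$ is already a quotient of $R_J(\chi)$ under the hypothesis on $J$. Concretely, I would first recall that $Q(\brho^\ss,\sigma^{[s]})$ is the unique quotient of $R(\chi)$ with socle $\delta(\sigma)$ such that $\soc_{\GL_2(\cO_K)}Q(\brho^\ss,\sigma^{[s]})$ is a sum of Serre weights in $W(\brho^\ss)$ and $I(\delta(\sigma),\sigma^{[s]})\subset Q(\brho^\ss,\sigma^{[s]})$. So it suffices to produce a quotient of $R_J(\chi)$ with these properties; by uniqueness it must then coincide with $Q(\brho^\ss,\sigma^{[s]})$, and the claim follows from the second bullet of Proposition~\ref{prop:BP-Rchi}. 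The final ``in particular'' is immediate: $D_{0,\delta(\sigma)}(\brho^\ss)$ has simple socle $\delta(\sigma)$, so it is contained in the unique quotient of $R_J(\chi)$ with socle $\delta(\sigma)$ (and likewise for $\wt R_J(\chi)$, since $R_J(\chi)\subset \wt R_J(\chi)$ and both have the same socle $\delta(\sigma)$, the socle of $\Ind_I^{\GL_2(\cO_K)}\chi^s\prod_{j\in J}\alpha_j$).

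Next I would set up the relevant combinatorics. Write $J_0\defeq\{j:\lambda_j(x_j)\in\{x_j+1,p-2-x_j\}\}$, so $J_0\subseteq J$ by hypothesis. The key point is to identify, via Proposition~\ref{prop:HW}\ref{it:prop:HW:2} and Proposition~\ref{prop:cond-K1inv} (applied with $\chi$ there equal to $\chi^s$, $J_1=\emptyset$, $J_2=J$), exactly which constituents $\tau'\in W(\brho^\ss)$ of $\wt R_J(\chi)$ occur, and to check that they already account for all of $D_{0,\delta(\sigma)}(\brho^\ss)$. Using Lemma~\ref{lem:I1-invt-component} to describe the parametrizing sets of the Serre weights $\delta(\sigma)$ and $\sigma^{[s]}$ as Jordan--H\"older factors of the appropriate principal series, and the description of $\mathcal S(-)$ in \eqref{eq:S-xi1}, I would compute that the constituent $\sigma^{[s]}$ of $\Ind_I^{\GL_2(\cO_K)}\chi^s\prod_{j=0}^{f-1}\alpha_j$ is parametrized by a set $\mathcal S(\sigma^{[s]})$ disjoint from $J_0$, and similarly that $\delta(\sigma)$ is parametrized by a set contained in $J$. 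This is where the hypothesis $J\supseteq J_0$ is used: it guarantees both $\mathcal S(\sigma^{[s]})\cap J=\mathcal S(\sigma^{[s]})\setminus J_0$ vanishes on the ``bad'' indices and that the containment $\mathcal S(\delta(\sigma))\sqcup\,\emptyset\subset \mathcal S(\sigma^{[s]})\cup J$ required by \eqref{eq:HW} holds, so $\sigma^{[s]}\in\JH(Q_{\delta(\sigma)})$ for the relevant quotient $Q_{\delta(\sigma)}$ of $\wt R_J(\chi)$, and moreover by Proposition~\ref{prop:cond-K1inv} (with the conditions $J\subset\mathcal S(\delta(\sigma))$... — here one has to be careful which of $\delta(\sigma)$, $\sigma^{[s]}$ plays which role) that this quotient is $K_1$-invariant and contains $I(\delta(\sigma),\sigma^{[s]})$.

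With this in hand I would argue as follows: let $Q$ be the unique quotient of $\wt R_J(\chi)$ with socle $\delta(\sigma)$, and $Q'$ the unique quotient of $R_J(\chi)$ with socle $\delta(\sigma)$ (the image of $R_J(\chi)$ in $Q$). By the combinatorial computation, $\sigma^{[s]}\in\JH(Q')$ and the unique subrepresentation of $Q'$ with cosocle $\sigma^{[s]}$ is $I(\delta(\sigma),\sigma^{[s]})$; moreover $\soc_{\GL_2(\cO_K)}Q'=\delta(\sigma)\in W(\brho^\ss)$. Hence $Q'$ satisfies the defining properties of $Q(\brho^\ss,\sigma^{[s]})$ in Proposition~\ref{prop:BP-Rchi}, so $Q'=Q(\brho^\ss,\sigma^{[s]})$ and in particular $D_{0,\delta(\sigma)}(\brho^\ss)\subset Q'$ by the last sentence of that proposition. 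Since $R_J(\chi)\hookrightarrow R(\chi)\twoheadrightarrow Q(\brho^\ss,\sigma^{[s]})$ factors through $Q'$, this gives the first assertion, and the ``in particular'' follows as explained above.

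\textbf{Main obstacle.} The delicate part is the bookkeeping with the parametrizing sets: one must correctly translate between the $\lambda\in\D^\ss$ description of $\sigma$, the $\delta(\lambda)$-description of $\delta(\sigma)$, the $[s]$-operation, and the $\mathcal S(-)$-parametrization of Jordan--H\"older factors of the two principal series $\Ind_I^{\GL_2(\cO_K)}\chi^s$ and $\Ind_I^{\GL_2(\cO_K)}\chi^s\prod_j\alpha_j$, keeping track of the genericity bounds needed for Propositions~\ref{prop:HW} and~\ref{prop:cond-K1inv} (which require $\chi^s$ to be $5$-generic, hence $\brho$ suitably generic). In particular one has to verify carefully that the hypothesis $J\supseteq\{j:\lambda_j(x_j)\in\{x_j+1,p-2-x_j\}\}$ is precisely what is needed for condition~\eqref{eq:HW} and the conditions of Proposition~\ref{prop:cond-K1inv} to be met for the pair $(\delta(\sigma),\sigma^{[s]})$ — i.e.\ that enlarging $J$ beyond $J_0$ does no harm and shrinking it below $J_0$ would lose $I(\delta(\sigma),\sigma^{[s]})$. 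I expect no conceptual difficulty beyond this, since the structural input (Propositions~\ref{prop:HW},~\ref{prop:cond-K1inv},~\ref{prop:BP-Rchi} and Lemma~\ref{lem:I1-invt-component}) is already available.
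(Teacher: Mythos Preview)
Your approach has a genuine gap in the uniqueness step. Proposition~\ref{prop:BP-Rchi} asserts uniqueness among quotients of $R(\chi)$, but your $Q'$ is only a quotient of the subrepresentation $R_J(\chi)\subset R(\chi)$; a quotient of a subrepresentation need not be a quotient of the ambient representation, so even if you verify both bullet points for $Q'$ you cannot invoke Proposition~\ref{prop:BP-Rchi} to conclude $Q'=Q(\brho^\ss,\sigma^{[s]})$. And without this equality you cannot deduce $D_{0,\delta(\sigma)}(\brho^\ss)\subset Q'$ from the ``moreover'' clause. Note that establishing $I(\delta(\sigma),\sigma^{[s]})\subset Q'$ alone would not suffice either: $D_{0,\delta(\sigma)}(\brho^\ss)=I(\delta(\sigma),\tau)$ for a specific $\tau$ (not $\sigma^{[s]}$), and in general $D_{0,\delta(\sigma)}(\brho^\ss)$ is neither contained in nor contains $I(\delta(\sigma),\sigma^{[s]})$. (There is also a smaller issue: to apply Proposition~\ref{prop:cond-K1inv} with $J_2=J$ you would need $J\subset\mathcal S(\delta(\sigma))$, and the paper computes $\mathcal S(\delta(\sigma))=J_0$, forcing $J=J_0$; this is harmless since one may reduce to the minimal $J$, but it shows the combinatorics is tighter than your sketch suggests.)

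The paper's argument is more direct and bypasses Propositions~\ref{prop:HW} and~\ref{prop:cond-K1inv} entirely. Reducing to $J=J_0$, it identifies the cosocle $\tau$ of $D_{0,\delta(\sigma)}(\brho^\ss)$ explicitly via the element $\mu_\xi\in\mathcal I$ of \cite[\S~19]{BP}, so that $D_{0,\delta(\sigma)}(\brho^\ss)=I(\delta(\sigma),\tau)$ by \cite[Lemma~19.2]{BP}. It then verifies by a short computation with \cite[Lemma~17.12(i)]{BP} that $\tau$ lies in the top stratum $\Ind_I^{\GL_2(\cO_K)}\chi^s\prod_{j\in J_0}\alpha_j$ of $\wt R_{J_0}(\chi)$. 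Since $\tau\in\JH(Q(\brho^\ss,\sigma^{[s]}))$ by Proposition~\ref{prop:BP-Rchi}, multiplicity freeness gives $\tau$ in the image of $R_{J_0}(\chi)\to Q(\brho^\ss,\sigma^{[s]})$; that image has socle $\delta(\sigma)$, hence contains $I(\delta(\sigma),\tau)=D_{0,\delta(\sigma)}(\brho^\ss)$.
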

\begin{proof}
Clearly we may assume $J=\{j:\lambda_j(x_j)\in\{x_j+1,p-2-x_j\}\}$. 
Applying Lemma \ref{lem:I1-invt-component} to $\lambda^{[s]}$ {(same notation as (\ref{eq:[s]}))}, and remembering that $\lambda\in\D^{\ss}$, we conclude that $X^{\rm ss}(\lambda^{[s]})=J$ parametrizes $\delta(\sigma)$ (as a constituent of $\Ind_I^{\GL_2(\cO_K)}\chi_\lambda^s$).
Let $\xi\in\mathcal{P}$ correspond to $\delta(\sigma)$, so that $\mathcal{S}(\xi) = J$.
Define $\mu_{\xi}\in \mathcal{I}$ as follows (cf.~\cite[\S~19]{BP}):
\begin{equation*}
\mu_{\xi,j}(y_j)\defeq 
\begin{cases}
  p-1-y_j & \text{if $\xi_j(x_j)\in\{x_j-1,x_j\}$},\\
  p-3-y_j & \text{if $\xi_j(x_j)\in\{p-2-x_j,p-1-x_j\}$}.
\end{cases}
\end{equation*}
Write $\sigma=(s_0,\dots,s_{f-1})\otimes\theta$, so $\delta(\sigma)=(\xi_0(s_0),\dots,\xi_{f-1}(s_{f-1}))\otimes {\det}^{e(\xi)(s_0,\dots,s_{f-1})}\theta$.
By \cite[Lemma 19.2]{BP}, $D_{0,\delta(\sigma)}(\brho^\ss)$ is equal to $I(\delta(\sigma),\tau)$, where 
\[\tau=\mu_{\xi}(\delta(\sigma))\defeq \big(\mu_{\xi,0}(\xi_0(s_0)),\dots,\mu_{\xi,f-1}(\xi_{f-1}(s_{f-1}))\big)\otimes {\det}^{e(\mu_{\xi}\circ\xi)(s_0,\dots,s_{f-1})}\theta.\] 
 By Proposition~\ref{prop:BP-Rchi}, $\tau$ occurs in $Q(\brho^\ss,\sigma^{[s]})$, hence also in $R(\chi)$.

To conclude, it suffices to prove that $\tau$ occurs in $\Ind_I^{\GL_2(\cO_K)}\chi^s\prod_{j\in J}\alpha_j$. By \cite[Lemma 17.12(i)]{BP}, it is equivalent to proving that $J$ equals 
\[J(\mu_{\xi}\circ\xi)\defeq \big\{j\in\{0,\dots,f-1\}: (\mu_{\xi}\circ\xi)(x_j)\in \{x_j-2,p-x_j\}\big\}.\]
By the definition of $\mu_{\xi}$, we see that $J(\mu_{\xi}\circ\xi)$ is exactly 
$\{j:\xi_j(x_j)\in \{x_j-1,p-1-x_j\}\} = \mathcal{S}(\xi)$,  which equals $J$ by above.
\end{proof}

\subsubsection{\texorpdfstring{$K_1$-}{K\_1-} and \texorpdfstring{$I_1$}{I\_1}-invariants of subrepresentations of \texorpdfstring{$\pi$}{pi}}
\label{sec:k_1-invar-subr}

We describe the $K_1$-invariants, $I_1$-invariants and the $\GL_2(\cO_K)$-socles of subrepresentations of $\pi$.

By \cite[Prop.\ 5.2]{Hu-SMF}, {the $\Gamma$-representation} $D_0(\brho)$ admits a unique filtration 
\begin{equation}\label{eq:fil-D0}
0=D_0(\brho)_{\leq -1}\subsetneq D_0(\brho)_{\leq 0}\subsetneq \cdots \subsetneq D_0(\brho)_{\leq i}\subsetneq \cdots \subsetneq D_0(\brho)_{\leq f}=D_0(\brho)
\end{equation}
such that for any $0\leq i\leq f$, 
\begin{equation*}D_{0}(\brho)_i\defeq D_0(\brho)_{\leq i}/D_0(\brho)_{\leq i-1}\end{equation*}
is a subrepresentation of $D_0(\brho^\ss)_i\defeq \bigoplus_{\tau\in W(\brho^\ss),\ell(\tau)=i}D_{0,\tau}(\brho^{\ss})$ and 
\begin{equation}\label{eq:soc-i}
\soc_{\GL_2(\cO_K)}D_0(\brho)_i=\bigoplus_{\tau\in W(\brho^\ss),~\ell(\tau)=i}\tau.\end{equation}
By construction, $D_0(\brho)_{\leq i}$ is the largest $\Gamma$-subrepresentation of $D_0(\brho)$ not containing any $\tau\in W(\brho^\ss)$, $\ell(\tau)>i$ as subquotient.
Set $D_0(\brho^\ss)_{\leq i}\defeq \bigoplus_{j\leq i}D_{0}(\brho^\ss)_j$. We obtain  
\begin{align}
\label{eq:JH-D0-i}\JH(D_0(\brho)_{i})&=\JH(D_0(\brho))\cap  \JH(D_0(\brho^\ss)_{i}), \text{\ \ and}\\
\label{eq:JH-D0-leqi}\JH(D_0(\brho)_{\leq i})&=\JH(D_0(\brho))\cap  \JH(D_0(\brho^\ss)_{\leq i}).
\end{align}
Indeed, \eqref{eq:JH-D0-i} implies \eqref{eq:JH-D0-leqi}. 
For \eqref{eq:JH-D0-i}, the inclusion $\subseteq$ is obvious, but both sides form a partition of $\JH(D_0(\brho))$ as $i$ varies, so equality holds.

Since $D_0(\brho)$ is multiplicity free and decomposes as $\bigoplus_{\sigma\in W(\brho)}D_{0,\sigma}(\brho)$, we see that $D_{0}(\brho)_{\leq i}$ also decomposes as a direct sum
\begin{equation}\label{eq:decom-leqi}D_{0}(\brho)_{\leq i}=\bigoplus_{\sigma\in W(\brho)}D_{0,\sigma}(\brho)_{\leq i},\end{equation}
where $D_{0,\sigma}(\brho)_{\leq i}\defeq D_{0,\sigma}(\brho)\cap D_{0}(\brho)_{\leq i}$. 
(Note that by \eqref{eq:JH-D0-leqi} we have $D_{0,\sigma}(\brho)_{\leq i} \ne 0$ if and only if $\ell(\sigma)\leq i$.)
Similarly, 
$D_0(\brho)_i$ also decomposes as a direct sum  $\bigoplus_{\tau\in W(\brho^\ss),\ell(\tau)=i}D_{0,\tau}(\brho)_i$, where 
$D_{0,\tau}(\brho)_{i} \defeq  D_0(\brho)_i \cap D_{0,\tau}(\brho^\ss)$.

We remark that by \eqref{eq:JH-D0-leqi} and Lemma~\ref{lem:inter} we have for any $\sigma \in W(\brho)$:
\begin{equation}\label{eq:D0-sigma-i-decomp}
  \frac{D_{0,\sigma}(\brho)_{\le i}}{D_{0,\sigma}(\brho)_{\le i-1}} = \bigoplus_{\tau \in W(\brho^\ss), \ell(\tau)=i, J_\sigma = J_{\brho} \cap J_\tau} D_{0,\tau}(\brho)_{i}.
\end{equation}

\begin{lem}\label{lem:inter}
Let $\tau\in W(\brho^{\rm ss})$ and $\sigma\in W(\brho)$ be the element such that $J_{\sigma}=J_{\brho}\cap J_{\tau}$. Then
\[\JH(D_{0,\tau}(\brho^{\rm ss}))\cap \JH(D_0(\brho))\subset \JH(D_{0,\sigma}(\brho)).\]
\end{lem}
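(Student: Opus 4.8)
\textbf{Proof plan for Lemma \ref{lem:inter}.}
The plan is to reduce the statement to a claim purely about Jordan--Hölder factors of injective envelopes of Serre weights. First I would recall from the construction of $D_0(\brho)$ (see \cite[Prop.~13.1, Prop.~13.4]{BP}) that $D_{0,\tau}(\brho^{\rm ss})$ is the largest subrepresentation of $\Inj_\Gamma \tau$ all of whose constituents other than $\tau$ lie outside $W(\brho^{\rm ss})$, and similarly $D_{0,\sigma}(\brho)$ is the largest subrepresentation of $\Inj_\Gamma \sigma$ all of whose constituents other than $\sigma$ lie outside $W(\brho)$. Pick $\tau'\in \JH(D_{0,\tau}(\brho^{\rm ss}))\cap \JH(D_0(\brho))$. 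By \eqref{eq:JH-D0-rho} we have $\JH(D_0(\brho))=\JH(\bigoplus_{\sigma'\in W(\brho)}\Inj_\Gamma\sigma')$, so $\tau'\in\JH(\Inj_\Gamma\sigma')$ for some $\sigma'\in W(\brho)$. The heart of the matter is to show $\sigma'$ can be taken to be our specific $\sigma$ (the one with $J_\sigma=J_{\brho}\cap J_\tau$), and then that $\tau'$ already lies in the subrepresentation $D_{0,\sigma}(\brho)$ of $\Inj_\Gamma\sigma$.

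The key combinatorial input is Lemma \ref{lem:JHinW}: it tells us precisely that $\tau\in \JH(D_{0,\sigma}(\brho))$ and that the constituents of $I(\sigma,\tau)$ are exactly the $\tau''\in W(\brho^{\rm ss})$ with $J_\sigma\subset J_{\tau''}\subset J_\tau$. I would combine this with \cite[Lemma 15.3, Lemma 19.2]{BP} (or the analogous statements used in the proof of Lemma \ref{lem:JHinW}) which identify $D_{0,\tau}(\brho^{\rm ss})=I(\tau,\mu_\xi(\tau))$ for the appropriate $\xi$, so that every $\tau'\in\JH(D_{0,\tau}(\brho^{\rm ss}))$ is a constituent of some $I(\tau,\tau''')$. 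Tracking the extension-graph/$\mathcal S$-set combinatorics, a constituent $\tau'$ of $D_{0,\tau}(\brho^{\rm ss})$ that also lies in $\JH(D_0(\brho))$ is forced to satisfy a containment relation on its parametrizing set that places it inside $I(\sigma,\tau)$, and $I(\sigma,\tau)\subseteq D_{0,\sigma}(\brho)$ by Lemma \ref{lem:JHinW}. The main obstacle is making this set-theoretic bookkeeping airtight: one must be careful that ``$\tau'\in\JH(D_0(\brho))$'' genuinely cuts down the constituents of $\Inj_\Gamma\tau$ lying in $D_{0,\tau}(\brho^{\rm ss})$ to those already visible inside $D_{0,\sigma}(\brho)$, using that $D_0(\brho)$ is multiplicity free together with \eqref{eq:JH-D0-rho}.

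Concretely, I expect the cleanest route is: (1) By Lemma \ref{lem:JHinW}, $\tau\in\JH(D_{0,\sigma}(\brho))$, so $I(\sigma,\tau)\subseteq D_{0,\sigma}(\brho)\subseteq D_0(\brho)$, giving $\JH(I(\sigma,\tau))\subseteq \JH(D_0(\brho))$. (2) Show $\JH(D_{0,\tau}(\brho^{\rm ss}))\cap \JH(D_0(\brho))\subseteq \JH(I(\sigma,\tau))$: take $\tau'$ in the left-hand intersection; since $\tau'\in\JH(D_{0,\tau}(\brho^{\rm ss}))$ there is, by the structure of $D_{0,\tau}(\brho^{\rm ss})$ as a subrepresentation of $\Inj_\Gamma\tau$ and \cite[Cor.~4.11]{BP}, a constituent with parametrizing set $J_{\tau'}$ satisfying a ``no forbidden direction'' condition relative to $J_\tau$ and to $W(\brho^{\rm ss})\setminus\{\tau\}$; meanwhile $\tau'\in\JH(D_0(\brho))$ and multiplicity-freeness force, via \eqref{eq:JH-D0-rho}, that $\tau'$ sits in $\Inj_\Gamma\sigma$ between $\sigma$ and $\tau$, i.e.\ $J_\sigma\subseteq J_{\tau'}\subseteq J_\tau$, whence $\tau'\in\JH(I(\sigma,\tau))$ by Lemma \ref{lem:JHinW}. (3) Combining (1) and (2) with $I(\sigma,\tau)\subseteq D_{0,\sigma}(\brho)$ yields the claim. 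I would carry out step (2) by a direct case analysis on the $f$-tuples, exactly in the style of the proof of Lemma \ref{lem:I1-invt-nonss}, which is where the bulk of the (routine but delicate) work lies.
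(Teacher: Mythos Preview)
Your step (2) contains a genuine error. You claim that $\JH(D_{0,\tau}(\brho^{\rm ss}))\cap \JH(D_0(\brho))\subseteq \JH(I(\sigma,\tau))$, but this is false. By the very definition of $D_{0,\tau}(\brho^{\rm ss})$, its socle is $\tau$ and \emph{none} of its other constituents lie in $W(\brho^{\rm ss})$. On the other hand, Lemma~\ref{lem:JHinW} says $\JH(I(\sigma,\tau))\subset W(\brho^{\rm ss})$. Hence the only element of $\JH(D_{0,\tau}(\brho^{\rm ss}))$ that could possibly land in $\JH(I(\sigma,\tau))$ is $\tau$ itself, whereas the left-hand side of the lemma typically contains many weights outside $W(\brho^{\rm ss})$. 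Relatedly, you write ``$J_\sigma\subseteq J_{\tau'}\subseteq J_\tau$'' for a general constituent $\tau'$, but the invariant $J_{\tau'}$ is only defined for $\tau'\in W(\brho^{\rm ss})$; for the non-modular constituents of $D_{0,\tau}(\brho^{\rm ss})$ this has no meaning in the sense you intend.

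The paper's proof is a one-line citation of \cite[Lemma~15.3]{BP}. That lemma gives, for any weight $\tau'$ with $\ell(\brho,\tau')<\infty$ (equivalently $\tau'\in\JH(D_0(\brho))$), an explicit combinatorial description of the unique $\sigma'\in W(\brho)$ such that $\tau'\in\JH(D_{0,\sigma'}(\brho))$, in terms of the $f$-tuple parametrizing $\tau'$ in $\mathcal{I}$ (not the set $J_{\tau'}$, which is unavailable). One applies this criterion once for $\brho^{\rm ss}$ (to read off what $\tau'\in\JH(D_{0,\tau}(\brho^{\rm ss}))$ says about the $f$-tuple of $\tau'$) and once for $\brho$ (to compute the $\sigma'$ containing $\tau'$), and checks that $J_{\sigma'}=J_{\brho}\cap J_{\tau}$. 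The point is that \cite[Lemma~15.3]{BP} works at the level of arbitrary constituents of $\Inj_\Gamma\sigma$, not just those in $W(\brho^{\rm ss})$, which is exactly what your route via $I(\sigma,\tau)$ cannot do.
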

\begin{proof}
This is a consequence of \cite[Lemma 15.3]{BP}.
\end{proof}

\begin{thm}\label{thm:conj2}
 Assume that $\rhobar$ is $6$-generic.  
Let $\pi_1$ be a  subrepresentation of $\pi$. Then there exists a unique integer $i_0 = i_0(\pi_1)$ with $-1\leq i_0\leq f$ such that
\[\pi_1^{K_1}=D_0(\brho)_{\leq i_0}. \]
\end{thm}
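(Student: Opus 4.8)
The plan is to show that the $\GL_2(\cO_K)$-representation $\pi_1^{K_1}$ is forced to be one of the members $D_0(\brho)_{\le i}$ of the filtration \eqref{eq:fil-D0}, by combining two inclusions. On the one hand $\pi_1^{K_1}$ is a $\GL_2(\cO_K)$-subrepresentation of $\pi^{K_1}=D_0(\brho)$ (using assumption \ref{it:assum-i} with $r=1$), and as in the proof of Proposition \ref{prop:split-I1}, since $\smatr{0}{1}{p}{0}$ preserves $\pi_1^{I_1}$, the diagram $(\pi_1^{I_1}\hookrightarrow \pi_1^{K_1})$ is a direct summand of the diagram $(D_1(\brho)\hookrightarrow D_0(\brho))$. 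On the other hand, $\pi_1$ is a $\GL_2(K)$-subrepresentation, so it is stable under $\GL_2(K)$ and in particular $\pi_1^{K_1}$ must be ``large enough'' to be compatible with the global structure of $\pi$. The key point is to prove the two implications: (a) if $\sigma\in W(\brho)$ and $D_{0,\sigma}(\brho)\cap\pi_1^{K_1}\neq 0$, then $D_{0,\sigma}(\brho)_{\le i_0}\subset \pi_1^{K_1}$ for the appropriate $i_0$; and (b) $\pi_1^{K_1}$ does not contain any constituent $\tau\in W(\brho^\ss)$ with $\ell(\tau)>i_0$. Together with the combinatorial description of the filtration (that $D_0(\brho)_{\le i}$ is the largest $\Gamma$-subrepresentation not containing any $\tau\in W(\brho^\ss)$ with $\ell(\tau)>i$) this pins down $\pi_1^{K_1}=D_0(\brho)_{\le i_0}$.

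\textbf{Key steps.} First I would set $i_0+1$ to be the smallest value of $\ell(\tau)$ over all $\tau\in W(\brho^\ss)$ that occur in $\pi/\pi_1$ (interpreting $i_0 = f$ if no such $\tau$ exists, i.e.\ $\pi_1 = \pi$, and $i_0 = -1$ if $\pi_1 = 0$), equivalently $i_0$ is the largest $i$ such that every $\tau\in W(\brho^\ss)$ with $\ell(\tau)\le i$ occurs in $\pi_1$. The inclusion $\pi_1^{K_1}\subseteq D_0(\brho)_{\le i_0}$ should follow by contradiction: if $\pi_1^{K_1}$ contained a constituent $\tau\in W(\brho^\ss)$ with $\ell(\tau)\ge i_0+1$, then by minimality of $i_0+1$ there is a Serre weight $\sigma\in W(\brho^\ss)$ of the same minimal length $i_0+1$ not in $\pi_1$; using the self-duality of $\pi$ (assumption \ref{it:assum-iii}) and the $(\varphi,\Gamma)$-module dimension count from \cite[Thm.~1.2]{YitongWang}, together with the exactness of $D_\xi^\vee$, one derives a numerical contradiction — this is exactly the kind of argument in Step 2 of the proof of Proposition \ref{prop:split-I1}, adapted to the nonsplit case via Lemma \ref{lem:JH-V} and the structural results of \S~\ref{sec:structure-subrep}. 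For the reverse inclusion $D_0(\brho)_{\le i_0}\subseteq \pi_1^{K_1}$, I would argue that for each $\sigma\in W(\brho)$ with $\ell(\sigma)\le i_0$, the weight $\sigma$ occurs in $\soc_{\GL_2(\cO_K)}(\pi_1)$ (since all low-length Serre weights of $W(\brho^\ss)$ lie in $\pi_1$, and $\soc_{\GL_2(\cO_K)}(\pi_1)\subseteq W(\brho)$), and then use Lemma \ref{lem:JH-V} together with Lemma \ref{lem:W-embeds} to produce, for each relevant $\lambda\in\D^\ss$, an explicit $\GL_2(\cO_K)$-subrepresentation of $\pi$ of the form $V\supseteq I(\sigma,\tau)\supseteq D_{0,\tau}(\brho^\ss)$ sitting inside $\pi_1$ (the point being that such $V$ is generated by its $I$-socle $\chi_\mu$, which lies in $\pi_1^{I_1}$ once $\sigma\in\soc_{\GL_2(\cO_K)}(\pi_1)$). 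Summing over all $\sigma$ and all $\tau$ with $J_\sigma = J_{\brho}\cap J_\tau$ and $\ell(\tau)\le i_0$ and invoking \eqref{eq:D0-sigma-i-decomp} and \eqref{eq:decom-leqi} gives $D_0(\brho)_{\le i_0}\subseteq\pi_1^{K_1}$.

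\textbf{Main obstacle.} The hard part will be the reverse inclusion, i.e.\ showing $D_0(\brho)_{\le i_0}\subseteq \pi_1^{K_1}$: one must show that every constituent of every $D_{0,\sigma}(\brho)$ with $\sigma$ of small length actually appears in $\pi_1^{K_1}$ and not merely in $\pi^{K_1}$. This requires genuinely using that $\pi_1$ is a $\GL_2(K)$-subrepresentation (not just a $\GL_2(\cO_K)$-subrepresentation), via the ``generated by $\chi_\mu$'' mechanism of Lemma \ref{lem:W-embeds} and the propagation of Serre weights along $\Ind_I^{\GL_2(\cO_K)}W(\chi,\chi^{J_1,J_2})$ controlled by Proposition \ref{prop:HW} and Proposition \ref{prop:cond-K1inv}. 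Checking that the resulting $V$'s really are contained in $\pi_1$ (and not just in $\pi$) comes down to checking that their $I_1$-invariants, which are spanned by characters $\chi_\mu$ with $\sigma\in\JH(\Ind_I^{\GL_2(\cO_K)}\chi_\mu)\cap W(\brho)$, all lie in $\pi_1^{I_1}$, and this needs the finite-length/socle-generation input (Theorem \ref{thm:fin-length-nonsplit} in the text, whose proof will interlock with this one); the uniqueness of $i_0$ is then immediate from \eqref{eq:JH-D0-leqi} and the fact that the lengths $\ell(\tau)$ realize all values $0,\dots,f$ on $W(\brho^\ss)$, so the $D_0(\brho)_{\le i}$ are pairwise distinct.
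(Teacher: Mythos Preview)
Your proposal has a fundamental circularity problem. You invoke Theorem \ref{thm:fin-length-nonsplit} and the self-duality/$D_\xi^\vee$ dimension count ``as in Step~2 of the proof of Proposition~\ref{prop:split-I1}'', but in the nonsplit case this argument is carried out in Proposition~\ref{prop:nonsplit-I1}, which \emph{uses} Theorem~\ref{thm:conj2} as input (via Corollary~\ref{cor:conj1}, Proposition~\ref{prop:conj5} and Corollary~\ref{cor:yitong}). Likewise Theorem~\ref{thm:fin-length-nonsplit} is a consequence of Proposition~\ref{prop:nonsplit-I1}. So you cannot appeal to either of these here. Yitong Wang's formula $\dim D_\xi^\vee(\pi_1)=|\JH(\pi_1^{K_1})\cap W(\brho^\ss)|$ is external and available, but by itself it does not give the inclusion $\pi_1^{K_1}\subset D_0(\brho)_{\le i_0}$: you would need to already know $\JH(\pi_1^{K_1})\cap W(\brho^\ss)$ to extract a numerical contradiction, and that is precisely what is being determined.

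The paper's proof uses none of assumption~\ref{it:assum-iii}, $D_\xi^\vee$, or any finite-length input. It is a direct propagation argument exploiting the $\GL_2(K)$-action, specifically conjugation by $\Pi=\smatr{0}{1}{p}{0}$. One takes $i_0$ maximal with $D_0(\brho)_{\le i_0}\subset\pi_1^{K_1}$, assumes the inclusion is strict, and picks $\tau\in W(\brho^\ss)\cap\JH(\pi_1^{K_1})$ of minimal length $>i_0$. Step~1 shows $\ell(\tau)=i_0+1$. Steps~2--3 embed a suitable $W(\chi_\mu,\chi'')\hookrightarrow\pi_1|_I$ (using assumption~\ref{it:assum-iv} via Lemma~\ref{lem:W-embeds}). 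The crucial Step~4 then applies $\Pi$ to get $W(\chi_\mu^s,\chi''^s)\hookrightarrow\pi_1|_I$ and uses Lemma~\ref{lem:BP-gene} (the refinement of \cite[Lemma~19.7]{BP} prepared in \S\ref{sec:generalization-bp19}) to show that the induced map $\Ind_I^{\GL_2(\cO_K)}W(\chi_\mu^s,\chi''^s)\to\pi_1$ has image containing $D_{0,\delta(\tau)}(\brho^\ss)$ as a subquotient. Iterating over $\delta^n(\tau)$ and then running the combinatorial Step~5 (adapting \cite[Thm.~15.4]{BP}) forces all of $D_0(\brho)_{i_0+1}$ into $\pi_1^{K_1}$, contradicting maximality of $i_0$. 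You do mention Lemma~\ref{lem:JH-V} and Proposition~\ref{prop:cond-K1inv}, which are indeed used, but the essential mechanism---the $\Pi$-action together with Lemma~\ref{lem:BP-gene}---is absent from your sketch, and your argument for the reverse inclusion (``$\sigma$ occurs in $\soc_{\GL_2(\cO_K)}(\pi_1)$ since all low-length Serre weights lie in $\pi_1$'') confuses occurrence as a Jordan--H\"older factor with occurrence in the socle.
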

\begin{proof}
If $\pi_1^{K_1}=0$ (resp.\ $\pi_1^{K_1}=D_0(\brho)$) we are done, with $i_0 = -1$ (resp.\ $i_0 = f$).
Otherwise, by \eqref{eq:fil-D0} there exists a unique integer $-1 < i_0< f$ such that $D_0(\brho)_{\leq i_0}\subset \pi_1^{K_1}$ and $D_0(\brho)_{\leq i_0+1}\nsubseteq \pi_1^{K_1}$. We need to prove that the (first) inclusion is an equality. 
Suppose this is not the case. Then we may find a Serre weight $\tau$ which embeds in $\pi_1^{K_1}/D_0(\brho)_{\leq i_0}$, hence also embeds in $D_0(\brho)/D_{0}(\brho)_{\leq i_0}$. This implies that $\tau\in W(\brho^\ss)$ with $\ell(\tau)>i_0$ by \eqref{eq:fil-D0} and \eqref{eq:soc-i}. Thus, there exists a Serre weight $\tau$ satisfying the condition 
\begin{equation}\label{eq:cond-tau}
\tau\in W(\brho^{\rm ss})\cap \JH(\pi_1^{K_1}), \ \ \ell(\tau)>i_0.
\end{equation} We choose $\tau$ satisfying \eqref{eq:cond-tau} such that $\ell(\tau)$ is minimal.

\textbf{Step 1.} We prove that $\ell(\tau)=i_0+1$. First assume $\tau\in W(\brho^\ss)\setminus W(\brho)$ and  let $\sigma\in W(\brho)$ be the Serre weight with $J_{\sigma}=J_{\brho}\cap J_{\tau}$. Note that $\tau \in \JH(D_{0}(\brho))$ by Lemma~\ref{lem:Dss-in-D0}, so $I(\sigma,\tau)\hookrightarrow D_{0}(\brho)$ by Lemma~\ref{lem:JHinW}, and thus $I(\sigma,\tau)\subset \pi_1^{K_1}$. Since $\sigma\neq \tau$, we have $\rad_\Gamma(I(\sigma,\tau))\neq0$, and by using again Lemma~\ref{lem:JHinW} we have
\[\JH(\rad_\Gamma(I(\sigma,\tau)))\subset W(\brho^{\rm ss})\cap \JH(\pi_1^{K_1}).\] 
By the choice of $\tau$, we must have $\ell(\tau')\leq i_0$ for any $\tau'\in \JH(\rad_\Gamma(I(\sigma,\tau)))$. Then by the second sentence of Lemma \ref{lem:JHinW} and remembering that $\ell(\tau') = |J_{\tau'}|$ for $\tau' \in W(\brho^\ss)$, this forces $\ell(\tau)\leq i_0+1$, hence $\ell(\tau)=i_0+1$ (as $\ell(\tau)>i_0$ by construction).   

Next, we assume that $\tau\in W(\brho)$, i.e.~$\tau$ occurs in the $\GL_2(\cO_K)$-socle of $\pi$. This is equivalent to  $J_{\tau}\subset J_{\brho}$. 
Note that in this case we have $\tau \into \pi_1^{K_1}$.
By assumption, $\ell(\tau)=i_0+1>0$.
By Lemma \ref{lem:I1-invt-component} (resp.\ Lemma~\ref{lem:I1-invt-nonss}), using the observation $J_{\mu^{[s]}} = \delta(J_\mu)$ for $\mu \in \D^\ss$, the Serre weight ${\tau}^{[s]}$ occurs in $D_{0,\tau_1}(\brho^\ss)$ (resp.\ $D_{0,\sigma_1}(\brho)$), where $\tau_1\in W(\brho^\ss)$ and $\sigma_1\in W(\brho)$ are uniquely determined by \[J_{\tau_1}=\delta(J_{\tau}),\ \ J_{\sigma_1}=J_{\brho}\cap \delta(J_{\tau}).\] 
Moreover, the image of $\Ind_I^{\GL_2(\cO_K)}\chi_{\tau}^s\ra \pi_1$ is equal to $I(\sigma_1,\tau^{[s]})$, which contains $\tau_1$ as a subquotient (by Lemma \ref{lem:W-embeds-3} applied to $\brho^{\rm ss}$ and $\chi_\mu=\chi_{\tau^{[s]}}$), so we have $\tau_1\in \JH(\pi_1^{K_1})$.   We note that  $\ell(\tau_1)=\ell(\tau)>i_0$, thus $\tau_1$ also satisfies \eqref{eq:cond-tau} and $\ell(\tau_1)$ is minimal subject to \eqref{eq:cond-tau}.
If again $\tau_1\in  W(\brho)$, i.e.~$J_{\tau_1}\subset J_{\brho}$, we may continue this procedure to obtain $\tau_2$ and $\sigma_2$. Since $J_\tau \ne \emptyset$, $J_{\brho}\neq \{0,\dots,f-1\}$, we finally arrive  at some $\tau_n$ with $J_{\tau_n}=\delta^n(J_{\tau})\nsubseteq J_{\brho}$, equivalently $\tau_n\in W(\brho^\ss)\setminus W(\brho)$,  and we are reduced to the case in the previous paragraph. Thus $\ell(\tau)=\ell(\tau_n)=i_0+1$ as desired.

\textbf{Step 2.} Let $\lambda\in \D^\ss$ be the element corresponding to $\tau$, and define the $f$-tuple $\mu=(\mu_j(x_j))$ as in (\ref{eq:mu_j}), i.e.\ $\mu_j(x_j)=p-1-x_j$ if $j\in J_1$ and $\mu_j(x_j)=\lambda_j(x_j)$ otherwise, where 
\[J_1\defeq \{j\in J_{\brho}^c:\lambda_j(x_j)=p-3-x_j\} \quad (\text{and}\ J_2=\emptyset).\]
It is direct to check that $\mu\in \P$ and $\chi_{\lambda}=\chi_{\mu}\prod_{j\in J_1}\alpha_j^{-1}$.  
We also note that $J_{\brho} \cap J_\lambda \subset J_\mu \subset J_\lambda$, i.e.\ $J_{\brho} \cap J_\lambda = J_\mu$. 
Let
\[ \wt{J}_1\defeq \{j:\lambda_j(x_j)\in\{x_j+1,p-2-x_j\}\} = \{j:\mu_j(x_j)\in\{x_j+1,p-2-x_j\}\}.\]
Then $J_1\cap \wt{J}_1=\emptyset$ and $J\defeq J_1\sqcup \wt{J}_1\subset Y(\mu)$, where $Y(\mu)$ is defined in \eqref{eq:Y-mu}. By Lemma \ref{lem:W-embeds}, there is a unique (up to scalar) $I$-equivariant embedding   $\iota: W(\chi_\mu,\chi'')\hookrightarrow \pi|_I$, where
\[\chi''\defeq \chi_{\mu}^{J,\emptyset}=\chi_{\mu}\prod_{j\in J}\alpha_j^{-1}.\] 
Note that $W(\chi_\mu,\chi_\lambda)\into W(\chi_\mu,\chi'')$ by construction (Lemma \ref{lem:Wchi}). 

\textbf{Step 3.}  We prove that $\im(\iota)$ is contained in $\pi_1$. 
It is equivalent to prove that  $V$ is contained in $\pi_1$, where $V$ denotes the image of the  $\GL_2(\cO_K)$-equivariant morphism (induced by Frobenius reciprocity)
\[\wt{\iota}:\Ind_I^{\GL_2(\cO_K)}W(\chi_{\mu},\chi'')\ra \pi|_{\GL_2(\cO_K)}. \]
Note that $V$ is contained in $\pi^{K_1} \cong \bigoplus_{\sigma' \in W(\brho)} D_{0,\sigma'}(\brho)$, since $W(\chi_\mu,\chi'')$ is fixed by $K_1$ by Lemma \ref{lem:Wchi}. By Lemma \ref{lem:W-embeds}, $V$ is contained in $D_{0,\sigma}(\brho)$, where $\sigma \in W(\brho)$ is as in Step 1.  
For $J'\subset J$, let $\tau^{J'}$ be the cosocle of $\Ind_I^{\GL_2(\cO_K)}\chi_\mu\prod_{j\in J'}\alpha_j^{-1}$, i.e.~the unique Serre weight with $(\tau^{J'})^{I_1}=\chi_{\mu}\prod_{j\in J'}\alpha_j^{-1}$. Note that $\tau^{J_1}=\tau$. 
It follows from Proposition \ref{prop:HW}(i) that 
\begin{equation}
\cosoc_\Gamma(V)\cong\bigoplus_{J'\subset J,~\tau^{J'}\in \JH(V)}\tau^{J'}.\label{eq:cosoc-V}
\end{equation}
By multiplicity freeness of $\pi^{K_1}$ it suffices to show that $\tau^{J'}$ occurs in $\pi_1^{K_1}$ for each $J'\subset J$ satisfying $\tau^{J'}\in \JH(V)$.  
If $J' = J_1$, this is true by assumption, so we may assume $J' \ne J_1$ in the following. 

We have $I(\sigma,\tau)\hookrightarrow  V$ by Lemma \ref{lem:JH-V}, and    $\JH(I(\sigma,\tau))\subset W(\brho^\ss)$ by Lemma \ref{lem:JHinW}. 
Moreover, we have 
\begin{equation}\label{eq:V/I}
\JH(V/I(\sigma,\tau))\cap W(\brho^\ss)=\emptyset.\end{equation}
This follows from Lemma \ref{lem:JH-V} by noting that if $\chi'\in \JH(W(\chi_{\mu},\chi''))\setminus \JH(W(\chi_{\mu}, \chi_{\lambda}))$, then  $\chi'\notin \JH(D_0(\brho^\ss)^{I_1})$   by the explicit description of $\mathscr{P}^\ss$ and so $\JH(\Ind_I^{\GL_2(\cO_K)}\chi')\cap W(\brho^\ss)=\emptyset$
by \cite[Prop.~4.2]{breuil-buzzati}. 

Now fix $J'\subset J$ satisfying $J' \ne J_1$ and $\tau^{J'}\in \JH(V)$. In particular $\tau^{J'}\neq \tau$. 
As $V$ is $K_1$-invariant, $I(\sigma,\tau^{J'}) \subset V$.
If $I(\sigma,\tau^{J'})\nsubseteq D_0(\brho)_{\leq i_0}$, equivalently the morphism $I(\sigma,\tau^{J'})\ra D_0(\brho)/D_{0}(\brho)_{\leq i_0}$ is nonzero, then $\JH(I(\sigma,\tau^{J'}))$ would contain some element $\tau'\in W(\brho^\ss)$ with $\ell(\tau')\geq i_0+1$, by \eqref{eq:soc-i}. 
As $\tau'$ must contribute to $I(\sigma,\tau)$ by \eqref{eq:V/I},  by Lemma~\ref{lem:JHinW} we deduce $\tau' = \tau$ (as otherwise $\ell(\tau') < \ell(\tau) = i_0+1$) and hence $\tau \in \JH(I(\sigma,\tau^{J'}))$.
But $\tau$ is a quotient of $V$ by~(\ref{eq:cosoc-V}) and hence of $I(\sigma,\tau^{J'})$, so $\tau^{J'}=\tau$, contradiction.
Hence $\tau^{J'}$ occurs in $D_0(\brho)_{\leq i_0} \subset \pi_1^{K_1}$, as desired.

\textbf{Step 4.} 
Our goal is to prove that $D_{0}(\brho)_{\leq i_0+1}\subset \pi_1^{K_1}$, which will contradict our choice of $i_0$. By the multiplicity freeness of $\pi^{K_1}$, it suffices to prove  
\[
\JH(D_{0}(\brho)_{i_0+1})\subset \JH(\pi_1^{K_1}),\]
or equivalently (by \eqref{eq:JH-D0-i}),
\begin{equation}\label{eq:check-tau'}
\JH(D_{0,\tau'}(\brho^\ss)) \cap \JH(D_{0}(\brho)) \subset \JH(\pi_1^{K_1})\end{equation}  for any $\tau' \in W(\brho^\ss)$ satisfying $\ell(\tau')=i_0+1$.  
In this step we prove that~(\ref{eq:check-tau'}) holds under the additional hypothesis that $\tau' \in \JH(\pi_1^{K_1})$.

We may assume that $\tau' = \tau$ and let {again} $\lambda\in\D^\ss$ correspond to $\tau$. 
Since $\pi_1$ carries an action of $\smatr{0}1p0$, we deduce an injective morphism $\kappa:W(\chi_{\mu}^s,\chi''^{s})\hookrightarrow \pi_1|_I$ from Step~3, hence a $\GL_2(\cO_K)$-equivariant morphism (induced by Frobenius reciprocity)
\[\wt{\kappa}:\Ind_I^{\GL_2(\cO_K)}W(\chi_{\mu}^s,\chi''^s)\ra \pi_1|_{\GL_2(\cO_K)}.\]
Let $\sigma_1\in W(\brho)$ be the Serre weight such that $\chi_{\mu}^s$ contributes to $D_{0,\sigma_{1}}(\brho)^{I_1}$. Then $\sigma_1$ occurs in $\Ind_I^{\GL_2(\cO_K)}\chi_\mu^s$ and is parametrized by $X(\mu^{[s]})$ {(recall from (\ref{eq:[s]}) that $\mu^{[s]}\in \P$ is the $f$-tuple corresponding to $\chi_\mu^s$)}.
Similarly, let $\tau_1=\delta(\tau)\in W(\brho^\ss)$ be such that $\chi_\lambda^s$  contributes to $D_{0,\tau_1}(\brho^{\ss})^{I_1}$, then 
$\tau_1$ occurs in $\Ind_I^{\GL_2(\cO_K)}\chi_\lambda^s$ and is parametrized by $X^{\ss}(\lambda^{[s]})$.
By Lemma~\ref{lem:W-embeds} (applied with $(\mu^{[s]},\emptyset,J)$ for $(\mu,J_1,J_2)$ there) we see that $\soc_{\GL_2(\cO_K)}(\im(\wt{\kappa})) = \sigma_1$.
By Lemma~\ref{lem:JH-V} (applied with $\lambda^{[s]}$, resp.\ $\mu^{[s]}$, instead of $\lambda$, resp.\ $\mu$, and noting that $W(\chi_\mu^s,\chi_\lambda^s)\into W(\chi_\mu^s,\chi''^s)$) we deduce that $I(\sigma_1,\tau_1) \subset \im(\wt{\kappa})^{K_1}\subset \pi_1^{K_1}$.
In particular, $\tau_1 \in \JH(\pi_1^{K_1})$.
We also note that $J_{\sigma_1} = J_{\brho} \cap J_{\tau_1}$.
(Using Lemmas \ref{lem:I1-invt-nonss} and \ref{lem:I1-invt-component} we have $J_{\sigma_1} = J_{\brho} \cap J_{\mu^{[s]}}$, $J_{\tau_1} = J_{\lambda^{[s]}}$, and recall from Step 2 that $\lambda_j = \mu_j$, hence $\lambda_j^{[s]} = \mu_j^{[s]}$, for all $j \in J_1^c \supset J_{\brho}$.)

By Lemma \ref{lem:BP-gene} applied to $\wt{R}_{\wt{J}_1}(\chi_\lambda)$, and noting that we have a surjection 
\[
\Ind_I^{\GL_2(\cO_K)}W(\chi_{\mu}^s,\chi''^s)\onto \Ind_I^{\GL_2(\cO_K)}W(\chi_{\lambda}^s,\chi''^s) = \wt{R}_{\wt{J}_1}(\chi_\lambda),
\] 
we see that the unique quotient $Q_{\tau_1}$ of $\Ind_I^{\GL_2(\cO_K)}W(\chi_{\mu}^s,\chi''^s)$ with socle $\delta(\tau)=\tau_1$ contains $D_{0,\tau_1}(\brho^\ss)$.
As $\tau_1$ occurs in $Q_{\sigma_1} = \im(\wt{\kappa})$  (the unique quotient with socle $\sigma_1$), we see that 
$Q_{\sigma_1}$ surjects onto $Q_{\tau_1}$ and hence contains $D_{0,\tau_1}(\brho^\ss)$ as subquotient. 
By Lemma \ref{lem:inter}, we have \[\JH(D_{0,\tau_1}(\brho^{\ss}))\cap \JH(D_0(\brho))\subset \JH(D_{0,\tau_1}(\brho^{\ss}))\cap \JH(D_{0,\sigma_1}(\brho))\subset \JH(Q_{\sigma_1})\cap \JH(D_{0,\sigma_1}(\brho))\subset \JH(Q_{\sigma_1}^{K_1}),\]
where the last inclusion results from Proposition \ref{prop:cond-K1inv} (applied with $\sigma = \sigma_1$ and varying $\tau$). 
As $Q_{\sigma_1} = \im(\wt{\kappa}) \subset \pi_1$, 
\eqref{eq:check-tau'} holds for $\tau'=\tau_1$.  
Repeating the same argument with $\tau'=\tau_1 \in W(\brho^{\rm ss}) \cap \JH(\pi_1^{K_1})$, which still has length $i_0+1$,
we see that \eqref{eq:check-tau'} holds for all $\delta^n(\tau)$, in particular for $\tau$ itself as $\delta(\cdot)$ is periodic.  
Thus, we deduce that \eqref{eq:check-tau'} holds for all $\tau' \in W(\brho^\ss)$ such that $\ell(\tau')=i_0+1$ and $\tau' \in \JH(\pi_1^{K_1})$.
 
\textbf{Step 5.} We modify the proof of \cite[Thm.~15.4]{BP} to show that \eqref{eq:check-tau'} holds for any  $\tau'\in W(\brho^{\ss})$ with $\ell(\tau')=i_0+1$. 
We may assume that $i_0+1<f$.
As in the previous step 
we start with $\tau\in W(\brho^\ss) \cap \JH(\pi_1^{K_1})$ and recall that $\ell(\tau)=i_0+1$.
Write $J_\tau=\mathcal{S}_{1}\sqcup \cdots\sqcup \mathcal{S}_r$ with $\mathcal{S}_{i}=\{a_i,a_i+1,\dots,b_i=a_i+\ell_i-1\}$ (thought of inside $\Z/f\Z$), $0 \le a_1 < a_2 < \dots < a_r < f$, and $b_{i}+1 \notin J_\tau$ for each $1\leq i\leq r$.
In particular, $\ell(\tau) = \sum_{i=1}^r \ell_i$.
Fix $1\le i\le r$. Define an $f$-tuple $\lambda$ as follows (note that $\lambda$ has a different meaning than in the previous steps): 
\[\lambda_j(x_j)=\begin{cases}p-3-x_j&\text{if}\ j\in J_\tau\setminus\{b_i\},\\
x_j+1&\text{if}\ j=b_i,\\
p-2-x_j&\text{if}\ j=b_i+1,\\
p-1-x_j &\text{otherwise}.\end{cases}\] 
Then it is direct to check that $\lambda\in \mathscr{P}^{\rm ss}$ and $|J_{\lambda}|=i_0+1$.  Moreover, letting $\tau'\in W(\brho^{\ss})$ be the element such that $\chi_{\lambda}^s$ contributes to $D_{0,\tau'}(\brho^\ss)$,  by Lemma \ref{lem:I1-invt-component}  applied to $\chi_{\lambda}^s$ we have
\begin{equation}
J_{\tau'}=(J_\tau\setminus\{b_i\})\sqcup \{b_i+1\},\label{eq:J-tau'}
\end{equation} 
so in particular, $\ell(\tau') = \ell(\tau) = i_0+1$.
Below we will prove that $\tau'\in \JH(\pi_1^{K_1})$, so that \eqref{eq:check-tau'} holds for $\tau'$ by Step 4. By repeating this procedure, it is easy to see using~(\ref{eq:J-tau'}) that \eqref{eq:check-tau'} holds for any $\tau'\in W(\brho^{\ss})$ of length $i_0+1$.

We 
define $\mu\in\mathscr{P}$ as in Step 2, with $J_1\defeq \{j\in J_{\brho}^c: \lambda_j(x_j)=p-3-x_j\}$ (and $J_2=\emptyset$).
Then $W(\chi_{\mu},\chi_{\lambda})\hookrightarrow \pi|_{I_1}$ by Lemma \ref{lem:W-embeds}.  
We claim that the image is contained in $\pi_1$, or equivalently that the image $V$ of the induced map $\Ind_I^{\GL_2(\cO_K)}W(\chi_{\mu},\chi_{\lambda}) \to \pi^{K_1}$ is contained in $\pi_1^{K_1}$.
As in Step 3, letting $\tau^{J'}$ denote the cosocle of $\Ind_I^{\GL_2(\cO_K)}\chi_\mu\prod_{j\in J'}\alpha_j^{-1}$, where $J'\subset J_1$, it suffices to show that $\tau^{J'} \in \JH(V)$ implies $\tau^{J'} \in \JH(\pi_1^{K_1})$ for any $J' \subset J_1$.
Assume $\tau^{J'} \in \JH(V)$ for some $J' \subset J_1$ and define an $f$-tuple $\mu'$ by $\mu'_j(x_j) = \mu_j(x_j)-2=\lambda_j(x_j)$ if $j \in J'$, $\mu'_j(x_j) = \mu_j(x_j)$ otherwise, so that $\chi_{\mu'}=\chi_{\mu}\prod_{j\in J'}\alpha_j^{-1}$.
Then $\mu' \in \P^\ss$ and $|J_{\mu'}| \le |J_\lambda| = i_0+1$, with equality holding if and only if $\mu'=\lambda$ (i.e.\ $J'=J_1$).
If $J' = J_1$, then $\tau^{J'} \in \JH(D_{0,\tau}(\brho^\ss)) \cap \JH(D_0(\brho)) $ (by Lemma \ref{lem:I1-invt-component}) and so $\tau^{J'} \in \JH(\pi_1^{K_1})$ by \eqref{eq:check-tau'} for $\tau' = \tau$ in Step 4.
If $J' \subsetneq J_1$, then $\tau^{J'} \in \JH(D_{0}(\brho^\ss)_{\le i_0}) \cap \JH(D_0(\brho)) = \JH(D_0(\brho)_{\leq i_0})\subset \JH(\pi_1^{K_1})$, by assumption.
This proves the claim. By \ Lemma~\ref{lem:JH-V}, \ $\tau'$ \ is \ contained \ in \ the \ $K_1$-invariants \ of \ the \ image \ of $\Ind_I^{\GL_2(\cO_K)}W(\chi_{\mu}^s,\chi_{\lambda}^s)$ in $\pi_1$, hence $\tau'\in \JH(\pi_1^{K_1})$ as desired.
\end{proof}

\begin{cor}\label{cor:conj1}
  Let $i_0 = i_0(\pi_1)$ with $-1 \le i_0 \le f$ be as in Theorem \ref{thm:conj2}.
  Then
  \begin{equation}\label{eq:2}
    \JH(\pi_1^{I_1}) = \{ \chi_\lambda : \text{$\lambda \in \P$ such that $|J_\lambda| \le i_0$} \}.
  \end{equation}
\end{cor}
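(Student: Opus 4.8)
<br>

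The plan is to deduce Corollary~\ref{cor:conj1} directly from Theorem~\ref{thm:conj2} by taking $I_1$-invariants of the identification $\pi_1^{K_1} = D_0(\brho)_{\le i_0}$ and then translating the resulting set of $H$-characters into the combinatorial condition on $\P$. First I would observe that $\pi_1^{I_1} = (\pi_1^{K_1})^{I_1}$, since $K_1 \subset I_1$ and $\pi_1^{K_1}$ is a $\GL_2(\cO_K)$-subrepresentation (being $D_0(\brho)_{\le i_0}$, which is $\Gamma$-stable), so $\pi_1^{I_1} = D_0(\brho)_{\le i_0}^{I_1}$ as $H$-representations.

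Next I would use~\eqref{eq:JH-D0-leqi}, which gives $\JH(D_0(\brho)_{\le i_0}) = \JH(D_0(\brho)) \cap \JH(D_0(\brho^\ss)_{\le i_0})$, together with the fact that $D_0(\brho)^{I_1}$ is multiplicity free and parametrized by $\P$: concretely, $\chi \in \JH(\pi_1^{I_1})$ iff $\chi = \chi_\lambda$ for some $\lambda \in \P$ such that the Serre weight $D_{0,\sigma}(\brho)$ carrying $\chi_\lambda$ lies in $D_0(\brho)_{\le i_0}$. Here $\sigma \in W(\brho)$ is the weight determined by $J_\sigma = J_{\brho} \cap J_\lambda$ (Lemma~\ref{lem:I1-invt-nonss}). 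By the construction of the filtration~\eqref{eq:fil-D0} and the decomposition~\eqref{eq:decom-leqi}, $D_{0,\sigma}(\brho)_{\le i_0} \ne 0$ — equivalently $\chi_\lambda \in \JH(D_{0,\sigma}(\brho)_{\le i_0})$ — precisely when $\ell(\sigma) \le i_0$, using~\eqref{eq:JH-D0-leqi} and the remark after~\eqref{eq:decom-leqi} that $D_{0,\sigma}(\brho)_{\le i} \ne 0$ iff $\ell(\sigma) \le i$. Finally I would note that $\ell(\sigma) = |J_\sigma| = |J_{\brho} \cap J_\lambda| \le |J_\lambda|$ is not quite the right bound; rather, I need that $\chi_\lambda$ occurs in $D_0(\brho)_{\le i_0}$ iff $|J_\lambda| \le i_0$. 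This will follow from Lemma~\ref{lem:I1-invt-component} applied to $\brho^\ss$: $\chi_\lambda$ occurs in $D_{0,\tau}(\brho^\ss)^{I_1}$ with $J_\tau = J_\lambda$, so $\chi_\lambda \in \JH(D_0(\brho^\ss)_{|J_\lambda|})$, and combined with~\eqref{eq:JH-D0-i} (so that the grading level in $\brho$ and $\brho^\ss$ match for factors of $D_0(\brho)$) this pins down that $\chi_\lambda$ appears in $D_0(\brho)_{\le i_0}$ exactly when $|J_\lambda| \le i_0$.

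Assembling these, for $-1 \le i_0 \le f$ we get $\JH(\pi_1^{I_1}) = \JH(D_0(\brho)_{\le i_0}^{I_1}) = \{\chi_\lambda : \lambda \in \P,\ |J_\lambda| \le i_0\}$, which is~\eqref{eq:2}; the edge cases $i_0 = -1$ (giving $\pi_1^{I_1} = 0$, consistent with the empty set on the right since $|J_\lambda| \ge 0$) and $i_0 = f$ (giving all of $\P$) are immediate.

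I expect the only genuine point requiring care to be the bookkeeping identifying the grading index of a constituent $\chi_\lambda \in \P \subset \P^\ss$: one must check that the level $i$ at which $\chi_\lambda$ sits in the filtration of $D_0(\brho)$ equals $|J_\lambda|$, and not merely $|J_{\brho} \cap J_\lambda|$. This is exactly the content of combining Lemma~\ref{lem:I1-invt-component} (for $\brho^\ss$, giving $J_\tau = J_\lambda$ and hence $\ell(\tau) = |J_\lambda|$) with~\eqref{eq:JH-D0-i} (which says a factor of $D_0(\brho)$ sits in $D_0(\brho)_i$ iff it sits in $D_0(\brho^\ss)_i$); everything else is a direct unwinding of definitions. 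No further genericity beyond what Theorem~\ref{thm:conj2} already assumes is needed.
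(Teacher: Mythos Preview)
Your proposal is correct and follows essentially the same route as the paper: reduce via Theorem~\ref{thm:conj2} to $\pi_1^{I_1}=D_0(\brho)_{\le i_0}^{I_1}$, then use Lemma~\ref{lem:I1-invt-component} (applied to $\brho^\ss$) to identify the grading level of $\chi_\lambda$ as $|J_\lambda|$. The paper phrases the combinatorial translation slightly more cleanly as the equivalence $\chi\in\{\chi_\lambda:\lambda\in\P,\,|J_\lambda|\le i_0\}\Leftrightarrow\chi\in\JH(D_0(\brho)^{I_1})\cap\JH(D_0(\brho^\ss)_{\le i_0}^{I_1})$, and your invocation of~\eqref{eq:JH-D0-i} is more naturally replaced by the direct inclusion $D_0(\brho)_i\subset D_0(\brho^\ss)_i$ (stated just after~\eqref{eq:soc-i}), but the argument is the same.
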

\begin{proof}
By \ Lemma \ \ref{lem:I1-invt-component}, \ $\chi$ \ is \ contained \ in \ the \ right-hand \ side \ of~(\ref{eq:2}) \ if \ and \ only \ if \ $\chi\in \JH(D_0(\brho)^{I_1})\cap \JH(D_0(\brho^\ss)_{\leq i_0}^{I_1})$.
As $\JH(\pi_1^{I_1}) \subset \JH(D_0(\brho)^{I_1})\cap \JH(D_0(\brho^\ss)_{\leq i_0}^{I_1})$ by \eqref{eq:JH-D0-leqi} and Theorem \ref{thm:conj2}, we deduce that ``$\subset$'' holds in~(\ref{eq:2}).
Conversely, if $\chi \in \JH(D_0(\brho)^{I_1})\cap \JH(D_0(\brho^\ss)_{\leq i_0}^{I_1})$, then $\chi$ contributes to $D_0(\brho)_i^{I_1}$ for some $i$, hence to $D_0(\brho^\ss)_i^{I_1}$, which implies $i \le i_0$.
In particular, $\chi$ does not contribute to $D_0(\brho)_i^{I_1}$ for any $i > i_0$, so $\chi$ contributes to $D_0(\brho)_{\le i_0}^{I_1} = \pi_1^{I_1}$ by Theorem \ref{thm:conj2}.
\end{proof}

\begin{cor}\label{cor:pi1-K-soc}
  Let $i_0 = i_0(\pi_1)$ with $-1 \le i_0 \le f$ be as in Theorem \ref{thm:conj2}.
  Then
  \begin{equation*}
    \soc_{\GL_2(\cO_K)}(\pi_1) \cong \bigoplus_{\sigma \in W(\brho),\ell(\sigma) \le i_0} \sigma.
  \end{equation*}
\end{cor}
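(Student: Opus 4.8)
\textbf{Proof plan for Corollary~\ref{cor:pi1-K-soc}.}
The goal is to identify the $\GL_2(\cO_K)$-socle of a subrepresentation $\pi_1 \subseteq \pi$ with $\bigoplus_{\sigma \in W(\brho),\, \ell(\sigma) \le i_0} \sigma$, where $i_0 = i_0(\pi_1)$ is the integer produced by Theorem~\ref{thm:conj2}. The natural starting point is the identity $\pi_1^{K_1} = D_0(\brho)_{\le i_0}$ from Theorem~\ref{thm:conj2}: since $\soc_{\GL_2(\cO_K)}(\pi_1) \subseteq \pi_1^{K_1}$ (the $\GL_2(\cO_K)$-socle is semisimple, hence killed by $K_1$, because every Serre weight is trivial on $K_1$), we get $\soc_{\GL_2(\cO_K)}(\pi_1) \subseteq \soc_{\GL_2(\cO_K)}(\pi_1^{K_1}) = \soc_{\GL_2(\cO_K)}(D_0(\brho)_{\le i_0})$, and the latter equals $\bigoplus_{\sigma \in W(\brho),\, \ell(\sigma) \le i_0} \sigma$. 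Indeed, using the decomposition \eqref{eq:decom-leqi}, namely $D_0(\brho)_{\le i_0} = \bigoplus_{\sigma \in W(\brho)} D_{0,\sigma}(\brho)_{\le i_0}$, together with the remark just after it that $D_{0,\sigma}(\brho)_{\le i_0} \ne 0$ iff $\ell(\sigma) \le i_0$, and the fact that $\soc_{\GL_2(\cO_K)}(D_{0,\sigma}(\brho)_{\le i_0}) = \sigma$ whenever it is nonzero (as $D_{0,\sigma}(\brho)$ has socle $\sigma$ and $D_{0,\sigma}(\brho)_{\le i_0}$ is a nonzero subrepresentation of it, and $D_{0,\sigma}(\brho)$ is multiplicity free with irreducible socle), we obtain $\soc_{\GL_2(\cO_K)}(D_0(\brho)_{\le i_0}) = \bigoplus_{\ell(\sigma) \le i_0} \sigma$.

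So one inclusion — $\soc_{\GL_2(\cO_K)}(\pi_1) \subseteq \bigoplus_{\ell(\sigma) \le i_0} \sigma$ — is essentially immediate. For the reverse inclusion I would argue that each Serre weight $\sigma \in W(\brho)$ with $\ell(\sigma) \le i_0$ actually embeds into $\pi_1$. The cleanest route is to use Corollary~\ref{cor:conj1}: for such a $\sigma$, the corresponding character $\chi_\sigma = \chi_\lambda$ with $\lambda \in \D \subseteq \P$ and $|J_\lambda| = \ell(\sigma) \le i_0$, so $\chi_\sigma \in \JH(\pi_1^{I_1})$ by \eqref{eq:2}. Moreover $\chi_\sigma = \sigma^{I_1}$ occurs in $\pi_1^{I_1}$ with multiplicity one (since $\pi^{I_1}$ is multiplicity free, $\pi$ being minimal), and $\chi_\sigma = \sigma^{I_1}$ appears in $\soc_I(\pi_1)$ — one should check that the $\chi_\sigma$-isotypic part of $\pi_1^{I_1}$ is contained in $\soc_I(\pi_1)$, which follows because $\pi_1 \subseteq \pi$ and $\soc_I(\pi) = \pi^{I_1}$ (already $\pi|_I \hookrightarrow \Inj_{I/Z_1}(\pi^{I_1})$, cf.\ the corollary right after Lemma~\ref{lem:isom-modcI}), so $\soc_I(\pi_1) = \pi_1 \cap \soc_I(\pi) = \pi_1^{I_1}$. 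Then the one-dimensional $\chi_\sigma$-eigenspace in $\pi_1^{I_1}$ extends, by Frobenius reciprocity, to a nonzero $\GL_2(\cO_K)$-map $\Ind_I^{\GL_2(\cO_K)} \chi_\sigma \to \pi_1$, whose image has $\GL_2(\cO_K)$-socle $\sigma$ by Lemma~\ref{lem:I1-invt-nonss} (applied to $\mu = \lambda \in \P$ with $J_{\brho} \cap J_\lambda = J_\lambda = J_\sigma$, since $\lambda \in \D$ forces $J_\lambda \subseteq J_{\brho}$); in particular $\sigma \hookrightarrow \pi_1$.

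Combining the two inclusions gives the asserted isomorphism (using once more that $\pi^{K_1} = D_0(\brho)$ is multiplicity free, so there is no ambiguity in the direct sum). I expect the only genuinely delicate point to be the bookkeeping identifying, for $\sigma \in W(\brho)$ with $\ell(\sigma) \le i_0$, the parametrizing tuple $\lambda \in \D$ of $\sigma$ and confirming $|J_\lambda| = \ell(\sigma)$ and $J_\lambda \subseteq J_{\brho}$ — but this is exactly the content of the discussion around \eqref{eq:J-lambda} and the bijection $\D^\ss \leftrightarrow$ subsets of $\{0,\dots,f-1\}$, together with \eqref{eq:P}, so it is routine. Everything else is a direct consequence of Theorem~\ref{thm:conj2}, Corollary~\ref{cor:conj1}, the structure of $D_0(\brho)$ recalled in \eqref{eq:decom-leqi}--\eqref{eq:soc-i}, and Lemma~\ref{lem:I1-invt-nonss}.
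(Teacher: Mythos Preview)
Your argument is correct. The forward inclusion is handled essentially as in the paper: both of you use Theorem~\ref{thm:conj2} to identify $\pi_1^{K_1}$ with $D_0(\brho)_{\le i_0}$ and then read off its $\GL_2(\cO_K)$-socle from the decomposition~\eqref{eq:decom-leqi}.

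For the converse, however, the paper takes a much shorter route. You pass through Corollary~\ref{cor:conj1} to locate $\chi_\sigma$ in $\pi_1^{I_1}$, then apply Frobenius reciprocity to get a map $\Ind_I^{\GL_2(\cO_K)}\chi_\sigma \to \pi_1$, and finally invoke Lemma~\ref{lem:I1-invt-nonss} to identify the socle of the image. The paper instead observes directly that if $\sigma \in W(\brho)$ with $\ell(\sigma) \le i_0$, then $\sigma \hookrightarrow D_0(\brho)_{\le i_0}$ by the maximality characterization of $D_0(\brho)_{\le i_0}$ stated just after~\eqref{eq:soc-i}, and since $D_0(\brho)_{\le i_0} = \pi_1^{K_1} \subset \pi_1$ by Theorem~\ref{thm:conj2}, the embedding $\sigma \hookrightarrow \pi_1$ follows immediately. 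This bypasses Corollary~\ref{cor:conj1}, Frobenius reciprocity, and Lemma~\ref{lem:I1-invt-nonss} entirely. Your detour is harmless but unnecessary: the Serre weight $\sigma$ already sits visibly inside $\pi_1^{K_1}$, so there is no need to reconstruct it from its $I_1$-invariants.
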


\begin{proof}
  Note that $\soc_{\GL_2(\cO_K)}(\pi_1)$ is multiplicity free by Corollary~\ref{cor:conj1}.
  If $\sigma \subset \pi_1|_{\GL_2(\cO_K)}$ is an irreducible subrepresentation, then $\sigma \in W(\brho)$, $\ell(\sigma) \le i_0$ by Theorem~\ref{thm:conj2} and \eqref{eq:soc-i}.
  Conversely, suppose that $\sigma \in W(\brho)$ with $\ell(\sigma) \le i_0$.
 Then $\sigma\hookrightarrow D_{0}(\brho)_{\leq i_0}$ by the sentence after \eqref{eq:soc-i}, hence $\sigma \subset \pi_1|_{\GL_2(\cO_K)}$ by Theorem~\ref{thm:conj2}.
\end{proof}

\begin{rem}\label{rem:pi1-K-soc}
  In particular, a subrepresentation $\pi_1$ is not determined by $\pi_1^{I_1}$. 
  For example, if $J_{\brho} = \emptyset$, then it follows from the definitions that $|J_{\lambda}| \le f/2$ for all $\lambda \in \P$.
  Likewise, $\pi_1$ is not determined by $\soc_{\GL_2(\cO_K)}(\pi_1)$.
  (On the other hand, $\pi_1$ is determined by $\pi_1^{K_1}$ by Theorem \ref{thm:fin-length-nonsplit}.)
\end{rem}

We conclude with a result on higher Iwahori invariants.

\begin{prop}\label{prop:conj5}
 Assume that $\rhobar$ is $\max\{6,2f+1\}$-generic.  
Let $i_0 = i_0(\pi_1)$ with $-1 \le i_0 \le f$ be as in Theorem \ref{thm:conj2}.
Then for any $\lambda \in \P^\ss \setminus \P$ such that $|J_\lambda| = i_0+1$, the character $\chi_\lambda$ does not occur in $\pi_1[\m^{f+1}]$.
\end{prop}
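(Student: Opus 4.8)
\textbf{Proof strategy for Proposition \ref{prop:conj5}.} The plan is to argue by contradiction: assume $\chi_\lambda$ occurs in $\pi_1[\m^{f+1}]$ for some $\lambda\in\P^\ss\setminus\P$ with $|J_\lambda|=i_0+1$, and derive a contradiction with the maximality of $i_0$ built into Theorem \ref{thm:conj2}. Since $\rhobar$ is $\max\{6,2f+1\}$-generic, it is in particular $(2n-1)$-generic for $n=f+1$, so by Lemma \ref{lem:isom-modcI} we have $\pi_1[\m^{f+1}]\subseteq\pi[\m^{f+1}]=\tau^{(f+1)}[\m^{f+1}]$, and $\pi[\m^{f+1}]$ is multiplicity free by Corollary \ref{cor:tau-multfree}(i) (with $r=1$). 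Thus $\chi_\lambda$ occurring in $\pi_1[\m^{f+1}]$ means precisely that the summand $\tau^{(f+1)}_\mu$ of $\tau^{(f+1)}$ indexed by some $\mu\in\P$, which contributes $\chi_\lambda$ to its $\m^{f+1}$-torsion, has its copy of $\chi_\lambda$ landing inside $\pi_1$.

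The key point is to identify which $\mu\in\P$ can contribute $\chi_\lambda$ to $\pi[\m^{f+1}]$ and to locate the relevant $I$-subrepresentation explicitly. By the construction of $\tau^{(f+1)}_\mu$ in Lemma \ref{lem:tau-embed} together with Lemma \ref{lem:compare-I1-invts}(ii) (applicable since $\rhobar$ is $(2f+1)$-generic, so we may take $m=2f$), the character $\chi_\lambda=\chi_\mu\prod_j\alpha_j^{i_j}$ with $\sum|i_j|\le f$ forces $\mu$ to be related to $\lambda$ exactly as in \eqref{eq:mu_j}: one takes $\mu_j(x_j)=p-1-x_j$ on $J_1=\{j\in J_{\rhobar}^c:\lambda_j(x_j)=p-3-x_j\}$, $\mu_j(x_j)=x_j$ on $J_2=\{j\in J_{\rhobar}^c:\lambda_j(x_j)=x_j+2\}$, and $\mu_j(x_j)=\lambda_j(x_j)$ otherwise, so that $\mu\in\P$, $|J_\mu|=|J_\lambda|-|J_1|-|J_2|=i_0+1-|J_1|-|J_2|$, and $J_1\subseteq Y(\mu)$, $J_2\subseteq Z(\mu)$. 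Since $\lambda\notin\P$, equation \eqref{eq:P} shows $J_1\sqcup J_2\ne\emptyset$. Then $W(\chi_\mu,\chi_\lambda)\hookrightarrow\pi|_I$ by Lemma \ref{lem:W-embeds}, and the relevant copy of $\chi_\lambda$ is the cosocle of this $W(\chi_\mu,\chi_\lambda)$; the hypothesis is that this copy lies in $\pi_1$.

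From here the plan is to run the argument of Steps 3--4 of the proof of Theorem \ref{thm:conj2} in reverse. If $\chi_\lambda\in\JH(\pi_1^{I_1})$ already, then $\lambda\in\P$ by Corollary \ref{cor:conj1}, contradicting $\lambda\notin\P$; so $\chi_\lambda$ lies in $\pi_1[\m^{f+1}]$ but not in $\pi_1^{I_1}$. Now consider the $\GL_2(\cO_K)$-subrepresentation $V\subseteq\pi^{K_1}$ generated by the image of $\Ind_I^{\GL_2(\cO_K)}W(\chi_\mu,\chi_\lambda)$; by Lemmas \ref{lem:W-embeds} and \ref{lem:JH-V} it has socle $\sigma\in W(\rhobar)$ with $J_\sigma=J_{\rhobar}\cap J_\mu$ and contains $I(\sigma,\tau)$, where $\tau\in W(\rhobar^\ss)$ corresponds to $\lambda$ and $\ell(\tau)=|J_\lambda|=i_0+1$. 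The subrepresentation $W(\chi_\mu,\chi_\lambda)$ of $\pi_1|_I$ is fixed by $K_1$, hence — being generated over $I$ by its socle which sits inside $\pi_1^{I_1}$ — one deduces (as in Step 3, using that $\Ext^1_{I/Z_1}(\chi',\pi)=0$ for $\chi'\in\JH(W(\chi_\mu,\chi_\lambda)/\chi_\mu)$ by assumption \ref{it:assum-iv} and Lemma \ref{lem:compare-I1-invts}(ii)) that actually the whole image $V$ lies in $\pi_1$. But $V$ contains $I(\sigma,\tau)$ with $\tau\in W(\rhobar^\ss)$ of length $i_0+1>i_0$, so $\tau\in\JH(\pi_1^{K_1})$; by Step 4 (applied verbatim, since $\tau\in W(\rhobar^\ss)\cap\JH(\pi_1^{K_1})$ with $\ell(\tau)=i_0+1$) this forces $D_0(\rhobar)_{\le i_0+1}\subseteq\pi_1^{K_1}$, contradicting the choice of $i_0=i_0(\pi_1)$ in Theorem \ref{thm:conj2}.

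\textbf{Main obstacle.} The delicate point is the reverse implication in the previous paragraph: deducing that the $K_1$-invariant subrepresentation $V$ of $\pi$ generated by $\Ind_I^{\GL_2(\cO_K)}W(\chi_\mu,\chi_\lambda)$ lies entirely in $\pi_1$, given only that a single character $\chi_\lambda$ inside the $\m^{f+1}$-torsion of $W(\chi_\mu,\chi_\lambda)$ lies in $\pi_1$. This requires knowing that $\pi_1$, being a $\GL_2(K)$-subrepresentation, absorbs the full $I$-subrepresentation $W(\chi_\mu,\chi_\lambda)$ once it contains its cosocle line — which follows because $\pi_1$ is $\GL_2(K)$-stable and $W(\chi_\mu,\chi_\lambda)$ is $K_1$-fixed with socle $\chi_\mu\in\pi_1^{I_1}\subseteq\pi_1$, so $W(\chi_\mu,\chi_\lambda)$ embeds in $\pi^{K_1}$ and its image meets $\pi_1^{K_1}$ in a submodule containing the socle; combined with the uniqueness statement in Lemma \ref{lem:Wchi} and the vanishing of the relevant $\Ext^1_{I/Z_1}$, this pins down the image. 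One must check carefully that $|J_\mu|\le i_0$ (which holds since $|J_1\sqcup J_2|\ge1$), so that $\chi_\mu\in\pi_1^{I_1}$ is guaranteed by Corollary \ref{cor:conj1} — this is the step where $\lambda\notin\P$ is genuinely used. The genericity bound $\max\{6,2f+1\}$ enters through Lemma \ref{lem:compare-I1-invts}(ii) (needing $m=2f$), Corollary \ref{cor:tau-multfree} and Lemma \ref{lem:isom-modcI} (needing $2(f+1)-1$-genericity), and Theorem \ref{thm:conj2} (needing $6$-genericity).
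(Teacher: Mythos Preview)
Your overall strategy matches the paper's: construct $\mu\in\P$ via \eqref{eq:mu_j}, embed $W(\chi_\mu,\chi_\lambda)\hookrightarrow\pi|_I$ via Lemma~\ref{lem:W-embeds}, show that the image lies in $\pi_1$, and then use Lemma~\ref{lem:JH-V} to produce $\tau\in\JH(\pi_1^{K_1})$ with $\ell(\tau)=i_0+1$, contradicting Theorem~\ref{thm:conj2}. However, the justification you give for the key step $\im(\iota)\subset\pi_1$ is flawed.

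You repeatedly assert that $W(\chi_\mu,\chi_\lambda)$ is $K_1$-fixed, but by Lemma~\ref{lem:Wchi}(ii) this holds \emph{if and only if} $J_2=\emptyset$. Since $\lambda\in\P^\ss\setminus\P$ may well have $\lambda_j(x_j)=x_j+2$ for some $j\in J_{\brho}^c$ (cf.\ \eqref{eq:P}), $J_2$ need not be empty, and then $W(\chi_\mu,\chi_\lambda)$ does \emph{not} embed in $\pi^{K_1}$. Consequently your argument (``socle $\chi_\mu\in\pi_1^{I_1}$ plus $K_1$-fixedness plus uniqueness pins down the image inside $\pi_1^{K_1}$'') breaks down. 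The socle-based reasoning is in any case insufficient: knowing $\chi_\mu\in\pi_1$ does not by itself force an arbitrary $I$-extension of $\chi_\mu$ inside $\pi$ to land in $\pi_1$.

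The correct argument, which you have all the ingredients for but do not assemble, is purely via multiplicity freeness of $\pi[\m^{f+1}]$. Since $W(\chi_\mu,\chi_\lambda)$ is annihilated by $\m^{|J_1|+|J_2|+1}$ (Lemma~\ref{lem:Wchi}) and $|J_1|+|J_2|\le f$, we have $\im(\iota)\subset\pi[\m^{f+1}]$. As $\pi[\m^{f+1}]$ is multiplicity free and $\im(\iota)$ has cosocle $\chi_\lambda$, it is the \emph{unique} (equivalently, smallest) $I$-submodule of $\pi[\m^{f+1}]$ containing $\chi_\lambda$ as a constituent. Since $\chi_\lambda\in\JH(\pi_1[\m^{f+1}])$ by hypothesis, $\im(\iota)\subset\pi_1[\m^{f+1}]\subset\pi_1$ follows immediately. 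This is exactly the paper's one-line justification.

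Finally, your contradiction step is unnecessarily elaborate: once $\tau\in\JH(\pi_1^{K_1})$ with $\ell(\tau)=i_0+1$, you can invoke Theorem~\ref{thm:conj2} directly (it gives $\pi_1^{K_1}=D_0(\brho)_{\le i_0}$, hence $\JH(\pi_1^{K_1})\subset\JH(D_0(\brho^\ss)_{\le i_0})$). There is no need to re-run Steps~4--5 of that proof.
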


\begin{proof}

Define disjoint subsets $J_1$, $J_2$ of $\{0,1,\dots,f-1\}$ and $\mu\in \mathscr P$ as in \eqref{eq:J1:J2} and \eqref{eq:mu_j}.

We let again $\sigma\in W(\brho)$ be the Serre weight determined by $J_\sigma=J_{\brho}\cap J_{\mu}$ and $\tau\in W(\brho^\ss)$ be the Serre weight determined by $J_{\tau}=J_{\lambda}$.
We also recall that there is a unique embedding $\iota:W(\chi_\mu,\chi_\lambda)\hookrightarrow \pi|_{I}$ and let $V$ be the image of the induced morphism $\wt\iota:\Ind_I^{\GL_2(\cO_K)} W(\chi_{\mu},\chi_{\lambda}) \to \pi$.
By Lemma \ref{lem:JH-V}, $I(\sigma,\tau)\subset V^{K_1}$.

Note that $\chi_\lambda$ contributes to $D_{0,\tau}(\brho^\ss)^{I_1}$ by Lemma \ref{lem:I1-invt-component}, so $\ell(\tau)=|J_{\lambda}|=i_0+1$.

Suppose by contradiction that $\chi_\lambda \in \JH(\pi_1[\m^{f+1}])$.
As $|J_1|+|J_2| \le f$ we see by Lemma~\ref{lem:Wchi} and multiplicity freeness of $\pi[\m^{f+1}]$ (which holds by Corollary~\ref{cor:tau-multfree}(ii), applied with $n = f+1$ and $r = 1$) that $\im(\iota) \subset \pi_1$ and hence $V \subset \pi_1$.
Since $I(\sigma,\tau)\subset V^{K_1}$, we deduce that $\tau \in \JH(\pi_1^{K_1})$. 
By Theorem \ref{thm:conj2}, $\JH(\pi_1^{K_1}) \subset \JH(D_0(\brho^\ss)_{\le i_0})$, contradicting $\ell(\tau)=i_0+1$.
\end{proof}

\subsection{Finite length}
\label{sec:finite-length-nonss-sub}

We prove that (the duals of) subrepresentations and quotients of $\pi$ are Cohen--Macaulay $\Lambda$-modules of grade $2f$. We deduce many results on the structure of $\pi$ as a $\GL_2(K)$-representation, including that it is of finite length. 

{In the proofs we will use the functor $D_{\xi}^{\vee}$ (see the paragraph preceding Proposition \ref{prop:split-I1})}. We first state a theorem of Yitong Wang \cite[Thm.\ 1.2]{YitongWang} that will be essential for our proof. 

\begin{thm}[Y.\ Wang]\label{thm:yitong}
 Assume that $2f<r_j<p-2-2f$ for all $0\leq j\leq f-1$. 
  Let $\pi_1$ be a subrepresentation of $\pi$. Then we have
  \begin{equation*}
    \dim_{\F\ppar{X}} D_{\xi}^{\vee}(\pi_1)=|\JH(\pi_1^{K_1}) \cap W(\rhobar^\ss)|.
  \end{equation*}
\end{thm}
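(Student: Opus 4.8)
\textbf{Proof plan for Theorem \ref{thm:yitong}.}
The plan is to reduce the computation of $\dim_{\F\ppar{X}} D_{\xi}^{\vee}(\pi_1)$ to a statement about $(\varphi,\Gamma)$-modules that can be handled through the machinery of \cite{YitongWang}. The starting point is the exact sequence $0 \to \pi_1 \to \pi \to \pi/\pi_1 \to 0$ and the exactness of the contravariant functor $D_\xi^\vee$ (\cite[Thm.~3.1.3.7]{BHHMS2}), which gives
\[
\dim_{\F\ppar{X}} D_{\xi}^{\vee}(\pi_1) = \dim_{\F\ppar{X}} D_{\xi}^{\vee}(\pi) - \dim_{\F\ppar{X}} D_{\xi}^{\vee}(\pi/\pi_1).
\]
Since $\dim_{\F\ppar{X}} D_{\xi}^{\vee}(\pi) = 2^f$ by \cite[Thm.~1.3.1]{BHHMS2} and $|W(\rhobar^\ss)| = 2^f$ (as $\rhobar^\ss$ is reducible and $0$-generic, all $f$-tuples occur, cf.\ \S~\ref{sec:preliminaries-fin-len}), it suffices to prove the complementary statement that $\dim_{\F\ppar{X}} D_{\xi}^{\vee}(\pi/\pi_1) = |W(\rhobar^\ss) \setminus \JH(\pi_1^{K_1})|$, i.e.\ $= |\JH((\pi/\pi_1)^{K_1}_{\mathrm{soc}}) \cap W(\rhobar^\ss)|$ once one has identified which elements of $W(\rhobar^\ss)$ survive in $\pi/\pi_1$. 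So the two formulations are equivalent, and the heart of the matter is a single bijective/dimension-counting identity relating the output of $D_\xi^\vee$ on a subrepresentation of $\pi$ with the set of \emph{modular} Serre weights in its $K_1$-invariants.

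Concretely, I would proceed as follows. First, by Theorem \ref{thm:conj2} there is a unique integer $i_0 = i_0(\pi_1)$ with $\pi_1^{K_1} = D_0(\brho)_{\le i_0}$, and by \eqref{eq:JH-D0-leqi} we have $\JH(\pi_1^{K_1}) \cap W(\brho^\ss) = \{\tau \in W(\brho^\ss) : \ell(\tau) \le i_0\}$, whose cardinality is $\sum_{i \le i_0} \binom{f}{i}$ by the bijection $\tau \mapsto J_\tau$ between $\mathscr{D}^\ss$ and subsets of $\{0,\dots,f-1\}$. Thus the target of the formula is an explicit combinatorial quantity depending only on $i_0$. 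Second, one invokes the computation of $D_\xi^\vee$ on subquotients in terms of $\gr_\m$-data: the $(\varphi,\Gamma)$-module dimension is governed by the characteristic cycle $\mathcal{Z}(\gr_\m(\pi_1^\vee))$ via \cite[Prop.~3.3.5.3]{BHHMS2}, and $\gr_\m(\pi_1^\vee)$ is a quotient of the explicit module $N_1 = \bigoplus_{\lambda \in \P,\, |J_\lambda| \le i_0} \chi_\lambda^{-1} \otimes R/\mathfrak{a}(\lambda)$ (the analogue of \eqref{eq:splitN1}, built from $\pi_1^{I_1}$ which by Corollary \ref{cor:conj1} is $\{\chi_\lambda : |J_\lambda| \le i_0\}$). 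The upper bound $\dim_{\F\ppar{X}} D_\xi^\vee(\pi_1) \le$ [the quantity computed from $N_1$] then follows formally from the surjection $N_1 \twoheadrightarrow \gr_\m(\pi_1^\vee)$ and purity. The reverse inequality is exactly where \cite[Thm.~1.2]{YitongWang} enters: Wang's theorem produces, for each $\tau \in W(\brho^\ss)$ with $\ell(\tau) \le i_0$, a corresponding $1$-dimensional $\F\ppar{X}$-summand (a Serre weight direction) inside $D_\xi^\vee(\pi_1)$, using the input that $\tau \in \JH(\pi_1^{K_1})$ together with the structure of the patched module / the explicit $(\varphi,\Gamma)$-module attached to $\pi_1$.

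The main obstacle is precisely the lower bound, i.e.\ showing that each modular Serre weight surviving in $\pi_1^{K_1}$ genuinely contributes a new dimension to $D_\xi^\vee(\pi_1)$ rather than being absorbed — this requires the delicate analysis of \cite{YitongWang}, which in turn rests on the Breuil--Mézard-type identities, the Diamond diagram / multiplicity-one results for $\pi_1^{K_1}$ (via \cite{DanWild} in the nonsplit case), and a careful matching between the combinatorics of $W(\rhobar^\ss)$ and the Kisin variety / cycle computations. A secondary technical point is checking that $\pi_1$ lies in the category $\mathcal C$ of \cite[\S~3.1.2]{BHHMS2} so that $D_\xi^\vee(\pi_1)$ is defined, which follows because assumption \ref{it:assum-ii} forces $\gr_\m(\pi^\vee)$ (hence $\gr_\m(\pi_1^\vee)$, a subquotient) to be killed by $J$, as noted in the paragraph preceding Proposition \ref{prop:split-I1}. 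Once these are in place, combining the upper bound from $N_1$ with Wang's lower bound pins down $\dim_{\F\ppar{X}} D_\xi^\vee(\pi_1)$ exactly, and rewriting the answer via Corollary \ref{cor:conj1} and \eqref{eq:JH-D0-leqi} yields the stated formula $|\JH(\pi_1^{K_1}) \cap W(\rhobar^\ss)|$.
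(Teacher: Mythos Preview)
This theorem is not proved in the paper: it is stated as a black-box input, attributed to Y.~Wang and cited as \cite[Thm.~1.2]{YitongWang}. There is no proof to compare against.

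That said, your proposal has a genuine circularity. You defer the ``lower bound'' step to \cite[Thm.~1.2]{YitongWang}, which is precisely the statement you are trying to establish; so what you have written is not a proof but a restatement of where the difficulty lies. Moreover, the logical flow in the paper runs the opposite way from what you sketch: Theorem~\ref{thm:yitong} (via Corollary~\ref{cor:yitong}) is the key \emph{input} that allows one to prove Proposition~\ref{prop:nonsplit-I1} and hence to identify $\gr_\m(\pi_1^\vee)$ explicitly (Corollary~\ref{cor:gr-pi1}). You instead try to use the structure of $\gr_\m(\pi_1^\vee)$ to bound $\dim D_\xi^\vee(\pi_1)$, which reverses cause and effect. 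A related technical slip: the module you call $N_1$, taken from the split-case formula~\eqref{eq:splitN1}, is not the correct surjection source in the nonsplit case; the right object is $N_1^{i_0}$ built from the ideals $\mathfrak a_1^{i_0}(\lambda)$ of~\eqref{eq:fa-1}, and its construction already uses Proposition~\ref{prop:conj5}, whose proof is independent of Wang's theorem but whose \emph{use} in Step~2 of Proposition~\ref{prop:nonsplit-I1} only yields the desired isomorphism after one feeds in Corollary~\ref{cor:yitong}.

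In short: the theorem is an external result whose proof lives in \cite{YitongWang}, and the paper uses it rather than proves it. Your sketch correctly identifies that the content reduces to a dimension count indexed by $i_0$, but does not supply the missing argument.
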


By equation \eqref{eq:soc-i} we deduce the following corollary.

\begin{cor}\label{cor:yitong} 
 Assume that $\brho$ is $\max\{6,2f+1\}$-generic. 
  Let $i_0 = i_0(\pi_1)$ with $-1 \le i_0 \le f$ be as in Theorem \ref{thm:conj2}.
  Then \[\dim_{\F\ppar{X}} D_{\xi}^{\vee}(\pi_1)=\sum_{i \le i_0} \binom fi.\]
\end{cor}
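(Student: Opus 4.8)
The plan is to combine Theorem~\ref{thm:yitong} (Y.~Wang's result) with the structural description of the $K_1$-invariants of subrepresentations of $\pi$ given by Theorem~\ref{thm:conj2}. Concretely, by Theorem~\ref{thm:conj2} there is a unique integer $i_0 = i_0(\pi_1)$ with $-1 \le i_0 \le f$ such that $\pi_1^{K_1} = D_0(\brho)_{\leq i_0}$, so the problem reduces to computing $|\JH(D_0(\brho)_{\leq i_0}) \cap W(\brho^\ss)|$.

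First I would note that since $D_0(\brho)_{\leq i_0}$ is a $\Gamma$-subrepresentation of $D_0(\brho)$ (and hence multiplicity free), we have
\[
|\JH(D_0(\brho)_{\leq i_0}) \cap W(\brho^\ss)| = \sum_{i \le i_0} |\JH(D_0(\brho)_{i}) \cap W(\brho^\ss)|,
\]
using the filtration \eqref{eq:fil-D0} and that $D_0(\brho)_i = D_0(\brho)_{\leq i}/D_0(\brho)_{\leq i-1}$. So it suffices to show that $|\JH(D_0(\brho)_{i}) \cap W(\brho^\ss)| = \binom{f}{i}$ for each $0 \le i \le f$. Here I would use~\eqref{eq:soc-i}, which says $\soc_{\GL_2(\cO_K)}D_0(\brho)_i = \bigoplus_{\tau\in W(\brho^\ss),\,\ell(\tau)=i}\tau$, together with the fact (recalled in \S~\ref{sec:notation}) that $\tau \mapsto J_\tau$ induces a bijection between $\D^\ss$ (equivalently $W(\brho^\ss)$) and the set of subsets of $\{0,\dots,f-1\}$, under which $\ell(\tau) = |J_\tau|$. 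Thus the number of Serre weights $\tau \in W(\brho^\ss)$ with $\ell(\tau) = i$ is exactly $\binom{f}{i}$.

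The remaining point is to check that every $\tau \in W(\brho^\ss)$ with $\ell(\tau) = i$ actually occurs in $D_0(\brho)_i$ (and not just in the socle of $D_0(\brho)_{\le i}$), i.e.\ that $\JH(D_0(\brho)_{i}) \cap W(\brho^\ss) = \{\tau \in W(\brho^\ss) : \ell(\tau) = i\}$. For the inclusion $\supseteq$: by~\eqref{eq:soc-i} each such $\tau$ lies in $\soc_{\GL_2(\cO_K)}D_0(\brho)_i \subset \JH(D_0(\brho)_i)$. For $\subseteq$: if $\tau' \in W(\brho^\ss) \cap \JH(D_0(\brho)_i)$, then $\tau'$ is a subquotient of $D_0(\brho)_i$, which by construction (it is the $i$-th graded piece of the filtration in \cite[Prop.~5.2]{Hu-SMF}) has all its Serre-weight constituents in $W(\brho^\ss)$ of length exactly $i$ — this is precisely the content of the fact that $D_0(\brho)_i$ is a subrepresentation of $D_0(\brho^\ss)_i = \bigoplus_{\ell(\tau)=i}D_{0,\tau}(\brho^\ss)$ combined with Lemma~\ref{lem:JHinW} (constituents of $I(\sigma,\tau)$ have $\ell \le \ell(\tau)$ with equality only for $\tau$) and Lemma~\ref{lem:inter}. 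Putting this together gives $\dim_{\F\ppar{X}} D_{\xi}^{\vee}(\pi_1) = |\JH(\pi_1^{K_1}) \cap W(\brho^\ss)| = \sum_{i \le i_0}\binom{f}{i}$.

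I do not anticipate a serious obstacle here: the genericity hypothesis $\max\{6,2f+1\}$-generic is more than enough to invoke Theorem~\ref{thm:conj2} ($6$-generic suffices) and Theorem~\ref{thm:yitong} ($2f+1$-generic implies $2f < r_j < p-2-2f$). The only mild care needed is to verify cleanly that the graded pieces $D_0(\brho)_i$ of the filtration have exactly the claimed intersection with $W(\brho^\ss)$; but this follows directly from the properties of the filtration recalled in~\eqref{eq:fil-D0}, \eqref{eq:soc-i}, \eqref{eq:JH-D0-i}, together with the bijection $\tau \mapsto J_\tau$ and $\ell(\tau) = |J_\tau|$. Hence the corollary is essentially a bookkeeping consequence of the two cited theorems.
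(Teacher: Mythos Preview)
Your proposal is correct and follows essentially the same approach as the paper, which simply says the corollary follows from equation~\eqref{eq:soc-i} (combined with Theorems~\ref{thm:yitong} and~\ref{thm:conj2}). Your argument for the inclusion $\subseteq$ is slightly more elaborate than necessary: since $D_0(\brho)_i$ is a subrepresentation of $D_0(\brho^\ss)_i = \bigoplus_{\ell(\tau)=i} D_{0,\tau}(\brho^\ss)$ and by definition each $D_{0,\tau}(\brho^\ss)$ contains no element of $W(\brho^\ss)$ other than $\tau$ itself, the containment $\JH(D_0(\brho)_i)\cap W(\brho^\ss)\subset\{\tau:\ell(\tau)=i\}$ is immediate without Lemmas~\ref{lem:JHinW} or~\ref{lem:inter}.
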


We denote by $N$ the graded module defined in \S~\ref{sec:some-homol-argum}, namely
\[N\defeq \bigoplus_{\lambda\in\mathscr{P}}\chi_{\lambda}^{-1}\otimes \frac{R}{\mathfrak{a}(\lambda)}.\]
If $\rhobar$ is moreover 9-generic we have $\gr_{\m}(\pi^{\vee})\cong N$ by Theorem \ref{thm:CMC}.
From now on, we thus assume that $\rhobar$ is $\max\{9,2f+1\}$-generic (in addition to assumptions \ref{it:assum-i}--\ref{it:assum-iv}).

\begin{prop}\label{prop:nonsplit-I1}
Assume that $\brho$ is $\max\{9,2f+1\}$-generic.
Let $0 \subsetneq \pi_1 \subsetneq \pi$ be a subrepresentation of $\pi$ and let $\pi_2\defeq \pi/\pi_1$. 
Then both $\gr_\m(\pi_1^{\vee})$ and $\gr_F(\pi_2^{\vee})$ are Cohen--Macaulay $\gr(\Lambda)$-modules of grade $2f$, where $F$ denotes the filtration induced from $\pi^\vee$.
In particular, $\pi_1^\vee$ and $\pi_2^\vee$ are Cohen--Macaulay $\Lambda$-modules of grade $2f$.
\end{prop}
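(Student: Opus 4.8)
The plan is to mimic the strategy of Proposition~\ref{prop:split-I1} but replace everywhere the combinatorics of direct summands of $N$ by the (more delicate) combinatorics of the ideals $I(J_1,J_2,d,\un t)$ of Definition~\ref{def:IJideals}. Fix a nonzero proper subrepresentation $\pi_1\subsetneq\pi$ with quotient $\pi_2$ and let $F$ be the filtration on $\pi_2^\vee$ induced from the $\m$-adic filtration on $\pi^\vee$. As in the split case it suffices to exhibit a graded $\gr(\Lambda)$-module $N_1$ with compatible $H$-action, depending only on the $\GL_2(\F_q)$-representation $\pi_1^{K_1}$, together with a commutative diagram with exact rows exactly as in~(\ref{eq:diagramintro}), having a surjection $N\onto\gr_\m(\pi^\vee)$ (Theorem~\ref{thm:CMC}) realizing the middle isomorphism, an injection $N_2\defeq\ker(N\onto N_1)\hookrightarrow\gr_F(\pi_2^\vee)$, and a surjection $N_1\onto\gr_\m(\pi_1^\vee)$. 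Here $N_1$ will no longer be a direct summand of $N$: using Theorem~\ref{thm:conj2} and Corollary~\ref{cor:conj1}, which identify $\pi_1^{K_1}=D_0(\brho)_{\le i_0}$ and $\JH(\pi_1^{I_1})=\{\chi_\lambda:|J_\lambda|\le i_0\}$ for the integer $i_0=i_0(\pi_1)$, one defines $N_1$ by replacing, for each $\lambda\in\P$ with $|J_\lambda|>i_0$, the quotient $R/\fa(\lambda)$ by a smaller quotient cut out by an ideal $I(J_1,J_2,d,\un t)$ with $d$ chosen so that the relevant $H$-eigenspaces killed are precisely those not appearing in $\pi_1^{K_1}$; the point of Step~2 of the referenced proof is to show this is forced. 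Constructing this diagram and checking it is well-defined (in particular that $N_2$ maps into $\gr_F(\pi_2^\vee)$, and that $N_1$ depends only on $\pi_1^{K_1}$) is the combinatorial heart and will rely on Lemmas~\ref{lem:I1-invt-nonss}, \ref{lem:J-sets-add-up}, \ref{lem:W-embeds}, \ref{lem:JH-V} and Definition~\ref{def:IJideals}.

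Once the diagram is in place, the argument runs parallel to the split case. Both $N_1$ and $N_2$ are Cohen--Macaulay $\gr(\Lambda)$-modules of grade $2f$: for $N_1$ this follows from Corollary~\ref{cor:CM-module-kunneth} and Proposition~\ref{prop:basic-CM-module} (the pieces $\o R/I(J_1,J_2,d,\un t)$ are Cohen--Macaulay of grade $f$ over $R$, hence of grade $2f$ over $\gr(\Lambda)$ by \cite[Lemma 3.3.1.9]{BHHMS2}), and $N_2$ is handled by d\'evissage from $N$ and $N_1$ using that $\Lambda$ and $\gr(\Lambda)$ are Auslander regular (\cite[Cor.~III.2.1.6]{LiOy}). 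In particular $N_1$, $N_2$ are pure, so any nonzero submodule has nonzero characteristic cycle, and it suffices to prove $\mathcal{Z}(N_1)=\mathcal{Z}(\gr_\m(\pi_1^\vee))$, equivalently $\mathcal{Z}(N_2)=\mathcal{Z}(\gr_F(\pi_2^\vee))$, to force the vertical maps to be isomorphisms. Then $\gr_\m(\pi_1^\vee)\cong N_1$ and $\gr_F(\pi_2^\vee)\cong N_2$ are Cohen--Macaulay of grade $2f$, whence $\pi_1^\vee$, $\pi_2^\vee$ are Cohen--Macaulay $\Lambda$-modules of grade $2f$ by \cite[Prop.~III.2.2.4]{LiOy}, which is the assertion.

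To prove the cycle equality I would again use the essential self-duality $\EE^{2f}_\Lambda(\pi^\vee)\cong\pi^\vee\otimes(\det(\brho)\omega^{-1})$ (assumption~\ref{it:assum-iii}) to build $\widetilde\pi_2\subseteq\pi$ with $\widetilde\pi_2^\vee=\im\{\EE^{2f}_\Lambda(\pi^\vee)\to\EE^{2f}_\Lambda(\pi_2^\vee)\}\otimes(\det(\brho)^{-1}\omega)$, so that $\mathcal{Z}(\gr_\m(\widetilde\pi_2^\vee))=\mathcal{Z}(\gr_F(\pi_2^\vee))$ as in the proof of \cite[Prop.~3.3.5.3(iii)]{BHHMS2}, and an analogous surjection $\widetilde N_2\onto\gr_\m(\widetilde\pi_2^\vee)$ with $\widetilde N_2$ depending only on $\widetilde\pi_2^{K_1}$. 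Chasing inequalities gives $\mathcal{Z}(\widetilde N_2)\ge\mathcal{Z}(\gr_\m(\widetilde\pi_2^\vee))=\mathcal{Z}(\gr_F(\pi_2^\vee))\ge\mathcal{Z}(N_2)$, so it remains to show $\mathcal{Z}(\widetilde N_2)=\mathcal{Z}(N_2)$, which amounts to determining the $\GL_2(\F_q)$-representation $\widetilde\pi_2^{K_1}$ from $\pi_1^{K_1}$. This is where the nonsplit case genuinely diverges: using the exactness of $D_\xi^\vee$ (\cite[Thm.~3.1.3.7]{BHHMS2}), the formula $\dim_{\F\ppar X}D_\xi^\vee(\pi)=2^f$ (\cite[Thm.~1.3.1]{BHHMS2}), and Theorem~\ref{thm:yitong} (Y.\ Wang) applied to both $\pi_1$ and $\widetilde\pi_2$ in the form of Corollary~\ref{cor:yitong}, together with Theorem~\ref{thm:conj2} which pins down $\pi_1^{K_1}=D_0(\brho)_{\le i_0}$ and identifies $\widetilde\pi_2^{K_1}=D_0(\brho)_{\le i_0'}$ with $i_0'$ complementary to $i_0$ via Corollary~\ref{cor:involution-components}, one computes $\dim_{\F\ppar X}D_\xi^\vee(\widetilde\pi_2)=\dim_{\F\ppar X}D_\xi^\vee(\pi)-\dim_{\F\ppar X}D_\xi^\vee(\pi_1)=\sum_{i>i_0}\binom fi$, forcing $i_0'=f-i_0-1$ and hence determining $\widetilde N_2$. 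The final multiplicity computation $\mathcal{Z}(\widetilde N_2)=\mathcal{Z}(N_2)$ is then a direct count via Lemma~\ref{lem:total-mult}. The main obstacle, as in all of this circle of ideas, is the precise bookkeeping in constructing $N_1$ and matching $\widetilde\pi_2^{K_1}$ to it through the $(\varphi,\Gamma)$-module dimension formula — the combinatorics of the sets $J_{\rhobar}$, $J_\lambda$, $X(\mu)$ and the ideals $I(J_1,J_2,d,\un t)$ is considerably more involved than the clean direct-sum decomposition available when $\rhobar$ is split.
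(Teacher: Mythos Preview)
Your overall architecture matches the paper's proof: build a quotient $N_1$ of $N$ depending only on $i_0=i_0(\pi_1)$, set up diagram~(\ref{eq:diagramintro}), show $N_1$ and $N_2$ are Cohen--Macaulay via Corollary~\ref{cor:CM-module-kunneth}, construct $\widetilde\pi_2$ by essential self-duality, use Wang's theorem (Corollary~\ref{cor:yitong}) plus exactness of $D_\xi^\vee$ to pin down $i_0(\widetilde\pi_2)=f-1-i_0$, and close with the multiplicity count of Lemma~\ref{lem:total-mult} (together with Lemma~\ref{lem:J-sets-add-up}).

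However, your description of $N_1$ is backwards, and this hides the one genuinely new ingredient. You say one modifies $R/\fa(\lambda)$ for $\lambda$ with $|J_\lambda|>i_0$; in fact those summands simply vanish (since $\chi_\lambda\notin\JH(\pi_1^{I_1})$ by Corollary~\ref{cor:conj1}, there is nothing in degree $0$ to generate). The real content is that for $\lambda$ with $|J_\lambda|\le i_0$ --- i.e.\ for $\chi_\lambda$ that \emph{does} occur in $\pi_1^{I_1}$ --- one still needs to impose extra relations beyond $\fa(\lambda)$: the paper sets $\fa_1^{i_0}(\lambda)\defeq I(J_1,J_2,i_0+1-|J_\lambda|)+\fa(\lambda)$ as in~(\ref{eq:fa-1}), and the whole difficulty is showing these extra monomials kill the $\chi_\lambda^{-1}$-eigenspace of $\gr_\m(\pi_1^\vee)_0$. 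This is precisely Step~1 of the paper's proof, and it relies on Proposition~\ref{prop:conj5} (not on the lemmas you list), which says that certain $\chi_{\lambda'}$ with $\lambda'\in\P^\ss\setminus\P$ and $|J_{\lambda'}|=i_0+1$ do \emph{not} occur in $\pi_1[\m^{f+1}]$. Dualizing this gives exactly the required monomial relations. Without this input the surjection $N_1\onto\gr_\m(\pi_1^\vee)$ is not defined, and without the correct $N_1$ the multiplicity identity $m(N_1^{i_0})+m(N_1^{f-1-i_0})=m(N)$ of Step~4 (which is equation~(\ref{eq:mult-add}) in the paper) cannot even be formulated. Once you have Proposition~\ref{prop:conj5} and the correct $\fa_1^{i_0}(\lambda)$, the rest of your outline is right; note also that Corollary~\ref{cor:involution-components} is not needed here --- the symmetry between $N_1^{i_0}$ and $N_1^{f-1-i_0}$ comes directly from the involution $\lambda\mapsto\lambda^*$ via Lemma~\ref{lem:J-sets-add-up}.
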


\begin{rem}\label{rem:nonsplit-I1}
  It is easy to see that $F$ does not coincide with the $\m$-adic filtration in general (when $\rhobar$ is nonsplit), for example because $\pi_2^{I_1}$ is bigger than $\pi^{I_1}/\pi_1^{I_1}$ already when $f = 1$ and $\pi_1$ is a principal series representation.
 {We will determine $\gr_\m(\pi_2^\vee)$ in \cite{BHHMS5}.} 
\end{rem}

 Recall the ideals $I(J_1,J_2,d)$ and $I(J_1,J_2,d,\un t) = I(J_1,J_2,d)+(\un t)$ of $\o R$ from Definition \ref{def:IJideals}, where $J_1$, $J_2$ are disjoint subsets of $\{0,\dots,f-1\}$, $d \in \Z$, and $t_j \in \{y_j,z_j,y_jz_j\}$ for all $0 \le j \le f-1$. 
If $d \ge 1$, the ideal $I(J_1,J_2,d)$ is generated by all $\prod_{j \in J_1'} y_j \prod_{j \in J_2'} z_j$ with $J_1' \subset J_1$, $J_2' \subset J_2$, $|J'_1|+|J'_2| = d$ (plus all $t_j$ {for $I(J_1,J_2,d,\un t)$}). If $d \le 0$ {these ideals equal} $\o R$. 

For $\lambda \in \P$ define the ideal of $\o R$,
\begin{equation}\label{eq:fa-1}\mathfrak a_1^{i_0}(\lambda) \defeq  I(J_1,J_2,i_0+1-|J_\lambda|) + \mathfrak a(\lambda),\end{equation}
where $J_1 \defeq  \{ j \in J_{\rhobar}^c : \lambda_j(x_j) = p-1-x_j \}$ and $J_2 \defeq  \{ j \in J_{\rhobar}^c : \lambda_j(x_j) = x_j \}$.
In other words, $\mathfrak a_1^{i_0}(\lambda) = I(J_1,J_2,i_0+1-|J_\lambda|,\un t)$, where $t_j \in \{y_j,z_j,y_jz_j\}$ is defined in~(\ref{eq:id:al}) in terms of $\lambda$.
(Note that $t_j = y_jz_j$ for all $j \in J_1 \sqcup J_2$.)
By definition, $\mathfrak a_1^{i_0}(\lambda) = \o R$ if $i_0 < |J_\lambda|$ and $\mathfrak a_1^{i_0}(\lambda) = \mathfrak a(\lambda)$ if $|J_1|+|J_2| < i_0+1-|J_\lambda|$.

\begin{proof}[Proof of Proposition \ref{prop:nonsplit-I1}]
For most of the proof we allow the extreme cases $\pi_1 = 0$ and $\pi_1 = \pi$.

\textbf{Step 1.} We show that for $\lambda \in \P$ the ideal $\mathfrak a_1^{i_0}(\lambda)$ kills the $\chi_\lambda^{-1}$-eigenspace of $\gr_\m(\pi_1^\vee)_0 \cong (\pi_1^{I_1})^\vee$ inside $\gr_\m(\pi_1^\vee)$.

By Corollary \ref{cor:conj1} we may assume that $|J_\lambda| \le i_0$.
We already know that the $\chi_\lambda^{-1}$-eigenspace is killed \ by \ $\mathfrak a(\lambda)$ \ (\cite[Thm.\ 3.67]{BHHMS2}, \cite[Cor.\ 8.12]{HuWang2}), \ so \ let \ us \ take \ a \ monomial $\prod_{j \in J_1'} y_j \prod_{j \in J_2'} z_j$ with $J_1' \subset J_1$, $J_2' \subset J_2$, $|J'_1|+|J'_2| = i_0+1-|J_\lambda|$ (in particular of degree $> 0$).

Define $\lambda' \in \P^\ss$ by letting $\lambda'_j(x_j) \defeq  \lambda_j(x_j)-2$ if $j \in J_1'$, $\lambda'_j(x_j) \defeq  \lambda_j(x_j)+2$ if $j \in J_2'$, and $\lambda'_j(x_j) \defeq  \lambda_j(x_j)$ otherwise.
Then $\lambda' \in \P^\ss\setminus\P$ using the definition of $\P^\ss$ and (\ref{eq:P}). 
Moreover, $|J_{\lambda'}| = |J_\lambda|+(i_0+1-|J_\lambda|) = i_0+1$.
By Proposition \ref{prop:conj5} we deduce (on the dual side) that the monomial $\prod_{j \in J_1'} y_j \prod_{j \in J_2'} z_j$ kills the $\chi_\lambda^{-1}$-eigenspace of $\gr_\m(\pi_1^\vee)_0$ inside $\gr_\m(\pi_1^\vee)$.

\textbf{Step 2.} 
Define $N_1^{i_0} \defeq  \bigoplus_{\lambda\in\mathscr{P}}\chi_{\lambda}^{-1}\otimes \o R/\mathfrak{a}_1^{i_0}(\lambda)$ and let $N_2^{i_0}$ be the kernel of the natural map $N \onto N_1^{i_0}$.
Consider the induced short exact sequence
\[0\ra \gr_{F}(\pi_2^{\vee})\ra \gr_{\m}(\pi^{\vee})\ra \gr_{\m}(\pi_1^{\vee})\ra0,\]
where $F$ is the filtration on $\pi_2^{\vee}$ induced from the $\m$-adic filtration on $\pi^{\vee}$. 
By Step~1 the morphism $N \congto \gr_{\m}(\pi^{\vee}) \onto \gr_{\m}(\pi_1^{\vee})$ factors through $N_1^{i_0}$,  hence we get an induced commutative diagram
\begin{equation*}
  \xymatrix{0\ar[r]& \gr_{F}(\pi_2^{\vee})\ar[r]& \gr_{\m}(\pi^{\vee})\ar[r]& \gr_{\m}(\pi_1^{\vee})\ar[r]&0 \\
  0\ar[r]& N_2^{i_0}\ar[r]\ar@{^{(}->}[u]& N\ar[r]\ar[u]^\cong & N_1^{i_0}\ar[r]\ar@{->>}[u]& 0}
\end{equation*}
with injective (resp.\ surjective) vertical map on the left (resp.\ right).
Thus
\begin{equation}\label{eq:cycles-ineq}\mathcal{Z}(N_1^{i_0})\geq \mathcal{Z}(\gr_{\fm}(\pi_1^{\vee})),\ \ \mathcal{Z}(N_2^{i_0})\leq \mathcal{Z}(\gr_F(\pi_2^{\vee})).\end{equation}

\textbf{Step 3.} We show that $N_1^{i_0}$ and $N_2^{i_0}$ are Cohen--Macaulay of grade $2f$, or zero.

First note that $j_{\gr(\Lambda)}(N_1^{i_0}) \ge j_{\gr(\Lambda)}(N) = 2f$.
By Corollary~\ref{cor:CM-module-kunneth} and \cite[Lemma 3.65]{BHHMS2}, $N_1^{i_0}$ is a Cohen--Macaulay $\gr(\Lambda)$-module of grade $2f$, or zero.
(We may omit the terms in the direct sum with $|J_\lambda| > i_0$, as they vanish.)
As $N_2^{i_0} = \ker(N \onto N_1^{i_0})$ and both $N$ and $N_1^{i_0}$ are Cohen--Macaulay of grade $2f$, or zero, so is $N_2^{i_0}$.

\textbf{Step 4.} We show that $\gr_\m(\pi_1^{\vee})$ and $\gr_F(\pi_2^{\vee})$ are Cohen--Macaulay of grade $2f$.

{By assumption \ref{it:assum-iii}}  we have $\EE_{\Lambda}^{2f}(\pi^\vee) \cong \pi^\vee \otimes \det(\rhobar)\omega^{-1}$ as $\GL_2(K)$-representations. 
As in the proof of \cite[Prop.\ 3.87(iii)]{BHHMS2} we may construct a subrepresentation $\widetilde{\pi}_2\subset \pi$ such that $\mathcal{Z}(\gr(\wt\pi_2^{\vee}))=\mathcal{Z}(\gr(\pi_2^{\vee}))$ (with respect to any good filtrations).
By \cite[Prop.\ 3.87(i)]{BHHMS2} and the exactness of $D_{\xi}^{\vee}$ we have
\begin{equation*}
  \dim_{\F\ppar{X}}D_{\xi}^{\vee}(\wt\pi_2)=\dim_{\F\ppar{X}}D_{\xi}^{\vee}(\pi_2)=\dim_{\F\ppar{X}}D_{\xi}^{\vee}(\pi)-\dim_{\F\ppar{X}}D_{\xi}^{\vee}(\pi_1).
\end{equation*}
By Corollary~\ref{cor:yitong} we deduce that $i_0(\wt\pi_2) = f-1-i_0(\pi_1)$.

In particular, noting that $N_1^{i_0}$ only depends on $i_0 = i_0(\pi_1)$, we deduce by~(\ref{eq:cycles-ineq}) applied to the subrepresentation $\wt\pi_2$ that $\mathcal{Z}(N_1^{f-1-i_0})\geq \mathcal{Z}(\gr_{\fm}(\wt\pi_2^{\vee}))$.
Hence
\begin{equation}\label{eq:cycles-ineq2}
  \mathcal{Z}(N_1^{f-1-i_0})\geq \mathcal{Z}(\gr(\wt\pi_2^{\vee})) = \mathcal{Z}(\gr(\pi_2^{\vee})) \ge \mathcal{Z}(N_2^{i_0}) = \mathcal{Z}(N)-\mathcal{Z}(N_1^{i_0}).
\end{equation}
We claim that equality holds, and it suffices to show that $m(N_1^{i_0})+m(N_1^{f-1-i_0}) = m(N)$.

As $N_1^{i_0} = \bigoplus_{\lambda\in\mathscr{P}}\chi_{\lambda}^{-1}\otimes \o R/\mathfrak a_1^{i_0}(\lambda)$ and the involution $\lambda \mapsto \lambda^*$ preserves (i.e.\ induces a bijection on) $\P$ by \cite[Lemma 3.63(i)]{BHHMS2}, it suffices to show that
\begin{equation}
  m(\o R/\mathfrak a_1^{i_0}(\lambda))+m(\o R/\mathfrak a_1^{f-1-i_0}(\lambda^*)) = m(\o R/\mathfrak a(\lambda))\ \ \text{for each $\lambda \in \P$}.\label{eq:mult-add}
\end{equation}
Fix now $\lambda \in \P$.
Recall that $\mathfrak a_1^{i_0}(\lambda) = I(J_1,J_2,i_0+1-|J_\lambda|,\un t)$, where $J_1 = \{ j \in J_{\rhobar}^c : \lambda_j(x_j) = p-1-x_j \}$, $J_2 = \{ j \in J_{\rhobar}^c : \lambda_j(x_j) = x_j \}$, and $t_j \in \{y_j,z_j,y_jz_j\}$ is defined in~(\ref{eq:id:al}).
Let $J \defeq  J_1 \sqcup J_2$.
By Lemma~\ref{lem:J-sets-add-up} we have $|J_\lambda|+|J_{\lambda^*}|+|J| = f$ (and $J$ is unchanged when $\lambda$ is replaced by $\lambda^*$).
By Lemma~\ref{lem:total-mult} we have
\begin{equation}\label{eq:mult-a1}
  m(\o R/\mathfrak a_1^{i_0}(\lambda)) = 2^{|\{ j : \lambda_j(x_j) \in \{x_j+1,p-2-x_j\}\}|} \left(\sum_{i < i_0+1-|J_\lambda|} \binom{|J|}i\right),
\end{equation}
noting that $\{ j \in J^c : t_j = y_jz_j \} = \{ j : \lambda_j(x_j) \in \{x_j+1,p-2-x_j\}\}$.
In particular, taking $i_0 = f$ and noting that $|J_\lambda|+|J| \le f$ by Lemma~\ref{lem:J-sets-add-up} (or by arguing directly) we have,
\begin{equation}\label{eq:mult-a}
  m(\o R/\mathfrak a(\lambda)) = 2^{|\{ j : \lambda_j(x_j) \in \{x_j+1,p-2-x_j\}\}|} \cdot 2^{|J|}.
\end{equation}
From~(\ref{eq:mult-a1}) and the definition of $\lambda \mapsto \lambda^*$ in \cite[Def.\ 3.62]{BHHMS2} we obtain
\begin{equation*}
  m(\o R/\mathfrak a_1^{f-1-i_0}(\lambda^*)) = 2^{|\{ j : \lambda_j(x_j) \in \{x_j+1,p-2-x_j\}\}|} \left(\sum_{i < f-i_0-|J_{\lambda^*}|} \binom{|J|}i\right),
\end{equation*}
By Lemma~\ref{lem:J-sets-add-up},
\begin{equation*}
  \sum_{i < f-i_0-|J_{\lambda^*}|} \binom{|J|}i = \sum_{i < |J|+|J_\lambda|-i_0} \binom{|J|}i = \sum_{i > i_0-|J_\lambda|} \binom{|J|}i,
\end{equation*}
and we deduce~(\ref{eq:mult-add}) and hence equality in~(\ref{eq:cycles-ineq2}) and~(\ref{eq:cycles-ineq}).

Since $N_1^{i_0}$ is Cohen--Macaulay, hence pure (by combining Prop.~3.5(v)(a) and Prop.~3.9(i) in \cite{Venjakob}), or since $N_1^{i_0} = 0$, any nonzero submodule of $N_1^{i_0}$ has a nonzero cycle.
Hence the surjection $N_1^{i_0}\twoheadrightarrow \gr_{\m}(\pi_1^{\vee})$ must be an isomorphism and consequently $\gr_{F}(\pi_2^{\vee})\cong N_2^{i_0}$ by Step~2.
We finally assume that $\pi_1 \ne 0$ and $\pi_1 \ne \pi$.
Then the isomorphisms we just established show that $N_1^{i_0} \ne 0$ and $N_2^{i_0} \ne 0$, so both $N_1^{i_0}$ and $N_2^{i_0}$ are Cohen--Macaulay by Step 3.
Hence $\pi_1^\vee$ and $\pi_2^\vee$ are Cohen--Macaulay, because if a finitely generated $\Lambda$-module $M$ admits a good filtration such that the associated graded module is Cohen--Macaulay, then $M$ itself is Cohen--Macaulay by \cite[Prop.~III.2.2.4]{LiOy}.
\end{proof}

\begin{cor}\label{cor:gr-pi1}
Assume that $\brho$ is  $\max\{9,2f+1\}$-generic. 
  Let $i_0 = i_0(\pi_1)$ with $-1 \le i_0 \le f$ be as in Theorem \ref{thm:conj2}.
  Then
  \begin{equation*}
    \gr_\m(\pi_1^{\vee}) \cong \bigoplus_{\lambda\in\mathscr{P}}\chi_{\lambda}^{-1}\otimes \frac{\o R}{\mathfrak{a}_1^{i_0}(\lambda)}
  \end{equation*}
  and
  \begin{equation*}
    \gr_F(\pi_2^{\vee}) \cong \bigoplus_{\lambda\in\mathscr{P}}\chi_{\lambda}^{-1}\otimes \frac{\mathfrak{a}_1^{i_0}(\lambda)}{\mathfrak{a}(\lambda)},
  \end{equation*}
  where $F$ denotes the filtration induced from $\pi^\vee$.
\end{cor}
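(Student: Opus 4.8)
\textbf{Proof plan for Corollary~\ref{cor:gr-pi1}.}
The plan is to read off both isomorphisms directly from the proof of Proposition~\ref{prop:nonsplit-I1}, which already does all the real work; what remains is only bookkeeping. First I would recall that in Step~4 of that proof we showed the surjection $N_1^{i_0}\twoheadrightarrow\gr_\m(\pi_1^\vee)$ (constructed in Step~2, using Step~1 to see that it is well defined) is in fact an isomorphism, and that under it the filtration $F$ on $\pi_2^\vee$ induced from the $\m$-adic filtration on $\pi^\vee$ satisfies $\gr_F(\pi_2^\vee)\cong N_2^{i_0}=\ker(N\twoheadrightarrow N_1^{i_0})$. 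Since by definition $N_1^{i_0}=\bigoplus_{\lambda\in\mathscr P}\chi_\lambda^{-1}\otimes\o R/\mathfrak a_1^{i_0}(\lambda)$, the first displayed isomorphism is immediate. For the second, note that the natural surjection $N=\bigoplus_{\lambda\in\mathscr P}\chi_\lambda^{-1}\otimes\o R/\mathfrak a(\lambda)\twoheadrightarrow N_1^{i_0}$ is the direct sum over $\lambda$ of the maps $\chi_\lambda^{-1}\otimes\o R/\mathfrak a(\lambda)\twoheadrightarrow\chi_\lambda^{-1}\otimes\o R/\mathfrak a_1^{i_0}(\lambda)$ induced by the inclusion $\mathfrak a(\lambda)\subset\mathfrak a_1^{i_0}(\lambda)$ of ideals of $\o R$ (recall from~\eqref{eq:fa-1} that $\mathfrak a_1^{i_0}(\lambda)=I(J_1,J_2,i_0+1-|J_\lambda|)+\mathfrak a(\lambda)$ contains $\mathfrak a(\lambda)$), so its kernel is $N_2^{i_0}=\bigoplus_{\lambda\in\mathscr P}\chi_\lambda^{-1}\otimes\mathfrak a_1^{i_0}(\lambda)/\mathfrak a(\lambda)$, as claimed.

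These identifications are $H$-equivariant and respect the grading because every map in the relevant diagram in the proof of Proposition~\ref{prop:nonsplit-I1} was a morphism of graded $\gr(\Lambda)$-modules with compatible $H$-action; the middle vertical isomorphism $N\cong\gr_\m(\pi^\vee)$ is the one furnished by Theorem~\ref{thm:CMC}\ref{it:grisointro}, which is graded and $H$-equivariant. I would just remark in one sentence that the genericity hypothesis $\max\{9,2f+1\}$ is exactly what is needed to invoke both Theorem~\ref{thm:CMC} (hence $\gr_\m(\pi^\vee)\cong N$) and Theorem~\ref{thm:conj2}/Corollary~\ref{cor:conj1} (hence the well-definedness of $i_0(\pi_1)$ and the computation of $\JH(\pi_1^{I_1})$ used in Step~1), and that the extreme cases $\pi_1=0$ (where $i_0=-1$, $\mathfrak a_1^{-1}(\lambda)=\o R$ for all $\lambda$, and both sides of the first display vanish) and $\pi_1=\pi$ (where $i_0=f$, $\mathfrak a_1^{f}(\lambda)=\mathfrak a(\lambda)$, recovering Theorem~\ref{thm:CMC}) are consistent. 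There is essentially no obstacle here: the corollary is a restatement of intermediate conclusions reached inside the proof of Proposition~\ref{prop:nonsplit-I1}, and the only thing to be slightly careful about is tracking that $\gr_F(\pi_2^\vee)$ (not $\gr_\m(\pi_2^\vee)$, which is genuinely different as Remark~\ref{rem:nonsplit-I1} warns) is what gets computed.
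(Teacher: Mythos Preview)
Your proposal is correct and matches the paper's approach exactly: the corollary is stated without proof in the paper precisely because both isomorphisms are extracted verbatim from Step~4 of the proof of Proposition~\ref{prop:nonsplit-I1} (where $N_1^{i_0}\twoheadrightarrow\gr_\m(\pi_1^\vee)$ is shown to be an isomorphism and hence $\gr_F(\pi_2^\vee)\cong N_2^{i_0}$), together with the elementary computation of $N_2^{i_0}$ as the componentwise kernel.
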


\begin{cor}\label{cor:subquot}
Assume that $\brho$ is $\max\{9,2f+1\}$-generic. 
  Suppose that $\pi' = \pi_1'/\pi_1$ is any nonzero subquotient of $\pi$, where $\pi_1 \subsetneq \pi_1' \subset \pi$.
  Let $i_0 \defeq  i_0(\pi_1)$, $i_0' \defeq  i_0(\pi_1')$, so $-1 \le i_0 < i_0' \le f$.
  Let $F$ denote the subquotient filtration on $\pi'^\vee$ induced from the $\m$-adic filtration on $\pi^\vee$.
  Then
  \begin{equation}\label{eq:gr-subquot}
    \gr_F(\pi'^\vee) \cong \bigoplus_{\lambda\in\mathscr{P}}\chi_{\lambda}^{-1}\otimes \frac{\mathfrak{a}_1^{i_0}(\lambda)}{\mathfrak{a}_1^{i_0'}(\lambda)}.
  \end{equation}
  Moreover, $\gr_F(\pi'^\vee)$ (resp.\ $\pi'^\vee$) is Cohen--Macaulay of grade $2f$.
\end{cor}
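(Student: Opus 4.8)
The plan is to reduce everything to Proposition~\ref{prop:nonsplit-I1} and Corollary~\ref{cor:gr-pi1} by a dévissage argument in two layers. First I would treat the subquotient $\pi'=\pi_1'/\pi_1$ directly. Write $\pi_2\defeq \pi/\pi_1$ and $\pi_2'\defeq \pi/\pi_1'$, so that $\pi'$ sits in a short exact sequence $0\to \pi'\to \pi_2\to \pi_2'\to 0$ of $\GL_2(K)$-representations (taking $\pi_1' = \pi$ is allowed, in which case $\pi_2' = 0$). Dualizing gives $0\to (\pi_2')^\vee\to \pi_2^\vee\to \pi'^\vee\to 0$. By Proposition~\ref{prop:nonsplit-I1} applied to both $\pi_1\subsetneq\pi$ and $\pi_1'\subseteq\pi$, the $\Lambda$-modules $\pi_2^\vee$ and $(\pi_2')^\vee$ are Cohen--Macaulay of grade $2f$ (when nonzero), and by Corollary~\ref{cor:gr-pi1} their graded modules for the filtration $F$ induced from the $\m$-adic filtration on $\pi^\vee$ are known explicitly: $\gr_F(\pi_2^\vee)\cong\bigoplus_\lambda \chi_\lambda^{-1}\otimes \mathfrak a_1^{i_0}(\lambda)/\mathfrak a(\lambda)$ and $\gr_F((\pi_2')^\vee)\cong\bigoplus_\lambda \chi_\lambda^{-1}\otimes \mathfrak a_1^{i_0'}(\lambda)/\mathfrak a(\lambda)$.

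The key point is then that the $F$-filtration on $\pi_2^\vee$ induces on the submodule $(\pi_2')^\vee$ exactly the filtration appearing in Corollary~\ref{cor:gr-pi1}, so that the induced sequence of graded modules
\begin{equation*}
0\to \gr_F((\pi_2')^\vee)\to \gr_F(\pi_2^\vee)\to \gr_F(\pi'^\vee)\to 0
\end{equation*}
is exact; this is where a little care is needed, but it follows because $\mathfrak a_1^{i_0'}(\lambda)\subseteq \mathfrak a_1^{i_0}(\lambda)$ (as $i_0<i_0'$, the ideal $I(J_1,J_2,d)$ shrinks as $d$ decreases) forces the two filtrations to agree: one has $\mathcal Z(\gr_F((\pi_2')^\vee))+\mathcal Z(\gr_F(\pi'^\vee))=\mathcal Z(\gr_F(\pi_2^\vee))$ on the one hand, and the containment $F_{\bullet}((\pi_2')^\vee)\supseteq$ (the filtration of Corollary~\ref{cor:gr-pi1}) on the other, together with equality of total multiplicities, gives the claim. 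Consequently $\gr_F(\pi'^\vee)\cong \bigoplus_\lambda \chi_\lambda^{-1}\otimes \mathfrak a_1^{i_0}(\lambda)/\mathfrak a_1^{i_0'}(\lambda)$, which is~(\ref{eq:gr-subquot}).

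It remains to check that this explicit graded module is Cohen--Macaulay of grade $2f$, and this I would reduce to the commutative-algebra results of \S~\ref{sec:some-algebra}. Each summand $\o R/\mathfrak a_1^{i_0}(\lambda) = \o R/I(J_1,J_2,i_0+1-|J_\lambda|,\un t)$ is Cohen--Macaulay of grade $f$ (as an $R$-module) by Corollary~\ref{cor:CM-module-kunneth}, hence Cohen--Macaulay of grade $2f$ over $\gr(\Lambda)$ by \cite[Lemma 3.3.1.9]{BHHMS2}; the same applies to $\o R/\mathfrak a_1^{i_0'}(\lambda)$. The quotient $\mathfrak a_1^{i_0}(\lambda)/\mathfrak a_1^{i_0'}(\lambda) = \ker(\o R/\mathfrak a_1^{i_0'}(\lambda)\twoheadrightarrow \o R/\mathfrak a_1^{i_0}(\lambda))$ is then Cohen--Macaulay of grade $2f$ (or zero) by the long exact $\Ext$ sequence, exactly as in Step~3 of the proof of Proposition~\ref{prop:nonsplit-I1}. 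So $\gr_F(\pi'^\vee)$ is Cohen--Macaulay of grade $2f$, and since $\pi'^\vee$ is a finitely generated $\Lambda$-module carrying a good filtration whose associated graded is Cohen--Macaulay, $\pi'^\vee$ itself is Cohen--Macaulay of grade $2f$ by \cite[Prop.~III.2.2.4]{LiOy}.

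The main obstacle I anticipate is the middle step: verifying that the subquotient filtration $F$ on $\pi'^\vee$ really coincides with the ``ideal-theoretic'' filtration making~(\ref{eq:gr-subquot}) hold, rather than being a priori only coarser. The clean way around this is not to argue with filtrations directly but to run the same characteristic-cycle comparison as in Proposition~\ref{prop:nonsplit-I1}: from the surjection $\bigoplus_\lambda \chi_\lambda^{-1}\otimes \mathfrak a_1^{i_0}(\lambda)/\mathfrak a_1^{i_0'}(\lambda)\twoheadrightarrow \gr_F(\pi'^\vee)$ one gets $\mathcal Z(\text{source})\ge \mathcal Z(\gr_F(\pi'^\vee))$, and the additivity of $\mathcal Z$ in the exact sequence above together with the already-established isomorphisms $\gr_F(\pi_2^\vee)\cong N_2^{i_0}$, $\gr_F((\pi_2')^\vee)\cong N_2^{i_0'}$ (plus purity of the explicit modules, which are Cohen--Macaulay) forces the surjection to be an isomorphism. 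This is routine once assembled but needs the bookkeeping of which submodule of $N$ corresponds to which subrepresentation.
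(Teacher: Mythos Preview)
Your approach is correct and essentially dual to the paper's, but it is more roundabout and your ``main obstacle'' is illusory. The paper uses the other realization of $\pi'^\vee$ as a subquotient: from $0\to\pi_1\to\pi_1'\to\pi'\to0$ one gets $0\to\pi'^\vee\to\pi_1'^\vee\to\pi_1^\vee\to0$, where the two right-hand terms are \emph{quotients} of $\pi^\vee$ and hence carry the $\m$-adic filtration directly. Corollary~\ref{cor:gr-pi1} identifies the surjection $\gr_\m(\pi_1'^\vee)\to\gr_\m(\pi_1^\vee)$ with the natural map $N_1^{i_0'}\to N_1^{i_0}$, and the kernel is~\eqref{eq:gr-subquot} on the nose.

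Your route via $0\to(\pi_2')^\vee\to\pi_2^\vee\to\pi'^\vee\to0$ also works, but the filtration compatibility you worry about is automatic: since $(\pi_2')^\vee\subset\pi_2^\vee\subset\pi^\vee$ are nested submodules, the subspace filtration on $(\pi_2')^\vee$ from $\pi_2^\vee$'s $F$-filtration is $(\pi_2')^\vee\cap(\pi_2^\vee\cap\m^n\pi^\vee)=(\pi_2')^\vee\cap\m^n\pi^\vee$, which is exactly the $F$-filtration from Corollary~\ref{cor:gr-pi1}. No characteristic-cycle argument is needed. (Incidentally, you have the direction reversed: $I(J_1,J_2,d)$ shrinks as $d$ \emph{increases}, not decreases; your conclusion $\mathfrak a_1^{i_0'}(\lambda)\subset\mathfrak a_1^{i_0}(\lambda)$ is nonetheless correct since $i_0'>i_0$.)

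For the Cohen--Macaulay property the paper is again more efficient: rather than going back to the commutative algebra of \S~\ref{sec:some-algebra}, it simply observes that $\gr_\m(\pi_1'^\vee)$ and $\gr_\m(\pi_1^\vee)$ (resp.\ $\pi_1'^\vee$ and $\pi_1^\vee$) are Cohen--Macaulay of grade $2f$ by Proposition~\ref{prop:nonsplit-I1}, and then invokes the general fact (\cite[Cor.~III.2.1.6]{LiOy}) that the kernel of a surjection between Cohen--Macaulay modules of the same grade is again Cohen--Macaulay of that grade (or zero).
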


\begin{proof}
  The exact sequence $0 \to \pi'^\vee \to \pi_1'^\vee \to \pi_1^\vee \to 0$ of $\Lambda$-modules gives rise to an exact sequence
  \begin{equation*}
    0 \to \gr_F(\pi'^\vee) \to \gr(\pi_1'^\vee) \to \gr(\pi_1^\vee) \to 0.
  \end{equation*}
  The second map is identified with the natural map $\bigoplus_{\lambda\in\mathscr{P}}\chi_{\lambda}^{-1}\otimes \o R/\mathfrak{a}_1^{i_0'}(\lambda) \to \bigoplus_{\lambda\in\mathscr{P}}\chi_{\lambda}^{-1}\otimes \o R/\mathfrak{a}_1^{i_0}(\lambda)$
  by Corollary~\ref{cor:gr-pi1} (cf.\ Step 2 of the proof of Proposition~\ref{prop:nonsplit-I1}).
  Formula~(\ref{eq:gr-subquot}) follows.
  As $\gr(\pi_1'^\vee)$, $\gr(\pi_1^\vee)$ (resp.\ $\pi_1'^\vee$, $\pi_1^\vee$) are Cohen--Macaulay of grade $2f$ by Proposition~\ref{prop:nonsplit-I1}, so is $\gr_F(\pi'^\vee)$ (resp.\ $\pi'^\vee$).
  (If $0 \to M' \to M \to M'' \to 0$ is an exact sequence of $\Lambda$-modules (resp.\ $\gr(\Lambda)$-modules) and $M$ and $M''$ are Cohen--Macaulay of the same grade $j$, then $M'$ is zero or Cohen--Macaulay of grade $j$ by \cite[Cor.~III.2.1.6]{LiOy}.) 
\end{proof}

For $0\leq j\leq f$, let $\mathscr{P}^{\rm ss}_{j}$ (resp.\ $\mathscr{P}_{j}$) denote the subset of $\lambda\in\mathscr{P}^{\rm ss}$ (resp.\ $\lambda\in\mathscr{P})$ with $|J_{\lambda}|=j$. 
\begin{cor}\label{cor:subquot-F}
Keep the assumptions and notation in Corollary \ref{cor:subquot}.  There is an $H$-equiva\-riant isomorphism 
 \[\F\otimes_{\gr(\Lambda)}\gr_F(\pi'^{\vee})\cong \bigoplus_\lambda\chi_{\lambda}^{-1},\]
where $\lambda$ runs through all $\lambda\in \mathscr{P}^{\rm ss}_{i_0+1}\cup \big(\bigcup_{i_0+2\leq j\leq i_0'}\mathscr{P}_j\big)$.
\end{cor}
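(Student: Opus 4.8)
\textbf{Proof proposal for Corollary \ref{cor:subquot-F}.}

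The plan is to read off the degree-zero part of $\gr_F(\pi'^{\vee})$ from the explicit formula~(\ref{eq:gr-subquot}) in Corollary \ref{cor:subquot}, using that $\F\otimes_{\gr(\Lambda)}M$ for a graded $\gr(\Lambda)$-module $M$ supported in degrees $\geq 0$ is (as $H$-module) the degree-zero piece $M_0$ modulo the image of the positive-degree part, equivalently (since the modules here are quotients of $\o R$, generated in degree $0$ after the relevant twist) it is computed by Nakayama. Concretely, for each $\lambda\in\mathscr P$ the summand $\chi_\lambda^{-1}\otimes \mathfrak a_1^{i_0}(\lambda)/\mathfrak a_1^{i_0'}(\lambda)$ contributes to $\F\otimes_{\gr(\Lambda)}\gr_F(\pi'^{\vee})$ exactly its minimal generators. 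Since both $\mathfrak a_1^{i_0}(\lambda)$ and $\mathfrak a_1^{i_0'}(\lambda)$ are of the form $I(J_1,J_2,d,\un t)$ with the \emph{same} $J_1$, $J_2$, $\un t$ and only $d$ changing (from $i_0+1-|J_\lambda|$ down to $i_0'+1-|J_\lambda|$), the quotient $\mathfrak a_1^{i_0}(\lambda)/\mathfrak a_1^{i_0'}(\lambda)$ is the $\o R$-module generated by the monomials $\prod_{j\in J_1'}y_j\prod_{j\in J_2'}z_j$ with $J_1'\subset J_1$, $J_2'\subset J_2$ and $i_0+1-|J_\lambda|\leq |J_1'|+|J_2'|\leq i_0'-|J_\lambda|$ (and it is zero unless $|J_\lambda|\leq i_0'$, and it equals $\o R/\mathfrak a_1^{i_0'}(\lambda)$ shifted if $|J_\lambda|\leq i_0$, the case $|J_\lambda|=i_0+1$ being handled by $d\le 0$ giving the top ideal).

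First I would reduce to a single $\lambda$ and analyze the three ranges of $|J_\lambda|$: if $|J_\lambda|\geq i_0'+1$ the summand vanishes; if $|J_\lambda|=i_0+1$ then $i_0+1-|J_\lambda|=0$ so $\mathfrak a_1^{i_0}(\lambda)=\o R$ and the summand is $\o R/\mathfrak a_1^{i_0'}(\lambda)$, which is generated in degree $0$, contributing the single character $\chi_\lambda^{-1}$; if $i_0+2\leq |J_\lambda|\leq i_0'$ then again $i_0+1-|J_\lambda|\le -1$, so $\mathfrak a_1^{i_0}(\lambda)=\o R$ and the same conclusion holds, contributing $\chi_\lambda^{-1}$. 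The remaining case $|J_\lambda|\leq i_0$ is where the module is generated in \emph{positive} degree by the monomials of degree exactly $i_0+1-|J_\lambda|\geq 1$ in the $y_j$ ($j\in J_1$) and $z_j$ ($j\in J_2$); here I need to check these do not collapse, and then observe that each such generator $\prod_{j\in J_1'}y_j\prod_{j\in J_2'}z_j$ (of degree $>0$) carries the $H$-eigencharacter $\chi_\lambda^{-1}\prod_{j\in J_1'}\alpha_j\prod_{j\in J_2'}\alpha_j^{-1}$, which by Lemma~\ref{lem:compare-I1-invts}(i) (or directly by the combinatorics of $\mathscr P^{\rm ss}$) equals $\chi_{\lambda'}^{-1}$ for the unique $\lambda'\in\mathscr P^{\rm ss}$ obtained by replacing $\lambda_j(x_j)$ by $\lambda_j(x_j)-2$ for $j\in J_1'$ and $\lambda_j(x_j)+2$ for $j\in J_2'$; this $\lambda'$ lies in $\mathscr P^{\rm ss}\setminus\mathscr P$ when $|J_1'|+|J_2'|\geq 1$ with $|J_{\lambda'}|=|J_\lambda|+|J_1'|+|J_2'|$ running over $i_0+1\leq |J_{\lambda'}|\leq i_0'$.

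Assembling: summing over all $\lambda\in\mathscr P$, the characters appearing (with multiplicity one, which I would check using Lemma~\ref{lem:N/I-multifree} or the multiplicity-freeness already established for $\gr_\m(\pi^\vee)$ in Theorem~\ref{thm:CMC} together with the fact that $\gr_F(\pi'^\vee)$ is a subquotient) are exactly $\chi_{\lambda}^{-1}$ for $\lambda\in\mathscr P_{i_0+2}\cup\cdots\cup\mathscr P_{i_0'}$ (from the summands with $|J_\lambda|=i_0+1$ or $i_0+2\leq |J_\lambda|\leq i_0'$, which contribute their own degree-$0$ generator $\chi_\lambda^{-1}$ --- note these $\lambda$ themselves lie in $\mathscr P$, not just $\mathscr P^{\rm ss}$) together with $\chi_{\lambda'}^{-1}$ for $\lambda'\in\mathscr P^{\rm ss}\setminus\mathscr P$ with $|J_{\lambda'}|$ between $i_0+1$ and $i_0'$ (from the positive-degree generators in the summands with $|J_\lambda|\le i_0$). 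Reorganizing by the value $j=|J_{\lambda'}|$: for $j=i_0+1$ one gets all of $\mathscr P^{\rm ss}_{i_0+1}$ (the elements of $\mathscr P_{i_0+1}$ from the "own generator" case, those of $\mathscr P^{\rm ss}_{i_0+1}\setminus\mathscr P_{i_0+1}$ from the monomial case with $|J_1'|+|J_2'|=i_0+1-|J_\lambda|$, and I would double-check every element of $\mathscr P^{\rm ss}_{i_0+1}$ is hit exactly once by one or the other mechanism), while for $i_0+2\leq j\leq i_0'$ one gets precisely $\mathscr P_j$ --- here I would verify that an element of $\mathscr P^{\rm ss}_j\setminus\mathscr P_j$ with $j\geq i_0+2$ is \emph{not} produced, because to produce such a $\lambda'$ via the monomial mechanism one would need a source $\lambda\in\mathscr P$ with $|J_\lambda|\leq i_0$ and the degree of the monomial would have to be $j-|J_\lambda|\ge i_0+2-i_0=2$, but by the definition of $\mathfrak a_1^{i_0}(\lambda)$ only monomials of degree $\leq i_0-|J_\lambda|$ survive in $\mathfrak a_1^{i_0}(\lambda)$ modulo... --- no, that is exactly the delicate point.

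\textbf{The main obstacle} will be precisely this bookkeeping in the last step: showing that the characters $\chi_{\lambda'}^{-1}$ with $\lambda'\in\mathscr P^{\rm ss}\setminus\mathscr P$ and $i_0+2\le |J_{\lambda'}|\le i_0'$ do \emph{not} contribute (so that for those degrees only $\mathscr P_j$, not $\mathscr P^{\rm ss}_j$, appears), while for $|J_{\lambda'}|=i_0+1$ the full $\mathscr P^{\rm ss}_{i_0+1}$ does appear, all with multiplicity exactly one. This requires carefully tracking, for each $\lambda\in\mathscr P$ with $|J_\lambda|\le i_0$, which monomials in $\mathfrak a_1^{i_0}(\lambda)/\mathfrak a_1^{i_0'}(\lambda)$ are actually minimal generators (degree exactly $i_0+1-|J_\lambda|$) versus which lie in $\mathfrak a_1^{i_0}(\lambda)\cdot(\text{positive degree})$, combined with the observation from Step 1 / the proof of Proposition~\ref{prop:nonsplit-I1} that the higher-degree monomials (degree $>i_0+1-|J_\lambda|$) are \emph{not} new generators but images of the minimal ones. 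I expect the cleanest route is to invoke $\gr_F(\pi'^\vee)$ Cohen--Macaulay (Corollary~\ref{cor:subquot}) to get purity, then compare $\F\otimes_{\gr(\Lambda)}\gr_F(\pi'^\vee)$ with $\F\otimes_{\gr(\Lambda)}(N_2^{i_0}/N_2^{i_0'})$ computed directly from the minimal gr-free resolutions of Lemma~\ref{lem:min-graded-splitting}, whose degree-$0$ and low-degree generator data is exactly the boxed terms there, rather than trying to do the monomial combinatorics by hand.
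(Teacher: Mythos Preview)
Your approach is the same as the paper's, and it is correct up to the point you call the ``main obstacle''. But that obstacle is illusory and is exactly where you overthink it. The paper dispatches it in one line: for $|J_\lambda|\le i_0$ one has $I(J_1,J_2,i_0'+1-|J_\lambda|)\subset \fm_{\o R}\cdot I(J_1,J_2,i_0+1-|J_\lambda|)$ (obvious, since every generating monomial of degree $i_0'+1-|J_\lambda|$ is a variable times one of degree $i_0'-|J_\lambda|\ge i_0+1-|J_\lambda|$), and $\fa(\lambda)\subset \fa_1^{i_0'}(\lambda)$ is already zero in the quotient. Hence
\[
\F\otimes_{\o R}\big(\fa_1^{i_0}(\lambda)/\fa_1^{i_0'}(\lambda)\big)\;\cong\;\F\otimes_{\o R}I(J_1,J_2,i_0+1-|J_\lambda|),
\]
which is spanned by the images of the monomials of degree \emph{exactly} $i_0+1-|J_\lambda|$. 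So from the case $|J_\lambda|\le i_0$ you only ever produce $\lambda'$ with $|J_{\lambda'}|=i_0+1$; no $\lambda'\in\P^{\ss}\setminus\P$ with $|J_{\lambda'}|\ge i_0+2$ appears, and your worry evaporates.

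For multiplicity one, there is no need to invoke Lemma~\ref{lem:N/I-multifree} or purity: the paper simply notes that the assignment $(\lambda,J_1',J_2')\mapsto\lambda'$ from Step~1 of Proposition~\ref{prop:nonsplit-I1} is a bijection onto $\P^{\ss}_{i_0+1}\setminus\P_{i_0+1}$, with inverse given by the construction~(\ref{eq:mu_j}). Your proposed detour through Cohen--Macaulayness and the boxed terms in Lemma~\ref{lem:min-graded-splitting} is unnecessary.
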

\begin{proof}
We first look at $X_{i_0,i_0'}(\lambda)\defeq \F\otimes_{\overline{R}}\fa_1^{i_0}(\lambda)/\fa_1^{i_0'}(\lambda)$ for $\lambda\in \mathscr{P}$.
If $|J_{\lambda}|>i_0'$, then $\fa_{1}^{i_0}(\lambda)=\fa_{1}^{i_0'}(\lambda)=\overline{R}$, so $X_{i_0,i_0'}(\lambda)=0$; if $i_0<|J_{\lambda}|\leq i_0'$, then $\fa_1^{i_0}(\lambda)=\overline{R}$ while $\fa_1^{i_0'}(\lambda)\subset \fm_{\overline{R}}$ (the unique maximal graded ideal in $\overline{R}$), so $X_{i_0,i_0'}(\lambda)\cong \F$. 
Finally suppose $|J_{\lambda}|\leq i_0$, and recall $\fa_1^{i}(\lambda) = I(J_1,J_2,i+1-|J_\lambda|)+\fa(\lambda)$, where $J_1, J_2$ are as in \eqref{eq:fa-1}.
Hence $I(J_1,J_2,i_0'+1-|J_\lambda|) \subset \fm_{\overline{R}} I(J_1,J_2,i_0+1-|J_\lambda|)$ and so
\[X_{i_0,i_0'}(\lambda)\cong\F\otimes_{\overline{R}} I(J_1,J_2,i_0+1-|J_\lambda|) \cong \bigoplus_{(J_1',J_2')} \F\big( \overline{\prod_{j\in J_1'}y_j\prod_{j\in J_2'}z_j}\big), \]
where $(J_1',J_2')$ runs through all pairs with $J_1'\subset J_1$, $J_2'\subset J_2$, $|J_1'|+|J_2'|=i_0+1-|J_{\lambda}|$. 
Step 1 of the proof of Proposition \ref{prop:nonsplit-I1} shows that to each pair $(J_1',J_2')$ as above, one can associate an element $\lambda'\in \mathscr{P}^{\rm ss}\setminus\P$  with $|J_{\lambda'}|=i_0+1$, such that $\chi_{\lambda}^{-1}\prod_{j\in J_1'}\alpha_j\prod_{j\in J_2'}\alpha_j^{-1}=\chi_{\lambda'}^{-1}$.  Conversely, by the construction in \eqref{eq:mu_j}, any element $\lambda'\in \mathscr{P}^{\ss}\setminus\P$ with $|J_{\lambda'}|=i_0+1$ arises in this way and $\lambda'$ uniquely determines $\lambda\in\mathscr{P}$ and $J_1'$, $J_2'$. 
The result follows from this combined with Corollary \ref{cor:subquot}.
\end{proof}

\begin{thm}\label{thm:fin-length-nonsplit}
  Assume that $\brho$ is $\max\{9,2f+1\}$-generic. 
  \begin{enumerate}
  \item 
    Any subquotient of $\pi$ is generated by its $K_1$-invariants.
  \item 
    The representation $\pi$ is uniserial of length at most $f+1$.
    More precisely, suppose that $\pi_1$, $\pi_1'$ are any subrepresentations of $\pi$.  Then the following are equivalent:
    \begin{enumerate}
    \item $\pi_1 \subset \pi_1'$;
    \item $\pi_1^{K_1} \subset \pi_1'^{K_1}$;
    \item $i_0(\pi_1) \le i_0(\pi_1')$;
    \item $\dim_{\F\ppar{X}} D_{\xi}^{\vee}(\pi_1)\le\dim_{\F\ppar{X}} D_{\xi}^{\vee}(\pi_1')$. 
  \end{enumerate}
  \item 
    If $\pi'$ is any nonzero subquotient of $\pi$, then $D_{\xi}^{\vee}(\pi')$ is nonzero.
  \end{enumerate}
\end{thm}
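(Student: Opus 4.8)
\textbf{Proof proposal for Theorem \ref{thm:fin-length-nonsplit}.}

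The plan is to deduce all three parts from Proposition \ref{prop:nonsplit-I1} and Corollary \ref{cor:subquot} together with the structural results of \S~\ref{sec:k_1-invar-subr}, mirroring the strategy of the split case (Theorem \ref{thm:length:f+1} and Corollary \ref{cor:finite-length}) but using $K_1$-invariants in place of the $\GL_2(\cO_K)$-socle. First I would prove part (i) for subrepresentations. Let $\pi_1\subseteq\pi$ be a nonzero subrepresentation and let $\pi_1'\subseteq\pi_1$ be the $\GL_2(K)$-subrepresentation generated by $\pi_1^{K_1}$. Then ${\pi_1'}^{K_1}=\pi_1^{K_1}$ (the reverse inclusion is clear and the forward one holds because $\pi_1'$ surjects onto no new $K_1$-fixed vectors: concretely $\pi_1'$ contains $\pi_1^{K_1}$ and ${\pi_1'}^{K_1}\subseteq\pi_1^{K_1}$). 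Hence $i_0(\pi_1')=i_0(\pi_1)$ by Theorem \ref{thm:conj2}, so by Corollary \ref{cor:gr-pi1} the surjection $\gr_\m(\pi_1^\vee)\twoheadrightarrow\gr_\m({\pi_1'}^\vee)$ is between modules with the same (finite) underlying graded $\gr(\Lambda)$-module $\bigoplus_{\lambda}\chi_\lambda^{-1}\otimes\overline R/\fa_1^{i_0}(\lambda)$; as $\gr(\Lambda)$ is noetherian a surjective endomorphism of a finitely generated module is an isomorphism, so $\gr_\m(\pi_1^\vee)\congto\gr_\m({\pi_1'}^\vee)$. By d\'evissage $\pi_1^\vee/\m^n\congto{\pi_1'}^\vee/\m^n$ for all $n$, hence $\pi_1=\pi_1'$, i.e.\ $\pi_1$ is generated by its $K_1$-invariants. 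For a general subquotient $\pi'=\pi_2/\pi_1$, the representation $\pi_2$ is generated by $\pi_2^{K_1}$, so $\pi'$ is generated by the image of $\pi_2^{K_1}$, which lies in ${\pi'}^{K_1}$; thus $\pi'$ is generated by ${\pi'}^{K_1}$, proving (i).

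Next I would prove (ii). Suppose $\pi_1$, $\pi_1'$ are subrepresentations of $\pi$; I want to show (a)$\Leftrightarrow$(b)$\Leftrightarrow$(c)$\Leftrightarrow$(d). The implication (a)$\Rightarrow$(b) is trivial; (b)$\Rightarrow$(c) follows since $\pi_1^{K_1}=D_0(\brho)_{\le i_0(\pi_1)}$ is an increasing filtration of $D_0(\brho)$ by Theorem \ref{thm:conj2}; (c)$\Leftrightarrow$(d) is Corollary \ref{cor:yitong}, since $i_0\mapsto\sum_{i\le i_0}\binom fi$ is strictly increasing. For (c)$\Rightarrow$(a): assuming $i_0(\pi_1)\le i_0(\pi_1')$, consider $\pi_1+\pi_1'\subseteq\pi$. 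Its $K_1$-invariants contain $\pi_1^{K_1}+{\pi_1'}^{K_1}={\pi_1'}^{K_1}$ (as the $K_1$-invariant filtration is a chain), and equal $D_0(\brho)_{\le i_0(\pi_1+\pi_1')}$; so $i_0(\pi_1+\pi_1')\ge i_0(\pi_1')$. Applying (i) to both $\pi_1'$ and $\pi_1+\pi_1'$, each is generated by its $K_1$-invariants; since $(\pi_1+\pi_1')^{K_1}\supseteq{\pi_1'}^{K_1}$, we get $\pi_1+\pi_1'\supseteq\pi_1'$ trivially, but the point is the reverse: by the isomorphism $\gr_\m((\pi_1+\pi_1')^\vee)\congto\gr_\m({\pi_1'}^\vee)$ argument as in part (i) (valid once $i_0(\pi_1+\pi_1')=i_0(\pi_1')$, which holds because ${\pi_1'}^{K_1}\subseteq(\pi_1+\pi_1')^{K_1}\subseteq D_0(\brho)$ and the only way to go up is to increase $i_0$, contradicting $i_0(\pi_1)\le i_0(\pi_1')$ via the surjection $\pi_1^\vee\oplus{\pi_1'}^\vee\onto(\pi_1+\pi_1')^\vee$), we conclude $\pi_1+\pi_1'=\pi_1'$, i.e.\ $\pi_1\subseteq\pi_1'$. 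Uniseriality and length $\le f+1$ then follow: distinct subrepresentations are totally ordered by (a)$\Leftrightarrow$(c), and $i_0$ takes values in $\{-1,0,\dots,f\}$, giving at most $f+2$ subrepresentations hence length $\le f+1$; that the poset of subrepresentations injects into $\{-1,\dots,f\}$ via $i_0$ shows any chain of constituents has length $\le f+1$, and maximality of such a chain forces it to be a composition series.

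Finally, part (iii): if $\pi'=\pi_2/\pi_1$ is a nonzero subquotient with $i_0\defeq i_0(\pi_1)<i_0(\pi_1')\eqdef i_0'$, then by Corollary \ref{cor:subquot-F} we have $\F\otimes_{\gr(\Lambda)}\gr_F(\pi'^\vee)\cong\bigoplus_\lambda\chi_\lambda^{-1}$ with $\lambda$ ranging over $\mathscr P^{\rm ss}_{i_0+1}\cup\bigl(\bigcup_{i_0+2\le j\le i_0'}\mathscr P_j\bigr)$, which is nonempty: since $i_0<i_0'$, either $i_0+1\le i_0'$ and $\mathscr P^{\rm ss}_{i_0+1}\ne\emptyset$ (as $0\le i_0+1\le f$ and $\lambda\mapsto J_\lambda$ surjects onto subsets), or one checks directly the union is nonempty. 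Hence $\gr_F(\pi'^\vee)\ne0$, so $\pi'^\vee\ne0$ and $\pi'\ne0$; more precisely $D_\xi^\vee(\pi')\ne0$ follows from $\dim_{\F\pp X}D_\xi^\vee(\pi')=\dim_{\F\pp X}D_\xi^\vee(\pi_1')-\dim_{\F\ppar X}D_\xi^\vee(\pi_1)$ (exactness of $D_\xi^\vee$, \cite[Thm.~3.1.3.7]{BHHMS2}) combined with Corollary \ref{cor:yitong} and $i_0(\pi_1)<i_0(\pi_1')$, which gives $\dim_{\F\ppar X}D_\xi^\vee(\pi')=\sum_{i_0<i\le i_0'}\binom fi>0$. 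The main obstacle I anticipate is making the argument in part (ii) for (c)$\Rightarrow$(a) fully rigorous — specifically, controlling $i_0(\pi_1+\pi_1')$ and justifying the isomorphism of graded modules for $\pi_1+\pi_1'$; this requires the fact that the $K_1$-invariant filtration $D_0(\brho)_{\le\bullet}$ is an honest (totally ordered) filtration so that sums of subrepresentations behave predictably at the level of $K_1$-invariants, which is exactly the content of Theorem \ref{thm:conj2}.
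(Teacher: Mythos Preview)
Your proofs of (i) and (iii) are correct and essentially identical to the paper's. The issue is in (ii), specifically your argument for (c)$\Rightarrow$(a).

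You try to show $\pi_1\subseteq\pi_1'$ by analyzing $\pi_1+\pi_1'$ and claiming $i_0(\pi_1+\pi_1')=i_0(\pi_1')$. But your justification for this equality is not valid: from $\pi_1'^{K_1}\subseteq(\pi_1+\pi_1')^{K_1}$ you only get $i_0(\pi_1+\pi_1')\ge i_0(\pi_1')$, and the parenthetical appeal to ``the surjection $\pi_1^\vee\oplus\pi_1'^\vee\onto(\pi_1+\pi_1')^\vee$'' does not give the reverse inequality (exactness of $D_\xi^\vee$ only yields $\dim D_\xi^\vee(\pi_1+\pi_1')\le\dim D_\xi^\vee(\pi_1)+\dim D_\xi^\vee(\pi_1')$, which is not enough). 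You correctly flagged this as the main obstacle.

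The paper avoids this entirely by proving (b)$\Rightarrow$(a) directly from part (i), in one line: if $\pi_1^{K_1}\subseteq\pi_1'^{K_1}\subseteq\pi_1'$, then since $\pi_1$ is generated as a $\GL_2(K)$-representation by $\pi_1^{K_1}$ (this is exactly what you proved in (i)), we get $\pi_1=\langle\GL_2(K)\cdot\pi_1^{K_1}\rangle\subseteq\pi_1'$. Then (c)$\Rightarrow$(b) is Theorem~\ref{thm:conj2} (the filtration $D_0(\brho)_{\le\bullet}$ is increasing) and (c)$\Leftrightarrow$(d) is Corollary~\ref{cor:yitong}, exactly as you wrote. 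So your detour through $\pi_1+\pi_1'$ is unnecessary, and once you replace it by this direct use of (i), your proof is complete and matches the paper's.
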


\begin{proof}
  (i) 
  The quotient of any $\GL_2(K)$-representation generated by its $K_1$-invariants is generated by its $K_1$-invariants, hence it suffices to consider the case of a subrepresentation $\pi_1 \subset \pi$.
  Let $\pi_1' \defeq  \ang{\GL_2(K)\cdot \pi_1^{K_1}}$ be the subrepresentation of $\pi_1$ generated by $\pi_1^{K_1}$, so $\pi_1'^{K_1} = \pi_1^{K_1}$.
  By Theorem \ref{thm:conj2} we have $i_0(\pi_1') = i_0(\pi_1)$.
  By the proof of Proposition~\ref{prop:nonsplit-I1} the natural map $\gr_\m(\pi_1^\vee) \onto \gr_\m(\pi_1'^\vee)$ is an isomorphism (consider the diagram in Step 2), so $\pi_1' = \pi_1$.
  
  (ii) To show the equivalence, we note that (a)$\Rightarrow$(b) and the converse holds by part (i), (b)$\Leftrightarrow$(c) by Theorem \ref{thm:conj2}, and (c)$\Leftrightarrow$(d) by Corollary~\ref{cor:yitong}.
  Finally, condition (c) implies that $\pi$ is uniserial of length at most $f+1$.

  (iii) Write $\pi' = \pi_1'/\pi_1$ for some subrepresentations $\pi_1 \subsetneq \pi_1' \subset \pi$.
  By part (ii) we deduce that $\dim_{\F\ppar{X}} D_{\xi}^{\vee}(\pi_1) < \dim_{\F\ppar{X}} D_{\xi}^{\vee}(\pi_1')$. 
  We conclude by the exactness of $D_{\xi}^{\vee}$.
\end{proof}
\begin{rem}\label{rem:fin-length-nonsplit}
The statement of Theorem \ref{thm:fin-length-nonsplit}(i) fails if we replace $K_1$ by $I_1$, already when $K=\Qp$, by \cite[Thm.\ 20.3(i)]{BP} (see also \cite[Thm.\ 1.1]{morra-AA} for a different proof).
\end{rem}

\begin{cor}\label{cor:pi-mult-free1}
Assume that $\brho$ is $\max\{9,2f+1\}$-generic. 
  The $\GL_2(K)$-representation $\pi$ is multiplicity free (of length $\le f+1$).
\end{cor}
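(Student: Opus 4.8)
The plan is to deduce multiplicity freeness of $\pi$ from the structural results already established, by understanding the Jordan--Hölder factors of $\pi$ via their $K_1$-invariants and the uniseriality proved in Theorem~\ref{thm:fin-length-nonsplit}(ii). First I would invoke Theorem~\ref{thm:fin-length-nonsplit}(ii): $\pi$ is uniserial, so there is a unique chain of subrepresentations $0 = \pi^{(0)} \subsetneq \pi^{(1)} \subsetneq \cdots \subsetneq \pi^{(\ell)} = \pi$ with each graded piece $\pi^{(i)}/\pi^{(i-1)}$ irreducible, and $\ell \le f+1$. The associated integers $i_0(\pi^{(0)}) < i_0(\pi^{(1)}) < \cdots < i_0(\pi^{(\ell)})$ form a strictly increasing chain in $\{-1,0,1,\dots,f\}$ starting at $-1$ and ending at $f$. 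The task is to show that no irreducible $\GL_2(K)$-representation occurs twice among the graded pieces, equivalently that for $i < j$ the irreducible subquotients $\pi^{(i)}/\pi^{(i-1)}$ and $\pi^{(j)}/\pi^{(j-1)}$ are non-isomorphic.

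The key tool is that these graded pieces are distinguished by their $\GL_2(\cO_K)$-socles, or more precisely by their $K_1$-invariants. By Theorem~\ref{thm:conj2} (together with Corollary~\ref{cor:pi1-K-soc}) we have $\pi^{(i)}{}^{K_1} = D_0(\brho)_{\le i_0(\pi^{(i)})}$, and hence the $K_1$-invariants of the graded piece $\pi^{(i)}/\pi^{(i-1)}$ are computed by Corollary~\ref{cor:subquot-F} (or directly by the filtration~(\ref{eq:fil-D0})): they are a nonzero subquotient of $D_0(\brho)_{i_0(\pi^{(i)})}$, whose $\GL_2(\cO_K)$-socle is $\bigoplus_{\tau\in W(\brho^\ss),\,\ell(\tau)=i_0(\pi^{(i)})}\tau$ by~(\ref{eq:soc-i}) (intersected with $W(\brho)$). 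In particular, since an irreducible smooth $\GL_2(K)$-representation has a well-defined (multiplicity-free, by the general theory, or here by Corollary~\ref{cor:conj1}) set of Serre weights in its $\GL_2(\cO_K)$-socle, and since the graded pieces have socles concentrated in distinct $\ell$-values $i_0(\pi^{(i)})$ which are pairwise distinct along the chain, two graded pieces with different $i_0$ cannot be isomorphic. Since the $i_0(\pi^{(i)})$ are pairwise distinct, we are done — provided I check that the $\GL_2(\cO_K)$-socle of the graded piece $\pi^{(i)}/\pi^{(i-1)}$ is really nonzero and lies in the $i_0(\pi^{(i)})$-graded piece of the filtration. For the nonzero part one uses Theorem~\ref{thm:fin-length-nonsplit}(i): the graded piece is generated by its $K_1$-invariants, hence has nonzero $K_1$-invariants, hence nonzero $\GL_2(\cO_K)$-socle; and the description of $\pi^{(i)}{}^{K_1}/\pi^{(i-1)}{}^{K_1}$ via Theorem~\ref{thm:conj2} together with~(\ref{eq:fil-D0}) pins down which graded piece of $D_0(\brho)$ it comes from.

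I would then organize the argument as follows: (1) reduce to the uniserial filtration and record the strictly increasing sequence $i_0(\pi^{(i)})$; (2) show each graded piece $V_i \defeq \pi^{(i)}/\pi^{(i-1)}$ satisfies $V_i^{K_1} \cong D_0(\brho)_{i_0(\pi^{(i)})}$ as $\GL_2(\cO_K)$-representations, using the exactness on $K_1$-invariants that follows from Theorem~\ref{thm:conj2} applied to $\pi^{(i-1)}\subset\pi^{(i)}$ and the fact that $D_0(\brho)_{\le i_0(\pi^{(i)})}/D_0(\brho)_{\le i_0(\pi^{(i)})-1} = D_0(\brho)_{i_0(\pi^{(i)})}$ (note $i_0(\pi^{(i)}) = i_0(\pi^{(i-1)})+1$ necessarily for consecutive terms of a uniserial filtration, by Theorem~\ref{thm:fin-length-nonsplit}(ii)(c)); (3) conclude that $\soc_{\GL_2(\cO_K)}(V_i)$ consists of Serre weights $\tau\in W(\brho)$ with $\ell(\tau) = i_0(\pi^{(i)})$, which are pairwise disjoint across different $i$; (4) since isomorphic $\GL_2(K)$-representations have the same $\GL_2(\cO_K)$-socle, the $V_i$ are pairwise non-isomorphic, so $\pi$ is multiplicity free; and finally the length bound $\le f+1$ is immediate from $\ell \le f+1$ in Theorem~\ref{thm:fin-length-nonsplit}(ii). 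The main obstacle, I expect, is step (2): carefully justifying that $V_i^{K_1}$ is exactly $D_0(\brho)_{i_0(\pi^{(i)})}$ and not some proper subquotient — but this is precisely what Theorem~\ref{thm:conj2} delivers, since $\pi^{(i)}{}^{K_1} = D_0(\brho)_{\le i_0(\pi^{(i)})}$ and $\pi^{(i-1)}{}^{K_1} = D_0(\brho)_{\le i_0(\pi^{(i)})-1}$, and the quotient of these inside $\pi^{K_1} = D_0(\brho)$ (which is multiplicity free) is forced to be the corresponding graded piece of the canonical filtration~(\ref{eq:fil-D0}). Once this bookkeeping is in place the multiplicity freeness is essentially formal.
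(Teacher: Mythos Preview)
Your argument has a genuine gap in step (2). You claim that $V_i^{K_1} \cong D_0(\brho)_{\le i_0(\pi^{(i)})}/D_0(\brho)_{\le i_0(\pi^{(i-1)})}$, justifying this by Theorem~\ref{thm:conj2}. But Theorem~\ref{thm:conj2} only computes $\pi_1^{K_1}$ for \emph{subrepresentations} $\pi_1 \subset \pi$; it says nothing about the surjectivity of the map $\pi^{(i)}{}^{K_1} \to V_i^{K_1}$. In the nonsplit case this map is \emph{not} surjective in general: the paper explicitly notes (just after Theorem~\ref{thm:mainterintro} in the introduction, and see Remark~\ref{rem:nonsplit-I1}) that for subrepresentations $\pi_1 \subset \pi_2$ of $\pi$ the maps $\pi_2^{K_1} \to (\pi_2/\pi_1)^{K_1}$ and $\pi_2^{I_1} \to (\pi_2/\pi_1)^{I_1}$ fail to be surjective already for $f=1$. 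So $V_i^{K_1}$ may be strictly larger than the image you describe, and you have no control over the extra part; in particular you cannot conclude that $\soc_{\GL_2(\cO_K)}(V_i)$ is concentrated in a single $\ell$-value. (There is also the minor issue that $i_0(\pi^{(i)}) = i_0(\pi^{(i-1)})+1$ is unjustified: the $i_0$'s form a strictly increasing sequence in $\{-1,\dots,f\}$, but if $\ell < f+1$ some values are skipped.)

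The paper circumvents this difficulty by working with $I_1$-invariants and a spectral sequence rather than computing $V_i^{K_1}$ directly. For each irreducible subquotient $V_i$ one has a spectral sequence with $E^1_0 = \F \otimes_{\gr(\Lambda)} \gr_F(V_i^\vee)$ converging to $\Tor_0^\Lambda(\F,V_i^\vee) = \F \otimes_\Lambda V_i^\vee$, yielding a surjection $\F \otimes_{\gr(\Lambda)} \gr_F(V_i^\vee) \twoheadrightarrow \gr(\F \otimes_\Lambda V_i^\vee)$ and hence an inclusion $\JH(V_i^{I_1}) \subset \JH(\F \otimes_{\gr(\Lambda)} \gr_F(V_i^\vee))$ of sets of $H$-characters. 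The right-hand side is computed explicitly by Corollary~\ref{cor:subquot-F} in terms of the interval $(i_0(\pi^{(i-1)}), i_0(\pi^{(i)})]$, and these sets are pairwise disjoint for different $i$ (the parameters $\lambda$ lie in disjoint ranges of $|J_\lambda|$). Hence the $\JH(V_i^{I_1})$ are pairwise disjoint, so the $V_i$ are pairwise non-isomorphic. The point is that one only needs an \emph{upper bound} on $\JH(V_i^{I_1})$, not the exact value, and the graded module $\gr_F(V_i^\vee)$ (known from Corollary~\ref{cor:subquot}) provides exactly this.
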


\begin{proof}
Let $\pi'$ be any nonzero subquotient of $\pi$ and $F$ be the subquotient filtration on $\pi'^\vee$ induced from the $\m$-adic filtration on $\pi^\vee$. As in the proof of Proposition \ref{prop:Tor-inj}, by replacing $\gr_{\m}(\pi^{\vee})$ by $\gr_{F}(\pi'^{\vee})$ we obtain a spectral sequence $E_{i}^r\Longrightarrow \Tor_i^{\Lambda}(\F,\pi'^{\vee})$ with $E_i^1=\Tor_i^{\gr(\Lambda)}(\F,\gr_{F}(\pi'^{\vee}))$ for $i\geq 0$. In particular, we get a surjective graded morphism compatible with $H$-action
\[E^1_0 = \F\otimes_{\gr(\Lambda)}\gr_F(\pi'^{\vee})\twoheadrightarrow \gr(\F\otimes_{\Lambda}\pi'^{\vee}) = E^\infty_0,\]
hence an inclusion 
\begin{equation}\label{eq:multifree-JH}\JH(\F\otimes_{\Lambda}\pi'^{\vee})=\JH(\gr(\F\otimes_{\Lambda}\pi'^{\vee}))\subset \JH(\F\otimes_{\gr(\Lambda)}\gr_F(\pi'^{\vee})).\end{equation}

By Theorem~\ref{thm:fin-length-nonsplit}(ii) there exists a unique composition series $0=\pi_0\subsetneq \pi_1\subsetneq\cdots\subsetneq\pi_\ell=\pi$ of the $\GL_2(K)$-representation $\pi$, and moreover $-1=i_0(\pi_0)<i_0(\pi_1)<\cdots<i_0(\pi_\ell)=f$.
Corollary \ref{cor:subquot} implies that 
\[\gr_F((\pi_j/\pi_{j-1})^{\vee})\cong \bigoplus_{\lambda\in\mathscr{P}}\chi_{\lambda}^{-1}\otimes \frac{\fa_1^{i_0(\pi_{j-1})}(\lambda)}{\fa_1^{i_0(\pi_{j})}(\lambda)}.\]
As $\F\otimes_{\Lambda}(\pi_j/\pi_{j-1})^{\vee}$ is dual to $(\pi_j/\pi_{j-1})^{I_1}$, we deduce from Corollary \ref{cor:subquot-F} and \eqref{eq:multifree-JH} that the sets $\JH((\pi_j/\pi_{j-1})^{I_1})$ (of $H$-representations)  are disjoint for $1\leq j\leq \ell$, which proves the multiplicity freeness of $\pi$.
\end{proof}

\appendix
\section{Appendix: canonical filtrations on Tor and Ext groups}
\label{sec:append-canon-filtr}

We prove useful lemmas on the canonical filtration on Tor and Ext groups of filtered
modules.

Let $R$ be a filtered ring {(not necessarily the ring $R$ of \S~\ref{sec:notation}!)}, and let $\wt R$ be its Rees ring (see \cite[Def.~I.4.3.5]{LiOy} or \cite[\S~4.1]{BE90}). 
Then $\wt R$ is a graded ring, and we have a functor $N\mapsto \wt N$ from the category of filtered $R$-modules to the category of graded $\wt R$-modules (see~\cite[\S~I.4.3]{LiOy}).

Letting $X\defeq 1\in \wt{R}_1$ be the canonical homogeneous element of degree $1$ we have $\wt R=\bigoplus_{n\in\Z}(F_nR)X^n$ (\cite[Def.~I.4.3.6(b)]{LiOy}). 
We thus define the \emph{dehomogenization functor} $\cE$ from the category of graded $\wt R$-modules to the category of filtered $R$-modules as follows: for a graded $\wt R$-module $W=\bigoplus_{n\in\Z}W_n$ we set $\cE(W)\defeq W/(1-X)W$, with filtration defined by
\[
F_n(\cE(W))\defeq (W_n+(1-X)W)/(1-X)W
\]
for \ any \ $n\in\Z$. \ By \ \cite[Prop.~I.4.3.7(5)]{LiOy} \ the \ functor \ $\cE$ \ is \ exact, \ and \ by \cite[Prop.~I.4.3.7(2),\,(3)]{LiOy} it induces an equivalence when restricted to the full subcategory of $X$-torsion-free graded $\wt R$-modules, with quasi-inverse $N\mapsto \wt N$.
In particular, $\cE(\wt N)\cong N$ for any filtered $R$-module $N$.

\begin{lem1}\label{lem:strict}
  Suppose that $R$ is a filtered ring and that $N_1 \to N_2 \to N_3$ is an exact sequence of graded $\wt{R}$-modules.
  If $N_3$ is $X$-torsion-free, then the sequence $\mathcal{E}(N_1) \to \mathcal{E}(N_2) \to \mathcal{E}(N_3)$ of filtered $R$-modules is exact and the first morphism is strict.
  In particular, taking $N_3 = 0$: if $N_1 \to N_2$ is surjective, then $\mathcal{E}(N_1) \to \mathcal{E}(N_2)$ is a strict surjection.
\end{lem1}

\begin{proof}
  As recorded above (cf.\ \cite[Prop.\ 5.3]{BE90}), the Rees module $\wt{\mathcal{E}(N)}$ is identified with the largest $X$-torsion-free quotient of $N$.
  As $N_3$ is $X$-torsion-free, a diagram chase shows that the sequence $\wt{\mathcal{E}(N_1)} \to \wt{\mathcal{E}(N_2)} \to \wt{\mathcal{E}(N_3)}$ is exact.
  The result follows from \cite[Prop.\ I.4.3.8(2)]{LiOy}.
\end{proof}

Suppose now that $R$, $S$ are filtered rings such that the Rees ring $\wt S$ is noetherian, and let $N$ be any filtered $(S,R)$-bimodule, i.e.\ equipped with a filtration $F_nN$ ($n \in \Z$) such that with this filtration $N$ is both a filtered left $S$-module and a filtered right $R$-module (cf.\ \cite[Def.\ I.2.2]{LiOy}).
Then the notions in the previous paragraphs extend to filtered and graded bimodules, and we have a dehomogenization functor $\cE$ from graded $(\wt S,\wt R)$-bimodules to filtered $(S,R)$-bimodules (in particular, $\cE(\wt N) \cong N$ as filtered $(S,R)$-bimodules).

Following \cite[\S~5]{BE90} in the case of $\Ext_R^i(-,R)$, we now explain that $\Tor_i^R(N,R)$ is canonically and functorially a filtered $S$-module.
We also establish some basic properties of this canonical filtration.

If $W$ is any graded $\wt R$-module, then
\begin{equation}\label{eq:dehomog-tensor}
  \cE(\wt N \otimes_{\wt R} W) \cong S \otimes_{\wt S} \wt N \otimes_{\wt R} W \cong N \otimes_{\wt R} W\cong N \otimes_R \cE(W),
\end{equation}
where we used that $X = 1 \in \wt S_1$ (resp.\ $\wt R_1$) acts the same on the left and right of $\wt N$.
Here, $\wt N \otimes_{\wt R} W$ is a graded $\wt S$-module (cf.\ the discussion at the end of \S~\ref{sec:prelim}), $N \otimes_R \cE(W)$ is a filtered $S$-module (cf.\ \cite[\S~I.6]{LiOy}) and~\eqref{eq:dehomog-tensor} is easily checked to be an isomorphism of filtered $S$-modules.

Forgetting filtrations for a moment, as $\cE$ is exact, we have a natural isomorphism
\begin{equation}\label{eq:dehomog-tor}
  \cE(\Tor_i^{\wt R} (\wt N, W)) \cong \Tor_i^R (N, \cE(W))
\end{equation}
as $S$-modules for all $i \ge 0$.
As $\Tor_i^{\wt R} (\wt N, W)$ is a graded $\wt S$-module, the isomorphism induces a canonical and functorial filtration on $\Tor_i^R (N, \cE(W))$.
In particular, if $W = \wt M$ for a filtered $R$-module $M$ we obtain a canonical and functorial filtration on the $S$-module $\Tor_i^R (N, M)$.

\begin{lem1}\label{lem:fil-tor-seq}
  If $0 \to M_1 \to M_2 \to M_3 \to 0$ is a strict exact sequence of filtered $R$-modules, then the long exact sequence
  \[ \cdots \to \Tor_1^R(N,M_2) \to \Tor_1^R(N,M_3) \to N \otimes_R M_1 \to N \otimes_R M_2 \to N \otimes_R M_3 \to 0\]
  of $S$-modules respects filtrations.
\end{lem1}

The reason is that by strictness the induced sequence $0 \to \wt M_1 \to \wt M_2 \to \wt M_3 \to 0$ is still exact.

\begin{lem1}\label{lem:good-fil-tor}
  Suppose that $\wt R$ is noetherian, and suppose that $N$ has the property that as a filtered $S$-module its filtration is good.
  Then for any filtered $R$-module $M$ equipped with a good filtration, the canonical filtration on each $\Tor_i^R (N, M)$ is good.
\end{lem1}

Note that the condition on $N$ is equivalent to $\wt N$ being a finitely generated $\wt S$-module \cite[Prop.\ I.5.4(1)]{LiOy}.

\begin{proof}
  From the isomorphism~\eqref{eq:dehomog-tor} with $W = \wt M$ and \cite[Prop.\ I.4.3.7(2),\,(3)]{LiOy} it follows that the Rees module of $\Tor_i^R (N, M)$ is the largest $X$-torsion-free quotient of $\Tor_i^{\wt R} (\wt N, \wt M)$.
  Hence by \cite[Prop.\ I.5.4(1)]{LiOy} it suffices to show that $\Tor_i^{\wt R} (\wt N, \wt M)$ is a finitely generated $\wt S$-module for all $i$.
  By picking a gr-free resolution of $\wt M$ whose terms are moreover finitely generated (using $\wt R$ noetherian) and since $\wt S$ is noetherian, we reduce to the case $i = 0$ and $\wt M$ gr-free, in which case the claim follows from the assumption on $N$.
\end{proof}

\begin{lem1}\label{lem:fil-tor-by-resolution}
  Suppose that $\cdots \to F_1 \to F_0 \to M \to 0$ is a strict exact sequence with $F_i$ filt-free for all $i$ {(see the beginning of \S~\ref{sec:prelim} for filt-free)}.
  Then the canonical filtration on $\Tor_i^R (N, M)$ coincides with the subquotient filtration on the $i$-th homology of the complex of filtered $S$-modules $N \otimes_R F_\bullet$ 
  (each carrying the tensor product filtration).
\end{lem1}

\begin{proof}
  By strictness, the sequence $\cdots \to \wt F_1 \to \wt F_0 \to \wt M \to 0$ of graded $\wt R$-modules is exact.
  Hence $\Tor_i^{\wt R} (\wt N, \wt M)$ is isomorphic to the $i$-th homology of the complex $\wt N \otimes_{\wt R} \wt F_\bullet$ of graded $\wt S$-modules.
  Let $C_i \defeq  \wt N \otimes_{\wt R} \wt F_i$, so $\cE(C_i) \cong N \otimes_R F_i$ with the tensor product filtration.
  Let $Z_i$ (resp.\ $B_{i-1}$) denote the kernel (resp.\ the image) of $C_i \to C_{i-1}$, and let $H_i \defeq  Z_i/B_i$.
  By exactness of $\cE$ we have $\cE(H_i) \cong \cE(Z_i)/\cE(B_i)$ as $S$-modules and we need to show that it carries the subquotient topology inside $\cE(C_i)$, i.e.\ that the maps $\cE(Z_i) \into \cE(C_i)$ and $\cE(Z_i) \onto \cE(H_i)$ are both strict.
  As $\wt F_i$ is gr-free, it follows that $C_i$ is $X$-torsion-free, and hence so are $B_i$ and $Z_i$.
  From Lemma~\ref{lem:strict} we deduce that the sequences $0 \to \cE(Z_i) \to \cE(C_i) \to \cE(B_{i-1}) \to 0$ and $\cE(Z_i) \to \cE(H_i) \to 0$ are strict exact.
\end{proof}

Similarly, if $N$ is a filtered $(R,S)$-bimodule, and $M$ is an $R$-module with a \emph{good} filtration, then the right $S$-module $\Ext^i_R(M,N)$ is canonically and functorially a filtered $S$-module.
The reason is that for any finitely generated graded $\wt R$-module $W$ we have a natural isomorphism of filtered right $S$-modules
\begin{equation*}
  \cE(\Ext^i_{\wt R}(W,\wt N)) \cong \Ext^i_R(\cE(W),N)
\end{equation*}
and that $\Hom_R(W,-)$ is naturally graded \cite[Lemma I.4.1.1]{LiOy} and $\Hom_R(\cE(W),N)$ is naturally filtered \cite[Prop.\ I.6.6]{LiOy}, as $W$ is finitely generated.
The analogues of Lemmas~\ref{lem:fil-tor-seq}, \ref{lem:good-fil-tor}, and ~\ref{lem:fil-tor-by-resolution} hold, with the analogous proofs, provided in the first lemma all $M_i$ carry good filtrations and in the last lemma all $F_i$ are filt-free of finite rank.

We finally specialize to the case where $R=S=\Lambda$ and $M$ is a finitely generated (left) $\Lambda$-module equipped with a good filtration.
In particular the right $\Lambda$-module $\EE^i_\Lambda(M)= \Ext^i_\Lambda(M,\Lambda)$ carries a canonical and functorial filtration.
\begin{lem1}\label{lem:E-strict}
  Suppose that $0 \to M_1 \to M_2 \to M_3 \to 0$ is a strict exact sequence of finitely generated filtered $\Lambda$-modules.
  Suppose that the filtration on $M_2$ (and hence on $M_1$, $M_3$) is good and that $j \defeq  j_\Lambda(M_2)$.
  Then the induced morphism $0 \to \EE^j_\Lambda(M_3) \to \EE^j_\Lambda(M_2)$ is strict.
\end{lem1}

\begin{proof}
  By strictness we get $0 \to \wt M_1 \to \wt M_2 \to \wt M_3 \to 0$ of graded $\wt \Lambda$-modules, with $j_{\wt\Lambda}(\wt M_2) = j$ by \cite[\S~III.2.5]{LiOy}.
  Hence we obtain the exact sequence
  \[0 \to \EE^j_{\wt \Lambda}(\wt M_3) \to \EE^j_{\wt \Lambda}(\wt M_2) \to \EE^j_{\wt \Lambda}(\wt M_1)\] of graded right $\wt \Lambda$-modules.
  Each $\EE^j_{\wt \Lambda}(\wt M_i)$ is $X$-torsion-free by \cite[Lemma 5.11]{BE90}, 
    The result follows from Lemma~\ref{lem:strict}.
\end{proof}

\bibliography{Biblio}
\bibliographystyle{amsalpha}

\end{document}